\tikzset{|/.tip={Bar[width=.8ex,round]}}
\patchcmd{\@sect}{\@svsec}{\color{blue}\@svsec}{}{}
\newcommand{\sh}[1]{\mathscr{#1}}
\newcommand{\fl}[1]{\mathcal{#1}^{\,\bullet}}
\newcommand{\ccs}[2]{H^{\bullet}(\mathcal{S}h_{#1}(\Lambda(#2), \mathbb{K})_{0}) }
\newcommand{\lk}[1]{\Lambda(#1)}
\newcommand{\sfl}{\mathcal{F}^{\bullet}_{\mathrm{std}}}
\newcommand{\asfl}{\mathcal{F}^{\bullet}_{\mathrm{astd}}}
\newcommand{\flag}[1]{ \fl{#1}=\big\{ {#1}^{(0)} \subset \cdots \subset {#1}^{(n)}\big\}  }
\newcommand{\flagstd}[2]{ \fl{#1}_{\mathrm{std}}\,[\hat{\mathbf{#2}}]:=\big\{ {#1}^{(0)}_{\mathrm{std}} \subset \cdots \subset {#1}^{(n)}_{\mathrm{std}}\big\} }
\newcommand{\flagastd}[2]{ \fl{#1}_{\mathrm{astd}}\,[\hat{\mathbf{#2}}]:=\big\{ {#1}^{(0)}_{\mathrm{astd}} \subset \cdots \subset {#1}^{(n)}_{\mathrm{astd}}\big\} }
\newcommand{\elmg}[3]{ \tensor[_{\gamma_{\sh{#2}}}]{ \langle \smash{\gamma_{\scriptscriptstyle \sh{#3}}} \rangle }{_{\gamma_{\sh{#1}}}} }
\newcommand{\be}[3]{ \tensor[_{#2}]{ \langle \smash{#3} \rangle }{_{#1 } } }
\newcommand{\elmG}{\tensor[_{\gamma_{\sh{G}}}]{ \langle \smash{\gamma_{\scriptscriptstyle \sh{H}}} \rangle }{_{\gamma_{\sh{F}}}}}
\newenvironment{myequ*}{\begin{equation*}\textstyle}{\end{equation*}}
\newtheorem{theorem}{Theorem}
\newtheorem{lemma}[theorem]{Lemma}
\newtheorem{claim}[theorem]{Claim}
\newtheorem{proposition}[theorem]{Proposition}
\newtheorem{definition}[theorem]{Definition}
\newtheorem{remark}[theorem]{Remark}
\newtheorem{notation}[theorem]{Notation}
\newtheorem{construction}[theorem]{Construction}
\newtheorem{observation}[theorem]{Observation}
\newtheorem{setup}[theorem]{Setup}
\numberwithin{equation}{section}
\newtheorem{maintheorem}{Theorem}         
\DeclareMathAlphabet{\mathbbold}{U}{bbold}{m}{n}
\renewenvironment{bmatrix}
{\left[\mkern3mu\env@matrix}
{\endmatrix\mkern3mu\right]}
\renewcommand{\arraystretch}{0.8}
\definecolor{myblue}{RGB}{33,85,205}
\newmdtheoremenv[
  linewidth=1pt,
  roundcorner=4pt,
  linecolor=myblue,
  backgroundcolor=white, 
  innerleftmargin=10pt,
  innerrightmargin=10pt,
  innertopmargin=8pt,
  innerbottommargin=8pt,
]{blueboxthm}{Theorem}
\author{\'Angel Rodr\'iguez--L\'opez}
\address{University of California Davis, Dept.~of Mathematics, USA}
\email{arodriguezlopez@ucdavis.edu}
\title{Computable Sheaf Invariants for Legendrian Rainbow Closures}
\begin{document}

\fontdimen14\textfont2=6pt
\fontdimen16\textfont2=2pt
\fontdimen17\textfont2=3pt

\setlength{\parindent}{24pt}
\setlength{\parskip}{8pt}
\setlength{\abovedisplayskip}{5pt}
\setlength{\belowdisplayskip}{5pt}
\setlength{\abovedisplayshortskip}{5pt}
\setlength{\belowdisplayshortskip}{5pt}
\setlength{\abovecaptionskip}{10pt}

\begin{abstract}
For any Legendrian link in $\displaystyle \mathbb{R}^{3}$ given by the rainbow closure of a positive braid word, we develop an explicit and computable description of a Legendrian isotopy invariant associated with it, namely the cohomological category of compactly supported, microlocal rank-one sheaves with singular support on the Legendrian link. In particular, we parametrize the objects of the category by points of a braid variety, and for any pair of objects, we provide a linear map that algebraically characterizes their possible non-trivial graded morphism spaces. In addition, we provide combinatorial rules governing the compositions of graded morphisms in the category under consideration. Finally, we present several applications of our results, highlighting the structural features captured by the categorical invariant of interest.
\end{abstract}

\maketitle
\tableofcontents

\section{Introduction}\label{sec:introduction}

\noindent
In this article, we investigate a broad family of Legendrian links in the standard contact three-dimensional space through the lens of microlocal sheaf theory. More precisely, for any Legendrian link obtained as the rainbow closure of a positive braid word, we analyze in depth a categorical invariant associated with it, namely a category of sheaves whose singular support lies on the Legendrian link under study. The main contribution of this work is the development of an explicit sheaf-theoretic framework that, for any Legendrian link in the family of interest, provides an algebraic, combinatorial, and computable characterization of the objects, morphisms, and compositions of its associated category. Furthermore, to illustrate our methods, we present several examples demonstrating the range and effectiveness of our approach.

\subsection{Scientific Context} In recent years, microlocal sheaf-theoretic methods~\cite{KS1} have emerged as a powerful framework in contact and symplectic topology~\cite{GKS1, G1, NZ1, Nad1, GPS1, JT1}, providing new categorical invariants~\cite{STZ1} and novel approaches to long-standing problems in the theory of Legendrian links~\cite{CG1, CZ1}, thereby revealing rich connections with cluster theory, algebraic geometry, and representation theory~\cite{STZ2, CW1, TZ1, CGGLSS1, CGGS1, CGGS2}. 

For a Legendrian link, the study of its associated microlocal-sheaf categorical invariants has evolved along two main directions. The first explores the relationship between these invariants and those arising from Floer-theoretic constructions~\cite{STZ1, NRSSZ1, CNS1}, while the second focuses on the algebraic and geometric structures of their moduli spaces of objects and their applications to the problem of existence and classification of exact Lagrangian fillings of the underlying Legendrian link~\cite{CG1, CZ1, CW1, CL1, CL2, SW1, HJ1, HJ2}. Despite the substantial progress achieved along these two research directions, several aspects of these categorical invariants remain largely unexplored, particularly those concerning their graded morphism spaces and their compositions. In particular, from the general framework developed by Nadler and Zaslow~\cite{NZ1} and by Nadler~\cite{Nad1}, one can deduce that the structure of the graded morphism spaces in these invariants encodes significant geometric and topological information about the underlying Legendrian link~\cite{GPS1}. For example, in~\cite{STZ1}, Shende, Treumann, and Zaslow studied a concrete microlocal-sheaf categorical invariant for the Chekanov pair, a pair of Legendrian knots with identical classical invariants, and used the structure of the associated graded morphism spaces to show that the two Chekanov knots are not Legendrian isotopic. Apart from this result, the work of Chantraine, Ng, and Sivek on Legendrian $(2,m)$ torus links~\cite{CNS1} constitutes essentially the only study in the literature where the structure of the graded morphism spaces and their compositions has been treated in detail for a microlocal-sheaf categorical invariant.

Bearing this in mind, the main goal in this paper is to investigate in detail a microlocal-sheaf categorical invariant for a broad family of Legendrian links and to shed light on the rich algebraic, combinatorial, geometric, and topological structures encoded by the graded morphism spaces and their compositions within the categorical invariant of interest.

\subsection{Main Results} 
Before stating our main results, we briefly introduce the notation and conventions necessary for their formulation. Let $(x,y,z)$ be coordinates on $\mathbb{R}^{3}$. The standard contact structure $\xi_{\mathrm{std}}$ on $\mathbb{R}^{3}$ is given by $\xi_{\mathrm{std}}:=\mathrm{ker}\,(dz-ydx)$~\cite{EN1}. Let $n\geq 2$ be an integer. We denote by $\mathrm{Br}_{n}$ the braid group on $n$ strands with Artin generators $\sigma_1, \dots, \sigma_{n-1}$, and by $\mathrm{S}_{n}$ its associated Coxeter group---the symmetric group on $n$ elements---with generators $s_{1}, \dots, s_{n-1}$, the adjacent transpositions~\cite{CGGS1}. Accordingly, we denote by $\mathrm{Br}^{+}_{n}\subset \mathrm{Br}_{n}$ the monoid generated by the positive powers of the Artin generators, and call its elements positive braid words. Finally, let $\mathbb{K}$ be a ground field. For any integer $m\geq 1$, we denote by $\mathbb{K}^{m}$ the $m$-dimensional Cartesian vector space over $\mathbb{K}$ with no basis specified, and by $\mathbb{K}^{m}_{\mathrm{std}}$ the same vector space equipped with its standard ordered basis.

Let $\beta \in \mathrm{Br}^{+}_{n}$ be a positive braid word. Following~\cite{KT1, KT2, STZ1}, we associate to it the Legendrian link $\Lambda(\beta)$ in $(\mathbb{R}^{3}, \xi_{\mathrm{std}})$---the rainbow closure of $\beta$---whose front projection $\Pi_{x,z}(\Lambda(\beta)) \subset \mathbb{R}^2$ is depicted in Figure~\ref{Front diagram of the rainbow closure of a braid in n strands}. Subsequently, we denote by $\mathcal{S}h_{1}(\Lambda(\beta), \mathbb{K})_{0}$ the dg--category of microlocal-rank-one, compactly supported, constructible sheaves of $\mathbb{K}$--modules on $\mathbb{R}^{2}$ whose singular support lies on $\Lambda(\beta)$~\cite{STZ1}. By the work of Guillermou, Kashiwara, and Schapira~\cite{GKS1}, this category is an invariant of the Legendrian isotopy class of $\Lambda(\beta)$, and in~\cite{STZ1}, Shende, Treumann, and Zaslow provided an explicit local combinatorial description of its objects. In particular, building on these foundations and the seminal work of Chantraine, Ng, and Sivek~\cite{CNS1}, the main results in this paper lead to a concrete and computable characterization of a categorical invariant of $\Lambda(\beta)$ closely associated with $\mathcal{S}h_{1}(\Lambda(\beta), \mathbb{K})_{0}$, namely its cohomological category $H^{\bullet}\left(\mathcal{S}h_{1}(\Lambda(\beta),\mathbb{K})_{0}\right)$. 

\begin{definition}[Cohomological Category for Rainbow Closures]
Let $\beta \in\mathrm{Br}^{+}_{n}$ be a positive braid word. According to~\cite{STZ1, CNS1}, the category $H^{\bullet}\left(\mathcal{S}h_{1}(\Lambda(\beta),\mathbb{K})_{0}\right)$ admits the following presentation:
\begin{itemize}
\item \textbf{Objects}: The objects of the category $H^{\bullet}\left(\mathcal{S}h_{1}(\Lambda(\beta),\mathbb{K})_{0}\right)$ are those of the category $\mathcal{S}h_{1}(\Lambda(\beta), \mathbb{K})_{0}$. 
\item \textbf{Graded Morphisms}: Let $\sh{F}, \, \sh{G}$ be objects of the category $H^\bullet(\mathcal{S}h_1(\Lambda(\beta), \mathbb{K})_0)$. The graded morphism space associated with the pair $(\sh{F},\sh{G})$ is given by the Yoneda graded vector space:
\begin{equation*}
\mathrm{Ext}^{\bullet}(\sh{F},\sh{G})= \bigoplus_{p\geq0} \mathrm{Ext}^{\,p}(\sh{F},\sh{G})\, .    
\end{equation*} 
\item \textbf{Graded Composition}: Let $\sh{F}, \, \sh{G}, \,\sh{H}$ be objects of the category $H^\bullet(\mathcal{S}h_1(\Lambda(\beta), \mathbb{K})_0)$, and let \,$p,\,q\geq 0$ be integers. The graded composition \,$\circ : \mathrm{Ext}^{\,p}(\sh{G},\sh{H})\times \mathrm{Ext}^{\,q}(\sh{F},\sh{G})\to \mathrm{Ext}^{\,p+q}(\sh{F},\sh{H})$ is given by the Yoneda composition (see, for instance,~\cite{HS1}). 
\end{itemize}
\end{definition}

\noindent
In recent years, the category $H^{\bullet}\left(\mathcal{S}h_{1}(\Lambda(\beta),\mathbb{K})_{0}\right)$ has played a pivotal role in addressing several long-standing problems in symplectic topology and algebraic geometry. For instance, the study of its moduli space of objects has yielded fundamental results on the existence and classification of the exact Lagrangian fillings of $\Lambda(\beta)$~\cite{STZ2, CG1, CL1} and led to the definition of the braid varieties~\cite{CGGS1, CGGS2, CGGLSS1}, thereby revealing deep connections between the theory of Legendrian links, cluster theory, algebraic geometry, and representation theory~\cite{CW1, HJ1, HJ2}.

\begin{definition}[A-type Braid Variety]\label{Def: A-type braid variety}
Let $n\geq 2$ be an integer, and let $\beta=\sigma_{i_{1}}\dots \sigma_{i_{\ell}}\in \mathrm{Br}_{n}^{+}$ be a positive braid word. The braid variety $X(\beta,\mathbb{K})\subset \mathbb{K}^{\ell}_{\mathrm{std}}$ associated with $\beta$ is the affine variety defined by
\begin{equation*}
X(\beta,\mathbb{K}):=\Big\{\,\vec{x}\in \mathbb{K}^{\ell}_{\mathrm{std}} \;\big|\; \text{$P_{\beta}(\vec{x})$ admits an LU decomposition} \,\Big\}\, ,
\end{equation*}
where, for any $\vec{x}=(x_{1},\dots,x_{\ell})\in \mathbb{K}_{\mathrm{std}}^{\ell}$, we denote by $P_{\beta}(\vec{x}) \in \mathrm{GL}(n,\mathbb{K})$ the ordered matrix product of the $n$-dimensional braid matrices associated with the generators of $\beta$ and the entries of $\vec{x}$:
\begin{equation*}
P_{\beta}(\vec{x}) := B^{(n)}_{i_{1}}(x_{1})\,\cdots\,B^{(n)}_{i_{\ell}}(x_{\ell})\, .    
\end{equation*}
Specifically, given an Artin generator $\sigma_{k}\in \mathrm{Br}^{+}_{n}$, with $k\in [1,n-1]$, and a parameter $x\in \mathbb{K}$, the $n$-dimensional braid matrix $B^{(n)}_{k}(x)\in \mathrm{GL}\big(n,\, \mathbb{K}\big)$ associated with $\sigma_{k}$ and $x$ is defined by
\begin{equation*}
\Big[\,B^{(n)}_{k}(x)\,\Big]_{i,j}\,:=\,\begin{cases}
~1\,, & \text{if $~i=j\,$ and $\,i\neq k,\,k+1$}\, ,\\
~1\,, & \text{if $~(i,j)=(k,k+1)\,$ or $\,(k+1,k)$}\, ,\\
~x\,, & \text{if $~i=j=k$}\, ,\\ 
~0\,, & \text{otherwise}\, ,
\end{cases}  \qquad   i,j\in[1, n]\, .
\end{equation*}
\end{definition}

\noindent
In~\cite{STZ1}, Shende, Treumann, and Zaslow showed that the objects of the category $\ccs{1}{\beta}$ can be geometrically described by linear configurations of complete flags in $\mathbb{K}^{n}$ that satisfy certain transversality conditions determined by the positive braid word $\beta$. By the work of Casals, Gorsky, Gorsky, and Simental~\cite{CGGS1, CGGS2}, it is known that such configurations of flags are parametrized by points in the braid variety $X(\beta, \mathbb{K})$. In particular, combining these results yields the following algebraic characterization of the objects of the category $\ccs{1}{\beta}$.

\begin{theorem}[Algebraic Characterization of the Objects,~\cite{STZ1, CGGS1}]\label{Theorem: sheaves as points in the braid variety}
Let $\beta=\sigma_{i_{1}}\cdots\sigma_{i_{\ell}}\in\mathrm{Br}_{n}^{+}$ be a positive braid word, and let $\sh{F}$ be an object of the category $\ccs{1}{\beta}$. Then $\sh{F}$ is algebraically parametrized by a basis \,$\hat{\mathbf{f}}^{(n)}$ for \,$\mathbb{K}^{n}$ and a point $\vec{x}$ in the braid variety $X(\beta,\mathbb{K})$.
\end{theorem}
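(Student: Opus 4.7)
The plan is to combine the explicit combinatorial description of $\ccs{1}{\beta}$ provided by Shende--Treumann--Zaslow with the algebraic presentation of braid varieties given by Casals--Gorsky--Gorsky--Simental. Since both halves are already available in the literature, the task reduces to identifying the moduli-theoretic objects appearing in the former with the affine variety appearing in the latter, via an explicit choice of coordinates attached to the crossings of $\beta$.

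First, I would analyze the front projection of $\lk{\beta}$ depicted in the paper, whose subdivision of $\mathbb{R}^{2}$ into regions allows one to encode any object $\sh{F}$ by the stalks over those regions together with the generization morphisms across their boundary arcs, subject to the local crossing and cusp conditions of Shende--Treumann--Zaslow. Reading this data along vertical slices located between consecutive crossings yields, at each such slice, a complete flag $\sh{F}^{(0)} \subset \cdots \subset \sh{F}^{(n)}$ in an ambient $n$-dimensional $\mathbb{K}$-vector space, which I identify with $\mathbb{K}^{n}$ once the basis $\hat{\mathbf{f}}^{(n)}$ is chosen.

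Second, I would translate the local conditions of the STZ description into algebraic relations among these flags. The compactness requirement and the presence of the left and right cusps pin down the flags at the extreme slices to be a standard-type and an antistandard-type flag with respect to $\hat{\mathbf{f}}^{(n)}$. At each crossing $\sigma_{i_{k}}$ of $\beta$, the microlocal rank-one condition forces the two adjacent flags to differ only in their $i_{k}$-th subspace while remaining in a prescribed transverse relation, and this freedom is captured by a single scalar parameter $x_{k}\in\mathbb{K}$. Propagating the leftmost flag through the braid then amounts to acting on $\hat{\mathbf{f}}^{(n)}$ by the ordered product $P_{\beta}(\vec{x}) = B^{(n)}_{i_{1}}(x_{1})\cdots B^{(n)}_{i_{\ell}}(x_{\ell})$, so the entire flag sequence is determined by the pair $(\hat{\mathbf{f}}^{(n)}, \vec{x})$ with $\vec{x} = (x_{1},\dots,x_{\ell})$.

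Third, I would invoke the CGGS parametrization to close the loop: the matching condition between the rightmost flag produced by $P_{\beta}(\vec{x})$ and the antistandard flag imposed by the right cusps translates precisely into the requirement that $P_{\beta}(\vec{x})$ admit an LU decomposition, which by Definition~\ref{Def: A-type braid variety} is equivalent to $\vec{x} \in X(\beta,\mathbb{K})$. The main obstacle is bookkeeping: checking that the LU-decomposition condition is the exact algebraic counterpart of the geometric closure condition at the right cusps, and verifying that the correspondence between isomorphism classes of objects and pairs $(\hat{\mathbf{f}}^{(n)}, \vec{x})$ is unambiguous once the STZ moduli-theoretic description is matched with the CGGS coordinates on the braid variety.
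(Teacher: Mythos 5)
Your proposal is correct and follows essentially the same route as the paper: extracting a sequence of complete flags from vertical slices of the front, normalizing the leftmost flag to the standard flag and the top flag to the anti-standard flag, parametrizing each $s_{i_k}$-relative position by a scalar via a braid matrix so that the flag sequence is governed by $P_{\beta}(\vec{x})$, and identifying the complete-opposition condition at the right cusps with the LU-factorization condition (which the paper makes precise via the Bruhat decomposition). The only step you leave as ``bookkeeping'' is exactly the one the paper handles by writing $P_{\beta}(\vec{z})=\mathbf{w}_{n}\tilde{U}\mathbf{w}_{n}U=LU$, so no essential idea is missing.
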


\noindent
Prior to this work, the only known results regarding the graded morphism spaces and their compositions in the category $\ccs{1}{\beta}$ were those due to Chantraine, Ng, and Sivek~\cite{CNS1}. Specifically, they provided an explicit characterization of the category for Legendrian $(2,m)$ torus links---positive braid words on two strands---and proved the hereditary-type property in full generality.  

\begin{theorem}[Hereditary-Type Property,~\cite{CNS1}]\label{Theorem: hereditary-type property}
Let $\beta \in \mathrm{Br}_n^+$ be a positive braid word, and let $\sh{F}, \sh{G}$ be objects of the category $\ccs{1}{\beta}$. Then, for any $p \geq 2$, 
\begin{equation*}
\mathrm{Ext}^{p}(\sh{F}, \sh{G}) = 0.    
\end{equation*}
\end{theorem}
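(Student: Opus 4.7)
The plan is to exploit the planar constructibility of the sheaves together with the microlocal rank-one condition to produce, for any pair of objects, a spectral sequence computing $\mathrm{Ext}^{\bullet}(\sh{F},\sh{G})$ whose non-trivial contributions are concentrated in cohomological degrees $0$ and $1$. First I would invoke the Shende--Treumann--Zaslow combinatorial model to realise every object $\sh{F}$ of $\ccs{1}{\beta}$ as a compactly supported constructible sheaf on the finite cellular stratification $\sh{S}_{\beta}$ of $\mathbb{R}^{2}$ induced by the front projection $\Pi_{x,z}(\Lambda(\beta))$. In this model $\sh{F}$ assigns a finite-dimensional $\mathbb{K}$-vector space to each open region and a generisation map across each smooth arc, subject to the rank-one microlocal conditions imposed at every crossing and cusp of the front.

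With both $\sh{F}$ and $\sh{G}$ presented on $\sh{S}_{\beta}$, I would compute $\mathrm{Ext}^{p}(\sh{F},\sh{G})$ as the hypercohomology of $\mathrm{R}\underline{\mathrm{Hom}}(\sh{F},\sh{G})$ by means of a Mayer--Vietoris-type spectral sequence associated with the covering of $\mathbb{R}^{2}$ by the closures of the strata of $\sh{S}_{\beta}$, equivalently by the cellular cochain complex with coefficients in local Ext groups. This reduces the vanishing to a stratum-by-stratum check: on open $2$-cells both sheaves are locally constant and the local contribution lives in degree $0$; across a smooth arc of $\Pi_{x,z}(\Lambda(\beta))$ the microlocal rank-one condition presents $\sh{F}$ locally as a two-step extension, producing only degrees $0$ and $1$; at crossings and cusps one carries out the same analysis using the explicit Shende--Treumann--Zaslow normal forms in order to verify that $\mathrm{R}\underline{\mathrm{Hom}}$ is again a two-term complex locally.

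Assembling these local computations, the $E_{1}$-page is concentrated in the columns $q\in\{0,1\}$, while the cellular direction contributes $p\in\{0,1,2\}$ because $\sh{S}_{\beta}$ is a $2$-dimensional stratification. An anti-diagonal count then shows that the abutment vanishes in total degrees $\geq 3$ automatically, whereas reducing the bound further to $p\geq 2$ requires killing the one potentially surviving class on the anti-diagonal $p+q=2$. I would establish this last vanishing by identifying the corresponding differential with the boundary map that encodes the microlocal rank-one constraint at the crossings of $\beta$, and showing that it is surjective onto that class.

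The hard part will be the crossing analysis: one must verify that the microlocal rank-one condition is strong enough to prevent the local $\mathrm{R}\underline{\mathrm{Hom}}$ at each crossing of $\Pi_{x,z}(\Lambda(\beta))$ from contributing a genuine class in degree $2$, despite the four regions meeting there and the non-smoothness of the stratum. This amounts to a case-by-case local model computation using the explicit normal forms from~\cite{STZ1}, and constitutes the technical heart of the argument. Once this local vanishing is in place, the global statement $\mathrm{Ext}^{p}(\sh{F},\sh{G})=0$ for $p\geq 2$ follows directly from the collapse of the spectral sequence.
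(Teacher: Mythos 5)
Your overall strategy---a local-to-global spectral sequence for $R\sh{H}om(\sh{F},\sh{G})$ whose inputs are local computations in the Shende--Treumann--Zaslow normal forms---is the same route the paper takes: it quotes Propositions 6.5 and 6.6 of Chantraine--Ng--Sivek~\cite{CNS1} and feeds them into the local-to-global $\mathrm{Ext}$ spectral sequence $E_2^{p,q}=H^p\bigl(R\Gamma(\mathbb{R}^2;H^q(R\sh{H}om(\sh{F},\sh{G})))\bigr)$. However, your outline has a genuine gap exactly where the content of those two propositions lives. First, your degree count is wrong: if the local contributions sit in $q\in\{0,1\}$ and the cellular direction contributes $p\in\{0,1,2\}$, then the position $(p,q)=(2,1)$ feeds total degree $3$, so the abutment does \emph{not} vanish automatically in total degrees $\geq 3$; you would have to kill classes in total degrees $2$ \emph{and} $3$, not just $2$. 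Second, and more fundamentally, the crucial local fact is stronger than what you allow yourself: the microlocal rank-one condition together with the binary Maslov potential forces the generisation maps to be split injections or split surjections (this is the content of the cusp conditions), and this makes $R\sh{H}om(\sh{F},\sh{G})$ quasi-isomorphic to its zeroth cohomology sheaf---the local contributions are concentrated in degree $0$ alone, not in degrees $0$ and $1$. It is precisely this concentration that collapses the spectral sequence onto the row $q=0$ and reduces the theorem to the single global statement $H^{2}\bigl(R\Gamma(\mathbb{R}^{2};\sh{H}om(\sh{F},\sh{G}))\bigr)=0$.

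That remaining global vanishing is itself a nontrivial computation (it is Proposition 6.5 of~\cite{CNS1}), and your proposed mechanism for it---surjectivity of a differential ``encoding the microlocal rank-one constraint at the crossings''---is not where the vanishing comes from; the crossing condition imposes an exactness constraint on the objects $\sh{F}$ and $\sh{G}$ themselves, not on the top cellular differential of the Hom sheaf, whose vanishing instead relies on the compact support and the cusp structure. As written, your plan defers both essential vanishing statements to an unexecuted ``case-by-case local model computation,'' so the argument is not yet a proof; to complete it you would need either to carry out the local analysis showing that $\sh{E}\mathrm{xt}^{q}(\sh{F},\sh{G})=0$ for $q\geq 1$ and that $H^{2}$ of the Hom sheaf vanishes, or simply to cite~\cite{CNS1} for both facts, as the paper does.
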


Building on the above foundations, we now present the first main result of this paper: a theorem providing a precise algebraic description of the lower-degree morphism spaces in the category $\ccs{1}{\beta}$. To this end, the following definition plays a central role.

\begin{definition}[$\delta$--map]\label{Def: delta map}
Let $\beta=\sigma_{i_{1}}\cdots\sigma_{i_{\ell}}\in\mathrm{Br}^{+}_{n}$ be a positive braid word, and let $\sh{F}$, $\sh{G}$ be objects of the category $\ccs{1}{\beta}$. Let \,$\mathbf{\hat{f}}^{(n)}$, \,$\mathbf{\hat{g}}^{(n)}$ be bases for \,$\mathbb{K}^{n}$, and let \,$\vec{x}=(x_{1},\dots, x_{\ell}),\;\vec{y}=(y_{1},\dots, y_{\ell})\in X(\beta,\mathbb{K})$ be points such that the pairs \,$\big(\,\mathbf{\hat{f}}^{(n)},\, \vec{x}\, \big)$ and \,$\big(\,\mathbf{\hat{g}}^{(n)},\, \vec{y}\, \big)$ algebraically characterize $\sh{F}$ and $\sh{G}$ according to Theorem~\eqref{Theorem: sheaves as points in the braid variety}, respectively. Then, we assign to the pair $(\sh{F}, \sh{G})$ the linear map \,$\delta_{\sh{F},\sh{G}}:\mathbb{K}^{n}_{\mathrm{std}}\rightarrow \mathbb{K}^{\ell}_{\mathrm{std}}$\, defined by
\begin{equation*}
\delta_{\sh{F},\sh{G}}(\vec{u}\,):=\big(\delta_{1}(\vec{u}\,),\dots, \delta_{\ell}(\vec{u}\,)\big)\in \mathbb{K}^{\ell}_{\mathrm{std}} \,, \qquad \vec{u}=(u_{1}, \dots, u_{n})\in \mathbb{K}^{n}_{\mathrm{std}}\, ,
\end{equation*}
where, for each $j\in [1,\ell]$,
\begin{equation*}
\delta_j(\vec{u}\,) := \Big[\, \big(B^{(n)}_{i_j}(y_j) \big)^{-1}D\big(\pi_{\beta_{j-1}}(\vec{u}\,)\big) \,B^{(n)}_{i_j}(x_j)\, \Big]_{i_j+1,\,i_j}\, .
\end{equation*}
The components appearing in this formula are defined as follows:
\begin{itemize}
\item $\pi_{\beta_{j-1}}\in \mathrm{S}_{n}$ is the permutation associated with $\beta_{j-1}=\sigma_{i_{1}}\cdots\sigma_{i_{j-1}}\in \mathrm{Br}^{+}_{n}$, the truncation of $\beta$ at the $(j-1)$-th crossing; in particular, $\pi_{\beta_{0}}$ is the trivial permutation. 
\item $\pi_{\beta_{j-1}}(\vec{u}\,):=(u_{\pi_{\beta_{j-1}}(1)},\dots,u_{\pi_{\beta_{j-1}}(n)} )\in \mathbb{K}^{n}_{\mathrm{std}}$ is the vector obtained by permuting the entries of $\vec{u}$ via $\pi_{\beta_{j-1}}$; accordingly, $\pi_{\beta_{0}}(\vec{u}\,)=\vec{u}$.
\item $D(\pi_{\beta_{j-1}}(\vec{u}\,)):=\mathrm{diag}\big\{ u_{\pi_{\beta_{j-1}}(1)},\dots,u_{\pi_{\beta_{j-1}}(n)} \big\}\in \mathrm{M}(n,\mathbb{K})$ is the $n\times n$ diagonal matrix whose diagonal entries are given by $\pi_{\beta_{j-1}}(\vec{u}\,)$. 
\item $B^{(n)}_{i_{j}}(x_{j}),\, B^{(n)}_{i_{j}}(y_{j})\in \mathrm{GL}(n,\mathbb{K})$ are $n$-dimensional braid matrices associated with $\sigma_{i_{j}}$---the $j$-th crossing of $\beta$---and $x_{j},\,y_{j}\in \mathbb{K}$---the $j$-th entries of $\vec{x}$ and $\vec{y}$---respectively. 
\end{itemize}
\end{definition}

\begin{maintheorem}[Algebraic Description of the Lower-Degree Morphism Spaces]\label{Theorem: main result 1}
Let $\beta=\sigma_{i_{1}}\cdots\sigma_{i_{\ell}}\in\mathrm{Br}^{+}_{n}$ be a positive braid word, and let $\sh{F}$, $\sh{G}$ be objects of the category $\ccs{1}{\beta}$. Let \,$\mathbf{\hat{f}}^{(n)},\,\mathbf{\hat{g}}^{(n)}$ be bases for \,$\mathbb{K}^{n}$, and let \,$\vec{x},\, \vec{y}\in X(\beta,\mathbb{K})$ be points such that the pairs $\big(\,\mathbf{\hat{f}}^{(n)},\,\vec{x}\, \big)$ and $\big(\,\mathbf{\hat{g}}^{(n)},\ \vec{y}\, \big)$ algebraically characterize $\sh{F}$ and $\sh{G}$ according to Theorem~\eqref{Theorem: sheaves as points in the braid variety}, respectively.

Following Definition~\eqref{Def: delta map}, let \,$\delta_{\sh{F},\sh{G}}:\mathbb{K}^{n}_{\mathrm{std}}\to \mathbb{K}^{\ell}_{\mathrm{std}}$\, be the linear map associated with the pair $(\sh{F},\sh{G})$. Then there are isomorphisms of vector spaces
\begin{equation*}
\begin{array}{rcl}
\mathrm{Ext}^{0}(\sh{F},\sh{G})&\cong & \mathrm{ker}\, \delta_{\sh{F},\sh{G}}\, ,   \\[6pt]
\mathrm{Ext}^{1}(\sh{F},\sh{G})&\cong & \mathrm{coker}\, \delta_{\sh{F},\sh{G}} \, .
\end{array}
\end{equation*}    
\end{maintheorem}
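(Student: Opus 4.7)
The plan is to identify $\mathrm{Ext}^{\bullet}(\sh{F},\sh{G})$ with the cohomology of the explicit two-term complex
\begin{equation*}
\bigl[\,\mathbb{K}^{n}_{\mathrm{std}}\;\xrightarrow{\;\delta_{\sh{F},\sh{G}}\;}\;\mathbb{K}^{\ell}_{\mathrm{std}}\,\bigr],
\end{equation*}
which, combined with the hereditary-type property (Theorem \ref{Theorem: hereditary-type property}), yields both claimed isomorphisms simultaneously. First I would invoke the Shende--Treumann--Zaslow combinatorial model to describe $\sh{F}$ and $\sh{G}$ by tuples of complete flags $\fl{F}, \fl{G}$ assigned to the regions of the front projection $\Pi_{x,z}(\Lambda(\beta))$, subject to the transversality constraints at crossings and to the standard/anti-standard boundary conditions imposed by the cusps of the rainbow closure. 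Theorem \ref{Theorem: sheaves as points in the braid variety} packages this flag data as the pairs $(\hat{\mathbf{f}}^{(n)},\vec{x})$ and $(\hat{\mathbf{g}}^{(n)},\vec{y})$, with propagation across crossings governed by the braid matrices $B^{(n)}_{i_j}$.

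Second, I would translate a morphism $\sh{F}\to\sh{G}$ into a compatible family of linear maps between the flags at each region. At the top unbounded region, both flags are full-rank with bases $\hat{\mathbf{f}}^{(n)}$ and $\hat{\mathbf{g}}^{(n)}$, so any flag-preserving map is diagonal in these bases and is encoded by a single vector $\vec{u}\in\mathbb{K}^{n}$. Propagating this diagonal across the $j$-th crossing amounts to replacing it by its conjugate $(B^{(n)}_{i_j}(y_j))^{-1}\,D\bigl(\pi_{\beta_{j-1}}(\vec{u})\bigr)\,B^{(n)}_{i_j}(x_j)$, where $\pi_{\beta_{j-1}}$ tracks the strand relabeling introduced by the prior crossings. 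This conjugate is again flag-preserving on the next region precisely when its $(i_j+1,\,i_j)$-entry vanishes, and that entry is, by Definition \ref{Def: delta map}, exactly $\delta_{j}(\vec{u})$. Hence a global morphism corresponds to a vector $\vec{u}$ annihilated by every $\delta_{j}$, which identifies $\mathrm{Ext}^{0}(\sh{F},\sh{G})$ with $\ker\delta_{\sh{F},\sh{G}}$.

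Third, for the identification $\mathrm{Ext}^{1}(\sh{F},\sh{G})\cong\mathrm{coker}\,\delta_{\sh{F},\sh{G}}$, I would follow the computational strategy used by Chantraine--Ng--Sivek in the two-strand case and resolve $\sh{F}$ by sheaves supported on the open faces of the front. The resulting one-cocycles are scalar crossing-gluings, one per crossing, living naturally in $\mathbb{K}^{\ell}_{\mathrm{std}}$; the image of $\delta_{\sh{F},\sh{G}}$ is precisely the subspace of cocycles that come from deforming the zero-cochain data at the top. The hereditary vanishing in Theorem \ref{Theorem: hereditary-type property} is essential: it guarantees that the total complex of the resolution truncates in degrees $\geq 2$, so no higher-order coboundaries enter and the cokernel description is exact.

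The main obstacle I anticipate is verifying that the local-to-global computation at the $j$-th crossing produces \emph{only} the entry at position $(i_j+1,\,i_j)$ and no further contributions. In the STZ model, this amounts to showing that the Maslov-index-one relation among the four flags surrounding a crossing, once the flags are expressed via the braid matrices and the bases $\hat{\mathbf{f}}^{(n)},\hat{\mathbf{g}}^{(n)}$, collapses to the single scalar equation $\delta_{j}(\vec{u})=0$. I expect this to require a careful bookkeeping argument tracking how $\pi_{\beta_{j-1}}$ reorders the diagonal entries of $D(\pi_{\beta_{j-1}}(\vec{u}))$, distinguishing whether the two strands meeting at the $j$-th crossing have previously been permuted with each other, and confirming that the standard/anti-standard data at the cusps do not introduce additional relations in the cokernel.
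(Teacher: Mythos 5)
Your outline for $\mathrm{Ext}^{0}$ is essentially the paper's argument: the cusp conditions force a morphism to intertwine both the surjections on the top strands and the injections on the bottom strands, so its matrix at the central region is simultaneously lower- and upper-triangular, hence $D(\vec{u})$; propagation across the $j$-th crossing is conjugation by $(B^{(n)}_{i_j}(y_j))^{-1}(\cdot)B^{(n)}_{i_j}(x_j)$ in the braid-transformed bases, and the single surviving constraint is the vanishing of the $(i_j+1,i_j)$-entry, i.e.\ $\delta_j(\vec{u})=0$ (this is exactly Lemma~\eqref{lemma for braid matrices and diagonal matrices} together with Lemmas~\eqref{Lemma: Ext0 on R_j for k=1} and~\eqref{Lemma: Ext0 for R_j for k geq 2}). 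Be careful with the phrase ``any flag-preserving map is diagonal'': preserving one flag only gives triangularity; diagonality comes from preserving the two \emph{completely opposite} flags imposed by the cusps, which is the content of Lemmas~\eqref{Lemma: Ext0 for surjective linear maps} and~\eqref{Lemma: Ext0 for injective linear maps}.

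For $\mathrm{Ext}^{1}$ your route differs from the paper's. You propose a resolution of $\sh{F}$ by face-supported sheaves and a cocycle/coboundary computation; the paper instead uses the Yoneda description of $\mathrm{Ext}^{1}$ as equivalence classes of extensions, normalizes every extension to a block extension that is trivial on $U_{\mathrm{T}}\cup U_{\mathrm{L}}$, shows one new scalar parameter survives per crossing (giving $\mathbb{K}^{\ell}$), and identifies the gauge freedom with $\mathrm{im}\,\delta_{\sh{F},\sh{G}}$ (Lemmas~\eqref{Lemma: equivalence of extensions and block extensions}--\eqref{Lemma: system of equations for equivalent extensions at a crossing sigma_k} and Theorem~\eqref{Theorem: Ext1 as the cokernel of delta_F,G}). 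Your approach is plausible, but the resolution is never constructed, and the claim that the cocycle space is exactly $\mathbb{K}^{\ell}$ with coboundaries exactly $\mathrm{im}\,\delta$ is precisely where the work lies --- it is the same local normalization at crossings you flag as the ``main obstacle,'' and it occupies most of the paper's Section~5. One conceptual slip: the hereditary property (Theorem~\eqref{Theorem: hereditary-type property}) is not what makes the cokernel description exact; in the paper it is used only to kill $\mathrm{Ext}^{p}$ for $p\geq 2$, and the $H^{1}$ of a two-term complex is its cokernel regardless of what happens in degree $2$. What you actually need to justify is that your two-term complex computes $\mathrm{Ext}^{\leq 1}$ at all, i.e.\ that your proposed resolution is a resolution --- that step is missing.
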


Having established this, we proceed to present the second main result of this paper: a theorem providing a precise algebraic and combinatorial description of the composition between the lower-degree morphism spaces in the category $\ccs{1}{\beta}$. In particular, the following definition is fundamental to its formulation.

\begin{definition}[Braided Compositions]\label{Def: main braided graded composition}
Let $\beta = \sigma_{i_1} \cdots \sigma_{i_\ell} \in \mathrm{Br}_n^+$ be a positive braid word. We introduce three bilinear operations associated with $\beta$: the \textbf{Hadamard} $\odot: \mathbb{K}^n_{\mathrm{std}} \times \mathbb{K}^n_{\mathrm{std}} \to \mathbb{K}^n_{\mathrm{std}}$, the \textbf{left braided} $\circ_{\beta_{\mathrm{L}}}: \mathbb{K}^n_{\mathrm{std}} \times \mathbb{K}^\ell_{\mathrm{std}} \to \mathbb{K}^\ell_{\mathrm{std}}$, and the \textbf{right braided} $\circ_{\beta_{\mathrm{R}}}: \mathbb{K}^\ell_{\mathrm{std}} \times \mathbb{K}^n_{\mathrm{std}} \to \mathbb{K}^\ell_{\mathrm{std}}$ compositions.

Specifically, for any $\vec{u} = (u_1, \dots, u_n),\; \vec{v} = (v_1, \dots, v_n) \in \mathbb{K}^n_{\mathrm{std}}$, and any $\vec{p} = (p_1, \dots, p_\ell),\; \vec{q} = (q_1, \dots, q_\ell) \in \mathbb{K}^\ell_{\mathrm{std}}$, we define:
\begin{equation*}
\begin{array}{rcl}
\vec{v} \odot \vec{u} &:=& (v_1 u_1, \dots, v_n u_n)\in \mathbb{K}^{n}_{\mathrm{std}}\,, \\[6pt]
\vec{v}\, \circ_{\beta_{\mathrm{L}}} \vec{p} &:=& (v_{\pi_{\beta_1}(i_1+1)} p_1, \dots, v_{\pi_{\beta_\ell}(i_\ell+1)} p_\ell)\in \mathbb{K}^{\ell}_{\mathrm{std}}\, ,\\[6pt]
\vec{q}\, \circ_{\beta_{\mathrm{R}}} \vec{u} &:=& (q_1 u_{\pi_{\beta_1}(i_1)}, \dots, q_\ell u_{\pi_{\beta_\ell}(i_\ell)})\in \mathbb{K}^{\ell}_{\mathrm{std}}\,,     
\end{array}    
\end{equation*}
where, for each $j \in [1,\ell]$:  
\begin{itemize}
\item $\pi_{\beta_j} \in \mathrm{S}_n$ is the permutation associated with $\beta_j = \sigma_{i_1} \dots \sigma_{i_j} \in \mathrm{Br}^{+}_{n}$, the truncation of $\beta$ at the $j$-th crossing.  
\item $u_{\pi_{\beta_j}(i_j)}\in \mathbb{K}$ and $v_{\pi_{\beta_j}(i_j+1)}\in\mathbb{K}$ are the $i_j$-th and $(i_j+1)$-th entries of $\pi_{\beta_j}(\vec{u}\,) \in \mathbb{K}^{n}_{\mathrm{std}}$ and $\pi_{\beta_j}(\vec{v}\,) \in \mathbb{K}^{n}_{\mathrm{std}}$---the vectors obtained by permuting the entries of $\vec{u}$ and $ \vec{v}$ via $\pi_{\beta_j}$---respectively, where $i_j \in [1,n-1]$ denotes the index of $\sigma_{i_j}$---the $j$-th crossing of $\beta$.
\end{itemize}
\end{definition}

\begin{maintheorem}[Algebraic Description of the Graded Composition]\label{Theorem: main result 2}
Let $\beta=\sigma_{i_{1}}\dots \sigma_{i_{\ell}}\in \mathrm{Br}^{+}_{n}$ be a positive braid word, and let $\sh{F}$, $\sh{G}$, $\sh{H}$ be objects of the category $\ccs{1}{\beta}$. Let \,$\mathbf{\hat{f}}^{(n)}, \,\mathbf{\hat{g}}^{(n)},\, \mathbf{\hat{h}}^{(n)}$ be bases for \,$\mathbb{K}^{n}$, and let \,$\vec{x},\, \vec{y},\, \vec{z}\in X(\beta, \mathbb{K})$ be points such that the pairs $(\,\mathbf{\hat{f}}^{(n)}\,, \vec{x}\,),\, (\,\mathbf{\hat{g}}^{(n)}\,, \vec{y}\,),\, (\,\mathbf{\hat{h}}^{(n)}\,, \vec{z}\,)$ algebraically characterize $\sh{F}$, $\sh{G}$, $\sh{H}$ according to Theorem~\eqref{Theorem: sheaves as points in the braid variety}, respectively. 

Following Definition~\eqref{Def: delta map}, let $\delta_{\sh{F},\sh{G}},\; \delta_{\sh{G},\sh{H}},\; \delta_{\sh{F},\sh{H}} : \mathbb{K}^{n}_{\mathrm{std}}\to \mathbb{K}^{\ell}_{\mathrm{std}}$ be the linear maps associated with the pairs $(\sh{F}, \sh{G}),\,(\sh{G}, \sh{H}),\,(\sh{F}, \sh{H})$. In light of Theorem~\eqref{Theorem: main result 1}, let $\mu\in \mathrm{Ext}^{0}(\sh{F},\sh{G})$, $\nu\in \mathrm{Ext}^{0}(\sh{G},\sh{H})$, $\Theta\in \mathrm{Ext}^{1}(\sh{F},\sh{G})$, $\Phi\in \mathrm{Ext}^{1}(\sh{G},\sh{H})$, and suppose that, for some representatives $\vec{p},\,\vec{q}\in \mathbb{K}^{\ell}_{\mathrm{std}}$, the vectors $\vec{u}\in \mathrm{ker}\,\delta_{\sh{F},\sh{G}}\subseteq \mathbb{K}^{n}_{\mathrm{std}}$, $\vec{v}\in \mathrm{ker}\,\delta_{\sh{G},\sh{H}}\subseteq \mathbb{K}^{n}_{\mathrm{std}}$, and the classes $[\,\vec{p}\,]\in \mathrm{coker}\, \delta_{\sh{F},\sh{G}}$, $[\,\vec{q}\,]\in \mathrm{coker}\, \delta_{\sh{G},\sh{H}}$ determine $\mu$, $\nu$, $\Theta$, $\Phi$, respectively, under the isomorphisms
\begin{equation*}
\begin{array}{rclcrcl}
\mathrm{Ext}^{0}(\sh{F},\sh{G})&\cong& \mathrm{ker}\, \delta_{\sh{F},\sh{G}}\, ,   & \qquad \qquad &  \mathrm{Ext}^{0}(\sh{G},\sh{H})&\cong& \mathrm{ker}\, \delta_{\sh{G},\sh{H}}\, ,\\[8pt]
\mathrm{Ext}^{1}(\sh{F},\sh{G})&\cong& \mathrm{coker}\, \delta_{\sh{F},\sh{G}}\, , & \qquad \qquad & \mathrm{Ext}^{1}(\sh{G},\sh{H})&\cong& \mathrm{coker}\, \delta_{\sh{G},\sh{H}}\, .\\[4pt]
\end{array}    
\end{equation*}
Let $i,j\geq 0$ be integers such that $i+j\in \{0,1\}$. Then, building on Definition~\eqref{Def: main braided graded composition}, the graded composition $\circ :\mathrm{Ext}^{i}(\sh{G}, \sh{H})\times \mathrm{Ext}^{j}(\sh{F}, \sh{G})\to \mathrm{Ext}^{i+j}(\sh{F}, \sh{H})$ admits the following description: 
\begin{itemize}
\item  \textbf{$(0,0)$-degree composition}: Under the isomorphism $\mathrm{Ext}^{0}(\sh{F},\sh{H})\cong \mathrm{ker}\, \delta_{\sh{F},\sh{H}}$, the graded composition $\nu\circ \mu \in \mathrm{Ext}^{0}(\sh{F},\sh{H})$ is determined by $\vec{v}\odot\vec{u}\in\mathrm{ker}\,\delta_{\sh{F},\sh{H}}$.
\item \textbf{$(0,1)$-degree composition}: Under the isomorphism $\mathrm{Ext}^{1}(\sh{F},\sh{H})\cong \mathrm{coker}\, \delta_{\sh{F},\sh{H}}$, the graded composition $\nu\circ \Theta \in \mathrm{Ext}^{1}(\sh{F},\sh{H})$ is determined by the class  $[\,\vec{v}\,\circ_{\beta_{\mathrm{L}}}\vec{p}\;]\in\mathrm{coker}\,\delta_{\sh{F},\sh{H}}$.
\item \textbf{$(1,0)$-degree composition}: Under the isomorphism $\mathrm{Ext}^{1}(\sh{F},\sh{H})\cong \mathrm{coker}\, \delta_{\sh{F},\sh{H}}$, the graded composition $\Phi\circ \mu \in \mathrm{Ext}^{1}(\sh{F},\sh{H})$ is determined by the class $[\,\vec{q}\,\circ_{\beta_{\mathrm{R}}}\vec{u}\;]\in\mathrm{coker}\,\delta_{\sh{F},\sh{H}}$.
\end{itemize}    
\end{maintheorem}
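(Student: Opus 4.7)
The plan is to establish the three composition formulas by lifting the Yoneda product to an explicit chain-level pairing between the two-term complexes that already computed $\mathrm{Ext}^{\bullet}$ in Theorem~\eqref{Theorem: main result 1}, and then pass to cohomology. Concretely, for each ordered pair of objects I would regard the complex $C^{\bullet}_{\sh{F},\sh{G}} := \bigl(\mathbb{K}^{n}_{\mathrm{std}} \xrightarrow{\delta_{\sh{F},\sh{G}}} \mathbb{K}^{\ell}_{\mathrm{std}}\bigr)$ as a model for $\mathrm{RHom}(\sh{F},\sh{G})$: the degree-$0$ component parametrizes diagonal rescalings of the generic strand fibers (read in the bases $\hat{\mathbf{f}}^{(n)}, \hat{\mathbf{g}}^{(n)}$), while the degree-$1$ component records the compatibility obstruction at each of the $\ell$ crossings. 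The entry $\delta_{j}(\vec{u}\,)$ in Definition~\eqref{Def: delta map} is then exactly the failure of a diagonal $\vec{u}$ to commute, at the $j$-th crossing, with the braid-matrix transitions prescribed by $\vec{x}$ and $\vec{y}$, which is why the $(i_{j}+1, i_{j})$-entry of $(B^{(n)}_{i_{j}}(y_{j}))^{-1} D(\pi_{\beta_{j-1}}(\vec{u}\,))\, B^{(n)}_{i_{j}}(x_{j})$ is the relevant obstruction.

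With this model fixed, I would define the chain-level pairing $C^{\bullet}_{\sh{G},\sh{H}} \otimes C^{\bullet}_{\sh{F},\sh{G}} \to C^{\bullet}_{\sh{F},\sh{H}}$ by the three operations of Definition~\eqref{Def: main braided graded composition} and verify degree-by-degree that it is a chain map modeling the Yoneda product. The $(0,0)$-case amounts to observing that composition of two diagonal endomorphisms of strand fibers is componentwise multiplication; a short matrix identity using the shared shape of $B^{(n)}_{i_{j}}(x_{j})$, $B^{(n)}_{i_{j}}(y_{j})$, $B^{(n)}_{i_{j}}(z_{j})$ then shows that $\vec{v} \odot \vec{u}$ lies in $\ker \delta_{\sh{F},\sh{H}}$ whenever the factors lie in $\ker \delta_{\sh{F},\sh{G}}$ and $\ker \delta_{\sh{G},\sh{H}}$. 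For the mixed cases, one represents a class in $\mathrm{coker}\,\delta$ by a crossingwise extension cocycle; composing an $\mathrm{Ext}^{1}$ class $[\,\vec{p}\,]$ on the right with a morphism $\vec{v}$ on the left rescales the $j$-th entry by the value taken by $\vec{v}$ on the upper strand \emph{emerging} from the $j$-th crossing, which, under the global basis $\hat{\mathbf{h}}^{(n)}$, equals $v_{\pi_{\beta_{j}}(i_{j}+1)}$, giving the left braided composition. The $(1,0)$-case is symmetric: the morphism $\vec{u}$ is evaluated on the lower strand \emph{entering} the $j$-th crossing, yielding the coordinate $u_{\pi_{\beta_{j}}(i_{j})}$.

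The principal technical obstacle I anticipate is twofold. First, one has to justify that the proposed chain-level pairing genuinely represents the Yoneda composition rather than a homotopic variant: this requires checking compatibility with the differentials $\delta_{\sh{F},\sh{G}}$, $\delta_{\sh{G},\sh{H}}$, $\delta_{\sh{F},\sh{H}}$, and confirming that no degree-$2$ Massey-type correction arises, a fact ultimately guaranteed by the hereditary-type property of Theorem~\eqref{Theorem: hereditary-type property}. Second, and more delicate, is the bookkeeping of indices: the local labeling of strands at the $j$-th crossing differs from the global labeling by $\hat{\mathbf{f}}^{(n)}, \hat{\mathbf{g}}^{(n)}, \hat{\mathbf{h}}^{(n)}$ by the truncated braid permutation $\pi_{\beta_{j-1}}$ or $\pi_{\beta_{j}}$, depending on whether one stands immediately before or after the crossing. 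A careful local-to-global translation is needed to identify the fiberwise scalars appearing in the Yoneda product with the correct global coordinates $v_{\pi_{\beta_{j}}(i_{j}+1)}$ and $u_{\pi_{\beta_{j}}(i_{j})}$ featured in the braided operations. Once this combinatorial bookkeeping is settled for a single crossing, the full claim follows by concatenation over $j \in [1,\ell]$ and passage to $\ker$ and $\mathrm{coker}$.
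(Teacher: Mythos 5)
Your proposal is correct and follows essentially the same route as the paper: the paper likewise first proves a compatibility lemma (that $\odot$ carries $\ker\delta_{\sh{G},\sh{H}}\times\ker\delta_{\sh{F},\sh{G}}$ into $\ker\delta_{\sh{F},\sh{H}}$ and that $\circ_{\beta_{\mathrm{L}}}$, $\circ_{\beta_{\mathrm{R}}}$ carry images into $\mathrm{im}\,\delta_{\sh{F},\sh{H}}$), and then computes each composition crossing-by-crossing in the braid-transformed adapted bases, where degree-$0$ morphisms are the diagonal matrices $D(\pi_{\beta_{j}}(\vec{u}\,))$ and degree-$1$ classes are crossingwise cocycles whose $j$-th entry gets rescaled by exactly the permuted coordinates $u_{\pi_{\beta_{j}}(i_{j})}$ and $v_{\pi_{\beta_{j}}(i_{j}+1)}$ you identify. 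The only difference is packaging: rather than positing a chain-level pairing and checking it models the Yoneda product, the paper evaluates the Yoneda composition directly on representatives (composition of characteristic maps for $(0,0)$, and pull-back/push-out of block extensions for the mixed cases), so the Massey-correction concern you raise never arises in degrees $(0,1)$ and $(1,0)$.
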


\noindent
The above results are of fundamental relevance, as together with Theorems~\eqref{Theorem: sheaves as points in the braid variety} and~\eqref{Theorem: hereditary-type property}, they provide an explicit and computable characterization of the category $\ccs{1}{\beta}$. Furthermore, together with the work of Chantraine, Ng, and Sivek~\cite{CNS1} on Legendrian $(2,m)$ torus links, our results constitute one of the first comprehensive studies of a microlocal-sheaf categorical invariant for a broad family of Legendrian links.

Having presented the main results of this paper, we proceed to describe the structure of the paper.

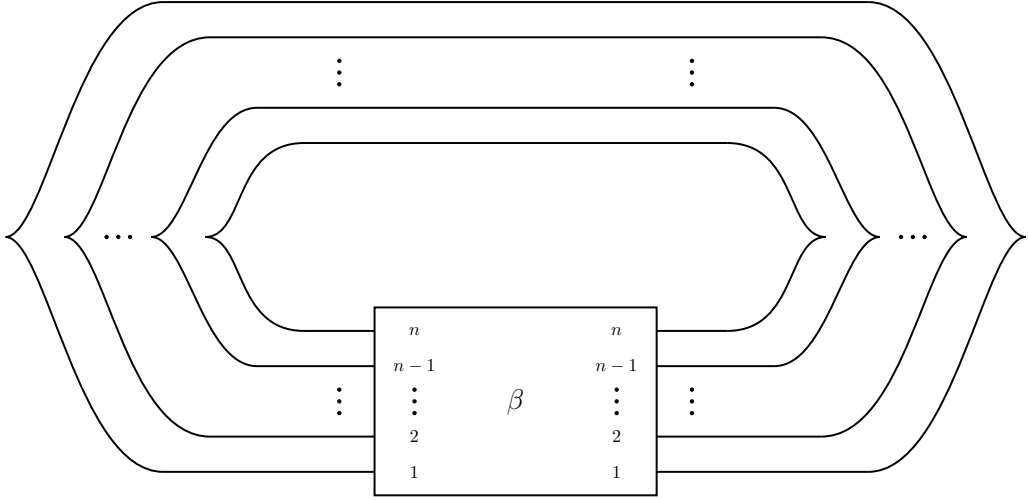
\begin{figure}
\centering
\begin{tikzpicture}
\useasboundingbox (-4,-3.75) rectangle (4,4);
\scope[transform canvas={scale=0.625}]

\draw[very thick] (-4-0.5,2) -- (4+0.5, 2);
\draw[very thick] (-4-0.5,-2) -- (-3, -2);
\draw[very thick] (3,-2) -- (4+0.5, -2);
\draw[very thick] (4+0.5,2) .. controls (4+0.5+1.5,2) and (4+0.5+2-0.75,0) .. (4+0.5+2+0.1,0);
\draw[very thick] (4+0.5,-2) .. controls (4+0.5+1.5,-2) and (4+0.5+2-0.75,0) .. (4+0.5+2+0.1,0);
\draw[very thick] (-4-0.5-2-0.1,0) .. controls (-4-0.5-2+0.75,0) and (-4-0.5-1.5,2) .. (-4-0.5,2);
\draw[very thick] (-4-0.5-2-0.1,0) .. controls (-4-0.5-2+0.75,0) and (-4-0.5-1.5,-2) .. (-4-0.5,-2);


\draw[very thick] (-4-0.5-1,2+0.75) -- (4+0.5+1, 2+0.75);
\draw[very thick] (-4-0.5-1,-2-0.75) -- (-3, -2-0.75);
\draw[very thick] (3,-2-0.75) -- (4+0.5+1, -2-0.75);
\draw[very thick] (4+0.5+1,2+0.75) .. controls (4+0.5+1+1.15,2+0.75) and (4+0.5+1+2-0.5,0) .. (4+0.5+1+2+0.25,0);
\draw[very thick] (4+0.5+1,-2-0.75) .. controls (4+0.5+1+1.15,-2-0.75) and (4+0.5+1+2-0.5,0) .. (4+0.5+1+2+0.25,0);
\draw[very thick] (-4-0.5-1-2-0.25,0) .. controls (-4-0.5-1-2+0.5,0) and (-4-0.5-1-1.15,2+0.75) .. (-4-0.5-1,2+0.75);
\draw[very thick] (-4-0.5-1-2-0.25,0) .. controls (-4-0.5-1-2+0.5,0) and (-4-0.5-1-1.15,-2-0.75) .. (-4-0.5-1,-2-0.75);


\draw[very thick] (-4-0.5-1-1,+2+3.5-0.5-0.75) -- (4+0.5+1+1,+2+3.5-0.5-0.75);
\draw[very thick] (-4-0.5-1-1,-2-3.5+0.5+0.75) -- (-3,-2-3.5+0.5+0.75);
\draw[very thick] (3,-2-3.5+0.5+0.75) -- (4+0.5+1+1,-2-3.5+0.5+0.75);
\draw[very thick] (4+0.5+1+1,+2+3.5-0.5-0.75) .. controls (4+0.5+1+1+1.15+0.65,+2+3.5-0.5-0.75) and (4+0.5+1+1+2-0.5+0.75,0) .. (4+0.5+1+1+2+0.75+0.35,0);
\draw[very thick] (4+0.5+1+1,-2-3.5+0.5+0.75) .. controls (4+0.5+1+1+1.15+0.65,-2-3.5+0.5+0.75) and (4+0.5+1+1+2-0.5+0.75,0) .. (4+0.5+1+1+2+0.75+0.35,0);
\draw[very thick] (-4-0.5-1-1-2-0.75-0.35,0) .. controls (-4-0.5-1-1-2+0.5-0.75,0) and (-4-0.5-1-1-1.15-0.65,+2+3.5-0.5-0.75) .. (-4-0.5-1-1,+2+3.5-0.5-0.75);
\draw[very thick] (-4-0.5-1-1-2-0.75-0.35,0) .. controls (-4-0.5-1-1-2+0.5-0.75,0) and (-4-0.5-1-1-1.15-0.65,-2-3.5+0.5+0.75) .. (-4-0.5-1-1,-2-3.5+0.5+0.75);


\draw[very thick] (-4-0.5-1-1-1,+2+3.5-0.5) -- (4+0.5+1+1+1,+2+3.5-0.5);
\draw[very thick] (-4-0.5-1-1-1,-2-3.5+0.5) -- (-3,-2-3.5+0.5);
\draw[very thick] (3,-2-3.5+0.5) -- (4+0.5+1+1+1,-2-3.5+0.5);
\draw[very thick] (4+0.5+1+1+1,+2+3.5-0.5) .. controls (4+0.5+1+1+1+1.15+0.65,+2+3.5-0.5) and (4+0.5+1+1+1+2-0.5+0.75+0.25,0) .. (4+0.5+1+1+1+2+0.75+0.25+0.35,0);
\draw[very thick] (4+0.5+1+1+1,-2-3.5+0.5) .. controls (4+0.5+1+1+1+1.15+0.65,-2-3.5+0.5) and (4+0.5+1+1+1+2-0.5+0.75+0.25,0) .. (4+0.5+1+1+1+2+0.75+0.25+0.35,0);
\draw[very thick] (-4-0.5-1-1-1-2-0.75-0.25-0.35,0) .. controls (-4-0.5-1-1-1-2+0.5-0.75-0.25,0) and (-4-0.5-1-1-1-1.15-0.65,+2+3.5-0.5) .. (-4-0.5-1-1-1,+2+3.5-0.5);
\draw[very thick] (-4-0.5-1-1-1-2-0.75-0.25-0.35,0) .. controls (-4-0.5-1-1-1-2+0.5-0.75-0.25,0) and (-4-0.5-1-1-1-1.15-0.65,-2-3.5+0.5) .. (-4-0.5-1-1-1,-2-3.5+0.5);


\draw[very thick] (-3,-2-3.5) rectangle (3,2-3.5);

\node at (-3+0.85,-2) {\large$n$};
\node at (-3+0.85,-2-0.75) {\large$n-1$};
\node at (-3+0.85,-2-3.5+0.5+0.75) {\large$2$};
\node at (-3+0.85,-2-3.5+0.5) {\large$1$};

\node at (3-0.85,-2) {\large$n$};
\node at (3-0.85,-2-0.75) {\large$n-1$};
\node at (3-0.85,-2-3.5+0.5+0.75) {\large$2$};
\node at (3-0.85,-2-3.5+0.5) {\large$1$};

\node at  (0,-3.5) {\huge$\beta$};

\filldraw[black] (-3+0.85,-3.5+0.25) circle (1pt);
\filldraw[black] (-3+0.85,-3.5) circle (1pt);
\filldraw[black] (-3+0.85,-3.5-0.25) circle (1pt);

\filldraw[black] (3-0.85,-3.5+0.25) circle (1pt);
\filldraw[black] (3-0.85,-3.5) circle (1pt);
\filldraw[black] (3-0.85,-3.5-0.25) circle (1pt);

\filldraw[black] (-3-0.75,-3.5+0.25) circle (1pt);
\filldraw[black] (-3-0.75,-3.5) circle (1pt);
\filldraw[black] (-3-0.75,-3.5-0.25) circle (1pt);

\filldraw[black] (3+0.75,-3.5+0.25) circle (1pt);
\filldraw[black] (3+0.75,-3.5) circle (1pt);
\filldraw[black] (3+0.75,-3.5-0.25) circle (1pt);

\filldraw[black] (-3-0.75,-3.5+0.25+7) circle (1pt);
\filldraw[black] (-3-0.75,-3.5+7) circle (1pt);
\filldraw[black] (-3-0.75,-3.5-0.25+7) circle (1pt);

\filldraw[black] (3+0.75,-3.5+0.25+7) circle (1pt);
\filldraw[black] (3+0.75,-3.5+7) circle (1pt);
\filldraw[black] (3+0.75,-3.5-0.25+7) circle (1pt);

\filldraw[black] (-8.45-0.25,0) circle (1pt);
\filldraw[black] (-8.45,0) circle (1pt);
\filldraw[black] (-8.45+0.25,0) circle (1pt);

\filldraw[black] (8.45-0.25,0) circle (1pt);
\filldraw[black] (8.45,0) circle (1pt);
\filldraw[black] (8.45+0.25,0) circle (1pt);

\node at (10,5.75) {\LARGE$\mathbb{R}^{2}_{x,z}$};

\endscope
\end{tikzpicture}
\caption{Front projection $\Pi_{x,z}(\Lambda(\beta))$ of the Legendrian link $\Lambda(\beta)$.}
\label{Front diagram of the rainbow closure of a braid in n strands}
\end{figure}

\subsection{Structure of the Manuscript}
In this subsection, we briefly outline the structure of the paper. In particular, the remainder of Section \ref{sec:introduction} introduces the notation and conventions used throughout the manuscript.

\begin{itemize}
\item Section \ref{sec:rainbow closures} presents the basic geometric constructions relevant to our study and gives a concise definition of the family of Legendrian links of interest.
\item Section \ref{sec:microlocal theory} briefly reviews some aspects of the microlocal theory of Legendrian links developed by Shende, Treumann, and Zaslow~\cite{STZ1}.
\item Section \ref{sec:objects} provides a detailed analysis of the objects of the category under study and derives Theorem~\eqref{Theorem: sheaves as points in the braid variety} from first principles.
\item Section \ref{sec:morphisms and compositions} discusses the graded morphism spaces and their compositions in the category of interest, and provides the proofs of Theorems~\eqref{Theorem: main result 1} and~\eqref{Theorem: main result 2}.
\item Section \ref{sec:applications} presents applications of our main results and illustrates the scope and effectiveness of our methods through a detailed study of selected examples.
\end{itemize}

\noindent {\bf Acknowledgements}. First and foremost, I would like to express my sincere gratitude to my PhD advisor, Roger Casals, for his guidance, support, and many inspiring discussions, which have culminated in this manuscript. In particular, I am grateful to him for introducing me to the microlocal theory of sheaves and its applications to symplectic and contact topology. I would also like to thank Eugene Gorsky for his insightful questions and comments at an early stage of this work. Last but not least, I would like to thank Lenny Ng, Honghao Gao, Orsola Capovilla--Searle, Wenyuan Li, and James Hughes for their interest in this project and for helpful discussions on sheaf- and Floer-theoretic approaches to study of Legendrian links.

\subsection{Notational Conventions} To ensure clarity and consistency, this subsection sets forth the notation and conventions used throughout the manuscript.

Let $a,b\in \mathbb{N}$ with $a<b$. We define $[a,b]:=\{c\in\mathbb{N}\mid a \leq c \leq b \}$. Let $n\geq 2$ be an integer. We denote by $\mathrm{Br}_{n}$ the braid group on $n$ strands; more precisely, the group on $n-1$ generators whose presentation is given by:
\begin{equation*}
\mathrm{Br}_{n}:=\left\langle \sigma_{1}, \dots, \sigma_{n-1}~ \left| ~ 
\begin{aligned}
~\sigma_{i}\sigma_{i+1}\sigma_{i}&=\sigma_{i+1}\sigma_{i}\sigma_{i+1}\, , \quad && \text{for all $i\in [1,n-2]$}\,\\
~\sigma_{i}\sigma_{j}&=\sigma_{j}\sigma_{i}\, , \quad && \text{for all $i, j\in [1,n-1]$ such that $|i-j|\geq2$}\,     
\end{aligned}\right. ~ \right\rangle\, ,
\end{equation*}
where $\sigma_{i}\in \mathrm{Br}_{n}$ corresponds to the standard $i$-th Artin generator for all $i\in [1,n-1]$. Accordingly, we denote by $e_{n}$ the identity element of $\mathrm{Br}_{n}$, and by $\mathrm{Br}^{+}_{n}\subset \mathrm{Br}_{n}$ the monoid generated by the positive powers of the Artin generators. Furthermore, a product expression of the form $\beta = \sigma_{i_{1}} \cdots \sigma_{i_{\ell}} \in \mathrm{Br}^{+}_{n}$ is referred to as a \emph{positive braid word} of length $\ell \in \mathbb{N}$.

When drawing a braid diagram on $n$ strands, our convention is that strands are enumerated from bottom to top by $1,\dots, n$. In addition, we multiply braids from left to right, and depict the braid diagram of a positive braid word by reading its generators from left to right, drawing the corresponding crossings in the same order. For example, the braid diagram on three strands corresponding to the positive braid word $\sigma_{1}\sigma_{2}\in \mathrm{Br}^{+}_{3}$ is depicted in Figure~\eqref{Example of a braid diagram with n=3}. 

\begin{figure}[ht]
\centering
\begin{tikzpicture}
\useasboundingbox (-2.5,-1) rectangle (6,3);
\scope[transform canvas={scale=1}]

\draw[thick] (0,0) .. controls (0.5,0) and (0.5,0).. (1,0).. controls (1 +0.50, 0) and (1+0.50, 1)..(2,1).. controls (2+0.5,1) and (2+0.5,1).. (3,1).. controls (3+0.50, 1) and (3+0.50,2).. (4,2).. controls (4+0.5,2) and (4+0.5, 2).. (5,2);

\draw[thick] (0,1) .. controls (0.5,1) and (0.5,1).. (1,1).. controls (1 +0.50, 1) and (1+0.50, 0)..(2,0).. controls (3,0) and (4,0).. (5,0);

\draw[thick] (0,2) .. controls (1,2) and (2,2).. (3,2).. controls (3 +0.50, 2) and (3+0.50, 1)..(4,1).. controls (4.5,1) and (4.5,1).. (5,1);

\node at (-0.5,0) {\footnotesize $1$};
\node at (-0.5,1) {\footnotesize $2$};
\node at (-0.5,2) {\footnotesize $3$};

\node at (1.5,-0.75) {\footnotesize\large $\sigma_{1}$};
\node at (3.5,-0.75) {\footnotesize\large $\sigma_{2}$};

\node at (-1.75, 1) {\footnotesize\large $\sigma_{1}\,\sigma_{2}~=$};

\endscope
\end{tikzpicture}
\caption{Braid diagram on three strands of the positive braid word $\sigma_{1}\sigma_{2}\in\mathrm{Br}^{+}_{3}$.}
\label{Example of a braid diagram with n=3}
\end{figure}
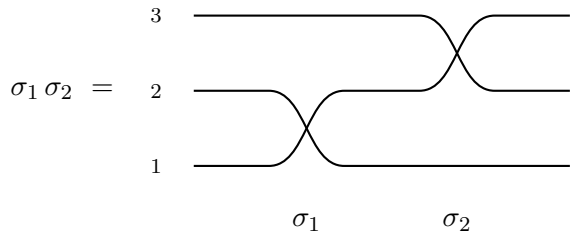

Next, we denote by $\mathrm{S}_{n}$ the Coxeter group associated with $\mathrm{Br}_{n}$---the symmetric group on $n$ elements. More precisely,
\begin{equation*}
\mathrm{S}_{n}:=\left\langle s_{1}, \dots, s_{n-1}~ \left| ~ 
\begin{aligned}
s_{i}^{2}&=1\, , \quad && \text{for all $i\in [1,n-1]$}\,\\ 
s_{i}s_{i+1}s_{i}&=s_{i+1}s_{i}s_{i+1}\, , \quad && \text{for all $i\in [1,n-2]$}\,\\ 
s_{i}s_{j}&=s_{j}s_{i}\, , \quad && \text{for all $i, j\in [1,n-1]$ such that $|i-j|\geq2$}\,     
\end{aligned}\right. ~ \right\rangle\, ,
\end{equation*}
where $s_{i}\in \mathrm{S}_{n}$ denotes the $i$-th adjacent transposition for all $i\in [1,n-1]$. In particular, by a slight abuse of notation, we also use $e_{n}$ to denote the identity element in $\mathrm{S}_{n}$. Throughout this paper, we compose permutations from left to right, so that the map $\mathrm{Br}_{n}\to\mathrm{S}_{n}$ given by $\sigma_{i}\mapsto s_{i}$ defines a group homomorphism. In other words, for any $\pi_{1},\pi_{2}\in \mathrm{S}_{n}$ and any $k\in[1,n]$, we set $(\pi_{1}\pi_{2})(k):=\pi_{2}(\pi_{1}(k))$. For instance, the permutation $s_{1}s_{2}\in \mathrm{S}_{3}$ is given by $s_{1}s_{2}(1)=3$, $s_{1}s_{2}(2)=1$, $s_{1}s_{2}(3)=2$. Furthermore, for any positive braid word $\beta=\sigma_{i_{1}}\cdots\sigma_{i_{\ell}}\in \mathrm{Br}^{+}_{n}$, we denote by $\pi_{\beta}:=s_{i_{1}}\cdots s_{i_{\ell}}\in \mathrm{S}_{n}$ the permutation associated with $\beta$.
 
Let $m,n\in \mathbb{N}$, and let $R$ be a commutative ring. We denote by $\mathrm{M}(n,m,R)$ the set of $n\times m$ matrices over $R$, by $\mathrm{M}(n,R)$ the ring of $n\times n$ matrices over $R$, and by $\mathrm{GL}(n,R)$ the group of invertible $n\times n$ matrices over $R$. Moreover, given a matrix $A\in \mathrm{M}(n,m, R)$, we denote by $A_{i,j}\in R$ the $(i,j)$-entry of $A$, for all $i\in[1,n]$ and $j\in[1,m]$. In certain contexts, we also write
\begin{equation*}
A = 
\left[
\begin{array}{ccc}
c_1 & \cdots & c_m
\end{array}
\right]\, ,
\quad \text{and} \quad
A =
\begin{bmatrix}
r_1 \\
\vdots \\
r_n
\end{bmatrix},
\end{equation*}
where $c_{i}$ and $r_{j}$ denote the $i$-th column and $j$-th row vectors of $A$, respectively, for all $i\in[1,m]$ and $j\in[1,n]$.

Throughout the manuscript, we identify $\mathbb{K}$ with an arbitrary ground field. Accordingly, for any integer $m\geq 1$, we denote by $\mathbb{K}^{m}$ the $m$-dimensional Cartesian vector space over $\mathbb{K}$ with no basis specified, and by $\mathbb{K}^{m}_{\mathrm{std}}$ the same vector space equipped with its standard ordered basis. Now, let $X$ and $Y$ be finite-dimensional vector spaces over $\mathbb{K}$ such that $\mathrm{dim}_{\,\mathbb{K}}X=p$ and $\mathrm{dim}_{\,\mathbb{K}}Y=q$, for some $p,q\in \mathbb{N}$. Let $T: X\to Y$ be a linear map, and let $\hat{\mathbf{x}}:=\big\{\hat{x}_{i}\big\}_{i=1}^{p}$ and $\hat{\mathbf{y}}:=\big\{\hat{y}_{i}\big\}_{i=1}^{q}$ be bases for $X$ and $Y$, respectively. Then, we denote by $\tensor[_{\hat{\mathbf{y}}}]{ \big[\, T \,\big] }{_{\hat{\mathbf{x}}}}\in\mathrm{M}(q,p, \mathbb{K})$ the matrix representing $T$ with respect to the bases $\hat{\mathbf{x}}$ and $\hat{\mathbf{y}}$. 

Finally, let $\mathcal{M}$ be a smooth manifold, and let $\sh{F}$ be a sheaf on $\mathcal{M}$ with values in an abelian category $\mathcal{C}$ \cite{Hart1, KS1, HS1}. For any point $x\in \mathcal{M}$, we denote by $\sh{F}_{x}$ the stalk of $\sh{F}$ at $x$. Furthermore, we denote by $SS(\sh{F})\subseteq T^{*}\mathcal{M}$ the singular support of $\sh{F}$. For the definition of singular support of a sheaf and a detailed treatment of its geometric and topological properties, see~\cite{KS1}.
   
With our notation and conventions in place, we proceed to introduce the family of Legendrian links in $\mathbb{R}^{3}$ that constitutes the essential geometric foundation for the main categorical invariant under study in this article.
\section{The Rainbow Closure of Positive Braids}\label{sec:rainbow closures}
\noindent
This section is devoted to defining the \emph{rainbow closure} of positive braid words---that is, a geometric construction that yields a distinguished family of Legendrian links in $\mathbb{R}^{3}$.

Let $(x,y,z)$ be coordinates on $\mathbb{R}^{3}$. In this setting, the \textit{standard contact structure} $\xi_{\mathrm{std}}$ on $\mathbb{R}^{3}$ is defined by $\xi_{\mathrm{std}}:=\mathrm{ker}\,(dz-ydx)$. Given this contact distribution, a \textit{Legendrian knot} in $(\mathbb{R}^{3},\xi_{\mathrm{std}})$ is a knot in $\mathbb{R}^{3}$ with a smooth parametrization $\gamma:\mathbb{S}^{1}\rightarrow \mathbb{R}^{3}$ such that, for all $t\in \mathbb{S}^{1}$, the tangent vector $\gamma'(t)$ lies in $\xi_{\mathrm{std}}$ at the point $\gamma(t)$. Extending this notion, a \textit{Legendrian link} in $(\mathbb{R}^{3},\xi_{\mathrm{std}})$ is a finite collection of disjoint Legendrian knots~\cite{EN1}. 

Next, we introduce a powerful representation for visualizing Legendrian knots in $(\mathbb{R}^{3},\xi_{\mathrm{std}})$, namely the \emph{front projection}. To this end, let $\Lambda\subset (\mathbb{R}^{3},\xi_{\mathrm{std}})$ be a Legendrian knot, and let $\Pi_{x,z}:\mathbb{R}^{3}\rightarrow \mathbb{R}^{2}$ be the smooth map given by $\Pi_{x,z}(x,y,z):=(x,z)$, for all $(x,y,z)\in \mathbb{R}^{3}$. Accordingly, the front projection of $\Lambda$, denoted $\Pi_{x,z}(\Lambda)\subset\mathbb{R}^{2}$, is defined as the image of $\Lambda$ under the map $\Pi_{x,z}$. Notably, the front projection fully characterizes $\Lambda$. More precisely, since the $1$-form $dz-ydx$ vanishes along $\Lambda$, the $y$-coordinate can be recovered from the front projection $\Pi_{x,z}(\Lambda)$ via the relation $y=dz/dx$. The front projection possesses several powerful properties; in particular, any immersed curve in $\mathbb{R}^{2}$ with no vertical tangencies lifts to a unique Legendrian link in $(\mathbb{R}^{3},\xi_{\mathrm{std}})$~\cite{EN1}. Consequently, certain distinguished families of Legendrian links in $(\mathbb{R}^{3},\xi_{\mathrm{std}})$ can be constructed via the process of \textit{cusping off} braid diagrams of positive braid words on $n$ strands. Following~\cite{KT1, KT2, STZ1}, we introduce the following definition.

\begin{definition}\label{Def: rainbow closure of a positive braid}
Let $\beta \in\mathrm{Br}^{+}_{n}$ be a positive braid word. We denote by $\Lambda(\beta)$ the Legendrian link in $(\mathbb{R}^{3}, \xi_{\mathrm{std}})$ whose front projection $\Pi_{x,z}(\Lambda(\beta))\subset\mathbb{R}^{2}$ is illustrated in Figure~\eqref{Front diagram of the rainbow closure of a braid in n strands}. We refer to $\Lambda(\beta)$ as the \textit{rainbow closure} of $\beta$.    
\end{definition}

\begin{remark}
Let $e_{n}\in \mathrm{Br}^{+}_{n}$ be the trivial braid word. The Legendrian link $\Lambda(e_{n})\subset  (\mathbb{R}^{3}, \xi_{\mathrm{std}})$ corresponds to the Legendrian unlink on $n$ strands. In particular, the front projection $\Pi_{x,z}(\Lambda(e_{n}))\subset \mathbb{R}^{2}$ of $\Lambda(e_{n})$ is depicted in Figure~\eqref{fig: unlink on n-strands}.  
\end{remark}

\begin{figure}
\centering
\begin{tikzpicture}
\useasboundingbox (-8,-3.75) rectangle (8,3.85);
\scope[transform canvas={scale=0.625}]

\draw[very thick] (-2,1) -- (2, 1);
\draw[very thick] (-2,-1) -- (2, -1);
\draw[very thick] (2,1) .. controls (3-0.15,1) and (3-0.25,0) .. (3.5,0);
\draw[very thick] (2,-1) .. controls (3-0.15,-1) and (3-0.25,0) .. (3.5,0);
\draw[very thick] (-3.5, 0) .. controls (-3+0.25,0) and (-3+0.15,1) .. (-2,1);
\draw[very thick] (-3.5, 0) .. controls (-3+0.25,0) and (-3+0.15,-1) .. (-2,-1);

\draw[very thick] (-2-1,1+1+0.25) -- (2+1,1+1+0.25);
\draw[very thick] (-2-1,-1-1-0.25) -- (2+1,-1-1-0.25);
\draw[very thick] (2+1,1+1+0.25) .. controls (3+1-0.15,1+1+0.25) and (3+1-0.15,0+0.2) .. (3.5+1.5+0.15,0);
\draw[very thick] (2+1,-1-1-0.25) .. controls (3+1-0.15,-1-1-0.25) and (3+1-0.15,0-0.2) .. (3.5+1.5+0.15,0);
\draw[very thick] (-3.5-1.5-0.15, 0) .. controls (-3-1+0.15,0+0.2) and (-3-1+0.15,1+1+0.25) .. (-2-1,1+1+0.25);
\draw[very thick] (-3.5-1.5-0.15, 0) .. controls (-3-1+0.15,0-0.2) and (-3-1+0.15,-1-1-0.25) .. (-2-1,-1-1-0.25);

\draw[very thick] (-2-2.5,1+2.5) -- (2+2.5, 1+2.5);
\draw[very thick] (-2-2.5,-1-2.5) -- (2+2.5, -1-2.5);
\draw[very thick] (2+2.5,1+2.5) .. controls (3+3,1+2.5) and (3+3,0) .. (3.5+4,0);
\draw[very thick] (2+2.5,-1-2.5) .. controls (3+3,-1-2.5) and (3+3,0) .. (3.5+4,0);
\draw[very thick] (-3.5-4, 0) .. controls (-3-3,0) and (-3-3,1+2.5) .. (-2-2.5,1+2.5);
\draw[very thick] (-3.5-4, 0) .. controls (-3-3,0) and (-3-3,-1-2.5) .. (-2-2.5,-1-2.5);

\draw[very thick] (-2-2.5-1,1+2.5+1+0.25) -- (2+2.5+1, 1+2.5+1+0.25);
\draw[very thick] (-2-2.5-1,-1-2.5-1-0.25) -- (2+2.5+1, -1-2.5-1-0.25);
\draw[very thick] (2+2.5+1, 1+2.5+1+0.25) .. controls (3+3+1+0.5,1+2.5+1+0.25) and (3+3+1+0.5,0) .. (3.5+4+2,0);
\draw[very thick] (2+2.5+1,-1-2.5-1-0.25) .. controls (3+3+1+0.5,-1-2.5-1-0.25) and (3+3+1+0.5,0) .. (3.5+4+2,0);
\draw[very thick] (-3.5-4-2, 0) .. controls (-3-3-1-0.5,0) and (-3-3-1-0.5,1+2.5+1+0.25) .. (-2-2.5-1,1+2.5+1+0.25);
\draw[very thick] (-3.5-4-2, 0) .. controls (-3-3-1-0.5,0) and (-3-3-1-0.5,-1-2.5-1-0.25) .. (-2-2.5-1,-1-2.5-1-0.25);

\filldraw[black] (2,2.9) circle (1pt);
\filldraw[black] (2,2.9+0.25) circle (1pt);
\filldraw[black] (2,2.9-0.25) circle (1pt);

\filldraw[black] (-2,2.9) circle (1pt);
\filldraw[black] (-2,2.9+0.25) circle (1pt);
\filldraw[black] (-2,2.9-0.25) circle (1pt);

\filldraw[black] (2,-2.9) circle (1pt);
\filldraw[black] (2,-2.9-0.25) circle (1pt);
\filldraw[black] (2,-2.9+0.25) circle (1pt);

\filldraw[black] (-2,-2.9) circle (1pt);
\filldraw[black] (-2,-2.9-0.25) circle (1pt);
\filldraw[black] (-2,-2.9+0.25) circle (1pt);

\filldraw[black] (-6,0) circle (1pt);
\filldraw[black] (-6-0.25,0) circle (1pt);
\filldraw[black] (-6+0.25,0) circle (1pt);

\filldraw[black] (6,0) circle (1pt);
\filldraw[black] (6+0.25,0) circle (1pt);
\filldraw[black] (6-0.25,0) circle (1pt);

\node at (0,-1-2.5-1-0.25-0.5) {\large$1$};
\node at (0,-1-2.5-0.5) {\large$2$};
\node at (0,-1-1-0.25-0.5) {\large$n-1$};
\node at (0,-1-0.5) {\large$n$};

\node at (7.5,5.5) {\Large$\mathbb{R}^{2}_{x,z}$};

\endscope
\end{tikzpicture}
\captionof{figure}{Front projection $\Pi_{x,z}(\Lambda(e_{n}))\subset \mathbb{R}^{2}$ of the Legendrian unlink $\Lambda(e_{n})\subset(\mathbb{R}^{3}, \xi_{\mathrm{std}})$ on $n$ strands.}
\label{fig: unlink on n-strands}
\end{figure}
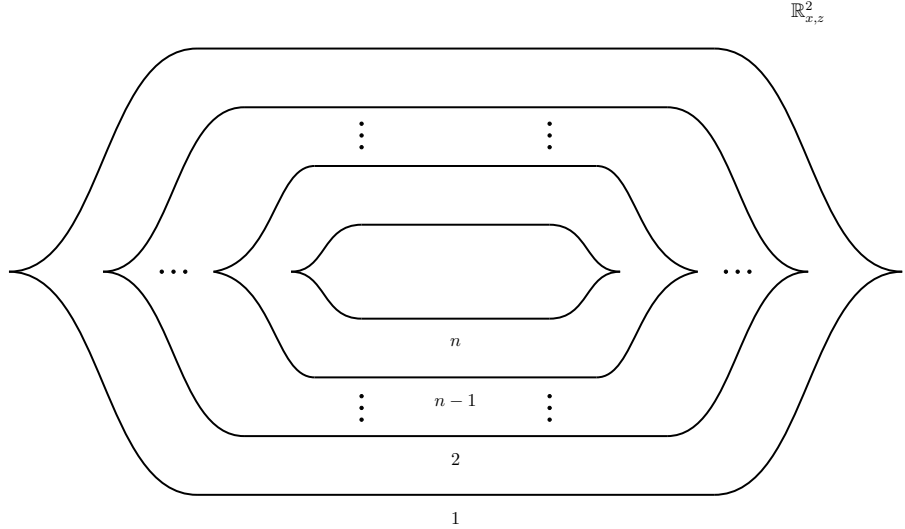

\noindent 
The family of Legendrian links introduced in Definition \eqref{Def: rainbow closure of a positive braid} constitutes the essential geometric foundation for the main categorical invariant under study in this manuscript. This family of Legendrian links has also been studied from a variety of algebraic and geometric perspectives in the literature; see, for instance,~\cite{KT1, KT2, STZ1, STZ2, CG1, HJ1, CG2, CN1, GSW1, SW1}.

We conclude this section by recalling a construction from~\cite{STZ1} that associates to any Legendrian link $\Lambda\subset (\mathbb{R}^{3}, \xi_{\mathrm{std}})$ a closed conic Lagrangian $L(\Lambda)\subset (T^{*}\mathbb{R}^{2},\omega_{\mathrm{std}})$, where $\omega_{\mathrm{std}}$ denotes the standard symplectic structure on $T^{*}\mathbb{R}^{2}$.  

\begin{construction}\label{Def: closed conic Lagrangian associated with a Legendrian}
Let $(x,z)$ be coordinates on $\mathbb{R}^{2}$, and let $(x,z, p_{x}, p_{z})$ be induced coordinates on $T^{*}\mathbb{R}^{2}$. In this setting, $T^{*}\mathbb{R}^{2}$ carries the standard symplectic structure $\omega_{\mathrm{std}}:=d\,\theta_{\mathrm{std}}$, where the standard Liouville $1$-form $\theta_{\mathrm{std}}$ on $T^{*}\mathbb{R}^{2}$ is given by $\theta_{\mathrm{std}}:=-(p_{x}\,dx+p_{z}\,dz)$.  

Let $(x,y,z)$ be coordinates on $\mathbb{R}^{3}$, and recall that the standard contact structure $\xi_{\mathrm{std}}$ on $\mathbb{R}^{3}$ is defined by $\xi_{\mathrm{std}}=\mathrm{ker}\,(dz-y\,dx)$. Now, let \,$\kappa:\mathbb{R}^{3}\to T^{*}\mathbb{R}^{2}$ be the smooth map given by $\kappa(x,y,z):=(x,z,y,-1)$, for all $(x,y,z)\in \mathbb{R}^{3}$. By construction, $\,\xi_{\mathrm{std}}=\mathrm{ker}\,\big (\kappa^{*}\theta_{\mathrm{std}}\big)$, where $\kappa^{*}\theta_{\mathrm{std}}$ denotes the pull-back of $\theta_{\mathrm{std}}$ via $\kappa$. It follows that, since $\kappa$ is injective, has an injective differential, and $\,\mathbb{R}^{3}$ is diffeomorphic to $\kappa(\mathbb{R}^{3})$, $\kappa$ is an embedding~\cite{STZ1}. As a result, $\kappa$ realizes $(\mathbb{R}^{3}, \xi_{\mathrm{std}})$ as an embedded contact submanifold of $(T^{*}\mathbb{R}^{2}, \omega_{\mathrm{std}})$. 

Let $\Lambda \subset (\mathbb{R}^{3}, \xi_{\mathrm{std}})$ be a Legendrian link. We denote by $\tilde{\Lambda}\subset T^{*}\mathbb{R}^{2}$ the image of $\Lambda$ under $\kappa$, namely $\tilde{\Lambda}:=\kappa(\Lambda)$, and introduce $L(\Lambda)\subset (T^{*}\mathbb{R}^{2}, \omega_{\mathrm{std}})$ to denote the closed conic Lagrangian defined by $L(\Lambda):=\mathbb{R}_{> 0}\,\tilde{\Lambda}\;\cup\; 0_{\,\mathbb{R}^{2}}$, where $\mathbb{R}_{>0}\,\tilde{\Lambda} \subset T^{*}\mathbb{R}^{2}$ is the positive cone over $\tilde{\Lambda}$ and $0_{\,\mathbb{R}^{2}} \subset T^{*}\mathbb{R}^{2}$ is the zero section.         
\end{construction}

\begin{notation}
Let $\beta\in \mathrm{Br}^{+}_{n}$ be a positive braid word, and let $\Lambda(\beta)\subset (\mathbb{R}^{3}, \xi_{\mathrm{std}})$ be its associated Legendrian link. Building on Construction~\eqref{Def: closed conic Lagrangian associated with a Legendrian}, we introduce $L(\Lambda(\beta))\subset (T^{*}\mathbb{R}^{2}, \omega_{\mathrm{std}})$ to denote the closed conic Lagrangian associated with $\Lambda(\beta)$.   
\end{notation}

Having introduced the distinguished family of Legendrian links and the main contact and symplectic constructions relevant to our study, we now turn to the definition of the associated microlocal-sheaf categorical invariant of interest in this article.

\section{Microlocal Theory of Legendrian Links}\label{sec:microlocal theory}

\noindent
In this section, we briefly review the microlocal theory of Legendrian links developed by Shende, Treumann, and Zaslow~\cite{STZ1}. More precisely, given a ground field $\mathbb{K}$, an integer $r\geq 1$, and a Legendrian link $\Lambda\subset (\mathbb{R}^{3},\xi_{\mathrm{std}})$ whose front projection $\Pi_{x,z}(\Lambda)\subset\mathbb{R}^{2}$ is generic and carries a binary Maslov potential, we focus on the combinatorial description of the category $\mathcal{S}h_{r}(\Lambda,\mathbb{K})_{0}$---that is, the dg-derived category of microlocal rank $r$, compactly supported, constructible sheaves of $\mathbb{K}$-modules on $\mathbb{R}^{2}$ whose singular support is contained in the closed conic Lagrangian $L(\Lambda)\subset (T^{*}\mathbb{R}^{2}, \omega_{\mathrm{std}})$ associated with $\Lambda$ (see Construction~\eqref{Def: closed conic Lagrangian associated with a Legendrian}). In addition, we define the main object of interest in this manuscript: the cohomological category $H^{\bullet}\big(\mathcal{S}h_{r}(\Lambda,\mathbb{K})_{0}\big)$. To this end, we begin by describing certain stratifications of $\mathbb{R}^{2}$, which are essential for the combinatorial characterization of the objects of the categories under consideration.

\subsection{Regular Stratifications of \texorpdfstring{$\mathbb{R}^{2}$}{R2} Induced by Legendrian Links} As previously mentioned, we aim to describe a specific dg-derived category of constructible sheaves of $\mathbb{K}$-modules on $\mathbb{R}^{2}$. Accordingly, the main goal of this subsection is to introduce the stratifications of $\mathbb{R}^{2}$ relevant to our study.

Let $\Lambda\subset (\mathbb{R}^{3},\xi_{\mathrm{std}})$ be a Legendrian link. We say that the front projection $\Pi_{x,z}(\Lambda)\subset \mathbb{R}^{2}$ of $\Lambda$ is \textit{generic} if $\Pi_{x,z}(\Lambda)$ has only cusps and crossings as singularities. In particular, when $\Lambda$ has a generic front projection, it induces a regular stratification $\mathcal{S}_{\Lambda}$ of $\mathbb{R}^{2}$ defined as follows~\cite{STZ1}:  
\begin{itemize}
\justifying
    \item [(\textit{i})] The $0$-dimensional strata correspond to the cusps and crossings in $\Pi_{x,z}(\Lambda)$. 
    \item [(\textit{ii})] The $1$-dimensional strata are given by the arcs in $\Pi_{x,z}(\Lambda)$. 
    \item [(\textit{iii})] The $2$-dimensional strata consist of the disjoint open subsets of $\mathbb{R}^{2}$ whose union is equal to the complement of $\Pi_{x,z}(\Lambda)$ in $\mathbb{R}^{2}$. Consequently, there is only one unbounded $2$-dimensional stratum in the stratification $\mathcal{S}_{\Lambda}$. 
\end{itemize}
Next, let $a\in\mathcal{S}_{\Lambda}$ be a stratum. We define the star of $a$, denoted $s(a)\subset\mathcal{S}_{\Lambda}$, as the union of all the strata of $\mathcal{S}_{\Lambda}$ whose closure contains $a$. By construction, $\mathcal{S}_{\Lambda}$ forms a regular cell complex; that is, each stratum $a\in \mathcal{S}_{\Lambda}$ and its star $s(a)$ are contractible in $\mathbb{S}^{2}=\mathbb{R}^{2}\cup\{\infty\}$, the one-point compactification of $\mathbb{R}^{2}$. 

It is important to emphasize that, for any positive braid word $\beta\in\mathrm{Br}_{n}^{+}$, the front projection $\Pi_{x,z}(\Lambda(\beta))$ of the associated Legendrian link $\Lambda(\beta)\subset(\mathbb{R}^{3},\xi_{\mathrm{std}})$ is generic. Bearing this in mind, we henceforth restrict our discussion to Legendrian links with generic front projections.

Having introduced the stratifications of $\mathbb{R}^{2}$ relevant to our discussion, we now turn to the definition of a binary \emph{Maslov potential} for any Legendrian link in $(\mathbb{R}^{3},\xi_{\mathrm{std}})$ arising as the rainbow closure of a positive braid word. In particular, this binary Maslov potential will constitute an essential ingredient in the combinatorial description of the objects of the categories of interest.

\subsection{A Binary Maslov Potential for the Rainbow Closure of Positive Braids} Let $\beta\in\mathrm{Br}^{+}_{n}$ be a positive braid word, and let $\Lambda(\beta)\subset(\mathbb{R}^{3},\xi_{\mathrm{std}})$ be its associated Legendrian link. In this subsection, our goal is to equip $\Pi_{x,z}(\Lambda(\beta))\subset\mathbb{R}^{2}$ with a binary Maslov potential; specifically, a locally constant function $\mu_{\mathrm{Maslov}}:\Pi_{x,z}(\Lambda(\beta))\rightarrow \mathbb{Z}$ satisfying certain combinatorial properties.

To begin, observe that, given a generic positive braid word $\beta\in\mathrm{Br}^{+}_{n}$, the front projection $\Pi_{x,z}(\Lambda(\beta))\subset \mathbb{R}^{2}$ of the Legendrian link $\Lambda(\beta)\subset(\mathbb{R}^{3},\xi_{\mathrm{std}})$ is illustrated in Figure~\eqref{Front diagram of the rainbow closure of a braid in n strands}. In this article, we call \emph{strands} the smooth curves connecting a left cusp to a right cusp in $\Pi_{x,z}(\Lambda(\beta))$. It follows that, for each pair of left and right cusps $c_{l}$ and $c_{r}$ in $\Pi_{x,z}(\Lambda(\beta))$, exactly two strands meet at this pair of cusps. Consequently, we denote by $L_{c_{l},c_{r}}^{-}$ and $L_{c_{l},c_{r}}^{+}$ the bottom and top strands meeting at $c_{l}$ and $c_{r}$, respectively. Having established these notions, we define a binary Maslov potential $\mu_{\mathrm{Maslov}}:\Pi_{x,z}(\Lambda(\beta))\rightarrow \mathbb{Z}$ via the assignment $\mu_{\mathrm{Maslov}}(L_{c_{l},c_{r}}^{-})=0$ and $\mu_{\mathrm{Maslov}}(L_{c_{l},c_{r}}^{+})=1$, for every pair of left and right cusps $c_{l}$ and $c_{r}$ in $\Pi_{x,z}(\Lambda(\beta))$. 

In particular, by closely inspecting the front projection $\Pi_{x,z}(\Lambda(\beta))$ of the Legendrian link $\Lambda(\beta)$, illustrated in Figure~\eqref{Front diagram of the rainbow closure of a braid in n strands}, we observe that $\Pi_{x,z}(\Lambda(\beta))$ has $n$ left cusps, $n$ right cusps, and $2n$ strands. Then, according to our previous discussion, we have that: 
\begin{itemize}
\justifying
\item  The $n$ strands forming the braid diagram of $\beta$ in the front projection $\Pi_{x,z}(\Lambda(\beta))$ have Maslov potential $0$. These strands are referred to as the \textit{bottom strands} in $\Pi_{x,z}(\Lambda(\beta))$.

\item  The $n$ strands comprising the \textit{rainbow-like shape} that closes the braid diagram of $\beta$ to form the front projection $\Pi_{x,z}(\Lambda(\beta))$ have Maslov potential $1$. These strands are referred to as the \textit{top strands} in $\Pi_{x,z}(\Lambda(\beta))$.
\end{itemize}

Having introduced a binary Maslov potential for the family of Legendrian links under study, we now review the combinatorial description of the category $\mathcal{S}h_{r}(\Lambda, \mathbb{K})_{0}$, which constitutes a robust Legendrian isotopy invariant of a given Legendrian link $\Lambda\subset(\mathbb{R}^{3},\xi_{\mathrm{std}})$~\cite{STZ1}. 

\subsection{Combinatorial Description of the Category \texorpdfstring{$\mathcal{S}h_{r}(\Lambda,\mathbb{K})_{0}$}{Sh}} Let $\Lambda\subset (\mathbb{R}^{3},\xi_{\mathrm{std}})$ be a Legendrian link. Given a field $\mathbb{K}$ and an integer $r\geq 1$, we denote by $\mathcal{S}h_{r}(\Lambda, \mathbb{K})_{0}$ the dg-derived category of microlocal rank $r$, compactly supported, constructible sheaves of $\mathbb{K}$-modules on $\mathbb{R}^{2}$ whose singular support is contained in the closed conic Lagrangian $L(\Lambda)\subset (T^{*}\mathbb{R}^{2}, \omega_{\mathrm{std}})$ associated with $\Lambda$ (see Construction~\eqref{Def: closed conic Lagrangian associated with a Legendrian}). From the perspective of contact topology, the category $\mathcal{S}h_{r}(\Lambda,\mathbb{K})_{0}$ is of fundamental relevance: more precisely, the work of Guillermou, Kashiwara, and Schapira~\cite{GKS1} establishes that it is a Legendrian isotopy invariant of $\Lambda$. Moreover, Shende, Treumann, and Zaslow~\cite{STZ1} proved that when the front projection $\Pi_{x,z}(\Lambda)\subset\mathbb{R}^{2}$ is generic and equipped with a binary Maslov potential, the category $\mathcal{S}h_{r}(\Lambda,\mathbb{K})_{0}$ admits an explicit local combinatorial description. In this subsection, our goal is to briefly review this combinatorial characterization of such a microlocal-sheaf categorical invariant of $\Lambda$. Accordingly, we henceforth assume that $\Lambda$ is a Legendrian link whose front projection $\Pi_{x,z}(\Lambda)$ is generic and equipped with a binary Maslov potential. 

To begin, let $\mathcal{S}_{\Lambda}$ denote the regular stratification of $\mathbb{R}^{2}$ induced by $\Lambda$. As shown in \cite{STZ1}, when $\Lambda$ carries a binary Maslov potential, the objects of the category $\mathcal{S}h_{r}(\Lambda,\mathbb{K})_{0}$ are quasi-isomorphic to their zeroth cohomology sheaf, and therefore can be represented by honest sheaves of $\mathbb{K}$-modules. This simplification is particularly convenient, as it enables us to work with sheaves of $\mathbb{K}$-modules rather than complexes of such sheaves. Next, let $\sh{F}$ be an object of the category $\mathcal{S}h_{r}(\Lambda,\mathbb{K})_{0}$. Then, according to \cite{STZ1}, $\sh{F}$ is fully characterized by the \emph{microlocal support conditions}---namely, the set of conditions relative to the stratification $\mathcal{S}_{\Lambda}$ that determine $\sh{F}$ according to the following local models: 

\begin{itemize}
\justifying
\item[a)] \textit{Arcs}: Let $a$ be an arc in the stratification $\mathcal{S}_{\Lambda}$. Locally near $a$, $\mathcal{S}_{\Lambda}$ consists of the arc $a$, an upper two-dimensional stratum $U$, and a lower two-dimensional stratum $D$, as illustrated in Sub-figure~\eqref{sub-fig: Strata near an arc}. With respect to this local configuration, the behavior of $\sh{F}$ is described by the diagram in Sub-figure~\eqref{sub-fig: microsupport near an arc}.

\begin{figure}[ht]
\centering
\begin{subfigure}[b]{.45\textwidth}
\centering
\begin{tikzpicture}[framed, background rectangle/.style={fill=white,inner sep=2pt, draw=white}, thick]
     \useasboundingbox (-2,-1.5) rectangle (4,4);
     \scope[transform canvas={scale=0.7}]
     
\draw[very thick] (-2,1) .. controls (-1.2,1) and (-0.2,2) .. (1.5,2) .. controls (3.2,2) and (4.2,1) .. (5,1);
\node[below] at (1.5, 2.8) {\footnotesize\Large $a$};
\node[below] at (1.5, 4.8) {\footnotesize\Large $U$};
\node[below] at (1.5, 0.5) {\footnotesize\Large $D$};
     
     \endscope
\end{tikzpicture}
\vspace{10pt}
\caption{Strata configuration near an arc.}
\label{sub-fig: Strata near an arc}
\end{subfigure}%
\begin{subfigure}[b]{.45\textwidth}
\centering
\begin{tikzpicture}[framed, background rectangle/.style={fill=white,inner sep=2pt, draw=white}, thick]
\useasboundingbox (-3,-3) rectangle (3,2.5);
\scope[transform canvas={scale=0.7}]
\centering
\node[below] (a) at (0, 3) {\footnotesize\Large $\sh{F}(U)$};
\node[below] (b) at (0, 0) {\footnotesize\Large $\sh{F}(s(a))$};
\node[below] (c) at (0, -3) {\footnotesize\Large $\sh{F}(D)$};
\draw[->] (b) -- (a);
\draw[->] (b) -- (c) node[midway, right] {\footnotesize\Large $\cong$};
\endscope
\end{tikzpicture}
\vspace{10pt}
\caption{Micro-support condition near an arc.}
\label{sub-fig: microsupport near an arc}
\end{subfigure}
\caption{Strata configuration (left) and micro-support condition (right) near an arc.}
\end{figure}
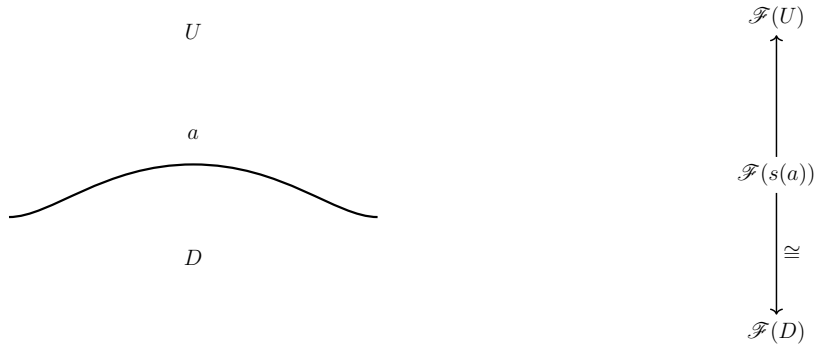   

\item[b)] \textit{Cusps}: Let $c$ be a cusp in the stratification $\mathcal{S}_{\Lambda}$. Locally near $c$, $\mathcal{S}_{\Lambda}$ consists of the cusp $c$, an upper arc $u$, a lower arc $d$, an inside region $I$, and an outside region $O$, as shown in Sub-figure~\eqref{sub-fig: strata near a cusp}. With respect to this local configuration, the behavior of $\sh{F}$ is characterized by the commutative diagram in Sub-figure~\eqref{sub-fig: microsupport near a cusp}.

\begin{figure}[ht]
    \centering
\begin{subfigure}[b]{.4\textwidth}
\centering
\begin{tikzpicture}[framed, background rectangle/.style={fill=white,inner sep=2pt, draw=white}, thick]
\useasboundingbox (-3.5,-2.5) rectangle (3,3);
\scope[transform canvas={scale=0.5}]
\draw[very thick] (-2.5,0) .. controls (1,1) and (1,4) .. (2.5,5);
\draw[very thick] (-2.5,0) .. controls (-0.5,-0.5) and (-0.5,-3.1) .. (1,-3);
\node[below] at (-3, 0.3) {\footnotesize\LARGE $c$};
\node[below] at (-5.5, 0.4) {\footnotesize\huge $O$};
\node[below] at (0.5, 0.4) {\footnotesize\huge $I$};
\node[below] at (0.5, 4) {\footnotesize\LARGE $u$};
\node[below] at (0.5, -4) {\footnotesize\LARGE $d$};
\endscope
\end{tikzpicture}    
\vspace{10pt}
\caption{Strata configuration near a cusp.}
\label{sub-fig: strata near a cusp}
\end{subfigure}%
\begin{subfigure}[b]{0.5\textwidth}
\centering
\begin{tikzpicture}[framed, background rectangle/.style={fill=white,inner sep=2pt, draw=white}, thick]
\useasboundingbox (-6,-3) rectangle (1.1,2.5);
\scope[transform canvas={scale=0.65}]
     
\node[below] (a) at (0, 3) {\footnotesize\Large $\sh{F}(s(u))$};
\node[below] (b) at (0, 0) {\footnotesize\Large $\sh{F}(I)$};
\node[below] (c) at (0, -3) {\footnotesize\Large $\sh{F}(s(d))$};
\node[below] (d) at (-4, 0) {\footnotesize\Large $\sh{F}(s(c))$};
\node[below] (e) at (-8, 0) {\footnotesize\Large $\sh{F}(O)$};
\draw[->] (a) -- (b) node[midway, right] {\footnotesize\Large $\cong$};
\draw[->] (c) -- (b) ;
\draw[->] (d) -- (a) ;
\draw[->] (d) -- (b) ;
\draw[->] (d) -- (c) node[midway, above right] {\footnotesize\Large $\cong$};
\draw[->] (d) -- (e) node[midway, above] {\footnotesize\Large $\cong$};
\draw[->] (-1,2.6) .. controls (-2.5,2.7) and (-6,2.3) .. (-8,0);
\draw[->] (-1,-3.4) .. controls (-2.5,-3.5) and (-6,-3.1) .. (-8,-0.8);
\node[below] at (-5,-3) {\footnotesize\Large $\cong$};          
\endscope
\end{tikzpicture}
\vspace{10pt}
\caption{Micro-support condition near a cusp.}
\label{sub-fig: microsupport near a cusp}
\end{subfigure}
\caption{Strata configuration (left) and micro-support condition (right) near a cusp.}
\label{fig: sheaf near a cusp}
\end{figure}
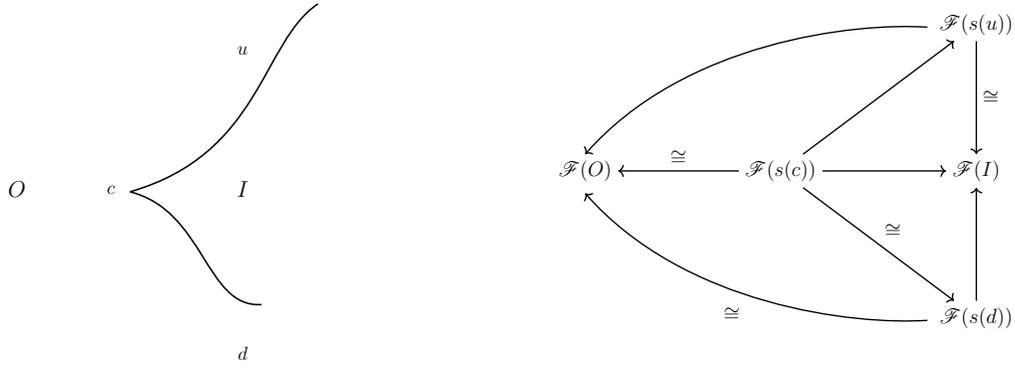

\item[c)] \textit{Crossings}: Let $x$ be a crossing in the stratification $\mathcal{S}_{\Lambda}$. Locally near $x$, $\mathcal{S}_{\Lambda}$ consists of the crossing $x$, four two-dimensional strata $N$, $S$, $E$, and $W$, along with four arcs $nw$, $ne$, $sw$, and $se$, as depicted in Sub-figure~\eqref{sub-fig: strata near a crossing}. With respect to this local configuration, the behavior of $\sh{F}$ is described by the commutative diagram in Sub-figure~\eqref{sub-fig: microsupport near a crossing}. Moreover, the sequence 
\begin{equation}\nonumber
0\longrightarrow\sh{F}(s(x)) \longrightarrow \sh{F}(s(nw))\oplus \sh{F}(s(ne)) \longrightarrow  \sh{F}(N)\longrightarrow 0,
\end{equation}
is required to be short exact.  

\begin{remark}
In the diagrams of Sub-figures~\eqref{sub-fig: microsupport near an arc}, \eqref{sub-fig: microsupport near a cusp}, and~\eqref{sub-fig: microsupport near a crossing}, all maps labeled with the symbol “$\,\cong\,$” are required to be isomorphisms. In this article, we further require these maps to be the identity maps, a convenient convention often adopted in the literature when working with concrete examples (see, for instance,~\cite{STZ1, CNS1, CW1}). 
\end{remark}

\begin{figure}[ht]
\centering
\begin{subfigure}[b]{.4\textwidth}
\centering    
\begin{tikzpicture}[framed, background rectangle/.style={fill=white,inner sep=4pt, draw=white}, thick]
     \useasboundingbox (-4,-4) rectangle (1,1);
     \scope[transform canvas={scale=0.5}]
\draw[very thick] (-6,-6) .. controls (-3,-6.1) and (-3,0.1) .. (0,0);
\draw[very thick] (-6,0) .. controls (-3,0.1) and (-3,-6.1) .. (0,-6);
\node[below] at (-3, -4) {\footnotesize\LARGE $x$};
\node[below] at (-3, 1.5) {\footnotesize\LARGE $N$};
\node[below] at (-3, -7) {\footnotesize\LARGE $S$};
\node[below] at (-6, -2.5) {\footnotesize\LARGE $W$};
\node[below] at (0, -2.5) {\footnotesize\LARGE $E$};
\node[below] at (-4, 0) {\footnotesize\LARGE $nw$};
\node[below] at (-2, 0) {\footnotesize\LARGE $ne$};
\node[below] at (-4, -5.5) {\footnotesize\LARGE $sw$};
\node[below] at (-2, -5.5) {\footnotesize\LARGE $se$};
\endscope
\end{tikzpicture}
\vspace{10pt}
\caption{Strata configuration near a crossing.}
\label{sub-fig: strata near a crossing}
\end{subfigure}%
\begin{subfigure}[b]{.5\textwidth}
\centering
\begin{tikzpicture}[framed, background rectangle/.style={fill=white,inner sep=2pt, draw=white}, thick]
\useasboundingbox (-3.8,-3.5) rectangle (3.8,3);
\scope[transform canvas={scale=0.5}]
\node[below] (a) at (0, 0) {\footnotesize\Large $\sh{F}(s(x))$};
\node[below] (b) at (0, 6) {\footnotesize\Large $\sh{F}(N)$};
\node[below] (c) at (0, -6) {\footnotesize\Large $\sh{F}(S)$};
\node[below] (d) at (-6, 0) {\footnotesize\Large $\sh{F}(W)$};
\node[below] (e) at (6, 0) {\footnotesize\Large $\sh{F}(E)$};
\node[below] (f) at (-3, 3) {\footnotesize\Large $\sh{F}(s(nw))$};
\node[below] (g) at (3, 3) {\footnotesize\Large $\sh{F}(s(ne))$};
\node[below] (h) at (-3, -3) {\footnotesize\Large $\sh{F}(s(sw))$};
\node[below] (i) at (3, -3) {\footnotesize\Large $\sh{F}(s(se))$};
\draw[->] (a)--(b);
\draw[->] (a)--(c) node[midway, right] {\footnotesize\Large $\cong$};
\draw[->] (a)--(d);
\draw[->] (a)--(e);
\draw[->] (a)--(f);
\draw[->] (a)--(g);
\draw[->] (a)--(h) node[midway, above left] {\footnotesize\Large $\cong$};
\draw[->] (a)--(i) node[midway, above right] {\footnotesize\Large $\cong$};
\draw[->] (f)--(b);
\draw[->] (f)--(d) node[midway, above left] {\footnotesize\Large $\cong$};
\draw[->] (g)--(b);
\draw[->] (g)--(e) node[midway, above right] {\footnotesize\Large $\cong$};
\draw[->] (h)--(d);
\draw[->] (h)--(c) node[midway, below left] {\footnotesize\Large $\cong$};
\draw[->] (i)--(e);
\draw[->] (i)--(c) node[midway, below right] {\footnotesize\Large $\cong$};
\endscope
\end{tikzpicture}
\vspace{10pt}
\caption{Micro-support condition near a crossing.}
\label{sub-fig: microsupport near a crossing}
\end{subfigure}
\caption{Strata configuration (left) and micro-support condition (right) near a crossing.}
\label{fig:sheaf near a crossing}
\end{figure}
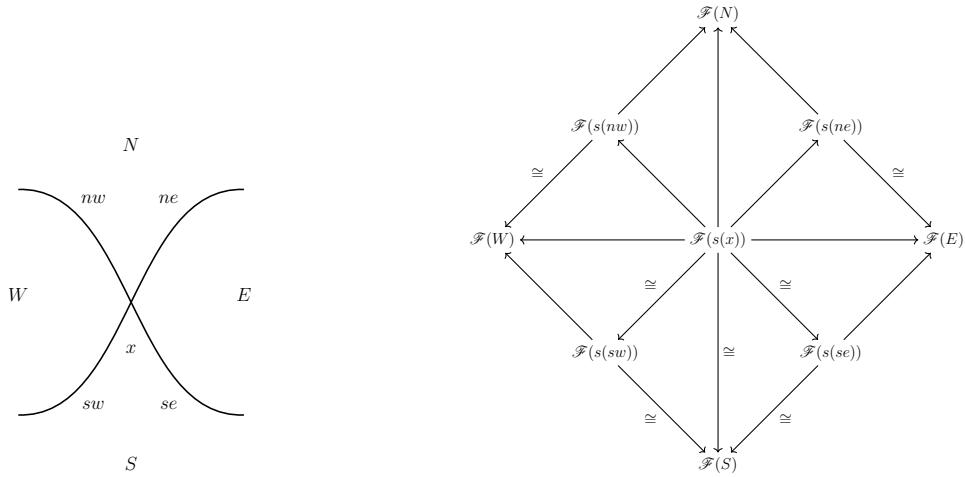

\item [d)] \textit{Microlocal Rank for Binary Maslov Potentials}: Let $r\geq 1$ be an integer, and let $a$ be an arc in the stratification $\mathcal{S}_{\Lambda}$. In particular, observe that locally near $a$, $\mathcal{S}_{\Lambda}$ consists of the arc $a$, an upper two-dimensional stratum $U$, and a lower two-dimensional stratum $D$, as illustrated in Sub-figure~\eqref{sub-fig: Strata near an arc}. Given this local configuration, let $p\in U$ and $q\in D$ be arbitrary points, and denote by $\sh{F}_{p}$ and $\sh{F}_{q}$ the stalks of $\sh{F}$ at $p$ and $q$, respectively. Then, depending on the binary Maslov potential, the microlocal-rank-$r$ condition impose the following constraints on the dimensions of these stalks:
\begin{itemize}
\justifying
\item If the arc $a$ belongs to a strand with Maslov potential $0$, we require $\mathrm{dim}_{\,\mathbb{K}}\,\sh{F}_{p}-\mathrm{dim}_{\,\mathbb{K}}\,\sh{F}_{q}=r$.
\item If the arc $a$ belongs to a strand with Maslov potential $1$, we require $\mathrm{dim}_{\,\mathbb{K}}\,\sh{F}_{q}-\mathrm{dim}_{\,\mathbb{K}}\,\sh{F}_{p}=r$. 
\end{itemize} 
In addition, let $U_{0}$ denote the unique unbounded two-dimensional stratum of $\mathcal{S}_{\Lambda}$. Then, the compact support condition establishes that for any $x\in U_{0}$, the stalk of $\sh{F}$ at $x$ must be trivial; that is, $\sh{F}_{x}=0$.  
\end{itemize}

With the local combinatorial description of the objects of the category $\mathcal{S}h_{r}(\Lambda,\mathbb{K})_{0}$ in place, we now introduce one of the core concepts of this manuscript: the cohomological category $H^{\bullet}(\mathcal{S}h_{r}(\Lambda,\mathbb{K})_{0})$. To this end, let $\sh{F}$ and $\sh{G}$ be objects of the category $\mathcal{S}h_{r}(\Lambda,\mathbb{K})_{0}$. Following~\cite{KS1}, we define $\mathrm{Ext}^{i}(\sh{F},\sh{G})$ to be the $i$-th sheaf cohomology of the right-derived internal Hom sheaf $R\sh{H}om(\sh{F},\sh{G})$; that is,
\begin{equation}
\mathrm{Ext}^{i}(\sh{F},\sh{G}):=H^{i}\big(R\Gamma(\mathbb{R}^{2};R\sh{H}om(\sh{F},\sh{G}))\big)\, ,
\end{equation}
for all $i\in\mathbb{Z}$. In particular, as shown in~\cite{STZ1}, when the front projection $\Pi_{x,z}(\Lambda)$ is generic and equipped with a binary Maslov potential, $\mathrm{Ext}^{i}(\sh{F},\sh{G})=0$ for all $i<0$. In this setting, the category $H^{\bullet}(\mathcal{S}h_{r}(\Lambda,\mathbb{K})_{0})$ admits the following presentation: 

\begin{definition}
Let $\Lambda\subset(\mathbb{R}^{3},\xi_{\mathrm{std}})$ be a Legendrian link whose front projection $\Pi_{x,z}(\Lambda)\subset\mathbb{R}^{2}$ is generic and equipped with a binary Maslov potential. Then, building on~\cite{STZ1}, the cohomological category $H^{\bullet}(\mathcal{S}h_{r}(\Lambda,\mathbb{K})_{0})$ is characterized by the following data:   
\begin{itemize}
\justifying
\item \textit{Objects}: The objects of the category $H^{\bullet}(\mathcal{S}h_{r}(\Lambda,\mathbb{K})_{0})$ are those of the category $\mathcal{S}h_{r}(\Lambda, \mathbb{K})_{0}$. 

\item \textit{Morphisms}: The morphism spaces are positively graded $\mathbb{K}$-modules. More precisely, let $\sh{F}$, $\sh{G}$ be objects of the category $H^{\bullet}(\mathcal{S}h_{r}(\Lambda,\mathbb{K})_{0})$. Then,
\begin{equation*}
\textit{Hom}_{\,H^{\bullet}(\mathcal{S}h_{r}(\Lambda,\mathbb{K})_{0})}(\sh{F},\sh{G}):=H^{\bullet}\big(\,\textit{Hom}_{\,\mathcal{S}h_{r}(\Lambda,\mathbb{K})_{0}}(\sh{F},\sh{G})\,\big)=\mathrm{Ext}^{\bullet}(\sh{F},\sh{G})=\bigoplus_{i\geq 0}\mathrm{Ext}^{i}(\sh{F},\sh{G})\, .     
\end{equation*}

\item \textit{Composition}: The morphism spaces are equipped with a graded composition. Specifically, let $\sh{F}$, $\sh{G}$, $\sh{H}$ be objects of the category $H^{\bullet}(\mathcal{S}h_{r}(\Lambda,\mathbb{K})_{0})$, and let $i,\,j\geq 0$ be integers. Then, the graded composition
\begin{equation*}
\circ:\textit{Hom}^{i}_{\,H^{\bullet}(\mathcal{S}h_{r}(\Lambda,\mathbb{K})_{0})}(\sh{G},\sh{H})\times \textit{Hom}^{j}_{\,H^{\bullet}(\mathcal{S}h_{r}(\Lambda,\mathbb{K})_{0})}(\sh{F},\sh{G})\longrightarrow \textit{Hom}^{i+j}_{\,H^{\bullet}(\mathcal{S}h_{r}(\Lambda,\mathbb{K})_{0})}(\sh{F},\sh{H}) \, ,       
\end{equation*}
is defined by the Yoneda composition on \textrm{Ext} groups (see, for instance,~\cite{HS1}). 
\end{itemize} 
\end{definition}

With the above definition at hand, we conclude our brief review of the microlocal theory of Legendrian links and turn our attention to the main goal of this manuscript. More precisely, given a positive braid word  $\beta\in \mathrm{Br}^{+}_{n}$, the remainder of the paper is devoted to providing an explicit and computable characterization of the category $H^{\bullet}(\mathcal{S}h_{1}(\Lambda(\beta),\mathbb{K})_{0})$. In the next section, we present both a geometric and an algebraic description of the objects of this category.
\section{\texorpdfstring{The Objects of the Category $\ccs{1}{\beta}$}{The Objects of the Category HS}}\label{sec:objects}
\noindent
Let $\beta\in\mathrm{Br}_{n}^{+}$ be a positive braid word. Then, given a fixed ground field $\mathbb{K}$, the main purpose of this section is to provide both a geometric and an algebraic characterization of the objects of the category $H^{\bullet}(\mathcal{S}h_{1}(\Lambda(\beta),\mathbb{K})_{0})$. To this end, we first establish some technical definitions and lemmas concerning linear maps and complete flags, which will be instrumental in the discussion ahead. 

\subsection{Some Technical Results on Linear Maps and Complete Flags}
Let $\beta\in\mathrm{Br}_{n}^{+}$ be a positive braid word. In this subsection, we collect several technical lemmas concerning linear maps subject to specific constraints. Furthermore, we introduce some relevant definitions and properties of complete flags in finite-dimensional vector spaces. The importance of the concepts we present below lies in the fact that they provide the basic framework for the geometric and algebraic description of the objects of the category $\ccs{1}{\beta}$.

We begin by introducing the following lemma, which will be fundamental in studying the objects of the category $\ccs{1}{\beta}$ near the cusps in the front projection of the Legendrian link $\Lambda(\beta)$.

\begin{lemma}[Cusp Condition]\label{Lemma: cusp condition}
Let $S$ and $N$ be finite-dimensional vector spaces over $\mathbb{K}$ such that $\mathrm{dim}_{\mathbb{K}}\,S=p$ and $\mathrm{dim}_{\mathbb{K}}\,N=q$, where $p$ and $q$ are two positive integers subject to the constraints $1\leq p \leq q-1$ and $q\geq 2$. Let $\phi:S\to N$ and $\psi:N\to S$ be linear maps such that $\psi\circ\phi=\mathrm{id}_{S}$. Then, the following statements hold:
\begin{itemize}
\justifying
\item[(\textit{i})] $\psi$ is surjective.
\item[(\textit{ii})] $\phi$ is injective.
\item[(\textit{iii})] $N=\mathrm{ker}\,\psi\,\oplus\,\mathrm{im}\,\phi$. 
\end{itemize}
\end{lemma}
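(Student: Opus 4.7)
The plan is to establish the three statements in order, since each builds on the preceding one and everything reduces to manipulating the identity $\psi\circ\phi=\mathrm{id}_{S}$. I would not use the hypotheses $p\leq q-1$ and $q\geq 2$ at all for the logical content; they only ensure the geometric setting is non-degenerate (both spaces nontrivial, and $\phi$ not forced to be an isomorphism).

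For (i), surjectivity of $\psi$ is immediate: given $s\in S$, the element $\phi(s)\in N$ satisfies $\psi(\phi(s))=s$, so $s\in\mathrm{im}\,\psi$. For (ii), injectivity of $\phi$ follows by contrapositive in one line: if $\phi(s)=0$, then $s=\psi(\phi(s))=\psi(0)=0$. These two arguments are a direct rewriting of the section/retraction identity.

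For (iii), I would split the claim into the standard two parts. First, $\ker\psi\cap\mathrm{im}\,\phi=\{0\}$: writing an element of the intersection as $\phi(s)$ with $\psi(\phi(s))=0$ gives $s=0$ by the section identity, so $\phi(s)=0$. Second, $N=\ker\psi+\mathrm{im}\,\phi$: for any $n\in N$, use the canonical splitting
\begin{equation*}
n=\bigl(n-\phi(\psi(n))\bigr)+\phi(\psi(n)),
\end{equation*}
where the second summand manifestly lies in $\mathrm{im}\,\phi$ and the first satisfies $\psi\bigl(n-\phi(\psi(n))\bigr)=\psi(n)-\psi(n)=0$ by $\psi\circ\phi=\mathrm{id}_{S}$, so lies in $\ker\psi$. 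Alternatively, once the trivial intersection is in hand, a dimension count closes the argument: from (i), $\dim_{\mathbb{K}}\ker\psi=q-p$, and from (ii), $\dim_{\mathbb{K}}\mathrm{im}\,\phi=p$, so $\dim_{\mathbb{K}}(\ker\psi+\mathrm{im}\,\phi)=q=\dim_{\mathbb{K}}N$.

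This is a routine splitting lemma with no genuine obstacle; the only subtlety worth flagging is that the explicit projection formula $n\mapsto\phi(\psi(n))$ gives not merely the abstract direct-sum decomposition but an actual idempotent projector onto $\mathrm{im}\,\phi$ along $\ker\psi$, which will presumably be useful later when this lemma is applied to control sheaves near cusps, where $\phi$ and $\psi$ encode the generization maps between stalks on either side of a cusp.
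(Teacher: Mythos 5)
Your proof is correct, and parts (\textit{i}) and (\textit{ii}) coincide word-for-word in substance with the paper's argument. The only divergence is in part (\textit{iii}): the paper proves $\ker\psi\cap\mathrm{im}\,\phi=0$ exactly as you do, but then closes the argument purely by the rank--nullity count ($\dim_{\mathbb{K}}\ker\psi=q-p$ from surjectivity of $\psi$, $\dim_{\mathbb{K}}\mathrm{im}\,\phi=p$ from injectivity of $\phi$), which is the route you list only as the alternative. Your primary route --- the explicit splitting $n=\bigl(n-\phi(\psi(n))\bigr)+\phi(\psi(n))$ --- is arguably the better one: it exhibits the idempotent $\phi\circ\psi$ as a concrete projector onto $\mathrm{im}\,\phi$ along $\ker\psi$, makes no use of finite-dimensionality, and (as you note) renders the hypotheses $1\leq p\leq q-1$, $q\geq 2$ logically superfluous; the paper's dimension count, by contrast, genuinely needs finite dimensions and rank--nullity. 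Both arguments are valid, and your observation that the explicit projector is what actually gets used downstream (in identifying the stalk on the inside of a cusp with $\ker\psi\oplus\mathrm{im}\,\phi$) is apt.
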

\begin{proof}
First, we prove part (\textit{i}). To this end, let $s\in S$. Then, we obtain that $s=\mathrm{id}_{S}(s)=\psi(\phi(s))$, which shows that $s\in \mathrm{im}\,\psi$. As a result, we conclude that $S=\mathrm{im}\,\psi$. Next, we verify part (\textit{ii}). For this purpose, let $x\in\mathrm{ker}\,\phi\subset S$. Hence, we have that $x=\mathrm{id}_{S}(x)= \psi(\phi(x))=\psi(0)=0$. It follows that $\mathrm{ker}\,\phi=0$. Finally, we prove part (\textit{iii}). To this end, let $n\in\mathrm{ker}\,\psi \,\cap\, \mathrm{im}\,\phi \subset N$. By definition, we know that there exists $s\in S$ such that $n=\phi(s)$. In particular, we observe that $s=\mathrm{id}_{S}(s)=\psi(\phi(s))=\psi(n)=0$, which implies that $\mathrm{ker}\,\psi\,\cap\,\mathrm{im}\,\phi=0$. Furthermore, by parts (\textit{i}) and (\textit{ii}) above, we know that $\psi$ is surjective and $\phi$ is injective. Therefore, by the rank-nullity theorem, we get that $\mathrm{dim}_{\,\mathbb{K}}\,\mathrm{ker}\,\psi=q-p$ and $\mathrm{dim}_{\,\mathbb{K}}\,\mathrm{im}\,\phi=p$. Bearing this in mind, we deduce that $N=\mathrm{ker}\,\psi\,\oplus\,\mathrm{im}\,\phi$.  
\end{proof}

We now present a lemma that will play a key role in the description of the objects of the category $\ccs{1}{\beta}$ near the crossings in the front projection of the Legendrian link $\Lambda(\beta)$.

\begin{lemma}[Crossing Condition]\label{Lemma: Crossing condition for linear maps}
Let $S$, $W$, $E$, and $N$ be vector spaces over $\mathbb{K}$. In addition, suppose that $\alpha_{1}:S\to W$, $\beta_{1}:W\to N$, $\alpha_{2}:S\to E$, and $\beta_{2}:E\to N$ are a collection of linear maps subject to the following constraints:
\begin{itemize}
\item The diagram in Figure \eqref{fig: Commutative diagram for the crossing condition} commutes.

\begin{figure}
\centering
\begin{tikzpicture}
\useasboundingbox (-2.25,-2.25) rectangle (2.25,2.25);
\scope[transform canvas={scale=0.75}]

\node[above] at (0, 2) {\Large$N$};
\node[below] at (0, -2) {\Large$S$};
\node at (-1.5, 0) {\Large$W$};
\node at (1.5, 0) {\Large$E$};

\draw[->, shorten <=0.5cm,  shorten >=0.4cm, line width=0.03cm] (-1.75, 0) -- (0,2.25);
\draw[->, shorten <=0.5cm,  shorten >=0.4cm, line width=0.03cm] (1.75, 0) -- (0,2.25);
\draw[->, shorten <=0.4cm,  shorten >=0.5cm, line width=0.03cm] (0,-2.25) -- (-1.75, 0);
\draw[->, shorten <=0.4cm,  shorten >=0.5cm, line width=0.03cm] (0,-2.25) -- (1.75, 0);

\node at (-1.4, 1.3) {\Large$\beta_{1}$};
\node at (1.4, 1.3) {\Large$\beta_{2}$};
\node at (-1.4, -1.3) {\Large$\alpha_{1}$};
\node at (1.4, -1.3) {\Large$\alpha_{2}$};

\endscope
\end{tikzpicture}
\caption{Commutative diagram for a collection of linear maps $\alpha_{1}$, $\beta_{1}$, $\alpha_{2}$, and $\beta_{2}$ satisfying the crossing condition.}
\label{fig: Commutative diagram for the crossing condition}
\end{figure}
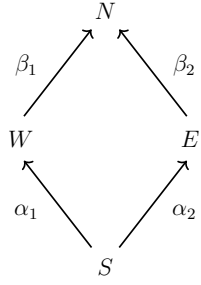

\item The sequence in Equation \eqref{Eq: exact sequence for the crossing condition} is short exact.
\begin{equation}\label{Eq: exact sequence for the crossing condition}
\begin{tikzpicture}
\useasboundingbox (-3,-0.25) rectangle (3,1);
\scope[transform canvas={scale=0.95}]
\node at (-5.5-1,0) {\large$0$}; 
\node at (-3-0.5,0) {\large$S$};    
\node at (0,0) {\large$W\oplus E$};   
\node at (3+0.5,0) {\large$N$};
\node at (5.5+1,0) {\large$0$}; 

\draw[->, shorten <=0.8cm,  shorten >=0.3cm, line width=0.02cm] (-6-1, 0) -- (-3-0.5,0);
\draw[->, shorten <=0.3cm,  shorten >=0.8cm, line width=0.02cm] (-3-0.5, 0) -- (0,0);
\draw[->, shorten <=0.8cm,  shorten >=0.3cm, line width=0.02cm] (0, 0) -- (3+0.5,0);
\draw[->, shorten <=0.3cm,  shorten >=0.8cm, line width=0.02cm] (3+0.5, 0) -- (6+1,0);

\node at (-1.5-0.25-0.25, 0.6) {\large$(\alpha_{1},\alpha_{2})$};
\node at (1.5+0.25+0.25, 0.6) {\large$\beta_{1}\oplus\big(-\beta_{2}\,\big)$};

\node at (5.8+1,-0.10) {\large$.$}; 

\endscope
\end{tikzpicture}    
\end{equation}  
\end{itemize}

\noindent
Then, the following statements hold:
\begin{itemize}
\justifying
\item[(\textit{i})] $\beta_{1}\circ \alpha_{1}=\beta_{2}\circ \alpha_{2}$.
\item[(\textit{ii})] $\mathrm{im}\big(\beta_{1}\circ \alpha_{1}\big)=\mathrm{im}\,\beta_{1}\cap\,\mathrm{im}\,\beta_{2}=\mathrm{im}\big(\beta_{2}\circ\alpha_{2}\big)$.
\item[(\textit{iii})] $N=\mathrm{im}\,\beta_{1}\,+\,\mathrm{im}\,\beta_{2}$.
\item[(\textit{iv})] If the maps $\alpha_{1}$ and $\beta_{1}$ are injective, then the maps $\alpha_{2}$ and $\beta_{2}$ are also injective.
\end{itemize}
\end{lemma}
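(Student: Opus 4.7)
The plan is to prove all four assertions by straightforward diagram-chasing, using only the commutativity of the square and the short exactness of the sequence
$0 \to S \xrightarrow{(\alpha_1,\alpha_2)} W \oplus E \xrightarrow{\beta_1 \oplus (-\beta_2)} N \to 0$.
I will handle the four parts in the order (i), (iii), (ii), (iv), since (ii) and (iv) are most cleanly argued after (i) and (iii) are in hand. No single step is deep; the care needed is purely bookkeeping with the sign in $-\beta_2$.

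For (i), the commutativity of the square says $\beta_1 \circ \alpha_1 = \beta_2 \circ \alpha_2$ directly. Alternatively, one can read it off from exactness at $W\oplus E$: for any $s\in S$, $(\alpha_1(s),\alpha_2(s))$ lies in the kernel of $\beta_1\oplus(-\beta_2)$, so $\beta_1(\alpha_1(s))=\beta_2(\alpha_2(s))$. For (iii), the map $\beta_1\oplus(-\beta_2):W\oplus E\to N$ is surjective by exactness at $N$, so for any $n\in N$ there exist $w\in W$, $e\in E$ with $n=\beta_1(w)-\beta_2(e)$, witnessing $n\in \mathrm{im}\,\beta_1+\mathrm{im}\,\beta_2$; the reverse inclusion is immediate.

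For (ii), the inclusion $\mathrm{im}(\beta_1\circ\alpha_1)\subseteq \mathrm{im}\,\beta_1\cap\mathrm{im}\,\beta_2$ follows from (i), since $\beta_1\alpha_1=\beta_2\alpha_2$ lands in both images. For the reverse inclusion, take $n\in \mathrm{im}\,\beta_1\cap\mathrm{im}\,\beta_2$ and pick $w\in W$, $e\in E$ with $n=\beta_1(w)=\beta_2(e)$. Then $(\beta_1\oplus(-\beta_2))(w,e)=0$, so by exactness at $W\oplus E$ there is a unique $s\in S$ with $(w,e)=(\alpha_1(s),\alpha_2(s))$, whence $n=\beta_1(\alpha_1(s))\in \mathrm{im}(\beta_1\circ\alpha_1)$. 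This simultaneously gives the identification with $\mathrm{im}(\beta_2\circ\alpha_2)$ via (i).

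For (iv), assume $\alpha_1$ and $\beta_1$ are injective. To see that $\alpha_2$ is injective, let $s\in \ker\alpha_2$; then by (i), $\beta_1(\alpha_1(s))=\beta_2(\alpha_2(s))=0$, and successively applying the injectivity of $\beta_1$ and of $\alpha_1$ gives $s=0$. To see that $\beta_2$ is injective, let $e\in \ker\beta_2$ and consider the pair $(0,e)\in W\oplus E$: since $(\beta_1\oplus(-\beta_2))(0,e)=-\beta_2(e)=0$, exactness at $W\oplus E$ yields $s\in S$ with $\alpha_1(s)=0$ and $\alpha_2(s)=e$; injectivity of $\alpha_1$ forces $s=0$, hence $e=0$. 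The most error-prone point in the whole argument is being consistent about the sign in $\beta_1\oplus(-\beta_2)$ when interpreting kernel membership; once that is pinned down, every implication is a one-line check.
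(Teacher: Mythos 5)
Your proof is correct and follows essentially the same diagram-chasing argument as the paper: commutativity gives (i), surjectivity of $\beta_{1}\oplus(-\beta_{2})$ gives (iii), exactness at $W\oplus E$ gives the nontrivial inclusion in (ii) and the injectivity of $\beta_{2}$ in (iv). The reordering of the parts and the explicit mention of the easy inclusion in (ii) are cosmetic differences only.
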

\begin{proof}
First, we prove part (\textit{i}). To this end, observe that, by assumption, the diagram in Figure \eqref{fig: Commutative diagram for the crossing condition} commutes. In consequence, we obtain that $\beta_{1}\circ \alpha_{1}=\beta_{2}\circ \alpha_{2}$.

Next, we verify part (\textit{ii}). For this purpose, let $n\in \mathrm{im}\,\beta_{1}\cap\,\mathrm{im}\,\beta_{2}\subset N$. Then, there exist $w\in W$ and $e\in E$ such that $n=\beta_{1}(w)$ and $n=\beta_{2}(e)$. In particular, observe that $\beta_{1}(w)-\beta_{2}(e)=n-n=0$, which implies that $(w,e)\in \mathrm{ker}\,\big(\beta_{1}\oplus(-\beta_{2}\,)\big)$. Consequently, since the sequence in Equation~\eqref{Eq: exact sequence for the crossing condition} is short exact, there exists $s\in S$ such that $(w,e)=(\alpha_{1}(s), \alpha_{2}(s))$, which shows that $n=\beta_{1}(\alpha_{1}(s))$ and $n=\beta_{2}(\alpha_{2}(s))$. Therefore, by part (\textit{i}) above, we conclude that $\mathrm{im}\big(\beta_{1}\circ \alpha_{1}\big)=\mathrm{im}\,\beta_{1}\cap\,\mathrm{im}\,\beta_{2}=\mathrm{im}\big(\beta_{2}\circ\alpha_{2}\big)$.

Now, we prove part (\textit{iii}). To this end, let $n\in N$. Then, since the sequence in Equation~\eqref{Eq: exact sequence for the crossing condition} is short exact, there exist $w\in W$ and $e\in E$ such that $n=\beta_{1}(w)-\beta_{2}(e)$. It follows that $N=\mathrm{im}\,\beta_{1}\,+\,\mathrm{im}\,\beta_{2}$.

Finally, we verify part (\textit{iv}). For this purpose, let $x\in \mathrm{ker}\,\alpha_{2}\subset S$. Then, by part (\textit{i}) above, we have that $\beta_{1}(\alpha_{1}(x))=\beta_{2}(\alpha_{2}(x))=\beta_{2}(0)=0$. In particular, since the linear maps $\alpha_{1}$ and $\beta_{1}$ are assumed to be injective, we conclude that $x=0$, and therefore $\mathrm{ker}\,\alpha_{2}=0$. In addition, let $y\in \mathrm{ker}\,\beta_{2}\subset E$. Then, we have that $\beta_{1}(0)-\beta_{2}(y)=0-0=0$, which implies that $(0,y)\in\mathrm{ker}\big(\beta_{1}\oplus(-\beta_{2})\big)$. Thus, since the sequence in Equation~\eqref{Eq: exact sequence for the crossing condition} is short exact, there exists $s\in S$ such that $(\alpha_{1}(s), \alpha_{2}(s))=(0,y)$. Consequently, since the linear map $\alpha_{1}$ is assumed to be injective, we obtain that $s=0$. It follows that $y=\alpha_{2}(s)=\alpha_{2}(0)=0$. Bearing this in mind, we deduce that $\mathrm{ker}\,\beta_{2}=0$. This completes the proof.   
\end{proof}

Next, we introduce some key definitions and lemmas concerning complete flags in finite-dimensional vector spaces.

\begin{definition}\label{Def: flags}
Let $V$ be a finite-dimensional vector space over $\mathbb{K}$ with $\mathrm{dim}_{\,\mathbb{K}}V=n$, for some integer $n\geq 1$. Then, a complete flag $\flag{F}$ in $V$ is defined to be a nested sequence of vector subspaces $F^{(p)}$ of $\,V$ such that $\mathrm{dim}_{\,\mathbb{K}}F^{(p)}=p$ for all $p\in[0, n]$. With this definition in place, we adopt the following terminology:   
\begin{itemize}
\justifying    
\item Two complete flags $\flag{F}$ and $\flag{G}$ in $V$ are said to be completely opposite if and only if 
\begin{equation*}
V=F^{(p)}\oplus G^{(n-p)}\, ,    
\end{equation*}
for each $p\in[0, n]$.

\item Let $k\in[1,n-1]$ be an integer. Two complete flags $\flag{F}$ and $\flag{G}$ in $V$ are said to be in $s_{k}$-relative position if and only if $F^{(k)}\neq G^{(k)}$ and $F^{(p)}=G^{(p)}$ for each $p\neq k$, where $p\in[0,n]$. 

\item Let $\hat{\mathbf{x}}:=\big\{\hat{x}_{i}\big\}_{i=1}^{n}$ be a basis for $V$. We define the standard flag in $V$ relative to the basis $\hat{\mathbf{x}}$ to be the complete flag $\flagstd{F}{x}$ given by
\begin{equation*}
F_{\mathrm{std}}^{(p)}[ \hat{\mathbf{x}}]:=\begin{cases}
\hfil 0\, , & \text{if $~p=0$}\, \\
\big<\hat{x}_{1},\dots, \hat{x}_{p}\big>\, , & \text{if $~p\in[1, n]$}
\end{cases}\quad.    
\end{equation*}
Similarly, we define the anti-standard flag in $V$ relative to the basis $\hat{\mathbf{x}}$ to be the complete flag $\flagastd{F}{x}$ given by
\begin{equation*}
F_{\mathrm{astd}}^{(p)} [\hat{\mathbf{x}}]:=\begin{cases}
\hfil 0\, , & \text{if $~p=0$}\, ,\\
\big<\hat{x}_{n},\dots, \hat{x}_{n+1-p}\big>\, , & \text{if $~p\in[1, n]$}
\end{cases}\quad.      
\end{equation*}

\item Let $\hat{\mathbf{x}}:=\big\{\hat{x}_{i}\big\}_{i=1}^{n}$ be a basis for $V$, and let $A\in\mathrm{GL}(n, \mathbb{K})$ be an $n\times n$ invertible matrix over $\mathbb{K}$. We introduce $\big\{\vec{a}_{j}\big\}_{j=1}^{n}$ to denote the collection of vectors in $V$ given by $\vec{a}_{j}:=\sum_{i=1}^{n}A_{i,j}\,\hat{x}_{i}$ for each $j\in[1,n]$. In particular, let $\flag{F}$ be a complete flag in $V$ such that
\begin{equation*}
F^{(p)}=\begin{cases}
\hfil 0\, , & \text{if $~p=0$}\, ,\\
\big<\vec{a}_{1},\dots,\vec{a}_{p}\big>\, , & \text{if $~p\in[1,n]$}
\end{cases}\quad .    
\end{equation*}
Then, we say that, relative to the basis $\hat{\mathbf{x}}$ for $V$, the complete flag $\fl{F}$ is represented by the matrix $A$.
\end{itemize}
\end{definition}

\begin{notation}
Let $n,\,m\geq 1$ be integers. Throughout this manuscript, we implement the following standard matrices:
\begin{itemize}
\justifying
\item $\mathbf{1}_{n} \in \mathrm{GL}(n, \mathbb{K})$: the identity matrix of size $n\times n$.
\item $\mathbf{w}_{n}\in \mathrm{GL}(n, \mathbb{K})$: the anti-diagonal identity matrix of size $n\times n$; that is, the matrix with ones on the anti-diagonal and zeros elsewhere. 
\item $\mathbf{0}_{n\times m}\in M(n,m,\mathbb{K})$: the zero matrix of size $n\times m$.
\end{itemize}
Explicitly, 
\begin{equation*}
\mathbf{1}_{n}:=\begin{bmatrix}
1 & \cdots & 0\\
\vdots & \ddots & \vdots\\
0 & \cdots & 1
\end{bmatrix}_{n\times n}\, ,    \qquad \quad \mathbf{w}_{n}:=\begin{bmatrix}
0 & \cdots & 1\\
\vdots & \iddots & \vdots\\
1 & \cdots & 0
\end{bmatrix}_{n\times n}\, , \qquad \quad \mathbf{0}_{n\times m}:=\begin{bmatrix}
0 & \cdots & 0\\
\vdots & \ddots & \vdots\\
0 & \cdots & 0
\end{bmatrix}_{n\times m }\, .       
\end{equation*}    

In addition, for any pair of integers $p,q\geq 1$ with $q>p$, we denote by $\iota^{(q,p)}\in M(q,p,\mathbb{K})$ the standard inclusion matrix, and by $\pi^{(p,q)}\in M(p,q,\mathbb{K})$ the standard projection matrix, defined by 
\begin{equation*}
\iota^{(q,p)}:=\left[
\begin{array}{c}
\mathbf{1}_{p} \\
\hline
\mathbf{0}_{(q - p)\times p}
\end{array}
\right]\, , \qquad \qquad \pi^{(p,q)}:=\left[
\begin{array}{c|c}
\mathbf{1}_{p} & \mathbf{0}_{p \times (q - p)}
\end{array}
\right]\, .
\end{equation*}
Accordingly, for all $k\geq 2$, the following identities hold:
\begin{equation*}
\pi^{(1,2)}\cdots\pi^{(k-1,k)}=\pi^{(1,k)}\, , \qquad\qquad  \pi^{(p,q)}\cdot \iota^{(q,p)}=\mathbf{1}_{p}\, , \qquad\qquad \iota^{(k,k-1)}\cdots\iota^{(2,1)}=\iota^{(k,1)}\, .     
\end{equation*}
\end{notation}

\begin{remark}
Let $V$ be a finite-dimensional vector space over $\mathbb{K}$ with $\mathrm{dim}_{\,\mathbb{K}}V=n$, for some integer $n\geq 1$. Furthermore, suppose that $\hat{\mathbf{x}}:=\big\{\hat{x}_{i}\big\}_{i=1}^{n}$ is a basis for $V$. Then, according to Definition~\eqref{Def: flags}, we have that:
\begin{itemize}
\justifying
\item Relative to the basis $\hat{\mathbf{x}}$ for $V$, the standard flag $\sfl[\hat{\mathbf{x}}]$ in $V$ is represented by the matrix $\mathbf{1}_{n}\in \mathrm{GL}(n,\mathbb{K})$.
\item Relative to the basis $\hat{\mathbf{x}}$ for $V$, the anti-standard flag $\asfl[\hat{\mathbf{x}}]$ in $V$ is represented by the matrix $\mathbf{w}_{n}\in \mathrm{GL}(n,\mathbb{K})$.
\end{itemize}
\end{remark}

With the basic notions of complete flags and standard matrices in place, we now formalize how complete flags arise naturally from collections of vector spaces and linear maps. Moreover, we introduce the notion of bases adapted to such collections, which provide concrete matrix realizations of the resulting flags.

\begin{definition}\label{Def:flags and adapted bases}
Let $n\geq 2$ be an integer, and let $\big\{V^{(i)}\big\}_{i=1}^{n}$ be a collection of vector spaces over $\mathbb{K}$ such that $\dim_{\,\mathbb{K}}V^{(i)}=i$ for all $i\in[1,n]$. Let $\big\{\phi^{(i)}:V^{(i)}\to V^{(i+1)}\big\}_{i=1}^{n-1}$ and $\big\{\psi^{(i)}:V^{(i+1)}\to V^{(i)}\big\}_{i=1}^{n-1}$ be collections of injective and surjective linear maps, respectively. Then, we introduce the following definitions: 

\begin{enumerate}[leftmargin=*]
\justifying
\item \label{Def: type I flag} \textbf{\textit{Type $\mathcal{I}$ Flag}}: Consider the collection $\big\{\phi^{(i)}\big\}_{i=1}^{n-1}$. We associate to this data the filtration in $V^{(n)}$ given by
\begin{equation*}
\prescript{}{\mathcal{I}\,}{\fl{F}}(\phi^{(1)},\dots,\phi^{(n-1)}):=\big\{ F^{(0)}\subset \cdots \subset F^{(n)} \big\}\, ,    
\end{equation*}
where
\begin{equation}
F^{(p)}:=\begin{cases}
\hfil 0, & \text{if $~p=0$}\,,\\
\mathrm{im}\big( \phi^{(n-1)}\circ\cdots\circ\phi^{(p)} \big), & \text{if $~p\in[1,n-1]$}\,,\\
\hfil V^{(n)}, & \text{if $~p=n$}\, .
\end{cases}
\end{equation}
We refer to $\prescript{}{\mathcal{I}\,}{\fl{F}}(\phi^{(1)},\dots,\phi^{(n-1)})$ as the \emph{type $\mathcal{I}$ flag} in $V^{(n)}$ associated with $\big\{\phi^{(i)}\big\}_{i=1}^{n-1}$. 

\item \label{Def: type K flag} \textbf{\textit{Type $\mathcal{K}$ Flag}}: Consider the collection $\big\{\psi^{(i)}\big\}_{i=1}^{n-1}$. We associate to this data the filtration in $V^{(n)}$ given by
\begin{equation*}
\prescript{}{\mathcal{K}\,}{\fl{F} }(\psi^{(1)},\dots,\psi^{(n-1)}):=\big\{ F^{(0)}\subset \cdots \subset F^{(n)} \big\}\, ,    
\end{equation*} 
where
\begin{equation}
F^{(p)}:=\begin{cases}
\hfil 0, & \text{if $p=0$}\,,\\
\mathrm{ker}\big( \psi^{(n-p)}\circ\cdots\circ\psi^{(n-1)} \big), & \text{if $p\in[1, n-1]$}\,,\\
\hfil V^{(n)}, & \text{if $p=n$}\, .
\end{cases}
\end{equation}
We refer to $\prescript{}{\mathcal{K}\,}{\fl{F}}(\psi^{(1)},\dots,\psi^{(n-1)})$ as the \emph{type $\mathcal{K}$ flag} in $V^{(n)}$ associated with $\big\{\psi^{(i)}\big\}_{i=1}^{n-1}$. 

\item \label{Def: adapted bases I} \textbf{\textit{Adapted System of Bases I}}: Consider the collection $\big\{\phi^{(i)}\big\}_{i=1}^{n-1}$. For each $i\in[1,n]$, let $\hat{\mathbf{f}}^{(i)}:=\big\{\hat{f}^{(i)}_{k}\big\}_{k=1}^{i}$ be a basis for $V^{(i)}$, and denote by $\big\{\hat{\mathbf{f}}^{(i)}\big\}_{i=1}^{n}$ the collection of such bases. We say that $\big\{\hat{\mathbf{f}}^{(i)}\big\}_{i=1}^{n}$ is a \emph{system of bases adapted} to $\big\{\phi^{(i)}\big\}_{i=1}^{n-1}$ if, for each $i\in[1,n-1]$, the matrix $\tensor[_{\hat{\mathbf{f}}^{(i+1)}}]{ \big[\, \phi^{\,(i)}\,\big] }{_{\hat{\mathbf{f}}^{(i)}}}\in M(i+1,i,\mathbb{K})$ representing $\phi^{(i)}$ is the standard inclusion matrix; namely, 
\begin{equation*}
\tensor[_{\hat{\mathbf{f}}^{(i+1)}}]{ \big[\, \phi^{\,(i)}\,\big] }{_{\hat{\mathbf{f}}^{(i)}}}=\iota^{(i+1,i)}\, .    
\end{equation*}

\item \label{Def: adapted bases II} \textbf{\textit{Adapted System of Bases II}}: Consider the collection $\big\{\psi^{(i)}\big\}_{i=1}^{n-1}$. For each $i\in[1,n]$, let $\hat{\mathbf{f}}^{(i)}:=\big\{\hat{f}^{(i)}_{k}\big\}_{k=1}^{i}$ be a basis for $V^{(i)}$, and denote by $\big\{\hat{\mathbf{f}}^{(i)}\big\}_{i=1}^{n}$ the collection of such bases. We say that $\big\{\hat{\mathbf{f}}^{(i)}\big\}_{i=1}^{n}$ is a \emph{system of bases adapted} to $\big\{\psi^{(i)}\big\}_{i=1}^{n-1}$ if, for each $i\in[1,n-1]$, the matrix $\tensor[_{\hat{\mathbf{f}}^{(i)}}]{ \big[\, \psi^{\,(i)}\,\big] }{_{\hat{\mathbf{f}}^{(i+1)}}}\in M(i,i+1,\mathbb{K})$ representing $\psi^{(i)}$ is the standard projection matrix; specifically, 
\begin{equation*}
\tensor[_{\hat{\mathbf{f}}^{(i)}}]{ \big[\, \psi^{\,(i)}\,\big] }{_{\hat{\mathbf{f}}^{(i+1)}}}=\pi^{(i,i+1)}\, .
\end{equation*}
\end{enumerate}
\end{definition}

\begin{lemma}\label{Lemma: type I flag}
Let $n\geq2$ be an integer, and let $\big\{V^{(i)}\big\}_{i=1}^{n}$ be a collection of vector spaces over $\mathbb{K}$ such that $\dim_{\,\mathbb{K}}V^{(i)}=i$ for all $i\in[1,n]$. In addition, let $\big\{\phi^{(i)}:V^{(i)}\to V^{(i+1)}\big\}_{i=1}^{n-1}$ be a collection of injective linear maps. Then the type $\mathcal{I}$ flag $\prescript{}{\mathcal{I}\,}{\fl{F}}(\phi^{(1)},\dots,\phi^{(n-1)})$ in $V^{(n)}$ associated with $\big\{\phi^{(i)}\big\}_{i=1}^{n-1}$ is a complete flag.  
\end{lemma}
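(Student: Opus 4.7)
The plan is to verify directly from the definition that $\prescript{}{\mathcal{I}\,}{\fl{F}}(\phi^{(1)},\dots,\phi^{(n-1)})$ satisfies the two defining properties of a complete flag in $V^{(n)}$, namely the nesting $F^{(p)}\subset F^{(p+1)}$ for all $p\in[0,n-1]$ and the dimension condition $\dim_{\,\mathbb{K}}F^{(p)}=p$ for all $p\in[0,n]$. Both boundary cases, $p=0$ and $p=n$, are immediate from the definition, so the substantive content lies in the intermediate range $p\in[1,n-1]$.

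First, I would establish the nesting. For each $p\in[1,n-2]$, the map $\phi^{(n-1)}\circ\cdots\circ\phi^{(p)}$ factors as $\big(\phi^{(n-1)}\circ\cdots\circ\phi^{(p+1)}\big)\circ\phi^{(p)}$, so any vector in its image lies in the image of $\phi^{(n-1)}\circ\cdots\circ\phi^{(p+1)}$; that is, $F^{(p)}\subset F^{(p+1)}$. The cases $p=0$ and $p=n-1$ follow similarly, since $F^{(0)}=0\subset F^{(1)}$ and $F^{(n-1)}=\mathrm{im}\,\phi^{(n-1)}\subset V^{(n)}=F^{(n)}$ trivially.

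Next, I would verify the dimensions. Fix $p\in[1,n-1]$. By assumption, each $\phi^{(i)}$ is injective, and since the composition of injective linear maps is injective, the map $\phi^{(n-1)}\circ\cdots\circ\phi^{(p)}\colon V^{(p)}\to V^{(n)}$ is injective. Applying the rank-nullity theorem to this map, together with $\dim_{\,\mathbb{K}}V^{(p)}=p$, yields
\begin{equation*}
\dim_{\,\mathbb{K}}F^{(p)}=\dim_{\,\mathbb{K}}\mathrm{im}\big(\phi^{(n-1)}\circ\cdots\circ\phi^{(p)}\big)=p\, .
\end{equation*}
Combined with $\dim_{\,\mathbb{K}}F^{(0)}=0$ and $\dim_{\,\mathbb{K}}F^{(n)}=n$, this finishes the verification.

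There is no real obstacle here: the lemma is essentially a book-keeping statement once one uses the elementary fact that the composition of injective linear maps is injective. The only point requiring a small amount of care is making sure the boundary indices $p=0$ and $p=n$ are handled consistently with the inductive description of $F^{(p)}$ for $p\in[1,n-1]$, but this is straightforward from the piecewise definition given in Definition~\eqref{Def:flags and adapted bases}\eqref{Def: type I flag}.
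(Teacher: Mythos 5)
Your proof is correct and follows essentially the same route as the paper's: injectivity of the composed maps plus the rank--nullity theorem gives $\dim_{\,\mathbb{K}}F^{(p)}=p$. The only difference is that you also spell out the nesting $F^{(p)}\subset F^{(p+1)}$ via the factorization of the compositions, which the paper leaves implicit; this is a harmless (and arguably welcome) extra check.
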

\begin{proof}
By Definition~\eqref{Def:flags and adapted bases}--\eqref{Def: type I flag}, the filtration $\prescript{}{\mathcal{I}\,}{\fl{F}}(\phi^{(1)},\dots,\phi^{(n-1)}):=\big\{ F^{(0)}\subset \cdots \subset F^{(n)} \big\}$ in $V^{(n)}$ is given by 
\begin{equation*}
F^{(p)}:=\begin{cases}
\hfil 0, & \text{if $~p=0$}\,,\\
\mathrm{im}\big( \phi^{(n-1)}\circ\cdots\circ\phi^{(p)} \big), & \text{if $~p\in[1,n-1]$}\,,\\
\hfil V^{(n)}, & \text{if $~p=n$}\, .
\end{cases}
\end{equation*}
In particular, observe that since each $\phi^{(i)}$ is injective, the composition $\phi^{(n-1)}\circ \cdots\circ \phi^{(p)}:V^{(p)}\to V^{(n)}$ is also injective for every $p\in[1,n-1]$. Consequently, by the rank-nullity theorem, we deduce that 
\begin{equation*}
\mathrm{dim}_{\,\mathbb{K}}\,\mathrm{im}\big(\phi^{(n-1)}\circ \cdots\circ \phi^{(p)}\big)=p\, ,    
\end{equation*}
for all $p\in[1,n-1]$, which shows that $\prescript{}{\mathcal{I}\,}{\fl{F}}(\phi^{(1)},\dots,\phi^{(n-1)})$ is a complete flag.     
\end{proof}

\begin{lemma}\label{Lemma: type K flag}
Let $n\geq 2$ be an integer, and let $\big\{V^{(i)}\big\}_{i=1}^{n}$ be a collection of vector spaces over $\mathbb{K}$ such that $\dim_{\,\mathbb{K}}V^{(i)}=i$ for all $i\in[1,n]$. In addition, let $\big\{\psi^{(i)}:V^{(i+1)}\to V^{(i)}\big\}_{i=1}^{n-1}$ be a collection of surjective linear maps. Then the type $\mathcal{K}$ flag $\prescript{}{\mathcal{K}\,}{\fl{F}}(\psi^{(1)},\dots,\psi^{(n-1)})$ in $V^{(n)}$ associated with $\big\{\psi^{(i)}\big\}_{i=1}^{n-1}$ is a complete flag.
\end{lemma}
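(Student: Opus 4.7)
The plan is to argue in complete analogy with the proof of Lemma~\eqref{Lemma: type I flag}, now working with kernels of surjective compositions instead of images of injective ones. By Definition~\eqref{Def:flags and adapted bases}--\eqref{Def: type K flag}, the filtration $\prescript{}{\mathcal{K}\,}{\fl{F}}(\psi^{(1)},\dots,\psi^{(n-1)})=\big\{F^{(0)}\subset\cdots\subset F^{(n)}\big\}$ in $V^{(n)}$ is given by $F^{(0)}=0$, $F^{(n)}=V^{(n)}$, and $F^{(p)}=\mathrm{ker}\big(\psi^{(n-p)}\circ\cdots\circ\psi^{(n-1)}\big)$ for $p\in[1,n-1]$. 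Since each $F^{(p)}$ is manifestly a subspace and the nesting $F^{(p)}\subset F^{(p+1)}$ follows directly from the identity $\psi^{(n-p-1)}\circ\cdots\circ\psi^{(n-1)}=\psi^{(n-p-1)}\circ\big(\psi^{(n-p)}\circ\cdots\circ\psi^{(n-1)}\big)$, the only remaining task is to verify that $\mathrm{dim}_{\,\mathbb{K}}F^{(p)}=p$ for each $p\in[1,n-1]$.

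To this end, I would first observe that for any $p\in[1,n-1]$, the composite map $\psi^{(n-p)}\circ\cdots\circ\psi^{(n-1)}:V^{(n)}\to V^{(n-p)}$ is surjective, because a composition of surjective linear maps is again surjective. The rank-nullity theorem then yields
\begin{equation*}
\mathrm{dim}_{\,\mathbb{K}}\,\mathrm{ker}\big(\psi^{(n-p)}\circ\cdots\circ\psi^{(n-1)}\big)=\mathrm{dim}_{\,\mathbb{K}}V^{(n)}-\mathrm{dim}_{\,\mathbb{K}}V^{(n-p)}=n-(n-p)=p\, ,
\end{equation*}
which is precisely the required dimension. This completes the verification that $\prescript{}{\mathcal{K}\,}{\fl{F}}(\psi^{(1)},\dots,\psi^{(n-1)})$ is a complete flag. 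There is no substantive obstacle here; the proof is genuinely the dual of the one given for Lemma~\eqref{Lemma: type I flag}, replacing injectivity, images, and the dimension of the domain with surjectivity, kernels, and the dimension of the codomain of the relevant composite.
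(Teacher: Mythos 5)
Your proposal is correct and follows essentially the same route as the paper's own proof: surjectivity of the composite $\psi^{(n-p)}\circ\cdots\circ\psi^{(n-1)}$ plus the rank--nullity theorem to obtain $\dim_{\,\mathbb{K}}F^{(p)}=p$. The additional explicit check of the nesting $F^{(p)}\subset F^{(p+1)}$ is a harmless (and welcome) elaboration that the paper leaves implicit.
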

\begin{proof}
By Definition~\eqref{Def:flags and adapted bases}--\eqref{Def: type K flag}, the filtration $\prescript{}{\mathcal{K}\,}{\fl{F}}(\psi^{(1)},\dots,\psi^{(n-1)}):=\big\{ F^{(0)}\subset \cdots \subset F^{(n)} \big\}$ in $V^{(n)}$ is given by
\begin{equation}
F^{(p)}:=\begin{cases}
\hfil 0, & \text{if $p=0$}\,,\\
\mathrm{ker}\big( \psi^{(n-p)}\circ\cdots\circ\psi^{(n-1)} \big), & \text{if $p\in[1, n-1]$}\,,\\
\hfil V^{(n)}, & \text{if $p=n$}\, .
\end{cases}
\end{equation}
In addition, note that since each $\psi^{(i)}$ is surjective, the composition $\psi^{(n-p)}\circ \cdots\circ \psi^{(n-1)}:V^{(n)}\to V^{(n-p)}$ is also surjective for every $p\in[1,n-1]$. Consequently, by the rank-nullity theorem, we deduce that
\begin{equation*}
\mathrm{dim}_{\,\mathbb{K}}\,\mathrm{ker}\big(\psi^{(n-p)}\circ \cdots\circ \psi^{(n-1)}\big)=p\, ,    
\end{equation*}
for all $p\in[1,n-1]$, which confirms that $\prescript{}{\mathcal{K}\,}{\fl{F}}(\psi^{(1)},\dots,\psi^{(n-1)})$ is a complete flag.     
\end{proof}

\begin{lemma}\label{Lemma: completely opposite flags from linear maps}
Let $n\geq 2$ be an integer, and let $\big\{V^{(i)}\big\}_{i=1}^{n}$ be a collection of vector spaces over $\mathbb{K}$ such that $\dim_{\,\mathbb{K}}V^{(i)}=i$ for all $i\in[1,n]$. Let $\big\{\psi^{(i)}:V^{(i+1)}\to V^{(i)}\big\}_{i=1}^{n-1}$ be a collection of surjective linear maps, and let $\big\{\phi^{(i)}:V^{(i)}\to V^{(i+1)}\big\}_{i=1}^{n-1}$ be a collection of injective linear maps satisfying:
\begin{equation*}
\psi^{(i)}\circ \phi^{(i)}=\mathrm{id}_{V^{(i)}}\, ,    
\end{equation*}
for each $i\in[1,n-1]$. In particular, denote by 
\begin{equation*}
\begin{aligned}
\fl{F}_{0}&:=\prescript{}{\mathcal{K}\,}{\fl{F}}(\psi^{(1)},\dots,\psi^{(n-1)})\, ,   \\[6pt] 
\fl{F}_{1}&:=\prescript{}{\mathcal{I}\,}{\fl{F}}(\phi^{(1)},\dots,\phi^{(n-1)})\, ,
\end{aligned}
\end{equation*}
the type $\mathcal{K}$ and type $\mathcal{I}$ flags in $V^{(n)}$ associated with $\big\{\psi^{(i)}\big\}_{i=1}^{n-1}$ and $\big\{\phi^{(i)}\big\}_{i=1}^{n-1}$, respectively. Then, $\fl{F}_{0}$ and $\fl{F}_{1}$ are completely opposite flags. 
\end{lemma}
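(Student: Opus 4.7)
The plan is to handle the boundary cases $p=0$ and $p=n$ trivially (since $F_{0}^{(0)}=0$, $F_{1}^{(n)}=V^{(n)}$ and symmetrically), and to reduce the remaining cases $p\in[1,n-1]$ to the verification that $F_{0}^{(p)}\cap F_{1}^{(n-p)}=0$. Indeed, Lemmas~\eqref{Lemma: type I flag} and~\eqref{Lemma: type K flag} already give $\dim_{\mathbb{K}} F_{0}^{(p)}=p$ and $\dim_{\mathbb{K}} F_{1}^{(n-p)}=n-p$, so the dimensions add up to $n=\dim_{\mathbb{K}}V^{(n)}$; once the intersection is shown to be trivial, the direct-sum decomposition $V^{(n)}=F_{0}^{(p)}\oplus F_{1}^{(n-p)}$ follows automatically.

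For the intersection statement, I would fix $p\in[1,n-1]$ and introduce the shorthand
\begin{equation*}
\Phi_{p}:=\phi^{(n-1)}\circ\cdots\circ\phi^{(n-p)}:V^{(n-p)}\longrightarrow V^{(n)},\qquad
\Psi_{p}:=\psi^{(n-p)}\circ\cdots\circ\psi^{(n-1)}:V^{(n)}\longrightarrow V^{(n-p)},
\end{equation*}
so that, by definition, $F_{1}^{(n-p)}=\mathrm{im}\,\Phi_{p}$ and $F_{0}^{(p)}=\ker\Psi_{p}$. Let $v\in F_{0}^{(p)}\cap F_{1}^{(n-p)}$ and write $v=\Phi_{p}(w)$ for some $w\in V^{(n-p)}$. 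Then $\Psi_{p}(v)=\Psi_{p}(\Phi_{p}(w))=0$, so the whole proof reduces to identifying $\Psi_{p}\circ\Phi_{p}$ with the identity on $V^{(n-p)}$.

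The key step, and I expect the only non-trivial ingredient, is an inside-out cancellation argument: starting from the innermost pair $\psi^{(n-1)}\circ\phi^{(n-1)}=\mathrm{id}_{V^{(n-1)}}$, one collapses the composite to
\begin{equation*}
\Psi_{p}\circ\Phi_{p}=\psi^{(n-p)}\circ\cdots\circ\psi^{(n-2)}\circ\phi^{(n-2)}\circ\cdots\circ\phi^{(n-p)},
\end{equation*}
and then iterates, each time using the hypothesis $\psi^{(i)}\circ\phi^{(i)}=\mathrm{id}_{V^{(i)}}$ at level $i=n-2,n-3,\dots,n-p+1$, until only $\psi^{(n-p)}\circ\phi^{(n-p)}=\mathrm{id}_{V^{(n-p)}}$ remains. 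A clean way to package this is a short induction on $p$: for $p=1$ it is exactly the hypothesis, and the inductive step is precisely the innermost cancellation just described. Once $\Psi_{p}\circ\Phi_{p}=\mathrm{id}_{V^{(n-p)}}$ is established, the equation $\Psi_{p}(\Phi_{p}(w))=0$ forces $w=0$, hence $v=0$, completing the argument.

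I do not anticipate serious obstacles beyond keeping the indexing bookkeeping clean in the telescoping cancellation; conceptually, the lemma is just the iterated form of the Cusp Condition (Lemma~\eqref{Lemma: cusp condition}) applied along the tower $V^{(n-p)}\hookrightarrow V^{(n-p+1)}\hookrightarrow\cdots\hookrightarrow V^{(n)}$.
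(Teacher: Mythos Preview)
Your proof is correct and essentially the same as the paper's: both establish the telescoping identity $\Psi_{p}\circ\Phi_{p}=\mathrm{id}_{V^{(n-p)}}$ via inside-out cancellation, then deduce the direct-sum decomposition. The only cosmetic difference is that the paper packages the final step by invoking Lemma~\eqref{Lemma: cusp condition} directly (applied to the composite maps $\Psi_{p}$ and $\Phi_{p}$), whereas you unpack that lemma inline by showing the intersection is trivial and then counting dimensions.
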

\begin{proof}
By assumption, $\psi^{(i)}\circ \phi^{(i)}=\mathrm{id}_{V^{(i)}}$ for all $i\in [1,n-1]$. Thus, for each $i\in [1,n-1]$, the compositions $\psi^{(i)}\circ \cdots \circ \psi^{(n-1)}:V^{(n)}\to V^{(i)}$ and $\phi^{(n-1)}\circ \cdots \circ \phi^{(i)}:V^{(i)}\to V^{(n)}$ satisfy
\begin{equation*}
\Big(\psi^{(i)}\circ \cdots \circ \psi^{(n-1)}\Big) \circ \Big(\phi^{(n-1)}\circ \cdots \circ \phi^{(i)}\Big)=\mathrm{id}_{V^{(i)}}\, .     
\end{equation*}
Consequently, a direct application of Lemma~\eqref{Lemma: cusp condition} yields, for each $i\in[1,n-1]$, the direct sum decomposition
\begin{equation}\label{Eq: direct sum decomposition}
V^{(n)}=\mathrm{ker} \Big(\psi^{(i)}\circ \cdots \circ \psi^{(n-1)}\Big)\oplus \mathrm{im} \Big(\phi^{(n-1)}\circ \cdots \circ \phi^{(i)}\Big)\, .
\end{equation}

Now, consider
\begin{equation*}
\begin{aligned}
\fl{F}_{0}=\prescript{}{\mathcal{K}\,}{\fl{F}}(\psi^{(1)},\dots,\psi^{(n-1)})=\big\{ F^{(0)}_{0}\subset  \cdots \subset F^{(n)}_{0} \big\}\, , \\[4pt]
\fl{F}_{1}=\prescript{}{\mathcal{I}\,}{\fl{F}}(\phi^{(1)},\dots,\phi^{(n-1)})=\big\{ F^{(0)}_{1} \subset \cdots \subset F^{(n)}_{1} \big\}\, .  
\end{aligned}    
\end{equation*}
the type $\mathcal{K}$ and type $\mathcal{I}$ flags in $V^{(n)}$ associated with $\big\{\psi^{(i)}\big\}_{i=1}^{n-1}$ and $\big\{\phi^{(i)}\big\}_{i=1}^{n-1}$, respectively. By Definition~\eqref{Def:flags and adapted bases}--\eqref{Def: type I flag}--\eqref{Def: type K flag}, we have that
\begin{equation*}
\begin{aligned}
F^{(p)}_{0}&:=\begin{cases}
\hfil 0, & \text{if $p=0$}\,,\\
\mathrm{ker}\big( \psi^{(n-p)}\circ\cdots\circ\psi^{(n-1)} \big), & \text{if $p\in[1, n-1]$}\,,\\
\hfil V^{(n)}, & \text{if $p=n$}\, .
\end{cases}\\[8pt]  
F^{(p)}_{1}&:=\begin{cases}
\hfil 0, & \text{if $~p=0$}\,,\\
\mathrm{im}\big( \phi^{(n-1)}\circ\cdots\circ\phi^{(p)} \big), & \text{if $~p\in[1,n-1]$}\,,\\
\hfil V^{(n)}, & \text{if $~p=n$}\, .
\end{cases}
\end{aligned}    
\end{equation*}
Therefore, by the direct sum decompositions~\eqref{Eq: direct sum decomposition}, we deduce that
\begin{equation*}
V^{(n)}=F^{(p)}_{0}\oplus F^{(n-p)}_{1}\, ,    
\end{equation*}
for all $p\in[1,n-1]$, which shows that $\fl{F}_{0}$ and $\fl{F}_{1}$ are completely opposite flags, as desired. 
\end{proof}

\begin{lemma}\label{Lemma: I type flag in adapted bases for injective maps}
Let $n\geq 2$ be an integer, and let $\big\{V^{(i)}\big\}_{i=1}^{n}$ be a collection of vector spaces over $\mathbb{K}$ such that $\dim_{\,\mathbb{K}}V^{(i)}=i$ for all $i\in[1,n]$. Let $\big\{ \phi^{(i)}:V^{(i)}\to V^{(i+1)}\big\}_{i=1}^{n-1}$ be a collection of injective linear maps, and for each $i\in[1,n]$, let $\,\hat{\mathbf{f}}^{(i)}:=\big\{\hat{f}^{(i)}_{k}\big\}_{k=1}^{i}$ be a basis for $V^{(i)}$ such that $\big\{\hat{\mathbf{f}}^{(i)}\big\}_{i=1}^{n}$ is a system of bases adapted to $\big\{\phi^{(i)}\big\}_{i=1}^{n-1}$ (see Definition~\eqref{Def:flags and adapted bases}--\eqref{Def: adapted bases I}). Then, relative to the basis $\hat{\mathbf{f}}^{(n)}$ for $V^{(n)}$, the type $\mathcal{I}$ flag $\prescript{}{\mathcal{I}\,}{\fl{F} }(\phi^{(1)},\dots,\phi^{(n-1)})$ in $V^{(n)}$ associated with $\big\{\phi^{(i)}\big\}_{i=1}^{n-1}$ coincides with the standard flag, $\prescript{}{\mathcal{I}\,}{\fl{F} }(\phi^{(1)},\dots,\phi^{(n-1)})\big[\, \hat{\mathbf{f}}^{(n)}\,\big]=\sfl \big[\, \hat{\mathbf{f}}^{(n)}\,\big]$, and is therefore represented by the matrix $\mathbf{1}_{n}$.  
\end{lemma}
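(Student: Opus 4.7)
The plan is to compute each subspace $F^{(p)}=\mathrm{im}\big(\phi^{(n-1)}\circ\cdots\circ\phi^{(p)}\big)$ of the type $\mathcal{I}$ flag directly in the basis $\hat{\mathbf{f}}^{(n)}$, and to verify that it coincides with the $p$-th subspace $\big\langle \hat{f}^{(n)}_{1},\dots,\hat{f}^{(n)}_{p}\big\rangle$ of the standard flag $\sfl\big[\hat{\mathbf{f}}^{(n)}\big]$. The boundary cases $p=0$ and $p=n$ are immediate from Definition~\eqref{Def:flags and adapted bases}--\eqref{Def: type I flag} and Definition~\eqref{Def: flags}, so all the content lies in the range $p\in[1,n-1]$, where both $\prescript{}{\mathcal{I}\,}{\fl{F}}(\phi^{(1)},\dots,\phi^{(n-1)})$ is a complete flag by Lemma~\eqref{Lemma: type I flag}.

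The first step is to invoke functoriality of matrix representations with respect to composition. Since $\big\{\hat{\mathbf{f}}^{(i)}\big\}_{i=1}^{n}$ is adapted to $\big\{\phi^{(i)}\big\}_{i=1}^{n-1}$, each map $\phi^{(i)}$ is represented by the standard inclusion matrix $\iota^{(i+1,i)}$. Hence the composition $\phi^{(n-1)}\circ\cdots\circ \phi^{(p)}:V^{(p)}\to V^{(n)}$ is represented, relative to the pair of bases $\big(\hat{\mathbf{f}}^{(p)},\hat{\mathbf{f}}^{(n)}\big)$, by the ordered matrix product $\iota^{(n,n-1)}\,\iota^{(n-1,n-2)}\cdots\iota^{(p+1,p)}$.

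The second step is to identify this product. A short induction on $n-p$ (or a direct block-matrix multiplication) extends the telescoping identity $\iota^{(k,k-1)}\cdots\iota^{(2,1)}=\iota^{(k,1)}$ quoted in the excerpt to its general form
\begin{equation*}
\iota^{(n,n-1)}\,\iota^{(n-1,n-2)}\cdots\iota^{(p+1,p)}=\iota^{(n,p)}\, ,
\end{equation*}
which, applied to the basis vectors of $\hat{\mathbf{f}}^{(p)}$, sends $\hat{f}^{(p)}_{k}\mapsto \hat{f}^{(n)}_{k}$ for every $k\in[1,p]$. Consequently,
\begin{equation*}
F^{(p)}=\big\langle \hat{f}^{(n)}_{1},\dots,\hat{f}^{(n)}_{p}\big\rangle=F^{(p)}_{\mathrm{std}}\big[\hat{\mathbf{f}}^{(n)}\big]\, ,
\end{equation*}
for all $p\in[1,n-1]$.

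Combining these identifications across all $p\in[0,n]$ yields $\prescript{}{\mathcal{I}\,}{\fl{F}}(\phi^{(1)},\dots,\phi^{(n-1)})\big[\hat{\mathbf{f}}^{(n)}\big]=\sfl\big[\hat{\mathbf{f}}^{(n)}\big]$. Since the standard flag is represented by $\mathbf{1}_{n}$ relative to $\hat{\mathbf{f}}^{(n)}$, the same is true for the type $\mathcal{I}$ flag, completing the argument. The only non-trivial ingredient is the telescoping identity for the standard inclusion matrices; this is a routine block computation, but it is the essential combinatorial step on which the whole statement hinges.
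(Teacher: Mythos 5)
Your proposal is correct and follows essentially the same route as the paper's proof: both use the adapted-bases hypothesis to represent each $\phi^{(i)}$ by $\iota^{(i+1,i)}$ and then conclude that $F^{(p)}=\big\langle \hat{f}^{(n)}_{1},\dots,\hat{f}^{(n)}_{p}\big\rangle$, the only difference being that you make the telescoping product $\iota^{(n,n-1)}\cdots\iota^{(p+1,p)}=\iota^{(n,p)}$ explicit where the paper leaves it implicit.
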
 
\begin{proof}
By Definition~\eqref{Def:flags and adapted bases}--\eqref{Def: type I flag}, the filtration $\prescript{}{\mathcal{I}\,}{\fl{F} }(\phi^{(1)},\dots,\phi^{(n-1)}):=\big\{ F^{(0)}\subset \cdots \subset  F^{(n)} \big\}$ in $V^{(n)}$ is given by
\begin{equation*}
F^{(p)}:=\begin{cases}
\hfil 0, & \text{if $~p=0$}\,,\\
\mathrm{im}\big( \phi^{(n-1)}\circ\cdots\circ\phi^{(p)} \big), & \text{if $~p\in[1,n-1]$}\,,\\
\hfil V^{(n)}, & \text{if $~p=n$}\, .
\end{cases}
\end{equation*}
Now, since $\big\{\hat{\mathbf{f}}^{(i)}\big\}_{i=1}^{n}$ is a system of bases adapted to $\big\{\phi^{(i)}\big\}_{i=1}^{n-1}$, Definition~\eqref{Def:flags and adapted bases}--\eqref{Def: adapted bases I} asserts that for each $i\in[1,n-1]$, the matrix $\tensor[_{\hat{\mathbf{f}}^{(i+1)}}]{ \big[\, \phi^{\,(i)}\,\big] }{_{\hat{\mathbf{f}}^{(i)}}}\in M(i+1,i,\mathbb{K})$ representing $\phi^{\,(i)}$ is the standard inclusion matrix; that is, $\tensor[_{\hat{\mathbf{f}}^{(i+1)}}]{ \big[\, \phi^{\,(i)}\,\big] }{_{\hat{\mathbf{f}}^{(i)}}}=\iota^{(i+1,i)}$. As a result, we deduce that
\begin{equation*}
F^{(p)}=\left<\hat{f}^{(n)}_{1},\dots, \hat{f}^{(n)}_{p}\right>\, ,
\end{equation*}
for all $p\in [1,n-1]$, which confirms that $\prescript{}{\mathcal{I}\,}{\fl{F} }(\phi^{(1)},\dots,\phi^{(n-1)})\big[\, \hat{\mathbf{f}}^{(n)}\,\big]=\sfl \big[\, \hat{\mathbf{f}}^{(n)} \,\big]$, as claimed.   
\end{proof}

\begin{remark}
Let $n\geq 2$ be an integer, and let $\big\{V^{(i)}\big\}_{i=1}^{n}$ be a collection of vector spaces over $\mathbb{K}$ such that $\dim_{\,\mathbb{K}}V^{(i)}=i$ for all $i\in[1,n]$. Let $\big\{ \phi^{(i)}:V^{(i)}\to V^{(i+1)}\big\}_{i=1}^{n-1}$ be a collection of injective linear maps, and let $\,\hat{\mathbf{f}}^{(n)}:=\big\{\hat{f}^{(n)}_{k}\big\}_{k=1}^{n}$ be a basis for $V^{(n)}$ such that the type $\mathcal{I}$ flag $\prescript{}{\mathcal{I}\,}{\fl{F} }(\phi^{(1)},\dots,\phi^{(n-1)})$ in $V^{(n)}$ associated with $\big\{\phi^{(i)}\big\}_{i=1}^{n-1}$ coincides with the standard flag, $\prescript{}{\mathcal{I}\,}{\fl{F} }(\phi^{(1)},\dots,\phi^{(n-1)})\big[\, \hat{\mathbf{f}}^{(n)}\,\big]=\sfl \big[\, \hat{\mathbf{f}}^{(n)}\,\big]$. Then, an inductive argument establishes that, for each $i\in[1,n-1]$, there is a unique basis $\hat{\mathbf{f}}^{(i)}:=\big\{\hat{f}^{(i)}_{k}\big\}_{k=1}^{i}$ for $V^{(i)}$ such that the collection of bases $\big\{\hat{\mathbf{f}}^{(i)}\big\}_{i=1}^{n}$ is a system of bases for $\big\{V^{(i)}\big\}_{i=1}^{n}$ adapted to $\big\{\phi^{(i)}\big\}_{i=1}^{n-1}$ (see Definition~\eqref{Def:flags and adapted bases}--\eqref{Def: adapted bases I}). In particular, this observation will play a key role in our discussion ahead.
\end{remark}

\begin{lemma}\label{Lemma: K type flag in adapted bases for surjective maps}
Let $n\geq 2$ be an integer, and let $\big\{V^{(i)}\big\}_{i=1}^{n}$ be a collection of vector spaces over $\mathbb{K}$ such that $\dim_{\,\mathbb{K}}V^{(i)}=i$ for all $i\in[1,n]$.  Let $\big\{ \psi^{(i)}:V^{(i+1)}\to V^{(i)}\big\}_{i=1}^{n-1}$ be a collection of surjective linear maps, and for each $i\in[1,n]$, let $\,\hat{\mathbf{f}}^{(i)}:=\big\{\hat{f}^{(i)}_{k}\big\}_{k=1}^{i}$ be a basis for $V^{(i)}$ such that $\big\{\hat{\mathbf{f}}^{(i)}\big\}_{i=1}^{n}$ is a system of bases adapted to $\big\{\psi^{(i)}\big\}_{i=1}^{n-1}$ (see Definition~\eqref{Def:flags and adapted bases}--\eqref{Def: adapted bases II}). Then, relative to the basis $\hat{\mathbf{f}}^{(n)}$ for $V^{(n)}$, the type $\mathcal{K}$ flag~$\prescript{}{\mathcal{K}\,}{\fl{F} }(\psi^{(1)},\dots,\psi^{(n-1)})$ in $V^{(n)}$ associated with $\big\{\psi^{(i)}\big\}_{i=1}^{n-1}$ coincides with the anti-standard flag, $\prescript{}{\mathcal{K}\,}{\fl{F} }(\psi^{(1)},\dots,\psi^{(n-1)})\big[\, \hat{\mathbf{f}}^{(n)}\,\big]=\asfl \big[\, \hat{\mathbf{f}}^{(n)}\,\big]$, and is therefore represented by the matrix $\,\mathbf{w}_{n}$.  
\end{lemma}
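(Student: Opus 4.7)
The plan is to mirror the argument for Lemma~\eqref{Lemma: I type flag in adapted bases for injective maps}, with images of inclusion compositions replaced by kernels of projection compositions. First I would unpack Definition~\eqref{Def:flags and adapted bases}--\eqref{Def: type K flag}, which gives $F^{(p)}=\mathrm{ker}\big(\psi^{(n-p)}\circ\cdots\circ\psi^{(n-1)}\big)$ for each $p\in[1,n-1]$. Since $\big\{\hat{\mathbf{f}}^{(i)}\big\}_{i=1}^{n}$ is adapted to $\big\{\psi^{(i)}\big\}_{i=1}^{n-1}$, Definition~\eqref{Def:flags and adapted bases}--\eqref{Def: adapted bases II} ensures that, relative to the pair $(\hat{\mathbf{f}}^{(i+1)},\hat{\mathbf{f}}^{(i)})$, each $\psi^{(i)}$ is represented by the standard projection matrix $\pi^{(i,i+1)}$.

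The main computation is then to show that the matrix of the composition $\psi^{(n-p)}\circ\cdots\circ\psi^{(n-1)}:V^{(n)}\to V^{(n-p)}$ relative to the bases $(\hat{\mathbf{f}}^{(n)},\hat{\mathbf{f}}^{(n-p)})$ equals $\pi^{(n-p,n)}$. This reduces to the block-matrix identity
\begin{equation*}
\pi^{(n-p,n-p+1)}\cdots\pi^{(n-1,n)}=\pi^{(n-p,n)}\, ,
\end{equation*}
a mild generalization of the formula $\pi^{(1,2)}\cdots\pi^{(k-1,k)}=\pi^{(1,k)}$ already recorded in the notation section, which follows by a short induction on $p$ from block multiplication. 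Once this is in place, the kernel of $\pi^{(n-p,n)}$ consists precisely of those vectors whose first $n-p$ coordinates in the basis $\hat{\mathbf{f}}^{(n)}$ vanish, so
\begin{equation*}
F^{(p)}=\big\langle \hat{f}^{(n)}_{n-p+1},\dots,\hat{f}^{(n)}_{n}\big\rangle=\big\langle \hat{f}^{(n)}_{n},\dots,\hat{f}^{(n)}_{n+1-p}\big\rangle\, ,
\end{equation*}
which matches $F_{\mathrm{astd}}^{(p)}\big[\,\hat{\mathbf{f}}^{(n)}\,\big]$ from Definition~\eqref{Def: flags}. Combined with the remark just after the introduction of $\mathbf{w}_{n}$, this yields $\prescript{}{\mathcal{K}\,}{\fl{F}}(\psi^{(1)},\dots,\psi^{(n-1)})\big[\,\hat{\mathbf{f}}^{(n)}\,\big]=\asfl\big[\,\hat{\mathbf{f}}^{(n)}\,\big]$, represented by $\mathbf{w}_{n}$.

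I anticipate no genuine obstacle. The argument is entirely dual to that of Lemma~\eqref{Lemma: I type flag in adapted bases for injective maps}, and the only bookkeeping item is the composition-of-projections identity, which is elementary. The one point worth being careful about is the indexing: since the type $\mathcal{K}$ definition kernels the $(n-p)$-th through $(n-1)$-th maps (reading left to right), the relevant composition lands in $V^{(n-p)}$ rather than $V^{(p)}$, and it is this that produces the flipped ordering of basis vectors characteristic of the anti-standard flag.
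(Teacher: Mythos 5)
Your proposal is correct and follows essentially the same route as the paper's proof: unpack the type $\mathcal{K}$ definition, use adaptedness to replace each $\psi^{(i)}$ by $\pi^{(i,i+1)}$, and identify the kernel of the resulting composite projection with the span of the last $p$ basis vectors. The only difference is that you make explicit the composition identity $\pi^{(n-p,n-p+1)}\cdots\pi^{(n-1,n)}=\pi^{(n-p,n)}$, which the paper leaves implicit.
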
 
\begin{proof}
By Definition~\eqref{Def:flags and adapted bases}--\eqref{Def: type K flag}, the filtration $\prescript{}{\mathcal{K}\,}{\fl{F} }(\psi^{(1)},\dots,\psi^{(n-1)}):=\big\{ F^{(0)}\subset  \cdots \subset F^{(n)} \big\}$ in $V^{(n)}$ is given by 
\begin{equation*}
F^{(p)}:=\begin{cases}
\hfil 0, & \text{if $~p=0$}\,,\\
\mathrm{ker}\big( \psi^{(n-p)}\circ\cdots\circ\psi^{(n-1)} \big), & \text{if $~p\in[1,n-1]$}\,,\\
\hfil V^{(n)}, & \text{if $~p=n$}\, .
\end{cases}
\end{equation*}
Now, since $\big\{\hat{\mathbf{f}}^{(i)}\big\}_{i=1}^{n}$ is a system of bases adapted to $\big\{\psi^{(i)}\big\}_{i=1}^{n-1}$, Definition~\eqref{Def:flags and adapted bases}--\eqref{Def: adapted bases II} asserts that for each $i\in[1,n-1]$, the matrix $\tensor[_{\hat{\mathbf{f}}^{(i)}}]{ \big[\, \psi^{\,(i)}\,\big] }{_{\hat{\mathbf{f}}^{(i+1)}}}\in M(i,i+1,\mathbb{K})$ representing $\psi^{\,(i)}$ is the standard projection matrix; that is, $\tensor[_{\hat{\mathbf{f}}^{(i)}}]{ \big[\, \psi^{\,(i)}\,\big] }{_{\hat{\mathbf{f}}^{(i+1)}}}=\pi^{(i,i+1)}$. Consequently, we obtain that
\begin{equation*}
F^{(p)}=\left<\hat{f}^{(n)}_{n},\dots, \hat{f}^{(n)}_{n-p+1}\right>\, ,
\end{equation*}
for all $p\in [1,n-1]$, which establishes that $\prescript{}{\mathcal{K}\,}{\fl{F} }(\psi^{(1)},\dots,\psi^{(n-1)})\big[\, \hat{\mathbf{f}}^{(n)}\,\big]=\sfl \big[\, \hat{\mathbf{f}}^{(n)}\,\big]$, as desired.   
\end{proof}

\begin{remark}
Let $n\geq 2$ be an integer, and let $\big\{V^{(i)}\big\}_{i=1}^{n}$ be a collection of vector spaces over $\mathbb{K}$ such that $\dim_{\,\mathbb{K}}V^{(i)}=i$ for all $i\in[1,n]$. Let $\big\{ \psi^{(i)}:V^{(i+1)}\to V^{(i)}\big\}_{i=1}^{n-1}$ be a collection of surjective linear maps, and let $\,\hat{\mathbf{f}}^{(n)}:=\big\{\hat{f}^{(n)}_{k}\big\}_{k=1}^{n}$ be a basis for $V^{(n)}$ such that the type $\mathcal{K}$ flag $\prescript{}{\mathcal{K}\,}{\fl{F} }(\psi^{(1)},\dots,\psi^{(n-1)})$ in $V^{(n)}$ associated with $\big\{\psi^{(i)}\big\}_{i=1}^{n-1}$ coincides with the anti-standard flag, $\prescript{}{\mathcal{K}\,}{\fl{F} }(\psi^{(1)},\dots,\psi^{(n-1)})\big[\, \hat{\mathbf{f}}^{(n)}\,\big]=\asfl \big[\, \hat{\mathbf{f}}^{(n)}\,\big]$. Then, an inductive argument shows that, for each $i\in[1,n-1]$, there is a unique basis $\hat{\mathbf{f}}^{(i)}:=\big\{\hat{f}^{(i)}_{k}\big\}_{k=1}^{i}$ for $V^{(i)}$ such that the collection of bases $\big\{\hat{\mathbf{f}}^{(i)}\big\}_{i=1}^{n}$ is a system of bases for $\big\{V^{(i)}\big\}_{i=1}^{n}$ adapted to $\big\{\psi^{(i)}\big\}_{i=1}^{n-1}$ (see Definition~\eqref{Def:flags and adapted bases}--\eqref{Def: adapted bases II}). In particular, this observation will play a key role in our discussion ahead.
\end{remark}

Before proceeding further, we state a lemma that describes how the matrix representation of a complete flag in a vector space transforms under a change of basis.

\begin{lemma}\label{lemma: Matrices that represent a flag in two different bases}
Let $V$ be a finite-dimensional vector space over $\mathbb{K}$ with $\mathrm{dim}_{\,\mathbb{K}}V=n$ for some integer $n\geq 1$, and let $\hat{\mathbf{x}}:=\big\{\hat{x}_{i}\big\}_{i=1}^{n}$ and $\hat{\mathbf{y}}:=\big\{\hat{y}_{j}\big\}_{j=1}^{n}$ be bases for $V$ related via the change-of-basis matrix $M\in \mathrm{GL}(n,\mathbb{K})$; that is, $\hat{y}_{j}=\sum_{i=1}^{n}M_{i,j}\,\hat{x}_{i}$ for each $j\in[1, n]$. Furthermore, let $\fl{F}:=\big\{F^{0}\subset \cdots \subset F^{n}\big\}$ be a complete flag in $V$, and suppose that relative to the basis $\hat{\mathbf{y}}$ for $V$, the flag is represented by the matrix $A\in \mathrm{GL}(n,\mathbb{K})$. Then, relative to the basis $\hat{\mathbf{x}}$ for $V$, the complete flag $\fl{F}$ is represented by the matrix product $M\cdot A\in \mathrm{GL}(n,\mathbb{K})$. 
\end{lemma}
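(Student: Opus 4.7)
The plan is to unwind the definition of a matrix representing a complete flag relative to a basis (as given in Definition~\eqref{Def: flags}) and then translate the coefficients from the basis $\hat{\mathbf{y}}$ to the basis $\hat{\mathbf{x}}$ by direct substitution via the change-of-basis matrix $M$.

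First, I would write down the vectors that the matrix $A$ defines relative to $\hat{\mathbf{y}}$. By Definition~\eqref{Def: flags}, setting $\vec{a}_{j}:=\sum_{i=1}^{n}A_{i,j}\,\hat{y}_{i}$ for each $j\in[1,n]$, we have
\begin{equation*}
F^{(p)}=\langle \vec{a}_{1},\dots, \vec{a}_{p}\rangle \quad \text{for each } p\in[1,n],
\end{equation*}
and $F^{(0)}=0$. The vectors $\vec{a}_{j}\in V$ are intrinsic to the flag; only their coordinate representations depend on the choice of basis.

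Next, I would re-express each $\vec{a}_{j}$ in terms of $\hat{\mathbf{x}}$ by substituting $\hat{y}_{i}=\sum_{k=1}^{n}M_{k,i}\,\hat{x}_{k}$ into the formula above. A direct computation gives
\begin{equation*}
\vec{a}_{j}=\sum_{i=1}^{n}A_{i,j}\sum_{k=1}^{n}M_{k,i}\,\hat{x}_{k}=\sum_{k=1}^{n}\left(\sum_{i=1}^{n}M_{k,i}\,A_{i,j}\right)\hat{x}_{k}=\sum_{k=1}^{n}(M\cdot A)_{k,j}\,\hat{x}_{k},
\end{equation*}
so that the coefficients of $\vec{a}_{j}$ in the basis $\hat{\mathbf{x}}$ are precisely the entries of the $j$-th column of $M\cdot A$.

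Finally, I would conclude by invoking Definition~\eqref{Def: flags} once more: since the same vectors $\vec{a}_{1},\dots, \vec{a}_{p}$ still span $F^{(p)}$ for each $p\in[1,n]$, and their coordinate columns relative to $\hat{\mathbf{x}}$ are the columns of $M\cdot A$, the flag $\fl{F}$ is represented by $M\cdot A$ relative to $\hat{\mathbf{x}}$. Note that $M\cdot A\in \mathrm{GL}(n,\mathbb{K})$ because both $M$ and $A$ are invertible. There is no real obstacle here: the proof is essentially a bookkeeping calculation confirming that the change-of-basis formula for column vectors of coordinates is left multiplication by $M$, and the only care needed is to keep the indices in the correct order when swapping the two summations.
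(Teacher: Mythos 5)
Your proof is correct and follows essentially the same route as the paper: define the spanning vectors from the columns of $A$ relative to $\hat{\mathbf{y}}$, substitute the change-of-basis expansion, interchange the sums to identify the coefficients as the columns of $M\cdot A$, and invoke the definition of a flag represented by a matrix. No gaps.
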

\begin{proof}
To begin with, let $\big\{\vec{v}_{k}\big\}_{k=1}^{n}$ be the collection of vectors in $V$ given by $\vec{v}_{k}=\sum_{j=1}^{n}A_{j,k}\,\hat{y}_{j}$ for each $k\in[1,n]$. Thus, since relative to the basis $\hat{\mathbf{y}}$ for $V$, the flag $\fl{F}$ is represented by the matrix $A$, we have that 
\begin{equation*}
F^{(p)}=\begin{cases}
\hfil 0\, , & \text{if $p=0$}\, ,\\
\big< \vec{v}_{1}, \dots, \vec{v}_{p} \big>\, , & \text{if $p=1,\dots, n$}\, .
\end{cases}    
\end{equation*}
In addition, recall that $\hat{y}_{j}=\sum_{i=1}^{n}M_{i,j}\,\hat{x}_{i}$ for each $j\in[1,n]$. As a result, we obtain that 
\begin{equation*}
\begin{aligned}
\vec{v}_{k}&=\sum_{j=1}^{n}\sum_{i=1}^{n}A_{j,k}M_{i,j}\,\hat{x}_{i}\, ,\\
&=\sum_{i=1}^{n}\Bigg(\sum_{j=1}^{n}M_{i,j}A_{j,k}\Bigg)\,\hat{x}_{i}\, ,\\
&=\sum_{i=1}^{n}\big(M\cdot A\big)_{i,k}\,\hat{x}_{i}\, ,\\
\end{aligned}
\end{equation*}
for each $k\in[1,n]$. Bearing this in mind, we conclude that, relative to the basis $\hat{\mathbf{x}}$ for $V$, the flag $\fl{F}$ is represented by the matrix product $M\cdot A$. This completes the proof.  
\end{proof}

We now introduce a collection of matrices that parametrize complete flags in relative position, the so-called \emph{braid matrices}. These matrices are of central importance, as they will play a key role in the algebraic description of the objects of the category $\ccs{1}{\beta}$. 

\begin{definition}[Braid Matrices]\label{Def: braid matrices}
Let $n\geq 2$ be an integer. Given an Artin generator $\sigma_{k}\in \mathrm{Br}^{+}_{n}$, with $k\in [1,n-1]$, and a parameter $z\in \mathbb{K}$, the $n$-dimensional braid matrix $B^{(n)}_{k}(z)\in \mathrm{GL}\big(n,\, \mathbb{K}\big)$ associated with $\sigma_{k}$ and $z$ is defined by
\begin{equation*}
\Big[\,B^{(n)}_{k}(z)\,\Big]_{i,j}\,:=\,\begin{cases}
~1\,, & \text{if $~i=j\,$ and $\,i\neq k,\,k+1$}\, ,\\
~1\,, & \text{if $~(i,j)=(k,k+1)\,$ or $\,(k+1,k)$}\, ,\\
~z\,, & \text{if $~i=j=k$}\, ,\\ 
~0\,, & \text{otherwise}\, ,
\end{cases}  \qquad   i,j\in[1, n]\, .
\end{equation*}
More precisely,
\begin{equation*}
\centering    
\begin{tikzpicture}
\matrix (m) [matrix of math nodes, ampersand replacement=\&, left delimiter={[}, right delimiter={]}, nodes in empty cells, row sep=1pt, column sep=1pt]
{
\begin{array}{ccc}
    1 & \cdots & 0 \\
    \vdots & \ddots & \vdots \\
    0 & \cdots & 1
\end{array} \& \begin{array}{cc}
    0 & 0 \\
    \vdots & \vdots \\
    0 & 0
\end{array} \& \begin{array}{ccc}
    0 & \cdots & 0 \\
    \vdots & \ddots & \vdots \\
    0 & \cdots & 0
\end{array} \\
\begin{array}{ccc}
    0 & \cdots & 0 \\
    0 & \cdots & 0 
\end{array} \& \begin{array}{cc}
    z & 1 \\
    1 & 0 
\end{array} \& \begin{array}{ccc}
    0 & \cdots & 0 \\
    0 & \cdots & 0 
\end{array} \\
\begin{array}{ccc}
    0 & \cdots & 0 \\
    \vdots & \ddots & \vdots \\
    0 & \cdots & 0
\end{array} \& \begin{array}{cc}
    0 & 0 \\
    \vdots & \vdots \\
    0 & 0
\end{array} \& \begin{array}{ccc}
    1 & \cdots & 0 \\
    \vdots & \ddots & \vdots \\
    0 & \cdots & 1
\end{array} \\
};

\node[draw=none, fit=(m-1-1) (m-1-1), inner sep=2pt] (Abox) {};
\node[draw=none, fit=(m-1-2) (m-1-2), inner sep=2pt] (Bbox) {};
\node[draw=none, fit=(m-1-3) (m-1-3), inner sep=2pt] (Cbox) {};
\node[draw=none, fit=(m-2-3) (m-2-3), inner sep=2pt] (Dbox) {};
\node[draw=none, fit=(m-3-3) (m-3-3), inner sep=2pt] (Ebox) {};
\node[draw=none, fit=(m-2-1) (m-2-1), inner sep=2pt] (Fbox) {};

\draw[<->] ([yshift=0.2cm,xshift=0.15cm]Abox.north west) -- node[above=0.15cm] {\scriptsize $k$} ([yshift=0.2cm,xshift=-0.15cm]Abox.north east);
\draw[<->] ([yshift=0.2cm,xshift=0.15cm]Bbox.north west) -- node[above=0.15cm] {\scriptsize $2$} ([yshift=0.2cm,xshift=-0.15cm]Bbox.north east);
\draw[<->] ([yshift=0.2cm,xshift=0.15cm]Cbox.north west) -- node[above=0.15cm] {\scriptsize $n-(k+1)$} ([yshift=0.2cm,xshift=-0.15cm]Cbox.north east);

\draw[<->] ([yshift=-0.3cm,xshift=0.45cm]Cbox.north east) -- node[right=0.15cm] {\scriptsize $k$} ([yshift=0.3cm,xshift=0.45cm]Cbox.south east);
\draw[<->] ([yshift=-0.3cm,xshift=0.45cm]Dbox.north east) -- node[right=0.15cm] {\scriptsize $2$} ([yshift=0.3cm,xshift=0.45cm]Dbox.south east);
\draw[<->] ([yshift=-0.3cm,xshift=0.45cm]Ebox.north east) -- node[right=0.15cm] {\scriptsize $n-(k+1)$} ([yshift=0.3cm,xshift=0.45cm]Ebox.south east);

\node[left=2.5cm] at (m) {$B^{(n)}_{k}(z):=$};

\node[draw=none, fit=(m-1-1) (m-1-1), inner sep=2pt] (a) {};
\node[draw=none, fit=(m-1-2) (m-1-2), inner sep=2pt] (b) {};
\node[draw=none, fit=(m-1-3) (m-1-3), inner sep=2pt] (c) {};
\node[draw=none, fit=(m-2-1) (m-2-1), inner sep=2pt] (d) {};
\node[draw=none, fit=(m-2-2) (m-2-2), inner sep=2pt] (e) {};
\node[draw=none, fit=(m-2-3) (m-2-3), inner sep=2pt] (f) {};
\node[draw=none, fit=(m-3-1) (m-3-1), inner sep=2pt] (g) {};
\node[draw=none, fit=(m-3-2) (m-3-2), inner sep=2pt] (h) {};
\node[draw=none, fit=(m-3-3) (m-3-3), inner sep=2pt] (k) {};

\draw[-, line width=0.02cm] ([yshift=-0.95cm, xshift=-0.75cm]a.center) -- ([yshift=-0.95cm, xshift=0.75cm]c.center);
\draw[-, line width=0.02cm] ([yshift=0.95cm, xshift=-0.75cm]g.center) -- ([yshift=0.95cm, xshift=0.75cm]k.center);

\draw[-, line width=0.02cm] ([yshift=0.75cm, xshift=0.85cm]a.center) -- ([yshift=-0.75cm, xshift=0.85cm]g.center);
\draw[-, line width=0.02cm] ([yshift=0.75cm, xshift=-0.85cm]c.center) -- ([yshift=-0.75cm, xshift=-0.85cm]k.center);

\end{tikzpicture}
\end{equation*}
In this manuscript, we refer to $B^{(n)}_{k}(z)$ as the $k$-th braid matrix of dimension $n$ associated with $\sigma_{k}$ and $z$. 
\end{definition}

\noindent
The braid matrices have been extensively studied and employed in the literature (see, for instance,~\cite{KT2, CGGS1, CGGS2, CGGS1, GSW1, CN1, CNS1}). In particular, in Appendix A, we list some of their properties, focusing on those most relevant for this manuscript.

\begin{lemma}\label{Lemma for flags and braid matrices}
Let $V$ be a finite-dimensional vector space over $\mathbb{K}$ with $\mathrm{dim}_{\,\mathbb{K}}V=n$ for some integer $n \geq 1$. Let $\flag{F}$ and $\flag{G}$ be a pair of complete flags in $V$ such that $\fl{G}$ is in $s_{k}$-relative position with respect to $\fl{F}$ for some $k\in [1, n-1]$. Furthermore, let $\hat{\mathbf{x}}:=\big\{\hat{x}_{i}\big\}_{i=1}^{n}$ be a basis for $V$, and suppose that relative to the basis $\hat{\mathbf{x}}$, the flag $\fl{F}$ is represented by the matrix $A\in \mathrm{GL}(n,\mathbb{K})$. Then there exists $z\in\mathbb{K}$ such that, relative to the basis $\hat{\mathbf{x}}$, the flag $\fl{G}$ is represented by the matrix product $A\cdot B^{(n)}_{k}(z)$, where $B^{(n)}_{k}(z)\in\mathrm{GL}(n,\mathbb{K})$ denotes a $k$-th braid matrix of dimension $n$. 
\end{lemma}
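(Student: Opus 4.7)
The plan is to reduce the statement to a normal-form computation by changing basis so that $\fl{F}$ becomes the standard flag, and then parametrise the one-dimensional family of flags in $s_k$-relative position with respect to the standard flag by a single scalar $z\in\mathbb{K}$.

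First I would introduce the basis $\hat{\mathbf{y}}:=\{\hat{y}_j\}_{j=1}^{n}$ of $V$ defined by $\hat{y}_j:=\sum_{i=1}^{n}A_{i,j}\hat{x}_i$. By construction, relative to $\hat{\mathbf{y}}$ the flag $\fl{F}$ is represented by the identity matrix $\mathbf{1}_n$, i.e.\ $\fl{F}=\sfl[\hat{\mathbf{y}}]$. By Lemma~\eqref{lemma: Matrices that represent a flag in two different bases}, if I can show that $\fl{G}$ is represented by $B^{(n)}_k(z)$ relative to $\hat{\mathbf{y}}$ for some $z\in\mathbb{K}$, then relative to $\hat{\mathbf{x}}$ it is represented by $A\cdot B^{(n)}_k(z)$, which is exactly the claim. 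Thus the problem reduces to analyzing flags $\fl{G}$ in $s_k$-relative position with respect to the standard flag.

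Working in the basis $\hat{\mathbf{y}}$, the $s_k$-relative position condition forces $G^{(p)}=\langle \hat{y}_1,\dots,\hat{y}_p\rangle$ for every $p\neq k$. Since $G^{(k-1)}\subsetneq G^{(k)}\subsetneq G^{(k+1)}$, and $G^{(k)}\neq F^{(k)}=\langle \hat{y}_1,\dots,\hat{y}_k\rangle$, there is a vector $v\in G^{(k)}\setminus G^{(k-1)}$. After subtracting an element of $G^{(k-1)}=\langle \hat{y}_1,\dots,\hat{y}_{k-1}\rangle$, I may assume $v=a\hat{y}_k+b\hat{y}_{k+1}$; moreover, $b\neq 0$ (otherwise $G^{(k)}\subseteq F^{(k)}$ would force equality by dimensions), so after rescaling I may take $v=z\hat{y}_k+\hat{y}_{k+1}$ for a unique $z\in\mathbb{K}$. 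Therefore
\begin{equation*}
G^{(k)}=\langle\hat{y}_1,\dots,\hat{y}_{k-1},\,z\hat{y}_k+\hat{y}_{k+1}\rangle.
\end{equation*}

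To finish, I would build an explicit basis for $V$ exhibiting $\fl{G}$ and read off its matrix representation: take $\hat{y}_1,\dots,\hat{y}_{k-1}$ as the first $k-1$ column vectors, $z\hat{y}_k+\hat{y}_{k+1}$ as the $k$-th, $\hat{y}_k$ as the $(k+1)$-th (which together with the preceding vectors spans $G^{(k+1)}=\langle\hat{y}_1,\dots,\hat{y}_{k+1}\rangle$), and $\hat{y}_{k+2},\dots,\hat{y}_n$ as the remaining columns. By inspection, the resulting matrix coincides with $B^{(n)}_k(z)$ of Definition~\eqref{Def: braid matrices}. The only genuinely non-routine step is the passage from the subspace $G^{(k)}$ to the normalized spanning vector $z\hat{y}_k+\hat{y}_{k+1}$, where one must argue carefully that the coefficient of $\hat{y}_{k+1}$ cannot vanish; once this is done, the identification with the braid matrix is essentially immediate from the shape of $B^{(n)}_k(z)$.
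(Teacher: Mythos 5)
Your proof is correct and follows essentially the same route as the paper's: both arguments extend a basis of $F^{(k-1)}$ to $G^{(k)}$, show the coefficient of the $(k+1)$-st vector is nonzero because $G^{(k)}\neq F^{(k)}$, normalize to obtain the spanning vector $z\hat{y}_k+\hat{y}_{k+1}$, and read off the braid matrix. The only cosmetic difference is that you change basis first to reduce to $A=\mathbf{1}_n$ and invoke Lemma~\eqref{lemma: Matrices that represent a flag in two different bases} at the end, whereas the paper works directly with the columns of $A$ and Claim~\eqref{braid matrix product from right}.
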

\begin{proof}
Let $\big\{\vec{v}_{j}\big\}_{j=1}^{n}$ be the collection of vectors in $V$ given by $\vec{v}_{j}=\sum_{i=1}^{n}A_{i,j}\,\hat{x}_{i}$ for each $j\in[1,n]$. By hypothesis, we know that, relative to the basis $\hat{\mathbf{x}}$, the flag $\fl{F}$ is represented by the matrix $A$. In other words, we have that
\begin{equation*}
F^{(p)}=\begin{cases}
\hfil 0\, , & \text{if $p=0$}\, ,\\
\big< \vec{v}_{1}, \dots, \vec{v}_{p} \big>\, , & \text{if $p=1,\dots, n$}\, .
\end{cases}    
\end{equation*}
Moreover, recall that the flag $\fl{G}$ is in $s_{k}$-relative position with respect to $\fl{F}$. To be more precise, we know that $F^{(k)}\neq G^{(k)}$ and $F^{(p)}=G^{(p)}$ for all $p\neq k$, where $p\in[0,n]$. Here, observe that for each $p\in[1,n]$, the set $\big\{\vec{v}_{1},\dots, \vec{v}_{p}\big\}$ is a basis for $F^{(p)}$. Thus, since $F^{(k-1)}\subset G^{(k)}$, we can extend the basis $\{\vec{v}_{1},\dots, \vec{v}_{k-1}\}$ for $F^{(k-1)}$ to a basis $\big\{\vec{v}_{1},\dots, \vec{v}_{k-1},\vec{g}\big\}$ for $G^{(k)}$, for some non-zero vector $\vec{g}\in G^{(k)}$. Moreover, since $\big\{\vec{v_{1}},\dots, \vec{v}_{k+1}\big\}$ spans $F^{(k+1)}$ and $G^{(k)}\subset F^{(k+1)}$, we can write $\vec{g}=\alpha_{1}\,\vec{v}_{1}+\cdots + \alpha_{k+1}\,\vec{v}_{k+1}$ for some $\alpha_{j}\in\mathbb{K}$, where $j\in[1, k+1]$. In particular, since $G^{(k)}\neq F^{(k)}$, we know that $\vec{g}$ does not belong to $F^{(k)}$, and therefore $\alpha_{k+1}\neq 0$. Thus, we obtain that $\vec{g}= \alpha_{1}\,\vec{v}_{1}+\dots+\alpha_{k-1}\,\vec{v}_{k-1} +\alpha_{k+1}\big(\vec{v}_{k+1}+z\,\vec{v}_{k}\big)$, where $z:=\alpha_{k+1}^{-1}\alpha_{k}\in \mathbb{K}$. As a result, we can see that the set $\big\{\vec{v}_{1},\dots, \vec{v}_{k-1}, \vec{v}_{k+1}+ z\,\vec{v}_{k}\big\}$ spans $G^{(k)}$. Bearing this in mind, we conclude that, there is $z\in \mathbb{K}$ such that, relative to the basis $\hat{\mathbf{x}}$ for $V$, the flag $\fl{G}$ is represented by the matrix whose column vectors are given by $\big[\vec{v}_{1},\dots,\vec{v}_{k-1}, \vec{v}_{k+1}+z\,\vec{v}_{k},\vec{v}_{k}, \vec{v}_{k+2}, \dots, \vec{v}_{n}\big]$. Finally, in light of Claim~\eqref{braid matrix product from right}, we readily verify that
\begin{equation*}
A\cdot B^{(n)}_{k}(z)=\big[\vec{v}_{1},\dots,\vec{v}_{k-1}, \vec{v}_{k+1}+z\,\vec{v}_{k},\vec{v}_{k}, \vec{v}_{k+2}, \dots, \vec{v}_{n}\big]\, ,    
\end{equation*}
where $B^{(n)}_{k}(z)\in \mathrm{GL}(n,\mathbb{K})$ denotes a $k$-th braid matrix of dimension $n$. Therefore, we deduce that, relative to the basis $\hat{\mathbf{x}}$, the flag $\fl{G}$ is represented by the matrix product $A\cdot B^{(n)}_{k}(z)$, for some $z\in\mathbb{K}$.
\end{proof}

Finally, we introduce the notion of braid-transformed bases, which captures how a positive braid word $\beta=\sigma_{i_{1}}\cdots \sigma_{i_{\ell}}\in\mathrm{Br}^{+}_{n}$, together with a tuple $\vec{z}\in\mathbb{K}^{\ell}_{\mathrm{std}}$, acts on a collection of bases for a family of vector spaces and produces a new set of bases for the corresponding family.

\begin{definition}[Braid-Transformed Bases]\label{Def: braid transformation of bases}
Let $n\geq 2$ be an integer, and let $\big\{V^{(i)}\big\}_{i=1}^{n}$ be a collection of vector spaces over $\mathbb{K}$ such that $\dim_{\,\mathbb{K}}V^{(i)}=i$ for all $i\in[1,n]$. For each $i\in [1,n]$, let $\,\hat{\mathbf{f}}^{(i)}:=\big\{\hat{f}^{(i)}_{k}\big\}_{k=1}^{i}$ be basis for $V^{(i)}$, and denote by $\big\{\hat{\mathbf{f}}^{(i)}\big\}_{i=1}^{n}$ the collection of such bases.

Let $\sigma_{p}\in\mathrm{Br}^{+}_{n}$ be the $p$-th Artin generator for some $p\in[1,n-1]$, and let $z\in\mathbb{K}$ be a fixed parameter. For each $i\in [1,n]$, we define a new basis for $V^{(i)}$, $\hat{\mathbf{f}}^{(i)}[\sigma_{p},z]:=\big\{\hat{f}^{(i)}_{k}[\sigma_{p},z]\big\}_{k=1}^{i}$, as follows: for each $k\in[1,i]$, we set  
\begin{equation*}
\hat{f}^{(i)}_{k}[\sigma_{p},z]:=\begin{cases}
\hfil\hat{f}^{(i)}_{k}\, , &~ \text{if $~i\in[1,p]$}\, ,\\
\sum_{j=1}^{i}\big(B^{(i)}_{p}(z)\big)_{j,k}\,\hat{f}^{(i)}_{j} \, , &~ \text{if $~i\in[p+1,n]$}\, ,
\end{cases}    
\end{equation*}
where $B^{(i)}_{p}(z)$ denotes the $p$-th braid matrix of dimension $i$ associated with $\sigma_{p}$ and $z$.  In other words, for each $i\in [1,n]$, the new basis for $V^{(i)}$ is obtained from the old basis via the change-of-basis matrix $B^{(i)}_{p}(z)$ if $i\geq p+1$; otherwise, the new and old bases agree.

Accordingly, let $\beta=\sigma_{i_{1}}\dots \sigma_{i_{\ell}}\in\mathrm{Br}^{+}_{n}$ be a positive braid word, and let $\vec{z}=(z_{1},\dots, z_{\ell})\in \mathbb{K}^{\ell}_{\mathrm{std}}$ be a fixed tuple. Then, we define the \emph{braid-transformed bases} $\big\{\,\hat{\mathbf{f}}^{(i)}[\,\beta,\vec{z}\,]\,\big\}_{i=1}^{n}$, a new collection of bases for the family of vector spaces $\big\{V^{(i)}\big\}_{i=1}^{n}$, by recursively applying, in order, the single-step transformations corresponding to each Artin generator in $\beta$ and the associated entry of $\vec{z}$ to the old bases $\big\{\hat{\mathbf{f}}^{(i)}\big\}_{i=1}^{n}$, namely: 
\begin{equation*}
\hat{\mathbf{f}}^{(i)}[\,\beta,\vec{z}\,]:=  \big(\cdots\big(\hat{\mathbf{f}}^{(i)}[\sigma_{i_{1}},z_{1}]\big)\cdots \big)[\sigma_{i_{\ell}},z_{\ell}]\, , \quad i\in [1,n]. 
\end{equation*}
\end{definition}

Having established the necessary technical background, we now turn to the explicit characterization of the objects of the category $\ccs{1}{\beta}$ in the case $\beta=e_{n}$.

\subsection{The Case of the Trivial Braid} 
Let $e_{n}\in\mathrm{Br}^{+}_{n}$ be the trivial braid word. In this subsection, we study the objects of the category $\ccs{1}{e_{n}}$. In particular, this concrete example, corresponding to the Legendrian unlink $\Lambda(e_{n})\subset (\mathbb{R}^{3}, \xi_{\mathrm{std}})$ on $n$ strands, will serve as a prototype for the general theory developed later in this section.

Let $\sh{F}$ be an object of the category $\ccs{1}{e_{n}}$. Next, we construct an open cover of $\mathbb{R}^{2}$ adapted to the front projection $\Pi_{x,z}(\Lambda(e_{n}))\subset \mathbb{R}^{2}$ of the Legendrian link $\Lambda(e_{n})\subset (\mathbb{R}^{3},\xi_{\mathrm{std}})$. The purpose of this construction is to decompose the plane into manageable regions where the local behavior of $\sh{F}$ can be analyzed independently and later integrated into a concise global description.

\begin{construction}\label{Const: finite open cover of R^2 for the unlink}
Let $e_{n}\in \mathrm{Br}^{+}_{n}$ be the trivial braid word. Given the front projection $\Pi_{x,z}(\Lambda(e_{n}))\subset \mathbb{R}^{2}$ of the Legendrian link $\Lambda(e_{n})\subset (\mathbb{R}^{3},\xi_{\mathrm{std}})$, we denote by $\mathcal{U}_{\Lambda(e_{n})}:=\big\{ U_{0}, U_{\mathrm{B}}, U_{\mathrm{T}}\big\}$ the finite open cover of $\mathbb{R}^{2}$ illustrated in Figure~\eqref{Fig: open cover for the unlink}. Specifically, we assume that:     
\begin{itemize}
\justifying
\item $U_{0}$ is an unbounded open subset of $\mathbb{R}^{2}$ such that $U_{0}\,\cap\,\Pi_{x,z}(\Lambda(e_{n}))=\emptyset$. In Figure~\eqref{Fig: open cover for the unlink}, the region $U_{0}$ is depicted in purple. 

\item $U_{\mathrm{B}}$ is a bounded open subset of $\mathbb{R}^{2}$ such that the intersection $U_{\mathrm{B}}\,\cap\,\Pi_{x,z}(\Lambda(e_{n}))$ consists of the $n$ strands at the bottom of the front projection $\Pi_{x,z}(\Lambda(e_{n}))$. In Figure~\eqref{Fig: open cover for the unlink}, the region $U_{\mathrm{B}}$ is shown in pink.  

\item $U_{\mathrm{T}}$ is a bounded open subset of $\mathbb{R}^{2}$ such that the intersection $U_{\mathrm{T}}\,\cap\, \Pi_{x,z}(\Lambda(e_{n}))$ consists the $n$ strands at the top of the front projection $\Pi_{x,z}(\Lambda(e_{n}))$. In Figure~\eqref{Fig: open cover for the unlink}, the region $U_{\mathrm{T}}$ is illustrated in green. 
\end{itemize}

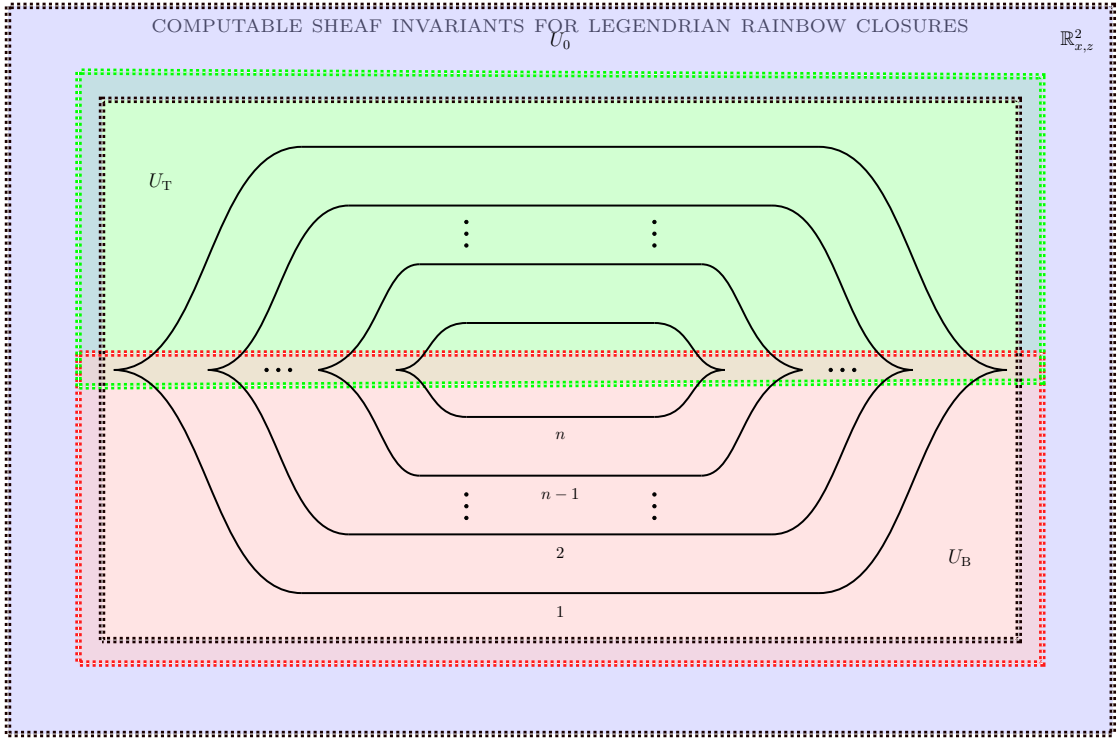
\begin{figure}
\centering
\begin{tikzpicture}
\useasboundingbox (-8,-4.85) rectangle (8,4);
\scope[transform canvas={scale=0.625}]

\def\regionone{ (-11.75+2-0.5,+0.35-1+1) -- (-11.75+2-0.5,-6-1.25+1) -- (11.75-2+0.5,-6-1.25+1) -- (+11.75-2+0.5,+0.35-1+1) -- (-11.75+2-0.5,+0.35-1+1) }
\def\regiontwo{ (-11.75+2-0.5,-0.35+1-1) -- (-11.75+2-0.5,6+1.35-1) -- (11.75-2+0.5,6+1.25-1) -- (+11.75-2+0.5,-0.25+1-1) -- (-11.75+2-0.5,-0.35+1-1) }
\def\regionthree{(-13.75+2,-6.25-2+0.5) rectangle (13.75-2, 6.25+2-0.5) (-11.25+1.5,-6.25+0.5) rectangle (11.25-1.5, 6.25-0.5)}

\fill[green!25, fill opacity=0.7] \regiontwo;
\fill[blue!30, even odd rule, fill opacity=0.4] \regionthree;
\fill[pink!70, fill opacity=0.6] \regionone;

\draw[dotted, line width=0.065cm, color=pink!360, double distance=0.2pt] \regionone;
\draw[dotted, line width=0.065cm, color=green!360, double distance=0.2pt] \regiontwo; 
\draw[dotted, line width=0.065cm, color=purple!360, double distance=0.2pt] \regionthree;

\draw[very thick] (-2,1) -- (2, 1);
\draw[very thick] (-2,-1) -- (2, -1);
\draw[very thick] (2,1) .. controls (3-0.15,1) and (3-0.25,0) .. (3.5,0);
\draw[very thick] (2,-1) .. controls (3-0.15,-1) and (3-0.25,0) .. (3.5,0);
\draw[very thick] (-3.5, 0) .. controls (-3+0.25,0) and (-3+0.15,1) .. (-2,1);
\draw[very thick] (-3.5, 0) .. controls (-3+0.25,0) and (-3+0.15,-1) .. (-2,-1);

\draw[very thick] (-2-1,1+1+0.25) -- (2+1,1+1+0.25);
\draw[very thick] (-2-1,-1-1-0.25) -- (2+1,-1-1-0.25);
\draw[very thick] (2+1,1+1+0.25) .. controls (3+1-0.15,1+1+0.25) and (3+1-0.15,0+0.2) .. (3.5+1.5+0.15,0);
\draw[very thick] (2+1,-1-1-0.25) .. controls (3+1-0.15,-1-1-0.25) and (3+1-0.15,0-0.2) .. (3.5+1.5+0.15,0);
\draw[very thick] (-3.5-1.5-0.15, 0) .. controls (-3-1+0.15,0+0.2) and (-3-1+0.15,1+1+0.25) .. (-2-1,1+1+0.25);
\draw[very thick] (-3.5-1.5-0.15, 0) .. controls (-3-1+0.15,0-0.2) and (-3-1+0.15,-1-1-0.25) .. (-2-1,-1-1-0.25);

\draw[very thick] (-2-2.5,1+2.5) -- (2+2.5, 1+2.5);
\draw[very thick] (-2-2.5,-1-2.5) -- (2+2.5, -1-2.5);
\draw[very thick] (2+2.5,1+2.5) .. controls (3+3,1+2.5) and (3+3,0) .. (3.5+4,0);
\draw[very thick] (2+2.5,-1-2.5) .. controls (3+3,-1-2.5) and (3+3,0) .. (3.5+4,0);
\draw[very thick] (-3.5-4, 0) .. controls (-3-3,0) and (-3-3,1+2.5) .. (-2-2.5,1+2.5);
\draw[very thick] (-3.5-4, 0) .. controls (-3-3,0) and (-3-3,-1-2.5) .. (-2-2.5,-1-2.5);

\draw[very thick] (-2-2.5-1,1+2.5+1+0.25) -- (2+2.5+1, 1+2.5+1+0.25);
\draw[very thick] (-2-2.5-1,-1-2.5-1-0.25) -- (2+2.5+1, -1-2.5-1-0.25);
\draw[very thick] (2+2.5+1, 1+2.5+1+0.25) .. controls (3+3+1+0.5,1+2.5+1+0.25) and (3+3+1+0.5,0) .. (3.5+4+2,0);
\draw[very thick] (2+2.5+1,-1-2.5-1-0.25) .. controls (3+3+1+0.5,-1-2.5-1-0.25) and (3+3+1+0.5,0) .. (3.5+4+2,0);
\draw[very thick] (-3.5-4-2, 0) .. controls (-3-3-1-0.5,0) and (-3-3-1-0.5,1+2.5+1+0.25) .. (-2-2.5-1,1+2.5+1+0.25);
\draw[very thick] (-3.5-4-2, 0) .. controls (-3-3-1-0.5,0) and (-3-3-1-0.5,-1-2.5-1-0.25) .. (-2-2.5-1,-1-2.5-1-0.25);

\filldraw[black] (2,2.9) circle (1pt);
\filldraw[black] (2,2.9+0.25) circle (1pt);
\filldraw[black] (2,2.9-0.25) circle (1pt);

\filldraw[black] (-2,2.9) circle (1pt);
\filldraw[black] (-2,2.9+0.25) circle (1pt);
\filldraw[black] (-2,2.9-0.25) circle (1pt);

\filldraw[black] (2,-2.9) circle (1pt);
\filldraw[black] (2,-2.9-0.25) circle (1pt);
\filldraw[black] (2,-2.9+0.25) circle (1pt);

\filldraw[black] (-2,-2.9) circle (1pt);
\filldraw[black] (-2,-2.9-0.25) circle (1pt);
\filldraw[black] (-2,-2.9+0.25) circle (1pt);

\filldraw[black] (-6,0) circle (1pt);
\filldraw[black] (-6-0.25,0) circle (1pt);
\filldraw[black] (-6+0.25,0) circle (1pt);

\filldraw[black] (6,0) circle (1pt);
\filldraw[black] (6+0.25,0) circle (1pt);
\filldraw[black] (6-0.25,0) circle (1pt);

\node at (0,-1-2.5-1-0.25-0.5+0.1) {\normalsize$1$};
\node at (0,-1-2.5-0.5+0.1) {\normalsize$2$};
\node at (0,-1-1-0.25-0.5+0.1) {\normalsize$n-1$};
\node at (0,-1-0.5+0.1) {\normalsize$n$};

\node at (0,5.5+1.5) {\Large$U_{0}$};
\node at (-8.5,4) {\Large$U_{\mathrm{T}}$};
\node at (8.5,-4) {\Large$U_{\mathrm{B}}$};

\node at (7.5+3.5,5.5+1.5) {\Large$\mathbb{R}^{2}_{x,z}$};

\endscope
\end{tikzpicture}
\captionof{figure}{The front projection $ \Pi_{x,z}(\Lambda(e_n)) \subset \mathbb{R}^2 $ of the Legendrian link $\Lambda(e_n) \subset (\mathbb{R}^3, \xi_{\mathrm{std}})$, along with the finite open cover $\mathcal{U}_{\Lambda(e_n)} = \big\{ U_0, U_{\mathrm{B}}, U_{\mathrm{T}} \big\}$ of $\mathbb{R}^2$. The regions $ U_0 $, $U_{\mathrm{B}} $, and $ U_{\mathrm{T}}$ are depicted in purple, pink, and green, respectively.}
\label{Fig: open cover for the unlink}
\end{figure}
\end{construction}

Next, by applying the sheaf axioms to the open cover $\mathcal{U}_{\Lambda(e_{n})}$ of $\mathbb{R}^{2}$ introduced in Construction~\eqref{Const: finite open cover of R^2 for the unlink}, we provide a global characterization of $\sh{F}$ in terms of its local data and the associated gluing conditions. 

\begin{proposition}\label{Prop: linear map description of a sheaf for the unlink on n strands}
Let $e_{n}\in \mathrm{Br}_{n}^{+}$ be the trivial braid word, $\sh{F}$ an object of the category $\ccs{1}{e_{n}}$, and ~$\mathcal{U}_{\Lambda(e_{n})}=\big\{ U_{0}, U_{\mathrm{B}}, U_{\mathrm{T}}\big\}$ the open cover of~$\mathbb{R}^{2}$ introduced in Construction~\eqref{Const: finite open cover of R^2 for the unlink}. Then, the following statements hold: 
\begin{itemize}
\justifying
\item  On $U_{0}$, $\sh{F}$ is identically zero.
\item  On $U_{\mathrm{T}}$, $\sh{F}$ is specified by a collection of $n-1$ surjective linear maps $\big\{ \psi^{(i)}_{\sh{F}}:\mathbb{K}^{i+1}\to \mathbb{K}^{i} \big\}_{i=1}^{n-1}$, as illustrated in Figure~\eqref{Fig: an object F on the region U_T for the unlink}. 
\item  On $U_{\mathrm{B}}$, $\sh{F}$ is specified by a collection of $n-1$ injective linear maps $\big\{ \phi^{(i)}_{\sh{F}}:\mathbb{K}^{i}\to \mathbb{K}^{i+1}\big\}_{i=1}^{n-1}$, as illustrated in Figure~\eqref{Fig: an object F on the region U_B for the unlink}.
\item  \textbf{Compatibility conditions}: For each $i\in[1,n-1]$, 
\begin{equation*}
\psi^{(i)}_{\sh{F}}\circ \phi^{(i)}_{\sh{F}}=\mathrm{id}_{\mathbb{K}^{i}}\, .   
\end{equation*}
\end{itemize}
\end{proposition}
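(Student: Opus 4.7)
The plan is to apply the Shende--Treumann--Zaslow local models at arcs, cusps, and infinity to the open cover $\mathcal{U}_{\Lambda(e_n)}$, and to read off the data of $\sh{F}$ piece by piece. As a preliminary step, I will establish that $\sh{F}|_{U_0}\equiv 0$: since $U_0$ is disjoint from $\Pi_{x,z}(\Lambda(e_n))$, it is contained in the unique unbounded $2$-stratum of $\mathcal{S}_{\Lambda(e_n)}$, and the compact-support condition forces the stalks of $\sh{F}$ to vanish throughout this stratum. I will then compute the stalk dimensions in the remaining $2$-strata. These consist of $n$ bounded connected regions $R_1,\dots,R_n$ formed by the $n$ nested loops of $\Pi_{x,z}(\Lambda(e_n))$; iterating the microlocal-rank-$1$ jump prescribed by the binary Maslov potential (which raises the stalk dimension by $1$ each time a Maslov-$0$ strand is crossed from below or a Maslov-$1$ strand is crossed from above) yields $\dim_{\mathbb{K}}\sh{F}|_{R_i}=i$ for every $i\in[1,n]$. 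Fixing a basis of each $\sh{F}|_{R_i}$ identifies it with $\mathbb{K}^i$.

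Next, I will extract the linear maps carried by $\sh{F}$ on $U_B$ and $U_T$. On $U_B$, the $i$-th bottom strand is a Maslov-$0$ arc sitting between $R_{i-1}$ (below) and $R_i$ (above) in $\mathcal{S}_{\Lambda(e_n)}$, so the arc diagram of Sub-figure~\eqref{sub-fig: microsupport near an arc} produces a canonical restriction map $\sh{F}|_{R_{i-1}}\to\sh{F}|_{R_i}$; the rank-$1$ singular-support constraint forces this map to have $1$-dimensional cokernel, hence to be injective. Dropping the trivial outermost strand (the map $0\to\mathbb{K}^1$) yields the collection $\{\phi^{(i)}_{\sh{F}}:\mathbb{K}^i\hookrightarrow\mathbb{K}^{i+1}\}_{i=1}^{n-1}$. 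A symmetric argument on $U_T$ applies: each top strand is a Maslov-$1$ arc with $R_i$ below and $R_{i-1}$ above, producing a surjective restriction $\mathbb{K}^i\twoheadrightarrow\mathbb{K}^{i-1}$, and dropping the trivial outermost contribution gives $\{\psi^{(i)}_{\sh{F}}:\mathbb{K}^{i+1}\twoheadrightarrow\mathbb{K}^i\}_{i=1}^{n-1}$.

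The compatibility $\psi^{(i)}_{\sh{F}}\circ\phi^{(i)}_{\sh{F}}=\mathrm{id}_{\mathbb{K}^i}$ will be forced by the cusp condition. Fix $i\in[1,n-1]$ and consider the left cusp $c$ where the two strands of the $(i+1)$-th loop meet; locally, the inside region $I$ is identified with $R_{i+1}$ and the outside region $O$ with $R_i$. Using the arc isomorphisms $\sh{F}(s(d))\cong\sh{F}(O)$ and $\sh{F}(s(u))\cong\sh{F}(I)$ to translate the restriction maps $\sh{F}(s(d))\to\sh{F}(I)$ and $\sh{F}(s(u))\to\sh{F}(O)$ into $\phi^{(i)}_{\sh{F}}$ and $\psi^{(i)}_{\sh{F}}$ respectively, a chase around the commutative cusp diagram of Sub-figure~\eqref{sub-fig: microsupport near a cusp} shows that the direct isomorphism $\sh{F}(s(c))\xrightarrow{\cong}\sh{F}(O)$ equals the composite
\begin{equation*}
\sh{F}(s(c))\xrightarrow{\cong}\sh{F}(O)\xrightarrow{\phi^{(i)}_{\sh{F}}}\sh{F}(I)\xrightarrow{\psi^{(i)}_{\sh{F}}}\sh{F}(O).
\end{equation*}
Cancelling the iso $\sh{F}(s(c))\cong\sh{F}(O)$ then yields $\mathrm{id}_{\mathbb{K}^i}=\psi^{(i)}_{\sh{F}}\circ\phi^{(i)}_{\sh{F}}$, and the right cusp gives the same equation. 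The main technical obstacle I foresee is precisely this diagram chase: one must carefully verify that the restriction maps read off near each cusp coincide with the globally defined $\phi^{(i)}_{\sh{F}}$ and $\psi^{(i)}_{\sh{F}}$ and keep track of which $R_j$ plays the role of $I$ and $O$ at each cusp, but this is finite bookkeeping rather than a conceptual difficulty.
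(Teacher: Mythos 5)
Your proof follows essentially the same route as the paper: vanishing on $U_{0}$ from compact support, the arc models producing the maps $\phi^{(i)}_{\sh{F}}$ and $\psi^{(i)}_{\sh{F}}$ on $U_{\mathrm{B}}$ and $U_{\mathrm{T}}$, and the cusp diagram forcing $\psi^{(i)}_{\sh{F}}\circ\phi^{(i)}_{\sh{F}}=\mathrm{id}_{\mathbb{K}^{i}}$; your diagram chase at the cusps is correct (and is immediate under the paper's convention that the marked isomorphisms are identities).

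The one point to fix is where injectivity and surjectivity come from. You assert that "the rank-$1$ singular-support constraint forces this map to have $1$-dimensional cokernel, hence to be injective." In the combinatorial framework the paper actually works with (Section~3, item~(d)), the microlocal-rank condition at an arc is stated purely as a constraint on the \emph{dimensions} of the two adjacent stalks; it says nothing about the rank of the generization map, so within that framework a map $\mathbb{K}^{i}\to\mathbb{K}^{i+1}$ is not yet known to be injective. (Your claim is true in the full microlocal theory, but that is not the model being quoted.) The paper instead deduces injectivity of the $\phi^{(i)}_{\sh{F}}$ and surjectivity of the $\psi^{(i)}_{\sh{F}}$ from the identity $\psi^{(i)}_{\sh{F}}\circ\phi^{(i)}_{\sh{F}}=\mathrm{id}_{\mathbb{K}^{i}}$ via Lemma~\eqref{Lemma: cusp condition}. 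Since you establish that identity anyway, your argument is repaired with no new input: simply delete the arc-based injectivity claim and invoke Lemma~\eqref{Lemma: cusp condition} after the cusp computation.
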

\begin{proof} To begin, consider the front projection $\Pi_{x,z}(\Lambda(e_{n}))\subset \mathbb{R}^{2}$ of the Legendrian link $\lk{e_{n}}\subset (\mathbb{R}^{3},\xi_{\mathrm{std}})$, which is depicted in Figure~\eqref{fig: unlink on n-strands}. In particular, recall that in $\Pi_{x,z}(\Lambda(e_{n}))$, the strands at the bottom have Maslov potential $0$ and the strands at the top have Maslov potential $1$. Thus, by the microlocal rank conditions and the microlocal support conditions near the arcs, we obtain that:
\begin{itemize}
\justifying
\item For all $q\in U_{0}$, the stalk of $\sh{F}$ at $q$ is trivial; that is, $\sh{F}_{q}=0$. In other words, $\sh{F}$ is identically zero on $U_{0}$.
\item On $U_{\mathrm{T}}$, $\sh{F}$ is defined by a collections of $n-1$ linear maps $\big\{\psi^{(i)}_{\sh{F}}:\mathbb{K}^{i+1}\to \mathbb{K}^{i}\big\}_{i=1}^{n-1}$, as shown in Figure~\eqref{Fig: an object F on the region U_T for the unlink}.
\item On $U_{\mathrm{B}}$, $\sh{F}$ is defined by a collections of $n-1$ linear maps $\big\{\phi^{(i)}_{\sh{F}}:\mathbb{K}^{i}\to \mathbb{K}^{i+1}\big\}_{i=1}^{n-1}$, as shown in Figure~\eqref{Fig: an object F on the region U_B for the unlink}.
\end{itemize}

\begin{figure}
\begin{center}
\begin{tikzpicture}
\useasboundingbox (-7.5,+3.75) rectangle (7.5, -3.75);
\scope[transform canvas={scale=0.4}]

\draw[line width=0.06cm] (+1.75,-3-2.5) .. controls (+1.75+1.5+0.25,-3-2.5) and (+1.75+1.5, -3-2.5-1.25) .. (+1.75+2+0.25,-3-2.5-1.25);
\draw[line width=0.06cm] (-1.75,-3-2.5) .. controls (-1.75-1.5-0.25,-3-2.5) and (-1.75-1.5, -3-2.5-1.25) .. (-1.75-2-0.25,-3-2.5-1.25);

\draw[line width=0.06cm] (+1.75+1,-3) .. controls (+1.75+1+2+0.25+0.5,-3) and (+1.75+1+2+1.25,-3-2.5-1.25) .. (+1.75+2+3+0.5,-3-2.5-1.25);
\draw[line width=0.06cm] (-1.75-1,-3) .. controls (-1.75-1-2-0.25-0.5,-3) and (-1.75-1-2-1.25,-3-2.5-1.25) .. (-1.75-2-3-0.5,-3-2.5-1.25);

\draw[line width=0.06cm] (+1.75+3,+3) .. controls (+1.75+1.5+2+2+3+1,+3) and (+1.75+1.5+2+2+1+2,-3-2.5-1.25) .. (+1.75+2+3+5+3,-3-2.5-1.25);
\draw[line width=0.06cm] (-1.75-3,+3) .. controls (-1.75-1.5-2-2-3-1,+3) and (-1.75-1.5-2-2-1-2,-3-2.5-1.25) .. (-1.75-2-3-5-3,-3-2.5-1.25);

\draw[line width=0.06cm] (+1.75+4,+3+2.5) .. controls (+1.75+1.5+2+2+3+2+2,+3+2.5) and (+1.75+1.5+2+2+1+3+1,-3-2.5-1.25) .. (+1.75+2+3+5+3+2+2,-3-2.5-1.25);
\draw[line width=0.06cm] (-1.75-4,+3+2.5) .. controls (-1.75-1.5-2-2-3-2-2,+3+2.5) and (-1.75-1.5-2-2-1-3-1,-3-2.5-1.25) .. (-1.75-2-3-5-3-2-2,-3-2.5-1.25);

\draw[line width=0.06cm, dashed] (+1.75+2+0.25, -3-2.5-1.25) .. controls (+1.75+2+0.25-1+0.35,-3-2.5-1.25) and (+1.75+2+0.25-1+0.25,-3-2.5-1.25-0.25) .. (+1.75+2+0.25-1,-3-2.5-1.25-0.5);
\draw[line width=0.06cm, dashed] (+1.75+2+3+0.5, -3-2.5-1.25) .. controls (+1.75+2+3+0.5-1+0.35,-3-2.5-1.25) and (+1.75+2+3+0.5-1+0.25,-3-2.5-1.25-0.25) .. (+1.75+2+3+0.5-1,-3-2.5-1.25-0.5);
\draw[line width=0.06cm, dashed] (+1.75+2+3+5+3, -3-2.5-1.25) .. controls (+1.75+2+3+5+3-1+0.35,-3-2.5-1.25) and (+1.75+2+3+5+3-1+0.25,-3-2.5-1.25-0.25) .. (+1.75+2+3+5+3-1,-3-2.5-1.25-0.5);
\draw[line width=0.06cm, dashed] (+1.75+2+3+5+3+2+2, -3-2.5-1.25) .. controls (+1.75+2+3+5+3+2+2-1+0.35,-3-2.5-1.25) and (+1.75+2+3+5+3+2+2-1+0.25,-3-2.5-1.25-0.25) .. (+1.75+2+3+5+3+2+2-1,-3-2.5-1.25-0.5);

\draw[line width=0.06cm, dashed] (-1.75-2-0.25, -3-2.5-1.25) .. controls (-1.75-2-0.25+1-0.35,-3-2.5-1.25) and (-1.75-2-0.25+1-0.25,-3-2.5-1.25-0.25) .. (-1.75-2-0.25+1,-3-2.5-1.25-0.5);
\draw[line width=0.06cm, dashed] (-1.75-2-3-0.5, -3-2.5-1.25) .. controls (-1.75-2-3-0.5+1-0.35,-3-2.5-1.25) and (-1.75-2-3-0.5+1-0.25,-3-2.5-1.25-0.25) .. (-1.75-2-3-0.5+1,-3-2.5-1.25-0.5);
\draw[line width=0.06cm, dashed] (-1.75-2-3-5-3, -3-2.5-1.25) .. controls (-1.75-2-3-5-3+1-0.35,-3-2.5-1.25) and (-1.75-2-3-5-3+1-0.25,-3-2.5-1.25-0.25) .. (-1.75-2-3-5-3+1,-3-2.5-1.25-0.5);
\draw[line width=0.06cm, dashed] (-1.75-2-3-5-3-2-2, -3-2.5-1.25) .. controls (-1.75-2-3-5-3-2-2+1-0.35,-3-2.5-1.25) and (-1.75-2-3-5-3-2-2+1-0.25,-3-2.5-1.25-0.25) .. (-1.75-2-3-5-3-2-2+1,-3-2.5-1.25-0.5);

\draw[shorten <=0.5cm,  shorten >=0.5cm, line width=0.06cm] (-2.25-4, 3+2.5) -- (2.25+4, 3+2.5);
\draw[shorten <=0.5cm,  shorten >=0.5cm, line width=0.06cm] (-2.25-2-1, 3) -- (2.25+2+1, 3);
\draw[shorten <=0.5cm,  shorten >=0.5cm, line width=0.06cm] (-2.25-1, -3) -- (2.25+1, -3);
\draw[shorten <=0.5cm,  shorten >=0.5cm, line width=0.06cm] (-2.25, -3-2.5) -- (2.25, -3-2.5);

\node at (0,3+2.5+1.25) {\huge$0$};
\node at (0,3+1.25) {\huge$\mathbb{K}^{1}$};
\node at (0,3-1.25) {\huge$\mathbb{K}^{2}$};

\node at (0,-3+1.25) {\huge$\mathbb{K}^{n-2}$};
\node at (0,-3-1.25) {\huge$\mathbb{K}^{n-1}$};
\node at (0,-3-2.5-1.25) {\huge$\mathbb{K}^{n}$};

\draw[->, shorten <=0.45cm,  shorten >=0.45cm, line width=0.085cm, red] (0,3+2.5-1.25) -- (0,3+2.5+1.25);
\draw[->, shorten <=0.45cm,  shorten >=0.45cm, line width=0.085cm, red] (0,3-1.25) -- (0,3+1.25);
\draw[->, shorten <=0.45cm,  shorten >=0.45cm, line width=0.085cm, red] (0, -3-1.25) -- (0, -3+1.25);
\draw[->, shorten <=0.45cm,  shorten >=0.45cm, line width=0.085cm, red] (0, -3-2.5-1.25) -- (0, -3-2.5+1.25);

\node[right] at (+0.65,+3+2.5-0.6) {\huge$0$};
\node[right] at (+0.65,+3-0.6) {\huge$\psi^{\,(1)}_{\sh{F}}$};

\node[right] at (+0.65,-3-0.6) {\huge$\psi^{\,(n-2)}_{\sh{F}}$};
\node[right] at (+0.65,-3-2.5-0.6) {\huge$\psi^{\,(n-1)}_{\sh{F}}$};

\filldraw[black] (0,0+0.35) circle (1.25pt);
\filldraw[black] (0,0) circle (1.25pt);
\filldraw[black] (0,0-0.35) circle (1.25pt);

\filldraw[black] (-10-0.35,-3-2.5-1.25) circle (1.25pt);
\filldraw[black] (-10,-3-2.5-1.25) circle (1.25pt);
\filldraw[black] (-10+0.35,-3-2.5-1.25) circle (1.25pt);

\filldraw[black] (+10-0.35,-3-2.5-1.25) circle (1.25pt);
\filldraw[black] (+10,-3-2.5-1.25) circle (1.25pt);
\filldraw[black] (+10+0.35,-3-2.5-1.25) circle (1.25pt);

\node at (-1.75,3+2.5+0.5) {\Large$n$};
\node at (-1.75,3+0.5) {\Large$n-1$};
\node at (-1.75,-3+0.5) {\Large$2$};
\node at (-1.75,-3-2.5+0.5) {\Large$1$};

\draw[dotted, line width=0.075cm, color=green!360, double distance=0.2pt] (-19.5,-8.5) rectangle (19.5, 9.5);

\node at (0,8.25) {\huge$U_{\mathrm{T}}$};

\endscope    
\end{tikzpicture}
\end{center}
\caption{An object $\sh{F}$ of the category $\ccs{1}{e_{n}}$ on the region $U_{\mathrm{T}}$.}
\label{Fig: an object F on the region U_T for the unlink}
\end{figure}

\begin{figure}
\begin{center}
\begin{tikzpicture}
\useasboundingbox (-7.5,-3.75) rectangle (7.5, 3.75);
\scope[transform canvas={scale=0.4}]

\draw[line width=0.06cm] (+1.75,+3+2.5) .. controls (+1.75+1.5+0.25,+3+2.5) and (+1.75+1.5, +3+2.5+1.25) .. (+1.75+2+0.25,+3+2.5+1.25);
\draw[line width=0.06cm] (-1.75,+3+2.5) .. controls (-1.75-1.5-0.25,+3+2.5) and (-1.75-1.5, +3+2.5+1.25) .. (-1.75-2-0.25,+3+2.5+1.25);

\draw[line width=0.06cm] (+1.75+1,+3) .. controls (+1.75+1+2+0.25+0.5,+3) and (+1.75+1+2+1.25,+3+2.5+1.25) .. (+1.75+2+3+0.5,+3+2.5+1.25);
\draw[line width=0.06cm] (-1.75-1,+3) .. controls (-1.75-1-2-0.25-0.5,+3) and (-1.75-1-2-1.25,+3+2.5+1.25) .. (-1.75-2-3-0.5,+3+2.5+1.25);

\draw[line width=0.06cm] (+1.75+3,-3) .. controls (+1.75+1.5+2+2+3+1,-3) and (+1.75+1.5+2+2+1+2,+3+2.5+1.25) .. (+1.75+2+3+5+3,+3+2.5+1.25);
\draw[line width=0.06cm] (-1.75-3,-3) .. controls (-1.75-1.5-2-2-3-1,-3) and (-1.75-1.5-2-2-1-2,+3+2.5+1.25) .. (-1.75-2-3-5-3,+3+2.5+1.25);

\draw[line width=0.06cm] (+1.75+4,-3-2.5) .. controls (+1.75+1.5+2+2+3+2+2,-3-2.5) and (+1.75+1.5+2+2+1+3+1,+3+2.5+1.25) .. (+1.75+2+3+5+3+2+2,+3+2.5+1.25);
\draw[line width=0.06cm] (-1.75-4,-3-2.5) .. controls (-1.75-1.5-2-2-3-2-2,-3-2.5) and (-1.75-1.5-2-2-1-3-1,+3+2.5+1.25) .. (-1.75-2-3-5-3-2-2,+3+2.5+1.25);

\draw[line width=0.06cm, dashed] (+1.75+2+0.25, +3+2.5+1.25) .. controls (+1.75+2+0.25-1+0.35,+3+2.5+1.25) and (+1.75+2+0.25-1+0.25,+3+2.5+1.25+0.25) .. (+1.75+2+0.25-1,+3+2.5+1.25+0.5);
\draw[line width=0.06cm, dashed] (+1.75+2+3+0.5, +3+2.5+1.25) .. controls (+1.75+2+3+0.5-1+0.35,+3+2.5+1.25) and (+1.75+2+3+0.5-1+0.25,+3+2.5+1.25+0.25) .. (+1.75+2+3+0.5-1,+3+2.5+1.25+0.5);
\draw[line width=0.06cm, dashed] (+1.75+2+3+5+3, +3+2.5+1.25) .. controls (+1.75+2+3+5+3-1+0.35,+3+2.5+1.25) and (+1.75+2+3+5+3-1+0.25,+3+2.5+1.25+0.25) .. (+1.75+2+3+5+3-1,+3+2.5+1.25+0.5);
\draw[line width=0.06cm, dashed] (+1.75+2+3+5+3+2+2, +3+2.5+1.25) .. controls (+1.75+2+3+5+3+2+2-1+0.35,+3+2.5+1.25) and (+1.75+2+3+5+3+2+2-1+0.25,+3+2.5+1.25+0.25) .. (+1.75+2+3+5+3+2+2-1,+3+2.5+1.25+0.5);

\draw[line width=0.06cm, dashed] (-1.75-2-0.25, +3+2.5+1.25) .. controls (-1.75-2-0.25+1-0.35,+3+2.5+1.25) and (-1.75-2-0.25+1-0.25,+3+2.5+1.25+0.25) .. (-1.75-2-0.25+1,+3+2.5+1.25+0.5);
\draw[line width=0.06cm, dashed] (-1.75-2-3-0.5, +3+2.5+1.25) .. controls (-1.75-2-3-0.5+1-0.35,+3+2.5+1.25) and (-1.75-2-3-0.5+1-0.25,+3+2.5+1.25+0.25) .. (-1.75-2-3-0.5+1,+3+2.5+1.25+0.5);
\draw[line width=0.06cm, dashed] (-1.75-2-3-5-3, +3+2.5+1.25) .. controls (-1.75-2-3-5-3+1-0.35,+3+2.5+1.25) and (-1.75-2-3-5-3+1-0.25,+3+2.5+1.25+0.25) .. (-1.75-2-3-5-3+1,+3+2.5+1.25+0.5);
\draw[line width=0.06cm, dashed] (-1.75-2-3-5-3-2-2, +3+2.5+1.25) .. controls (-1.75-2-3-5-3-2-2+1-0.35,+3+2.5+1.25) and (-1.75-2-3-5-3-2-2+1-0.25,+3+2.5+1.25+0.25) .. (-1.75-2-3-5-3-2-2+1,+3+2.5+1.25+0.5);

\draw[shorten <=0.5cm,  shorten >=0.5cm, line width=0.06cm] (-2.25-4, -3-2.5) -- (2.25+4, -3-2.5);
\draw[shorten <=0.5cm,  shorten >=0.5cm, line width=0.06cm] (-2.25-2-1, -3) -- (2.25+2+1, -3);
\draw[shorten <=0.5cm,  shorten >=0.5cm, line width=0.06cm] (-2.25-1, +3) -- (2.25+1, +3);
\draw[shorten <=0.5cm,  shorten >=0.5cm, line width=0.06cm] (-2.25, +3+2.5) -- (2.25, +3+2.5);

\node at (0,-3-2.5-1.25) {\huge$0$};
\node at (0,-3-1.25) {\huge$\mathbb{K}^{1}$};
\node at (0,-3+1.25) {\huge$\mathbb{K}^{2}$};

\node at (0,+3-1.25) {\huge$\mathbb{K}^{n-2}$};
\node at (0,+3+1.25) {\huge$\mathbb{K}^{n-1}$};
\node at (0,+3+2.5+1.25) {\huge$\mathbb{K}^{n}$};

\draw[->, shorten <=0.45cm,  shorten >=0.45cm, line width=0.085cm, blue] (0,-3-2.5-1.25) -- (0,-3-2.5+1.25);
\draw[->, shorten <=0.45cm,  shorten >=0.45cm, line width=0.085cm, blue] (0,-3-1.25) -- (0,-3+1.25);
\draw[->, shorten <=0.45cm,  shorten >=0.45cm, line width=0.085cm, blue] (0, +3-1.25) -- (0, +3+1.25);
\draw[->, shorten <=0.45cm,  shorten >=0.45cm, line width=0.085cm, blue] (0, +3+2.5-1.25) -- (0, +3+2.5+1.25);

\node[right] at (+0.65,-3-2.5-0.6) {\huge$0$};
\node[right] at (+0.65,-3-0.6) {\huge$\phi^{\,(1)}_{\sh{F}}$};

\node[right] at (+0.65,+3-0.6) {\huge$\phi^{\,(n-2)}_{\sh{F}}$};
\node[right] at (+0.65,+3+2.5-0.6) {\huge$\phi^{\,(n-1)}_{\sh{F}}$};

\filldraw[black] (0,0-0.35) circle (1.25pt);
\filldraw[black] (0,0) circle (1.25pt);
\filldraw[black] (0,0+0.35) circle (1.25pt);

\filldraw[black] (-10-0.35,+3+2.5+1.25) circle (1.25pt);
\filldraw[black] (-10,+3+2.5+1.25) circle (1.25pt);
\filldraw[black] (-10+0.35,+3+2.5+1.25) circle (1.25pt);

\filldraw[black] (+10-0.35,+3+2.5+1.25) circle (1.25pt);
\filldraw[black] (+10,+3+2.5+1.25) circle (1.25pt);
\filldraw[black] (+10+0.35,+3+2.5+1.25) circle (1.25pt);

\node at (-1.75,-3-2.5-0.5) {\Large$1$};
\node at (-1.75,-3-0.5) {\Large$2$};
\node at (-1.75,+3-0.5) {\Large$n-1$};
\node at (-1.75,+3+2.5-0.5) {\Large$n$};

\draw[dotted, line width=0.075cm, color=pink!360, double distance=0.2pt] (-19.5,+8.5) rectangle (19.5, -9.5);

\node at (0,-8.25) {\huge$U_{\mathrm{B}}$};

\endscope    
\end{tikzpicture}
\end{center}
\caption{An object $\sh{F}$ of the category $\ccs{1}{e_{n}}$ on the region $U_{\mathrm{B}}$.}
\label{Fig: an object F on the region U_B for the unlink}
\end{figure}

Now, note that the intersection $U_{\mathrm{B}}\,\cap\,U_{\mathrm{T}}\,\cap\,\Pi_{x,z}(\Lambda(e_{n}))$ consists of the $n$ left cusps and the $n$ right cusps in $\Pi_{x,z}(\Lambda(e_{n}))$. Thus, on $U_{\mathrm{B}}\,\cap\,U_{\mathrm{T}}$, $\sh{F}$ is described by the diagram in Figure~\eqref{Fig: an object F on the intersection of U_B and U_T for the unlink}. Bearing this in mind, the microlocal support conditions near the cusps establish that:
\begin{equation}\label{cusp conditions for the unlink}
\psi_{\sh{F}}^{(i)}\circ \phi_{\sh{F}}^{(i)}=\mathrm{id}_{\mathbb{K}^{i}}\, ,
\end{equation}
for each $i\in[1, n-1]$. Consequently, a direct application of Lemma~\eqref{Lemma: cusp condition} yields that the maps $\big\{\psi_{\sh{F}}^{(i)} \big\}_{i=1}^{n-1}$ are surjective, whereas the maps $\big\{\phi_{\sh{F}}^{(i)} \big\}_{i=1}^{n-1}$ are injective. This concludes the proof.

\begin{figure}
\begin{center}
\begin{tikzpicture}
\useasboundingbox (-8.25,-2) rectangle (8.25, 2);
\scope[transform canvas={scale=0.35}]

\draw[line width=0.05cm] (-2.5-2.5,0) .. controls (-1-0.75-2.5,0) and (-1+0.5-2.5,0.75) .. (0-2.5,1.5);
\draw[line width=0.05cm] (-2.5-2.5,0) .. controls (-1-0.75-2.5,-0) and (-1+0.5-2.5,-0.75) .. (0-2.5,-1.5);

\node[right] at (0-2.5,1.5+0.5) {\Large$1$};
\node[right] at (0-2.5,-1.5-0.5) {\Large$n$};

\node[right] at (0-2.5,0) {\huge$\mathbb{K}^{n}$};
\node[right] at (-4.25-2.5,0) {\huge$\mathbb{K}^{n-1}$};

\draw[->, shorten <=0.85cm,  shorten >=0.85cm, line width=0.075cm, red] (0+0.65-2.5,0) -- (-4+0.65-2.5, 1.5);
\draw[->, shorten <=0.85cm,  shorten >=0.85cm, line width=0.075cm, blue] (-4+0.65-2.5,-1.5) -- (0+0.65-2.5,0);

\node at (-1.75+0.5-2.5,+1.75) {\huge$\psi^{\,(n-1)}$};
\node at (-1.75+0.5-2.5,-1.75) {\huge$\phi^{\,(n-1)}_{\sh{F}}$};


\draw[line width=0.05cm] (-2.5-4.25-2.5,0) .. controls (-1-0.75-4.25-2.5,0) and (-1+0.5-4.25-2.5,0.75) .. (0-4.25-2.5,1.5);
\draw[line width=0.05cm] (-2.5-4.25-2.5,0) .. controls (-1-0.75-4.25-2.5,-0) and (-1+0.5-4.25-2.5,-0.75) .. (0-4.25-2.5,-1.5);

\node[right] at (0-4.25-2.5,1.5+0.5) {\Large$2$};
\node[right] at (0-4.25-2.5,-1.5-0.5) {\Large$n-1$};

\node at (-4-4.25-2.5,0) {\huge$\mathbb{K}^{n-2}$};

\draw[->, shorten <=0.85cm,  shorten >=0.85cm, line width=0.075cm, red] (0-4.25+0.65-2.5,0) -- (-4-4.25+0.65-2.5, 1.5);
\draw[->, shorten <=0.85cm,  shorten >=0.85cm, line width=0.075cm, blue] (-4-4.25+0.65-2.5,-1.5) -- (0-4.25+0.65-2.5,0);

\node at (-1.75-4.25+0.5-2.5,+1.75) {\huge$\psi^{\,(n-2)}$};
\node at (-1.75-4.25+0.5-2.5,-1.75) {\huge$\phi^{\,(n-2)}_{\sh{F}}$};


\filldraw[black] (-10.25-0.35-2.5,0+0) circle (1.25pt);
\filldraw[black] (-10.25-2.5,0) circle (1.25pt);
\filldraw[black] (-10.25+0.35-2.5,0) circle (1.25pt);


\draw[line width=0.05cm] (-2.5-12.25-2.5,0) .. controls (-1-0.75-12.25-2.5,0) and (-1+0.5-12.25-2.5,0.75) .. (0-12.25-2.5,1.5);
\draw[line width=0.05cm] (-2.5-12.25-2.5,0) .. controls (-1-0.75-12.25-2.5,-0) and (-1+0.5-12.25-2.5,-0.75) .. (0-12.25-2.5,-1.5);

\node[right] at (0-12.25-2.5,1.5+0.5) {\Large$n-1$};
\node[right] at (0-12.25-2.5,-1.5-0.5) {\Large$2$};

\node[right] at (0-12.25-2.5,0) {\huge$\mathbb{K}^{2}$};
\node[right] at (-4.25-12.25-2.5,0) {\huge$\mathbb{K}^{1}$};

\draw[->, shorten <=0.85cm,  shorten >=0.85cm, line width=0.075cm, red] (0-12.25+0.65-2.5,0) -- (-4-12.25+0.65-2.5, 1.5);
\draw[->, shorten <=0.85cm,  shorten >=0.85cm, line width=0.075cm, blue] (-4-12.25+0.65-2.5,-1.5) -- (0-12.25+0.65-2.5,0);

\node at (-1.75-12.25+0.5-2.5,+1.75) {\huge$\psi^{\,(1)}$};
\node at (-1.75-12.25+0.5-2.5,-1.75) {\huge$\phi^{\,(1)}_{\sh{F}}$};


\draw[line width=0.05cm] (-2.5-12.25-4.25-2.5,0) .. controls (-1-0.75-12.25-4.25-2.5,0) and (-1+0.5-12.25-4.25-2.5,0.75) .. (0-12.25-4.25-2.5,1.5);
\draw[line width=0.05cm] (-2.5-12.25-4.25-2.5,0) .. controls (-1-0.75-12.25-4.25-2.5,-0) and (-1+0.5-12.25-4.25-2.5,-0.75) .. (0-12.25-4.25-2.5,-1.5);

\node[right] at (0-12.25-4.25-2.5,1.5+0.5) {\Large$n$};
\node[right] at (0-12.25-4.25-2.5,-1.5-0.5) {\Large$1$};

\node[right] at (-4-12.25-4.25-2.5,0) {\huge$0$};

\draw[->, shorten <=0.85cm,  shorten >=0.85cm, line width=0.075cm, red] (0-12.25-4.25+0.65-2.5,0) -- (-4-12.25-4.25+0.65-2.5, 1.5);
\draw[->, shorten <=0.85cm,  shorten >=0.85cm, line width=0.075cm, blue] (-4-12.25-4.25+0.65-2.5,-1.5) -- (0-12.25-4.25+0.65-2.5,0);

\node at (-1.75-12.25-4.25+0.5-2.5,+1.75) {\huge$0$};
\node at (-1.75-12.25-4.25+0.5-2.5,-1.75) {\huge$0$};


\draw[dotted, line width=0.075cm, color=green!360, double distance=0.2pt] (-22-2.5,-5) rectangle (22+2.5, 5);
\draw[dotted, line width=0.075cm, color=yellow!360, double distance=0.2pt] (-22+0.25-2.5,-5+0.25) rectangle (22-0.25+2.5, 5-0.25);

\node at (0,3.75) {\huge$U_{\mathrm{B}}\,\cap\, U_{\mathrm{T}}$};

\filldraw[black] (0+0.35,0+0) circle (1.25pt);
\filldraw[black] (0,0) circle (1.25pt);
\filldraw[black] (0-0.35,0) circle (1.25pt);


\draw[line width=0.05cm] (+2.5+2.5,0) .. controls (+1+0.75+2.5,0) and (+1-0.5+2.5,0.75) .. (0+2.5,1.5);
\draw[line width=0.05cm] (+2.5+2.5,0) .. controls (+1+0.75+2.5,-0) and (+1-0.5+2.5,-0.75) .. (0+2.5,-1.5);

\node[left] at (0+2.5,1.5+0.5) {\Large$1$};
\node[left] at (0+2.5,-1.5-0.5) {\Large$n$};

\node[left] at (0+2.5,0) {\huge$\mathbb{K}^{n}$};
\node[left] at (+4.25+2.5,0) {\huge$\mathbb{K}^{n-1}$};

\draw[->, shorten <=0.85cm,  shorten >=0.85cm, line width=0.075cm, red] (0-0.65+2.5,0) -- (+4-0.65+2.5, 1.5);
\draw[->, shorten <=0.85cm,  shorten >=0.85cm, line width=0.075cm, blue] (+4-0.65+2.5,-1.5) -- (0-0.65+2.5,0);

\node at (+1.75-0.5+2.5,+1.75) {\huge$\psi^{\,(n-1)}$};
\node at (+1.75-0.5+2.5,-1.75) {\huge$\phi^{\,(n-1)}_{\sh{F}}$};


\draw[line width=0.05cm] (+2.5+4.25+2,0) .. controls (+1+0.75+4.25+2.5,0) and (+1-0.5+4.25+2.5,0.75) .. (0+4.25+2.5,1.5);
\draw[line width=0.05cm] (+2.5+4.25+2,0) .. controls (+1+0.75+4.25+2.5,-0) and (+1-0.5+4.25+2.5,-0.75) .. (0+4.25+2.5,-1.5);

\node[left] at (0+4.25+2.5,1.5+0.5) {\Large$2$};
\node[left] at (0+4.25+2.5,-1.5-0.5) {\Large$n-1$};

\node at (+4+4.25+2.5,0) {\huge$\mathbb{K}^{n-2}$};

\draw[->, shorten <=0.85cm,  shorten >=0.85cm, line width=0.075cm, red] (0+4.25-0.65+2.5,0) -- (+4+4.25-0.65+2.5, 1.5);
\draw[->, shorten <=0.85cm,  shorten >=0.85cm, line width=0.075cm, blue] (+4+4.25-0.65+2.5,-1.5) -- (0+4.25-0.65+2.5,0);

\node at (+1.75+4.25-0.5+2.5,+1.75) {\huge$\psi^{\,(n-2)}$};
\node at (+1.75+4.25-0.5+2.5,-1.75) {\huge$\phi^{\,(n-2)}_{\sh{F}}$};


\filldraw[black] (+10.25+0.35+2.5,0+0) circle (1.25pt);
\filldraw[black] (+10.25+2.5,0) circle (1.25pt);
\filldraw[black] (+10.25-0.35+2.5,0) circle (1.25pt);


\draw[line width=0.05cm] (+2.5+12.25+2.5,0) .. controls (+1+0.75+12.25+2.5,0) and (+1-0.5+12.25+2.5,0.75) .. (0+12.25+2.5,1.5);
\draw[line width=0.05cm] (+2.5+12.25+2.5,0) .. controls (+1+0.75+12.25+2.5,-0) and (+1-0.5+12.25+2.5,-0.75) .. (0+12.25+2.5,-1.5);

\node[left] at (0+12.25+2.5,1.5+0.5) {\Large$n-1$};
\node[left] at (0+12.25+2.5,-1.5-0.5) {\Large$2$};

\node[left] at (0+12.25+2.5,0) {\huge$\mathbb{K}^{2}$};
\node[left] at (+4+12.25+2.5,0) {\huge$\mathbb{K}^{1}$};

\draw[->, shorten <=0.85cm,  shorten >=0.85cm, line width=0.075cm, red] (0+12.25-0.65+2.5,0) -- (+4+12.25-0.65+2.5, 1.5);
\draw[->, shorten <=0.85cm,  shorten >=0.85cm, line width=0.075cm, blue] (+4+12.25-0.65+2.5,-1.5) -- (0+12.25-0.65+2.5,0);

\node at (+1.75+12.25-0.5+2.5,+1.75) {\huge$\psi^{\,(1)}$};
\node at (+1.75+12.25-0.5+2.5,-1.75) {\huge$\phi^{\,(1)}_{\sh{F}}$};


\draw[line width=0.05cm] (+2.5+12.25+4.25+2.5,0) .. controls (+1+0.75+12.25+4.25+2.5,0) and (+1-0.5+12.25+4.25+2.5,0.75) .. (0+12.25+4.25+2.5,1.5);
\draw[line width=0.05cm] (+2.5+12.25+4.25+2.5,0) .. controls (+1+0.75+12.25+4.25+2.5,-0) and (+1-0.5+12.25+4.25+2.5,-0.75) .. (0+12.25+4.25+2.5,-1.5);

\node[left] at (0+12.25+4.25+2.5,1.5+0.5) {\Large$n$};
\node[left] at (0+12.25+4.25+2.5,-1.5-0.5) {\Large$1$};

\node[left] at (+4+12.25+4.25+2.5,0) {\huge$0$};

\draw[->, shorten <=0.85cm,  shorten >=0.85cm, line width=0.075cm, red] (0+12.25+4.25-0.65+2.5,0) -- (+4+12.25+4.25-0.65+2.5, 1.5);
\draw[->, shorten <=0.85cm,  shorten >=0.85cm, line width=0.075cm, blue] (+4+12.25+4.25-0.65+2.5,-1.5) -- (0+12.25+4.25-0.65+2.5,0);

\node at (+1.75+12.25+4.25-0.5+2.5,+1.75) {\huge$0$};
\node at (+1.75+12.25+4.25-0.5+2.5,-1.75) {\huge$0$};

\endscope    
\end{tikzpicture}
\end{center}
\caption{An object $\sh{F}$ of the category $\ccs{1}{e_{n}}$ on the intersection $U_{\mathrm{B}}\,\cap\,U_{\mathrm{T}}$.}
\label{Fig: an object F on the intersection of U_B and U_T for the unlink}
\end{figure}
\end{proof}

Finally, we formalize how the description of $\sh{F}$ in terms of linear maps, as established in Proposition~\eqref{Prop: linear map description of a sheaf for the unlink on n strands}, naturally gives rise to a geometric characterization via complete flags in $\mathbb{K}^{n}$. 

\begin{proposition}\label{Sheaves for the unlink on n strands}
Let $e_{n}\in \mathrm{Br}_{n}^{+}$ be the trivial braid word, $\sh{F}$ an object of the category $\ccs{1}{e_{n}}$, and $\mathcal{U}_{\Lambda(e_{n})}=\big\{ U_{0}, U_{\mathrm{B}}, U_{\mathrm{T}}\big\}$ the open cover of~$\mathbb{R}^{2}$ introduced in Construction~\eqref{Const: finite open cover of R^2 for the unlink}. Then, the following statements hold: 
\begin{itemize}
\item On $U_{\mathrm{0}}$, $\sh{F}$ is identically zero.
\item On $U_{\mathrm{T}}$, $\sh{F}$ is characterized by a complete flag $\fl{F}_{0}$ in $\mathbb{K}^{n}$. 
\item On $U_{\mathrm{B}}$, $\sh{F}$ is characterized by a complete flag $\fl{F}_{1}$ in $\mathbb{K}^{n}$. 
\item \textbf{Compatibility condition}: $\fl{F}_{0}$ and $\fl{F}_{1}$ are completely opposite. 
\end{itemize}
\end{proposition}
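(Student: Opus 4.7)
The plan is to invoke Proposition~\eqref{Prop: linear map description of a sheaf for the unlink on n strands} to reduce the proposition to a purely linear-algebraic statement about the maps attached to $\sh{F}$ on $U_{\mathrm{B}}$ and $U_{\mathrm{T}}$, and then to package those maps into flags using the type $\mathcal{I}$ and type $\mathcal{K}$ constructions of Definition~\eqref{Def:flags and adapted bases}. Concretely, Proposition~\eqref{Prop: linear map description of a sheaf for the unlink on n strands} produces a collection of surjective linear maps $\big\{\psi^{(i)}_{\sh{F}}:\mathbb{K}^{i+1}\to\mathbb{K}^{i}\big\}_{i=1}^{n-1}$ encoding $\sh{F}$ on $U_{\mathrm{T}}$, a collection of injective linear maps $\big\{\phi^{(i)}_{\sh{F}}:\mathbb{K}^{i}\to\mathbb{K}^{i+1}\big\}_{i=1}^{n-1}$ encoding $\sh{F}$ on $U_{\mathrm{B}}$, and the compatibility identities $\psi^{(i)}_{\sh{F}}\circ\phi^{(i)}_{\sh{F}}=\mathrm{id}_{\mathbb{K}^{i}}$ at the cusps. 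Its triviality on $U_{0}$ is already built into the statement.

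From this, I would define
\begin{equation*}
\fl{F}_{0}:=\prescript{}{\mathcal{K}\,}{\fl{F}}\big(\psi^{(1)}_{\sh{F}},\dots,\psi^{(n-1)}_{\sh{F}}\big)\, ,\qquad \fl{F}_{1}:=\prescript{}{\mathcal{I}\,}{\fl{F}}\big(\phi^{(1)}_{\sh{F}},\dots,\phi^{(n-1)}_{\sh{F}}\big)\, ,
\end{equation*}
both filtrations of $\mathbb{K}^{n}$. Completeness of $\fl{F}_{1}$ follows from Lemma~\eqref{Lemma: type I flag} applied to the injective collection $\big\{\phi^{(i)}_{\sh{F}}\big\}_{i=1}^{n-1}$, while completeness of $\fl{F}_{0}$ follows from Lemma~\eqref{Lemma: type K flag} applied to the surjective collection $\big\{\psi^{(i)}_{\sh{F}}\big\}_{i=1}^{n-1}$. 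Finally, the cusp compatibility identities $\psi^{(i)}_{\sh{F}}\circ\phi^{(i)}_{\sh{F}}=\mathrm{id}_{\mathbb{K}^{i}}$ are precisely the hypotheses of Lemma~\eqref{Lemma: completely opposite flags from linear maps}, whose conclusion is that $\fl{F}_{0}$ and $\fl{F}_{1}$ are completely opposite. This establishes the existence of the flag data and the compatibility clause in the proposition.

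The only genuine subtlety, and what I expect to be the main point requiring care, is the verbal claim that $\sh{F}$ is \emph{characterized by} the pair $(\fl{F}_{0},\fl{F}_{1})$ rather than merely producing it as an invariant. To settle this I would invoke Definition~\eqref{Def:flags and adapted bases}--\eqref{Def: adapted bases I}--\eqref{Def: adapted bases II} and the remarks following Lemmas~\eqref{Lemma: I type flag in adapted bases for injective maps} and~\eqref{Lemma: K type flag in adapted bases for surjective maps}: given the completely opposite flags together with a fixed basis of $\mathbb{K}^{n}$, one obtains a unique system of bases on the intermediate spaces $\mathbb{K}^{i}$ in which each $\phi^{(i)}_{\sh{F}}$ is the standard inclusion $\iota^{(i+1,i)}$ and each $\psi^{(i)}_{\sh{F}}$ is the standard projection $\pi^{(i,i+1)}$. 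Thus the flag pair, together with the implicit bases, recovers the structure maps, and hence $\sh{F}$ itself, completing the characterization.
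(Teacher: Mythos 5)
Your proposal is correct and follows essentially the same route as the paper's proof: reduce to the linear-map description from Proposition~\eqref{Prop: linear map description of a sheaf for the unlink on n strands}, form $\fl{F}_{0}$ and $\fl{F}_{1}$ as the type $\mathcal{K}$ and type $\mathcal{I}$ flags, and apply Lemma~\eqref{Lemma: completely opposite flags from linear maps} to the cusp identities for complete opposition. Your closing remark on why the flag pair genuinely \emph{characterizes} $\sh{F}$ (via adapted bases) is a reasonable addition that the paper defers to its later algebraic discussion rather than addressing inside this proof.
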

\begin{proof}
By Proposition~\eqref{Prop: linear map description of a sheaf for the unlink on n strands}, we know that:
\begin{itemize}
\justifying
\item  On $U_{0}$, $\sh{F}$ is identically zero.
\item  On $U_{\mathrm{T}}$, $\sh{F}$ is specified by a collection of $n-1$ surjective linear maps $\big\{ \psi^{(i)}_{\sh{F}}:\mathbb{K}^{i+1}\to \mathbb{K}^{i} \big\}_{i=1}^{n-1}$.
\item  On $U_{\mathrm{B}}$, $\sh{F}$ is specified by a collection of $n-1$ injective linear maps $\big\{ \phi^{(i)}_{\sh{F}}:\mathbb{K}^{i}\to \mathbb{K}^{i+1}\big\}_{i=1}^{n-1}$.
\item  \textbf{Compatibility conditions}: For each $i\in[1,n-1]$, 
\begin{equation*}
\psi^{(i)}_{\sh{F}}\circ \phi^{(i)}_{\sh{F}}=\mathrm{id}_{\mathbb{K}^{i}}\, .   
\end{equation*}
\end{itemize}
To verify the result, it suffices to show that the above data determines a pair of completely opposite flags $\fl{F}_{0}$ and $\fl{F}_{1}$ in $\mathbb{K}^{n}$, thereby providing a geometric description of $\sh{F}$. To this end, relying on the notions of type $\mathcal{I}$ and type $\mathcal{K}$ flags introduced in Definition~\eqref{Def:flags and adapted bases}--\eqref{Def: type I flag}--\eqref{Def: type K flag}, we proceed as follows:

\begin{enumerate}
\justifying
\item Since the maps $\big\{\psi^{(i)}_{\sh{F}}\big\}_{i=1}^{n-1}$ are surjective, we assign to $\sh{F}$ on $U_{\mathrm{T}}$ the complete flag $\fl{F}_{0}$ in $\mathbb{K}^{n}$ given by
\begin{equation*}
\fl{F}_{0}:=\prescript{}{\mathcal{K}\,}{\fl{F}}\big(\,\psi^{(1)}_{\sh{F}},\dots,\psi^{(n-1)}_{\sh{F}}\,\big)\, .
\end{equation*}

\item Since the maps $\big\{\phi^{(i)}_{\sh{F}}\big\}_{i=1}^{n-1}$ are injective, we assign to $\sh{F}$ on $U_{\mathrm{B}}$ the complete flag $\fl{F}_{1}$ in $\mathbb{K}^{n}$ given by
\begin{equation*}
\fl{F}_{1}:=\prescript{}{\mathcal{I}\,}{\fl{F}}\big(\,\phi^{(1)}_{\sh{F}},\dots,\phi^{(n-1)}_{\sh{F}}\,\big)\, .
\end{equation*}
\end{enumerate}

Finally, observe that the compatibility conditions on the maps $\big\{\psi^{(i)}_{\sh{F}}\big\}_{i=1}^{n-1}$ and $\big\{\phi^{(i)}_{\sh{F}}\big\}_{i=1}^{n-1}$allow a direct application of Lemma~\eqref{Lemma: completely opposite flags from linear maps}, which shows that $\fl{F}_{0}$ and $\fl{F}_{1}$ are completely opposite, as desired.
\end{proof}

With the above results at hand, we consider the analysis of the objects of the category $\ccs{1}{e_{n}}$ complete. Next, we extend our discussion to general positive braid words $\beta\in \mathrm{Br}^{+}_{n}$, aiming to establish a geometric characterization of the objects of the category $\ccs{1}{\beta}$.

\subsection{A Geometric Characterization of the Objects for General Positive Braids}
Let $\beta:=\sigma_{i_{1}}\cdots\sigma_{i_{\ell}}\in\mathrm{Br}^{+}_{n}$ be a positive braid word. In this subsection, we focus our attention on the study of the objects of the category $\ccs{1}{\beta}$. In particular, our goal is to describe these objects geometrically, thereby extending the approach previously developed for the Legendrian unlink $\Lambda(e_{n})\subset (\mathbb{R}^{3},\xi_{\mathrm{std}})$ on $n$ strands. Following~\cite{STZ1}, we provide a characterization of the objects of the category $\ccs{1}{\beta}$ using configurations of complete flags in $\mathbb{K}^{n}$ subject to certain compatibility conditions determined by the singularities in the front projection $\Pi_{x,z}(\Lambda(\beta))\subset \mathbb{R}^{2}$ of the Legendrian link $\Lambda(\beta)\subset (\mathbb{R}^{3},\xi_{\mathrm{std}})$. 

Let $\sh{F}$ be an object of the category $\ccs{1}{\beta}$. Next, we construct an open cover of $\,\mathbb{R}^{2}$ adapted to the front projection $\Pi_{x,z}(\Lambda(\beta))$. The purpose of this construction is to decompose the plane into simple regions where the local behavior of $\sh{F}$ can be examined independently and subsequently assembled into a concise global description.

\begin{construction}\label{Cons: Finite open cover for R^2}
Let $\beta:=\sigma_{i_{1}}\cdots\sigma_{i_{\ell}} \in \mathrm{Br}^{+}_{n}$ be a positive braid word. Given the front projection $\Pi_{x,z}(\Lambda(\beta))\subset \mathbb{R}^{2}$ of the Legendrian link $\Lambda(\beta)\subset(\mathbb{R}^{3}, \xi_{\mathrm{std}})$, we introduce $\mathcal{U}_{\Lambda(\beta)}:=\big\{U_{0}, U_{\mathrm{B}}, U_{\mathrm{L}}, U_{\mathrm{R}}, U_{\mathrm{T}} \big\}$ to denote the finite open cover of $\,\mathbb{R}^{2}$ illustrated in Figure~\eqref{Front diagram decomposed into two regions}. Specifically, we assume that:
\begin{itemize}
\justifying
\item $U_{0}$ is an unbounded open subset of $\,\mathbb{R}^{2}$ such that $U_{0}\,\cap\,\Pi_{x,z}(\Lambda(\beta))=\emptyset$. In Figure \eqref{Front diagram decomposed into two regions}, the region $U_{0}$ is depicted in purple. 

\item $U_{\mathrm{B}}$ is a bounded open subset of $\,\mathbb{R}^{2}$ such that the intersection $U_{\mathrm{B}}\,\cap\, \Pi_{x,z}(\Lambda(\beta))$ comprises the braid diagram of $\beta$. In Figure \eqref{Front diagram decomposed into two regions}, the region $U_{\mathrm{B}}$ is illustrated in pink. 

\item $U_{\mathrm{L}}$ is a bounded open subset of $\,\mathbb{R}^{2}$ such that the intersection $U_{\mathrm{L}}\,\cap\, \Pi_{x,z}(\Lambda(\beta))$ consists of the $n$ arcs on the left side of the braid diagram of $\beta$. In Figure \eqref{Front diagram decomposed into two regions}, the region $U_{\mathrm{L}}$ is shown in yellow.

\item $U_{\mathrm{R}}$ is a bounded open subset of $\,\mathbb{R}^{2}$ such that the intersection $U_{\mathrm{R}}\,\cap\, \Pi_{x,z}(\Lambda(\beta))$ consists of the $n$ arcs on the right side of the braid diagram of $\beta$. In Figure \eqref{Front diagram decomposed into two regions}, the region $U_{\mathrm{R}}$ is also depicted in yellow. 

\item $U_{\mathrm{T}}$ is a bounded open subset of $\,\mathbb{R}^{2}$ such that the intersection $U_{\mathrm{T}}\,\cap\, \Pi_{x,z}(\Lambda(\beta))$ consists of the rainbow on $n$ strands that closes the braid diagram of $\beta$ in $\mathbb{R}^{2}$ to form the front projection $\Pi_{x,z}(\Lambda(\beta))$. In Figure \eqref{Front diagram decomposed into two regions}, the region $U_{\mathrm{T}}$ is illustrated in green. 
\end{itemize}

\begin{figure}
\centering
\begin{tikzpicture}
\useasboundingbox (-8.25,-5.25) rectangle (8.25, 5);
\scope[transform canvas={scale=0.6}]

\def\regionone{ (-4.25-0.25,-6.5-0.5) -- (-4.25-0.25, -0.5+1) -- (4.25+0.25,-0.5+1) -- (4.25+0.25,-6.5-0.5) -- (-4.25-0.25,-6.5-0.5)}

\def\regiontwo{ (-11.75,-0.5+1-1) -- (-11.75,6+0.5) -- (11.75,6+0.5) -- (+11.75,-0.5+1-1) -- (-11.75,-0.5+1-1) }

\def\regionthree{(-13.75,-6.5-2) rectangle (13.75, 6+2) (-11.25,-6.5) rectangle (11.25, 6)}

\def\regionfour{(-11.25-0.5,-6.5-0.5) rectangle (-11.25+7.25+0.25, 0+0.5)}

\def\regionfive{(+11.25-7.25-0.25, 0+0.5) rectangle (+11.25+0.5,-6.5-0.5)}





\fill[green!25, fill opacity=0.7] \regiontwo;
\fill[blue!30, even odd rule, fill opacity=0.4] \regionthree;
\fill[pink!70, fill opacity=0.6] \regionone;
\fill[yellow!55, fill opacity=0.6] \regionfour;
\fill[yellow!55, fill opacity=0.6] \regionfive;

\draw[dotted, line width=0.065cm, color=pink!360, double distance=0.2pt] \regionone;
\draw[dotted, line width=0.065cm, color=green!360, double distance=0.2pt] \regiontwo; 
\draw[dotted, line width=0.065cm, color=purple!360, double distance=0.2pt] \regionthree;
\draw[dotted, line width=0.065cm, color=yellow!360, double distance=0.2pt] \regionfour; 
\draw[dotted, line width=0.065cm, color=yellow!360, double distance=0.2pt] \regionfive;

\draw[very thick] (-4-0.5,2) -- (4+0.5, 2);
\draw[very thick] (-4-0.5,-2) -- (-3, -2);
\draw[very thick] (3,-2) -- (4+0.5, -2);
\draw[very thick] (4+0.5,2) .. controls (4+0.5+1.5,2) and (4+0.5+2-0.75,0) .. (4+0.5+2+0.1,0);
\draw[very thick] (4+0.5,-2) .. controls (4+0.5+1.5,-2) and (4+0.5+2-0.75,0) .. (4+0.5+2+0.1,0);
\draw[very thick] (-4-0.5-2-0.1,0) .. controls (-4-0.5-2+0.75,0) and (-4-0.5-1.5,2) .. (-4-0.5,2);
\draw[very thick] (-4-0.5-2-0.1,0) .. controls (-4-0.5-2+0.75,0) and (-4-0.5-1.5,-2) .. (-4-0.5,-2);


\draw[very thick] (-4-0.5-1,2+0.75) -- (4+0.5+1, 2+0.75);
\draw[very thick] (-4-0.5-1,-2-0.75) -- (-3, -2-0.75);
\draw[very thick] (3,-2-0.75) -- (4+0.5+1, -2-0.75);
\draw[very thick] (4+0.5+1,2+0.75) .. controls (4+0.5+1+1.15,2+0.75) and (4+0.5+1+2-0.5,0) .. (4+0.5+1+2+0.25,0);
\draw[very thick] (4+0.5+1,-2-0.75) .. controls (4+0.5+1+1.15,-2-0.75) and (4+0.5+1+2-0.5,0) .. (4+0.5+1+2+0.25,0);
\draw[very thick] (-4-0.5-1-2-0.25,0) .. controls (-4-0.5-1-2+0.5,0) and (-4-0.5-1-1.15,2+0.75) .. (-4-0.5-1,2+0.75);
\draw[very thick] (-4-0.5-1-2-0.25,0) .. controls (-4-0.5-1-2+0.5,0) and (-4-0.5-1-1.15,-2-0.75) .. (-4-0.5-1,-2-0.75);


\draw[very thick] (-4-0.5-1-1,+2+3.5-0.5-0.75) -- (4+0.5+1+1,+2+3.5-0.5-0.75);
\draw[very thick] (-4-0.5-1-1,-2-3.5+0.5+0.75) -- (-3,-2-3.5+0.5+0.75);
\draw[very thick] (3,-2-3.5+0.5+0.75) -- (4+0.5+1+1,-2-3.5+0.5+0.75);
\draw[very thick] (4+0.5+1+1,+2+3.5-0.5-0.75) .. controls (4+0.5+1+1+1.15+0.65,+2+3.5-0.5-0.75) and (4+0.5+1+1+2-0.5+0.75,0) .. (4+0.5+1+1+2+0.75+0.35,0);
\draw[very thick] (4+0.5+1+1,-2-3.5+0.5+0.75) .. controls (4+0.5+1+1+1.15+0.65,-2-3.5+0.5+0.75) and (4+0.5+1+1+2-0.5+0.75,0) .. (4+0.5+1+1+2+0.75+0.35,0);
\draw[very thick] (-4-0.5-1-1-2-0.75-0.35,0) .. controls (-4-0.5-1-1-2+0.5-0.75,0) and (-4-0.5-1-1-1.15-0.65,+2+3.5-0.5-0.75) .. (-4-0.5-1-1,+2+3.5-0.5-0.75);
\draw[very thick] (-4-0.5-1-1-2-0.75-0.35,0) .. controls (-4-0.5-1-1-2+0.5-0.75,0) and (-4-0.5-1-1-1.15-0.65,-2-3.5+0.5+0.75) .. (-4-0.5-1-1,-2-3.5+0.5+0.75);


\draw[very thick] (-4-0.5-1-1-1,+2+3.5-0.5) -- (4+0.5+1+1+1,+2+3.5-0.5);
\draw[very thick] (-4-0.5-1-1-1,-2-3.5+0.5) -- (-3,-2-3.5+0.5);
\draw[very thick] (3,-2-3.5+0.5) -- (4+0.5+1+1+1,-2-3.5+0.5);
\draw[very thick] (4+0.5+1+1+1,+2+3.5-0.5) .. controls (4+0.5+1+1+1+1.15+0.65,+2+3.5-0.5) and (4+0.5+1+1+1+2-0.5+0.75+0.25,0) .. (4+0.5+1+1+1+2+0.75+0.25+0.35,0);
\draw[very thick] (4+0.5+1+1+1,-2-3.5+0.5) .. controls (4+0.5+1+1+1+1.15+0.65,-2-3.5+0.5) and (4+0.5+1+1+1+2-0.5+0.75+0.25,0) .. (4+0.5+1+1+1+2+0.75+0.25+0.35,0);
\draw[very thick] (-4-0.5-1-1-1-2-0.75-0.25-0.35,0) .. controls (-4-0.5-1-1-1-2+0.5-0.75-0.25,0) and (-4-0.5-1-1-1-1.15-0.65,+2+3.5-0.5) .. (-4-0.5-1-1-1,+2+3.5-0.5);
\draw[very thick] (-4-0.5-1-1-1-2-0.75-0.25-0.35,0) .. controls (-4-0.5-1-1-1-2+0.5-0.75-0.25,0) and (-4-0.5-1-1-1-1.15-0.65,-2-3.5+0.5) .. (-4-0.5-1-1-1,-2-3.5+0.5);


\draw[very thick] (-3,-2-3.5) rectangle (3,2-3.5);

\node at (-3+0.85,-2) {\large$n$};
\node at (-3+0.85,-2-0.75) {\large$n-1$};
\node at (-3+0.85,-2-3.5+0.5+0.75) {\large$2$};
\node at (-3+0.85,-2-3.5+0.5) {\large$1$};

\node at (3-0.85,-2) {\large$n$};
\node at (3-0.85,-2-0.75) {\large$n-1$};
\node at (3-0.85,-2-3.5+0.5+0.75) {\large$2$};
\node at (3-0.85,-2-3.5+0.5) {\large$1$};

\node at  (0,5.5) {\Large$U_{\mathrm{T}}$};
\node at  (0,-3.5) {\huge$\beta$};
\node at  (0,-3.5-2.5) {\Large$U_{\mathrm{B}}$};
\node at  (0-7.5,-3.5-2.5) {\Large$U_{\mathrm{L}}$};
\node at  (0+7.5,-3.5-2.5) {\Large$U_{\mathrm{R}}$};

\filldraw[black] (-3+0.85,-3.5+0.25) circle (1pt);
\filldraw[black] (-3+0.85,-3.5) circle (1pt);
\filldraw[black] (-3+0.85,-3.5-0.25) circle (1pt);

\filldraw[black] (3-0.85,-3.5+0.25) circle (1pt);
\filldraw[black] (3-0.85,-3.5) circle (1pt);
\filldraw[black] (3-0.85,-3.5-0.25) circle (1pt);

\filldraw[black] (-3-0.5-1.5,-3.5+0.25) circle (1pt);
\filldraw[black] (-3-0.5-1.5,-3.5) circle (1pt);
\filldraw[black] (-3-0.5-1.5,-3.5-0.25) circle (1pt);

\filldraw[black] (3+0.5+1.5,-3.5+0.25) circle (1pt);
\filldraw[black] (3+0.5+1.5,-3.5) circle (1pt);
\filldraw[black] (3+0.5+1.5,-3.5-0.25) circle (1pt);

\filldraw[black] (-3-0.5-1.5,-3.5+0.25+7) circle (1pt);
\filldraw[black] (-3-0.5-1.5,-3.5+7) circle (1pt);
\filldraw[black] (-3-0.5-1.5,-3.5-0.25+7) circle (1pt);

\filldraw[black] (3+0.5+1.5,-3.5+0.25+7) circle (1pt);
\filldraw[black] (3+0.5+1.5,-3.5+7) circle (1pt);
\filldraw[black] (3+0.5+1.5,-3.5-0.25+7) circle (1pt);

\filldraw[black] (-8.45-0.25,0) circle (1pt);
\filldraw[black] (-8.45,0) circle (1pt);
\filldraw[black] (-8.45+0.25,0) circle (1pt);

\filldraw[black] (8.45-0.25,0) circle (1pt);
\filldraw[black] (8.45,0) circle (1pt);
\filldraw[black] (8.45+0.25,0) circle (1pt);

\node at (12.75, 7.25) {\LARGE$\mathbb{R}^{2}_{x,z}$};

\node at (0, 7.25) {\Large$U_{0}$};




\endscope
\end{tikzpicture}
\caption{The front projection $\Pi_{x,z}(\Lambda(\beta)) \subset \mathbb{R}^2$ of the Legendrian link $\Lambda(\beta) \subset (\mathbb{R}^3, \xi_{\mathrm{std}})$, along with the finite open cover $\mathcal{U}_{\Lambda(\beta)}=\big\{U_{0}, U_{\mathrm{B}}, U_{\mathrm{L}}, U_{\mathrm{R}}, U_{\mathrm{T}} \big\}$ of $\mathbb{R}^{2}$. The regions $U_{\mathrm{L}}$ and $U_{\mathrm{R}}$ are shown in yellow, while $U_0$, $U_{\mathrm{B}}$, and $U_{\mathrm{T}}$ are depicted in purple, pink, and green, respectively.}
\label{Front diagram decomposed into two regions}
\end{figure}
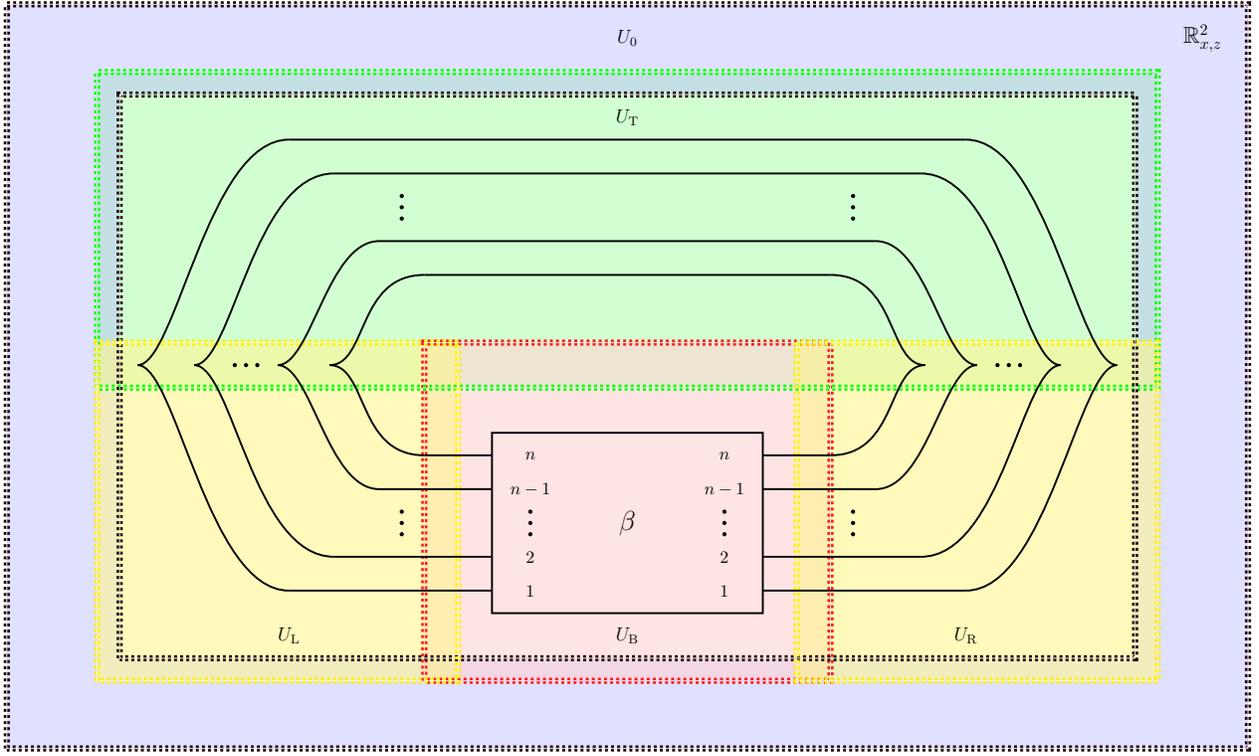
\end{construction}

Next, by applying the sheaf axioms to the open cover $\mathcal{U}_{\Lambda(\beta)}=\big\{U_{0}, U_{\mathrm{B}}, U_{\mathrm{L}}, U_{\mathrm{R}}, U_{\mathrm{T}} \big\}$ of $\mathbb{R}^{2}$ introduced in Construction~\eqref{Cons: Finite open cover for R^2}, we provide a global description of $\sh{F}$ in terms of its local data and the associated gluing conditions. Among the subsets of the cover $\mathcal{U}_{\Lambda(\beta)}$, the region $U_{\mathrm{B}}$ is particularly important, as it contains all the crossings in the front projection $\Pi_{x,z}(\Lambda(\beta))$, and consequently, the behavior of $\sh{F}$ on $U_{\mathrm{B}}$ is more intricate and warrants a detailed analysis. We therefore begin by analyzing $\sh{F}$ on the remaining subsets, postponing the study of its behavior on $U_{\mathrm{B}}$ until afterward. With this strategy in place, we now present the following result.

\begin{lemma}\label{Lemma: linear map description of an object on the regions U_T, U_L, and U_R}
Let $\beta:=\sigma_{i_{1}}\cdots \sigma_{i_{\ell}}\in\mathrm{Br}_{n}^{+}$ be a positive braid word, $\sh{F}$ an object of the category $\ccs{1}{\beta}$, and $\mathcal{U}_{\Lambda(\beta)}=\big\{U_{0}, U_{\mathrm{B}}, U_{\mathrm{L}}, U_{\mathrm{R}}, U_{\mathrm{T}} \big\}$ the open cover of $\mathbb{R}^{2}$ introduced in Construction~\eqref{Cons: Finite open cover for R^2}. Then, the following statements hold:    
\begin{itemize}
\justifying
\item On $U_{0}$, $\sh{F}$ is identically zero.
\item On $U_{\mathrm{T}}$, $\sh{F}$ is specified by a collection of $n-1$ surjective linear maps $\big\{\psi^{(i)}_{\sh{F}}:\mathbb{K}^{i+1}\to \mathbb{K}^{i}\big\}_{i=1}^{n-1}$, as illustrated in Figure~\eqref{Fig: an object F in the region U_T}.
\item On $U_{\mathrm{L}}$, $\sh{F}$ is specified by a collection of $n-1$ injective linear maps $\big\{\alpha^{(i)}_{\sh{F}}:\mathbb{K}^{i}\to \mathbb{K}^{i+1}\big\}_{i=1}^{n-1}$, as illustrated in Figure~\eqref{Fig: an object F in the region U_L}. 
\item On $U_{\mathrm{R}}$, $\sh{F}$ is specified by a collection of $n-1$ injective linear maps $\big\{\beta^{(i)}_{\sh{F}}:\mathbb{K}^{i}\to \mathbb{K}^{i+1}\big\}_{i=1}^{n-1}$, as illustrated in Figure~\eqref{Fig: an object F in the region U_R}. 
\item \textbf{Compatibility conditions}: For each $i\in[1,n-1]$, 
\begin{equation*}
\psi^{(i)}_{\sh{F}}\circ \alpha^{(i)}_{\sh{F}}=\mathrm{id}_{\mathbb{K}^{i}}\, , \quad \text{and} \quad \psi^{(i)}_{\sh{F}}\circ \beta^{(i)}_{\sh{F}}=\mathrm{id}_{\mathbb{K}^{i}}\, .    
\end{equation*}
\end{itemize}
\end{lemma}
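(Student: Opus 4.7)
The plan is to analyze $\sh{F}$ region by region using the open cover $\mathcal{U}_{\Lambda(\beta)}$, applying in each case the relevant microlocal support conditions from Section~\ref{sec:microlocal theory}, and then to glue the local data on overlaps via the cusp model. Since all crossings of $\Pi_{x,z}(\Lambda(\beta))$ lie inside $U_{\mathrm{B}}$, the four regions $U_{0}, U_{\mathrm{L}}, U_{\mathrm{R}}, U_{\mathrm{T}}$ intersect $\Pi_{x,z}(\Lambda(\beta))$ only in arcs and cusps, so the restriction of $\sh{F}$ to these regions is governed exclusively by the arc and cusp local models.

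First, $U_{0}$ is the unique unbounded 2-dimensional stratum of $\mathcal{S}_{\Lambda(\beta)}$, hence the compact support condition forces $\sh{F}|_{U_{0}}=0$. Second, on $U_{\mathrm{T}}$ the $n$ top strands carry Maslov potential $1$, so successively crossing them from the outside (where $\sh{F}$ vanishes, since $U_{0}$ is adjacent) toward the inside increases stalk dimension by one at each strand. Thus the stalks of $\sh{F}$ on the $n+1$ nested two-dimensional strata visible in $U_{\mathrm{T}}$ have dimensions $0,1,2,\dots,n$. Choosing a trivialization of each stalk by a basis of $\mathbb{K}^{i}$, the arc micro-support diagram of Sub-figure~\eqref{sub-fig: microsupport near an arc} reduces to a single linear map between the two neighboring 2-dimensional strata, producing a collection $\{\psi^{(i)}_{\sh{F}}:\mathbb{K}^{i+1}\to\mathbb{K}^{i}\}_{i=1}^{n-1}$ precisely as in Figure~\eqref{Fig: an object F in the region U_T}. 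An entirely analogous argument on $U_{\mathrm{L}}$ (resp.\ $U_{\mathrm{R}}$), where the $n$ arcs belong to the bottom strands of Maslov potential $0$ and therefore the stalks grow in the opposite direction, yields the $n-1$ maps $\alpha^{(i)}_{\sh{F}}:\mathbb{K}^{i}\to\mathbb{K}^{i+1}$ (resp.\ $\beta^{(i)}_{\sh{F}}:\mathbb{K}^{i}\to\mathbb{K}^{i+1}$).

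Third, the compatibility at the cusps is the key gluing step. The intersection $U_{\mathrm{L}}\cap U_{\mathrm{T}}$ contains exactly the $n$ left cusps, and similarly $U_{\mathrm{R}}\cap U_{\mathrm{T}}$ contains the $n$ right cusps; no other overlaps among $U_{\mathrm{L}},U_{\mathrm{R}},U_{\mathrm{T}}$ meet $\Pi_{x,z}(\Lambda(\beta))$. At each cusp, the commutative diagram of Sub-figure~\eqref{sub-fig: microsupport near a cusp} specializes to a pair of maps between a smaller stratum of dimension $i$ and a larger stratum of dimension $i+1$, one of which is an iso and the other of which, traced around the diagram, is forced to split the iso. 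Concretely, the outer stratum $O$ of the cusp is identified with the neighboring region at dimension $i$, the inner stratum $I$ with the region at dimension $i+1$, and the isomorphism $\sh{F}(s(c))\xrightarrow{\cong}\sh{F}(O)$ together with the identification $\sh{F}(s(c))\to\sh{F}(I)\to\sh{F}(s(u))\cong\sh{F}(I)$ translates into the identity $\psi^{(i)}_{\sh{F}}\circ\alpha^{(i)}_{\sh{F}}=\mathrm{id}_{\mathbb{K}^{i}}$ at a left cusp, and $\psi^{(i)}_{\sh{F}}\circ\beta^{(i)}_{\sh{F}}=\mathrm{id}_{\mathbb{K}^{i}}$ at a right cusp.

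Finally, surjectivity of each $\psi^{(i)}_{\sh{F}}$ and injectivity of each $\alpha^{(i)}_{\sh{F}}, \beta^{(i)}_{\sh{F}}$ are immediate consequences of Lemma~\eqref{Lemma: cusp condition} applied to each composition identity above. The main obstacle I anticipate is not conceptual but bookkeeping: one must carefully match the indexing of the maps with the nested strata in $\Pi_{x,z}(\Lambda(\beta))$, verifying that the $i$-th top arc in $U_{\mathrm{T}}$ is exactly the one whose left and right cusps glue to the $i$-th arcs of $U_{\mathrm{L}}$ and $U_{\mathrm{R}}$ sitting between the strata of dimensions $i$ and $i+1$. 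This is where Figure~\eqref{Front diagram of the rainbow closure of a braid in n strands} must be read with care, using the rainbow symmetry of the front projection.
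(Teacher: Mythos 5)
Your proposal is correct and follows essentially the same route as the paper: extract the linear-map data on $U_{0}$, $U_{\mathrm{T}}$, $U_{\mathrm{L}}$, $U_{\mathrm{R}}$ from the compact-support, microlocal-rank, and arc local models, derive the identities $\psi^{(i)}_{\sh{F}}\circ\alpha^{(i)}_{\sh{F}}=\mathrm{id}$ and $\psi^{(i)}_{\sh{F}}\circ\beta^{(i)}_{\sh{F}}=\mathrm{id}$ from the cusp local model on $U_{\mathrm{L}}\cap U_{\mathrm{T}}$ and $U_{\mathrm{R}}\cap U_{\mathrm{T}}$, and conclude surjectivity and injectivity via Lemma~\eqref{Lemma: cusp condition}. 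The extra bookkeeping you supply on stalk dimensions growing from $0$ to $n$ and on the direction of the arc maps determined by the Maslov potential is consistent with, and slightly more explicit than, the paper's argument.
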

\begin{proof}
To begin, consider the front projection $\Pi_{x,z}(\Lambda(\beta))\subset \mathbb{R}^{2}$ of the Legendrian link $\Lambda(\beta)\subset (\mathbb{R}^{3}, \xi_{\mathrm{std}})$, which is depicted in Figure~\eqref{Front diagram of the rainbow closure of a braid in n strands}. In particular, recall that in $\Pi_{x,z}(\Lambda(\beta))$, the strands at bottom have Maslov potential $0$ and the strands at the top have Maslov potential $1$. Thus, by the microlocal rank conditions and the microlocal support conditions near the arcs, we obtain that:
\begin{itemize}
\item For all $q\in U_{0}$, the stalk of $\sh{F}$ at $q$ is trivial; that is, $\sh{F}_{q}=0$. In other words, $\sh{F}$ is identically zero on $ U_{0}$.

\item On $U_{\mathrm{T}}$, $\sh{F}$ is defined by a collection of $n-1$ linear maps $\big\{\psi^{(i)}_{\sh{F}}:\mathbb{K}^{i+1}\to \mathbb{K}^{i}\big\}_{i=1}^{n-1}$, as shown in Figure~\eqref{Fig: an object F in the region U_T}. 

\item On $U_{\mathrm{L}}$, $\sh{F}$ is defined by a collection of $n-1$ linear maps $\big\{\alpha^{(i)}_{\sh{F}}:\mathbb{K}^{i}\to \mathbb{K}^{i+1}\big\}_{i=1}^{n-1}$, as shown in Figure~\eqref{Fig: an object F in the region U_L}. 

\item On $U_{\mathrm{R}}$, $\sh{F}$ is defined by a collection of $n-1$ linear maps $\big\{\beta^{(i)}_{\sh{F}}:\mathbb{K}^{i}\to \mathbb{K}^{i+1}\big\}_{i=1}^{n-1}$, as shown in Figure~\eqref{Fig: an object F in the region U_R}. 
\end{itemize}

\begin{figure}
\begin{center}
\begin{tikzpicture}
\useasboundingbox (-8,-4.25) rectangle (2.25, 4.25);
\scope[transform canvas={scale=0.55}]

\draw[line width=0.05cm] (-3, 3+2.5+1.25) .. controls (-1-1.25,3+2.5+1.25) and (-1-1.25,3+2.5) .. (-1.5,3+2.5);
\draw[line width=0.05cm] (-6, 3+2.5+1.25) .. controls (-3-1.25,3+2.5+1.25) and (-3-1.25,3) .. (-1.5,3);
\draw[line width=0.05cm] (-11, 3+2.5+1.25) .. controls (-5.5-1.75,3+2.5+1.25) and (-5.5-1.75, -3) .. (-1.5, -3);
\draw[line width=0.05cm] (-14, 3+2.5+1.25) .. controls (-7-3-0.25,3+2.75+1.25) and (-7-3.5, -3-2.5) .. (-1.5, -3-2.5);

\draw[line width=0.05cm, dashed] (-3, 3+2.5+1.25) .. controls (-2-0.35,3+2.5+1.25) and (-2-0.25,3+2.5+1.25+0.25) .. (-2,3+2.5+1.25+0.5);
\draw[line width=0.05cm, dashed] (-3-3, 3+2.5+1.25) .. controls (-2-0.35-3,3+2.5+1.25) and (-2-0.25-3,3+2.5+1.25+0.25) .. (-2-3,3+2.5+1.25+0.5);
\draw[line width=0.05cm, dashed] (-3-8, 3+2.5+1.25) .. controls (-2-0.35-8,3+2.5+1.25) and (-2-0.25-8,3+2.5+1.25+0.25) .. (-2-8,3+2.5+1.25+0.5);
\draw[line width=0.05cm, dashed] (-3-11, 3+2.5+1.25) .. controls (-2-0.35-11,3+2.5+1.25) and (-2-0.25-11,3+2.5+1.25+0.25) .. (-2-11,3+2.5+1.25+0.5);

\draw[shorten <=0.5cm,  shorten >=0.5cm, line width=0.05cm] (-2, 3+2.5) -- (2.25, 3+2.5);
\draw[shorten <=0.5cm,  shorten >=0.5cm, line width=0.05cm] (-2, 3) -- (2.25, 3);
\draw[shorten <=0.5cm,  shorten >=0.5cm, line width=0.05cm] (-2, -3) -- (2.25, -3);
\draw[shorten <=0.5cm,  shorten >=0.5cm, line width=0.05cm] (-2, -3-2.5) -- (2.25, -3-2.5);

\node at (0,3+2.5+1.25) {\LARGE$\mathbb{K}^{n}$};
\node at (0,3+1.25) {\LARGE$\mathbb{K}^{n-1}$};
\node at (0,3-1.25) {\LARGE$\mathbb{K}^{n-2}$};

\node at (0,-3+1.25) {\LARGE$\mathbb{K}^{2}$};
\node at (0,-3-1.25) {\LARGE$\mathbb{K}^{1}$};
\node at (0,-3-2.5-1.25) {\LARGE$0$};

\draw[->, shorten <=0.35cm,  shorten >=0.35cm, line width=0.055cm, blue] (0,3+2.5-1.25) -- (0,3+2.5+1.25);
\draw[->, shorten <=0.35cm,  shorten >=0.35cm, line width=0.055cm, blue] (0,3-1.25) -- (0,3+1.25);

\draw[->, shorten <=0.35cm,  shorten >=0.35cm, line width=0.055cm, blue] (0, -3-1.25) -- (0, -3+1.25);
\draw[->, shorten <=0.35cm,  shorten >=0.35cm, line width=0.055cm, blue] (0, -3-2.5-1.25) -- (0, -3-2.5+1.25);

\node[right] at (+0.25,+3+2.5-0.5) {\LARGE$\alpha^{\,(n-1)}_{\sh{F}}$};
\node[right] at (+0.25,+3-0.5) {\LARGE$\alpha^{\,(n-2)}_{\sh{F}}$};

\node[right] at (+0.25,-3-0.5) {\LARGE$\alpha^{\,(1)}_{\sh{F}}$};
\node[right] at (+0.25,-3-2.5-0.5) {\LARGE$0$};

\filldraw[black] (0,0+0.35) circle (1pt);
\filldraw[black] (0,0) circle (1pt);
\filldraw[black] (0,0-0.35) circle (1pt);

\filldraw[black] (+2.75,0+0.35) circle (1pt);
\filldraw[black] (+2.75,0) circle (1pt);
\filldraw[black] (+2.75,0-0.35) circle (1pt);

\filldraw[black] (-8-0.35,3+2.5+1.25) circle (1pt);
\filldraw[black] (-8,3+2.5+1.25) circle (1pt);
\filldraw[black] (-8+0.35,3+2.5+1.25) circle (1pt);

\node at (+2.75,3+2.5) {\large$n$};
\node at (+2.75,3) {\large$n-1$};
\node at (+2.75,-3) {\large$2$};
\node at (+2.75,-3-2.5) {\large$1$};

\draw[dotted, line width=0.065cm, color=yellow!360, double distance=0.2pt] (-14.5,-7.5) rectangle (4,7.5);

\node at (-7.25,-3-2.5-0.5) {\LARGE$U_{\mathrm{L}}$};

\endscope    
\end{tikzpicture}
\end{center}
\caption{An object $\sh{F}$ of the cohomological category $\ccs{1}{\beta}$ on the region $U_{\mathrm{L}}$.}
\label{Fig: an object F in the region U_L}
\end{figure}

\begin{figure}
\begin{center}
\begin{tikzpicture}
\useasboundingbox (-2.25,-4.25) rectangle (8, 4.25);
\scope[transform canvas={scale=0.55}]

\draw[line width=0.05cm] (+3, 3+2.5+1.25) .. controls (+1+1.25,3+2.5+1.25) and (+1+1.25,3+2.5) .. (+1.5,3+2.5);
\draw[line width=0.05cm] (+6, 3+2.5+1.25) .. controls (+3+1.25,3+2.5+1.25) and (+3+1.25,3) .. (+1.5,3);
\draw[line width=0.05cm] (+11, 3+2.5+1.25) .. controls (+5.5+1.75,3+2.5+1.25) and (+5.5+1.75, -3) .. (+1.5, -3);
\draw[line width=0.05cm] (+14, 3+2.5+1.25) .. controls (+7+3+0.25,3+2.75+1.25) and (+7+3.5, -3-2.5) .. (+1.5, -3-2.5);

\draw[line width=0.05cm, dashed] (+3, 3+2.5+1.25) .. controls (+2+0.35,3+2.5+1.25) and (+2+0.25,3+2.5+1.25+0.25) .. (+2,3+2.5+1.25+0.5);
\draw[line width=0.05cm, dashed] (+3+3, 3+2.5+1.25) .. controls (+2+0.35+3,3+2.5+1.25) and (+2+0.25+3,3+2.5+1.25+0.25) .. (+2+3,3+2.5+1.25+0.5);
\draw[line width=0.05cm, dashed] (+3+8, 3+2.5+1.25) .. controls (+2+0.35+8,3+2.5+1.25) and (+2+0.25+8,3+2.5+1.25+0.25) .. (+2+8,3+2.5+1.25+0.5);
\draw[line width=0.05cm, dashed] (+3+11, 3+2.5+1.25) .. controls (+2+0.35+11,3+2.5+1.25) and (+2+0.25+11,3+2.5+1.25+0.25) .. (+2+11,3+2.5+1.25+0.5);

\draw[shorten <=0.5cm,  shorten >=0.5cm, line width=0.05cm] (-2.25, 3+2.5) -- (2, 3+2.5);
\draw[shorten <=0.5cm,  shorten >=0.5cm, line width=0.05cm] (-2.25, 3) -- (2, 3);

\draw[shorten <=0.5cm,  shorten >=0.5cm, line width=0.05cm] (-2.25, -3) -- (2, -3);
\draw[shorten <=0.5cm,  shorten >=0.5cm, line width=0.05cm] (-2.25, -3-2.5) -- (2, -3-2.5);

\node at (0,3+2.5+1.25) {\LARGE$\mathbb{K}^{n}$};
\node at (0,3+1.25) {\LARGE$\mathbb{K}^{n-1}$};
\node at (0,3-1.25) {\LARGE$\mathbb{K}^{n-2}$};

\node at (0,-3+1.25) {\LARGE$\mathbb{K}^{2}$};
\node at (0,-3-1.25) {\LARGE$\mathbb{K}^{1}$};
\node at (0,-3-2.5-1.25) {\LARGE$0$};

\draw[->, shorten <=0.45cm,  shorten >=0.45cm, line width=0.055cm, blue] (0,3+2.5-1.25) -- (0,3+2.5+1.25);
\draw[->, shorten <=0.45cm,  shorten >=0.45cm, line width=0.055cm, blue] (0,3-1.25) -- (0,3+1.25);

\draw[->, shorten <=0.45cm,  shorten >=0.45cm, line width=0.055cm, blue] (0, -3-1.25) -- (0, -3+1.25);
\draw[->, shorten <=0.45cm,  shorten >=0.45cm, line width=0.055cm, blue] (0, -3-2.5-1.25) -- (0, -3-2.5+1.25);

\node[right] at (+0.25,+3+2.5-0.5) {\LARGE$\beta^{\,(n-1)}_{\sh{F}}$};
\node[right] at (+0.25,+3-0.5) {\LARGE$\beta^{\,(n-2)}_{\sh{F}}$};

\node[right] at (+0.25,-3-0.5) {\LARGE$\beta^{\,(1)}_{\sh{F}}$};
\node[right] at (+0.25,-3-2.5-0.5) {\LARGE$0$};

\filldraw[black] (0,0+0.35) circle (1pt);
\filldraw[black] (0,0) circle (1pt);
\filldraw[black] (0,0-0.35) circle (1pt);

\filldraw[black] (-2.75,0+0.35) circle (1pt);
\filldraw[black] (-2.75,0) circle (1pt);
\filldraw[black] (-2.75,0-0.35) circle (1pt);

\filldraw[black] (+8-0.35,3+2.5+1.25) circle (1pt);
\filldraw[black] (+8,3+2.5+1.25) circle (1pt);
\filldraw[black] (+8+0.35,3+2.5+1.25) circle (1pt);

\node at (-2.75,3+2.5) {\large$n$};
\node at (-2.75,3) {\large$n-1$};
\node at (-2.75,-3) {\large$2$};
\node at (-2.75,-3-2.5) {\large$1$};

\draw[dotted, line width=0.065cm, color=yellow!360, double distance=0.2pt] (-4,-7.5) rectangle (14.5,7.5);

\node at (+7.25,-3-2.5-0.5) {\LARGE$U_{\mathrm{R}}$};

\endscope    
\end{tikzpicture}
\end{center}
\caption{An object $\sh{F}$ of the cohomological category $\ccs{1}{\beta}$ on the region $U_{\mathrm{R}}$.}
\label{Fig: an object F in the region U_R}
\end{figure}

\begin{figure}
\begin{center}
\begin{tikzpicture}
\useasboundingbox (-7.5,-3.25) rectangle (7.5, 3.75);
\scope[transform canvas={scale=0.4}]

\draw[line width=0.06cm] (+1.75,-3-2.5) .. controls (+1.75+1.5+0.25,-3-2.5) and (+1.75+1.5, -3-2.5-1.25) .. (+1.75+2+0.25,-3-2.5-1.25);
\draw[line width=0.06cm] (-1.75,-3-2.5) .. controls (-1.75-1.5-0.25,-3-2.5) and (-1.75-1.5, -3-2.5-1.25) .. (-1.75-2-0.25,-3-2.5-1.25);

\draw[line width=0.06cm] (+1.75+1,-3) .. controls (+1.75+1+2+0.25+0.5,-3) and (+1.75+1+2+1.25,-3-2.5-1.25) .. (+1.75+2+3+0.5,-3-2.5-1.25);
\draw[line width=0.06cm] (-1.75-1,-3) .. controls (-1.75-1-2-0.25-0.5,-3) and (-1.75-1-2-1.25,-3-2.5-1.25) .. (-1.75-2-3-0.5,-3-2.5-1.25);

\draw[line width=0.06cm] (+1.75+3,+3) .. controls (+1.75+1.5+2+2+3+1,+3) and (+1.75+1.5+2+2+1+2,-3-2.5-1.25) .. (+1.75+2+3+5+3,-3-2.5-1.25);
\draw[line width=0.06cm] (-1.75-3,+3) .. controls (-1.75-1.5-2-2-3-1,+3) and (-1.75-1.5-2-2-1-2,-3-2.5-1.25) .. (-1.75-2-3-5-3,-3-2.5-1.25);

\draw[line width=0.06cm] (+1.75+4,+3+2.5) .. controls (+1.75+1.5+2+2+3+2+2,+3+2.5) and (+1.75+1.5+2+2+1+3+1,-3-2.5-1.25) .. (+1.75+2+3+5+3+2+2,-3-2.5-1.25);
\draw[line width=0.06cm] (-1.75-4,+3+2.5) .. controls (-1.75-1.5-2-2-3-2-2,+3+2.5) and (-1.75-1.5-2-2-1-3-1,-3-2.5-1.25) .. (-1.75-2-3-5-3-2-2,-3-2.5-1.25);

\draw[line width=0.06cm, dashed] (+1.75+2+0.25, -3-2.5-1.25) .. controls (+1.75+2+0.25-1+0.35,-3-2.5-1.25) and (+1.75+2+0.25-1+0.25,-3-2.5-1.25-0.25) .. (+1.75+2+0.25-1,-3-2.5-1.25-0.5);
\draw[line width=0.06cm, dashed] (+1.75+2+3+0.5, -3-2.5-1.25) .. controls (+1.75+2+3+0.5-1+0.35,-3-2.5-1.25) and (+1.75+2+3+0.5-1+0.25,-3-2.5-1.25-0.25) .. (+1.75+2+3+0.5-1,-3-2.5-1.25-0.5);
\draw[line width=0.06cm, dashed] (+1.75+2+3+5+3, -3-2.5-1.25) .. controls (+1.75+2+3+5+3-1+0.35,-3-2.5-1.25) and (+1.75+2+3+5+3-1+0.25,-3-2.5-1.25-0.25) .. (+1.75+2+3+5+3-1,-3-2.5-1.25-0.5);
\draw[line width=0.06cm, dashed] (+1.75+2+3+5+3+2+2, -3-2.5-1.25) .. controls (+1.75+2+3+5+3+2+2-1+0.35,-3-2.5-1.25) and (+1.75+2+3+5+3+2+2-1+0.25,-3-2.5-1.25-0.25) .. (+1.75+2+3+5+3+2+2-1,-3-2.5-1.25-0.5);

\draw[line width=0.06cm, dashed] (-1.75-2-0.25, -3-2.5-1.25) .. controls (-1.75-2-0.25+1-0.35,-3-2.5-1.25) and (-1.75-2-0.25+1-0.25,-3-2.5-1.25-0.25) .. (-1.75-2-0.25+1,-3-2.5-1.25-0.5);
\draw[line width=0.06cm, dashed] (-1.75-2-3-0.5, -3-2.5-1.25) .. controls (-1.75-2-3-0.5+1-0.35,-3-2.5-1.25) and (-1.75-2-3-0.5+1-0.25,-3-2.5-1.25-0.25) .. (-1.75-2-3-0.5+1,-3-2.5-1.25-0.5);
\draw[line width=0.06cm, dashed] (-1.75-2-3-5-3, -3-2.5-1.25) .. controls (-1.75-2-3-5-3+1-0.35,-3-2.5-1.25) and (-1.75-2-3-5-3+1-0.25,-3-2.5-1.25-0.25) .. (-1.75-2-3-5-3+1,-3-2.5-1.25-0.5);
\draw[line width=0.06cm, dashed] (-1.75-2-3-5-3-2-2, -3-2.5-1.25) .. controls (-1.75-2-3-5-3-2-2+1-0.35,-3-2.5-1.25) and (-1.75-2-3-5-3-2-2+1-0.25,-3-2.5-1.25-0.25) .. (-1.75-2-3-5-3-2-2+1,-3-2.5-1.25-0.5);

\draw[shorten <=0.5cm,  shorten >=0.5cm, line width=0.06cm] (-2.25-4, 3+2.5) -- (2.25+4, 3+2.5);
\draw[shorten <=0.5cm,  shorten >=0.5cm, line width=0.06cm] (-2.25-2-1, 3) -- (2.25+2+1, 3);
\draw[shorten <=0.5cm,  shorten >=0.5cm, line width=0.06cm] (-2.25-1, -3) -- (2.25+1, -3);
\draw[shorten <=0.5cm,  shorten >=0.5cm, line width=0.06cm] (-2.25, -3-2.5) -- (2.25, -3-2.5);

\node at (0,3+2.5+1.25) {\huge$0$};
\node at (0,3+1.25) {\huge$\mathbb{K}^{1}$};
\node at (0,3-1.25) {\huge$\mathbb{K}^{2}$};

\node at (0,-3+1.25) {\huge$\mathbb{K}^{n-2}$};
\node at (0,-3-1.25) {\huge$\mathbb{K}^{n-1}$};
\node at (0,-3-2.5-1.25) {\huge$\mathbb{K}^{n}$};

\draw[->, shorten <=0.45cm,  shorten >=0.45cm, line width=0.085cm, red] (0,3+2.5-1.25) -- (0,3+2.5+1.25);
\draw[->, shorten <=0.45cm,  shorten >=0.45cm, line width=0.085cm, red] (0,3-1.25) -- (0,3+1.25);
\draw[->, shorten <=0.45cm,  shorten >=0.45cm, line width=0.085cm, red] (0, -3-1.25) -- (0, -3+1.25);
\draw[->, shorten <=0.45cm,  shorten >=0.45cm, line width=0.085cm, red] (0, -3-2.5-1.25) -- (0, -3-2.5+1.25);

\node[right] at (+0.55,+3+2.5-0.6) {\huge$0$};
\node[right] at (+0.55,+3-0.6) {\huge$\psi^{\,(1)}_{\sh{F}}$};

\node[right] at (+0.55,-3-0.6) {\huge$\psi^{\,(n-2)}_{\sh{F}}$};
\node[right] at (+0.55,-3-2.5-0.6) {\huge$\psi^{\,(n-1)}_{\sh{F}}$};

\filldraw[black] (0,0+0.35) circle (1.25pt);
\filldraw[black] (0,0) circle (1.25pt);
\filldraw[black] (0,0-0.35) circle (1.25pt);

\filldraw[black] (-10-0.35,-3-2.5-1.25) circle (1.25pt);
\filldraw[black] (-10,-3-2.5-1.25) circle (1.25pt);
\filldraw[black] (-10+0.35,-3-2.5-1.25) circle (1.25pt);

\filldraw[black] (+10-0.35,-3-2.5-1.25) circle (1.25pt);
\filldraw[black] (+10,-3-2.5-1.25) circle (1.25pt);
\filldraw[black] (+10+0.35,-3-2.5-1.25) circle (1.25pt);

\node at (-1.75,3+2.5+0.5) {\Large$n$};
\node at (-1.75,3+0.5) {\Large$n-1$};
\node at (-1.75,-3+0.5) {\Large$2$};
\node at (-1.75,-3-2.5+0.5) {\Large$1$};

\draw[dotted, line width=0.075cm, color=green!360, double distance=0.2pt] (-19.5,-8.5) rectangle (19.5, 9.5);

\node at (0,8.25) {\huge$U_{\mathrm{T}}$};

\endscope    
\end{tikzpicture}
\end{center}
\caption{An object $\sh{F}$ of the cohomological category $\ccs{1}{\beta}$ on the region $U_{\mathrm{T}}$.}
\label{Fig: an object F in the region U_T}
\end{figure}

Now, note that the intersection $U_{\mathrm{L}}\,\cap\,U_{\mathrm{T}}\,\cap\, \Pi_{x,z}(\Lambda(\beta))$ consists of the $n$ left cusps in $\Pi_{x,z}(\Lambda(\beta))$. Thus, on $U_{\mathrm{L}}\,\cap\,U_{\mathrm{T}}$, $\sh{F}$ is described by the diagram in Figure~\eqref{Fig: an object F in the intersection of U_L and U_T}. Similarly, the intersection $U_{\mathrm{R}}\,\cap\,U_{\mathrm{T}}\,\cap\, \Pi_{x,z}(\Lambda(\beta))$ consists of the $n$ right cusps in $\Pi_{x,z}(\Lambda(\beta))$, and hence, on $U_{\mathrm{R}}\,\cap\,U_{\mathrm{T}}$, $\sh{F}$ is described by the diagram in Figure~\eqref{Fig: an object F in the intersection of U_R and U_T}. Bearing this in mind, the microlocal support conditions near the cusps establish that:
\begin{equation}\label{cusp conditions for lin maps in the general case}
\psi_{\sh{F}}^{(i)}\circ \alpha_{\sh{F}}^{(i)}=\mathrm{id}_{\mathbb{K}^{i}}\, , \quad \text{and} \quad \psi_{\sh{F}}^{(i)}\circ \beta_{\sh{F}}^{(i)}=\mathrm{id}_{\mathbb{K}^{i}}\, ,  
\end{equation}
for each $i\in[1, n-1]$. Consequently, a straightforward application of Lemma~\eqref{Lemma: cusp condition} yields that the maps $\big\{\psi^{(i)}_{\sh{F}}\big\}_{i=1}^{n-1}$ are surjective, whereas the maps $\big\{\alpha^{(i)}_{\sh{F}}\big\}_{i=1}^{n-1}$ and $\big\{\beta^{(i)}_{\sh{F}}\big\}_{i=1}^{n-1}$ are injective. This completes the proof.
\end{proof}

\begin{figure}
\begin{center}
\begin{tikzpicture}
\useasboundingbox (-8.75,-1.75) rectangle (0.75, 2);
\scope[transform canvas={scale=0.4}]

\draw[line width=0.05cm] (-2.5,0) .. controls (-1-0.75,0) and (-1+0.5,0.75) .. (0,1.5);
\draw[line width=0.05cm] (-2.5,0) .. controls (-1-0.75,-0) and (-1+0.5,-0.75) .. (0,-1.5);


\node at (0,0) {\huge$\mathbb{K}^{n}$};
\node at (-4,0) {\huge$\mathbb{K}^{n-1}$};

\draw[->, shorten <=0.85cm,  shorten >=0.85cm, line width=0.075cm, red] (0+0.25,0) -- (-4+0.25, 1.5);
\draw[->, shorten <=0.85cm,  shorten >=0.85cm, line width=0.075cm, blue] (-4+0.25,-1.5) -- (0+0.25,0);

\node at (-1.75+0.35,+1.75) {\huge$\psi^{\,(n-1)}$};
\node at (-1.75+0.35,-1.75) {\huge$\alpha^{\,(n-1)}_{\sh{F}}$};


\draw[line width=0.05cm] (-2.5-4.25,0) .. controls (-1-0.75-4.25,0) and (-1+0.5-4.25,0.75) .. (0-4.25,1.5);
\draw[line width=0.05cm] (-2.5-4.25,0) .. controls (-1-0.75-4.25,-0) and (-1+0.5-4.25,-0.75) .. (0-4.25,-1.5);


\node at (-4-4.25,0) {\huge$\mathbb{K}^{n-2}$};

\draw[->, shorten <=0.85cm,  shorten >=0.85cm, line width=0.075cm, red] (0-4.25+0.25,0) -- (-4-4.25+0.25, 1.5);
\draw[->, shorten <=0.85cm,  shorten >=0.85cm, line width=0.075cm, blue] (-4-4.25+0.25,-1.5) -- (0-4.25+0.25,0);

\node at (-1.75-4.25+0.35,+1.75) {\huge$\psi^{\,(n-2)}$};
\node at (-1.75-4.25+0.35,-1.75) {\huge$\alpha^{\,(n-2)}_{\sh{F}}$};


\filldraw[black] (-10.25-0.35,0+0) circle (1.25pt);
\filldraw[black] (-10.25,0) circle (1.25pt);
\filldraw[black] (-10.25+0.35,0) circle (1.25pt);


\draw[line width=0.05cm] (-2.5-12.25,0) .. controls (-1-0.75-12.25,0) and (-1+0.5-12.25,0.75) .. (0-12.25,1.5);
\draw[line width=0.05cm] (-2.5-12.25,0) .. controls (-1-0.75-12.25,-0) and (-1+0.5-12.25,-0.75) .. (0-12.25,-1.5);


\node at (0-12.25,0) {\huge$\mathbb{K}^{2}$};
\node at (-4-12.25,0) {\huge$\mathbb{K}^{1}$};

\draw[->, shorten <=0.85cm,  shorten >=0.85cm, line width=0.075cm, red] (0-12.25+0.25,0) -- (-4-12.25+0.25, 1.5);
\draw[->, shorten <=0.85cm,  shorten >=0.85cm, line width=0.075cm, blue] (-4-12.25+0.25,-1.5) -- (0-12.25+0.25,0);

\node at (-1.75-12.25+0.35,+1.75) {\huge$\psi^{\,(1)}$};
\node at (-1.75-12.25+0.35,-1.75) {\huge$\alpha^{\,(1)}_{\sh{F}}$};


\draw[line width=0.05cm] (-2.5-12.25-4.25,0) .. controls (-1-0.75-12.25-4.25,0) and (-1+0.5-12.25-4.25,0.75) .. (0-12.25-4.25,1.5);
\draw[line width=0.05cm] (-2.5-12.25-4.25,0) .. controls (-1-0.75-12.25-4.25,-0) and (-1+0.5-12.25-4.25,-0.75) .. (0-12.25-4.25,-1.5);


\node at (-4-12.25-4.25,0) {\huge$0$};

\draw[->, shorten <=0.85cm,  shorten >=0.85cm, line width=0.075cm, red] (0-12.25-4.25+0.25,0) -- (-4-12.25-4.25+0.25, 1.5);
\draw[->, shorten <=0.85cm,  shorten >=0.85cm, line width=0.075cm, blue] (-4-12.25-4.25+0.25,-1.5) -- (0-12.25-4.25+0.25,0);

\node at (-1.75-12.25-4.25+0.35,+1.75) {\huge$0$};
\node at (-1.75-12.25-4.25+0.35,-1.75) {\huge$0$};


\draw[dotted, line width=0.075cm, color=green!360, double distance=0.2pt] (-22,-4) rectangle (2, 5);
\draw[dotted, line width=0.075cm, color=yellow!360, double distance=0.2pt] (-22+0.25,-4+0.25) rectangle (2-0.25, 5-0.25);

\node at (-10.25,3.75) {\huge$U_{\mathrm{L}}\,\cap\, U_{\mathrm{T}}$};

\endscope    
\end{tikzpicture}
\end{center}
\caption{An object $\sh{F}$ of the cohomological category $\ccs{1}{\beta}$ on the intersection $U_{\mathrm{L}}\,\cap\,U_{\mathrm{T}}$.}
\label{Fig: an object F in the intersection of U_L and U_T}
\end{figure}

\begin{figure}
\begin{center}
\begin{tikzpicture}
\useasboundingbox (-0.75,-1.75) rectangle (+8.75, 2);
\scope[transform canvas={scale=0.4}]

\draw[line width=0.05cm] (+2.5,0) .. controls (+1+0.75,0) and (+1-0.5,0.75) .. (0,1.5);
\draw[line width=0.05cm] (+2.5,0) .. controls (+1+0.75,-0) and (+1-0.5,-0.75) .. (0,-1.5);


\node at (0,0) {\huge$\mathbb{K}^{n}$};
\node at (+4,0) {\huge$\mathbb{K}^{n-1}$};

\draw[->, shorten <=0.85cm,  shorten >=0.85cm, line width=0.075cm, red] (0-0.25,0) -- (+4-0.25, 1.5);
\draw[->, shorten <=0.85cm,  shorten >=0.85cm, line width=0.075cm, blue] (+4-0.25,-1.5) -- (0-0.25,0);

\node at (+1.75-0.35,+1.75) {\huge$\psi^{\,(n-1)}$};
\node at (+1.75-0.35,-1.75) {\huge$\beta^{\,(n-1)}_{\sh{F}}$};


\draw[line width=0.05cm] (+2.5+4.25,0) .. controls (+1+0.75+4.25,0) and (+1-0.5+4.25,0.75) .. (0+4.25,1.5);
\draw[line width=0.05cm] (+2.5+4.25,0) .. controls (+1+0.75+4.25,-0) and (+1-0.5+4.25,-0.75) .. (0+4.25,-1.5);


\node at (+4+4.25,0) {\huge$\mathbb{K}^{n-2}$};

\draw[->, shorten <=0.85cm,  shorten >=0.85cm, line width=0.075cm, red] (0+4.25-0.25,0) -- (+4+4.25-0.25, 1.5);
\draw[->, shorten <=0.85cm,  shorten >=0.85cm, line width=0.075cm, blue] (+4+4.25-0.25,-1.5) -- (0+4.25-0.25,0);

\node at (+1.75+4.25-0.35,+1.75) {\huge$\psi^{\,(n-2)}$};
\node at (+1.75+4.25-0.35,-1.75) {\huge$\beta^{\,(n-2)}_{\sh{F}}$};


\filldraw[black] (+10.25+0.35,0+0) circle (1.25pt);
\filldraw[black] (+10.25,0) circle (1.25pt);
\filldraw[black] (+10.25-0.35,0) circle (1.25pt);


\draw[line width=0.05cm] (+2.5+12.25,0) .. controls (+1+0.75+12.25,0) and (+1-0.5+12.25,0.75) .. (0+12.25,1.5);
\draw[line width=0.05cm] (+2.5+12.25,0) .. controls (+1+0.75+12.25,-0) and (+1-0.5+12.25,-0.75) .. (0+12.25,-1.5);


\node at (0+12.25,0) {\huge$\mathbb{K}^{2}$};
\node at (+4+12.25,0) {\huge$\mathbb{K}^{1}$};

\draw[->, shorten <=0.85cm,  shorten >=0.85cm, line width=0.075cm, red] (0+12.25-0.25,0) -- (+4+12.25-0.25, 1.5);
\draw[->, shorten <=0.85cm,  shorten >=0.85cm, line width=0.075cm, blue] (+4+12.25-0.25,-1.5) -- (0+12.25-0.25,0);

\node at (+1.75+12.25-0.35,+1.75) {\huge$\psi^{\,(1)}$};
\node at (+1.75+12.25-0.35,-1.75) {\huge$\beta^{\,(1)}_{\sh{F}}$};


\draw[line width=0.05cm] (+2.5+12.25+4.25,0) .. controls (+1+0.75+12.25+4.25,0) and (+1-0.5+12.25+4.25,0.75) .. (0+12.25+4.25,1.5);
\draw[line width=0.05cm] (+2.5+12.25+4.25,0) .. controls (+1+0.75+12.25+4.25,-0) and (+1-0.5+12.25+4.25,-0.75) .. (0+12.25+4.25,-1.5);


\node at (+4+12.25+4.25,0) {\huge$0$};

\draw[->, shorten <=0.85cm,  shorten >=0.85cm, line width=0.075cm, red] (0+12.25+4.25-0.25,0) -- (+4+12.25+4.25-0.25, 1.5);
\draw[->, shorten <=0.85cm,  shorten >=0.85cm, line width=0.075cm, blue] (+4+12.25+4.25-0.25,-1.5) -- (0+12.25+4.25-0.25,0);

\node at (+1.75+12.25+4.25-0.35,+1.75) {\huge$0$};
\node at (+1.75+12.25+4.25-0.35,-1.75) {\huge$0$};


\draw[dotted, line width=0.075cm, color=green!360, double distance=0.2pt] (-2,-4) rectangle (22, 5);
\draw[dotted, line width=0.075cm, color=yellow!360, double distance=0.2pt] (-2+0.25,-4+0.25) rectangle (22-0.25, 5-0.25);

\node at (+10.25,3.75) {\huge$U_{\mathrm{R}}\,\cap\, U_{\mathrm{T}}$};

\endscope    
\end{tikzpicture}
\end{center}
\caption{An object $\sh{F}$ of the cohomological category $\ccs{1}{\beta}$ on the intersection $U_{\mathrm{R}}\,\cap\,U_{\mathrm{T}}$.}
\label{Fig: an object F in the intersection of U_R and U_T}
\end{figure}

Next, we address how the description of $\sh{F}$ on the regions $U_{\mathrm{T}}$, $U_{\mathrm{L}}$, and $U_{\mathrm{R}}$ in terms of linear maps, as specified in Lemma~\eqref{Lemma: linear map description of an object on the regions U_T, U_L, and U_R}, naturally leads to a geometric characterization via complete flags in $\mathbb{K}^{n}$. 

\begin{lemma}\label{lemma for F in the region U_{R}}
Let $\beta:=\sigma_{i_{1}}\cdots \sigma_{i_{\ell}}\in\mathrm{Br}_{n}^{+}$ be a positive braid word, $\sh{F}$ an object of the category $\ccs{1}{\beta}$, and $\mathcal{U}_{\Lambda(\beta)}=\big\{U_{0}, U_{\mathrm{B}}, U_{\mathrm{L}}, U_{\mathrm{R}}, U_{\mathrm{T}} \big\}$ the open cover of $\mathbb{R}^{2}$ introduced in Construction~\eqref{Cons: Finite open cover for R^2}. Then, the following statements hold: 
\begin{itemize}
\justifying
\item On $U_{0}$, $\sh{F}$ is identically zero.
\item On $U_{\mathrm{T}}$, $\sh{F}$ is characterized by a complete flag $\fl{F}_{0}$ in $\mathbb{K}^{n}$.
\item On $U_{\mathrm{L}}$, $\sh{F}$ is characterized by a complete flag $\fl{F}_{\mathrm{left}}$ in $\mathbb{K}^{n}$. 
\item On $U_{\mathrm{R}}$, $\sh{F}$ is characterized by a complete flag $\fl{F}_{\mathrm{right}}$ in $\mathbb{K}^{n}$.
\item \textbf{Compatibility conditions}: $\fl{F}_{0}$ is completely opposite to both $\fl{F}_{\mathrm{left}}$ and $\fl{F}_{\mathrm{right}}$.  
\end{itemize}
\end{lemma}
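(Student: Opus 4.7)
The plan is to deduce this statement from Lemma~\eqref{Lemma: linear map description of an object on the regions U_T, U_L, and U_R} by converting the linear-map description of $\sh{F}$ on each region into a complete-flag description, in direct analogy with the proof of Proposition~\eqref{Sheaves for the unlink on n strands} for the Legendrian unlink. The triviality of $\sh{F}$ on $U_{0}$ is already part of Lemma~\eqref{Lemma: linear map description of an object on the regions U_T, U_L, and U_R}, so nothing new is required there.

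First, I would extract from Lemma~\eqref{Lemma: linear map description of an object on the regions U_T, U_L, and U_R} the three collections of linear maps that describe $\sh{F}$ on the remaining bounded regions: the surjections $\big\{\psi^{(i)}_{\sh{F}}:\mathbb{K}^{i+1}\to\mathbb{K}^{i}\big\}_{i=1}^{n-1}$ on $U_{\mathrm{T}}$, the injections $\big\{\alpha^{(i)}_{\sh{F}}:\mathbb{K}^{i}\to\mathbb{K}^{i+1}\big\}_{i=1}^{n-1}$ on $U_{\mathrm{L}}$, and the injections $\big\{\beta^{(i)}_{\sh{F}}:\mathbb{K}^{i}\to\mathbb{K}^{i+1}\big\}_{i=1}^{n-1}$ on $U_{\mathrm{R}}$, together with the cusp compatibility identities $\psi^{(i)}_{\sh{F}}\circ\alpha^{(i)}_{\sh{F}}=\mathrm{id}_{\mathbb{K}^{i}}$ and $\psi^{(i)}_{\sh{F}}\circ\beta^{(i)}_{\sh{F}}=\mathrm{id}_{\mathbb{K}^{i}}$.

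Next, I would apply the flag constructions of Definition~\eqref{Def:flags and adapted bases}. To the surjective system on $U_{\mathrm{T}}$ I would associate the type $\mathcal{K}$ flag
\begin{equation*}
\fl{F}_{0}:=\prescript{}{\mathcal{K}\,}{\fl{F}}\big(\psi^{(1)}_{\sh{F}},\dots,\psi^{(n-1)}_{\sh{F}}\big),
\end{equation*}
which is a complete flag in $\mathbb{K}^{n}$ by Lemma~\eqref{Lemma: type K flag}. Likewise, to each of the two injective systems I would associate the type $\mathcal{I}$ flags
\begin{equation*}
\fl{F}_{\mathrm{left}}:=\prescript{}{\mathcal{I}\,}{\fl{F}}\big(\alpha^{(1)}_{\sh{F}},\dots,\alpha^{(n-1)}_{\sh{F}}\big),\qquad \fl{F}_{\mathrm{right}}:=\prescript{}{\mathcal{I}\,}{\fl{F}}\big(\beta^{(1)}_{\sh{F}},\dots,\beta^{(n-1)}_{\sh{F}}\big),
\end{equation*}
both complete flags in $\mathbb{K}^{n}$ by Lemma~\eqref{Lemma: type I flag}. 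This yields the desired geometric data on $U_{\mathrm{T}}$, $U_{\mathrm{L}}$, and $U_{\mathrm{R}}$.

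Finally, the compatibility conditions inherited from the cusps are exactly the hypotheses of Lemma~\eqref{Lemma: completely opposite flags from linear maps}. Applied to the pair $\big(\{\psi^{(i)}_{\sh{F}}\},\{\alpha^{(i)}_{\sh{F}}\}\big)$, the lemma immediately gives that $\fl{F}_{0}$ and $\fl{F}_{\mathrm{left}}$ are completely opposite; applied to the pair $\big(\{\psi^{(i)}_{\sh{F}}\},\{\beta^{(i)}_{\sh{F}}\}\big)$, it gives the same conclusion for $\fl{F}_{0}$ and $\fl{F}_{\mathrm{right}}$. This completes the compatibility statement and the proof. The argument is essentially a direct generalization of the unlink case, with no substantive obstacle beyond organizing the two injective systems in parallel; the only minor subtlety worth flagging is that $\fl{F}_{\mathrm{left}}$ and $\fl{F}_{\mathrm{right}}$ are not required to satisfy any mutual compatibility at this stage, since the regions $U_{\mathrm{L}}$ and $U_{\mathrm{R}}$ are disjoint and are linked only through $U_{\mathrm{B}}$, whose analysis is postponed.
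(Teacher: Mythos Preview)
Your proof is correct and follows essentially the same approach as the paper: both start from Lemma~\ref{Lemma: linear map description of an object on the regions U_T, U_L, and U_R}, construct the type $\mathcal{K}$ flag $\fl{F}_{0}$ and the two type $\mathcal{I}$ flags $\fl{F}_{\mathrm{left}}$, $\fl{F}_{\mathrm{right}}$ from the corresponding families of surjections and injections, and then invoke Lemma~\ref{Lemma: completely opposite flags from linear maps} twice to obtain the opposition conditions. Your explicit citations of Lemmas~\ref{Lemma: type I flag} and~\ref{Lemma: type K flag} for completeness are a harmless refinement of what the paper leaves implicit.
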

\begin{proof}
By Lemma~\eqref{Lemma: linear map description of an object on the regions U_T, U_L, and U_R}, we know that: 
\begin{itemize}
\justifying
\item On $U_{0}$, $\sh{F}$ is identically zero.
\item On $U_{\mathrm{T}}$, $\sh{F}$ is defined by a collection of $n-1$ surjective linear maps $\big\{\psi^{(i)}_{\sh{F}}:\mathbb{K}^{i+1}\to \mathbb{K}^{i}\big\}_{i=1}^{n-1}$.
\item On $U_{\mathrm{L}}$, $\sh{F}$ is defined by a collection of $n-1$ injective linear maps $\big\{\alpha^{(i)}_{\sh{F}}:\mathbb{K}^{i}\to \mathbb{K}^{i+1}\big\}_{i=1}^{n-1}$. 
\item On $U_{\mathrm{R}}$, $\sh{F}$ is defined by a collection of $n-1$ injective linear maps $\big\{\beta^{(i)}_{\sh{F}}:\mathbb{K}^{i}\to \mathbb{K}^{i+1}\big\}_{i=1}^{n-1}$. 
\item \textbf{Compatibility conditions}: For each $i\in[1,n-1]$, 
\begin{equation*}
\psi^{(i)}_{\sh{F}}\circ \alpha^{(i)}_{\sh{F}}=\mathrm{id}_{\mathbb{K}^{i}}\, , \quad \text{and} \quad \psi^{(i)}_{\sh{F}}\circ \beta^{(i)}_{\sh{F}}=\mathrm{id}_{\mathbb{K}^{i}}\, .    
\end{equation*}
\end{itemize}
To prove the claim, it suffices to show that the above data determines three complete flags $\fl{F}_{0}$, $\fl{F}_{\mathrm{left}}$, and $\fl{F}_{\mathrm{right}}$ in $\mathbb{K}^{n}$, with $\fl{F}_{0}$ completely opposite to both $\fl{F}_{\mathrm{left}}$ and $\fl{F}_{\mathrm{right}}$, thereby providing a geometric description of $\sh{F}$. To this end, relying on the notions of type $\mathcal{I}$ and type $\mathcal{K}$ flags introduced in Definition~\eqref{Def:flags and adapted bases}--\eqref{Def: type I flag}--\eqref{Def: type K flag}, we proceed as follows:

\begin{enumerate}
\justifying
\item Since the maps $\big\{\psi^{(i)}_{\sh{F}}\big\}_{i=1}^{n-1}$ are surjective, we assign to $\sh{F}$ on $U_{\mathrm{T}}$  the complete flag $\fl{F}_{0}$ in $ \mathbb{K}^{n}$ given by
\begin{equation*}
\fl{F}_{0}:=\prescript{}{\mathcal{K}\,}{\fl{F}}\big(\,\psi^{(1)}_{\sh{F}},\dots,\psi^{(n-1)}_{\sh{F}}\, \big)\, .    
\end{equation*}

\item Since the maps $\big\{\alpha^{(i)}_{\sh{F}}\big\}_{i=1}^{n-1}$ are injective, we assign to $\sh{F}$ on $U_{\mathrm{L}}$ the complete flag $\fl{F}_{\mathrm{left}}$ in $ \mathbb{K}^{n}$ given by
\begin{equation*}
\fl{F}_{\mathrm{left}}:=\prescript{}{\mathcal{I}\,}{\fl{F}}\big(\,\alpha^{(1)}_{\sh{F}},\dots,\alpha^{(n-1)}_{\sh{F}}\,\big)\, .    
\end{equation*}

\item Since the maps $\big\{\beta^{(i)}_{\sh{F}}\big\}_{i=1}^{n-1}$ are injective, we assign to $\sh{F}$ on $U_{\mathrm{R}}$ the complete flag $\fl{F}_{\mathrm{right}}$ in $\mathbb{K}^{n}$ given by
\begin{equation*}
\fl{F}_{\mathrm{right}}:=\prescript{}{\mathcal{I}\,}{\fl{F}}\big(\,\beta^{(1)}_{\sh{F}},\dots,\beta^{(n-1)}_{\sh{F}}\,\big)\, .    
\end{equation*}
\end{enumerate}

Finally, the compatibility conditions on the maps $\big\{\psi^{(i)}_{\sh{F}}\big\}_{i=1}^{n-1}$, $\big\{\alpha^{(i)}_{\sh{F}}\big\}_{i=1}^{n-1}$, and $\big\{\beta^{(i)}_{\sh{F}}\big\}_{i=1}^{n-1}$ allow a direct application of Lemma~\eqref{Lemma: completely opposite flags from linear maps}, which shows that $\fl{F}_{0}$ is completely opposite to both $\fl{F}_{\mathrm{left}}$ and $\fl{F}_{\mathrm{right}}$, as desired. 
\end{proof}

We now turn to the analysis of $\sh{F}$ on the region $U_{\mathrm{B}}$, the last remaining element of the open cover $\mathcal{U}_{\Lambda(\beta)}$ of $\mathbb{R}^{2}$ introduced in Construction \eqref{Cons: Finite open cover for R^2}. As previously mentioned, $U_{\mathrm{B}}$ contains all the crossings in the front projection $\Pi_{x,z}(\Lambda(\beta))$, and consequently, the behavior of $\sh{F}$ on this region is inherently more intricate. To facilitate the exposition, we construct a finite open cover of $U_{\mathrm{B}}$ adapted to the front projection $\Pi_{x,z}(\Lambda(\beta))$, allowing us to study $\sh{F}$ on this region in terms of tractable local data and the corresponding gluing conditions. 

\begin{construction}\label{Cons: Definition of the vertical straps}
Let $\beta:=\sigma_{i_{1}}\cdots \sigma_{i_{\ell}}\in \mathrm{Br}^{+}_{n}$ be a positive braid word, and let ~$\mathcal{U}_{\Lambda(\beta)}=\big\{U_{0}, U_{\mathrm{B}}, U_{\mathrm{L}}, U_{\mathrm{R}}, U_{\mathrm{T}}\big\}$ be the open cover of $\mathbb{R}^{2}$ introduced in Construction~\eqref{Cons: Finite open cover for R^2}. Then, we decompose the region $U_{\mathrm{B}}\subset \mathbb{R}^{2}$ into a collection $\mathcal{R}_{\Lambda(\beta)}:=\big\{R_{j}\big\}_{j=1}^{\ell}$ of $\ell$ open vertical straps $R_{j}\subset\mathbb{R}^{2}$ as illustrated in Figure~\eqref{fig: Decomposition of the region U_B into sub-regions R_i}. In particular, given $\Pi_{x,z}(\Lambda(\beta))\subset \mathbb{R}^{2}$ the front projection of the Legendrian link $\Lambda(\beta)\subset (\mathbb{R}^{3},\xi_{\mathrm{std}})$, we assume that:
\begin{itemize}
\justifying
\item For each $j\in[1,\ell]$, the intersection $R_{j}\,\cap\,\Pi_{x,z}(\Lambda(\beta))$ consists of the braid on $n$ strands associated with $\sigma_{i_{j}}$---the $j$-th crossing of $\beta$---as shown in Figure~\eqref{fig: A sub-regions R_j}. 

\item For each $j\in[1,\ell-1]$, the intersection $R_{j}\,\cap\, R_{j+1}\,\cap\, \Pi_{x,z}(\Lambda(\beta))$ consists of the trivial braid on $n$ strands as depicted in Figure~\eqref{fig: Intersection of two consecutive sub-regions R_j and R_{j+1}}. 
\end{itemize} 

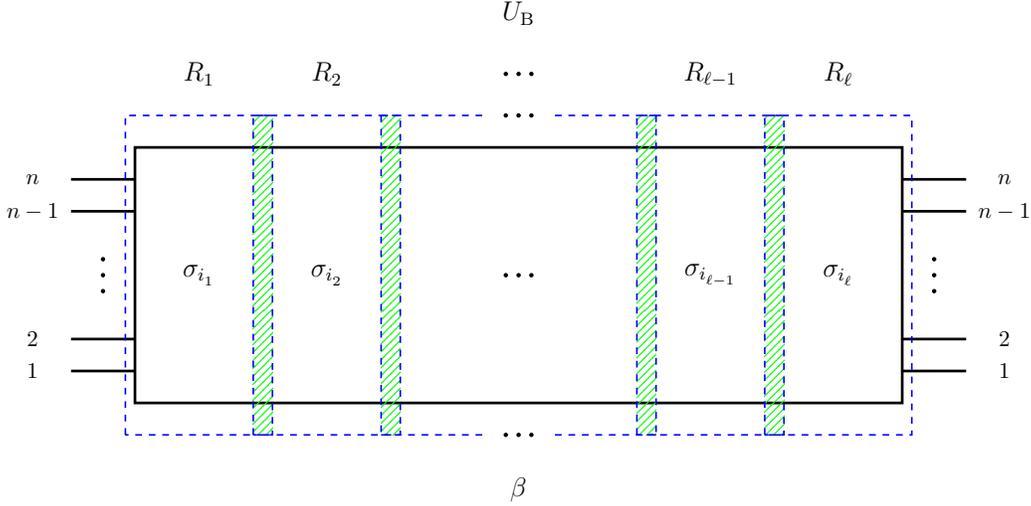
\begin{figure}
\centering
\begin{tikzpicture}
\useasboundingbox (-7,-2.85) rectangle (7,3.75);
\scope[transform canvas={scale=0.85}]

\draw[very thick] (-6,-2) rectangle (6,2);

\draw[very thick] (-7, 2-0.5) -- (-6, 2-0.5);
\draw[very thick] (-7, 1) -- (-6, 1);
\draw[very thick] (-7, -1) -- (-6,-1);
\draw[very thick] (-7, -2+0.5) -- (-6, -2+0.5);

\filldraw[black] (-6-0.5,0+0.25) circle (0.75pt);
\filldraw[black] (-6-0.5,0) circle (0.75pt);
\filldraw[black] (-6-0.5,0-0.25) circle (0.75pt);

\draw[very thick] (6, 2-0.5) -- (7, 2-0.5);
\draw[very thick] (6, 1) -- (7, 1);
\draw[very thick] (6, -1) -- (7,-1);
\draw[very thick] (6, -2+0.5) -- (7, -2+0.5);

\filldraw[black] (6+0.5,0+0.25) circle (0.75pt);
\filldraw[black] (6+0.5,0) circle (0.75pt);
\filldraw[black] (6+0.5,0-0.25) circle (0.75pt);

\node at (-7-0.6,2-0.5) {$n$};
\node at (-7-0.6,1) {$n-1$};
\node at (-7-0.6,-1) {$2$};
\node at (-7-0.6,-2+0.5) {$1$};

\node at (7+0.6,2-0.5) {$n$};
\node at (7+0.6,1) {$n-1$};
\node at (7+0.6,-1) {$2$};
\node at (7+0.6,-2+0.5) {$1$};


\node at (0,+4.10) {\Large$U_{\mathrm{B}}$};
\node at (0,-3.35) {\Large$\beta$};

\node at (-6+1,0) {\Large$\sigma_{i_{1}}$};
\node at (6-1,0) {\Large$\sigma_{i_{\ell}}$};

\node at (-6+3,0) {\Large$\sigma_{i_{2}}$};
\node at (6-3,0) {\Large$\sigma_{i_{\ell-1}}$};


\draw[draw=none, dashed, pattern=north east lines, pattern color=green] (-4-0.15,-2-0.5) rectangle (-4+0.15,+2+0.5);
\draw[draw=none, dashed, pattern=north east lines, pattern color=green] (+4-0.15,-2-0.5) rectangle (+4+0.15,+2+0.5);

\draw[draw=none, dashed, pattern=north east lines, pattern color=green] (-2-0.15,-2-0.5) rectangle (-2+0.15,+2+0.5);
\draw[draw=none, dashed, pattern=north east lines, pattern color=green] (+2-0.15,-2-0.5) rectangle (+2+0.15,+2+0.5);

\draw[dashed, blue, line width=0.025cm] (-6-0.15,-2-0.5) rectangle (-4+0.15,+2+0.5);
\draw[dashed, blue, line width=0.025cm] (-4-0.15,-2-0.5) rectangle (-2+0.15,+2+0.5);

\draw[dashed, blue, line width=0.025cm] (2-0.15,-2-0.5) rectangle (4+0.15,+2+0.5);
\draw[dashed, blue, line width=0.025cm] (4-0.15,-2-0.5) rectangle (6+0.15,+2+0.5);

\draw[dashed, blue, line width=0.025cm] (-2-0.15,2+0.5) -- (-2-0.15,-2-0.5);
\draw[dashed, blue, line width=0.025cm] (-2-0.15,-2-0.5) -- (0-0.5,-2-0.5);
\draw[dashed, blue, line width=0.025cm] (-2-0.15,+2+0.5) -- (0-0.5,+2+0.5);

\draw[dashed, blue, line width=0.025cm] (+2+0.15,2+0.5) -- (+2+0.15,-2-0.5);
\draw[dashed, blue, line width=0.025cm] (+2+0.15,-2-0.5) -- (0+0.5,-2-0.5);
\draw[dashed, blue, line width=0.025cm] (+2+0.15,+2+0.5) -- (0+0.5,+2+0.5);


\filldraw[black] (0-0.2,-2-0.5) circle (0.75pt);
\filldraw[black] (0,-2-0.5) circle (0.75pt);
\filldraw[black] (0+0.2,-2-0.5) circle (0.75pt);

\filldraw[black] (0-0.2,2+0.5) circle (0.75pt);
\filldraw[black] (0,2+0.5) circle (0.75pt);
\filldraw[black] (0+0.2,2+0.5) circle (0.75pt);

\filldraw[black] (0-0.2,0) circle (0.75pt);
\filldraw[black] (0,0) circle (0.75pt);
\filldraw[black] (0+0.2,0) circle (0.75pt);

\filldraw[black] (0-0.2,3.15) circle (0.75pt);
\filldraw[black] (0,3.15) circle (0.75pt);
\filldraw[black] (0+0.2,3.15) circle (0.75pt);


\node at (-6+1,3.15) {\Large$R_{1}$};
\node at (6-1,3.15) {\Large$R_{\ell}$};
\node at (-6+3,3.15) {\Large$R_{2}$};
\node at (6-3,3.15) {\Large$R_{\ell-1}$};

\endscope
\end{tikzpicture}
\caption{Region $U_{\mathrm{B}}$ in $\mathbb{R}^{2}$ decomposed into a collection $\mathcal{R}_{\Lambda(\beta)}:=\big\{R_{i}\big\}_{i=1}^{\ell}$ of $\ell$ open vertical straps $R_{j}$ in $ \mathbb{R}^{2}$.}
\label{fig: Decomposition of the region U_B into sub-regions R_i}
\end{figure}

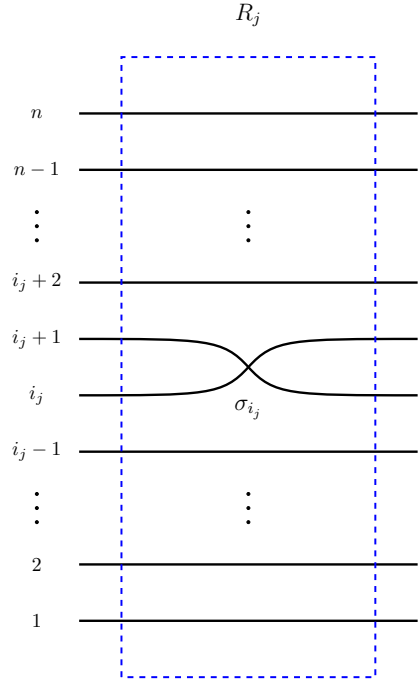
\begin{figure}
\centering
\begin{tikzpicture}
\useasboundingbox (-4,-4.25) rectangle (4,5);
\scope[transform canvas={scale=0.75}]

\draw[very thick] (-3,1+0.5) -- (3,1+0.5);
\draw[very thick] (-3,0.5) .. controls (-0.75,0.5) and (-0.45,0.5) .. (0,0) .. controls (0.45,-0.5) and (0.75,-0.5) .. (3,-0.5);
\draw[very thick] (-3,-0.5) .. controls (-0.75,-0.5) and (-0.45,-0.5) .. (0,0) .. controls (0.45,0.5) and (0.75,0.5) .. (3,0.5);
\draw[very thick] (-3,-1-0.5) -- (3,-1-0.5);

\filldraw[black] (0,2.5+0.25) circle (0.75pt);
\filldraw[black] (0,2.5) circle (0.75pt);
\filldraw[black] (0,2.5-0.25) circle (0.75pt);

\filldraw[black] (0,-2.5+0.25) circle (0.75pt);
\filldraw[black] (0,-2.5) circle (0.75pt);
\filldraw[black] (0,-2.5-0.25) circle (0.75pt);

\draw[very thick] (-3,3.5) -- (3,3.5);
\draw[very thick] (-3,-3.5) -- (3,-3.5);

\draw[very thick] (-3,4.5) -- (3,4.5);
\draw[very thick] (-3,-4.5) -- (3,-4.5);

\node at (-3-0.75, 4.5) {$n$};
\node at (-3-0.75, 3.5) {$n-1$};
\node at (-3-0.75, 1+0.5) {$i_{j}+2$};
\node at (-3-0.75, +0.5) {$i_{j}+1$};
\node at (-3-0.75, -0.5) {$i_{j}$};
\node at (-3-0.75, -1-0.5) {$i_{j}-1$};
\node at (-3-0.75, -3.5) {$2$};
\node at (-3-0.75, -4.5) {$1$};

\filldraw[black] (-3-0.75,2.5+0.25) circle (0.75pt);
\filldraw[black] (-3-0.75,2.5) circle (0.75pt);
\filldraw[black] (-3-0.75,2.5-0.25) circle (0.75pt);

\filldraw[black] (-3-0.75,-2.5+0.25) circle (0.75pt);
\filldraw[black] (-3-0.75,-2.5) circle (0.75pt);
\filldraw[black] (-3-0.75,-2.5-0.25) circle (0.75pt);

\draw[dashed, blue, line width=0.035cm] (-2.25,-5.5) rectangle (2.25,5.5);

\node at (0,-0.75) {\Large$\sigma_{i_{j}}$};
\node at (0,6.25) {\Large$R_{j}$};

\endscope
\end{tikzpicture}
\caption{Intersection of an open vertical strap $R_{j}$ in $\mathbb{R}^{2}$ with the front projection $\Pi_{x,z}(\Lambda(\beta))\subset \mathbb{R}^{2}$.}
\label{fig: A sub-regions R_j}
\end{figure}

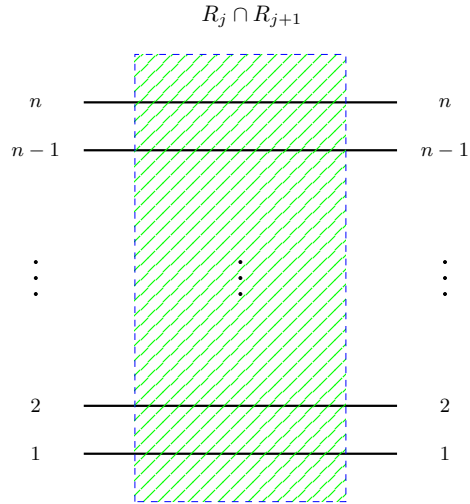
\begin{figure}
\centering
\begin{tikzpicture}
\useasboundingbox (-4,-3.15) rectangle (4,4);
\scope[transform canvas={scale=0.85}]

\draw[line width=0.035cm]  (-2.45,2.5+0.25) -- (2.45,2.5+0.25);
\draw[line width=0.035cm]  (-2.45, 1.75+0.25) -- (2.45, 1.75+0.25);

\draw[line width=0.035cm]  (-2.45, -1.75-0.25) -- (2.45, -1.75-0.25);
\draw[line width=0.035cm] (-2.45,-2.5-0.25) -- (2.45,-2.5-0.25);

\filldraw[black] (0,0+0.25) circle (0.65pt);
\filldraw[black] (0,0) circle (0.65pt);
\filldraw[black] (0,0-0.25) circle (0.65pt);

\filldraw[black] (-3.20,0+0.25) circle (0.65pt);
\filldraw[black] (-3.20,0) circle (0.65pt);
\filldraw[black] (-3.20,0-0.25) circle (0.65pt);

\node at (-3.20,2.5+0.25) {\small $n$};
\node at (-3.20,1.75+0.25) {\small $n-1$};
\node at (-3.20,-1.75-0.25) {\small $2$};
\node at (-3.20,-2.5-0.25) {\small $1$};

\filldraw[black] (+3.20,0+0.25) circle (0.65pt);
\filldraw[black] (+3.20,0) circle (0.65pt);
\filldraw[black] (+3.20,0-0.25) circle (0.65pt);

\node at (+3.20,2.5+0.25) {\small $n$};
\node at (+3.20,1.75+0.25) {\small $n-1$};
\node at (+3.20,-1.75-0.25) {\small $2$};
\node at (+3.20,-2.5-0.25) {\small $1$};

\draw[blue, dashed, pattern={Lines[angle=45,distance=4pt]}, pattern color=green, line width=0.015cm] (-1.65,-3.25-0.25) rectangle (1.65,3.25+0.25);

\node at (0.18, 3.85+0.25) {\normalsize $R_{j}\cap R_{j+1}$};

\endscope
\end{tikzpicture}
\caption{Intersection of the overlap $R_{j}\cap R_{j+1}$ of two adjacent straps $R_{j}$ and $R_{j+1}$ in $\mathbb{R}^{2}$ with the front projection $\Pi_{x,z}(\Lambda(\beta))\subset \mathbb{R}^{2}$.}
\label{fig: Intersection of two consecutive sub-regions R_j and R_{j+1}}
\end{figure}
\end{construction}

Next, by applying the sheaf axioms to the open cover $\mathcal{R}_{\Lambda(\beta)}=\big\{R_{j}\big\}_{j=1}^{\ell}$ of $U_{\mathrm{B}}$ introduced in Construction~\eqref{Cons: Definition of the vertical straps}, we provide an explicit characterization of $\sh{F}$ on the region $U_{\mathrm{B}}$ as follows: First, we give a geometric description of $\sh{F}$ on each vertical strap $R_{j}$ independently, for every $j\in[1,\ell]$. Second, we proceed by gluing the local data that characterizes $\sh{F}$ across adjacent straps $R_{j}$ and $R_{j+1}$, for every $j\in[1,\ell-1]$. Accordingly, we present the following results. 

\begin{lemma}\label{lemma: description of an object F in a region R_j in terms of flags for k=1}
Let $\beta=\sigma_{i_{1}}\cdots \sigma_{i_{\ell}}\in \mathrm{Br}^{+}_{n}$ be a positive braid word, and let $\sh{F}$ be an object of the category $\ccs{1}{\beta}$. Fix $j\in[1,\ell]$, and let $R_{j}\subset \mathbb{R}^{2}$ be the open vertical strap whose intersection with the front projection $\Pi_{x,z}(\Lambda(\beta))\subset \mathbb{R}^{2}$ consists of the braid diagram on $n$ strands associated with $\sigma_{i_{j}}$, i.e., the $j$-th crossing of $\beta$ (see Figure~\eqref{fig: A sub-regions R_j}). Denote by $k:=i_{j}\in[1, n-1]$ the index of $\sigma_{i_{j}}$, and suppose that $k=1$. Then, on the region $R_{j}$, the sheaf $\sh{F}$ is specified by a collection of $n$ linear maps 
\begin{equation*}
\big\{\phi^{(i)}_{\sh{F}}:\mathbb{K}^{i}\to \mathbb{K}^{i+1}\big\}_{i=1}^{n-1}\,\cup\, \big\{\widetilde{\phi}^{\,(1)}_{\sh{F}}:\mathbb{K}^{1}\to \mathbb{K}^{2} \big\}\, , 
\end{equation*}
as illustrated in Figure~\eqref{fig: a sheaf in the vertical strap R_j containing a crossing sigma_1}. 

\textbf{Compatibility conditions}:  The maps $\phi^{(1)}_{\sh{F}}$ and $\widetilde{\phi}^{\,(1)}_{\sh{F}}$ are injective and have complementary image in $\mathbb{K}^{2}$; that is, $\mathbb{K}^{2}=\mathrm{im}\,\phi^{\,(1)}_{\sh{F}}\,\oplus\,\mathrm{im}\,\widetilde{\phi}^{\,(1)}_{\sh{F}}$. 

In particular, if we assume that the maps $\big\{\phi^{(i)}_{\sh{F}}\big\}_{i=2}^{n-1}$ are injective, we further obtain that, on $R_{j}$, $\sh{F}$ is characterized by two complete flags $\fl{F}_{j}$ and $\fl{F}_{j+1}$ in $\mathbb{K}^{n}$, which are in $s_{1}$--relative position. 
\end{lemma}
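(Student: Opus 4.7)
The plan is to analyze $\sh{F}$ on the strap $R_{j}$ stratum-by-stratum using the microlocal support conditions recalled in Section~\ref{sec:microlocal theory}, and then repackage the resulting linear-algebra data into two complete flags in $s_{1}$-relative position. Since $\sigma_{i_{j}}=\sigma_{1}$ is the only crossing in $R_{j}$, it sits at the very bottom of $R_{j}$, between strands $1$ and $2$; every other stratum of $R_{j}$ is either an arc on one of the strands $2,\dots,n$, a two-dimensional region between consecutive strands, or part of the unbounded stratum lying below strand $1$. The vanishing of $\sh{F}$ on the unbounded stratum together with the microlocal rank condition at each Maslov-potential-$0$ arc then forces the stalks at the two-dimensional strata of $R_{j}$, read from bottom to top, to be $0,\mathbb{K}^{1},\mathbb{K}^{2},\dots,\mathbb{K}^{n}$.

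First I would read off the generization maps. For each $i\in[2,n-1]$, the single arc lying on strand $i+1$ and crossing $R_{j}$ produces exactly one linear map $\phi^{(i)}_{\sh{F}}\colon\mathbb{K}^{i}\to\mathbb{K}^{i+1}$, identical on the two halves of $R_{j}$, because no crossing separates them above strand $2$. Between strands $1$ and $2$, however, the crossing $\sigma_{1}$ splits the strap into a western region $W$ and an eastern region $E$, each with stalk $\mathbb{K}^{1}$; I would label the two corresponding generization maps to the northern stalk $N=\mathbb{K}^{2}$ by $\phi^{(1)}_{\sh{F}}$ and $\widetilde{\phi}^{(1)}_{\sh{F}}$, respectively, producing the full collection of linear data asserted in the statement. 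I would then apply the crossing microsupport condition from Sub-figure~\eqref{sub-fig: microsupport near a crossing} at $\sigma_{1}$: since the southern stalk is $0$, the commutative diagram is vacuous at the lower maps and the short exact sequence collapses to
\begin{equation*}
0\longrightarrow 0 \longrightarrow \mathbb{K}^{1}\oplus\mathbb{K}^{1}\xrightarrow{\;\phi^{(1)}_{\sh{F}}\,\oplus\,\bigl({-}\widetilde{\phi}^{(1)}_{\sh{F}}\bigr)\;}\mathbb{K}^{2}\longrightarrow 0.
\end{equation*}
This is precisely the setup of Lemma~\eqref{Lemma: Crossing condition for linear maps} with trivial lower maps; parts $(ii)$–$(iv)$ thereof (or a direct inspection of the sequence) immediately yield the injectivity of $\phi^{(1)}_{\sh{F}}$ and $\widetilde{\phi}^{(1)}_{\sh{F}}$ together with the decomposition $\mathbb{K}^{2}=\mathrm{im}\,\phi^{(1)}_{\sh{F}}\oplus\mathrm{im}\,\widetilde{\phi}^{(1)}_{\sh{F}}$, establishing the compatibility condition.

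Finally, under the additional hypothesis that $\phi^{(i)}_{\sh{F}}$ is injective for every $i\in[2,n-1]$, I would assemble the two desired complete flags in $\mathbb{K}^{n}$ via the type $\mathcal{I}$ construction of Definition~\eqref{Def:flags and adapted bases}:
\begin{equation*}
\fl{F}_{j}:=\prescript{}{\mathcal{I}\,}{\fl{F}}\bigl(\phi^{(1)}_{\sh{F}},\phi^{(2)}_{\sh{F}},\dots,\phi^{(n-1)}_{\sh{F}}\bigr),\qquad \fl{F}_{j+1}:=\prescript{}{\mathcal{I}\,}{\fl{F}}\bigl(\widetilde{\phi}^{(1)}_{\sh{F}},\phi^{(2)}_{\sh{F}},\dots,\phi^{(n-1)}_{\sh{F}}\bigr).
\end{equation*}
Lemma~\eqref{Lemma: type I flag} guarantees completeness of both. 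The $p$-th subspaces of the two flags coincide for every $p\in[2,n]$ by construction, since the composition $\phi^{(n-1)}_{\sh{F}}\circ\cdots\circ\phi^{(p)}_{\sh{F}}$ does not involve the first step when $p\geq 2$. For $p=1$, the injectivity of $\phi^{(n-1)}_{\sh{F}}\circ\cdots\circ\phi^{(2)}_{\sh{F}}$ reduces the question of whether $F_{j}^{(1)}$ equals $F_{j+1}^{(1)}$ to the comparison of $\mathrm{im}\,\phi^{(1)}_{\sh{F}}$ and $\mathrm{im}\,\widetilde{\phi}^{(1)}_{\sh{F}}$ inside $\mathbb{K}^{2}$, and these are distinct lines by the direct-sum decomposition just obtained. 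This yields the $s_{1}$-relative position. The only truly delicate point, and the one I would watch most carefully, is maintaining the orientation of the exact sequence so that the map computed from the western arc genuinely plays the role of the first step in $\fl{F}_{j}$ and the eastern one in $\fl{F}_{j+1}$; this orientation is what will later make the jump between $\fl{F}_{j}$ and $\fl{F}_{j+1}$ compatible with a braid matrix $B^{(n)}_{1}(z)$ acting on the right, and thus bridge the present geometric description with the algebraic characterization of the objects developed in the main theorems.
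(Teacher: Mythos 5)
Your proposal is correct and follows essentially the same route as the paper: read off the stalks and generization maps from the microlocal rank and arc conditions, collapse the crossing condition at $\sigma_{1}$ to the short exact sequence $0\to 0\to\mathbb{K}^{1}\oplus\mathbb{K}^{1}\to\mathbb{K}^{2}\to 0$ to get injectivity and the direct-sum decomposition, and then build the two type $\mathcal{I}$ flags and compare them degree by degree. The only cosmetic difference is that the paper extracts the compatibility conditions by a dimension count on the exact sequence rather than by citing Lemma~\eqref{Lemma: Crossing condition for linear maps}, but your "direct inspection" hedge covers exactly that.
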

\begin{proof}
To begin, consider the front projection $\Pi_{x,z}(\Lambda(\beta))\subset \mathbb{R}^{2}$ of the Legendrian link $\Lambda(\beta)\subset (\mathbb{R}^{3}, \xi_{\mathrm{std}})$, which is depicted in Figure~\eqref{Front diagram of the rainbow closure of a braid in n strands}. In particular, recall that in $\Pi_{x,z}(\Lambda(\beta))$, the strands at the bottom have Maslov potential $0$. Thus, since $k=1$, the microlocal rank conditions and the microlocal support conditions near the arcs imply that, on $R_{j}$, $\sh{F}$ is defined by a collection of $n$ linear maps 
\begin{equation*}
\big\{\phi^{(i)}_{\sh{F}}:\mathbb{K}^{i}\to \mathbb{K}^{i+1}\big\}_{i=1}^{n-1}\,\cup\, \big\{\widetilde{\phi}^{\,(1)}_{\sh{F}}:\mathbb{K}^{1}\to \mathbb{K}^{2} \big\}\, , 
\end{equation*}
as shown in Figure~\eqref{fig: a sheaf in the vertical strap R_j containing a crossing sigma_1}. 

Now, let $U_{j}\subset \mathbb{R}^{2}$ be the small open ball---centered at the crossing $\sigma_{i_{j}}$---highlighted in yellow in Figure~\eqref{fig: a sheaf in the vertical strap R_j containing a crossing sigma_1}, where $k=i_{j}=1$. Then, by the microlocal support conditions near the crossings, we obtain that the maps $\phi^{(1)}_{\sh{F}}$ and $\widetilde{\phi}^{\,(1)}_{\sh{F}}$ defining $\sh{F}$ on $U_{j}$ satisfy the following compatibility conditions:
\begin{itemize}
\justifying
\item[(\textit{i})] The diagram in Figure~\eqref{fig: an object F on an open ball at a crossing sigma 1} commutes.
\item[(\textit{ii})]  The sequence in Equation~\eqref{Eq: short exact sequence for the crossing sigma_1} is short exact.
\begin{equation}\label{Eq: short exact sequence for the crossing sigma_1}
\begin{tikzpicture}
\useasboundingbox (-4.5,-0.25) rectangle (4.5,1);
\scope[transform canvas={scale=0.95}]
\node at (-5.5-1.5,0) {\large$0$}; 
\node at (-3-1,0) {\large$0$};    
\node at (0,0) {\large$\mathbb{K}^{1}\oplus \mathbb{K}^{1}$};   
\node at (3+1,0) {\large$\mathbb{K}^{2}$};
\node at (5.5+1.5,0) {\large$0$}; 

\draw[->, shorten <=0.85cm,  shorten >=0.35cm, line width=0.02cm] (-8+0.5, 0) -- (-4,0);
\draw[->, shorten <=0.35cm,  shorten >=0.85cm, line width=0.02cm] (-4, 0) -- (0,0);
\draw[->, shorten <=0.85cm,  shorten >=0.35cm, line width=0.02cm] (0, 0) -- (4,0);
\draw[->, shorten <=0.35cm,  shorten >=0.85cm, line width=0.02cm] (4, 0) -- (8-0.5,0);

\node at (-1.5-0.75, 0.55) {\small$(0,0)$};
\node at (1.5+0.75, 0.55) {\small$\phi^{\,(1)}_{\sh{F}} \oplus\Big(-\widetilde{\phi}^{\,(1)}_{\sh{F}}\,\Big)$};

\node at (5.8+2-0.5,-0.10) {\large$.$}; 

\endscope
\end{tikzpicture}    
\end{equation}  
\end{itemize}
Consequently, since $0\leq \mathrm{dim}_{\,\mathbb{K}}\,\mathrm{im}\, \phi^{(1)}_{\sh{F}}\leq 1$ and $ 0\leq \mathrm{dim}_{\,\mathbb{K}}\,\mathrm{im}\,\widetilde{\phi}^{\,(1)}_{\sh{F}}\leq 1$, the short exact sequence in Equation~\eqref{Eq: short exact sequence for the crossing sigma_1} guarantees that the maps $\phi^{(1)}_{\sh{F}}$ and $\widetilde{\phi}^{\,(1)}_{\sh{F}}$ are injective and their images are complementary subspaces in $\mathbb{K}^{2}$; that is, $\mathbb{K}^{2}=\mathrm{im}\,\phi^{\,(1)}_{\sh{F}}\,\oplus\,\mathrm{im}\,\widetilde{\phi}^{\,(1)}_{\sh{F}}$. 

Finally, assuming the maps $\big\{\phi^{(i)}_{\sh{F}}\big\}_{i=2}^{n-1}$ are injective, we show that the linear maps defining $\sh{F}$ on $R_{j}$ determine two complete flags $\fl{F}_{j}$ and $\fl{F}_{j+1}$ in $\mathbb{K}^{n}$, which are in $s_{1}$--relative position, thereby providing a geometric characterization of $\sh{F}$ on $R_{j}$. To this end, relying on the notion of type $\mathcal{I}$ flag introduced in Definition~\eqref{Def:flags and adapted bases}--\eqref{Def: type I flag}, we proceed as follows: 
\begin{enumerate}
\item Since the maps $\big\{\phi^{(i)}_{\sh{F}}\big\}_{i=1}^{n-1}$ are injective, we assign to $\sh{F}$ on $R_{j}$ the complete flag $\fl{F}_{j}$ in $\mathbb{K}^{n}$ given by
\begin{equation*}
\fl{F}_{j}:=\prescript{}{\mathcal{I}\,}{\fl{F}}\big(\phi_{\sh{F}}^{(1)},  \dots, \phi_{\sh{F}}^{(n-1)}\big)\, .    
\end{equation*}

\item Since the maps $\big\{\widetilde{\phi}^{\,(1)}_{\sh{F}}, \phi^{(2)}_{\sh{F}}, \dots, \phi^{(n-1)}_{\sh{F}} \big\}$ are injective, we assign to $\sh{F}$ on $R_{j}$ the complete flag $\fl{F}_{j+1}$ in $\mathbb{K}^{n}$ given by
\begin{equation*}
\fl{F}_{j+1}:=\prescript{}{\mathcal{I}\,}{\fl{F}}\big( \widetilde{\phi}^{\,(1)}_{\sh{F}}, \phi^{(2)}_{\sh{F}}, \dots, \phi^{(n-1)}_{\sh{F}} \big)\, .   
\end{equation*}
\end{enumerate}

In particular, by Definition~\eqref{Def:flags and adapted bases}--\eqref{Def: type I flag}, we know that, as filtrations of vector subspaces in $\mathbb{K}^{n}$, the flags 
\begin{equation*}
\begin{aligned}
\fl{F}_{j}&=\big\{ F^{(0)}_{j}\subset \cdots \subset F^{(n)}_{j} \big\}\, , \\[4pt]   
\fl{F}_{j+1}&=\big\{ F^{(0)}_{j+1}\subset \cdots \subset F^{(n)}_{j+1} \big\}\, ,       
\end{aligned}
\end{equation*}
are given by
\begin{align*}
F^{(p)}_{j}&:=\begin{cases}
\hfil 0\, , & \text{if $~p=0$}\, , \\[2pt]
\hfil\mathrm{im}\Big(\phi^{\,(n-1)}_{\sh{F}}\circ\phi^{\,(n-2)}_{\sh{F}}\circ\cdots\circ\phi^{\,(2)}_{\sh{F}}\circ\phi^{\,(1)}_{\sh{F}}\Big)\, , & \text{if $~p=1$}\, , \\[2pt]
\mathrm{im}\Big(\phi^{\,(n-1)}_{\sh{F}}\circ\phi^{\,(n-2)}_{\sh{F}}\circ\cdots\circ\phi^{\,(p+1)}_{\sh{F}}\circ\phi^{\,(p)}_{\sh{F}}\Big)\, , & \text{if $~p\in[2, n-1]$}\, , \\[2pt]
\hfil \mathbb{K}^{n}\, , & \text{if $~p=n$}\, .
\end{cases}\\[6pt]
F^{(p)}_{j+1}&:=\begin{cases}
\hfil 0\, , & \text{if $~p=0$}\, , \\[2pt]
\hfil\mathrm{im}\Big(\phi^{\,(n-1)}_{\sh{F}}\circ\phi^{\,(n-2)}_{\sh{F}}\circ\cdots\circ\phi^{\,(2)}_{\sh{F}}\circ\widetilde{\phi}^{\,(1)}_{\sh{F}}\Big)\, , & \text{if $~p=1$}\, , \\[2pt]
\mathrm{im}\Big(\phi^{\,(n-1)}_{\sh{F}}\circ\phi^{\,(n-2)}_{\sh{F}}\circ\cdots\circ\phi^{\,(p+1)}_{\sh{F}}\circ\phi^{\,(p)}_{\sh{F}}\Big)\, , & \text{if $~p\in[2, n-1$]}\, , \\[2pt]
\hfil \mathbb{K}^{n}\, , & \text{if $~p=n$}\, .
\end{cases} 
\end{align*}
Thus, since $\mathrm{im}\,\phi^{\,(1)}_{\sh{F}}\,\neq\,\mathrm{im}\,\widetilde{\phi}^{\,(1)}_{\sh{F}}$ and the composition $\phi^{\,(n-1)}_{\sh{F}}\circ \cdots \circ \phi^{\,(2)}_{\sh{F}}:\mathbb{K}^{2}\longrightarrow \mathbb{K}^{n}$ is injective, we deduce that $F^{(1)}_{j}\neq F^{(1)}_{j+1}$ and $F^{(p)}_{j}= F^{(p)}_{j+1}$ for all $p\in[0,n]$ with $p\neq 1$, confirming that $\fl{F}_{j}$ and $\fl{F}_{j+1}$ are in $s_{1}$--relative position. This concludes the proof. 
\end{proof}

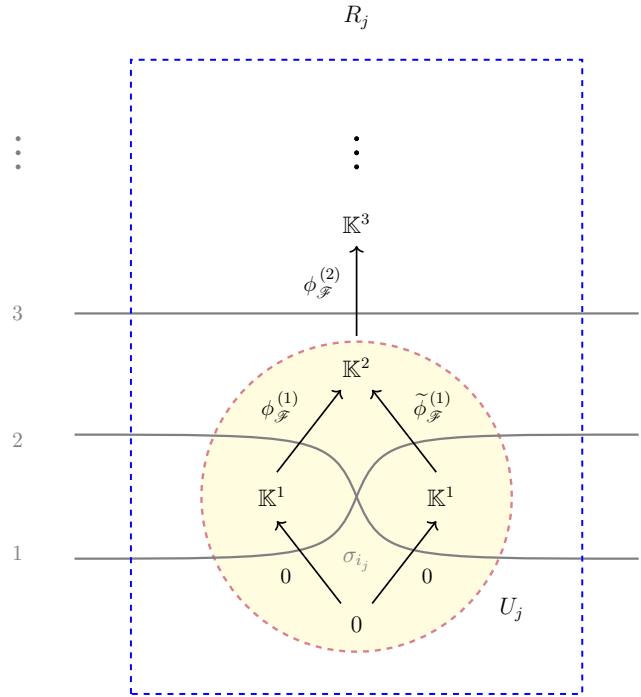
\begin{figure}
\centering
\begin{tikzpicture}
\useasboundingbox (-5.75,-2.75) rectangle (5.75,6.75);
\scope[transform canvas={scale=0.75}]

\begin{scope}[opacity=0.5, transparency group]

\filldraw[color=purple!120, fill=yellow!35, fill opacity=0.9, very thick, dashed] (0,0) circle (2.75); 

\draw[shorten <=0.5cm,  shorten >=0.5cm, line width=0.04cm] (-4.5-1, 2.35+1-0.1) -- (4.5+1,2.35+1-0.1);

\draw[shorten <=0.5cm,  shorten >=0.5cm, line width=0.04cm] (-4.5-1,1.1) .. controls (-0.75,1.1) and (-0.45,1.1) .. (0,0) .. controls (0.45,-1.1) and (0.75,-1.1) .. (4.5+1,-1.1);
\draw[shorten <=0.5cm,  shorten >=0.5cm, line width=0.04cm] (-4.5-1,-1.1) .. controls (-0.75,-1.1) and (-0.45,-1.1) .. (0,0) .. controls (0.45,1.1) and (0.75,1.1) .. (4.5+1,1.1);


\node at (-4-1-1, 2.35+1-0.1) {\large$3$};
\node at (-4-1-1, 1) {\large$2$};
\node at (-4-1-1, -1) {\large$1$};

\filldraw[black] (-5-1,4.35+1+0.25+1-0.25) circle (1pt);
\filldraw[black] (-5-1,4.35+1+1-0.25) circle (1pt);
\filldraw[black] (-5-1,4.35+1-0.25+1-0.25) circle (1pt);


\node at (0, -1.15) {\Large$\sigma_{i_{j}}$};

\end{scope}


\node at (0, 4+0.35+0.5) {\Large$\mathbb{K}^{3}$};

\node[above] at (0, 2) {\Large$\mathbb{K}^{2}$};
\node at (-1.5, 0) {\Large$\mathbb{K}^{1}$};
\node at (1.5, 0) {\Large$\mathbb{K}^{1}$};
\node[below] at (0, -2) {\Large$0$};


\draw[->, shorten <=0.6cm,  shorten >=0.3cm, line width=0.03cm] (0, 2+0.25) -- (0,4+0.25+0.5);

\draw[->, shorten <=0.55cm,  shorten >=0.45cm, line width=0.03cm] (-1.75, 0) -- (0,2.25);
\draw[->, shorten <=0.55cm,  shorten >=0.45cm, line width=0.03cm] (1.75, 0) -- (0,2.25);
\draw[->, shorten <=0.45cm,  shorten >=0.55cm, line width=0.03cm] (0,-2.25) -- (-1.75, 0);
\draw[->, shorten <=0.45cm,  shorten >=0.55cm, line width=0.03cm] (0,-2.25) -- (1.75, 0);


\filldraw[black] (0,4.35+1+0.25+1-0.25) circle (1pt);
\filldraw[black] (0,4.35+1+1-0.25) circle (1pt);
\filldraw[black] (0,4.35+1-0.25+1-0.25) circle (1pt);


\node at (-1.35, 1.6) {\large$\phi^{\,(1)}_{\sh{F}}$};
\node at (1.35, 1.6) {\large$\widetilde{\phi}^{\,(1)}_{\sh{F}}$};
\node at (-1.25, -1.4) {\large$0$};
\node at (1.25, -1.4) {\large$0$};

\node at (-0.6, 3.25+0.55) {\large$\phi^{\,(2)}_{\sh{F}}$};


\draw[dashed, blue, line width=0.035cm] (-3-1,-3.5) rectangle (3+1,7.75);

\node at (0,8.5) {\Large$R_{j}$};

\node at (2.75,-2) {\Large$U_{j}$};

\endscope
\end{tikzpicture}
\caption{An object $\sh{F}$ on the vertical strap $R_{j}$ containing a crossing $\sigma_{i_{j}}$ in the case $k=i_{j}=1$.}
\label{fig: a sheaf in the vertical strap R_j containing a crossing sigma_1}
\end{figure}

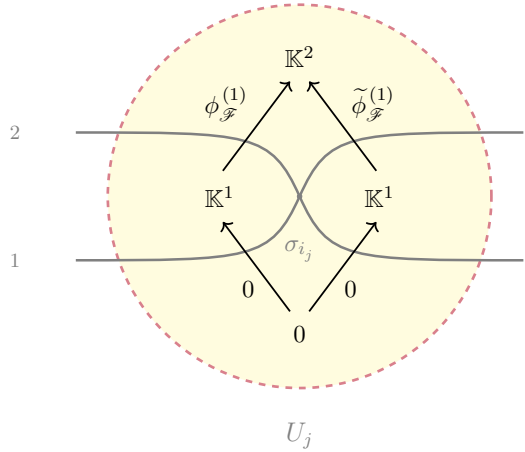
\begin{figure}
\centering
\begin{tikzpicture}
\useasboundingbox (-4,-3.25) rectangle (4,3);
\scope[transform canvas={scale=0.85}]
\begin{scope}[opacity=0.5, transparency group]
\filldraw[color=purple!120, fill=yellow!35, fill opacity=0.9, very thick, dashed] (0,0) circle (3);  
\filldraw[black] (0,0) circle (1pt);
\draw[shorten <=0.5cm,  shorten >=0.5cm, line width=0.04cm] (-4,1) .. controls (-0.75,1) and (-0.45,1) .. (0,0) .. controls (0.45,-1) and (0.75,-1) .. (4,-1);
\draw[shorten <=0.5cm,  shorten >=0.5cm, line width=0.04cm] (-4,-1) .. controls (-0.75,-1) and (-0.45,-1) .. (0,0) .. controls (0.45,1) and (0.75,1) .. (4,1);
\node[left] at (-4.25,1) {\small $2$};
\node[left] at (-4.25,-1) {\small $1$};
\node at (0,-0.85) {\large $\sigma_{i_{j}}$};
\node at (0,-3.75) {\Large $U_{j}$};    
\end{scope}

\node[above] at (0, 1.90) {\large$\mathbb{K}^{2}$};
\node[below] at (0, -1.90) {\large$0$};
\node at (-1.25, 0) {\large$\mathbb{K}^{1}$};
\node at (1.25, 0) {\large$\mathbb{K}^{1}$};

\draw[->, shorten <=0.5cm,  shorten >=0.25cm, thick] (-1.5, 0) -- (0,2);
\draw[->, shorten <=0.5cm,  shorten >=0.25cm, thick] (1.5, 0) -- (0,2);
\draw[->, shorten <=0.25cm,  shorten >=0.5cm, thick] (0,-2) -- (-1.5, 0);
\draw[->, shorten <=0.25cm,  shorten >=0.5cm, thick] (0,-2) -- (1.5, 0);

\node at (-1.15, 1.45) {\large$\phi^{\,(1)}_{\sh{F}}$};
\node at (1.15, 1.45) {\large$\widetilde{\phi}^{\,(1)}_{\sh{F}}$};
\node at (-0.8, -1.45) {\large$0$};
\node at (0.8, -1.45) {\large$0$};

\endscope
\end{tikzpicture}
\caption{Commutative diagram for the linear maps defining a sheaf $\sh{F}$ on the region $U_{j}$ in the case $k=i_{j}=1$.}
\label{fig: an object F on an open ball at a crossing sigma 1}
\end{figure}

\begin{lemma}\label{lemma: description of an object F in a region R_j in terms of flags for k geq 2}
Let $\beta=\sigma_{i_{1}}\cdots \sigma_{i_{\ell}}\in \mathrm{Br}^{+}_{n}$ be a positive braid word, and let $\sh{F}$ be an object of the category $\ccs{1}{\beta}$. Fix $j\in[1,\ell]$, and let $R_{j}\subset \mathbb{R}^{2}$ be the open vertical strap whose intersection with the front projection $\Pi_{x,z}(\Lambda(\beta))\subset \mathbb{R}^{2}$ consists of the braid diagram on $n$ strands associated with $\sigma_{i_{j}}$, i.e., the $j$-th crossing of $\beta$ (see Figure~\eqref{fig: A sub-regions R_j}). Denote by $k:=i_{j}\in[1, n-1]$ the index of $\sigma_{i_{j}}$, and suppose that $k\geq 2$. Then, on the region $R_{j}$, the sheaf $\sh{F}$ is specified by a collection of $n+1$ linear maps 
\begin{equation*}
\Big\{\phi^{\,(i)}_{\sh{F}}:\mathbb{K}^{i}\to \mathbb{K}^{i+1}\big\}_{i=1}^{n-1}\,\cup\,\big\{ \widetilde{\phi}^{\,(k-1)}_{\sh{F}}:\mathbb{K}^{k-1}\to \mathbb{K}^{k}\,,~\widetilde{\phi}^{\,(k)}_{\sh{F}}:\mathbb{K}^{k}\to \mathbb{K}^{k+1}\,\Big\}\, ,   
\end{equation*}
as illustrated in Figure~\eqref{fig: a sheaf in the vertical strap R_j containing a crossing sigma_k}. 

\textbf{Compatibility conditions}: The maps $\phi^{\,(k)}_{\sh{F}}$, $\phi^{\,(k-1)}_{\sh{F}}$, $\widetilde{\phi}^{\,(k)}_{\sh{F}}$, and $\widetilde{\phi}^{\,(k-1)}_{\sh{F}}$ satisfy the following properties: 
\begin{itemize}
\justifying
\item $\phi^{\,(k)}_{\sh{F}}\circ \phi^{\,(k-1)}_{\sh{F}}=\widetilde{\phi}^{\,(k)}_{\sh{F}}\circ\widetilde{\phi}^{\,(k-1)}_{\sh{F}}$. 
\item $\mathrm{im}\Big(\phi^{\,(k)}_{\sh{F}}\circ \phi^{\,(k-1)}_{\sh{F}}\Big)=\mathrm{im}\,\phi^{\,(k)}_{\sh{F}} \cap\,\mathrm{im}\,\widetilde{\phi}^{\,(k)}_{\sh{F}} =\mathrm{im}\Big(\widetilde{\phi}^{\,(k)}_{\sh{F}}\circ \widetilde{\phi}^{\,(k-1)}_{\sh{F}}\Big)$.
\item $\mathbb{K}^{k+1}=\mathrm{im}\,\phi^{\,(k)}_{\sh{F}}\,+\,\mathrm{im}\,\widetilde{\phi}^{\,(k)}_{\sh{F}}$. 
\end{itemize}

In particular, if we assume that the maps $\big\{\phi^{(i)}_{\sh{F}}\big\}_{i=1}^{n-1}$ are injective, we further obtain that:
\begin{itemize}
\justifying
\item The maps $\widetilde{\phi}^{\,(k-1)}_{\sh{F}}$ and $\widetilde{\phi}^{\,(k)}_{\sh{F}}$ are injective.
\item On $R_{j}$, $\sh{F}$ is characterized by two complete flags $\fl{F}_{j}$ and $\fl{F}_{j+1}$ in $\mathbb{K}^{n}$, which are in $s_{k}$--relative position. 
\end{itemize}
\end{lemma}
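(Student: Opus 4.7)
The plan is to mirror the strategy used in Lemma~4.12 for the case $k=1$, but now appealing to the Crossing Condition (Lemma~4.2) instead of the short-exact-sequence argument specific to the $\mathbb{K}^{1}\oplus\mathbb{K}^{1}\to\mathbb{K}^{2}$ situation. First, I will unpack the local data on $R_{j}$: by the microlocal rank condition at arcs and the fact that every strand inside $R_{j}$ carries Maslov potential $0$, the sheaf $\sh{F}$ restricted to $R_{j}$ is encoded by the family of stalks $\mathbb{K}^{i}$ at the $i$--th strand level together with linear maps between consecutive levels. Because $R_{j}$ contains a single crossing $\sigma_{i_{j}}$ with $k=i_{j}\geq 2$, the strands at levels $k$ and $k+1$ braid past each other, producing two distinct ``paths'' of linear maps at levels $k-1\to k\to k+1$: the unaccented family $\phi_{\sh{F}}^{(k-1)},\,\phi_{\sh{F}}^{(k)}$ coming from the pre-crossing side and the accented family $\widetilde{\phi}_{\sh{F}}^{(k-1)},\,\widetilde{\phi}_{\sh{F}}^{(k)}$ coming from the post-crossing side. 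Away from the crossing, the microlocal support condition at arcs forces all other levels to share a single map $\phi_{\sh{F}}^{(i)}$. This yields the announced collection of $n+1$ linear maps depicted in Figure~4.18.

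Next, I will apply the microlocal support condition at the crossing $\sigma_{i_{j}}$. The local model of Sub-figure~3.6 yields precisely the hypotheses of Lemma~4.2, with $S=\mathbb{K}^{k-1}$, $W=E=\mathbb{K}^{k}$, $N=\mathbb{K}^{k+1}$, and the four maps identified as $\alpha_{1}=\phi_{\sh{F}}^{(k-1)}$, $\beta_{1}=\phi_{\sh{F}}^{(k)}$, $\alpha_{2}=\widetilde{\phi}_{\sh{F}}^{(k-1)}$, $\beta_{2}=\widetilde{\phi}_{\sh{F}}^{(k)}$. Parts (\textit{i}), (\textit{ii}), and (\textit{iii}) of Lemma~4.2 immediately yield the three compatibility conditions in the statement: commutativity of the square, equality of images of the two compositions with their intersection, and $\mathbb{K}^{k+1}=\mathrm{im}\,\phi_{\sh{F}}^{(k)}+\mathrm{im}\,\widetilde{\phi}_{\sh{F}}^{(k)}$.

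Now assume that $\{\phi_{\sh{F}}^{(i)}\}_{i=1}^{n-1}$ are injective; in particular $\phi_{\sh{F}}^{(k-1)}$ and $\phi_{\sh{F}}^{(k)}$ are injective, so part (\textit{iv}) of Lemma~4.2 gives the injectivity of $\widetilde{\phi}_{\sh{F}}^{(k-1)}$ and $\widetilde{\phi}_{\sh{F}}^{(k)}$. Using the type~$\mathcal{I}$ construction of Definition~4.6--(1), I will associate to $\sh{F}$ on $R_{j}$ the two complete flags
\begin{equation*}
\fl{F}_{j}:=\prescript{}{\mathcal{I}\,}{\fl{F}}\bigl(\phi_{\sh{F}}^{(1)},\dots,\phi_{\sh{F}}^{(n-1)}\bigr),\qquad
\fl{F}_{j+1}:=\prescript{}{\mathcal{I}\,}{\fl{F}}\bigl(\phi_{\sh{F}}^{(1)},\dots,\phi_{\sh{F}}^{(k-2)},\widetilde{\phi}_{\sh{F}}^{(k-1)},\widetilde{\phi}_{\sh{F}}^{(k)},\phi_{\sh{F}}^{(k+1)},\dots,\phi_{\sh{F}}^{(n-1)}\bigr),
\end{equation*}
whose well-definedness as complete flags is guaranteed by Lemma~4.7.

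Finally, I will verify that $\fl{F}_{j}$ and $\fl{F}_{j+1}$ are in $s_{k}$--relative position. For $p\geq k+1$ the flags agree tautologically, since both $F^{(p)}_{j}$ and $F^{(p)}_{j+1}$ are defined via the same composition $\phi_{\sh{F}}^{(n-1)}\circ\cdots\circ\phi_{\sh{F}}^{(p)}$, whose entries all have index $\geq k+1$. For $p=k$ the two spaces differ: the composition $\phi_{\sh{F}}^{(n-1)}\circ\cdots\circ\phi_{\sh{F}}^{(k+1)}$ is injective, and the non-trivial content of part (\textit{ii}) of Lemma~4.2 shows $\mathrm{im}\,\phi_{\sh{F}}^{(k)}\neq\mathrm{im}\,\widetilde{\phi}_{\sh{F}}^{(k)}$ (otherwise the intersection would equal each of them, forcing $\phi_{\sh{F}}^{(k-1)}$ and $\widetilde{\phi}_{\sh{F}}^{(k-1)}$ to be surjective onto a $k$-dimensional space from a $(k-1)$-dimensional one, a contradiction). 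For $p\leq k-1$, the commutativity $\phi_{\sh{F}}^{(k)}\circ\phi_{\sh{F}}^{(k-1)}=\widetilde{\phi}_{\sh{F}}^{(k)}\circ\widetilde{\phi}_{\sh{F}}^{(k-1)}$ collapses both definitions at the problematic level $k-1$ into a single subspace, and all lower levels then coincide because the remaining entries in the compositions match. The delicate step is precisely this case-by-case comparison for $p\leq k-1$, where one must track how the commutativity of the crossing square propagates through the outer compositions $\phi_{\sh{F}}^{(n-1)}\circ\cdots\circ\phi_{\sh{F}}^{(k+1)}$; I anticipate this to be the principal technical obstacle, though it reduces to a direct inspection once one exploits the identity supplied by Lemma~4.2--(\textit{i}).
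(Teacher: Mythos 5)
Your proposal is correct and follows essentially the same route as the paper: microlocal conditions at arcs give the $n+1$ maps, the Crossing Condition (Lemma~\eqref{Lemma: Crossing condition for linear maps}) with $S=\mathbb{K}^{k-1}$, $W=E=\mathbb{K}^{k}$, $N=\mathbb{K}^{k+1}$ yields the compatibility conditions and the injectivity of the tilde maps, and the two type~$\mathcal{I}$ flags are compared level by level exactly as in the paper, with equality for $p\neq k$ forced by the commutativity $\phi^{(k)}_{\sh{F}}\circ\phi^{(k-1)}_{\sh{F}}=\widetilde{\phi}^{(k)}_{\sh{F}}\circ\widetilde{\phi}^{(k-1)}_{\sh{F}}$ and inequality at $p=k$ from the injectivity of $\phi^{(n-1)}_{\sh{F}}\circ\cdots\circ\phi^{(k+1)}_{\sh{F}}$. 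Your explicit dimension-count justification of $\mathrm{im}\,\phi^{(k)}_{\sh{F}}\neq\mathrm{im}\,\widetilde{\phi}^{(k)}_{\sh{F}}$ is a small but valid addition the paper leaves implicit.
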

\begin{proof}
To begin, consider the front projection $\Pi_{x,z}(\Lambda(\beta))\subset \mathbb{R}^{2}$ of the Legendrian link $\Lambda(\beta)\subset (\mathbb{R}^{3}, \xi_{\mathrm{std}})$, which is depicted in Figure~\eqref{Front diagram of the rainbow closure of a braid in n strands}. In particular, recall that in $\Pi_{x,z}(\Lambda(\beta))$, the strands at the bottom have Maslov potential $0$. Thus, since $k\geq 2$, the microlocal rank conditions and the microlocal support conditions near the arcs establish that, on $R_{j}$, $\sh{F}$ is defined by a collection of $n+1$ linear maps     
\begin{equation*}
\big\{\phi^{\,(i)}_{\sh{F}}:\mathbb{K}^{i}\to \mathbb{K}^{i+1}\big\}_{i=1}^{n-1}\,\cup\,\big\{ \widetilde{\phi}^{\,(k-1)}_{\sh{F}}:\mathbb{K}^{k-1}\to \mathbb{K}^{k}\,,~\widetilde{\phi}^{\,(k)}_{\sh{F}}:\mathbb{K}^{k}\to \mathbb{K}^{k+1}\,\big\}\, ,   
\end{equation*}
as shown in Figure~\eqref{fig: a sheaf in the vertical strap R_j containing a crossing sigma_k}. 

Now, let $U_{j}\subset \mathbb{R}^{2}$ be the small open ball---centered at the crossing $\sigma_{i_{j}}$---highlighted in yellow in Figure~\eqref{fig: a sheaf in the vertical strap R_j containing a crossing sigma_k}. Then, by the microlocal support conditions near the crossings, we obtain that the maps $\phi^{\,(k)}_{\sh{F}}$, $\phi^{\,(k-1)}_{\sh{F}}$, $\widetilde{\phi}^{\,(k)}_{\sh{F}}$, and $\widetilde{\phi}^{\,(k-1)}_{\sh{F}}$ defining $\sh{F}$ on $U_{j}$ satisfy the following compatibility conditions:
\begin{itemize}
\justifying
\item[(\textit{i})] The diagram in Figure~\eqref{fig: an object F on an open ball at a crossing sigma k} commutes.
\item[(\textit{ii})] The sequence in Equation~\eqref{Eq: short exact sequence for the crossing sigma_k} is short exact.
\begin{equation}\label{Eq: short exact sequence for the crossing sigma_k}
\begin{tikzpicture}
\useasboundingbox (-4.5,-0.25) rectangle (4.5,1.15);
\scope[transform canvas={scale=0.95}]
\node at (-5.5-2,0.05) {\large$0$}; 
\node at (-3-1-0.5,0.05) {\large$\mathbb{K}^{k-1}$};    
\node at (0,0.05) {\large$\mathbb{K}^{k}\oplus \mathbb{K}^{k}$};   
\node at (3+1+0.5,0.05) {\large$\mathbb{K}^{k+1}$};
\node at (5.5+2,0.05) {\large$0$}; 

\draw[->, shorten <=0.85cm,  shorten >=0.6cm, line width=0.02cm] (-8, 0) -- (-4-0.5,0);
\draw[->, shorten <=0.6cm,  shorten >=1.0cm, line width=0.02cm] (-4-0.5, 0) -- (0,0);
\draw[->, shorten <=1.0cm,  shorten >=0.6cm, line width=0.02cm] (0, 0) -- (4+0.5,0);
\draw[->, shorten <=0.6cm,  shorten >=0.85cm, line width=0.02cm] (4+0.5, 0) -- (8,0);

\node at (-1.5-0.75-0.25, 0.7) {\normalsize$\big(\,\phi^{\,(k-1)}_{\sh{F}},\,\widetilde{\phi}^{\,(k-1)}_{\sh{F}}\,\big)$};
\node at (1.5+0.75+0.25, 0.7) {\normalsize$\phi^{\,(k)}_{\sh{F}} \oplus\big(-\widetilde{\phi}^{\,(k)}_{\sh{F}}\,\big)$};

\node at (5.8+2,-0.025) {\large$.$}; 

\endscope
\end{tikzpicture}    
\end{equation}  
\end{itemize}
Consequently, a straightforward application of Lemma~\eqref{Lemma: Crossing condition for linear maps} ensures that: 
\begin{itemize}
\item \textit{(i)} $\phi^{\,(k)}_{\sh{F}}\circ \phi^{\,(k-1)}_{\sh{F}}=\widetilde{\phi}^{\,(k)}_{\sh{F}}\circ\widetilde{\phi}^{\,(k-1)}_{\sh{F}}$.
\item \textit{(ii)} $\mathrm{im}\big(\phi^{\,(k)}_{\sh{F}}\circ \phi^{\,(k-1)}_{\sh{F}}\big)=\mathrm{im}\,\phi^{\,(k)}_{\sh{F}} \cap\,\mathrm{im}\,\widetilde{\phi}^{\,(k)}_{\sh{F}} =\mathrm{im}\big(\widetilde{\phi}^{\,(k)}_{\sh{F}}\circ \widetilde{\phi}^{\,(k-1)}_{\sh{F}}\big)$.
\item \textit{(iii)} $\mathbb{K}^{k+1}=\mathrm{im}\,\phi^{\,(k)}_{\sh{F}}\,+\,\mathrm{im}\,\widetilde{\phi}^{\,(k)}_{\sh{F}}$.    
\end{itemize}

Finally, assume that the maps $\big\{\phi^{(i)}_{\sh{F}}\big\}_{i=1}^{n-1}$ are injective. Then, the compatibility conditions for the maps $\phi^{\,(k)}_{\sh{F}}$, $\phi^{\,(k-1)}_{\sh{F}}$, $\widetilde{\phi}^{\,(k)}_{\sh{F}}$, and $\widetilde{\phi}^{\,(k-1)}_{\sh{F}}$ further imply, via Lemma~\eqref{Lemma: Crossing condition for linear maps}, that $\widetilde{\phi}^{\,(k-1)}_{\sh{F}}$ and $\widetilde{\phi}^{\,(k)}_{\sh{F}}$ are also injective. 

Bearing this in mind, we now show that, under the above injectivity assumptions, the linear maps defining $\sh{F}$ on $R_{j}$ determine two complete flags $\fl{F}_{j}$ and $\fl{F}_{j+1}$ in $\mathbb{K}^{n}$, which are in $s_{k}$--relative position, thereby providing a geometric characterization of $\sh{F}$ on $R_{j}$. To this end, relying on the notion of type $\mathcal{I}$ flag introduced in Definition~\eqref{Def:flags and adapted bases}--\eqref{Def: type I flag}, we proceed as follows: 
\begin{enumerate}
\justifying
\item Since the maps $\big\{\phi^{(i)}_{\sh{F}}\big\}_{i=1}^{n-1}$ are injective, we assign to $\sh{F}$ on $R_{j}$ the complete flag $\fl{F}_{j}$ in $\mathbb{K}^{n}$ given by
\begin{equation*}
\fl{F}_{j}:=\prescript{}{\mathcal{I}\,}{\fl{F}}\big( \phi^{(1)}_{\sh{F}}, \dots, \phi^{(n-1)}_{\sh{F}} \big)\,.    
\end{equation*}

\item Since the maps $\big\{\phi^{(1)}_{\sh{F}}, \dots, \phi^{(k-2)}_{\sh{F}}, \widetilde{\phi}^{\,(k-1)}_{\sh{F}}, \widetilde{\phi}^{\,(k)}_{\sh{F}}, \phi^{(k+1)}_{\sh{F}},\dots, \phi^{(n-1)}_{\sh{F}} \big\}$ are injective, we assign to $\sh{F}$ on $R_{j}$ the complete flag $\fl{F}_{j+1}$ in $\mathbb{K}^{n}$ given by
\begin{equation*}
\fl{F}_{j+1}:=\prescript{}{\mathcal{I}\,}{\fl{F}}\big( \phi^{(1)}_{\sh{F}}, \dots, \phi^{(k-2)}_{\sh{F}}, \widetilde{\phi}^{\,(k-1)}_{\sh{F}}, \widetilde{\phi}^{\,(k)}_{\sh{F}}, \phi^{(k+1)}_{\sh{F}},\dots, \phi^{(n-1)}_{\sh{F}} \big)\,.    
\end{equation*}
\end{enumerate}

In particular, by Definition~\eqref{Def:flags and adapted bases}--\eqref{Def: type I flag}, we know that, as filtrations of vector subspaces in $\mathbb{K}^{n}$, the flags
\begin{equation*}
\begin{aligned}
\fl{F}_{j}&=\big\{ F^{(0)}_{j}\subset \cdots \subset F^{(n)}_{j}\big\}\, ,\\[6pt]
\fl{F}_{j+1}&=\big\{ F^{(0)}_{j+1}\subset \cdots \subset F^{(n)}_{j+1}\big\}\, ,
\end{aligned}
\end{equation*}
are given by 
\begin{equation*}
\begin{aligned}
F^{(p)}_{j}&:=\begin{cases}
\hfil \{0\}\,, & \text{if $p=0$}\,, \\[3pt]
\mathrm{im}\left(\, \phi^{\,(n-1)}_{\sh{F}}\circ \cdots \circ \phi^{\,(k+1)}_{\sh{F}}\circ\phi^{\,(k)}_{\sh{F}}\circ\phi^{\,(k-1)}_{\sh{F}}\circ\phi^{\,(k-2)}_{\sh{F}}\circ\cdots\circ\phi^{\,(p)}_{\sh{F}} \,\right)\, , & \text{if $p\in[1,n-1]$}\,, \\[3pt]
\hfil \mathbb{K}^{n}\,, & \text{if $p=n$}\, .
\end{cases}\\[4pt]    
F^{(p)}_{j+1}&:=\begin{cases}
\hfil \{0\}\,, & \text{if $p=0$}\,, \\[3pt]    
\mathrm{im}\left(\, \phi^{\,(n-1)}_{\sh{F}}\circ \cdots \circ \phi^{\,(k+1)}_{\sh{F}}\circ\widetilde{\phi}^{\,(k)}_{\sh{F}}\circ\widetilde{\phi}^{\,(k-1)}_{\sh{F}}\circ\phi^{\,(k-2)}_{\sh{F}}\circ\cdots\circ\phi^{\,(p)}_{\sh{F}} \,\right)\, , & \text{if $p\in[1, n-1]$}\,, \\[3pt]    
\hfil \mathbb{K}^{n}\,, & \text{if $p=n$}\, .
\end{cases}       
\end{aligned}    
\end{equation*}
Thus, since $\phi^{\,(k)}_{\sh{F}}\circ \phi^{\,(k-1)}_{\sh{F}}=\widetilde{\phi}^{\,(k)}_{\sh{F}}\circ\widetilde{\phi}^{\,(k-1)}_{\sh{F}}$, we have that  $F^{(p)}_{j}=F^{(p)}_{j+1}$ for all $p\in[0,n]$ with $p\neq k$. Moreover, since $\mathrm{im}\,\phi^{\,(k)}_{\sh{F}}\,\neq\,\mathrm{im}\,\widetilde{\phi}^{\,(k)}_{\sh{F}}$ and the composition map $\phi^{\,(n-1)}_{\sh{F}}\circ \cdots \circ\phi^{\,(k+1)}_{\sh{F}}:\mathbb{K}^{k+1}\longrightarrow \mathbb{K}^{n}$ is injective, we deduce that $F^{(k)}_{j}\neq F^{(k)}_{j+1}$, confirming that $\fl{F}_{j+1}$ and $\fl{F}_{j}$ are in $s_{k}$-relative position. This concludes the proof. 
\end{proof}

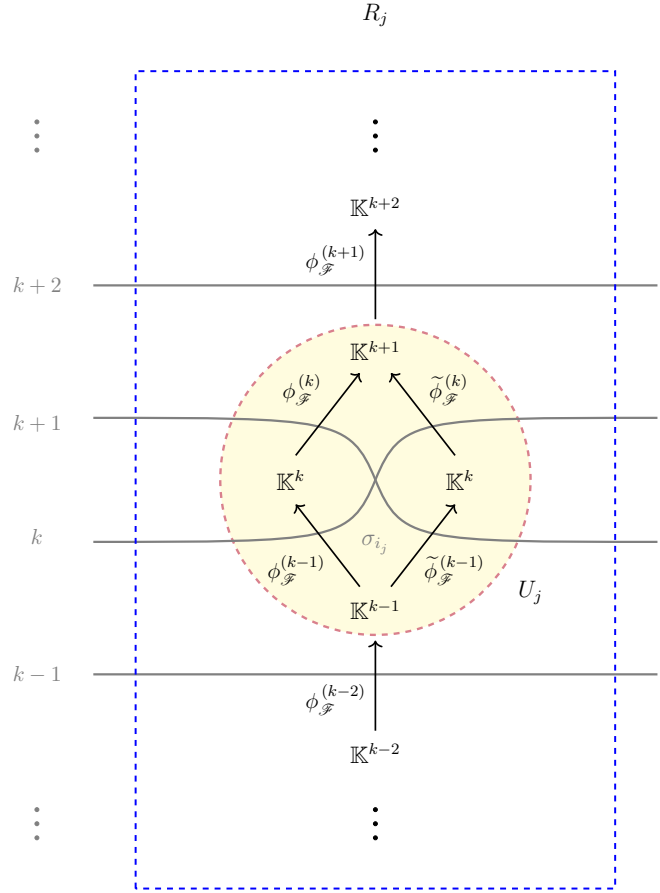
\begin{figure}
\centering
\begin{tikzpicture}
\useasboundingbox (-6,-5.5) rectangle (6,6.5);
\scope[transform canvas={scale=0.75}]

\begin{scope}[opacity=0.5, transparency group]

\filldraw[color=purple!120, fill=yellow!35, fill opacity=0.9, very thick, dashed] (0,0) circle (2.75); 

\draw[shorten <=0.5cm,  shorten >=0.5cm, line width=0.04cm] (-4.5-1, 2.35+1+0.1) -- (4.5+1,2.35+1+0.1);

\draw[shorten <=0.5cm,  shorten >=0.5cm, line width=0.04cm] (-4.5-1,1.1) .. controls (-0.75,1.1) and (-0.45,1.1) .. (0,0) .. controls (0.45,-1.1) and (0.75,-1.1) .. (4.5+1,-1.1);
\draw[shorten <=0.5cm,  shorten >=0.5cm, line width=0.04cm] (-4.5-1,-1.1) .. controls (-0.75,-1.1) and (-0.45,-1.1) .. (0,0) .. controls (0.45,1.1) and (0.75,1.1) .. (4.5+1,1.1);

\draw[shorten <=0.5cm,  shorten >=0.5cm, line width=0.04cm] (-4.5-1, -2.35-1-0.1) -- (4.5+1,-2.35-1-0.1);

\node at (-4-1-1, 2.35+1+0.1) {\large$k+2$};
\node at (-4-1-1, 1) {\large$k+1$};
\node at (-4-1-1, -1) {\large$k$};
\node at (-4-1-1, -2.35-1-0.1) {\large$k-1$};

\filldraw[black] (-5-1,4.35+1+0.25+1-0.25) circle (1pt);
\filldraw[black] (-5-1,4.35+1+1-0.25) circle (1pt);
\filldraw[black] (-5-1,4.35+1-0.25+1-0.25) circle (1pt);

\filldraw[black] (-5-1,-4.35-1+0.25-1+0.25) circle (1pt);
\filldraw[black] (-5-1,-4.35-1-1+0.25) circle (1pt);
\filldraw[black] (-5-1,-4.35-1-0.25-1+0.25) circle (1pt);

\node at (0, -1.15) {\Large$\sigma_{i_{j}}$};

\end{scope}


\node at (0, 4+0.35+0.5) {\Large$\mathbb{K}^{k+2}$};

\node[above] at (0, 2) {\Large$\mathbb{K}^{k+1}$};
\node at (-1.5, 0) {\Large$\mathbb{K}^{k}$};
\node at (1.5, 0) {\Large$\mathbb{K}^{k}$};
\node[below] at (0, -2) {\Large$\mathbb{K}^{k-1}$};

\node at (0, -4-0.35-0.5) {\Large$\mathbb{K}^{k-2}$};

\draw[->, shorten <=0.6cm,  shorten >=0.3cm, line width=0.03cm] (0, 2+0.25) -- (0,4+0.25+0.5);

\draw[->, shorten <=0.55cm,  shorten >=0.45cm, line width=0.03cm] (-1.75, 0) -- (0,2.25);
\draw[->, shorten <=0.55cm,  shorten >=0.45cm, line width=0.03cm] (1.75, 0) -- (0,2.25);
\draw[->, shorten <=0.45cm,  shorten >=0.55cm, line width=0.03cm] (0,-2.25) -- (-1.75, 0);
\draw[->, shorten <=0.45cm,  shorten >=0.55cm, line width=0.03cm] (0,-2.25) -- (1.75, 0);

\draw[->, shorten <=0.3cm,  shorten >=0.6cm, line width=0.03cm] (0, -4-0.25-0.5) -- (0,-2-0.25);

\filldraw[black] (0,4.35+1+0.25+1-0.25) circle (1pt);
\filldraw[black] (0,4.35+1+1-0.25) circle (1pt);
\filldraw[black] (0,4.35+1-0.25+1-0.25) circle (1pt);

\filldraw[black] (0,-4.35-1+0.25-1+0.25) circle (1pt);
\filldraw[black] (0,-4.35-1-1+0.25) circle (1pt);
\filldraw[black] (0,-4.35-1-0.25-1+0.25) circle (1pt);

\node at (-1.3, 1.6) {\large$\phi^{\,(k)}_{\sh{F}}$};
\node at (1.3, 1.6) {\large$\widetilde{\phi}^{\,(k)}_{\sh{F}}$};
\node at (-1.4, -1.6) {\large$\phi^{(k-1)}_{\sh{F}}$};
\node at (1.4, -1.6) {\large$\widetilde{\phi}^{\,(k-1)}_{\sh{F}}$};

\node at (-0.7, 3.25+0.65) {\large$\phi^{\,(k+1)}_{\sh{F}}$};
\node at (-0.7, -3.25-0.65) {\large$\phi^{\,(k-2)}_{\sh{F}}$};



\draw[dashed, blue, line width=0.035cm] (-4.25,-7.25) rectangle (4.25,7.25);

\node at (0,8.25) {\Large$R_{j}$};

\node at (2.75,-2) {\Large$U_{j}$};

\endscope
\end{tikzpicture}
\caption{An object $\sh{F}$ on the vertical strap $R_{j}$ containing the crossing $\sigma_{i_{j}}$ in the case $k=i_{j}\geq 2$.}
\label{fig: a sheaf in the vertical strap R_j containing a crossing sigma_k}
\end{figure}

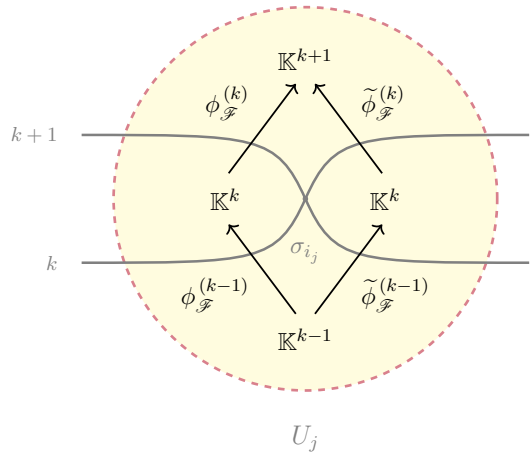
\begin{figure}
\centering
\begin{tikzpicture}
\useasboundingbox (-4,-3.25) rectangle (4,3.25);
\scope[transform canvas={scale=0.85}]
\begin{scope}[opacity=0.5, transparency group]
\filldraw[color=purple!120, fill=yellow!35, fill opacity=0.9, very thick, dashed] (0,0) circle (3);  
\filldraw[black] (0,0) circle (1pt);
\draw[shorten <=0.5cm,  shorten >=0.5cm, line width=0.04cm] (-4,1) .. controls (-0.75,1) and (-0.45,1) .. (0,0) .. controls (0.45,-1) and (0.75,-1) .. (4,-1);
\draw[shorten <=0.5cm,  shorten >=0.5cm, line width=0.04cm] (-4,-1) .. controls (-0.75,-1) and (-0.45,-1) .. (0,0) .. controls (0.45,1) and (0.75,1) .. (4,1);

\node[left] at (-3.75,1) {\small $k+1$};
\node[left] at (-3.75,-1) {\small $k$};
\node at (0,-0.85) {\large $\sigma_{i_{j}}$};
\node at (0,-3.75) {\Large $U_{j}$};    
\end{scope}

\node[above] at (0, 1.90) {\large $\mathbb{K}^{k+1}$};
\node[below] at (0, -1.90) {\large $\mathbb{K}^{k-1}$};
\node at (-1.25, 0) {\large $\mathbb{K}^{k}$};
\node at (1.25, 0) {\large $\mathbb{K}^{k}$};

\draw[->, shorten <=0.5cm,  shorten >=0.25cm, thick] (-1.5, 0) -- (0,2);
\draw[->, shorten <=0.5cm,  shorten >=0.25cm, thick] (1.5, 0) -- (0,2);
\draw[->, shorten <=0.25cm,  shorten >=0.5cm, thick] (0,-2) -- (-1.5, 0);
\draw[->, shorten <=0.25cm, shorten >=0.5cm, thick] (0,-2) -- (1.5, 0);

\node[left] at (-0.75, 1.5) {\large$\phi^{\,(k)}_{\sh{F}}$};
\node[right] at (0.75, 1.5) {\large$\widetilde{\phi}^{\,(k)}_{\sh{F}}$};
\node[left] at (-0.75, -1.5) {\large$\phi^{\,(k-1)}_{\sh{F}}$};
\node[right] at (0.75, -1.5) {\large$\widetilde{\phi}^{\,(k-1)}_{\sh{F}}$};

\endscope
\end{tikzpicture}
\caption{Commutative diagram for the linear maps that define a sheaf $\sh{F}$ on the region $U_{j}$ in the case $k=i_{j}\geq 2$.}
\label{fig: an object F on an open ball at a crossing sigma k}
\end{figure}

Building on the previous results, we now present a concise geometric description of $\sh{F}$ on the region $U_{\mathrm{B}}$. 

\begin{lemma}\label{Lemma for F in the region U_{B}}
Let $\beta:=\sigma_{i_{1}}\cdots \sigma_{i_{\ell}}\in\mathrm{Br}_{n}^{+}$ be a positive braid word, $\sh{F}$ an object of the category $\ccs{1}{\beta}$, and $\mathcal{U}_{\Lambda(\beta)}=\big\{U_{0}, U_{\mathrm{B}}, U_{\mathrm{L}}, U_{\mathrm{R}}, U_{\mathrm{T}}\big\}$ the open cover of $\mathbb{R}^{2}$ introduced in Construction~\eqref{Cons: Finite open cover for R^2}. Then, on the region $U_{\mathrm{B}}$, $\sh{F}$ is characterized by a sequence of complete flags $\big\{\fl{F}_{j}\big\}_{j=1}^{\ell+1}$ in $\mathbb{K}^{n}$, such that for each $j\in[1,\ell]$, $\fl{F}_{j+1}$ is in $s_{i_{j}}$-relative position with respect to $\fl{F}_{j}$.
\end{lemma}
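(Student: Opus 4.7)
The plan is to exploit the decomposition $\mathcal{R}_{\Lambda(\beta)}=\{R_{j}\}_{j=1}^{\ell}$ of $U_{\mathrm{B}}$ introduced in Construction~\eqref{Cons: Definition of the vertical straps} and apply the local descriptions of $\sh{F}$ on each vertical strap $R_{j}$ given by Lemmas~\eqref{lemma: description of an object F in a region R_j in terms of flags for k=1} and~\eqref{lemma: description of an object F in a region R_j in terms of flags for k geq 2}. Specifically, on each strap $R_{j}$, the sheaf $\sh{F}$ is determined by a finite collection of linear maps between the vector spaces $\mathbb{K}^{i}$, $i\in[1,n]$, and the two cited lemmas assert that as soon as the relevant ``straight'' maps $\big\{\phi^{(i)}_{\sh{F}}\big\}_{i=1}^{n-1}$ are injective on $R_{j}$, the data on $R_{j}$ yields two complete flags $\fl{F}_{j},\,\fl{F}_{j+1}$ in $\mathbb{K}^{n}$ in $s_{i_{j}}$-relative position. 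Thus the proof reduces to (a) propagating the injectivity of the straight maps from left to right across the straps, and (b) gluing the resulting pairs of flags along the overlaps $R_{j}\cap R_{j+1}$ into one globally defined sequence $\big\{\fl{F}_{j}\big\}_{j=1}^{\ell+1}$.

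First, I would use Lemma~\eqref{Lemma: linear map description of an object on the regions U_T, U_L, and U_R} applied on the overlap $U_{\mathrm{L}}\cap U_{\mathrm{B}}$: near the leftmost vertical line, the maps that describe $\sh{F}$ on $R_{1}$ coincide with the collection $\big\{\alpha^{(i)}_{\sh{F}}\big\}_{i=1}^{n-1}$ arising from $U_{\mathrm{L}}$, which is already known to consist of injective maps by the cusp conditions~\eqref{cusp conditions for lin maps in the general case}. This supplies the base case of the induction. For the inductive step, I would fix $j\in[1,\ell-1]$, assume that the maps $\big\{\phi^{(i)}_{\sh{F}}\big\}_{i=1}^{n-1}$ on $R_{j}$ are all injective, and invoke Lemmas~\eqref{lemma: description of an object F in a region R_j in terms of flags for k=1} and~\eqref{lemma: description of an object F in a region R_j in terms of flags for k geq 2} to conclude that the ``twisted'' maps $\widetilde{\phi}^{(k-1)}_{\sh{F}},\,\widetilde{\phi}^{(k)}_{\sh{F}}$ (or, when $k=1$, the map $\widetilde{\phi}^{(1)}_{\sh{F}}$) are also injective. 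On the overlap $R_{j}\cap R_{j+1}$ the local description of $\sh{F}$ consists of a trivial braid on $n$ strands (see Figure~\eqref{fig: Intersection of two consecutive sub-regions R_j and R_{j+1}}), so the sheaf axioms force the collection of straight maps on $R_{j+1}$ to be obtained from the collection on $R_{j}$ by replacing $\phi^{(k-1)}_{\sh{F}},\,\phi^{(k)}_{\sh{F}}$ with their twisted counterparts $\widetilde{\phi}^{(k-1)}_{\sh{F}},\,\widetilde{\phi}^{(k)}_{\sh{F}}$, where $k=i_{j}$. Consequently the straight maps on $R_{j+1}$ are again all injective, closing the induction.

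With injectivity propagated across every strap, Lemmas~\eqref{lemma: description of an object F in a region R_j in terms of flags for k=1} and~\eqref{lemma: description of an object F in a region R_j in terms of flags for k geq 2} yield, on each $R_{j}$, a pair of complete flags in $\mathbb{K}^{n}$ in $s_{i_{j}}$-relative position. Calling these pairs $\fl{F}_{j}$ and $\fl{F}_{j+1}^{(j)}$, I would next verify that the flag $\fl{F}_{j+1}^{(j)}$ built from the data of $R_{j}$ coincides with the flag $\fl{F}_{j+1}^{(j+1)}$ built from the data of $R_{j+1}$: indeed, both are the type $\mathcal{I}$ flag of the same collection of injective maps, because the sheaf axioms on $R_{j}\cap R_{j+1}$ identify the two descriptions, and type $\mathcal{I}$ flags (Definition~\eqref{Def:flags and adapted bases}--\eqref{Def: type I flag}) depend only on this data. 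This compatibility allows me to patch the locally defined pairs of flags into a single globally well-defined sequence $\big\{\fl{F}_{j}\big\}_{j=1}^{\ell+1}$ with the required property that $\fl{F}_{j+1}$ is in $s_{i_{j}}$-relative position with $\fl{F}_{j}$ for every $j\in[1,\ell]$.

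The main obstacle, and the step requiring the most care, is the inductive propagation of injectivity. The local lemmas produce injectivity of the twisted maps only after assuming injectivity of the straight ones, so the whole argument hinges on having a reliable left boundary condition (provided by $U_{\mathrm{L}}$ via the cusp conditions) and on the crossing condition of Lemma~\eqref{Lemma: Crossing condition for linear maps}--\textit{(iv)} applied at each crossing. Once this inductive mechanism is in place, the gluing of the locally defined flags is essentially formal, amounting to a consistency check between the two adjacent descriptions on the trivial-braid overlap $R_{j}\cap R_{j+1}$.
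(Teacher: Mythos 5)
Your proposal is correct and follows essentially the same route as the paper: the same strap decomposition $\mathcal{R}_{\Lambda(\beta)}$, the same base case supplied by the injectivity of the maps on $U_{\mathrm{L}}$ via the cusp conditions, the same left-to-right inductive propagation of injectivity through Lemmas~\eqref{lemma: description of an object F in a region R_j in terms of flags for k=1} and~\eqref{lemma: description of an object F in a region R_j in terms of flags for k geq 2}, and the same gluing over the trivial-braid overlaps $R_{j}\cap R_{j+1}$. Your explicit check that the two type~$\mathcal{I}$ flags agreeing on each overlap is slightly more detailed than what the paper writes, but it is the same argument.
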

\begin{proof}
Let $\mathcal{R}_{\Lambda(\beta)}=\big\{R_{j}\big\}_{j=1}^{\ell}$ be the collection of $\ell$ open vertical straps in $\mathbb{R}^{2}$ that partition the region $U_{\mathrm{B}}\subset \mathbb{R}^{2}$, as specified in Construction~\eqref{Cons: Definition of the vertical straps}. Specifically, for each $j\in [1,\ell]$, $R_{j}\subset \mathbb{R}^{2}$ denotes the open vertical strap whose intersection with the front projection $\Pi_{x,z}(\Lambda(\beta))$ consists of the braid diagram on $n$ strands associated with $\sigma_{i_{j}}$, i.e., the $j$-th crossing of $\beta$, as illustrated in Figure~\eqref{fig: A sub-regions R_j}. 

Next, we develop a recursive argument to analyze the behavior of $\sh{F}$ on each successive vertical strap $R_{j}$, thereby obtaining a geometric characterization of $\sh{F}$ on $U_{\mathrm{B}}$. With this strategy in place, we begin by studying $\sh{F}$ on the first region $R_{1}$. Let $k:=i_{1}\in[1,n-1]$ denote the index of $\sigma_{i_{1}}$---the first crossing of $\beta$ and the only crossing contained in $R_{1}$. Accordingly, we distinguish two cases:

{$\bullet$ \textit{Case 1}}: Suppose that $k=1$. Then, on $R_{1}$, $\sh{F}$ is defined by a collection of $n$ linear maps 
\begin{equation*}
\big\{\phi^{(i)}_{\sh{F}}:\mathbb{K}^{i}\to \mathbb{K}^{i+1}\big\}_{i=1}^{n-1}\,\cup\, \big\{\widetilde{\phi}^{\,(1)}_{\sh{F}}:\mathbb{K}^{1}\to \mathbb{K}^{2} \big\}\, , 
\end{equation*}
as shown in Figure~\eqref{fig: a sheaf in the vertical strap R_j containing a crossing sigma_1}, where $j=1$. In particular, these maps satisfy the compatibility conditions stated in Lemma~\eqref{lemma: description of an object F in a region R_j in terms of flags for k=1}. 

{$\bullet$ \textit{Case 2}}: Suppose that $k\geq 2$. Then, on $R_{j}$, $\sh{F}$ is defined by a collection of $n+1$ linear maps 
\begin{equation*}
\big\{\phi^{\,(i)}_{\sh{F}}:\mathbb{K}^{i}\to \mathbb{K}^{i+1}\big\}_{i=1}^{n-1}\,\cup\,\big\{ \widetilde{\phi}^{\,(k-1)}_{\sh{F}}:\mathbb{K}^{k-1}\to \mathbb{K}^{k}\,,~\widetilde{\phi}^{\,(k)}_{\sh{F}}:\mathbb{K}^{k}\to \mathbb{K}^{k+1}\,\big\}\, ,   
\end{equation*}
as shown in Figure~\eqref{fig: a sheaf in the vertical strap R_j containing a crossing sigma_k}, where $j=1$. Specifically, these maps satisfy the compatibility conditions stated in Lemma~\eqref{lemma: description of an object F in a region R_j in terms of flags for k geq 2}. 

Now, let $U_{\mathrm{L}}$ be the region of the open cover $\mathcal{U}_{\Lambda(\beta)}$ of $\mathbb{R}^{2}$ intersecting $U_{\mathrm{B}}$ on the left, as illustrated in Figure~\eqref{Front diagram decomposed into two regions}. By construction, $R_{1}$ is the only vertical strap in the partition $\mathcal{R}_{\Lambda(\beta)}$ of $U_{\mathrm{B}}$ whose intersection with $U_{\mathrm{L}}$ is nontrivial. Consequently, in either of the above cases, constructibility and the sheaf axioms imply that, on $U_{\mathrm{L}}$, $\sh{F}$ is determined by the collection of linear maps $\big\{\phi^{\,(i)}_{\sh{F}} \big\}_{i=1}^{n-1}$, and hence, by Lemma~\eqref{Lemma: linear map description of an object on the regions U_T, U_L, and U_R}, these maps are injective. Bearing this in mind, depending on the value of $k$, we can apply Lemma~\eqref{lemma: description of an object F in a region R_j in terms of flags for k=1} or Lemma~\eqref{lemma: description of an object F in a region R_j in terms of flags for k geq 2} to obtain that:
\begin{itemize}
\justifying
\item If $k=1$, the map $\widetilde{\phi}^{\,(1)}_{\sh{F}}$ is injective.
\item If $k\geq 2$, the maps $\widetilde{\phi}^{\,(k-1)}_{\sh{F}}$ and $\widetilde{\phi}^{\,(k)}_{\sh{F}}$ are injective.
\item On $R_{1}$, $\sh{F}$ is characterized by two complete flags $\fl{F}_{1}$ and $\fl{F}_{2}$ in $\mathbb{K}^{n}$, which are in $s_{k}$-relative position (recalling that $k=i_{1}$ as fixed earlier).
\end{itemize}
In particular, note that regardless of the value of $i_{1}$, all the linear maps defining $\sh{F}$ on $R_{1}$ are injective. 

Next, consider the region $R_{2}$ in the partition $\mathcal{R}_{\Lambda(\beta)}$ of $U_{\mathrm{B}}$; that is, the open vertical strap adjacent to $R_{1}$ on the right. By construction, the intersection $R_{1}\,\cap\,R_{2}\,\cap\,\Pi_{x,z}(\Lambda(\beta))$ consists of the trivial braid on $n$ strands, as illustrated in Figure~\eqref{fig: Intersection of two consecutive sub-regions R_j and R_{j+1}}. Thus, since all the linear maps defining $\sh{F}$ on $R_{1}$ are injective, the sheaf axioms ensure that the linear maps defining $\sh{F}$ on the intersection $R_{1}\,\cap\,R_{2}$  are also injective. Consequently, depending on the value of the index $i_{2}$ of the second crossing $\sigma_{i_{2}}$ of $\beta$, we can apply Lemma~\eqref{lemma: description of an object F in a region R_j in terms of flags for k=1} or Lemma~\eqref{lemma: description of an object F in a region R_j in terms of flags for k geq 2} to conclude that: 
\begin{itemize}
\justifying
\item The linear maps determining $\sh{F}$ on $R_{2}$ are injective.
\item On $R_{2}$, $\sh{F}$ is characterized by two complete flags $\fl{F}_{2}$ and $\fl{F}_{3}$ in $\mathbb{K}^{n}$, which are in $s_{i_{2}}$--relative position. 
\end{itemize}
In particular, by applying the above argument recursively to each vertical strap $R_{j}$ in the partition $\mathcal{R}_{\Lambda(\beta)}$ of $U_{\mathrm{B}}$, we deduce that on $U_{\mathrm{B}}$, the object $\sh{F}$ is determined by a sequence of complete flags $\big\{\fl{F}_{j}\big\}_{j=1}^{\ell+1}$ in $\mathbb{K}^{n}$, such that for each $j\in[1,\ell]$, $\fl{F}_{j+1}$ is in $s_{i_{j}}$-relative position with respect to $\fl{F}_{j}$.
\end{proof}

With the preceding results in place, we now present a global geometric characterization of $\sh{F}$, a result which also follows from~\cite{STZ1} and is included here for completeness.

\begin{theorem}\label{Flags and constructible sheaves}
Let $\beta:=\sigma_{i_{1}}\cdots\sigma_{i_{\ell}}\in\mathrm{Br}_{n}^{+}$ be a positive braid word, and let $\sh{F}$ be an object of the category $\ccs{1}{\beta}$. Then, as illustrated in Figure~\eqref{fig: sheaves and flags}, $\sh{F}$ is determined by a sequence of complete flags $\big\{\fl{F}_{j}\big\}_{j=0}^{\ell+1}$ in $\mathbb{K}^{n}$ satisfying the following compatibility conditions:
\begin{itemize}
\justifying
\item For each $j\in[1,\ell]$, $\fl{F}_{j+1}$ is in $s_{i_{j}}$-relative position with respect to $\fl{F}_{j}$. 
\item $\fl{F}_{0}$ is completely opposite to both $\fl{F}_{1}$ and $\fl{F}_{\ell+1}$.
\end{itemize}
\end{theorem}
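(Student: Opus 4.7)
The plan is to combine the local geometric descriptions of $\sh{F}$ on the regions of the open cover $\mathcal{U}_{\Lambda(\beta)}=\{U_0, U_B, U_L, U_R, U_T\}$ established in the preceding two lemmas, and then patch them together via the sheaf axioms on the pairwise overlaps. Since Lemma~\eqref{Lemma: linear map description of an object on the regions U_T, U_L, and U_R} already shows that $\sh{F}$ vanishes on $U_0$, only the four remaining regions carry nontrivial geometric information.

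First, I would invoke Lemma~\eqref{lemma for F in the region U_{R}} to obtain three complete flags in $\mathbb{K}^n$: a flag $\fl{F}_0$ characterizing $\sh{F}$ on $U_T$, built as a type-$\mathcal{K}$ flag from the surjections $\{\psi^{(i)}_{\sh{F}}\}$; a flag $\fl{F}_{\mathrm{left}}$ characterizing $\sh{F}$ on $U_L$, built as a type-$\mathcal{I}$ flag from the injections $\{\alpha^{(i)}_{\sh{F}}\}$; and a flag $\fl{F}_{\mathrm{right}}$ characterizing $\sh{F}$ on $U_R$, built as a type-$\mathcal{I}$ flag from the injections $\{\beta^{(i)}_{\sh{F}}\}$. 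That same lemma guarantees that $\fl{F}_0$ is completely opposite to both $\fl{F}_{\mathrm{left}}$ and $\fl{F}_{\mathrm{right}}$. Second, I would apply Lemma~\eqref{Lemma for F in the region U_{B}} to produce a sequence $\{\fl{F}_j\}_{j=1}^{\ell+1}$ of complete flags characterizing $\sh{F}$ on $U_B$, with $\fl{F}_{j+1}$ in $s_{i_j}$-relative position with respect to $\fl{F}_j$ for every $j\in[1,\ell]$.

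The main obstacle will be the boundary matching on the overlaps: identifying $\fl{F}_{\mathrm{left}}$ with $\fl{F}_1$ and $\fl{F}_{\mathrm{right}}$ with $\fl{F}_{\ell+1}$. For the left side, I would observe that the intersection $U_L\cap R_1$ meets $\Pi_{x,z}(\Lambda(\beta))$ in the trivial braid on $n$ strands, so the sheaf axioms force the collection of injective maps $\{\alpha^{(i)}_{\sh{F}}\}$ describing $\sh{F}$ on $U_L$ to agree, after restriction to the overlap, with the collection $\{\phi^{(i)}_{\sh{F}}\}$ associated with the leftmost strap $R_1$ in the proof of Lemma~\eqref{Lemma for F in the region U_{B}}. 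Since $\fl{F}_{\mathrm{left}}$ and $\fl{F}_1$ are by construction the type-$\mathcal{I}$ flag built from the same collection of injections, they must coincide as filtrations in $\mathbb{K}^n$; an analogous argument applied to the right overlap $U_R\cap R_\ell$ yields $\fl{F}_{\mathrm{right}}=\fl{F}_{\ell+1}$.

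Once these identifications are in place, relabeling produces the desired sequence $\{\fl{F}_j\}_{j=0}^{\ell+1}$. The relative-position condition between consecutive flags follows immediately from Lemma~\eqref{Lemma for F in the region U_{B}}, while the complete oppositeness of $\fl{F}_0$ with $\fl{F}_1$ and with $\fl{F}_{\ell+1}$ follows from the compatibility conditions in Lemma~\eqref{lemma for F in the region U_{R}} after invoking the boundary identifications, completing the global geometric characterization of $\sh{F}$.
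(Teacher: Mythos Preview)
Your proposal is correct and follows essentially the same approach as the paper: invoke Lemma~\eqref{lemma for F in the region U_{R}} for the flags $\fl{F}_0$, $\fl{F}_{\mathrm{left}}$, $\fl{F}_{\mathrm{right}}$, invoke Lemma~\eqref{Lemma for F in the region U_{B}} for the sequence $\{\fl{F}_j\}_{j=1}^{\ell+1}$, and then use the sheaf axioms on the overlaps to identify $\fl{F}_{\mathrm{left}}=\fl{F}_1$ and $\fl{F}_{\mathrm{right}}=\fl{F}_{\ell+1}$. The only cosmetic difference is that you phrase the boundary matching via $U_L\cap R_1$ and $U_R\cap R_\ell$ rather than $U_L\cap U_B$ and $U_R\cap U_B$, and you spell out more explicitly why the type-$\mathcal{I}$ flags agree; the argument is the same.
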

\begin{proof}
Let $\mathcal{U}_{\Lambda(\beta)}=\big\{U_{0}, U_{\mathrm{B}}, U_{\mathrm{L}}, U_{\mathrm{R}}, U_{\mathrm{T}} \big\}$ denote the finite open cover of $\mathbb{R}^{2}$ introduced in Construction \eqref{Cons: Finite open cover for R^2}. By Lemma \eqref{lemma for F in the region U_{R}}, we know that:
\begin{itemize}
\justifying
\item On $U_{0}$, $\sh{F}$ is identically zero.
\item On $U_{\mathrm{T}}$, $\sh{F}$ is characterized by a complete flag $\fl{F}_{0}$ in $\mathbb{K}^{n}$. 
\item On $U_{\mathrm{L}}$, $\sh{F}$ is characterized by a complete flag $\fl{F}_{\mathrm{left}}$ in $\mathbb{K}^{n}$. 
\item On $U_{\mathrm{R}}$, $\sh{F}$ is characterized by a complete flag $\fl{F}_{\mathrm{right}}$ in $\mathbb{K}^{n}$. 
\item \textbf{Compatibility conditions}: $\fl{F}_{0}$ is completely opposite to both $\fl{F}_{\mathrm{left}}$ and $\fl{F}_{\mathrm{right}}$.
\end{itemize}

Furthermore, by Lemma \eqref{Lemma for F in the region U_{B}}, we have that on $U_{\mathrm{B}}$, $\sh{F}$ is characterized by a sequence of complete flags $\big\{\fl{F}_{j}\big\}_{j=1}^{\ell+1}$ in $\mathbb{K}^{n}$ such that, for each $j\in[1,\ell]$, $\fl{F}_{j+1}$ is in $s_{i_{j}}$-relative position with respect to $\fl{F}_{j}$. In particular, observe that on the left and right edges of the region $U_{\mathrm{B}}$, $\sh{F}$ is determined by the complete flags $\fl{F}_{1}$ and $\fl{F}_{\ell+1}$, respectively. Hence, by applying the sheaf axioms to the intersections $U_{\mathrm{L}}\,\cap\,U_{\mathrm{B}}$ and $U_{\mathrm{R}}\,\cap\, U_{\mathrm{B}}$, we obtain that $\fl{F}_{\mathrm{left}}=\fl{F}_{1}$ and $\fl{F}_{\mathrm{right}}=\fl{F}_{\ell+1}$. Bearing this in mind, we conclude that $\sh{F}$ is determined by a sequence of complete flags $\big\{\fl{F}_{j}\big\}_{j=0}^{\ell+1}$ in $\mathbb{K}^{n}$ satisfying the compatibility conditions stated in the theorem. 
\end{proof}

\begin{figure}
\centering
\begin{tikzpicture}
\useasboundingbox (-8,-5.25) rectangle (8,5);
\scope[transform canvas={scale=0.675}]

\draw[very thick] (-4-0.5,2) -- (4+0.5, 2);
\draw[very thick] (-4-0.5,-2) -- (-3, -2);
\draw[very thick] (3,-2) -- (4+0.5, -2);
\draw[very thick] (4+0.5,2) .. controls (4+0.5+1.5,2) and (4+0.5+2-0.75,0) .. (4+0.5+2+0.1,0);
\draw[very thick] (4+0.5,-2) .. controls (4+0.5+1.5,-2) and (4+0.5+2-0.75,0) .. (4+0.5+2+0.1,0);
\draw[very thick] (-4-0.5-2-0.1,0) .. controls (-4-0.5-2+0.75,0) and (-4-0.5-1.5,2) .. (-4-0.5,2);
\draw[very thick] (-4-0.5-2-0.1,0) .. controls (-4-0.5-2+0.75,0) and (-4-0.5-1.5,-2) .. (-4-0.5,-2);


\draw[very thick] (-4-0.5-1,2+0.75) -- (4+0.5+1, 2+0.75);
\draw[very thick] (-4-0.5-1,-2-0.75) -- (-3, -2-0.75);
\draw[very thick] (3,-2-0.75) -- (4+0.5+1, -2-0.75);
\draw[very thick] (4+0.5+1,2+0.75) .. controls (4+0.5+1+1.15,2+0.75) and (4+0.5+1+2-0.5,0) .. (4+0.5+1+2+0.25,0);
\draw[very thick] (4+0.5+1,-2-0.75) .. controls (4+0.5+1+1.15,-2-0.75) and (4+0.5+1+2-0.5,0) .. (4+0.5+1+2+0.25,0);
\draw[very thick] (-4-0.5-1-2-0.25,0) .. controls (-4-0.5-1-2+0.5,0) and (-4-0.5-1-1.15,2+0.75) .. (-4-0.5-1,2+0.75);
\draw[very thick] (-4-0.5-1-2-0.25,0) .. controls (-4-0.5-1-2+0.5,0) and (-4-0.5-1-1.15,-2-0.75) .. (-4-0.5-1,-2-0.75);


\draw[very thick] (-4-0.5-1-1,+2+3.5-0.5-0.75) -- (4+0.5+1+1,+2+3.5-0.5-0.75);
\draw[very thick] (-4-0.5-1-1,-2-3.5+0.5+0.75) -- (-3,-2-3.5+0.5+0.75);
\draw[very thick] (3,-2-3.5+0.5+0.75) -- (4+0.5+1+1,-2-3.5+0.5+0.75);
\draw[very thick] (4+0.5+1+1,+2+3.5-0.5-0.75) .. controls (4+0.5+1+1+1.15+0.65,+2+3.5-0.5-0.75) and (4+0.5+1+1+2-0.5+0.75,0) .. (4+0.5+1+1+2+0.75+0.35,0);
\draw[very thick] (4+0.5+1+1,-2-3.5+0.5+0.75) .. controls (4+0.5+1+1+1.15+0.65,-2-3.5+0.5+0.75) and (4+0.5+1+1+2-0.5+0.75,0) .. (4+0.5+1+1+2+0.75+0.35,0);
\draw[very thick] (-4-0.5-1-1-2-0.75-0.35,0) .. controls (-4-0.5-1-1-2+0.5-0.75,0) and (-4-0.5-1-1-1.15-0.65,+2+3.5-0.5-0.75) .. (-4-0.5-1-1,+2+3.5-0.5-0.75);
\draw[very thick] (-4-0.5-1-1-2-0.75-0.35,0) .. controls (-4-0.5-1-1-2+0.5-0.75,0) and (-4-0.5-1-1-1.15-0.65,-2-3.5+0.5+0.75) .. (-4-0.5-1-1,-2-3.5+0.5+0.75);


\draw[very thick] (-4-0.5-1-1-1,+2+3.5-0.5) -- (4+0.5+1+1+1,+2+3.5-0.5);
\draw[very thick] (-4-0.5-1-1-1,-2-3.5+0.5) -- (-3,-2-3.5+0.5);
\draw[very thick] (3,-2-3.5+0.5) -- (4+0.5+1+1+1,-2-3.5+0.5);
\draw[very thick] (4+0.5+1+1+1,+2+3.5-0.5) .. controls (4+0.5+1+1+1+1.15+0.65,+2+3.5-0.5) and (4+0.5+1+1+1+2-0.5+0.75+0.25,0) .. (4+0.5+1+1+1+2+0.75+0.25+0.35,0);
\draw[very thick] (4+0.5+1+1+1,-2-3.5+0.5) .. controls (4+0.5+1+1+1+1.15+0.65,-2-3.5+0.5) and (4+0.5+1+1+1+2-0.5+0.75+0.25,0) .. (4+0.5+1+1+1+2+0.75+0.25+0.35,0);
\draw[very thick] (-4-0.5-1-1-1-2-0.75-0.25-0.35,0) .. controls (-4-0.5-1-1-1-2+0.5-0.75-0.25,0) and (-4-0.5-1-1-1-1.15-0.65,+2+3.5-0.5) .. (-4-0.5-1-1-1,+2+3.5-0.5);
\draw[very thick] (-4-0.5-1-1-1-2-0.75-0.25-0.35,0) .. controls (-4-0.5-1-1-1-2+0.5-0.75-0.25,0) and (-4-0.5-1-1-1-1.15-0.65,-2-3.5+0.5) .. (-4-0.5-1-1-1,-2-3.5+0.5);


\draw[very thick] (-3,-2-3.5) rectangle (3,2-3.5);

\node at (-3+0.6,-2) {\normalsize$n$};
\node at (-3+0.6,-2-0.75) {\normalsize$n-1$};
\node at (-3+0.6,-2-3.5+0.5+0.75) {\normalsize$2$};
\node at (-3+0.6,-2-3.5+0.5) {\normalsize$1$};

\node at (3-0.6,-2) {\normalsize$n$};
\node at (3-0.6,-2-0.75) {\normalsize$n-1$};
\node at (3-0.6,-2-3.5+0.5+0.75) {\normalsize$2$};
\node at (3-0.6,-2-3.5+0.5) {\normalsize$1$};

\node at  (0,-3.5) {\huge$\beta$};

\filldraw[black] (-3+0.6,-3.5+0.25) circle (1pt);
\filldraw[black] (-3+0.6,-3.5) circle (1pt);
\filldraw[black] (-3+0.6,-3.5-0.25) circle (1pt);

\filldraw[black] (3-0.6,-3.5+0.25) circle (1pt);
\filldraw[black] (3-0.6,-3.5) circle (1pt);
\filldraw[black] (3-0.6,-3.5-0.25) circle (1pt);

\filldraw[black] (-3-0.5,-3.5+0.25) circle (1pt);
\filldraw[black] (-3-0.5,-3.5) circle (1pt);
\filldraw[black] (-3-0.5,-3.5-0.25) circle (1pt);

\filldraw[black] (3+0.5,-3.5+0.25) circle (1pt);
\filldraw[black] (3+0.5,-3.5) circle (1pt);
\filldraw[black] (3+0.5,-3.5-0.25) circle (1pt);

\filldraw[black] (-3-0.5,-3.5+0.25+7) circle (1pt);
\filldraw[black] (-3-0.5,-3.5+7) circle (1pt);
\filldraw[black] (-3-0.5,-3.5-0.25+7) circle (1pt);

\filldraw[black] (3+0.5,-3.5+0.25+7) circle (1pt);
\filldraw[black] (3+0.5,-3.5+7) circle (1pt);
\filldraw[black] (3+0.5,-3.5-0.25+7) circle (1pt);

\filldraw[black] (-8.45-0.25,0) circle (1pt);
\filldraw[black] (-8.45,0) circle (1pt);
\filldraw[black] (-8.45+0.25,0) circle (1pt);

\filldraw[black] (8.45-0.25,0) circle (1pt);
\filldraw[black] (8.45,0) circle (1pt);
\filldraw[black] (8.45+0.25,0) circle (1pt);

\node at (10,5.75) {\LARGE$\mathbb{R}^{2}_{x,z}$};


\node at (-2.5, -6.15) {\LARGE$\sigma_{i_{1}}$};
\node at (2.5, -6.15) {\LARGE$\sigma_{i_{\ell}}$};

\node[left] at (0-0.4, -6.15) {\LARGE$\sigma_{i_{2}}$};
\node[right] at (0+0.4, -6.15) {\LARGE$\sigma_{i_{\ell-1}}$};

\draw[line width=0.06cm, blue] (0, 1) -- (0, 6);
\draw[line width=0.06cm, blue] (0-4, -6.75) -- (0-4, -0.25);
\draw[line width=0.06cm, blue] (0+4, -6.75) -- (0+4, -0.25);
\draw[line width=0.06cm, blue] (0-1.75, -6.75) -- (0-1.75, -0.25);
\draw[line width=0.06cm, blue] (0+1.75, -6.75) -- (0+1.75, -0.25);

\node[blue, tape, draw, minimum height=0.25pt, minimum width=14pt, fill=blue, transform shape] at (0+0.25,6-0.25) {};
\node[blue, tape, draw, minimum height=0.25pt, minimum width=14pt, fill=blue, transform shape] at (-4+0.25,-0.25-0.25) {};
\node[blue, tape, draw, minimum height=0.25pt, minimum width=14pt, fill=blue, transform shape] at (4+0.25,-0.25-0.25) {};
\node[blue, tape, draw, minimum height=0.25pt, minimum width=14pt, fill=blue, transform shape] at (-1.75+0.25,-0.25-0.25) {};
\node[blue, tape, draw, minimum height=0.25pt, minimum width=14pt, fill=blue, transform shape] at (1.75+0.25,-0.25-0.25) {};

\node at (0-4, -7.5) {\LARGE $\fl{F}_{1}$}; 
\node at (0-1.75, -7.5) {\LARGE $\fl{F}_{2}$}; 
\node at (0+1.75, -7.5) {\LARGE $\fl{F}_{\ell}$}; 
\node at (0+4, -7.5) {\LARGE $\fl{F}_{\ell+1}$}; 
\node at (0, 6.75) {\LARGE $\fl{F}_{0}$}; 

\filldraw[black] (0,-6.15) circle (1pt);
\filldraw[black] (0-0.25,-6.15) circle (1pt);
\filldraw[black] (0+0.25,-6.15) circle (1pt);

\filldraw[black] (0,-7.25) circle (1pt);
\filldraw[black] (0-0.25,-7.25) circle (1pt);
\filldraw[black] (0+0.25,-7.25) circle (1pt);

\filldraw[black] (0,-0.85) circle (1pt);
\filldraw[black] (0-0.25,-0.85) circle (1pt);
\filldraw[black] (0+0.25,-0.85) circle (1pt);

\endscope
\end{tikzpicture}
\caption{An object $\sh{F}$ of the category $\ccs{1}{\beta}$.}
\label{fig: sheaves and flags}
\end{figure}

Having established a geometric description of the objects of the category $\ccs{1}{\beta}$, we now turn to an algebraic characterization, which is particularly well-suited for concrete computations and will play a key role in the analysis of the graded morphism spaces and their compositions in the category under study.

\subsection{An Algebraic Characterization of the Objects for General Positive Braids}
Let $\beta=\sigma_{i_{1}}\cdots\sigma_{i_{\ell}}\in\mathrm{Br}^{+}_{n}$ be a positive braid word. Building on the previous subsection, we now develop a correspondence between the objects of the category $\ccs{1}{\beta}$ and the points of the so-called \emph{braid variety} $X(\beta, \mathbb{K})$, thereby providing an algebraic description of these objects. With this goal in mind, we begin by introducing the following definitions. 

\begin{definition}
Let $\beta=\sigma_{i_{1}}\cdots \sigma_{i_{\ell}}\in \mathrm{Br}_{n}^{+}$ be a positive braid word. For each $j\in [1,\ell]$, we define $\beta_{j}:=\sigma_{i_{1}}\cdots\sigma_{i_{j}}\in\mathrm{Br}^{+}_{n}$ to be the truncation of $\beta$ at its $j$-th crossing. In particular, we have that $\beta_{\ell}=\beta$.    
\end{definition}

\begin{definition}
Let $\beta=\sigma_{i_{1}}\cdots \sigma_{i_{\ell}}\in \mathrm{Br}_{n}^{+}$ be a positive braid word, and let $\vec{z}=(z_{1}, \dots, z_{\ell})\in \mathbb{K}^{\ell}_{\mathrm{std}}$ be a fixed tuple. Then, the \emph{path matrix} $P_{\beta}(z_{1},\dots, z_{\ell})\in\mathrm{GL}\big(n,\mathbb{K}\big)$ associated with $\beta$ and $\vec{z}$ is defined by
\begin{equation*}
 P_{\beta}(z_{1},\dots, z_{\ell}):=B^{(n)}_{i_{1}}(z_{1})\cdots B^{(n)}_{i_{\ell}}(z_{\ell}),
\end{equation*}
where, for each $j\in[1,\ell]$, the matrix $B^{(n)}_{i_{j}}(z_{j})\in \mathrm{GL}\big(n,\mathbb{K}\big)$ denotes the $i_{j}$-th braid matrix of dimension $n$ associated with the $j$-th crossing $\sigma_{i_{j}}$ of $\beta$ and the $j$-th entry $z_{j}$ of $\vec{z}$. 
\end{definition}

\begin{remark}\label{Remark: path matrices and truncated braid words}
Let $\beta=\sigma_{i_{1}}\cdots \sigma_{i_{\ell}}\in \mathrm{Br}_{n}^{+}$ be a positive braid word. For each $k\in [1,\ell-1]$, the truncations satisfy $\beta_{k+1}=\beta_{k}\cdot \sigma_{i_{k+1}}$. Consequently, the associated path matrices satisfy $P_{\beta_{k+1}}(z_{1},\dots, z_{k+1})=P_{\beta_{k}}(z_{1},\dots, z_{k})\cdot B^{(n)}_{i_{k+1}}(z_{k+1})$.   
\end{remark}

\begin{definition}\label{Def: braid variety}
Let $\beta=\sigma_{i_{1}}\cdots \sigma_{i_{\ell}}\in \mathrm{Br}_{n}^{+}$ be a positive braid word. The \emph{braid variety}  $X(\beta,\mathbb{K})$ associated with $\beta$ corresponds to the affine variety defined by
\begin{equation}
X(\beta,\mathbb{K}):=\Big\{\, (z_{1},\dots, z_{\ell})\in \mathbb{K}^{\ell} \,\big|\, \text{$P_{\beta}(z_{1}, \dots, z_{\ell})$ admits an LU decomposition} \,\Big\}\, .    
\end{equation}
\end{definition}

\noindent
Braid varieties have been the focus of considerable research (see, for instance,~\cite{KT2, CN1, GSW1}). In particular, Casals, Gorsky, Gorsky, and Simental, along with their collaborators, have explored their cluster-algebraic structures and their connections with classical varieties in algebraic geometry, including positroid varieties~\cite{CGGS1, CGGLSS1, CGGS2}.

Next, we proceed to reformulate Theorem~\eqref{Flags and constructible sheaves} in terms of the braid matrices. To facilitate this, we present the following lemma.

\begin{lemma}\label{Lemma: sequences of flags and path matrices}
Let $\beta=\sigma_{i_{1}}\cdots\sigma_{i_{\ell}}\in\mathrm{Br}_{n}^{+}$ be a positive braid word, and let $\big\{\fl{F}_{j}\big\}_{j=1}^{\ell+1}$ be a sequence of complete flags in $\mathbb{K}^{n}$ such that, for each $j\in[1,\ell]$, $\fl{F}_{j+1}$ is in $s_{i_{j}}$-relative position with respect to $\fl{F}_{j}$. 

Let $\,\hat{\mathbf{f}}^{(n)}:=\big\{\hat{f}^{(n)}_{i}\big\}_{i=1}^{n}$ be a basis for $\mathbb{K}^{n}$, and suppose that, relative to this basis, $\fl{F}_{1}$ is the standard flag, that is, $\fl{F}_{1}\big[\,\hat{\mathbf{f}}^{(n)}\,\big]=\sfl\big[\,\hat{\mathbf{f}}^{(n)}\,\big]$. Then there exists a tuple $\vec{z}=(z_{1},\dots, z_{\ell})\in\mathbb{K}^{\ell}_{\mathrm{std}}$ such that, relative to the basis $\hat{\mathbf{f}}^{(n)}$ for $\mathbb{K}^{n}$, the flag $\fl{F}_{j+1}$ is represented by the path matrix $P_{\beta_{j}}(\vec{z}_{j})=B^{(n)}_{i_{1}}(z_{1})\cdots B^{(n)}_{i_{j}}(z_{j})\;\in \mathrm{GL}(n, \mathbb{K})$ associated with the truncated braid word $\beta_{j}=\sigma_{i_{1}}\cdots \sigma_{i_{j}}\in \mathrm{Br}^{+}_{n}$ and the truncated tuple $\vec{z}_{j}=(z_{1},\dots, z_{j})\in \mathbb{K}^{j}_{\mathrm{std}}$, for each $j\in [1,\ell]$. 
\end{lemma}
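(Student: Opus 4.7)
The plan is to proceed by induction on $j \in [1,\ell]$, iteratively applying Lemma~\eqref{Lemma for flags and braid matrices} crossing by crossing, and then recognizing the resulting ordered product as the path matrix of the truncated braid word via Remark~\eqref{Remark: path matrices and truncated braid words}.

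First, I would establish the base case $j=1$. By hypothesis, relative to the basis $\hat{\mathbf{f}}^{(n)}$ for $\mathbb{K}^{n}$, the flag $\fl{F}_{1}$ coincides with the standard flag $\sfl\big[\,\hat{\mathbf{f}}^{(n)}\,\big]$, and hence is represented by the identity matrix $\mathbf{1}_{n}\in\mathrm{GL}(n,\mathbb{K})$. Since $\fl{F}_{2}$ is in $s_{i_{1}}$--relative position with respect to $\fl{F}_{1}$, Lemma~\eqref{Lemma for flags and braid matrices} yields an element $z_{1}\in\mathbb{K}$ such that, relative to the basis $\hat{\mathbf{f}}^{(n)}$, the flag $\fl{F}_{2}$ is represented by the matrix product $\mathbf{1}_{n}\cdot B^{(n)}_{i_{1}}(z_{1})=B^{(n)}_{i_{1}}(z_{1})=P_{\beta_{1}}(z_{1})$, which is exactly the path matrix associated with the truncated braid word $\beta_{1}=\sigma_{i_{1}}$ and $\vec{z}_{1}=(z_{1})$, as desired.

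Next, I would carry out the inductive step. Fix some $k\in[1,\ell-1]$ and suppose, as the inductive hypothesis, that one has already produced a tuple $(z_{1},\dots, z_{k})\in\mathbb{K}^{k}_{\mathrm{std}}$ such that, relative to the basis $\hat{\mathbf{f}}^{(n)}$ for $\mathbb{K}^{n}$, the flag $\fl{F}_{k+1}$ is represented by the path matrix $P_{\beta_{k}}(\vec{z}_{k})=B^{(n)}_{i_{1}}(z_{1})\cdots B^{(n)}_{i_{k}}(z_{k})\in\mathrm{GL}(n,\mathbb{K})$ associated with $\beta_{k}=\sigma_{i_{1}}\cdots\sigma_{i_{k}}$ and $\vec{z}_{k}=(z_{1},\dots, z_{k})$. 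Since $\fl{F}_{k+2}$ is in $s_{i_{k+1}}$--relative position with respect to $\fl{F}_{k+1}$, a direct application of Lemma~\eqref{Lemma for flags and braid matrices} provides an element $z_{k+1}\in\mathbb{K}$ such that, relative to the basis $\hat{\mathbf{f}}^{(n)}$, the flag $\fl{F}_{k+2}$ is represented by the matrix product $P_{\beta_{k}}(\vec{z}_{k})\cdot B^{(n)}_{i_{k+1}}(z_{k+1})$. By Remark~\eqref{Remark: path matrices and truncated braid words}, this product coincides with the path matrix $P_{\beta_{k+1}}(\vec{z}_{k+1})$ associated with the truncated braid word $\beta_{k+1}=\beta_{k}\cdot\sigma_{i_{k+1}}$ and the extended tuple $\vec{z}_{k+1}=(z_{1},\dots, z_{k}, z_{k+1})\in\mathbb{K}^{k+1}_{\mathrm{std}}$, which completes the inductive step.

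Iterating this procedure up to $j=\ell$ produces the full tuple $\vec{z}=(z_{1},\dots, z_{\ell})\in\mathbb{K}^{\ell}_{\mathrm{std}}$ with the required property. The argument is essentially bookkeeping rather than technically subtle: the entire content of the lemma is packaged inside Lemma~\eqref{Lemma for flags and braid matrices}, and the only mild care required is to verify that the incremental right-multiplication by a single braid matrix at each stage indeed assembles into the path matrix of the truncation, which is guaranteed by Remark~\eqref{Remark: path matrices and truncated braid words}. If any step warrants extra attention, it is the identification $P_{\beta_{k}}(\vec{z}_{k})\cdot B^{(n)}_{i_{k+1}}(z_{k+1})=P_{\beta_{k+1}}(\vec{z}_{k+1})$, which must be made explicit so that the inductive output retains the precise form claimed in the statement.
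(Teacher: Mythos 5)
Your proof is correct and follows essentially the same route as the paper's: both arguments proceed recursively from the base case $\fl{F}_{1}=\mathbf{1}_{n}$, apply Lemma~\eqref{Lemma for flags and braid matrices} at each crossing to produce $z_{j}$, and invoke Remark~\eqref{Remark: path matrices and truncated braid words} to identify the accumulated product with the path matrix of the truncated braid word. The only difference is presentational: you phrase the recursion as a formal induction, which is fine.
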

\begin{proof}
To prove the claim, we proceed recursively. To begin, observe that $\hat{\mathbf{f}}^{(n)}$ is a basis for $\mathbb{K}^{n}$ such that $\fl{F}_{1}\big[\,\hat{\mathbf{f}}^{(n)}\,\big]=\sfl\big[\,\hat{\mathbf{f}}^{(n)}\,\big]$. Hence, relative to this basis, the flag $\fl{F}_{1}$ is represented by the matrix $\mathbf{1}_{n}$. 

Now, consider the flag $\fl{F}_{2}$. By assumption, $\fl{F}_{2}$ is in $s_{i_{1}}$--relative position with respect to $\fl{F}_{1}$. Then, by Lemma~\eqref{Lemma for flags and braid matrices}, there exists $z_{1}\in \mathbb{K}$ such that, relative to the basis $\hat{\mathbf{f}}^{(n)}$, the flag $\fl{F}_{2}$ is represented by the matrix product $\mathbf{1}_{n}\cdot B^{(n)}_{i_{1}}(z_{1})$. In particular, since $\beta_{1}=\sigma_{i_{1}}$, we have that $P_{\beta_{1}}(z_{1})=B^{(n)}_{i_{1}}(z_{1})$, implying that the flag $\fl{F}_{2}$ is represented by the path matrix $P_{\beta_{1}}(z_{1})$. 

Next, consider the flag $\fl{F}_{3}$. By hypothesis, $\fl{F}_{3}$ is in $s_{i_{2}}$--relative position with respect to $\fl{F}_{2}$. Then, by Lemma~\eqref{Lemma for flags and braid matrices}, there exists $z_{2}\in \mathbb{K}$ such that, relative to the basis $\hat{\mathbf{f}}^{(n)}$, the flag $\fl{F}_{3}$ is represented by the matrix product $P_{\beta_{1}}(z_{1})\cdot B^{(n)}_{i_{2}}(z_{2})$. Furthermore, by Remark~\eqref{Remark: path matrices and truncated braid words}, we know that $P_{\beta_{2}}(z_{1},z_{2})=P_{\beta_{1}}(z_{1})\cdot B^{(n)}_{i_{2}}(z_{2})$, confirming that the flag $\fl{F}_{3}$ is represented by the path matrix $P_{\beta_{2}}(z_{1},z_{2})$. 

Continuing this process recursively, we conclude that there exists a tuple $\vec{z}=(z_{1},\dots, z_{\ell})\in\mathbb{K}^{\ell}_{\mathrm{std}}$ such that, relative to the basis $\hat{\mathbf{f}}^{(n)}$ for $\mathbb{K}^{n}$, the flag $\fl{F}_{j+1}$ is represented by the path matrix $P_{\beta_{j}}(\vec{z}_{j})=B^{(n)}_{i_{1}}(z_{1})\cdots B^{(n)}_{i_{j}}(z_{j})\,\in \mathrm{GL}(n, \mathbb{K})$ associated with the truncated braid word $\beta_{j}=\sigma_{i_{1}}\cdots \sigma_{i_{j}}\in \mathrm{Br}^{+}_{n}$ and the truncated tuple $\vec{z}_{j}=(z_{1},\dots, z_{j})\in\mathbb{K}^{j}_{\mathrm{std}}$, for each $j\in[1,\ell]$. This completes the proof.
\end{proof}

Building on the above preliminary result, we now present the following theorem, which algebraically characterizes an object $\sh{F}$ of the category $\ccs{1}{\beta}$ in terms of a basis for $\mathbb{K}^{n}$ and a point in the braid variety $X(\beta,\mathbb{K})$. 

\begin{theorem}\label{Prop. for sheaves and braid matrices}
Let $\beta=\sigma_{i_{1}}\cdots\sigma_{i_{\ell}}\in\mathrm{Br}_{n}^{+}$ be a positive braid word, and let $\sh{F}$ be an object of the category $\ccs{1}{\beta}$. In accordance with Theorem~\eqref{Flags and constructible sheaves}, we denote by $\big\{\fl{F}_{j}\big\}_{j=0}^{\ell+1}$ the sequence of complete flags in $\mathbb{K}^{n}$ that geometrically characterize $\sh{F}$. 

Let $\,\hat{\mathbf{f}}^{(n)}:=\big\{\hat{f}^{(n)}_{i}\big\}_{i=1}^{n}$ be a basis for $\mathbb{K}^{n}$, and suppose that, relative to this basis, $\fl{F}_{0}$ and $\fl{F}_{1}$ are the anti-standard and standard flags, respectively, that is, $\fl{F}_{0}\big[\,\hat{\mathbf{f}}^{(n)}\,\big]=\asfl\big[\,\hat{\mathbf{f}}^{(n)}\,\big] $ and $\fl{F}_{1}\big[\,\hat{\mathbf{f}}^{(n)}\,\big]=\sfl\big[\,\hat{\mathbf{f}}^{(n)}\,\big]$. 

Then $\sh{F}$ is algebraically parametrized by the basis $\hat{\mathbf{f}}^{(n)}$ and a point $\vec{z}=(z_{1},\dots, z_{\ell})$ in the braid variety $X(\beta,\mathbb{K})\subset \mathbb{K}^{\ell}_{\mathrm{std}}$. Furthermore, relative to the basis $\hat{\mathbf{f}}^{(n)}$ for $\mathbb{K}^{n}$, the flag $\fl{F}_{j+1}$ is represented by the path matrix $P_{\beta_{j}}(\vec{z}_{j}):=B^{(n)}_{i_{1}}(z_{1})\cdots B^{(n)}_{i_{j}}(z_{j})\;\in \mathrm{GL}(n, \mathbb{K})$ associated with the truncated braid word $\beta_{j}=\sigma_{i_{1}}\cdots \sigma_{i_{j}}\in \mathrm{Br}^{+}_{n}$ and the truncated tuple $\vec{z}_{j}=(z_{1},\dots, z_{j})\in \mathbb{K}^{j}_{\mathrm{std}}$, for each $j\in [1,\ell]$. 
\end{theorem}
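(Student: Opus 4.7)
The plan is to bootstrap the geometric description of $\sh{F}$ provided by Theorem~\eqref{Flags and constructible sheaves} into an algebraic one via Lemma~\eqref{Lemma: sequences of flags and path matrices}, and then to verify that the resulting tuple of scalars actually lies in the braid variety $X(\beta,\mathbb{K})$. First I would invoke Theorem~\eqref{Flags and constructible sheaves} to extract the sequence of complete flags $\{\fl{F}_j\}_{j=0}^{\ell+1}$ in $\mathbb{K}^n$ characterizing $\sh{F}$, together with its two compatibility conditions: the $s_{i_j}$--relative position for $j\in[1,\ell]$, and the complete opposition of $\fl{F}_0$ to both $\fl{F}_1$ and $\fl{F}_{\ell+1}$. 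The hypotheses on $\hat{\mathbf{f}}^{(n)}$ fix $\fl{F}_0$ and $\fl{F}_1$ as the anti-standard and standard flags in this basis, so a direct application of Lemma~\eqref{Lemma: sequences of flags and path matrices} to the truncated sequence $\{\fl{F}_j\}_{j=1}^{\ell+1}$ produces a tuple $\vec{z}=(z_1,\dots,z_\ell)\in\mathbb{K}^\ell_{\mathrm{std}}$ such that each flag $\fl{F}_{j+1}$ is represented, relative to $\hat{\mathbf{f}}^{(n)}$, by the path matrix $P_{\beta_j}(\vec{z}_j)$. This settles the matrix-representation assertion and reduces the remainder of the statement to a single algebraic check: that $P_\beta(\vec{z})$ admits an LU decomposition.

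The main obstacle is precisely this LU-decomposition check, and it is where the final compatibility condition of Theorem~\eqref{Flags and constructible sheaves} does the essential work. The flag $\fl{F}_{\ell+1}$, represented by $P_\beta(\vec{z})$, must be completely opposite to $\fl{F}_0=\asfl\big[\,\hat{\mathbf{f}}^{(n)}\,\big]$. Since $F_0^{(n-p)}$ is spanned by the last $n-p$ vectors of $\hat{\mathbf{f}}^{(n)}$ while $F_{\ell+1}^{(p)}$ is spanned by the first $p$ columns of $P_\beta(\vec{z})$, Laplace-expanding along the last $n-p$ columns of the resulting $n\times n$ matrix identifies the direct-sum condition $\mathbb{K}^n=F_{\ell+1}^{(p)}\oplus F_0^{(n-p)}$ with the non-vanishing of the $p\times p$ leading principal minor of $P_\beta(\vec{z})$, for every $p\in[1,n]$. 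Invoking the classical criterion that an invertible matrix admits an LU decomposition if and only if all its leading principal minors are nonzero then delivers $\vec{z}\in X(\beta,\mathbb{K})$ by Definition~\eqref{Def: braid variety}.

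To close the parametrization, I would briefly note the converse construction: given any pair $(\hat{\mathbf{f}}^{(n)},\vec{z})$ with $\vec{z}\in X(\beta,\mathbb{K})$, setting $\fl{F}_0:=\asfl\big[\,\hat{\mathbf{f}}^{(n)}\,\big]$, $\fl{F}_1:=\sfl\big[\,\hat{\mathbf{f}}^{(n)}\,\big]$, and $\fl{F}_{j+1}$ to be the flag represented by $P_{\beta_j}(\vec{z}_j)$ (built iteratively from $\fl{F}_j$ via Lemma~\eqref{Lemma for flags and braid matrices}) produces a sequence of flags satisfying both compatibility conditions of Theorem~\eqref{Flags and constructible sheaves}, and hence an object of $\ccs{1}{\beta}$. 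The LU-decomposability of $P_\beta(\vec{z})$ is precisely what guarantees the complete opposition of $\fl{F}_0$ to $\fl{F}_{\ell+1}$, so the forward and reverse constructions are mutually inverse. The core conceptual point of the whole argument is thus the recognition of the LU condition as the precise algebraic shadow of complete opposition to the anti-standard flag.
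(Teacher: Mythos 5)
Your proposal is correct and follows the same overall route as the paper: both first invoke Theorem~\eqref{Flags and constructible sheaves} to obtain the flag sequence and then apply Lemma~\eqref{Lemma: sequences of flags and path matrices} to produce the tuple $\vec{z}$ and the path-matrix representations of the flags $\fl{F}_{j+1}$. The only divergence is in how you verify that $P_{\beta}(\vec{z})$ admits an LU decomposition. The paper deduces this from the Bruhat decomposition: complete opposition to the anti-standard flag forces $P_{\beta}(\vec{z})=\mathbf{w}_{n}\tilde{U}\mathbf{w}_{n}U$ with $\tilde{U},U$ upper triangular, and $\mathbf{w}_{n}\tilde{U}\mathbf{w}_{n}$ is lower triangular. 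You instead translate complete opposition, column by column via a Laplace expansion, into the non-vanishing of all leading principal minors and then cite the classical LU criterion. These are two standard characterizations of the same open Bruhat cell, so the arguments are equivalent; yours is slightly more elementary and self-contained, while the paper's is shorter given the Bruhat machinery. Your closing paragraph on the converse construction is not required by the statement as written (which only asserts the forward parametrization), but it is consistent with the paper's framework and does no harm.
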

\begin{proof}
By Theorem~\eqref{Flags and constructible sheaves}, $\big\{\fl{F}_{j}\big\}_{j=0}^{\ell+1}$ is a sequence of complete flags in $\mathbb{K}^{n}$ such that: $\fl{F}_{0}$ is completely opposite to both $\fl{F}_{1}$ and $\fl{F}_{\ell+1}$, and for each $j\in[1,\ell]$, $\fl{F}_{j+1}$ is in $s_{i_{j}}$-relative position with respect to $\fl{F}_{j}$. 

Observe that, relative to the basis $\hat{\mathbf{f}}^{(n)}$ for $\mathbb{K}^{n}$,  $\fl{F}_{0}$ and $\fl{F}_{1}$ are the anti-standard and standard flags, respectively. Consequently, Lemma~\eqref{Lemma: sequences of flags and path matrices} asserts that there exists a tuple $\vec{z}=(z_{1},\dots, z_{\ell})\in \mathbb{K}^{\ell}_{\mathrm{std}}$ such that, relative to the basis $\hat{\mathbf{f}}^{(n)}$ for $\mathbb{K}^{n}$, the flag $\fl{F}_{j+1}$ is represented by the path matrix $P^{(n)}_{\beta_{j}}(\vec{z}_{j})=B^{(n)}_{i_{1}}(z_{1})\cdots B^{(n)}_{i_{j}}(z_{j})\;\in \mathrm{GL}(n, \mathbb{K})$ associated with the truncated braid word $\beta_{j}=\sigma_{i_{1}}\cdots \sigma_{i_{j}}\in \mathrm{Br}^{+}_{n}$ and the truncated tuple $\vec{z}_{j}=(z_{1},\dots, z_{j})\in \mathbb{K}^{j}$, for each $j\in [1,\ell]$. Thus, since $\beta_{\ell}=\beta$ and $\vec{z}_{\ell}=\vec{z}$, we have that, relative to the basis $\hat{\mathbf{f}}^{(n)}$ for $\mathbb{K}^{n}$, the flag $\fl{F}_{\ell+1}$ is represented by the path matrix $P_{\beta}(\vec{z})$.  
 
Finally, recall that $\fl{F}_{0}$ is completely opposite to $\fl{F}_{\ell+1}$. Then, since relative to the basis $\hat{\mathbf{f}}^{(n)}$ for $\mathbb{K}^{n}$, $\fl{F}_{0}$ is the anti-standard flag, the Bruhat decomposition of $\mathrm{GL}(n,\mathbb{K})$ implies that $P_{\beta}(\vec{z})=\mathbf{w}_{n}\tilde{U}\mathbf{w}_{n}U$ for some upper triangular matrices $\tilde{U},U\in \mathrm{GL}(n,\mathbb{K})$, where $\mathbf{w}_{n}$ corresponds to the permutation matrix associated with the longest element in $\mathrm{S}_{n}$~\cite{CGGS1, CGGLSS1, SW1}. In particular, note that the matrix $L=\mathbf{w}_{n}\tilde{U}\mathbf{w}_{n}\in\mathrm{GL}(n,\mathbb{K})$ is lower triangular, and hence $P_{\beta}(\vec{z})=L\,U$, which shows $P_{\beta}(\vec{z})$ admits an LU factorization. Bearing this in mind, it follows from Definition~\eqref{Def: braid variety} that $\vec{z}$ is a point in the braid variety $X(\beta, \mathbb{K})$, which completes the proof.    
\end{proof}

Lastly, let $\sh{F}$ be an object of the category $\ccs{1}{\beta}$, and consider the open cover $\mathcal{U}_{\Lambda(\beta)}=\big\{U_{0}, U_{\mathrm{B}}, U_{\mathrm{L}}, U_{\mathrm{R}}, U_{\mathrm{T}}\big\}$ of $\mathbb{R}^{2}$ (cf. Construction~\eqref{Cons: Finite open cover for R^2}), together with the partition $\mathcal{R}_{\Lambda(\beta)}=\big\{R_{j}\big\}_{j=1}^{\ell}$ of $U_{\mathrm{B}}$ into $\ell$ open vertical straps (cf. Construction~\eqref{Cons: Definition of the vertical straps}). We conclude this subsection by describing how the global correspondence established in Theorem~\eqref{Prop. for sheaves and braid matrices} manifests locally on each vertical strap $R_{j}$. More precisely, following the approach developed by Chantraine, Ng, and Sivek in~\cite{CNS1}, we construct algebraic local models for $\sh{F}$ on the regions $R_{j}$, employing braid matrices. In particular, these local models will be instrumental in the explicit computation of the graded morphism spaces and their compositions in the category $\ccs{1}{\beta}$, which constitutes the main subject of study in the subsequent section. Accordingly, we present the following results.

\begin{lemma}[Algebraic Local Model I]\label{Lemma: matrix local model for sigma_1} \textbf{Setup}: Let $\beta=\sigma_{i_{1}}\dots \sigma_{i_{\ell}}\in\mathrm{Br}^{+}_{n}$ be a positive braid word, and let $\sh{F}$ be an object of the category $\ccs{1}{\beta}$. Fix $j\in [1,\ell]$, and let $R_{j}$ be the open vertical strap in $\mathbb{R}^{2}$ containing $\sigma_{i_{j}}$---the $j$-th crossing of $\beta$ (see Figure~\eqref{fig: A sub-regions R_j}). 

\vspace{0.5em}
\noindent
$\star$ \emph{Assumption 1}: Let $k:=i_{j}\in[1,n-1]$ denote the index of $\sigma_{i_{j}}$, and suppose that $k=1$.

In this setting, we have that on $R_{j}$, the sheaf $\sh{F}$ is specified by a collection of $n$ injective linear maps
\begin{equation*}
\big\{\phi^{\,(i)}_{\sh{F}}:\mathbb{K}^{i}\to\mathbb{K}^{i+1}\big\}_{i=1}^{n-1}\;\cup\;\big\{\widetilde{\phi}^{\,(1)}_{\sh{F}}:\mathbb{K}^{1}\to \mathbb{K}^{2}\, \big\}\, ,  
\end{equation*} 
as illustrated in Figure~\eqref{fig: a sheaf in the vertical strap R_j containing a crossing sigma_1}. According to Lemma~\eqref{lemma: description of an object F in a region R_j in terms of flags for k=1}, the compatibility conditions for these maps ensure that the induced complete flags in $\mathbb{K}^{n}$,
\begin{equation*}
\fl{F}_{j}:=\prescript{}{\mathcal{I}\,}{\fl{F}}\bigl(\,\phi_{\sh{F}}^{(1)},\phi_{\sh{F}}^{(2)},\dots,\phi_{\sh{F}}^{(n-1)}\,\bigr)\, , \quad \text{and} \quad \fl{F}_{j+1}:=\prescript{}{\mathcal{I}\,}{\fl{F}}\bigl(\,\widetilde{\phi}_{\sh{F}}^{\,(1)},\phi_{\sh{F}}^{(2)},\dots,\phi_{\sh{F}}^{(n-1)}\,\bigr)\, ,
\end{equation*}
are in $s_{1}$-relative position. 

\vspace{0.5em}
\noindent
$\star$ \emph{Assumption 2}:  For each $i\in [1,n]$, let $\hat{\mathbf{f}}^{(i)}:=\big\{ \hat{f}^{(i)}_{j}\big\}_{j=1}^{i}$ be a basis for $\mathbb{K}^{i}$, and suppose that $\big\{\hat{\mathbf{f}}^{(i)}\big\}_{i=1}^{n}$ is a system of bases adapted to $\big\{\phi^{(i)}_{\sh{F}}\big\}_{i=1}^{n-1}$ (cf. Definition~\eqref{Def:flags and adapted bases}--\eqref{Def: adapted bases I}).

Within this framework and relative to the basis $\hat{\mathbf{f}}^{(n)}$ for $\mathbb{K}^{n}$, Lemmas~\eqref{Lemma: I type flag in adapted bases for injective maps} and~\eqref{Lemma for flags and braid matrices} algebraically capture the geometric compatibility between the flags $\fl{F}_{j}$ and $\fl{F}_{j+1}$ as follows:
\begin{itemize}
\justifying
\item $\fl{F}_{j}$ is the standard flag, that is, $\fl{F}_{j}\big[\, \hat{\mathbf{f}}^{(n)} \,\big]=\sfl\big[\, \hat{\mathbf{f}}^{(n)} \,\big] $, and is therefore represented by the matrix $\mathbf{1}_{n}$. 
\item $\fl{F}_{j+1}$ is represented by the matrix product $\mathbf{1}_{n}\cdot B^{(n)}_{1}(z)$, where $B^{(n)}_{1}(z)$ denotes the first braid matrix of dimension $n$, for some $z\in \mathbb{K}$ parameterizing the $s_{1}$-relative position between $\fl{F}_{j}$ and $\fl{F}_{j+1}$. 
\end{itemize}

\vspace{4pt}
\noindent
\textbf{Main Conclusion}: Under the given \textbf{setup}, the following statements hold:
\begin{itemize}
\justifying
\item For each $i\in [1,n-1]$, the matrix $\,\tensor[_{\hat{\mathbf{f}}^{(i+1)}}]{ \big[\,\phi^{\,(i)}_{\sh{F}}\,\big] }{_{\hat{\mathbf{f}}^{(i)}}}\in M(i+1,i,\mathbb{K})$ representing $\phi^{(i)}_{\sh{F}}$ is given by
\begin{equation*}
\tensor[_{\hat{\mathbf{f}}^{(i+1)}}]{ \big[\,\phi^{\,(i)}_{\sh{F}}\,\big] }{_{\hat{\mathbf{f}}^{(i)}}}=\iota^{(i+1,i)}\, .    
\end{equation*}

\item The matrix $\,\tensor[_{\hat{\mathbf{f}}^{(2)}}]{ \big[\,\widetilde{\phi}^{\,(1)}_{\sh{F}}\,\big] }{_{\hat{\mathbf{f}}^{(1)}}}\in \mathrm{M}(2, 1, \mathbb{K})$ representing $\widetilde{\phi}^{\,(1)}_{\sh{F}}$ can be written as
\begin{equation*}
\tensor[_{\hat{\mathbf{f}}^{(2)}}]{ \big[\, \widetilde{\phi}^{\,(1)}_{\sh{F}}\,\big] }{_{\hat{\mathbf{f}}^{(1)}}}=B^{(2)}_{1}(z)\cdot \iota^{(2,1)}
\, ,
\end{equation*}
where $B^{(2)}_{1}(z)$ denotes the first braid matrix of dimension $2$.
\end{itemize}

\vspace{4pt}
\noindent
\textbf{Definition (system of bases adapted to $\sh{F}$ on $R_{j}$)}: For each $i\in [1, n]$, let $\hat{\mathbf{h}}^{(i)}:=\big\{\hat{h}^{(i)}_{j}\big\}_{j=1}^{i}$ be a basis for $\mathbb{K}^{i}$, and denote by $\big\{\hat{\mathbf{h}}^{(i)}\big\}_{i=1}^{n}$ the collection of these bases. 

We say that a pair $\big(\big\{\hat{\mathbf{h}}^{(i)}\big\}_{i=1}^{n}, \lambda\big)$, with $\lambda \in \mathbb{K}$, is a \emph{system of bases adapted to $\sh{F}$ on $R_{j}$} if, with respect to these bases, the linear maps defining $\sh{F}$ on $R_{j}$ have the following matrix representations: 
\begin{equation*}
\begin{aligned}
\tensor[_{\hat{\mathbf{h}}^{(i+1)}}]{ \big[\,\phi^{\,(i)}_{\sh{F}}\,\big] }{_{\hat{\mathbf{h}}^{(i)}}}&=\iota^{(i+1,i)}\, , &&\text{for all $i\in[1,n-1]$}\, ,\\[6pt]   
\tensor[_{\hat{\mathbf{h}}^{(2)}}]{ \big[\,\widetilde{\phi}^{\,(1)}_{\sh{F}}\,\big] }{_{\hat{\mathbf{h}}^{(1)}}}&=B^{(2)}_{1}(\lambda)\cdot\iota^{(2,1)}\, . &&
\end{aligned}   
\end{equation*} 

\noindent
In particular, under the given \textbf{setup}, the \textbf{main conclusion} implies that $\big(\big\{\hat{\mathbf{f}}^{(i)}\big\}_{i=1}^{n}, z\big)$ is a system of bases adapted to $\sh{F}$ on $R_{j}$.
\end{lemma}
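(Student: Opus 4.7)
The first bullet of the \textbf{main conclusion}---namely $\tensor[_{\hat{\mathbf{f}}^{(i+1)}}]{\bigl[\phi^{(i)}_{\sh{F}}\bigr]}{_{\hat{\mathbf{f}}^{(i)}}} = \iota^{(i+1,i)}$ for every $i \in [1, n-1]$---is tautological from Assumption~2 and Definition~\eqref{Def:flags and adapted bases}. Hence the real content lies in the second bullet, the matrix identity for $\widetilde{\phi}^{(1)}_{\sh{F}}$; once that identity is in hand, the closing assertion that $\bigl(\{\hat{\mathbf{f}}^{(i)}\}_{i=1}^{n}, z\bigr)$ is a system of bases adapted to $\sh{F}$ on $R_{j}$ is immediate from the definition introduced at the end of the statement.

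My plan for the matrix identity is to describe the same one-dimensional subspace $F^{(1)}_{j+1} \subseteq \mathbb{K}^{n}$ in two different ways and equate them. First, Lemma~\eqref{lemma: description of an object F in a region R_j in terms of flags for k=1} applied in the present setting yields that $\widetilde{\phi}^{(1)}_{\sh{F}}$ is injective with $\mathbb{K}^{2} = \langle \hat{f}^{(2)}_1 \rangle \oplus \mathrm{im}\bigl(\widetilde{\phi}^{(1)}_{\sh{F}}\bigr)$, so one can expand
\begin{equation*}
\widetilde{\phi}^{(1)}_{\sh{F}}\bigl(\hat{f}^{(1)}_1\bigr) \;=\; a\,\hat{f}^{(2)}_1 \,+\, b\,\hat{f}^{(2)}_2 \qquad \text{with } b \neq 0 .
\end{equation*}
Using the type-$\mathcal{I}$ description of $\fl{F}_{j+1}$ from the same lemma together with the iterated adaptedness of Assumption~2---which gives the composition $\phi^{(n-1)}_{\sh{F}} \circ \cdots \circ \phi^{(2)}_{\sh{F}}$ matrix $\iota^{(n,2)}$ relative to $\hat{\mathbf{f}}^{(2)}$ and $\hat{\mathbf{f}}^{(n)}$---one finds $F^{(1)}_{j+1} = \bigl\langle\, a\hat{f}^{(n)}_1 + b\hat{f}^{(n)}_2 \,\bigr\rangle$. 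On the other hand, Assumption~2 also provides the flag matrix $B^{(n)}_{1}(z)$ for $\fl{F}_{j+1}$ relative to $\hat{\mathbf{f}}^{(n)}$, whose first column $(z,1,0,\dots,0)^{T}$ furnishes the alternative spanning vector $z\hat{f}^{(n)}_1 + \hat{f}^{(n)}_2$ of the very same subspace.

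Equating these two spans gives $(a,b) = \rho(z,1)$ for some $\rho \in \mathbb{K}^{\times}$, and the crux of the argument---the step I expect to be the main obstacle---is pinning down the normalization $\rho = 1$. To do so I would appeal to the rigidity of the adapted chain, which via $\hat{f}^{(2)}_2 = \bigl(\phi^{(2)}_{\sh{F}}\bigr)^{-1} \circ \cdots \circ \bigl(\phi^{(n-1)}_{\sh{F}}\bigr)^{-1}\bigl(\hat{f}^{(n)}_2\bigr)$ uniquely determines $\hat{f}^{(2)}_2$ from $\hat{f}^{(n)}_2$ with no residual scaling freedom, and then exploit the specific normalization convention of Lemma~\eqref{Lemma for flags and braid matrices} that singles out the parameter $z$ by forcing the $(i_{j}+1)$-th coordinate of the first column of the braid matrix to equal $1$. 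Because the composition $\phi^{(n-1)}_{\sh{F}} \circ \cdots \circ \phi^{(2)}_{\sh{F}}$ transports the $\hat{f}^{(2)}_2$-coefficient $b$ of $\widetilde{\phi}^{(1)}_{\sh{F}}\bigl(\hat{f}^{(1)}_1\bigr)$ intact to the $\hat{f}^{(n)}_2$-coefficient of the image (its matrix being $\iota^{(n,2)}$), matching the two descriptions of $F^{(1)}_{j+1}$ at the level of spanning vectors conspires to force $b = 1$, and hence $a = z$. This yields the desired identity
\begin{equation*}
\tensor[_{\hat{\mathbf{f}}^{(2)}}]{\bigl[\widetilde{\phi}^{(1)}_{\sh{F}}\bigr]}{_{\hat{\mathbf{f}}^{(1)}}} \;=\; \begin{bmatrix} z \\ 1 \end{bmatrix} \;=\; B^{(2)}_{1}(z) \cdot \iota^{(2,1)},
\end{equation*}
which together with the tautological first bullet and the definition of an adapted system on $R_{j}$ completes the proof.
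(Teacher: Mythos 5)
Your reduction to the second bullet is correct, and the geometric bookkeeping up to the relation $(a,b)=\rho(z,1)$ is sound. But the step you yourself identify as the crux --- forcing $\rho=1$ --- does not go through, and this is a genuine gap. The flag $\fl{F}_{j+1}$ records only the subspace $F^{(1)}_{j+1}=\mathrm{im}\bigl(\phi^{(n-1)}_{\sh{F}}\circ\cdots\circ\phi^{(2)}_{\sh{F}}\circ\widetilde{\phi}^{\,(1)}_{\sh{F}}\bigr)$, never a distinguished spanning vector of it; equality of your two descriptions of this line yields exactly $(a,b)=\rho(z,1)$ and nothing more, so there is no ``matching at the level of spanning vectors'' to appeal to. Neither the microlocal support condition at the crossing (which only requires $\mathrm{im}\,\phi^{(1)}_{\sh{F}}$ and $\mathrm{im}\,\widetilde{\phi}^{\,(1)}_{\sh{F}}$ to be complementary lines in $\mathbb{K}^{2}$) nor the rigidity of the adapted chain carrying $\hat{f}^{(2)}_{2}$ to $\hat{f}^{(n)}_{2}$ pins down $\rho$: replacing $\widetilde{\phi}^{\,(1)}_{\sh{F}}$ by any nonzero scalar multiple produces data satisfying every hypothesis of the lemma with the same flags and the same parameter $z$. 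Hence $b=1$ cannot be \emph{derived} from the stated assumptions.

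The paper sidesteps this by not attempting a derivation. Its proof reads the flag data as ``suggesting'' the matrix $B^{(2)}_{1}(z)\cdot\iota^{(2,1)}$, then \emph{defines} $\widetilde{\phi}^{\,(1)}_{\sh{F}}$ by that matrix in the given bases and verifies that this choice is admissible: (i) $\phi^{(1)}_{\sh{F}}\oplus\bigl(-\widetilde{\phi}^{\,(1)}_{\sh{F}}\bigr)$ is surjective, so the crossing short exact sequence and commutative diagram hold; and (ii) passing to the braid-transformed bases and invoking the lemmas on type-$\mathcal{I}$ flags in adapted bases and on change of basis, the induced flag $\fl{F}_{j+1}$ is represented by $B^{(n)}_{1}(z)$. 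In other words, the lemma's ``can be written as'' asserts a normal form for a chosen representative, and the scalar freedom $\rho$ is absorbed into that choice. To repair your argument you must either make the normalization explicit (e.g.\ rescale the basis of the source copy of $\mathbb{K}^{1}$ feeding $\widetilde{\phi}^{\,(1)}_{\sh{F}}$ by $\rho^{-1}$ and state that the claim is existence of such a representation) or adopt the paper's construct-and-verify route.
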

\begin{proof}
By assumption, $\big\{\hat{\mathbf{f}}^{(i)}\big\}_{i=1}^{n}$ is a system of bases adapted to $\big\{\phi^{(i)}_{\sh{F}}\big\}_{i=1}^{n-1}$. Then, Definition~\eqref{Def:flags and adapted bases}--\eqref{Def: adapted bases I} asserts that, for each $i\in[1,n-1]$, the matrix $\tensor[_{\hat{\mathbf{f}}^{(i+1)}}]{ \big[\, \phi^{(i)}_{\sh{F}}\,\big] }{_{\hat{\mathbf{f}}^{(i)}}}\in M(i+1,i,\mathbb{K})$ representing $\phi^{(i)}_{\sh{F}}$ is the standard inclusion matrix: 
\begin{equation*}
\tensor[_{\hat{\mathbf{f}}^{(i+1)}}]{ \big[\, \phi^{\,(i)}_{\sh{F}}\,\big] }{_{\hat{\mathbf{f}}^{(i)}}}=\iota^{(i+1,i)}\, .    
\end{equation*}

Here, recall that, relative to the basis $\hat{\mathbf{f}}^{(n)}$ for $\mathbb{K}^{n}$, the flags $\fl{F}_{j}$ and $\fl{F}_{j+1}$ that geometrically characterize $\sh{F}$ on $R_{j}$ are represented by the matrices $\mathbf{1}_{n}$ and $B^{(n)}_{1}(z)$, respectively, where $B^{(n)}_{1}(z)$ denotes the first braid matrix of dimension $n$, for some $z\in\mathbb{K}$ parameterizing the $s_{1}$-relative position between $\fl{F}_{j}$ and $\fl{F}_{j+1}$. In particular, this suggests that the matrix $\tensor[_{\hat{\mathbf{f}}^{(2)}}]{ \big[\,\widetilde{\phi}^{\,(1)}_{\sh{F}}\,\big] }{_{\hat{\mathbf{f}}^{(1)}}}\in \mathrm{M}(2, 1, \mathbb{K})$ representing $\widetilde{\phi}^{\,(1)}_{\sh{F}}$ can be expressed as
\begin{equation*}
\tensor[_{\hat{\mathbf{f}}^{(2)}}]{ \big[\, \widetilde{\phi}^{\,(1)}_{\sh{F}}\,\big] }{_{\hat{\mathbf{f}}^{(1)}}}=B^{(2)}_{1}(z)\cdot \iota^{(2,1)}\, ,
\end{equation*}
where $B^{(2)}_{1}(z)$ denotes the first braid matrix of dimension $2$. 

Next, we verify that the linear map $\widetilde{\phi}^{\,(1)}_{\sh{F}}$, henceforth thought of as defined by the above matrix expression in the given bases, is compatible with the microlocal support conditions for $\sh{F}$ on $R_{j}$ and induces the flag $\fl{F}_{j+1}$ in $\mathbb{K}^{n}$. 

To begin, let $\vec{x}:=\alpha_{1}\hat{f}^{(2)}_{1}+\alpha_{2}\hat{f}^{(2)}_{2}\in \mathbb{K}^{2}$, for some $\alpha_{1},\alpha_{2}\in \mathbb{K}$. Then, we have that
\begin{equation*}
\begin{aligned}
 \vec{x}&=\alpha_{1}\hat{f}^{(2)}_{1} -z\,\alpha_{2}\hat{f}^{(2)}_{1}+ z\,\alpha_{2}\hat{f}^{(2)}_{1}+\alpha_{2}\hat{f}^{(2)}_{2}\, ,\\
 &=\phi^{(1)}_{\sh{F}}\big(\alpha_{1}\hat{f}^{(1)}_{1} -z\,\alpha_{2}\hat{f}^{(1)}_{1}\big)-\widetilde{\phi}^{\,(1)}_{\sh{F}}\big(-\alpha_{2}\hat{f}^{(1)}_{1}\big)\, ,
\end{aligned}
\end{equation*}
which implies that the linear map $\phi^{(1)}_{\sh{F}}\oplus \big(-\widetilde{\phi}^{\,(1)}_{\sh{F}} \big):\mathbb{K}\oplus \mathbb{K}\longrightarrow \mathbb{K}^{2}$ is surjective. Consequently, since the linear maps $\phi^{(1)}_{\sh{F}}$ and $\widetilde{\phi}^{\,(1)}_{\sh{F}}$ are injective, we conclude that: 
\begin{itemize}
\justifying
\item[(\textit{i})] The diagram in Figure~\eqref{fig: an object F on an open ball at a crossing sigma 1} commutes.
\item[(\textit{ii})] The sequence in Equation~\eqref{Eq: short exact sequence for the crossing sigma_1} is short exact.
\end{itemize}
Thus, $\widetilde{\phi}^{\,(1)}_{\sh{F}}$ is compatible with the microlocal support conditions for $\sh{F}$ on $R_{j}$. 

Finally, let $\big\{\hat{\mathbf{f}}^{(i)}[\sigma_{1},z]\big\}_{i=1}^{n}$ be the braid-transformed bases obtained from $\big\{\hat{\mathbf{f}}^{(i)}\big\}_{i=1}^{n}$ via $\sigma_{1}$ and the parameter $z$ (cf, Definition~\eqref{Def: braid transformation of bases}). In particular, observe that: 
\begin{itemize}
\justifying
\item By construction, $\big\{\hat{\mathbf{f}}^{(i)}[\sigma_{1},z]\big\}_{i=1}^{n}$ is a system of bases adapted to $\big\{\widetilde{\phi}^{\,(1)}_{\sh{F}},\phi^{(2)}_{\sh{F}}, \dots, \phi^{(n-1)}_{\sh{F}}\big\}$ in the sense of Definition~\eqref{Def:flags and adapted bases}--\eqref{Def: adapted bases I}.
\item $\fl{F}_{j+1}=\prescript{}{\mathcal{I}\,}{\fl{F}}\bigl(\,\widetilde{\phi}_{\sh{F}}^{\,(1)},\phi_{\sh{F}}^{(2)},\dots,\phi_{\sh{F}}^{(n-1)}\,\bigr)$ is the type $\mathcal{I}$ flag in $\mathbb{K}^{n}$ associated with the indicated collection of maps (cf. Definition~\eqref{Def:flags and adapted bases}--\eqref{Def: type I flag}).
\end{itemize}
Hence, by Lemma~\eqref{Lemma: I type flag in adapted bases for injective maps}, we obtain that, relative to the basis $\hat{\mathbf{f}}^{(n)}[\sigma_{1}, z]$ for $\mathbb{K}^{n}$, $\fl{F}_{j+1}$ is the standard flag, that is, $\fl{F}_{j+1}\big[\,\hat{\mathbf{f}}^{(n)}[\sigma_{1}, z]\, \big]=\sfl\big[\,\hat{\mathbf{f}}^{(n)}[\sigma_{1}, z]\, \big]$, and is therefore represented by the matrix $\mathbf{1}_{n}$.

Furthermore, by Definition~\eqref{Def: braid transformation of bases}, the bases $\hat{\mathbf{f}}^{(n)}$ and $\hat{\mathbf{f}}^{(n)}[\sigma_{1}, z]$ for $\mathbb{K}^{n}$ are related via the change-of-basis matrix $B^{(n)}_{1}(z)$; specifically, for each $j\in[1,n]$,
\begin{equation*}
\hat{f}^{(n)}_{j}[\sigma_{1}, z]=\sum_{q=1}^{n}\big(B^{(n)}_{1}(z)\big)_{q,j}\,\hat{f}^{(n)}_{q}\, . 
\end{equation*}
Therefore, by Lemma~\eqref{lemma: Matrices that represent a flag in two different bases}, we conclude that, relative to the basis $\hat{\mathbf{f}}^{(n)}$ for $\mathbb{K}^{n}$, the flag $\fl{F}_{j+1}$ is represented by the matrix product $B^{(n)}_{1}(z)\cdot\mathbf{1}_{n}$, confirming that the linear map $\widetilde{\phi}^{\,(1)}_{\sh{F}}$ induces the flag $\fl{F}_{j+1}$, as required.
\end{proof}

\begin{lemma}[Algebraic Local Model II]\label{Lemma: matrix local model for sigma_k}
\textbf{Setup}: Let $\beta=\sigma_{i_{1}}\dots \sigma_{i_{\ell}}\in\mathrm{Br}^{+}_{n}$ be a positive braid word, and let $\sh{F}$ be an object of the category $\ccs{1}{\beta}$. Fix $j\in [1,\ell]$, and let $R_{j}$ be the open vertical strap in $\mathbb{R}^{2}$ containing $\sigma_{i_{j}}$---the $j$-th crossing of $\beta$ (see Figure~\eqref{fig: A sub-regions R_j}).

\vspace{0.5em}
\noindent
$\star$ \emph{Assumption 1}: Let $k:=i_{j}\in[1,n-1]$ denote the index of $\sigma_{i_{j}}$, and suppose that $k\geq 2$.

In this setting, we have that on $R_{j}$, the sheaf $\sh{F}$ is specified by a collection of $n+1$ injective linear maps
\begin{equation*}
\big\{\phi^{\,(i)}_{\sh{F}}:\mathbb{K}^{i}\to \mathbb{K}^{i+1}\big\}_{i=1}^{n-1}\,\cup\,\big\{ \widetilde{\phi}^{\,(k-1)}_{\sh{F}}:\mathbb{K}^{k-1}\to \mathbb{K}^{k}\,,~\widetilde{\phi}^{\,(k)}_{\sh{F}}:\mathbb{K}^{k}\to \mathbb{K}^{k+1}\,\big\}\, ,   
\end{equation*}
as illustrated in Figure~\eqref{fig: a sheaf in the vertical strap R_j containing a crossing sigma_k}. According to Lemma~\eqref{lemma: description of an object F in a region R_j in terms of flags for k geq 2}, the compatibility conditions for these maps ensure that the induced complete flags in $\mathbb{K}^{n}$,
\begin{equation*}
\begin{aligned}
\fl{F}_{j}&:=\prescript{}{\mathcal{I}\,}{\fl{F}}\bigl(\, \phi^{(1)}_{\sh{F}}, \dots, \phi^{(k-2)}_{\sh{F}}, \phi^{\,(k-1)}_{\sh{F}}, \phi^{\,(k)}_{\sh{F}}, \phi^{(k+1)}_{\sh{F}},\dots, \phi^{(n-1)}_{\sh{F}} \,\bigr)\,,\\[6pt]  
\fl{F}_{j+1}&:=\prescript{}{\mathcal{I}\,}{\fl{F}}\bigl(\, \phi^{(1)}_{\sh{F}}, \dots, \phi^{(k-2)}_{\sh{F}}, \widetilde{\phi}^{\,(k-1)}_{\sh{F}}, \widetilde{\phi}^{\,(k)}_{\sh{F}}, \phi^{(k+1)}_{\sh{F}},\dots, \phi^{(n-1)}_{\sh{F}} \,\bigr)\,,
\end{aligned}
\end{equation*}
are in $s_{k}$-relative position. 

\noindent
$\star$ \textit{Assumption 2}: For each $i\in [1,n]$, let $\hat{\mathbf{f}}^{(i)}:=\big\{ \hat{f}^{(i)}_{j}\big\}_{j=1}^{i}$ be a basis for $\mathbb{K}^{i}$, and suppose that $\big\{\hat{\mathbf{f}}^{(i)}\big\}_{i=1}^{n}$ is a system of bases adapted to $\big\{\phi^{(i)}_{\sh{F}}\big\}_{i=1}^{n-1}$ (cf. Definition~\eqref{Def:flags and adapted bases}--\eqref{Def: adapted bases I}).

Within this framework and relative to the basis $\hat{\mathbf{f}}^{(n)}$ for $\mathbb{K}^{n}$, Lemmas~\eqref{Lemma: I type flag in adapted bases for injective maps} and~\eqref{Lemma for flags and braid matrices} algebraically capture the geometric compatibility between the flags $\fl{F}_{j}$ and $\fl{F}_{j+1}$ as follows:
\begin{itemize}
\justifying
\item[i)] $\fl{F}_{j}$ is the standard flag, that is, $\fl{F}_{j}\big[\, \hat{\mathbf{f}}^{(n)} \,\big]=\sfl\big[\, \hat{\mathbf{f}}^{(n)} \,\big] $, and is therefore represented by the matrix $\mathbf{1}_{n}$. 
\item[ii)] $\fl{F}_{j+1}$ is represented by the matrix product $\mathbf{1}_{n}\cdot B^{(n)}_{k}(z)$, where $B^{(n)}_{k}(z)$ denotes the $k$-th braid matrix of dimension $n$, for some $z\in \mathbb{K}$ parameterizing the $s_{k}$-relative position between $\fl{F}_{j}$ and $\fl{F}_{j+1}$. 
\end{itemize}

\vspace{4pt}
\noindent
\textbf{Main Result}: Under the given \textbf{setup}, the following statements hold:
\begin{itemize}
\justifying
\item For each $i\in [1,n-1]$, the matrix $\tensor[_{\hat{\mathbf{f}}^{(i+1)}}]{ \big[\,\phi^{\,(i)}_{\sh{F}}\,\big] }{_{\hat{\mathbf{f}}^{(i)}}}\in M(i+1,i,\mathbb{K})$ representing $\phi^{(i)}_{\sh{F}}$ is given by
\begin{equation*}
\tensor[_{\hat{\mathbf{f}}^{(i+1)}}]{ \big[\,\phi^{\,(i)}_{\sh{F}}\,\big] }{_{\hat{\mathbf{f}}^{(i)}}}=\iota^{(i+1,i)}\, .    
\end{equation*}
\item The matrices $\tensor[_{\hat{\mathbf{f}}^{(k)}}]{ \big[\, \widetilde{\phi}^{\,(k-1)}_{\sh{F}}\,\big] }{_{\hat{\mathbf{f}}^{(k-1)}}} \in \mathrm{M}(k, k-1, \mathbb{K})$ and $\tensor[_{\hat{\mathbf{f}}^{(k+1)}}]{ \big[\, \widetilde{\phi}^{\,(k)}_{\sh{F}}\,\big] }{_{\hat{\mathbf{f}}^{(k)}}}\in \mathrm{M}(k+1, k, \mathbb{K})$ representing $\widetilde{\phi}^{\,(k-1)}_{\sh{F}}$ and $\widetilde{\phi}^{\,(k)}_{\sh{F}}$, respectively, can be written as
\begin{equation*}
\tensor[_{\hat{\mathbf{f}}^{(k)}}]{ \big[\, \widetilde{\phi}^{\,(k-1)}_{\sh{F}}\,\big] }{_{\hat{\mathbf{f}}^{(k-1)}}}=\iota^{(k,k-1)}\, ,   \quad \text{and} \quad 
\tensor[_{\hat{\mathbf{f}}^{(k+1)}}]{ \big[\, \widetilde{\phi}^{\,(k)}_{\sh{F}}\,\big] }{_{\hat{\mathbf{f}}^{(k)}}}=B^{(k+1)}_{k}(z)\cdot \iota^{(k+1,k)}\, , 
\end{equation*}
where $B^{(k+1)}_{k}(z)$ denotes the $k$-th braid matrix of dimension $k+1$.
\end{itemize}

\vspace{4pt}
\noindent
\textbf{Definition (system of bases adapted to $\sh{F}$ on $R_{j}$)}: For each $i\in [1, n]$, let $\hat{\mathbf{h}}^{(i)}:=\big\{\hat{h}^{(i)}_{j}\big\}_{j=1}^{i}$ be a basis for $\mathbb{K}^{i}$, and denote by $\big\{\hat{\mathbf{h}}^{(i)}\big\}_{i=1}^{n}$ the collection of these bases. 

We say that a pair $\big(\big\{\hat{\mathbf{h}}^{(i)}\big\}_{i=1}^{n}, \lambda\big)$, with $\lambda \in \mathbb{K}$, is a \emph{system of bases adapted to $\sh{F}$ on $R_{j}$} if, with respect to these bases, the linear maps defining $\sh{F}$ on $R_{j}$ have the following matrix representations: 
\begin{equation*}
\begin{aligned}
\tensor[_{\hat{\mathbf{h}}^{(i+1)}}]{ \big[\,\phi^{\,(i)}_{\sh{F}}\,\big] }{_{\hat{\mathbf{h}}^{(i)}}}&=\iota^{(i+1,i)}\, , \qquad&&\text{for all $i\in[1,n-1]$}\, ,\\[8pt]   
\tensor[_{\hat{\mathbf{h}}^{(k)}}]{ \big[\,\widetilde{\phi}^{\,(k-1)}_{\sh{F}}\,\big] }{_{\hat{\mathbf{h}}^{(k-1)}}}&=\iota^{(k,k-1)}\, , \qquad&&\tensor[_{\hat{\mathbf{h}}^{(k+1)}}]{ \big[\,\widetilde{\phi}^{\,(k)}_{\sh{F}}\,\big] }{_{\hat{\mathbf{h}}^{(k)}}}=B^{(k+1)}_{k}(\lambda)\cdot\iota^{(k+1,k)}\, . \\[6pt]  
\end{aligned}   
\end{equation*}

\noindent
In particular, under the given \textbf{setup}, the \textbf{main conclusion} implies that $\big(\big\{\hat{\mathbf{f}}^{(i)}\big\}_{i=1}^{n}, z\big)$ is a system of bases adapted to $\sh{F}$ on $R_{j}$.
\end{lemma}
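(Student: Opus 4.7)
The plan is to mirror the strategy used in Lemma Algebraic Local Model~I, upgrading each step to accommodate the two tilde maps $\widetilde{\phi}^{\,(k-1)}_{\sh{F}}$ and $\widetilde{\phi}^{\,(k)}_{\sh{F}}$ and the richer compatibility imposed by the crossing condition for $k \geq 2$. First, since $\big\{\hat{\mathbf{f}}^{(i)}\big\}_{i=1}^{n}$ is adapted to $\big\{\phi^{(i)}_{\sh{F}}\big\}_{i=1}^{n-1}$, Definition~\eqref{Def:flags and adapted bases}--\eqref{Def: adapted bases I} immediately yields $\tensor[_{\hat{\mathbf{f}}^{(i+1)}}]{\big[\phi^{\,(i)}_{\sh{F}}\big]}{_{\hat{\mathbf{f}}^{(i)}}} = \iota^{(i+1,i)}$ for every $i \in [1, n-1]$. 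The remaining task is therefore to identify the matrix representations of $\widetilde{\phi}^{\,(k-1)}_{\sh{F}}$ and $\widetilde{\phi}^{\,(k)}_{\sh{F}}$ with respect to the same bases, and to verify that these representations are simultaneously compatible with the microlocal data encoded near the crossing $\sigma_{i_{j}}$ and with the requirement that $\fl{F}_{j+1}$ be represented by $B^{(n)}_{k}(z)$ relative to $\hat{\mathbf{f}}^{(n)}$.

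My proposal for the ansatz is $\tensor[_{\hat{\mathbf{f}}^{(k)}}]{\big[\widetilde{\phi}^{\,(k-1)}_{\sh{F}}\big]}{_{\hat{\mathbf{f}}^{(k-1)}}} = \iota^{(k, k-1)}$ and $\tensor[_{\hat{\mathbf{f}}^{(k+1)}}]{\big[\widetilde{\phi}^{\,(k)}_{\sh{F}}\big]}{_{\hat{\mathbf{f}}^{(k)}}} = B^{(k+1)}_{k}(z)\cdot \iota^{(k+1,k)}$. The first step is to verify that these matrices satisfy the compatibility square $\phi^{\,(k)}_{\sh{F}} \circ \phi^{\,(k-1)}_{\sh{F}} = \widetilde{\phi}^{\,(k)}_{\sh{F}} \circ \widetilde{\phi}^{\,(k-1)}_{\sh{F}}$ of Lemma~\eqref{lemma: description of an object F in a region R_j in terms of flags for k geq 2}; this reduces to the block identity $B^{(k+1)}_{k}(z) \cdot \iota^{(k+1,k)} \cdot \iota^{(k,k-1)} = \iota^{(k+1,k)} \cdot \iota^{(k,k-1)}$, which holds because the first $k-1$ columns of $B^{(k+1)}_{k}(z)$ coincide with those of the identity. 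The remaining crossing conditions, namely the short-exactness of the sequence in Equation~\eqref{Eq: short exact sequence for the crossing sigma_k}, will then follow from a dimension-and-rank count together with the injectivity of the proposed tilde maps, in direct analogy with the argument used in Lemma Algebraic Local Model~I.

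The final step is to confirm that the induced type-$\mathcal{I}$ flag $\fl{F}_{j+1}$ is represented by $B^{(n)}_{k}(z)$ relative to $\hat{\mathbf{f}}^{(n)}$. For this, I would pass to the braid-transformed bases $\big\{\hat{\mathbf{f}}^{(i)}[\sigma_{k}, z]\big\}_{i=1}^{n}$ introduced in Definition~\eqref{Def: braid transformation of bases}, which leave the bases $\hat{\mathbf{f}}^{(i)}$ unchanged for $i \leq k$ and apply the change-of-basis matrix $B^{(i)}_{k}(z)$ for $i \geq k+1$. A block computation (exploiting the identity $\iota^{(i+1,i)} \cdot B^{(i)}_{k}(z) = B^{(i+1)}_{k}(z) \cdot \iota^{(i+1, i)}$ for $i \geq k+1$) shows that the transformed collection is a system of bases adapted, in the sense of Definition~\eqref{Def:flags and adapted bases}--\eqref{Def: adapted bases I}, to $\big\{\phi^{\,(1)}_{\sh{F}}, \ldots, \phi^{\,(k-2)}_{\sh{F}}, \widetilde{\phi}^{\,(k-1)}_{\sh{F}}, \widetilde{\phi}^{\,(k)}_{\sh{F}}, \phi^{\,(k+1)}_{\sh{F}}, \ldots, \phi^{\,(n-1)}_{\sh{F}}\big\}$. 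Lemma~\eqref{Lemma: I type flag in adapted bases for injective maps} then makes $\fl{F}_{j+1}$ standard with respect to $\hat{\mathbf{f}}^{(n)}[\sigma_{k}, z]$, and Lemma~\eqref{lemma: Matrices that represent a flag in two different bases} pulls this representation back to $B^{(n)}_{k}(z)$ in the original basis $\hat{\mathbf{f}}^{(n)}$. The principal obstacle is carefully executing the block-commutation verification at the transitional levels $k-1, k, k+1$, since this is precisely where the braid matrix interacts non-trivially with the standard inclusion matrices; the cases $i < k-1$ and $i > k+1$ are transparent, as the braid matrix acts as the identity on those blocks and the adapted-basis property is preserved automatically.
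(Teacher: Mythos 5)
Your proposal follows essentially the same route as the paper's proof: the adapted-bases assumption gives the inclusion matrices for the $\phi^{(i)}_{\sh{F}}$, the stated ansatz for the two tilde maps is verified against the crossing conditions via the block identity $B^{(k+1)}_{k}(z)\cdot\iota^{(k+1,k-1)}=\iota^{(k+1,k-1)}$, and the flag representation is identified by passing to the braid-transformed bases and invoking Lemma~\eqref{Lemma: I type flag in adapted bases for injective maps} followed by Lemma~\eqref{lemma: Matrices that represent a flag in two different bases}. The only caveat is that the surjectivity of $\phi^{(k)}_{\sh{F}}\oplus\bigl(-\widetilde{\phi}^{\,(k)}_{\sh{F}}\bigr)$ is not a pure dimension count — one must exhibit, as the paper does, that the two images jointly span $\mathbb{K}^{k+1}$ (the $(k+1)$-st basis vector is reached only through $\widetilde{\phi}^{\,(k)}_{\sh{F}}$) — but your appeal to the explicit argument of Algebraic Local Model~I covers this.
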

\begin{proof}
By assumption, $\big\{\hat{\mathbf{f}}^{(i)}\big\}_{i=1}^{n}$ is a system of bases adapted to $\big\{\phi^{(i)}_{\sh{F}}\big\}_{i=1}^{n-1}$. Then, Definition~\eqref{Def:flags and adapted bases}--\eqref{Def: adapted bases I} asserts that, for each $i\in[1,n-1]$, the matrix $\tensor[_{\hat{\mathbf{f}}^{(i+1)}}]{ \big[\, \phi^{(i)}_{\sh{F}}\,\big] }{_{\hat{\mathbf{f}}^{(i)}}}\in M(i+1,i,\mathbb{K})$ representing $\phi^{(i)}_{\sh{F}}$ is the standard inclusion matrix: 
\begin{equation*}
\tensor[_{\hat{\mathbf{f}}^{(i+1)}}]{ \big[\, \phi^{\,(i)}_{\sh{F}}\,\big] }{_{\hat{\mathbf{f}}^{(i)}}}=\iota^{(i+1,i)}\, .    
\end{equation*} 

Here, recall that, relative to the basis $\hat{\mathbf{f}}^{(n)}$ for $\mathbb{K}^{n}$, the flags $\fl{F}_{j}$ and $\fl{F}_{j+1}$ that geometrically characterize $\sh{F}$ on $R_{j}$ are represented by the matrices $\mathbf{1}_{n}$ and $B^{(n)}_{k}(z)$, respectively, where $B^{(n)}_{k}(z)$ denotes the $k$-th braid matrix of dimension $n$, for some $z\in\mathbb{K}$ parameterizing the $s_{k}$-relative position between $\fl{F}_{j}$ and $\fl{F}_{j+1}$. In particular, this suggests that the matrices $\tensor[_{\hat{\mathbf{f}}^{(k)}}]{ \big[\, \widetilde{\phi}^{\,(k-1)}_{\sh{F}}\,\big] }{_{\hat{\mathbf{f}}^{(k-1)}}} \in \mathrm{M}(k, k-1, \mathbb{K})$ and $\tensor[_{\hat{\mathbf{f}}^{(k+1)}}]{ \big[\, \widetilde{\phi}^{\,(k)}_{\sh{F}}\,\big] }{_{\hat{\mathbf{f}}^{(k)}}}\in \mathrm{M}(k+1, k, \mathbb{K})$ representing $\widetilde{\phi}^{\,(k-1)}_{\sh{F}}$ and $\widetilde{\phi}^{\,(k)}_{\sh{F}}$, respectively, can be expressed as
 \begin{equation*}
\tensor[_{\hat{\mathbf{f}}^{(k)}}]{ \big[\, \widetilde{\phi}^{\,(k-1)}_{\sh{F}}\,\big] }{_{\hat{\mathbf{f}}^{(k-1)}}}=\iota^{(k,k-1)},   \quad \text{and} \quad 
\tensor[_{\hat{\mathbf{f}}^{(k+1)}}]{ \big[\, \widetilde{\phi}^{\,(k)}_{\sh{F}}\,\big] }{_{\hat{\mathbf{f}}^{(k)}}}=B^{(k+1)}_{k}(z)\cdot \iota^{(k+1,k)}\, , 
\end{equation*}
where $B^{(k+1)}_{k}(z)$ denotes the $k$-th braid matrix of dimension $k+1$. 

Next, we verify that the linear maps $\widetilde{\phi}^{\,(k-1)}_{\sh{F}}$ and $\widetilde{\phi}^{\,(k)}_{\sh{F}}$, henceforth thought of as defined by the above matrix expressions
in the given bases, are compatible with the microlocal support conditions for $\sh{F}$ on $R_{j}$ and induce the flag $\fl{F}_{j+1}$ in $\mathbb{K}^{n}$. 

To begin, a direct calculation shows that
\begin{equation*}
\begin{aligned}
\tensor[_{\hat{\mathbf{f}}^{(k+1)}}]{ \big[\, \widetilde{\phi}^{\,(k)}_{\sh{F}}\,\big] }{_{\hat{\mathbf{f}}^{(k)}}}\cdot \tensor[_{\hat{\mathbf{f}}^{(k)}}]{ \big[\, \widetilde{\phi}^{\,(k-1)}_{\sh{F}}\,\big] }{_{\hat{\mathbf{f}}^{(k-1)}}}&=B^{(k+1)}_{k}(z)\cdot \iota^{(k+1,k)}\cdot \iota^{(k,k-1)}  \, , \\
&=B^{(k+1)}_{k}(z)\cdot \iota^{(k+1,k-1)}\, ,\\
&=\iota^{(k+1,k-1)}\, ,
\end{aligned}
\end{equation*}
and similarly,
\begin{equation*}
\begin{aligned}
\tensor[_{\hat{\mathbf{f}}^{(k+1)}}]{ \big[\, \phi^{\,(k)}_{\sh{F}}\,\big] }{_{\hat{\mathbf{f}}^{(k)}}}\cdot \tensor[_{\hat{\mathbf{f}}^{(k)}}]{ \big[\, \phi^{\,(k-1)}_{\sh{F}}\,\big] }{_{\hat{\mathbf{f}}^{(k-1)}}}&=\iota^{(k+1,k)}\cdot \iota^{(k,k-1)}  \, , \\
&=\iota^{(k+1,k-1)}\, .
\end{aligned}
\end{equation*}
It follows that
\begin{equation*}
\phi^{(k)}_{\sh{F}}\circ \phi^{(k-1)}_{\sh{F}}(y)=\widetilde{\phi}^{\,(k)}_{\sh{F}}\circ \widetilde{\phi}^{\,(k-1)}_{\sh{F}}(y)\, ,  
\end{equation*}
for all $y:=\beta_{1}\hat{f}^{(k-1)}_{1}+\cdots+\beta_{k-1}\hat{f}^{(k-1)}_{k-1} \in\mathbb{K}^{k-1}$, with $\beta_{1},\dots, \beta_{k-1}\in \mathbb{K}$. 

Now, let $\vec{x}:=\alpha_{1}\hat{f}^{(k+1)}_{1}+\cdots+\alpha_{k+1}\hat{f}^{(k+1)}_{k+1}\in \mathbb{K}^{k+1}$, for some $\alpha_{1},\dots, \alpha_{k+1}\in \mathbb{K}$. Then, we have that
\begin{equation*}
\begin{aligned}
 \vec{x}&=\alpha_{1}\hat{f}^{(k+1)}_{1} +\cdots + \alpha_{k}\hat{f}^{(k+1)}_{k} -z\,\alpha_{k+1}\hat{f}^{(k+1)}_{k} + z\,\alpha_{k+1}\hat{f}^{(k+1)}_{k}+\alpha_{k+1}\hat{f}^{(k+1)}_{k+1}\, ,\\
 &=\phi^{(k)}_{\sh{F}}\big( \alpha_{1}\hat{f}^{(k)}_{1} +\cdots + \alpha_{k}\hat{f}^{(k)}_{k} -z\,\alpha_{k+1}\hat{f}^{(k)}_{k}\big)-\widetilde{\phi}^{\,(k)}_{\sh{F}}\big(-\alpha_{k+1}\hat{f}^{(k)}_{k}\big)\,  ,
\end{aligned}
\end{equation*}
which implies that the linear map $\phi^{(k)}_{\sh{F}}\oplus (-\widetilde{\phi}^{\,(k)}_{\sh{F}}):\mathbb{K}^{k}\oplus \mathbb{K}^{k}\longrightarrow \mathbb{K}^{k+1}$ is surjective. 

In particular, since the linear maps $\phi^{(k-1)}_{\sh{F}}$, $\phi^{(k)}_{\sh{F}}$, $\widetilde{\phi}^{\,(k-1)}_{\sh{F}}$,  and $\widetilde{\phi}^{\,(k)}_{\sh{F}}$ are injective, the above results guarantee that: 
\begin{itemize}
\justifying
\item[(\textit{i})] The diagram in Figure~\eqref{fig: an object F on an open ball at a crossing sigma k} commutes.
\item[(\textit{ii})] The sequence in Equation~\eqref{Eq: short exact sequence for the crossing sigma_k} is short exact.
\end{itemize}
Hence, $\widetilde{\phi}^{\,(k-1)}_{\sh{F}}$ and $\widetilde{\phi}^{\,(k)}_{\sh{F}}$ are compatible with the microlocal support conditions for $\sh{F}$ on $R_{j}$. 

Finally, let $\big\{\hat{\mathbf{f}}^{(i)}[\sigma_{k},z]\big\}_{i=1}^{n}$ be the braid-transformed bases obtained from $\big\{\hat{\mathbf{f}}^{(i)}\big\}_{i=1}^{n}$ via $\sigma_{k}$ and the parameter $z$ (cf. Definition~\eqref{Def: braid transformation of bases}). In particular, observe that: 
\begin{itemize}
\justifying
\item By construction, $\big\{\hat{\mathbf{f}}^{(i)}[\sigma_{k},z]\big\}_{i=1}^{n}$ is a system of bases adapted to the collection of linear maps
\begin{equation*}
\big\{\phi_{\sh{F}}^{(1)},\dots,\phi_{\sh{F}}^{(k-2)},\widetilde{\phi}_{\sh{F}}^{\,(k-1)},\widetilde{\phi}_{\sh{F}}^{\,(k)}, \phi_{\sh{F}}^{(k+1)},\dots,\phi_{\sh{F}}^{(n-1)}\big\}    
\end{equation*}
in the sense of Definition~\eqref{Def:flags and adapted bases}--\eqref{Def: adapted bases I}.
\item $\fl{F}_{j+1}=\prescript{}{\mathcal{I}\,}{\fl{F}}\bigl(\, \phi^{(1)}_{\sh{F}}, \dots, \phi^{(k-2)}_{\sh{F}}, \widetilde{\phi}^{\,(k-1)}_{\sh{F}}, \widetilde{\phi}^{\,(k)}_{\sh{F}}, \phi^{(k+1)}_{\sh{F}},\dots, \phi^{(n-1)}_{\sh{F}} \,\bigr)$ is the type $\mathcal{I}$ flag in $\mathbb{K}^{n}$ associated with the indicated collection of maps (cf. Definition~\eqref{Def:flags and adapted bases}--\eqref{Def: type I flag}).
\end{itemize}
Hence, by Lemma~\eqref{Lemma: I type flag in adapted bases for injective maps}, we obtain that, relative to the basis $\hat{\mathbf{f}}^{(n)}[\sigma_{k}, z]$ for $\mathbb{K}^{n}$, the flag $\fl{F}_{j+1}$ is the standard flag, that is, $\fl{F}_{j+1}\big[\,\hat{\mathbf{f}}^{(n)}[\sigma_{k}, z]\, \big]=\sfl\big[\,\hat{\mathbf{f}}^{(n)}[\sigma_{k}, z]\, \big]$, and is therefore represented by the matrix $\mathbf{1}_{n}$.

Furthermore, by Definition~\eqref{Def: braid transformation of bases}, the bases $\hat{\mathbf{f}}^{(n)}$ and $\hat{\mathbf{f}}^{(n)}[\sigma_{k}, z]$ for $\mathbb{K}^{n}$ are related via the change-of-basis matrix $B^{(n)}_{k}(z)$; specifically, for each $j\in[1,n]$,
\begin{equation*}
\hat{f}^{(n)}_{j}[\sigma_{k}, z]=\sum_{q=1}^{n}\big(B^{(n)}_{k}(z)\big)_{q,j}\,\hat{f}^{(n)}_{q}\, . 
\end{equation*}
Therefore, by Lemma \eqref{lemma: Matrices that represent a flag in two different bases}, we conclude that, relative to the basis $\hat{\mathbf{f}}^{(n)}$ for $\mathbb{K}^{n}$, the flag $\fl{F}_{j+1}$ is represented by the matrix product $B^{(n)}_{k}(z)\cdot\mathbf{1}_{n}$, confirming that the linear maps $\widetilde{\phi}^{\,(k-1)}_{\sh{F}}$ and $\widetilde{\phi}^{\,(k)}_{\sh{F}}$ induce the flag $\fl{F}_{j+1}$, as required.
\end{proof}

\begin{remark}\label{Remark: intuitive explanation of the algebraic local models}
Let $\beta=\sigma_{i_{1}}\cdots \sigma_{i_{\ell}}\in \mathrm{Br}^{+}_{n}$ be a positive braid word, $\sh{F}$ an object of the category $\ccs{1}{\beta}$, and $\mathcal{R}_{\Lambda(\beta)}=\big\{R_{j}\big\}_{j=1}^{\ell}$ the collection of $\ell$ open vertical straps in $\mathbb{R}^{2}$ introduced in Construction~\eqref{Cons: Definition of the vertical straps}. For each $i\in [1, n]$, let $\,\hat{\mathbf{f}}^{(i)}:=\big\{\hat{f}^{(i)}_{j}\big\}_{j=1}^{i}$ be a basis for $\mathbb{K}^{i}$, and denote by $\big\{\hat{\mathbf{f}}^{(i)}\big\}_{i=1}^{n}$ the collection of these bases. 

Intuitively, Lemmas~\eqref{Lemma: matrix local model for sigma_1} and~\eqref{Lemma: matrix local model for sigma_k} assert the following. If $\big\{\hat{\mathbf{f}}^{(i)}\big\}_{i=1}^{n}$ is a system of bases adapted to the $n-1$ injective linear maps defining $\sh{F}$ to the left of a crossing $\sigma_{i_{j}}$ within a vertical strap $R_{j}$, then the pair $\big(\big\{\hat{\mathbf{f}}^{(i)}\big\}_{i=1}^{n}, z\big)$ defines a system of bases adapted to $\sh{F}$ on $R_{j}$, where $z\in \mathbb{K}$ algebraically encodes, relative to the basis $\hat{\mathbf{f}}^{(n)}$ for $\mathbb{K}^{n}$, the $s_{i_{j}}$-relative position between the complete flags in $\mathbb{K}^{n}$ that geometrically characterize $\sh{F}$ on $R_{j}$. In other words, the behavior of $\sh{F}$ on $R_{j}$ is completely determined by the single parameter $z$ together with the adapted bases $\big\{\hat{\mathbf{f}}^{(i)}\big\}_{i=1}^{n}$ on the left of the crossing $\sigma_{i_{j}}$.
\end{remark}

Building on the local algebraic descriptions established in Lemmas~\eqref{Lemma: matrix local model for sigma_1} and~\eqref{Lemma: matrix local model for sigma_k}, we now state the following result.

\begin{theorem}\label{Theorem: adapted bases for an object at a region R_j}
\textbf{Setup}: Let $\beta=\sigma_{i_{1}}\cdots \sigma_{i_{\ell}}\in\mathrm{Br}^{+}_{n}$ be a positive braid word, and let $\sh{F}$ be an object of the category $\ccs{1}{\beta}$. Let $\,\mathcal{U}_{\Lambda(\beta)}=\big\{U_{0}, U_{\mathrm{B}}, U_{\mathrm{L}}, U_{\mathrm{R}}, U_{\mathrm{T}}\big\}$ be the open cover of $\mathbb{R}^{2}$ from Construction~\eqref{Cons: Finite open cover for R^2}, and let $\,\mathcal{R}_{\Lambda(\beta)}=\big\{R_{j}\big\}_{j=1}^{\ell}$ be the partition of $U_{\mathrm{B}}$ into $\ell$ open vertical straps from Construction~\eqref{Cons: Definition of the vertical straps}.

\vspace{4pt}
\noindent
$\star$ \emph{Local descriptions on $U_{\mathrm{T}}$ and $U_{\mathrm{L}}$}: According to Lemma~\eqref{Lemma: linear map description of an object on the regions U_T, U_L, and U_R}, $\sh{F}$ admits the following local descriptions:
\begin{itemize}
\justifying
\item On $U_{\mathrm{T}}$, $\sh{F}$ is specified by a collection of $n-1$ surjective linear maps $\big\{ \psi_{\sh{F}}^{(i)}:\mathbb{K}^{i+1}\to \mathbb{K}^{i}\big\}_{i=1}^{n-1}$ (see Figure~\eqref{Fig: an object F in the region U_T}).
\item On $U_{\mathrm{L}}$, $\sh{F}$ is specified by a collection of $n-1$ injective linear maps $\big\{\phi_{\sh{F}}^{(i)}:\mathbb{K}^{i}\to \mathbb{K}^{i+1}\big\}_{i=1}^{n-1}$ (see Figure~\eqref{Fig: an object F in the region U_L}).
\item \textbf{Compatibility conditions}: For each $i\in[1,n-1]$,
\begin{equation*}
\psi^{(i)}_{\sh{F}}\circ \phi^{(i)}_{\sh{F}}=\mathrm{id}_{\mathbb{K}^{i}}\, .
\end{equation*}
\end{itemize}

\vspace{4pt}
\noindent
$\star$ \emph{Global flag data}: By Theorem~\eqref{Flags and constructible sheaves}, $\sh{F}$ is geometrically characterized by a sequence of complete flags $\big\{\fl{F}_{j}\big\}_{j=0}^{\ell+1}$ in $\mathbb{K}^{n}$ such that: $\fl{F}_{0}$ is completely opposite to both $\fl{F}_{1}$ and $\fl{F}_{\ell+1}$, and for each $j\in[1,\ell]$, $\fl{F}_{j}$ is in $s_{i_{j}}$-relative position with respect to $\fl{F}_{j+1}$. In particular, by Lemma~\eqref{lemma for F in the region U_{R}}, we know that:
\begin{equation*}
\fl{F}_{0}:=\prescript{}{\mathcal{K}\,}{\fl{F}}\big(\psi_{\sh{F}}^{(1)},\dots, \psi_{\sh{F}}^{(n-1)}\big)\, , \quad \text{and} \quad
\fl{F}_{1}:=\prescript{}{\mathcal{I}\,}{\fl{F}}\big(\phi_{\sh{F}}^{(1)},\dots, \phi_{\sh{F}}^{(n-1)}\big)\, ,
\end{equation*}
are the type $\mathcal{K}$ and type $\mathcal{I}$ flags in $\mathbb{K}^{n}$ associated with $\big\{\psi_{\sh{F}}^{(i)}\big\}_{i=1}^{n-1}$ and $\big\{\phi_{\sh{F}}^{(i)}\big\}_{i=1}^{n-1}$, respectively (cf. Definition~\eqref{Def:flags and adapted bases}--\eqref{Def: type I flag}--\eqref{Def: type K flag}). 

\vspace{4pt}
\noindent
$\star$ \emph{Main assumption} (Adapted bases): Let \,$\hat{\mathbf{f}}^{(n)}$ be a basis for $\mathbb{K}^{n}$, and let $\vec{z}=(z_{1},\dots, z_{\ell})\in X(\beta,\mathbb{K})$ be a point such that the pair $(\,\hat{\mathbf{f}}^{(n)}, \,\vec{z}\, )$ algebraically characterizes $\sh{F}$ according to Theorem~\eqref{Prop. for sheaves and braid matrices}.

In this setting, we have that relative to the basis $\hat{\mathbf{f}}^{(n)}$ for $\mathbb{K}^{n}$, the flags $\fl{F}_{0}$ and $\fl{F}_{1}$ are the anti-standard and standard flags, respectively, and for each $j\in[1,\ell]$, the flag $\fl{F}_{j+1}$ is represented by the path matrix $P_{\beta_{j}}(\vec{z}_{j})=B^{(n)}_{i_{1}}(z_{1})\cdots B^{(n)}_{i_{j}}(z_{j})\,\in\mathrm{GL}(n,\mathbb{K})$ associated with the truncated braid word $\beta_{j}=\sigma_{i_{1}}\cdots \sigma_{i_{j}}\in\mathrm{Br}^{+}_{n}$ and the truncated tuple $\vec{z}_{j}=(z_{1},\dots, z_{j})\in\mathbb{K}^{j}_{\mathrm{std}}$.  

Furthermore, under this assumption, the \textbf{compatibility conditions} and an inductive argument assert that, for each $i\in [1,n-1]$, there is a unique basis $\hat{\mathbf{f}}^{(i)}:=\big\{ \hat{f}^{(i)}_{j}\big\}_{j=1}^{i}$ for $\mathbb{K}^{i}$ such that the collection $\big\{\hat{\mathbf{f}}^{(i)}\big\}_{i=1}^{n}$ is a system of bases adapted to both $\big\{\psi_{\sh{F}}^{(i)}\big\}_{i=1}^{n-1}$ and $\big\{\phi_{\sh{F}}^{(i)}\big\}_{i=1}^{n-1}$ (cf. Definition~\eqref{Def:flags and adapted bases}--\eqref{Def: adapted bases I}--\eqref{Def: adapted bases II}).

\smallskip
\noindent
\textbf{Main conclusion}: For each $j\in[1,\ell]$, let $\big\{\hat{\mathbf{f}}^{(i)}[\beta_{j},\vec{z}_{j} ]\big\}_{i=1}^{n}$ denote the braid-transformed bases obtained from $\big\{\hat{\mathbf{f}}^{(i)} \big\}_{i=1}^{n}$ via the truncated braid word $\beta_{j}$ and the truncated tuple $\vec{z}_{j}$ (cf. Definition~\eqref{Def: braid transformation of bases}). Then: 
\begin{itemize}
\justifying
\item $\big(\big\{\hat{\mathbf{f}}^{(i)} \big\}_{i=1}^{n}, z_{1}\big)$ is a system of bases adapted to $\sh{F}$ on $R_{1}$.
\item For each $j\in[1,\ell-1]$, $\big(\big\{\hat{\mathbf{f}}^{(i)}[\beta_{j},\vec{z}_{j} ]\big\}_{i=1}^{n}, z_{j+1}\big)$ is a system of bases adapted to $\sh{F}$ on $R_{j+1}$.
\end{itemize}
\end{theorem}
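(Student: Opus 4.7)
The plan is to prove both parts simultaneously by induction on $j\in[0,\ell-1]$. Part~1 corresponds to the base case $j=0$, under the conventions $\beta_{0}:=e_{n}$, $\vec{z}_{0}:=()$, $P_{\beta_{0}}(\vec{z}_{0}):=\mathbf{1}_{n}$, and $\hat{\mathbf{f}}^{(i)}[\beta_{0},\vec{z}_{0}]:=\hat{\mathbf{f}}^{(i)}$; part~2 corresponds to the inductive step. The guiding observation is that, for each $j\in[0,\ell-1]$, the desired adapted system $\bigl(\{\hat{\mathbf{f}}^{(i)}[\beta_{j},\vec{z}_{j}]\}_{i=1}^{n},\,z_{j+1}\bigr)$ on $R_{j+1}$ will follow from Lemma~\eqref{Lemma: matrix local model for sigma_1} (when $i_{j+1}=1$) or Lemma~\eqref{Lemma: matrix local model for sigma_k} (when $i_{j+1}\geq 2$), provided we verify two conditions:
\begin{itemize}
\item[(A)] the collection $\{\hat{\mathbf{f}}^{(i)}[\beta_{j},\vec{z}_{j}]\}_{i=1}^{n}$ is a system of bases adapted to the $n-1$ injective linear maps defining $\sh{F}$ immediately to the left of the crossing $\sigma_{i_{j+1}}$ on $R_{j+1}$; and
\item[(B)] relative to $\hat{\mathbf{f}}^{(n)}[\beta_{j},\vec{z}_{j}]$, the flag $\fl{F}_{j+1}$ is the standard flag in $\mathbb{K}^{n}$, while the flag $\fl{F}_{j+2}$ is represented by $B^{(n)}_{i_{j+1}}(z_{j+1})$.
\end{itemize}

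Condition~(B) is the simpler of the two. By Definition~\eqref{Def: braid transformation of bases}, the change-of-basis matrix at level $n$ from $\hat{\mathbf{f}}^{(n)}$ to $\hat{\mathbf{f}}^{(n)}[\beta_{j},\vec{z}_{j}]$ is precisely the path matrix $P_{\beta_{j}}(\vec{z}_{j})=B^{(n)}_{i_{1}}(z_{1})\cdots B^{(n)}_{i_{j}}(z_{j})$. Since the main assumption guarantees that, relative to $\hat{\mathbf{f}}^{(n)}$, the flags $\fl{F}_{j+1}$ and $\fl{F}_{j+2}$ are represented by $P_{\beta_{j}}(\vec{z}_{j})$ and by $P_{\beta_{j}}(\vec{z}_{j})\cdot B^{(n)}_{i_{j+1}}(z_{j+1})$, respectively, a direct application of Lemma~\eqref{lemma: Matrices that represent a flag in two different bases} yields the representations $\mathbf{1}_{n}$ and $B^{(n)}_{i_{j+1}}(z_{j+1})$ in the new basis $\hat{\mathbf{f}}^{(n)}[\beta_{j},\vec{z}_{j}]$.

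Condition~(A) is the substantive inductive part. The base case $j=0$ is immediate: by constructibility and the sheaf axioms applied on $U_{\mathrm{L}}\cap R_{1}$, the injective maps on the left boundary of $R_{1}$ coincide with $\{\phi^{(i)}_{\sh{F}}\}_{i=1}^{n-1}$, to which $\{\hat{\mathbf{f}}^{(i)}\}_{i=1}^{n}$ is adapted by the main assumption. For the inductive step, assume~(A) at stage $j-1$; then Lemma~\eqref{Lemma: matrix local model for sigma_1} or~\eqref{Lemma: matrix local model for sigma_k} applied at $R_{j}$ pins down the matrix forms of the tilde maps occurring at the $j$-th crossing. The outgoing injective maps from $R_{j}$---which coincide with the incoming injective maps to $R_{j+1}$---agree with the incoming injective maps to $R_{j}$ at every index except $i=i_{j}$ (and additionally $i=i_{j}-1$ when $i_{j}\geq 2$), where they are replaced by the corresponding tilde maps. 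The single-step transformation from $\{\hat{\mathbf{f}}^{(i)}[\beta_{j-1},\vec{z}_{j-1}]\}_{i=1}^{n}$ to $\{\hat{\mathbf{f}}^{(i)}[\beta_{j},\vec{z}_{j}]\}_{i=1}^{n}$ leaves the bases at levels $i\leq i_{j}$ unchanged and applies the change-of-basis $B^{(i)}_{i_{j}}(z_{j})$ at each level $i\geq i_{j}+1$. Computing matrix representations of the outgoing maps in the new bases, one verifies three items: for $i<i_{j}-1$, both affected bases are unchanged, so the form $\iota^{(i+1,i)}$ persists trivially; at the critical indices $i_{j}-1$ and $i_{j}$, the forms $\iota^{(i_{j},i_{j}-1)}$ and $\iota^{(i_{j}+1,i_{j})}$ follow from the local model combined with the cancellation $\bigl(B^{(i_{j}+1)}_{i_{j}}(z_{j})\bigr)^{-1}\cdot B^{(i_{j}+1)}_{i_{j}}(z_{j})=\mathbf{1}_{i_{j}+1}$; and for $i\geq i_{j}+1$, the block structure $B^{(i+1)}_{i_{j}}(z_{j})=\mathrm{diag}\bigl(B^{(i_{j}+1)}_{i_{j}}(z_{j}),\,\mathbf{1}_{i-i_{j}}\bigr)$---which holds because $B^{(m)}_{p}(z)$ acts trivially on indices outside $\{p,p+1\}$---yields the intertwining identity $\bigl(B^{(i+1)}_{i_{j}}(z_{j})\bigr)^{-1}\cdot\iota^{(i+1,i)}\cdot B^{(i)}_{i_{j}}(z_{j})=\iota^{(i+1,i)}$, so that the standard inclusion form is preserved.

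The main obstacle will be the careful verification of the intertwining identity at every level $i\geq i_{j}+1$, which requires a direct block-matrix computation using the structural properties of the braid matrices collected in Appendix~A. Once this identity is established, conditions~(A) and~(B) propagate cleanly from stage $j-1$ to stage $j$, and the conclusion of the theorem follows by applying the appropriate local algebraic model on each vertical strap $R_{j+1}$.
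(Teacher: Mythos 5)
Your proposal is correct and follows essentially the same route as the paper's proof: establish the base case on $R_{1}$ from the main assumption, invoke Lemma~\eqref{Lemma: matrix local model for sigma_1} or Lemma~\eqref{Lemma: matrix local model for sigma_k} according to the index of the crossing, and then propagate adaptedness to the next strap via the braid-transformed bases. The only difference is one of explicitness: you spell out the intertwining identity $\bigl(B^{(i+1)}_{i_{j}}(z_{j})\bigr)^{-1}\iota^{(i+1,i)}B^{(i)}_{i_{j}}(z_{j})=\iota^{(i+1,i)}$ and the change-of-basis computation for the flags, which the paper absorbs into the "by construction" step already carried out inside the proofs of the two local-model lemmas.
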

\begin{proof}
To begin, let $R_{1}$ be the open vertical strap containing $\sigma_{i_{1}}$, the first crossing of $\beta$, and let $z_{1}$ be the first entry of $\vec{z}$, which parametrizes the $s_{i_{1}}$-relative position between the complete flags $\fl{F}_{1}$ and $\fl{F}_{2}$ in $\mathbb{K}^{n}$ that geometrically characterize $\sh{F}$ on $R_{1}$. By assumption, $\big\{\hat{\mathbf{f}}^{(i)} \big\}_{i=1}^{n}$ is a system of bases adapted to the $n-1$ injective linear maps defining $\sh{F}$ on the intersection $U_{\mathrm{L}}\,\cap\, R_{1}$, namely $\big\{\phi^{(i)}_{\sh{F}}\big\}_{i=1}^{n-1}$. Then, depending on whether $i_{1}=1$ or $i_{1}\geq 2$, we can apply Lemma~\eqref{Lemma: matrix local model for sigma_1} or Lemma~\eqref{Lemma: matrix local model for sigma_k} to deduce that $\big(\big\{\hat{\mathbf{f}}^{(i)} \big\}_{i=1}^{n}, z_{1}\big)$ is a system of bases adapted to $\sh{F}$ on $R_{1}$.

Now, consider $R_{2}$, the open vertical strap containing $\sigma_{i_{2}}$, the second crossing of $\beta$, and let $z_{2}$ be the second entry of $\vec{z}$, which parametrizes the $s_{i_{2}}$-relative position between the complete flags $\fl{F}_{2}$ and $\fl{F}_{3}$ in $\mathbb{K}^{n}$ that geometrically characterize $\sh{F}$ on $R_{2}$. In addition, let $\big\{\hat{\mathbf{f}}^{(i)}[\beta_{1},\vec{z}_{1}] \big\}_{i=1}^{n}$ be the braid-transformed bases obtained from $\big\{\hat{\mathbf{f}}^{(i)} \big\}_{i=1}^{n}$ via the truncated braid word $B_{1}=\sigma_{i_{1}}$ and the truncated tuple $\vec{z}_{1}=z_{1}$ (cf. Definition~\eqref{Def: braid transformation of bases}). By construction, $\big\{\hat{\mathbf{f}}^{(i)}[\beta_{1},\vec{z}_{1}] \big\}_{i=1}^{n}$ is a system of bases adapted to the $n-1$ injective linear maps defining $\sh{F}$ on the intersection $R_{1} \,\cap\, R_{2}$. Then, depending on whether $i_{2}=1$ or $i_{2}\geq 2$, we can apply Lemma~\eqref{Lemma: matrix local model for sigma_1} or Lemma~\eqref{Lemma: matrix local model for sigma_k} to obtain that $\big(\big\{\hat{\mathbf{f}}^{(i)}[\beta_{1},\vec{z}_{1}] \big\}_{i=1}^{n}, z_{2} \big)$ is a system of bases adapted to $\sh{F}$ on $R_{2}$.  

Finally, for each $j\in[1,\ell]$, let $\big\{\hat{\mathbf{f}}^{(i)}[\beta_{j},\vec{z}_{j} ]\big\}_{i=1}^{n}$ be the braid-transformed bases obtained from $\big\{\hat{\mathbf{f}}^{(i)} \big\}_{i=1}^{n}$ via the truncated braid word $B_{j}=\sigma_{i_{1}}\cdots \sigma_{i_{j}}\in \mathrm{Br}^{+}_{n}$ and the truncated tuple $\vec{z}_{j}=(z_{1},\dots, z_{j})\in \mathbb{K}^{j}_{\mathrm{std}}$. Then, iterating the above argument over all the straps $R_{j}$ shows that, for each $j\in[1,\ell-1]$, the pair $\big(\big\{\hat{\mathbf{f}}^{(i)}[\beta_{j},\vec{z}_{j} ]\big\}_{i=1}^{n}, z_{j+1}\big)$ is a system of bases adapted to $\sh{F}$ on $R_{j+1}$, as desired.
\end{proof}

\begin{remark}
Let $\beta=\sigma_{i_{1}}\cdots \sigma_{i_{\ell}}\in \mathrm{Br}^{+}_{n}$ be a positive braid word, $\sh{F}$ an object of the category $\ccs{1}{\beta}$, and $\mathcal{R}_{\Lambda(\beta)}=\big\{R_{j}\big\}_{j=1}^{\ell}$ the collection of $\ell$ open vertical straps in $\mathbb{R}^{2}$ introduced in Construction~\eqref{Cons: Definition of the vertical straps}. Let $\,\hat{\mathbf{f}}^{(n)}$ be a basis for $\mathbb{K}^{n}$, and let $\vec{z}=(z_{1},\dots, z_{\ell})\in X(\beta,\mathbb{K})$ be a point such that the pair $(\,\hat{\mathbf{f}}^{(n)}, \,\vec{z}\, )$ algebraically characterizes $\sh{F}$ according to Theorem~\eqref{Prop. for sheaves and braid matrices}. Then, Theorem~\eqref{Theorem: adapted bases for an object at a region R_j} illustrates how the algebraic data $(\,\hat{\mathbf{f}}^{(n)},\,\vec{z}\, )$ locally determines the behavior of $\sh{F}$ on each vertical strap $R_{j}$ via the notion of braid-transformed bases, which will play a key role in the computations and discussions ahead.  
\end{remark}

Having completed the algebraic characterization of the objects of the category $\ccs{1}{\beta}$, we now turn to the study of its graded morphism spaces and their compositions, which will further elucidate the rich algebraic, geometric, and combinatorial structures of the categorical invariant under consideration.

\section{\texorpdfstring{The Morphism Spaces and Their Compositions in the Category $\ccs{1}{\beta}$}{The Morphisms and Their Compositions in the Category HS}}\label{sec:morphisms and compositions}
\noindent
Let $\beta\in \mathrm{Br}_{n}^{+}$ be a positive braid word. In this section, we present an explicit description of the graded morphism spaces and their compositions in the category $\ccs{1}{\beta}$. To lay the groundwork, we begin by reviewing some key results due to Chantraine, Ng, and Sivek~\cite{CNS1}.

\subsection{Higher-Degree Morphism Spaces}
Let $\beta\in \mathrm{Br}_{n}^{+}$ be a positive braid word. In this subsection, we analyze the structure of the higher-degree morphism spaces in the category $\ccs{1}{\beta}$. As a first step, we recall the following fundamental result of Chantraine, Ng, and Sivek~\cite{CNS1}.

\begin{proposition}[[Propositions 6.5 \& 6.6,~\cite{CNS1}]\label{Prop: CNS results on derived and internal hom sheaves}
Let $\beta\in \mathrm{Br}_{n}^{+}$ be a positive braid word. Let $r,s\geq 1$ be integers, and let $\sh{F}$ and $\sh{G}$ be objects of the categories $\ccs{r}{\beta}$ and $\ccs{s}{\beta}$, respectively. Then, the following statements hold:
\begin{itemize}
\justifying
\item[\textit{a)}] For all $p\geq 2$, 
\begin{equation*}
H^{p}\Big(R\Gamma\big(\mathbb{R}^{2};\sh{H}om(\sh{F},\sh{G}) \big)\Big)=0\,. 
\end{equation*}
\item[\textit{b)}] For all $q\geq 1$, 
\begin{equation*}
H^{q}\big(R\sh{H}om(\sh{F},\sh{G})\big)=0\,.        
\end{equation*}
In other words, $R\sh{H}om(\sh{F},\sh{G}) \backsimeq \sh{H}om(\sh{F},\sh{G})$.
\end{itemize}
\end{proposition}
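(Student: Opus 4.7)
The plan is to establish part (b) first, and then to derive part (a) from it by means of a \v{C}ech spectral sequence over the open cover $\mathcal{U}_{\Lambda(\beta)} = \{U_{0}, U_{\mathrm{B}}, U_{\mathrm{L}}, U_{\mathrm{R}}, U_{\mathrm{T}}\}$ of $\mathbb{R}^{2}$ introduced earlier, refined by the vertical strap decomposition $\mathcal{R}_{\Lambda(\beta)}$ of $U_{\mathrm{B}}$.

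For (b), the argument is local. For each $p \in \mathbb{R}^{2}$ I would choose a small open neighborhood $V_{p}$ adapted to $\mathcal{S}_{\Lambda(\beta)}$, so that $V_{p}$ meets only the strata in the star of the one containing $p$. By constructibility, it then suffices to show that $R\mathrm{Hom}(\sh{F}|_{V_{p}}, \sh{G}|_{V_{p}})$ is concentrated in degree $0$. Using the explicit local combinatorial models of Section \ref{sec:microlocal theory}, the restrictions $\sh{F}|_{V_{p}}$ and $\sh{G}|_{V_{p}}$ are determined by a finite diagram of $\mathbb{K}$-modules of one of three standard types (arc, cusp, or crossing). For each type one would produce a short resolution of $\sh{F}|_{V_{p}}$ by extensions-by-zero of constant sheaves $j_{Z,!}\mathbb{K}_{Z}^{\oplus k}$ on contractible locally closed sub-strata $Z \subset V_{p}$. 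Since $\mathrm{Ext}^{q}(j_{Z,!}\mathbb{K}_{Z}, \sh{G}|_{V_{p}})$ reduces to sheaf cohomology of $\sh{G}$ over contractible sets and therefore vanishes for $q \geq 1$, applying $R\mathrm{Hom}(-,\sh{G}|_{V_{p}})$ termwise yields a complex concentrated in degree $0$.

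Granted (b), we have $\mathrm{Ext}^{p}(\sh{F},\sh{G}) \cong H^{p}(\mathbb{R}^{2}; \sh{H}om(\sh{F},\sh{G}))$, so (a) reduces to a cohomology vanishing statement for the non-derived internal Hom sheaf. This $\sh{H}om(\sh{F},\sh{G})$ is constructible relative to $\mathcal{S}_{\Lambda(\beta)}$, compactly supported, and identically zero on $U_{0}$. I would compute its cohomology via the \v{C}ech-to-derived-functor spectral sequence associated to the cover $\mathcal{U}_{\Lambda(\beta)}$ refined by $\mathcal{R}_{\Lambda(\beta)}$: each open in the cover and each nonempty multiple intersection is contractible (or a disjoint union of contractibles), and on each such piece $\sh{H}om(\sh{F},\sh{G})$ is a finite acyclic diagram of finite-dimensional vector spaces. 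A careful accounting of the resulting \v{C}ech complex reveals that it is concentrated in degrees $0$ and $1$, which forces $H^{p}(\mathbb{R}^{2}; \sh{H}om(\sh{F},\sh{G})) = 0$ for all $p \geq 2$.

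The principal difficulty lies in the local vanishing at a crossing for part (b). There, the defining data of $\sh{F}$ involves both a commutative diagram and a short-exact-sequence constraint among eight vector spaces, and a naive stratum-by-stratum resolution will generally fail to respect the exactness condition. Constructing a resolution compatible with this constraint---equivalently, producing enough projectives in the local combinatorial category at a crossing whose higher $\mathrm{Ext}$ groups into any admissible object vanish---is the main technical step, and is exactly where the hereditary behavior of the microlocal-sheaf category is genuinely used.
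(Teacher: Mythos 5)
First, a point of comparison: the paper does not prove this proposition at all --- it is imported verbatim from Propositions 6.5 and 6.6 of Chantraine--Ng--Sivek, so there is no in-paper argument to measure yours against. Judged on its own terms, your outline has the right general shape (local vanishing for (b), then a \v{C}ech/Mayer--Vietoris computation for (a)), but both halves stop exactly where the content begins. For (b), you correctly isolate the crossing as the hard case and then defer it: ``constructing a resolution compatible with this constraint \dots is the main technical step.'' That step \emph{is} the proof. The short-exact-sequence condition at a crossing is precisely what allows the local diagram to be split into elementary summands whose Hom-complexes are visibly concentrated in degree $0$; without carrying out that splitting (or otherwise producing the projective resolution you gesture at), the assertion that the local combinatorial category is hereditary is unsupported, and the proposal for (b) is a restatement of the claim rather than an argument for it.

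For (a), the assertion that the \v{C}ech complex of $\mathcal{U}_{\Lambda(\beta)}$ refined by $\mathcal{R}_{\Lambda(\beta)}$ ``is concentrated in degrees $0$ and $1$'' is unjustified and, as stated, false: the nerve of that cover is $2$-dimensional (e.g.\ $U_{\mathrm{T}}\cap U_{\mathrm{L}}\cap U_{\mathrm{B}}\neq\emptyset$, and the triple overlaps meet the support of $\sh{H}om(\sh{F},\sh{G})$), so the \v{C}ech complex has nonzero terms in degree $2$, and one must prove that the relevant degree-$1\to 2$ differential is surjective. This is not a formal consequence of contractibility of the pieces, nor of compact support and constructibility: for $D\subset\mathbb{R}^{2}$ an open disk one has $H^{2}\bigl(\mathbb{R}^{2};\,j_{!}\mathbb{K}_{D}\bigr)\cong\mathbb{K}$, so compactly supported constructible sheaves on the plane can perfectly well have nonvanishing $H^{2}$. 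The vanishing in (a) genuinely uses the microlocal support conditions --- specifically the cusp conditions $\psi^{(i)}\circ\phi^{(i)}=\mathrm{id}_{\mathbb{K}^{i}}$, which force the restriction maps of $\sh{H}om(\sh{F},\sh{G})$ out of the cusp regions to be surjective and thereby kill the top \v{C}ech cohomology. Your ``careful accounting'' must invoke these conditions explicitly; as written it does not, and this is a genuine gap rather than an omitted routine verification.
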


Next, let $\sh{F}$ and $\sh{G}$ be objects of the category $\ccs{1}{\beta}$. The local--to--global $\mathrm{Ext}$ spectral sequence establishes that 
\begin{equation*}
E^{p,q}_{2}=H^{p}\big(R\Gamma(\mathbb{R}^{2};H^{q}(R\sh{H}om(\sh{F},\sh{G})))\big) \Longrightarrow \mathrm{Ext}^{p+q}(\sh{F},\sh{G})\, ,
\end{equation*}
for all $p,q \geq 0$. By Proposition~\eqref{Prop: CNS results on derived and internal hom sheaves}-(\textit{b}), we have that $H^{q}\big(R\sh{H}om(\sh{F},\sh{G})\big)=0$ for all $q\geq 1$, and therefore $E_{2}^{p,q}=0$ whenever $q\geq 1$. It follows that the $E_{2}$-page is supported entirely along the axis $q=0$, and as a result, the spectral sequence collapses at this page. Consequently, we obtain that
\begin{equation*}
\mathrm{Ext}^{p}(\sh{F},\sh{G})=E_{2}^{p,0}= H^{p}\Big(R\Gamma\big(\mathbb{R}^{2};H^{0}(R\sh{H}om(\sh{F},\sh{G}))\big)\Big)=H^{p}\Big( R\Gamma\big(\mathbb{R}^{2};\sh{H}om(\sh{F},\sh{G})\big)\Big)\, ,  
\end{equation*}
for all $p\geq 0$~\cite{CNS1}. Proposition~\eqref{Prop: CNS results on derived and internal hom sheaves}-(\textit{a}) then implies that
\begin{equation*}
\mathrm{Ext}^{p}(\sh{F},\sh{G})=0\, ,
\end{equation*}
for all $p\geq2$. In particular, this shows that the higher-degree morphism spaces in the category $\ccs{1}{\beta}$ are trivial.    

Having established the vanishing of the higher-degree morphism spaces in category $\ccs{1}{\beta}$, we now focus on analyzing the structure of its lower-degree morphism spaces.

\subsection{Lower-Degree Morphism Spaces} Let $\beta\in\mathrm{Br}^{+}_{n}$ be a positive braid word. In this subsection, our goal is to provide an explicit and computable description of the lower-degree morphism spaces in the category $\ccs{1}{\beta}$. To this end, we begin by introducing some notation. 

\begin{notation}
Let $n\geq 1$ be an integer. For any tuple $\vec{u}=(u_{1},\dots, u_{n})\in\mathbb{K}^{n}_{\mathrm{std}}$, we denote by $D(\vec{u})\in \mathrm{M}(n,\mathbb{K})$ the $n\times n$ diagonal matrix given by
\begin{equation}\notag
 D(\vec{u}):=\begin{bmatrix}
     u_{1}& & \\
     &\ddots &\\
     & & u_{n}
\end{bmatrix}\, .
\end{equation}    
\end{notation}

\begin{notation}
Let $n\geq 2$ be an integer, and let $\beta=\sigma_{i_{1}}\cdots\sigma_{i_{\ell}}\in\mathrm{Br}^{+}_{n}$ be a positive braid word. For each $j\in[0,\ell]$, we denote by $\beta_{j}:=\sigma_{i_{1}}\dots\sigma_{i_{j}}\in\mathrm{Br}^{+}_{n}$ the truncation of $\beta$ at the $j$-th crossing, with the convention $\beta_{0} := e_{n}$, where $e_{n}\in \mathrm{Br}^{+}_{\mathrm{std}}$ is the trivial braid on $n$ strands. Bearing this in mind, we adopt the following notation: 
\begin{itemize}
\item For each $j\in[0,\ell]$, we introduce $\pi_{\beta_{j}}:=s_{i_{1}}\cdots s_{i_{j}}\in \mathrm{S}_{n}$ to denote the permutation associated with $\beta_{j}$, with the convention $\pi_{\beta_{0}}:=e_{n}$, where by slight abuse of notation $e_{n}\in \mathrm{S}_{n}$ denotes within this context the trivial permutation on $n$ elements.  

\item For any tuple $\vec{u}=(u_{1},\dots, u_{n})\in\mathbb{K}^{n}_{\mathrm{std}}$, we set $\pi_{\beta_{j}}(\vec{u}):=(u_{\pi_{\beta_{j}}(1)},\dots, u_{\pi_{\beta_{j}}(n)})\in\mathbb{K}^{n}_{\mathrm{std}}$ for each $j\in[0,\ell]$, with the convention $\pi_{\beta_{0}}(\vec{u}):=\vec{u}$. 
\end{itemize}
\end{notation}
\noindent
With the preceding notation in place, we now introduce a linear map that will be central to the explicit description of the lower-degree morphism spaces in the category $\ccs{1}{\beta}$.

\begin{definition}\label{Def: linear map delta}
Let $\beta=\sigma_{i_{1}}\cdots\sigma_{i_{\ell}}\in\mathrm{Br}^{+}_{n}$ be a positive braid word, and let $\sh{F}$, $\sh{G}$ be objects of the category $\ccs{1}{\beta}$. Let \,$\mathbf{\hat{f}}^{(n)},\,\mathbf{\hat{g}}^{(n)}$ be bases for \,$\mathbb{K}^{n}$, and let \,$\vec{z}=(z_{1},\dots, z_{\ell}),\,\vec{z}\,'=(z'_{1},\dots, z'_{\ell})\in X(\beta,\mathbb{K})$ be points such that the pairs $\big(\,\mathbf{\hat{f}}^{(n)},\, \vec{z}\, \big)$ and $\big(\,\mathbf{\hat{g}}^{(n)},\, \vec{z}\,'\, \big)$ algebraically characterize $\sh{F}$ and $\sh{G}$ according to Theorem~\eqref{Prop. for sheaves and braid matrices}, respectively. Then, we associate to the pair $(\sh{F},\sh{G})$ the linear \,$\delta_{\sh{F},\sh{G}}:\mathbb{K}^{n}_{\mathrm{std}}\to \mathbb{K}^{\ell}_{\mathrm{std}}$\, defined by
\begin{equation*}
\delta_{\sh{F},\sh{G}}(\vec{u}):=\big(\delta_{1}(\vec{u}),\dots, \delta_{\ell}(\vec{u})\big)\, , \qquad  \vec{u}=(u_{1},\dots, u_{n})\in \mathbb{K}^{n}_{\mathrm{std}}\, ,
\end{equation*}
where, for each $j\in [1,\ell]$,
\begin{equation*}
 \delta_{j}(\vec{u}):=\Big[\,(B^{(n)}_{i_{j}}(z'_{j}))^{-1}D\big( \pi_{\beta_{j-1}}(\vec{u}) \big)\,B^{(n)}_{i_{j}}(z_{j})\,\Big]_{i_{j}+1, i_{j}} \, .  
\end{equation*}
\end{definition}

Building on the previous definition, we now state one of the main results of this manuscript: a theorem providing an explicit and computable characterization of the lower-degree morphism spaces in the category $\ccs{1}{\beta}$. 

\begin{theorem}\label{Theorem: Lower-degree morphisms}
Let $\beta=\sigma_{i_{1}}\cdots\sigma_{i_{\ell}}\in\mathrm{Br}^{+}_{n}$ be a positive braid word, and let $\sh{F}$, $\sh{G}$ be objects of the category $\ccs{1}{\beta}$. Let \,$\mathbf{\hat{f}}^{(n)},\,\mathbf{\hat{g}}^{(n)}$ be bases for \,$\mathbb{K}^{n}$, and let \,$\vec{z},\, \vec{z}\,'\in X(\beta,\mathbb{K})$ be points such that the pairs $\big(\,\mathbf{\hat{f}}^{(n)},\,\vec{z}\, \big)$ and $\big(\,\mathbf{\hat{g}}^{(n)},\ \vec{z}\,'\, \big)$ algebraically characterize $\sh{F}$ and $\sh{G}$ according to Theorem~\eqref{Theorem: sheaves as points in the braid variety}, respectively.

Following Definition~\eqref{Def: linear map delta}, let \,$\delta_{\sh{F},\sh{G}}:\mathbb{K}^{n}_{\mathrm{std}}\to \mathbb{K}^{\ell}_{\mathrm{std}}$\, be the linear map associated with the pair $(\sh{F},\sh{G})$. Then there are isomorphisms of vector spaces
\begin{align}
\mathrm{Ext}^{0}(\sh{F},\sh{G}) &\cong \mathrm{ker}\, \delta_{\sh{F},\sh{G}}\, , \label{Eq: iso for Ext0} \\
\mathrm{Ext}^{1}(\sh{F},\sh{G}) &\cong \mathrm{coker}\, \delta_{\sh{F},\sh{G}}\, . \label{Eq: iso for Ext1}
\end{align}
\end{theorem}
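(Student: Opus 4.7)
The plan is to compute $\mathrm{Ext}^{p}(\sh{F},\sh{G})$ for $p\in\{0,1\}$ via sheaf hypercohomology, exploiting the collapse $\mathrm{Ext}^{p}(\sh{F},\sh{G})\cong H^{p}\bigl(R\Gamma(\mathbb{R}^{2};\sh{H}om(\sh{F},\sh{G}))\bigr)$ recorded in Proposition~\eqref{Prop: CNS results on derived and internal hom sheaves}, together with the open cover $\mathcal{U}_{\Lambda(\beta)}$ of Construction~\eqref{Cons: Finite open cover for R^2} refined on $U_{\mathrm{B}}$ by the vertical straps $\mathcal{R}_{\Lambda(\beta)}=\{R_{j}\}_{j=1}^{\ell}$ of Construction~\eqref{Cons: Definition of the vertical straps}. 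The overall strategy is to encode a global morphism $\sh{F}\to\sh{G}$ as a consistent collection of local linear data, one datum per region, and to show that the obstruction to gluing across each crossing of $\beta$ is precisely captured by one coordinate of $\delta_{\sh{F},\sh{G}}$.

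First I would compute $\mathrm{Hom}\bigl(\sh{F}|_{V},\sh{G}|_{V}\bigr)$ for $V\in\{U_{\mathrm{T}},U_{\mathrm{L}},U_{\mathrm{R}}\}$. Working in the bases $\{\hat{\mathbf{f}}^{(i)}\}_{i=1}^{n}$ and $\{\hat{\mathbf{g}}^{(i)}\}_{i=1}^{n}$ adapted to $\sh{F}$ and $\sh{G}$ on the left of $\beta$ (provided by Theorem~\eqref{Theorem: adapted bases for an object at a region R_j}), Lemma~\eqref{Lemma: linear map description of an object on the regions U_T, U_L, and U_R} represents every structural map as a standard inclusion $\iota^{(i+1,i)}$ or projection $\pi^{(i,i+1)}$. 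Commutativity of a morphism with these standard maps is easily seen to force, level by level, an upper-left truncation of a single diagonal matrix; hence a morphism on $U_{\mathrm{T}}\cup U_{\mathrm{L}}\cup U_{\mathrm{R}}$ is determined by a diagonal matrix $D(\vec{u})$ with $\vec{u}\in\mathbb{K}^{n}_{\mathrm{std}}$, and conversely any such $\vec{u}$ gives a morphism. This identifies the ``top/left/right'' piece of $\mathrm{Hom}(\sh{F},\sh{G})$ with $\mathbb{K}^{n}_{\mathrm{std}}$, which will play the role of the source of $\delta_{\sh{F},\sh{G}}$.

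Next I would extend this analysis to each strap $R_{j}$. The crucial input is Theorem~\eqref{Theorem: adapted bases for an object at a region R_j}, which asserts that on the left edge of $R_{j}$ the pair $\bigl(\{\hat{\mathbf{f}}^{(i)}[\beta_{j-1},\vec{z}_{j-1}]\}_{i=1}^{n},\,z_{j}\bigr)$ is adapted to $\sh{F}$, and analogously for $\sh{G}$ with $z_{j}'$. In these braid-transformed bases, the morphism coming from $R_{j-1}$ is represented by $D\bigl(\pi_{\beta_{j-1}}(\vec{u})\bigr)$, while by Lemmas~\eqref{Lemma: matrix local model for sigma_1}--\eqref{Lemma: matrix local model for sigma_k} the non-trivial crossing maps of $\sh{F}$ and $\sh{G}$ on $R_{j}$ are represented by $B^{(n)}_{i_{j}}(z_{j})\cdot\iota$ and $B^{(n)}_{i_{j}}(z_{j}')\cdot\iota$ respectively. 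The morphism extends across $\sigma_{i_{j}}$ precisely when the conjugated matrix
\[
M_{j}(\vec{u}):=\bigl(B^{(n)}_{i_{j}}(z_{j}')\bigr)^{-1}\,D\bigl(\pi_{\beta_{j-1}}(\vec{u})\bigr)\,B^{(n)}_{i_{j}}(z_{j})
\]
is again ``diagonal'' in the next set of adapted bases $\{\hat{\mathbf{f}}^{(i)}[\beta_{j},\vec{z}_{j}]\}_{i=1}^{n}$. A direct $2\times 2$ block computation (where the only nontrivial action of $B^{(n)}_{i_{j}}$ sits at rows/columns $i_{j},i_{j}+1$, the remaining diagonal entries of $D(\pi_{\beta_{j-1}}(\vec{u}))$ merely being permuted into $D(\pi_{\beta_{j}}(\vec{u}))$) shows that the unique entry of $M_{j}(\vec{u})$ that can obstruct this diagonalization is the $(i_{j}+1,i_{j})$-entry, i.e.\ exactly $\delta_{j}(\vec{u})$; when nonzero, this entry records a Čech-type coboundary contributing to $\mathrm{Ext}^{1}$.

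Assembling these local pictures with respect to the refined cover yields a two-term complex
\[
\mathbb{K}^{n}_{\mathrm{std}}\;\xrightarrow{\;\delta_{\sh{F},\sh{G}}\;}\;\mathbb{K}^{\ell}_{\mathrm{std}},
\]
whose kernel parametrizes global morphisms (the ``top'' data compatible with every crossing) and whose cokernel parametrizes $\mathrm{Ext}^{1}$-classes (coboundaries modulo those realized by global morphisms); the vanishing of the remaining spectral-sequence terms is guaranteed by Theorem~\eqref{Theorem: hereditary-type property}. The main obstacle I expect is the bookkeeping at each crossing: one must verify uniformly across the cases $i_{j}=1$ and $i_{j}\geq 2$ that the conjugation $B^{-1}DB$ produces precisely the asserted entry at position $(i_{j}+1,i_{j})$, and that the permutation of the diagonal entries of $\vec{u}$ advances from $\pi_{\beta_{j-1}}$ to $\pi_{\beta_{j}}$ exactly as required for the inductive step to continue. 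Carefully tracking this $2\times 2$ block while the other $n-2$ diagonal entries are merely shuffled will be the most delicate part of the argument.
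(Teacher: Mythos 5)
Your treatment of $\mathrm{Ext}^{0}$ is essentially the paper's argument and is sound: the reduction to a single diagonal matrix $D(\vec{u})$ on $U_{\mathrm{T}}\cup U_{\mathrm{L}}\cup U_{\mathrm{R}}$ via adapted bases, the propagation through the straps $R_{j}$ in braid-transformed bases, and the identification of the obstruction with the $(i_{j}+1,i_{j})$-entry of $\bigl(B^{(n)}_{i_{j}}(z_{j}')\bigr)^{-1}D(\pi_{\beta_{j-1}}(\vec{u}))B^{(n)}_{i_{j}}(z_{j})$ all match the proof of the kernel isomorphism, and the $2\times 2$ block bookkeeping you flag is exactly the content of Lemma~\eqref{lemma for braid matrices and diagonal matrices}.

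The gap is in the $\mathrm{Ext}^{1}$ half. You assert that the data assemble into a two-term complex $\mathbb{K}^{n}_{\mathrm{std}}\xrightarrow{\delta_{\sh{F},\sh{G}}}\mathbb{K}^{\ell}_{\mathrm{std}}$ whose cokernel computes $\mathrm{Ext}^{1}(\sh{F},\sh{G})$, but nothing in your argument produces such a complex. The actual \v{C}ech complex of $\sh{H}om(\sh{F},\sh{G})$ with respect to $\mathcal{U}_{\Lambda(\beta)}$ refined by $\mathcal{R}_{\Lambda(\beta)}$ has many more terms (sections over each open set and each pairwise intersection), and to extract $\mathrm{coker}\,\delta_{\sh{F},\sh{G}}$ from it you would have to compute the sections of the hom-sheaf over every region and intersection, identify the degree-one cochains, and show that after cancellation the differential is exactly $\delta_{\sh{F},\sh{G}}$ — none of which is done. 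The phrase ``coboundaries modulo those realized by global morphisms'' does not describe $H^{1}$ of any complex you have constructed, and citing Theorem~\eqref{Theorem: hereditary-type property} only controls degrees $p\geq 2$, not the degree-one computation. The paper closes this gap by an entirely separate argument: it identifies $\mathrm{Ext}^{1}(\sh{F},\sh{G})$ with equivalence classes of extensions of $\sh{F}$ by $\sh{G}$ (Lemma~\eqref{Key lemma for Ext^{1} for sheaves}), reduces every extension to a block extension with characteristic maps, shows by solving explicit systems of matrix equations (Lemmas~\eqref{Lemma: system of equations for equivalent collections of injective maps}--\eqref{Lemma: system of equations for equivalent extensions at a crossing sigma_k}) that each class is parametrized by a tuple $\vec{a}\in\mathbb{K}^{\ell}_{\mathrm{std}}$, one entry per crossing, and that two tuples give equivalent extensions if and only if their difference lies in $\mathrm{im}\,\delta_{\sh{F},\sh{G}}$. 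Alternatively, since the target is only an isomorphism of vector spaces, you could combine your $\mathrm{Ext}^{0}$ result with an independent computation of the Euler characteristic $\chi(\mathrm{Ext}^{\bullet}(\sh{F},\sh{G}))=n-\ell$ (e.g.\ via the constructible Euler characteristic of the hom-sheaf) to get $\dim\mathrm{Ext}^{1}=\dim\mathrm{coker}\,\delta_{\sh{F},\sh{G}}$; but as written you supply neither route, so the cokernel isomorphism is unproved.
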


In particular, the following subsections are devoted to proving the preceding statement. Bearing this in mind, we now turn to the study of the zero-degree morphism spaces in the category $\ccs{1}{\beta}$. 

\subsection{\texorpdfstring{Zero-Degree Morphism Spaces in the Category $\ccs{1}{\beta}$}{Zero-degree Morphism Spaces in the Category HSh}} 
\noindent
Let $\beta \in \mathrm{Br}^{+}_{n}$ be a positive braid word. The main goal of this subsection is to establish the first part of Theorem~\eqref{Theorem: Lower-degree morphisms}, namely, the isomorphism of vector spaces in Equation~\eqref{Eq: iso for Ext0}, thereby providing an explicit algebraic characterization of the zero-degree morphism spaces in the category $\ccs{1}{\beta}$. To this end, we begin by introducing some preliminaries. 

\subsubsection{Technical Background} Let $\beta \in \mathrm{Br}^{+}_{n}$ be a positive braid word. We now collect some preliminaries that will play a fundamental role in the explicit computation of the zero-degree morphism spaces in the category $\ccs{1}{\beta}$. In particular, we open the discussion with the following definition.

\begin{definition}\label{Def: morphism between linear maps}
Let $A$, $B$, $X$, $Y$ be vector spaces over $\mathbb{K}$, and let $\gamma_{\sh{F}}:A\to B$ and $\gamma_{\sh{G}}:X\to Y$ be linear maps. A morphism $\lambda_{\xi,\delta}:=(\lambda_{\xi}, \lambda_{\delta})$ between $\gamma_{\sh{F}}$ and $\gamma_{\sh{G}}$ consists of a pair of linear maps $\lambda_{\xi}:A\to X$ and $\lambda_{\delta}:B\to Y$ such that the diagram in Figure~\eqref{Commutative diagram for morphisms of linear maps} commutes. Equivalently,
\begin{equation*}
\lambda_{\delta}\circ \gamma_{\sh{F}}=\gamma_{\sh{G}}\circ \lambda_{\xi}\, .     
\end{equation*}
\begin{figure}[ht]
\centering
\begin{tikzpicture}
\useasboundingbox (-1.5,-2) rectangle (1.5,2);
\scope[transform canvas={scale=1.25}]

\node at (-1,-1) {\footnotesize $A$};
\node at (-1, 1) {\footnotesize $B$};

\node at (1, -1) {\footnotesize $X$};
\node at (1, 1) {\footnotesize $Y$};

\draw[->] (-1,-1+0.25) -- (-1,1-0.25);
\draw[->] (1,-1+0.25) -- (1,1-0.25);

\draw[->] (-1+0.25,1) -- (1-0.25,1);
\draw[->] (-1+0.25,-1) -- (1-0.25,-1);

\node[left] at (-1,0) {\footnotesize $\gamma_{\sh{F}}$};

\node[right] at (1,0) {\footnotesize $\gamma_{\sh{G}}$};

\node at (0,1.4) {\footnotesize $\lambda_{\delta}$};

\node at (0,-1.4) {\footnotesize $\lambda_{\xi}$};
\endscope
\end{tikzpicture}
\caption{A morphism $\lambda_{\xi,\delta}$ between $\gamma_{\sh{F}}$ by $\gamma_{\sh{G}}$.}
\label{Commutative diagram for morphisms of linear maps}
\end{figure}
\end{definition}

\begin{lemma}\label{Lemma: Ext0 for surjective linear maps}
Let $\big\{\psi_{\sh{F}}^{(i)}:\mathbb{K}^{i+1}\to \mathbb{K}^{i}\big\}_{i=1}^{n-1}$ and $\big\{\psi_{\sh{G}}^{(i)}:\mathbb{K}^{i+1}\to \mathbb{K}^{i}\big\}_{i=1}^{n-1}$ be two collections of surjective linear maps, and let $\big\{ T_{\lambda}^{(i)}:\mathbb{K}^{i}\to \mathbb{K}^{i}\big\}_{i=1}^{n}$ be a collection of linear maps such that for each $i\in[1,n-1]$, the pair $(T^{(i)}_{\lambda},T^{(i+1)}_{\lambda})$ defines a morphism between $\psi^{(i)}_{\sh{F}}$ and $\psi^{(i)}_{\sh{G}}$, i.e.,
\begin{equation}\label{Eq: identity I for Ext0 on top strands}
T^{(i)}_{\lambda}\circ \psi_{\sh{F}}^{(i)}=\psi^{(i)}_{\sh{G}}\circ T_{\lambda}^{(i+1)}\, .
\end{equation}

For each $i\in[1,n]$, let $\hat{\mathbf{f}}^{(i)}:=\big\{\hat{f}^{(i)}_{j}\big\}_{j=1}^{i}$ and $\hat{\mathbf{g}}^{(i)}:=\big\{\hat{g}^{(i)}_{j}\big\}_{j=1}^{i}$ be bases for $\mathbb{K}^{i}$ such that the collections $\big\{\hat{\mathbf{f}}^{(j)}\big\}_{j=1}^{n}$ and $\big\{\hat{\mathbf{g}}^{(j)}\big\}_{j=1}^{n}$ are systems of bases adapted to $\big\{\psi_{\sh{F}}^{(i)}\big\}_{i=1}^{n-1}$ and $\big\{\psi_{\sh{G}}^{(i)}\big\}_{i=1}^{n-1}$, respectively (cf. Definition~\eqref{Def:flags and adapted bases}--\eqref{Def: adapted bases II}). Then:
\begin{itemize}
\justifying
\item The matrix $\tensor[_{\hat{\mathbf{g}}^{(n)}}]{ \big[\, T_{\lambda}^{\,(n)}\,\big] }{_{\hat{\mathbf{f}}^{(n)}}}\in M(n,\mathbb{K})$ representing $T^{(n)}_{\lambda}$ is lower triangular. 
\item For each $i\in[1,n-1]$, the matrix $\tensor[_{\hat{\mathbf{g}}^{(i)}}]{ \big[\, T_{\lambda}^{\,(i)}\,\big] }{_{\hat{\mathbf{f}}^{(i)}}}\in M(i,\mathbb{K})$ representing $T^{(i)}_{\lambda}$ coincides with the principal $i\times i$ submatrix of $\tensor[_{\hat{\mathbf{g}}^{(n)}}]{ \big[\, T_{\lambda}^{\,(n)}\,\big] }{_{\hat{\mathbf{f}}^{(n)}}}$. 
\end{itemize} 
\end{lemma}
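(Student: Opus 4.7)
The plan is to translate the morphism condition into an explicit matrix identity via the adapted--bases hypothesis, and then to read off both assertions directly from the structure of that identity. For each $i\in[1,n]$, set $M^{(i)}:=\tensor[_{\hat{\mathbf{g}}^{(i)}}]{\big[\, T^{(i)}_{\lambda}\,\big]}{_{\hat{\mathbf{f}}^{(i)}}}\in \mathrm{M}(i,\mathbb{K})$. By Definition~\eqref{Def:flags and adapted bases}--\eqref{Def: adapted bases II}, the adapted--bases hypothesis gives
\begin{equation*}
\tensor[_{\hat{\mathbf{f}}^{(i)}}]{\big[\, \psi^{(i)}_{\sh{F}}\,\big]}{_{\hat{\mathbf{f}}^{(i+1)}}}\,=\,\tensor[_{\hat{\mathbf{g}}^{(i)}}]{\big[\, \psi^{(i)}_{\sh{G}}\,\big]}{_{\hat{\mathbf{g}}^{(i+1)}}}\,=\,\pi^{(i,i+1)}\, ,
\end{equation*}
for each $i\in[1,n-1]$. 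Passing Equation~\eqref{Eq: identity I for Ext0 on top strands} to matrix representations in these bases then yields the identity
\begin{equation*}
M^{(i)}\cdot \pi^{(i,i+1)}\,=\,\pi^{(i,i+1)}\cdot M^{(i+1)}\, , \qquad i\in[1,n-1]\, .
\end{equation*}

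Next, I will unpack this matrix identity. A direct calculation shows that $M^{(i)}\cdot \pi^{(i,i+1)}$ is the $i\times(i+1)$ matrix obtained from $M^{(i)}$ by appending a zero column on the right, whereas $\pi^{(i,i+1)}\cdot M^{(i+1)}$ is the $i\times(i+1)$ matrix consisting of the first $i$ rows of $M^{(i+1)}$. Comparing these two matrices column by column produces two structural conclusions: on the one hand, the principal $i\times i$ submatrix of $M^{(i+1)}$ coincides with $M^{(i)}$; on the other hand, $\big[M^{(i+1)}\big]_{j,i+1}=0$ for every $j\in[1,i]$, so every above--diagonal entry in the $(i+1)$-th column of $M^{(i+1)}$ vanishes.

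Finally, I will iterate these conclusions. Applying the principal--submatrix identity recursively from $i=n-1$ down to $i=1$ shows that $M^{(i)}$ coincides with the principal $i\times i$ submatrix of $M^{(n)}$ for every $i\in[1,n-1]$, which establishes the second claim of the lemma. For the first claim, fix indices $j,k$ with $1\leq j<k\leq n$; by the identification just obtained, $\big[M^{(n)}\big]_{j,k}=\big[M^{(k)}\big]_{j,k}$, and by the vanishing conclusion applied with $i=k-1$, this entry equals zero. Hence $M^{(n)}$ is lower triangular, as desired. No step presents a genuine obstacle; the argument is essentially a careful bookkeeping exercise with the standard projection matrices once the adapted--bases normalization has been invoked.
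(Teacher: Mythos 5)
Your proof is correct and follows essentially the same route as the paper: both pass to matrix representations using the adapted bases (so that each $\psi^{(i)}$ is the standard projection $\pi^{(i,i+1)}$), derive the identity $M^{(i)}\cdot\pi^{(i,i+1)}=\pi^{(i,i+1)}\cdot M^{(i+1)}$, and read off the nested-submatrix and vanishing conditions before iterating. Your write-up is in fact slightly more explicit than the paper's in how the lower-triangularity of $M^{(n)}$ is extracted from the column-vanishing conclusions, but the argument is the same.
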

\begin{proof}
By assumption, $\big\{\hat{\mathbf{f}}^{(i)}\big\}_{i=1}^{n}$ and $\big\{\hat{\mathbf{g}}^{(i)}\big\}_{i=1}^{n}$ are systems of bases adapted to $\big\{\psi_{\sh{F}}^{(i)}\big\}_{i=1}^{n-1}$ and $\big\{\psi_{\sh{G}}^{(i)}\big\}_{i=1}^{n-1}$, respectively. Then, Definition~\eqref{Def:flags and adapted bases}--\eqref{Def: adapted bases II} asserts that for each $i\in[1,n-1]$, the matrices $\tensor[_{\hat{\mathbf{f}}^{(i)}}]{ \big[\, \psi_{\sh{F}}^{(i)}\,\big] }{_{\hat{\mathbf{f}}^{(i+1)}}}\in M(i,i+1,\mathbb{K})$ and $\tensor[_{\hat{\mathbf{g}}^{(i)}}]{ \big[\, \psi_{\sh{G}}^{(i)}\,\big] }{_{\hat{\mathbf{g}}^{(i+1)}}}\in M(i,i+1,\mathbb{K})$ representing $\psi_{\sh{F}}^{(i)}$ and $\psi_{\sh{G}}^{(i)}$ are the standard projection matrices:
\begin{equation*}
\tensor[_{\hat{\mathbf{f}}^{(i)}}]{ \big[\, \psi_{\sh{F}}^{(i)}\,\big] }{_{\hat{\mathbf{f}}^{(i+1)}}}=\pi^{(i,i+1)}\,,  \quad \text{and} \quad \tensor[_{\hat{\mathbf{g}}^{(i)}}]{ \big[\, \psi_{\sh{G}}^{(i)}\,\big] }{_{\hat{\mathbf{g}}^{(i+1)}}}=\pi^{(i,i+1)}\, .    
\end{equation*}

Now, for each $i\in[1,n]$, denote by $\tensor[_{\hat{\mathbf{g}}^{(i)}}]{ \big[\, T_{\lambda}^{\,(i)}\,\big] }{_{\hat{\mathbf{f}}^{(i)}}}\in M(i,\mathbb{K})$ the matrix representing $T^{(i)}_{\lambda}$. Then, for each $i\in[1,n-1]$, the morphism condition~\eqref{Eq: identity I for Ext0 on top strands} translates into the matrix equation
\begin{equation*}
\tensor[_{\hat{\mathbf{g}}^{(i)}}]{ \big[\, T_{\lambda}^{(i)}\,\big] }{_{\hat{\mathbf{f}}^{(i)}}}\cdot  \tensor[_{\hat{\mathbf{f}}^{(i)}}]{ \big[\, \psi_{\sh{F}}^{(i)}\,\big] }{_{\hat{\mathbf{f}}^{(i+1)}}}=\tensor[_{\hat{\mathbf{g}}^{(i)}}]{ \big[\, \psi_{\sh{G}}^{\,(i)}\,\big] }{_{\hat{\mathbf{g}}^{(i+1)}}}\cdot \tensor[_{\hat{\mathbf{g}}^{(i+1)}}]{ \big[\, T_{\lambda}^{(i+1)}\,\big] }{_{\hat{\mathbf{f}}^{(i+1)}}}\, .
\end{equation*}
Consequently, a direct substitution of the standard projection matrices into the above relation shows that
\begin{equation*}
\left[
\begin{array}{c|c}
~\tensor[_{\hat{\mathbf{g}}^{(i)}}]{ \big[\, T_{\lambda}^{(i)}\,\big] }{_{\hat{\mathbf{f}}^{(i)}}}~ & ~ \mathbf{0}_{i\times 1}~
\end{array}\right]_{i\times (i+1)} =  \parbox{12em}{\center{the upper-left\\$(i,i+1)$ submatrix of}} \tensor[_{\hat{\mathbf{g}}^{(i+1)}}]{ \big[\, T_{\lambda}^{(i+1)}\,\big] }{_{\hat{\mathbf{f}}^{(i+1)}}}\, ,    
\end{equation*}
for each $i\in[1,n-1]$. Hence, by applying the above identity recursively, we conclude that the matrix $\tensor[_{\hat{\mathbf{g}}^{(n)}}]{ \big[\, T_{\lambda}^{(n)}\,\big] }{_{\hat{\mathbf{f}}^{(n)}}}$ representing $T^{(n)}_{\lambda}$ is lower triangular, and that for each $i\in[1,n-1]$, the matrix $\tensor[_{\hat{\mathbf{g}}^{(i)}}]{ \big[\, T_{\lambda}^{(i)}\,\big] }{_{\hat{\mathbf{f}}^{(i)}}}$ representing $T^{(i)}_{\lambda}$ coincides with the principal $i\times i$ submatrix of $\tensor[_{\hat{\mathbf{g}}^{(n)}}]{ \big[\, T_{\lambda}^{(n)}\,\big] }{_{\hat{\mathbf{f}}^{(n)}}}$, as claimed.  
\end{proof}

\begin{lemma}\label{Lemma: Ext0 for injective linear maps}
Let $\big\{\phi_{\sh{F}}^{(i)}:\mathbb{K}^{i}\to \mathbb{K}^{i+1}\big\}_{i=1}^{n-1}$ and $\big\{\phi_{\sh{G}}^{(i)}:\mathbb{K}^{i}\to \mathbb{K}^{i+1}\big\}_{i=1}^{n-1}$ be two collections of injective linear maps, and let $\big\{ B_{\lambda}^{(i)}:\mathbb{K}^{i}\to \mathbb{K}^{i}\big\}_{i=1}^{n}$ be a collection of linear maps such that for each $i\in[1,n-1]$, the pair $(B^{(i+1)}_{\lambda}, B^{(i)}_{\lambda})$ defines a morphism between $\phi^{(i)}_{\sh{F}}$ and $\phi^{(i)}_{\sh{G}}$, i.e., 
\begin{equation}\label{Eq: identity II for Ext0 on bottom strands}
B_{\lambda}^{(i+1)}\circ \phi_{\sh{F}}^{(i)}=\phi^{(i)}_{\sh{G}}\circ B_{\lambda}^{(i)}\, .   
\end{equation}

For each $i\in[1,n]$, let $\hat{\mathbf{f}}^{(i)}:=\big\{\hat{f}^{(i)}_{j}\big\}_{j=1}^{i}$ and $\hat{\mathbf{g}}^{(i)}:=\big\{\hat{g}^{(i)}_{j}\big\}_{j=1}^{i}$ be bases for $\mathbb{K}^{i}$ such that the collections $\big\{\hat{\mathbf{f}}^{(i)}\big\}_{i=1}^{n}$ and $\big\{\hat{\mathbf{g}}^{(i)}\big\}_{i=1}^{n}$ are systems of bases adapted to $\big\{\phi_{\sh{F}}^{(i)}\big\}_{i=1}^{n-1}$ and $\big\{\phi_{\sh{G}}^{(i)}\big\}_{i=1}^{n-1}$, respectively (cf. Definition~\eqref{Def:flags and adapted bases}--\eqref{Def: adapted bases I}). Then:
\begin{itemize}
\justifying
\item The matrix $\tensor[_{\hat{\mathbf{g}}^{(n)}}]{ \big[\, B_{\lambda}^{(n)}\,\big] }{_{\hat{\mathbf{f}}^{(n)}}}\in M(n,\mathbb{K})$ representing $B^{(n)}_{\lambda}$ is upper triangular. 
\item For each $i\in[1,n-1]$, the matrix $\tensor[_{\hat{\mathbf{g}}^{(i)}}]{ \big[\, B_{\lambda}^{(i)}\,\big] }{_{\hat{\mathbf{f}}^{(i)}}}\in M(i,\mathbb{K})$ representing $B^{(i)}_{\lambda}$ coincides with the principal $i\times i$ submatrix of $\tensor[_{\hat{\mathbf{g}}^{(n)}}]{ \big[\, B_{\lambda}^{(n)}\,\big] }{_{\hat{\mathbf{f}}^{(n)}}}$. 
\end{itemize} 
\end{lemma}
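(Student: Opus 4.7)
The plan is to mirror the strategy used in the proof of Lemma~\eqref{Lemma: Ext0 for surjective linear maps}, translating the commutativity condition~\eqref{Eq: identity II for Ext0 on bottom strands} into a matrix identity relative to the given adapted bases and then extracting the block structure by inspection. First, I will invoke Definition~\eqref{Def:flags and adapted bases}--\eqref{Def: adapted bases I} to rewrite, for each $i\in[1,n-1]$, the matrices representing $\phi^{(i)}_{\sh{F}}$ and $\phi^{(i)}_{\sh{G}}$ as the standard inclusion matrices
\begin{equation*}
\tensor[_{\hat{\mathbf{f}}^{(i+1)}}]{\bigl[\,\phi^{(i)}_{\sh{F}}\,\bigr]}{_{\hat{\mathbf{f}}^{(i)}}}=\iota^{(i+1,i)}\,,\qquad \tensor[_{\hat{\mathbf{g}}^{(i+1)}}]{\bigl[\,\phi^{(i)}_{\sh{G}}\,\bigr]}{_{\hat{\mathbf{g}}^{(i)}}}=\iota^{(i+1,i)}\,.
\end{equation*}

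Next, I will translate the morphism condition $B^{(i+1)}_{\lambda}\circ\phi^{(i)}_{\sh{F}}=\phi^{(i)}_{\sh{G}}\circ B^{(i)}_{\lambda}$ into the matrix identity
\begin{equation*}
\tensor[_{\hat{\mathbf{g}}^{(i+1)}}]{\bigl[\,B^{(i+1)}_{\lambda}\,\bigr]}{_{\hat{\mathbf{f}}^{(i+1)}}}\cdot\iota^{(i+1,i)}=\iota^{(i+1,i)}\cdot\tensor[_{\hat{\mathbf{g}}^{(i)}}]{\bigl[\,B^{(i)}_{\lambda}\,\bigr]}{_{\hat{\mathbf{f}}^{(i)}}}\,.
\end{equation*}
The left-hand side is just the submatrix consisting of the first $i$ columns of $\tensor[_{\hat{\mathbf{g}}^{(i+1)}}]{\bigl[\,B^{(i+1)}_{\lambda}\,\bigr]}{_{\hat{\mathbf{f}}^{(i+1)}}}$, while the right-hand side is the $(i+1)\times i$ block matrix obtained by stacking $\tensor[_{\hat{\mathbf{g}}^{(i)}}]{\bigl[\,B^{(i)}_{\lambda}\,\bigr]}{_{\hat{\mathbf{f}}^{(i)}}}$ on top of a row of zeros. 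Equating the two forces the last row of $\tensor[_{\hat{\mathbf{g}}^{(i+1)}}]{\bigl[\,B^{(i+1)}_{\lambda}\,\bigr]}{_{\hat{\mathbf{f}}^{(i+1)}}}$ to vanish in its first $i$ entries, and identifies the principal $i\times i$ submatrix of $\tensor[_{\hat{\mathbf{g}}^{(i+1)}}]{\bigl[\,B^{(i+1)}_{\lambda}\,\bigr]}{_{\hat{\mathbf{f}}^{(i+1)}}}$ with $\tensor[_{\hat{\mathbf{g}}^{(i)}}]{\bigl[\,B^{(i)}_{\lambda}\,\bigr]}{_{\hat{\mathbf{f}}^{(i)}}}$.

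Finally, I will iterate this one-step comparison from $i=1$ up to $i=n-1$. The accumulated vanishing of the entries $(i+1,j)$ for $1\leq j\leq i$ at every level shows that $\tensor[_{\hat{\mathbf{g}}^{(n)}}]{\bigl[\,B^{(n)}_{\lambda}\,\bigr]}{_{\hat{\mathbf{f}}^{(n)}}}$ is upper triangular, while the recursive block identification yields that, for each $i\in[1,n-1]$, the matrix $\tensor[_{\hat{\mathbf{g}}^{(i)}}]{\bigl[\,B^{(i)}_{\lambda}\,\bigr]}{_{\hat{\mathbf{f}}^{(i)}}}$ coincides with the principal $i\times i$ submatrix of $\tensor[_{\hat{\mathbf{g}}^{(n)}}]{\bigl[\,B^{(n)}_{\lambda}\,\bigr]}{_{\hat{\mathbf{f}}^{(n)}}}$, completing the proof. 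No real obstacle is anticipated; the argument is a dualization of the previous lemma, with the roles of rows and columns interchanged because the adapted maps are now injective embeddings rather than surjective projections.
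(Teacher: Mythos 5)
Your proposal is correct and follows essentially the same route as the paper's proof: both translate the morphism condition into the matrix identity $\tensor[_{\hat{\mathbf{g}}^{(i+1)}}]{\bigl[\,B^{(i+1)}_{\lambda}\,\bigr]}{_{\hat{\mathbf{f}}^{(i+1)}}}\cdot\iota^{(i+1,i)}=\iota^{(i+1,i)}\cdot\tensor[_{\hat{\mathbf{g}}^{(i)}}]{\bigl[\,B^{(i)}_{\lambda}\,\bigr]}{_{\hat{\mathbf{f}}^{(i)}}}$ using the adapted bases, read off the zero bottom row and the nested principal block, and iterate from $i=1$ to $i=n-1$. No changes needed.
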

\begin{proof}
By assumption, $\big\{\hat{\mathbf{f}}^{(i)}\big\}_{i=1}^{n}$ and $\big\{\hat{\mathbf{g}}^{(i)}\big\}_{i=1}^{n}$ are systems of bases adapted to $\big\{\phi_{\sh{F}}^{(i)}\big\}_{i=1}^{n-1}$ and $\big\{\phi_{\sh{G}}^{(i)}\big\}_{i=1}^{n-1}$, respectively. Then, Definition~\eqref{Def:flags and adapted bases}--\eqref{Def: adapted bases I} asserts that, for each $i\in[1,n-1]$, the matrices $\tensor[_{\hat{\mathbf{f}}^{(i+1)}}]{ \big[\, \phi_{\sh{F}}^{(i)}\,\big] }{_{\hat{\mathbf{f}}^{(i)}}}\in M(i+1,i,\mathbb{K})$ and $\tensor[_{\hat{\mathbf{g}}^{(i+1)}}]{ \big[\, \phi_{\sh{G}}^{(i)}\,\big] }{_{\hat{\mathbf{g}}^{(i)}}}\in M(i+1,i,\mathbb{K})$ representing $\phi_{\sh{F}}^{(i)}$ and $\phi_{\sh{G}}^{(i)}$ are the standard inclusion matrices: 
\begin{equation*}
\tensor[_{\hat{\mathbf{f}}^{(i+1)}}]{ \big[\, \phi_{\sh{F}}^{(i)}\,\big] }{_{\hat{\mathbf{f}}^{(i)}}}=\iota^{(i+1,i)}\, , \quad \text{and} \quad \tensor[_{\hat{\mathbf{g}}^{(i+1)}}]{ \big[\, \phi_{\sh{G}}^{(i)}\,\big] }{_{\hat{\mathbf{g}}^{(i)}}}= \iota^{(i+1,i)}\, .    
\end{equation*}

Now, for each $i\in[1,n]$, denote by $\tensor[_{\hat{\mathbf{g}}^{(i)}}]{ \big[\, B_{\lambda}^{(i)}\,\big] }{_{\hat{\mathbf{f}}^{(i)}}}\in M(i,\mathbb{K})$ the matrix representing $B^{(i)}_{\lambda}$. Then, for each $i\in[1,n-1]$, the morphism condition~\eqref{Eq: identity II for Ext0 on bottom strands} translates into the matrix equation
\begin{equation*}
\tensor[_{\hat{\mathbf{g}}^{(i+1)}}]{ \big[\, B_{\lambda}^{(i+1)}\,\big] }{_{\hat{\mathbf{f}}^{(i+1)}}}\cdot  \tensor[_{\hat{\mathbf{f}}^{(i+1)}}]{ \big[\, \phi_{\sh{F}}^{(i)}\,\big] }{_{\hat{\mathbf{f}}^{(i)}}}=\tensor[_{\hat{\mathbf{g}}^{(i+1)}}]{ \big[\, \phi_{\sh{G}}^{(i)}\,\big] }{_{\hat{\mathbf{g}}^{(i)}}}\cdot \tensor[_{\hat{\mathbf{g}}^{(i)}}]{ \big[\, B_{\lambda}^{(i)}\,\big] }{_{\hat{\mathbf{f}}^{(i)}}}\, .
\end{equation*}
Consequently, a direct substitution of the standard inclusion matrices into the above relation shows that
\begin{equation*}
\parbox{12em}{\center{ the upper-left\\$(i+1,i)$ submatrix of}}  \tensor[_{\hat{\mathbf{g}}^{(i+1)}}]{ \big[\, B_{\lambda}^{(i+1)}\,\big] }{_{\hat{\mathbf{f}}^{(i+1)}}}  = \left[\begin{array}{c}
~ \tensor[_{\hat{\mathbf{g}}^{(i)}}]{ \big[\, B_{\lambda}^{(i)}\,\big] }{_{\hat{\mathbf{f}}^{(i)}}} ~ \vspace{3pt}\\
\hline
\mathbf{0}_{1\times i}
\end{array}\right]_{(i+1)\times i}\, ,
\end{equation*}
for each $i\in[1,n-1]$. Hence, by applying the above identity recursively, we conclude that the matrix $\tensor[_{\hat{\mathbf{g}}^{(n)}}]{ \big[\, B_{\lambda}^{(n)}\,\big] }{_{\hat{\mathbf{f}}^{(n)}}}$ representing $B^{(n)}_{\lambda}$ is upper triangular, and that for each $i\in[1,n-1]$, the matrix $\tensor[_{\hat{\mathbf{g}}^{(i)}}]{ \big[\, B_{\lambda}^{(i)}\,\big] }{_{\hat{\mathbf{f}}^{(i)}}}$ representing $B^{(i)}_{\lambda}$ coincides with the principal $i\times i$ submatrix of $\tensor[_{\hat{\mathbf{g}}^{(n)}}]{ \big[\, B_{\lambda}^{(n)}\,\big] }{_{\hat{\mathbf{f}}^{(n)}}}$, as claimed.
\end{proof}

Having established the necessary groundwork, we conclude this part of the manuscript with the following observation, which sheds light on the local structure of the zero-degree morphism spaces in the category $\ccs{1}{\beta}$. 

\begin{observation}\label{Obs: elements of Ext0 near arcs}
Let $\beta=\sigma_{i_{1}}\cdots \sigma_{i_{\ell}}\in \mathrm{Br}^{+}_{n}$ be a positive braid word, $\mathcal{S}_{\Lambda(\beta)}$ the stratification of $\mathbb{R}^{2}$ induced by $\Lambda(\beta)\subset (\mathbb{R}^{3},\xi_{\mathrm{std}})$, and $\sh{F}$ and $\sh{G}$ objects of the category $\ccs{1}{\beta}$. By the microlocal support conditions, the global structure of $\sh{F}$ and $\sh{G}$ is determined by their local behavior along the arcs in $\mathcal{S}_{\Lambda(\beta)}$, together with compatibility conditions imposed at cusps and crossings. Now, recall that elements of $\,\mathrm{Ext}^{0}(\sh{F},\sh{G})$ are global sheaf homomorphisms between $\sh{F}$ and $\sh{G}$. Hence, the constructibility of $\sh{F}$ and $\sh{G}$ with respect to $\mathcal{S}_{\Lambda(\beta)}$, together with the sheaf axioms, implies that the global structure of the elements of $\mathrm{Ext}^{0}(\sh{F},\sh{G})$ is likewise determined by their local behavior along the arcs in $\mathcal{S}_{\Lambda(\beta)}$, together with compatibility conditions required at cusps and crossings. 

Next, we explicitly describe the structure of the elements of $\,\mathrm{Ext}^{0}(\sh{F},\sh{G})$ near an arc $a$ in $\mathcal{S}_{\Lambda(\beta)}$. To this end, observe that in a neighborhood of $a$, the stratification $\mathcal{S}_{\Lambda(\beta)}$ consists of an upper $2$-dimensional stratum $U$ and a lower $2$-dimensional stratum $D$, as illustrated in Sub-figure~\eqref{sub-fig: Strata near an arc}. Given this local configuration, let $p\in U$ and $q\in D$ be arbitrary points, and denote by $A=\sh{F}_{p}$, $B=\sh{F}_{q}$, $X=\sh{G}_{p}$, and $Y=\sh{G}_{q}$ the stalks of $\sh{F}$ and $\sh{G}$ at these points, respectively. It then follows from the microlocal support conditions near the arcs that, near $a$, $\sh{F}$ and $\sh{G}$ are determined by a pair of linear maps $\gamma_{\sh{F}}:A\to B$ and $\gamma_{\sh{G}}:X\to Y$, respectively. According to our previous discussion, we have that a sheaf homomorphism $\lambda \in \mathrm{Ext}^{0}(\sh{F}, \sh{G})$ between $\sh{F}$ and $\sh{G}$ is specified near $a$ by two linear maps $\lambda_{\xi}: A\to X$ and $\lambda_{\delta}:B \to Y$ such that
\begin{equation*}
\lambda_{\delta}\circ \gamma_{\sh{F}}=\gamma_{\sh{G}}\circ \lambda_{\xi}\, .     
\end{equation*}
In other words, near $a$, $\lambda$ is determined by a morphism $\lambda_{\xi,\delta}:=(\lambda_{\xi}, \lambda_{\delta})$ between $\gamma_{\sh{F}}$ and $\gamma_{\sh{G}}$ (cf. Definition~\eqref{Def: morphism between linear maps}).
\end{observation}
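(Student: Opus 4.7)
The plan is to justify the two assertions embedded in the observation in sequence: first, that $\mathrm{Ext}^{0}(\sh{F},\sh{G})$ is canonically identified with the space of global sheaf morphisms $\sh{F} \to \sh{G}$; second, that near each arc in $\mathcal{S}_{\Lambda(\beta)}$, such a sheaf morphism is recorded by a commutative square of linear maps between the stalks. For the first assertion, I would begin from the definition
\begin{equation*}
\mathrm{Ext}^{0}(\sh{F},\sh{G}) = H^{0}\bigl(R\Gamma(\mathbb{R}^{2}; R\sh{H}om(\sh{F},\sh{G}))\bigr),
\end{equation*}
and combine it with the quasi-isomorphism $R\sh{H}om(\sh{F},\sh{G}) \simeq \sh{H}om(\sh{F},\sh{G})$ established in Proposition~\eqref{Prop: CNS results on derived and internal hom sheaves}--(\textit{b}). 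Together these yield $\mathrm{Ext}^{0}(\sh{F},\sh{G}) = \Gamma(\mathbb{R}^{2}; \sh{H}om(\sh{F},\sh{G}))$, which by definition is the $\mathbb{K}$--vector space of sheaf morphisms from $\sh{F}$ to $\sh{G}$.

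For the second assertion, I would invoke the constructibility of $\sh{F}$ and $\sh{G}$ with respect to the regular cell complex $\mathcal{S}_{\Lambda(\beta)}$. By the sheaf axioms together with constructibility, a global morphism $\lambda: \sh{F} \to \sh{G}$ is equivalent to a compatible family of linear maps $\lambda_{\sigma}: \sh{F}(s(\sigma)) \to \sh{G}(s(\sigma))$, one for each stratum $\sigma$, commuting with all generization maps induced by containments of stars. Restricting to a neighborhood of a fixed arc $a$, the local stratification consists of the 2-dimensional strata $U$ and $D$ together with $a$ itself. The microlocal support conditions of Sub-figure~\eqref{sub-fig: microsupport near an arc} identify $\sh{F}(s(a))$ with $\sh{F}(D) \cong \sh{F}_q$ and $\sh{F}(U) \cong \sh{F}_p$, and produce the linear map $\gamma_{\sh{F}}: \sh{F}_p \to \sh{F}_q$ of the observation (and analogously $\gamma_{\sh{G}}$). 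Setting $\lambda_{\xi} := \lambda_p$ and $\lambda_{\delta} := \lambda_q$, the compatibility with generization becomes exactly
\begin{equation*}
\lambda_{\delta} \circ \gamma_{\sh{F}} = \gamma_{\sh{G}} \circ \lambda_{\xi},
\end{equation*}
which is precisely the morphism-of-linear-maps condition of Definition~\eqref{Def: morphism between linear maps}.

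The main obstacle here is not the local analysis near an arc, which is essentially formal once constructibility and the microlocal support diagrams are invoked, but rather the careful bookkeeping of conventions (direction of the canonical map induced by restriction versus generization, and matching the local stalk description with the algebraic parametrization of $\sh{F}$ and $\sh{G}$ by bases and braid-variety points given in Theorem~\eqref{Prop. for sheaves and braid matrices}). The observation itself deliberately treats only the local picture near arcs; the analogous compatibility at cusps and crossings, which is essential for deducing the algebraic description $\mathrm{Ext}^{0}(\sh{F},\sh{G}) \cong \ker \delta_{\sh{F},\sh{G}}$ of Theorem~\eqref{Theorem: Lower-degree morphisms}, is handled by the subsequent Lemmas~\eqref{Lemma: Ext0 for surjective linear maps} and~\eqref{Lemma: Ext0 for injective linear maps} and by a further matrix computation at each crossing.
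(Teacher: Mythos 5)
Your proposal is correct and follows essentially the same route as the paper's own inline reasoning for this observation: identify $\mathrm{Ext}^{0}(\sh{F},\sh{G})$ with global sheaf morphisms (your detour through Proposition on $R\sh{H}om \simeq \sh{H}om$ just makes explicit what the paper takes for granted), then use constructibility and the micro-support diagram near an arc to reduce a global morphism to a compatible family of stalk maps forming the commutative square of Definition of morphisms between linear maps. Your closing remark correctly delimits the scope of the observation, with the cusp and crossing compatibilities deferred to the subsequent lemmas exactly as in the paper.
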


Building on the preceding local description of the elements of the zero-degree morphism spaces in the category $\ccs{1}{\beta}$, we proceed to analyze the global structure of these elements, yielding an explicit characterization of the zero-degree morphism spaces under consideration. In particular, we begin our analysis by considering the case $\beta=e_{n}$.

\subsubsection{The Case of the Trivial Braid} Let $e_{n}\in\mathrm{Br}^{+}_{n}$ be the trivial braid word. Next, we study the zero-degree morphism spaces in the category $\ccs{1}{e_{n}}$. 

To begin, consider the open cover $\mathcal{U}_{\Lambda(e_{n})}=\big\{ U_{0}, U_{\mathrm{B}}, U_{\mathrm{T}}\big\}$ of $\mathbb{R}^{2}$ introduced in Construction~\eqref{Const: finite open cover of R^2 for the unlink}. Let $\sh{F}$ and $\sh{G}$ be objects of the category $\ccs{1}{e_{n}}$, and let $\lambda\in \mathrm{Ext}^{0}(\sh{F},\sh{G})$ be a sheaf homomorphism between them. By Proposition~\eqref{Sheaves for the unlink on n strands}, $\sh{F}$ and $\sh{G}$ are identically zero on $U_{0}$, and as a result, $\lambda$ is also identically zero on this region. It follows that $\lambda$ is entirely determined by its behavior on the regions $U_{\mathrm{B}}$ and $U_{\mathrm{T}}$. Having established this, we present the following proposition.

\begin{proposition}\label{Prop: Ext0 for the unlink}
\textbf{Setup}: Let $e_{n}\in\mathrm{Br}^{+}_{n}$ be the trivial braid word, $\mathcal{U}_{\Lambda(e_{n})}=\big\{ U_{0}, U_{\mathrm{B}}, U_{\mathrm{T}}\big\}$ the open cover of $\mathbb{R}^{2}$ introduced in Construction~\eqref{Const: finite open cover of R^2 for the unlink}, and $\sh{F}$ and $\sh{G}$ objects of the category $\ccs{1}{e_{n}}$. Accordingly, Proposition~\eqref{Prop: linear map description of a sheaf for the unlink on n strands} asserts that: 
\begin{itemize}
\justifying
\item  On $U_{\mathrm{T}}$, $\sh{F}$ and $\sh{G}$ are specified by two collections of surjective linear maps 
\begin{equation*}
\big\{\psi_{\sh{F}}^{(i)}:\mathbb{K}^{i+1}\to \mathbb{K}^{i}\big\}_{i=1}^{n-1}\, , \quad \text{and} \quad \big\{\psi_{\sh{G}}^{(i)}:\mathbb{K}^{i+1}\to \mathbb{K}^{i}\big\}_{i=1}^{n-1}\, ,    
\end{equation*}
respectively (see Figure~\eqref{Fig: an object F on the region U_T for the unlink} for a schematic representation).

\item On $U_{\mathrm{B}}$, $\sh{F}$ and $\sh{G}$ are specified by two collections of injective linear maps 
\begin{equation*}
\big\{\phi_{\sh{F}}^{(i)}:\mathbb{K}^{i}\to \mathbb{K}^{i+1}\big\}_{i=1}^{n-1}\, , \quad \text{and} \quad \big\{\phi_{\sh{G}}^{(i)}:\mathbb{K}^{i}\to \mathbb{K}^{i+1}\big\}_{i=1}^{n-1} \, ,    
\end{equation*}
respectively (see Figure~\eqref{Fig: an object F on the region U_B for the unlink} for a schematic representation).

\item \textbf{Compatibility conditions}: For each $i\in[1,n-1]$, 
\begin{equation*}
\psi^{(i)}_{\sh{F}}\circ \phi^{(i)}_{\sh{F}}=\mathrm{id}_{\mathbb{K}^{i}}\, , \quad \text{and} \quad \psi^{(i)}_{\sh{G}}\circ \phi^{(i)}_{\sh{G}}=\mathrm{id}_{\mathbb{K}^{i}}\, .    
\end{equation*}
\end{itemize}

\noindent
$\star$ \emph{Assumption}: For each $i\in[1,n]$, let $\hat{\mathbf{f}}^{(i)}:=\big\{\hat{f}^{(i)}_{j}\big\}_{j=1}^{i}$ and $\hat{\mathbf{g}}^{(i)}:=\big\{\hat{g}^{(i)}_{j}\big\}_{j=1}^{i}$ be bases for $\mathbb{K}^{i}$. In particular, building on Definition~\eqref{Def:flags and adapted bases}--\eqref{Def: adapted bases I}--\eqref{Def: adapted bases II}, we assume that: 
\begin{itemize}
\justifying
\item $\big\{\hat{\mathbf{f}}^{(i)}\big\}_{i=1}^{n}$ is a system of bases adapted to both $\big\{\psi_{\sh{F}}^{(i)}\big\}_{i=1}^{n-1}$ and $\big\{\phi_{\sh{F}}^{(i)}\,\big\}_{i=1}^{n-1}$.
\item  $\big\{\hat{\mathbf{g}}^{(i)}\big\}_{i=1}^{n}$ is a system of bases adapted to both $\big\{\psi_{\sh{G}}^{(i)}\big\}_{i=1}^{n-1}$ and $\big\{\phi_{\sh{G}}^{(i)}\,\big\}_{i=1}^{n-1}$.
\end{itemize}

\smallskip
\noindent
\textbf{Main Conclusion}: Let $\lambda\in \mathrm{Ext}^{0}(\sh{F},\sh{G})$ be a sheaf homomorphism between $\sh{F}$ and $\sh{G}$. Then, under the given \textbf{setup}, the following statements hold: 
\begin{itemize}
\justifying
\item $\lambda$ is characterized by a collection of $n$ linear maps $\big\{ T^{(i)}_{\lambda}:\mathbb{K}^{i}\to \mathbb{K}^{i} \big\}_{i=1}^{n}$ such that, for each $i\in [1,n-1]$: 
\begin{equation*}
T^{(i)}_{\lambda}\circ \psi^{(i)}_{\sh{F}}=\psi^{(i)}_{\sh{G}}\circ T^{(i+1)}_{\lambda}\, , \quad \text{and} \quad T^{(i+1)}_{\lambda}\circ \phi^{(i)}_{\sh{F}}=\phi^{(i)}_{\sh{G}}\circ T^{(i)}_{\lambda}\, .    
\end{equation*}
\item The matrix $\tensor[_{\hat{\mathbf{g}}^{(n)}}]{ \big[\, T_{\lambda}^{\,(n)}\,\big] }{_{\hat{\mathbf{f}}^{(n)}}}\in M(n,\mathbb{K})$ representing $T^{(n)}_{\lambda}$ is diagonal. 
\item For each $i\in[1,n-1]$, the matrix $\tensor[_{\hat{\mathbf{g}}^{(i)}}]{ \big[\, T_{\lambda}^{(i)}\,\big] }{_{\hat{\mathbf{f}}^{(i)}}}\in M(i,\mathbb{K})$ representing $T^{(i)}_{\lambda}$ coincides with the principal $i\times i$ submatrix of $\tensor[_{\hat{\mathbf{g}}^{(n)}}]{ \big[\, T_{\lambda}^{\,(n)}\,\big] }{_{\hat{\mathbf{f}}^{(n)}}}$.  
\end{itemize}
Consequently, we deduce that
\begin{equation*}
\mathrm{Ext}^{0}(\sh{F},\sh{G})\cong \mathbb{K}^{n}_{\mathrm{std}}\, .    
\end{equation*}
\end{proposition}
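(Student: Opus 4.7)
My plan is to translate Observation 4.28 globally across the open cover $\mathcal{U}_{\Lambda(e_n)}$ and then use Lemmas 5.4 and 5.5 to pin down the matrix form of the resulting morphism. First, I would fix a sheaf homomorphism $\lambda\in\mathrm{Ext}^{0}(\sh{F},\sh{G})$. Since $\sh{F}$ and $\sh{G}$ vanish identically on $U_{0}$, the map $\lambda$ is trivial there, so it is entirely determined by its restrictions to $U_{\mathrm{T}}$ and $U_{\mathrm{B}}$. Applying Observation 4.28 on each arc of $\mathcal{S}_{\Lambda(e_n)}$ and invoking constructibility, $\lambda|_{U_{\mathrm{T}}}$ is encoded by a collection of linear maps $\{T^{(i)}_{\lambda}:\mathbb{K}^{i}\to\mathbb{K}^{i}\}_{i=1}^{n}$ intertwining the surjections $\psi_{\sh{F}}^{(i)}$ and $\psi_{\sh{G}}^{(i)}$, while $\lambda|_{U_{\mathrm{B}}}$ is encoded by linear maps $\{B^{(i)}_{\lambda}\}_{i=1}^{n}$ intertwining the injections $\phi_{\sh{F}}^{(i)}$ and $\phi_{\sh{G}}^{(i)}$. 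The compatibility of these two descriptions on the overlap $U_{\mathrm{T}}\cap U_{\mathrm{B}}$, together with the sheaf axioms applied at the $n$ left cusps and $n$ right cusps, forces $B^{(i)}_{\lambda}=T^{(i)}_{\lambda}$ for every $i\in[1,n]$, yielding the pair of intertwining identities displayed in the conclusion.

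Next, I would exploit that $\{\hat{\mathbf{f}}^{(i)}\}_{i=1}^{n}$ and $\{\hat{\mathbf{g}}^{(i)}\}_{i=1}^{n}$ are simultaneously adapted to the surjections and to the injections (this is consistent because $\pi^{(i,i+1)}\!\cdot\!\iota^{(i+1,i)}=\mathbf{1}_{i}$, matching the cusp condition $\psi\circ\phi=\mathrm{id}$). Applying Lemma 5.4 to the collection $\{T^{(i)}_{\lambda}\}_{i=1}^{n}$ with respect to the surjective maps on $U_{\mathrm{T}}$ shows that the matrix $\tensor[_{\hat{\mathbf{g}}^{(n)}}]{\bigl[\,T^{(n)}_{\lambda}\,\bigr]}{_{\hat{\mathbf{f}}^{(n)}}}$ is lower triangular, and that for $i\in[1,n-1]$, $\tensor[_{\hat{\mathbf{g}}^{(i)}}]{\bigl[\,T^{(i)}_{\lambda}\,\bigr]}{_{\hat{\mathbf{f}}^{(i)}}}$ is its principal $i\times i$ submatrix. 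Symmetrically, applying Lemma 5.5 to the same collection with respect to the injective maps on $U_{\mathrm{B}}$ shows that $\tensor[_{\hat{\mathbf{g}}^{(n)}}]{\bigl[\,T^{(n)}_{\lambda}\,\bigr]}{_{\hat{\mathbf{f}}^{(n)}}}$ is upper triangular, with the same principal-submatrix structure. The intersection of these two conclusions forces the matrix to be diagonal, and the principal-submatrix relation is preserved.

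Conversely, I would verify that any diagonal matrix $D(\vec{u})\in\mathrm{M}(n,\mathbb{K})$, assembled together with its principal $i\times i$ submatrices $\{D^{(i)}\}$, yields a well-defined sheaf homomorphism: the intertwining identities with $\pi^{(i,i+1)}$ and $\iota^{(i+1,i)}$ are immediate matrix computations, and the cusp compatibility is automatic since the same matrices appear on both local descriptions. This defines an assignment $\vec{u}\mapsto\lambda_{\vec{u}}$ that is $\mathbb{K}$-linear, injective (each diagonal entry is read off from the corresponding $T^{(i)}_{\lambda}$), and by the preceding paragraph surjective. Consequently, the isomorphism $\mathrm{Ext}^{0}(\sh{F},\sh{G})\cong\mathbb{K}^{n}_{\mathrm{std}}$ follows, with $\lambda$ corresponding to the diagonal of $\tensor[_{\hat{\mathbf{g}}^{(n)}}]{\bigl[\,T^{(n)}_{\lambda}\,\bigr]}{_{\hat{\mathbf{f}}^{(n)}}}$.

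The step I expect to be the main obstacle is the bookkeeping at the cusps: patching the $U_{\mathrm{T}}$-side collection $\{T^{(i)}_{\lambda}\}$ with the $U_{\mathrm{B}}$-side collection $\{B^{(i)}_{\lambda}\}$ requires carefully unpacking the commutative diagram of Figure 3.4 for the stalks of $R\sh{H}om(\sh{F},\sh{G})$ and confirming that the identifications $\psi^{(i)}\circ\phi^{(i)}=\mathrm{id}$ force $T^{(i)}_{\lambda}=B^{(i)}_{\lambda}$ without introducing spurious relations among diagonal entries; everything else reduces to the two matrix lemmas together with a dimension count.
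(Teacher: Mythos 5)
Your proposal is correct and follows essentially the same route as the paper's proof: describe $\lambda$ on $U_{\mathrm{T}}$ and $U_{\mathrm{B}}$ by intertwining collections, use the cusp compatibility $\psi^{(i)}\circ\phi^{(i)}=\mathrm{id}$ together with the sheaf axioms on the overlap to identify the two collections, and then apply Lemmas~\eqref{Lemma: Ext0 for surjective linear maps} and~\eqref{Lemma: Ext0 for injective linear maps} to force the representing matrix to be simultaneously lower and upper triangular, hence diagonal with the principal-submatrix property. Your explicit verification of the converse direction (that every diagonal matrix assembles into a valid sheaf homomorphism) is a small extra step that the paper leaves implicit, but it does not change the argument.
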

\begin{proof}
Building on our previous discussion, we have that:
\begin{itemize}
\justifying
\item On $U_{\mathrm{T}}$, $\lambda$ is specified by a collection of linear maps $\big\{ T_{\lambda}^{(i)}:\mathbb{K}^{i}\to \mathbb{K}^{i}\big\}_{i=1}^{n}$ such that for each $i\in[1,n-1]$, the pair $\big( T^{(i+1)}_{\lambda},T^{(i)}_{\lambda}\big)$ defines a morphism between $\psi_{\sh{F}}^{(i)}$ and $\psi_{\sh{G}}^{(i)}$, i.e., 
\begin{equation}\label{Eq: identity I for Ext0 for the unlink}
T^{(i)}_{\lambda}\circ \psi^{(i)}_{\sh{F}}=\psi^{(i)}_{\sh{G}}\circ T^{(i+1)}_{\lambda}\, .   
\end{equation}
\item On $U_{\mathrm{B}}$, $\lambda$ is specified by a collection of linear maps $\big\{ B_{\lambda}^{(i)}:\mathbb{K}^{i}\to \mathbb{K}^{i}\big\}_{i=1}^{n}$ such that for each $i\in[1,n-1]$, the pair $\big(B^{(i)}_{\lambda},B^{(i+1)}_{\lambda}\big)$ defines a morphism between $\phi_{\sh{F}}^{(i)}$ and $\phi_{\sh{G}}^{(i)}$, i.e., 
\begin{equation}\label{Eq: identity II for Ext0 for the unlink}
B^{(i+1)}_{\lambda}\circ \phi^{(i)}_{\sh{F}}=\phi^{(i)}_{\sh{G}}\circ B^{(i)}_{\lambda}\, .   
\end{equation}
\end{itemize}

In particular, note that a direct application of the sheaf axioms to the intersection $U_{\mathrm{B}}\,\cap\,U_{\mathrm{T}}\subset \mathbb{R}^{2}$ implies that $T_{\lambda}^{(n)}=B_{\lambda}^{(n)}$. Hence, combining the morphism conditions~\eqref{Eq: identity I for Ext0 for the unlink} and~\eqref{Eq: identity II for Ext0 for the unlink} with the \textbf{compatibility conditions} for the maps characterizing $\sh{F}$ and $\sh{G}$ on $U_{\mathrm{B}}$ and $U_{\mathrm{T}}$, we further deduce that $T_{\lambda}^{(i)}=B_{\lambda}^{(i)}$ for every $i\in[1,n-1]$. It follows that $\lambda$ is determined by a collection of linear maps $\big\{T_{\lambda}^{(i)}:\mathbb{K}^{i}\to \mathbb{K}^{i}\big\}_{i=1}^{n}$ satisfying the stated conditions. 

Finally, recall that $\big\{\hat{\mathbf{f}}^{(i)}\big\}_{i=1}^{n}$ and $\big\{\hat{\mathbf{g}}^{(i)}\big\}_{i=1}^{n}$ are systems of bases adapted to the pairs $\bigr( \big\{\psi_{\sh{F}}^{(i)}\big\}_{i=1}^{n-1}, \big\{\phi_{\sh{F}}^{(i)}\big\}_{i=1}^{n-1} \bigl)$ and $\bigr( \big\{\psi_{\sh{G}}^{(i)}\big\}_{i=1}^{n-1}, \big\{\phi_{\sh{G}}^{(i)}\big\}_{i=1}^{n-1} \bigl)$, respectively. Thus, relying on our previous results, Lemmas~\eqref{Lemma: Ext0 for surjective linear maps} and~\eqref{Lemma: Ext0 for injective linear maps} guarantee that:
\begin{itemize}
\justifying
\item The matrix $\tensor[_{\hat{\mathbf{g}}^{(n)}}]{ \big[\, T_{\lambda}^{(n)}\,\big] }{_{\hat{\mathbf{f}}^{(n)}}}\in M(n,\mathbb{K})$ representing $T^{(n)}_{\lambda}$ is both lower and upper triangular, and hence diagonal. 
\item For each $i\in[1,n-1]$, the matrix $\tensor[_{\hat{\mathbf{g}}^{(i)}}]{ \big[\, T_{\lambda}^{(i)}\,\big] }{_{\hat{\mathbf{f}}^{(i)}}}\in M(i,\mathbb{K})$ representing $T^{(i)}_{\lambda}$ coincides with the principal $i\times i$ submatrix of $\tensor[_{\hat{\mathbf{g}}^{(n)}}]{ \big[\, T_{\lambda}^{\,(n)}\,\big] }{_{\hat{\mathbf{f}}^{(n)}}}$.  
\end{itemize}
Bearing this in mind, we conclude that
\begin{equation*}
\mathrm{Ext}^{0}(\sh{F},\sh{G})\cong \mathbb{K}^{n}_{\mathrm{std}}\, .
\end{equation*}
This completes the proof. 
\end{proof}

Having established the preceding result, we conclude our study of the zero-degree morphism spaces in the category $\ccs{1}{e_{n}}$, and turn to their analysis in the category $\ccs{1}{\beta}$ for an arbitrary positive braid word $\beta\in\mathrm{Br}^{+}_{n}$.  

\subsubsection{General Positive Braids} Let $\beta=\sigma_{i_{1}}\cdots \sigma_{i_{\ell}}\in\mathrm{Br}^{+}_{n}$ be a positive braid word. Next, we analyze the zero-degree morphism spaces in the category $\ccs{1}{\beta}$.   

To being, we establish an observation that will facilitate the study the zero-degree morphism spaces in the category $\ccs{1}{\beta}$. 

\begin{observation}\label{Obs: objects on U_T and vertical straps}
Let $\beta=\sigma_{i_{1}}\cdots \sigma_{i_{\ell}}\in \mathrm{Br}^{+}_{n}$ be a positive braid word, $\mathcal{S}_{\Lambda(\beta)}$ the stratification of $\mathbb{R}^{2}$ induced by $\Lambda(\beta)\subset (\mathbb{R}^{3},\xi_{\mathrm{std}})$, and $\sh{F}$ an object of the category $\ccs{1}{\beta}$. Let \,$\mathcal{U}_{\Lambda(\beta)}=\big\{U_{0}, U_{\mathrm{B}}, U_{\mathrm{L}}, U_{\mathrm{R}}, U_{\mathrm{T}}\big\}$ be the open cover of $\mathbb{R}^{2}$ from Construction~\eqref{Cons: Finite open cover for R^2}, and let $\mathcal{R}_{\Lambda(\beta)}=\big\{R_{j}\big\}_{j=1}^{\ell}$ be the partition of $U_{\mathrm{B}}$ into $\ell$ open vertical straps from Construction~\eqref{Cons: Definition of the vertical straps}. 

In particular, observe that the only singularities of $\Pi_{x,z}(\Lambda)$ contained in $U_{\mathrm{L}}$ are the $n$ left cusps, and $R_{1}$ is its only adjacent vertical strap. Hence, since $U_{\mathrm{L}}$ contains no crossings of $\Pi_{x,z}(\Lambda)$, the constructibility of $\sh{F}$ with respect to $\mathcal{S}_{\Lambda(\beta)}$ and the sheaf axioms guarantee that the maps defining $\sh{F}$ on $U_{\mathrm{L}}$ coincide with those specifying it on the intersection $U_{\mathrm{L}}\,\cap\, R_{1}$. It follows that, for the purpose of studying $\sh{F}$, instead of considering the region $U_{\mathrm{L}}$, it suffices to restrict to $R_{1}$, with the understanding that the data characterizing $\sh{F}$ on $R_{1}$ inherits the compatibility conditions associated with the left cusps in $U_{\mathrm{L}}$. 

Finally, note that a completely analogous argument applies to the region $U_{\mathrm{R}}$ and the vertical strap $R_{\ell}$, with the data characterizing $\sh{F}$ on $R_{\ell}$ inheriting the compatibility conditions associated with the right cusps in $U_{\mathrm{R}}$. Consequently, we deduce that the global structure of $\sh{F}$ can be fully recovered from its local description on the region $U_{\mathrm{T}}$ and the collection of vertical straps $\mathcal{R}_{\Lambda(\beta)}$, together with the appropriate compatibility conditions.
\end{observation}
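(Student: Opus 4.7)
The plan is to formalize the reduction claimed in the observation by making the stratified structure of $U_{\mathrm{L}}$ and $U_{\mathrm{R}}$ explicit, and then invoking the sheaf axioms to transfer the data defining $\sh{F}$ on these regions onto the adjacent boundary straps $R_1$ and $R_\ell$, respectively. The crucial geometric input is that, by Construction~\eqref{Cons: Finite open cover for R^2}, $U_{\mathrm{L}}$ (resp.\ $U_{\mathrm{R}}$) contains only left (resp.\ right) cusps but no crossings of $\Pi_{x,z}(\Lambda(\beta))$, so the stratification $\mathcal{S}_{\Lambda(\beta)}$ on $U_{\mathrm{L}}$ (resp.\ $U_{\mathrm{R}}$) is combinatorially trivial away from $R_1$ (resp.\ $R_\ell$).

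First, I would analyze $\mathcal{S}_{\Lambda(\beta)}\cap U_{\mathrm{L}}$ in detail, observing that all $n$ of its zero-dimensional strata (the left cusps) lie in $U_{\mathrm{L}}\cap R_1$, and that each $1$- and $2$-dimensional stratum in $U_{\mathrm{L}}$ is a trivial horizontal extension of a corresponding stratum in $U_{\mathrm{L}}\cap R_1$. I would then combine the constructibility of $\sh{F}$ with the microlocal support conditions near arcs and cusps to conclude that $\sh{F}|_{U_{\mathrm{L}}}$ is uniquely determined by its restriction to $U_{\mathrm{L}}\cap R_1$; in particular, under the sheaf axioms applied to the cover $\{U_{\mathrm{L}},R_1\}$ of $U_{\mathrm{L}}\cup R_1$, the injective maps $\alpha^{(i)}_{\sh{F}}$ from Lemma~\eqref{Lemma: linear map description of an object on the regions U_T, U_L, and U_R} coincide with the restrictions of the injective maps defining $\sh{F}$ on $R_1$. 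A verbatim argument then applies to $U_{\mathrm{R}}$ and $R_\ell$. Combining these two reductions with the global characterization of $\sh{F}$ from Theorem~\eqref{Flags and constructible sheaves} would yield that the open cover $\mathcal{U}_{\Lambda(\beta)}$ reduces effectively to $\{U_{\mathrm{T}}\}\cup \mathcal{R}_{\Lambda(\beta)}$ (after discarding $U_0$, on which $\sh{F}$ vanishes) and that $\sh{F}$ is globally reconstructed from this data via the sheaf axioms.

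The main obstacle I anticipate is verifying that the cusp compatibility conditions are correctly transferred to $R_1$ and $R_\ell$. Specifically, the identities $\psi^{(i)}_{\sh{F}}\circ \alpha^{(i)}_{\sh{F}}=\mathrm{id}_{\mathbb{K}^{i}}$ and $\psi^{(i)}_{\sh{F}}\circ \beta^{(i)}_{\sh{F}}=\mathrm{id}_{\mathbb{K}^{i}}$ from Lemma~\eqref{Lemma: linear map description of an object on the regions U_T, U_L, and U_R} originally live on the intersections $U_{\mathrm{L}}\cap U_{\mathrm{T}}$ and $U_{\mathrm{R}}\cap U_{\mathrm{T}}$, and after the reduction these must be reformulated as compatibility conditions between the surjective maps defining $\sh{F}$ on $U_{\mathrm{T}}$ and the injective maps defining $\sh{F}$ on $R_1$ and $R_\ell$. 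The delicate point is to ensure that no data is lost in this transfer and that the resulting compatibility conditions faithfully encode the microlocal support conditions at every left and right cusp, so that the original global description of $\sh{F}$ can be unambiguously recovered from the reduced data.
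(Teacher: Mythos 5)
Your proposal follows essentially the same route as the paper's own justification of this observation: since $U_{\mathrm{L}}$ (resp.\ $U_{\mathrm{R}}$) contains no crossings and $R_{1}$ (resp.\ $R_{\ell}$) is its unique adjacent strap, constructibility and the sheaf axioms force the arc data of $\sh{F}$ on $U_{\mathrm{L}}$ to agree with that on $U_{\mathrm{L}}\cap R_{1}$, so the cusp compatibility conditions simply get carried over to the maps on $R_{1}$ and $R_{\ell}$. The only slip is your placement of the left cusps in $U_{\mathrm{L}}\cap R_{1}$ — they lie in $U_{\mathrm{L}}\cap U_{\mathrm{T}}$, away from the straps — but this does not affect the argument, since what transfers to $R_{1}$ are the arcs and the injective maps attached to them, not the cusps themselves.
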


Now, let $\mathcal{U}_{\Lambda(\beta)}=\big\{U_{0}, U_{\mathrm{B}}, U_{\mathrm{L}}, U_{\mathrm{R}}, U_{\mathrm{T}}\big\}$ be the open cover of $\mathbb{R}^{2}$ from Construction~\eqref{Cons: Finite open cover for R^2}, $\mathcal{R}_{\Lambda(\beta)}=\big\{R_{j}\big\}_{j=1}^{\ell}$ be the partition of $U_{\mathrm{B}}$ into $\ell$ open vertical straps from Construction~\eqref{Cons: Definition of the vertical straps}, $\sh{F}$ and $\sh{G}$ objects of the category $\ccs{1}{\beta}$, and $\lambda\in\mathrm{Ext}^{0}(\sh{F},\sh{G})$ a sheaf homomorphism between them. Building on Observation~\eqref{Obs: objects on U_T and vertical straps}, we have that the global structure of $\lambda$ can be fully recovered from its local description on the region $U_{\mathrm{T}}$ and the collection of vertical straps $\mathcal{R}_{\Lambda(\beta)}$, together with the appropriate compatibility conditions. Accordingly, we present the following result.

\begin{lemma}\label{Lemma: Ext0 on U_T and U_L}
\textbf{Setup}: Let $\beta=\sigma_{i_{1}}\cdots \sigma_{i_{\ell}} \in\mathrm{Br}^{+}_{n}$ be a positive braid word, $\mathcal{U}_{\Lambda(\beta)}=\big\{U_{0}, U_{\mathrm{B}}, U_{\mathrm{L}}, U_{\mathrm{R}}, U_{\mathrm{T}}\big\}$ the open cover of $\mathbb{R}^{2}$ from Construction~\eqref{Cons: Finite open cover for R^2}, and $\sh{F}$ and $\sh{G}$ objects of the category $\ccs{1}{\beta}$. According to Lemma~\eqref{Lemma: linear map description of an object on the regions U_T, U_L, and U_R}, $\sh{F}$ and $\sh{G}$ have the following local descriptions: 
\begin{itemize}
\justifying
\item  On $U_{\mathrm{T}}$, $\sh{F}$ and $\sh{G}$ are specified by two collections of surjective linear maps 
\begin{equation*}
\big\{\psi_{\sh{F}}^{(i)}:\mathbb{K}^{i+1}\to \mathbb{K}^{i}\big\}_{i=1}^{n-1}\, , \quad \text{and} \quad \big\{\psi_{\sh{G}}^{(i)}:\mathbb{K}^{i+1}\to \mathbb{K}^{i}\big\}_{i=1}^{n-1}\, ,    
\end{equation*}
respectively. For a schematic illustration of a generic representative of one of these sheaves on $U_{\mathrm{T}}$, see Figure~\eqref{Fig: an object F in the region U_T}.

\item On $U_{\mathrm{L}}$, $\sh{F}$ and $\sh{G}$ are specified by two collections of injective linear maps 
\begin{equation*}
\big\{\phi_{\sh{F}}^{(i)}:\mathbb{K}^{i}\to \mathbb{K}^{i+1}\big\}_{i=1}^{n-1}\, , \quad \text{and} \quad \big\{\phi_{\sh{G}}^{(i)}:\mathbb{K}^{i}\to \mathbb{K}^{i+1}\big\}_{i=1}^{n-1} \, ,    
\end{equation*}
respectively. For a schematic illustration of a generic representative of one of these sheaves on $U_{\mathrm{L}}$, see Figure~\eqref{Fig: an object F in the region U_L}.

\item \textbf{Compatibility conditions}: For each $i\in[1,n-1]$, 
\begin{equation*}
\psi^{(i)}_{\sh{F}}\circ \phi^{(i)}_{\sh{F}}=\mathrm{id}_{\mathbb{K}^{i}}\, , \quad \text{and} \quad \psi^{(i)}_{\sh{G}}\circ \phi^{(i)}_{\sh{G}}=\mathrm{id}_{\mathbb{K}^{i}}\, .    
\end{equation*}
\end{itemize}

\noindent
$\star$ \emph{Assumption}: For each $i\in[1,n]$, let $\hat{\mathbf{f}}^{(i)}:=\big\{\hat{f}^{(i)}_{j}\big\}_{j=1}^{i}$ and $\hat{\mathbf{g}}^{(i)}:=\big\{\hat{g}^{(i)}_{j}\big\}_{j=1}^{i}$ be bases for $\mathbb{K}^{i}$. Based on Definition~\eqref{Def:flags and adapted bases}--\eqref{Def: adapted bases I}--\eqref{Def: adapted bases II}, we assume that: 
\begin{itemize}
\justifying
\item $\big\{\hat{\mathbf{f}}^{(i)}\big\}_{i=1}^{n}$ is a system of bases adapted to both $\big\{\psi_{\sh{F}}^{(i)}\big\}_{i=1}^{n-1}$ and $\big\{\phi_{\sh{F}}^{(i)}\,\big\}_{i=1}^{n-1}$.
\item  $\big\{\hat{\mathbf{g}}^{(i)}\big\}_{i=1}^{n}$ is a system of bases adapted to both $\big\{\psi_{\sh{G}}^{(i)}\big\}_{i=1}^{n-1}$ and $\big\{\phi_{\sh{G}}^{(i)}\,\big\}_{i=1}^{n-1}$.
\end{itemize}

\smallskip
\noindent
\textbf{Main Conclusion}: Let $\lambda\in \mathrm{Ext}^{0}(\sh{F},\sh{G})$. Under the given \textbf{setup}, the following statements hold: 

\noindent
$\star$ \emph{Local characterization of $\lambda$ on $U_{\mathrm{T}}$ and $U_{\mathrm{L}}$}: On $U_{\mathrm{T}}$ and \,$U_{\mathrm{L}}$, $\lambda$ is characterized by a collection of $n$ linear maps $\big\{ T^{(i)}_{\lambda}:\mathbb{K}^{i}\to \mathbb{K}^{i} \big\}_{i=1}^{n}$ such that for each $i\in [1,n-1]$: 
\begin{equation*}
T^{(i)}_{\lambda}\circ \psi^{(i)}_{\sh{F}}=\psi^{(i)}_{\sh{G}}\circ T^{(i+1)}_{\lambda}\, , \quad \text{and} \quad T^{(i+1)}_{\lambda}\circ \phi^{(i)}_{\sh{F}}=\phi^{(i)}_{\sh{G}}\circ T^{(i)}_{\lambda}\, .    
\end{equation*} 

\noindent
$\star$ \emph{Block-diagonal properties}: With respect to the bases $\big\{\hat{\mathbf{f}}^{(i)}\big\}_{i=1}^{n}$ and $\big\{\hat{\mathbf{g}}^{(i)}\big\}_{i=1}^{n}$, we have that: 
\begin{itemize}
\justifying
\item $\tensor[_{\hat{\mathbf{g}}^{(n)}}]{ \big[\, T_{\lambda}^{\,(n)}\,\big] }{_{\hat{\mathbf{f}}^{(n)}}}\in M(n,\mathbb{K})$ is a diagonal matrix. 
\item For each $i\in[1,n-1]$, the matrix $\tensor[_{\hat{\mathbf{g}}^{(i)}}]{ \big[\, T_{\lambda}^{(i)}\,\big] }{_{\hat{\mathbf{f}}^{(i)}}}\in M(i,\mathbb{K})$ coincides with the principal $i\times i$ submatrix of $\tensor[_{\hat{\mathbf{g}}^{(n)}}]{ \big[\, T_{\lambda}^{\,(n)}\,\big] }{_{\hat{\mathbf{f}}^{(n)}}}$.  
\end{itemize}
\end{lemma}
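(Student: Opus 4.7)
The plan is to adapt the strategy from Proposition~\eqref{Prop: Ext0 for the unlink}, since the region $U_{\mathrm{L}}$ plays exactly the same role for general $\beta$ as the region $U_{\mathrm{B}}$ does for the trivial braid: it contains the $n$ left cusps together with the injective linear maps $\bigl\{\phi^{(i)}_{\sh{F}}\bigr\}$ and $\bigl\{\phi^{(i)}_{\sh{G}}\bigr\}$, without any crossings. Concretely, by Observation~\eqref{Obs: elements of Ext0 near arcs} applied to every arc of $\Pi_{x,z}(\Lambda(\beta))$ inside $U_{\mathrm{T}}$, the sheaf homomorphism $\lambda$ is determined on $U_{\mathrm{T}}$ by a collection of linear maps $\bigl\{T^{(i)}_{\lambda}:\mathbb{K}^{i}\to\mathbb{K}^{i}\bigr\}_{i=1}^{n}$ such that, for each $i\in[1,n-1]$, the pair $\bigl(T^{(i+1)}_{\lambda},T^{(i)}_{\lambda}\bigr)$ defines a morphism between $\psi^{(i)}_{\sh{F}}$ and $\psi^{(i)}_{\sh{G}}$, giving the first identity in the statement. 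Analogously, on $U_{\mathrm{L}}$, $\lambda$ is determined by a collection $\bigl\{B^{(i)}_{\lambda}:\mathbb{K}^{i}\to\mathbb{K}^{i}\bigr\}_{i=1}^{n}$ such that, for each $i\in[1,n-1]$, the pair $\bigl(B^{(i)}_{\lambda},B^{(i+1)}_{\lambda}\bigr)$ defines a morphism between $\phi^{(i)}_{\sh{F}}$ and $\phi^{(i)}_{\sh{G}}$.

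The next step is to glue these two local descriptions. On the intersection $U_{\mathrm{T}}\cap U_{\mathrm{L}}$, which contains precisely the $n$ left cusps of $\Pi_{x,z}(\Lambda(\beta))$, the sheaf axioms force the stalk-level data determined by $\bigl\{T^{(i)}_{\lambda}\bigr\}$ to agree with that determined by $\bigl\{B^{(i)}_{\lambda}\bigr\}$. In particular, the stalks on the two strands meeting at a cusp are identified with $\mathbb{K}^{n}$ via $\hat{\mathbf{f}}^{(n)}$ and $\hat{\mathbf{g}}^{(n)}$, and this identification yields $T^{(n)}_{\lambda}=B^{(n)}_{\lambda}$. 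The compatibility relation $\psi^{(i)}_{\sh{F}}\circ\phi^{(i)}_{\sh{F}}=\mathrm{id}_{\mathbb{K}^{i}}$, combined with the two morphism identities and a downward induction on $i$, then propagates the equality to $T^{(i)}_{\lambda}=B^{(i)}_{\lambda}$ for every $i\in[1,n]$. This yields the first bullet of the Main Conclusion.

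For the block-diagonal assertion, the adapted-bases hypothesis is exactly what is needed to apply the two technical lemmas proved earlier. Applying Lemma~\eqref{Lemma: Ext0 for surjective linear maps} to the collection $\bigl\{T^{(i)}_{\lambda}\bigr\}$, viewed as a morphism between $\bigl\{\psi^{(i)}_{\sh{F}}\bigr\}$ and $\bigl\{\psi^{(i)}_{\sh{G}}\bigr\}$ with respect to the bases adapted to the surjections, shows that $\tensor[_{\hat{\mathbf{g}}^{(n)}}]{\bigl[T^{(n)}_{\lambda}\bigr]}{_{\hat{\mathbf{f}}^{(n)}}}$ is lower triangular and that each smaller representing matrix is the principal $i\times i$ submatrix. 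Applying Lemma~\eqref{Lemma: Ext0 for injective linear maps} to the same collection, now viewed as a morphism between $\bigl\{\phi^{(i)}_{\sh{F}}\bigr\}$ and $\bigl\{\phi^{(i)}_{\sh{G}}\bigr\}$ (using that the bases are also adapted to the injections), shows that $\tensor[_{\hat{\mathbf{g}}^{(n)}}]{\bigl[T^{(n)}_{\lambda}\bigr]}{_{\hat{\mathbf{f}}^{(n)}}}$ is simultaneously upper triangular and that, again, each smaller representing matrix is the corresponding principal submatrix. A matrix that is both upper and lower triangular is diagonal, which yields the second and third bullets.

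The only delicate point, and the one that is worth writing out carefully rather than waving through, is the gluing step on $U_{\mathrm{T}}\cap U_{\mathrm{L}}$: one must verify that the bases $\hat{\mathbf{f}}^{(n)}$ and $\hat{\mathbf{g}}^{(n)}$, adapted on the one hand to the surjections and on the other to the injections, yield a consistent identification of the top stalk of each strand with $\mathbb{K}^{n}$, so that the equality $T^{(n)}_{\lambda}=B^{(n)}_{\lambda}$ is meaningful as an equality of matrices with respect to the same pair of bases. This is guaranteed by the uniqueness remark following Lemma~\eqref{Lemma: K type flag in adapted bases for surjective maps} together with the remark following Lemma~\eqref{Lemma: I type flag in adapted bases for injective maps}, which show that the two systems of bases adapted to the two collections of maps are forced to coincide by the cusp compatibility conditions; once this point is nailed down, the rest of the argument is a straightforward combination of the two technical lemmas.
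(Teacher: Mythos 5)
Your proposal is correct and follows essentially the same route as the paper: the paper's proof simply says the argument parallels Proposition~\eqref{Prop: Ext0 for the unlink} (local characterization of $\lambda$ via Observation~\eqref{Obs: elements of Ext0 near arcs}, gluing the $U_{\mathrm{T}}$ and $U_{\mathrm{L}}$ data through the cusps, then Lemmas~\eqref{Lemma: Ext0 for surjective linear maps} and~\eqref{Lemma: Ext0 for injective linear maps} to get simultaneously lower- and upper-triangular, hence diagonal, representing matrices). Your extra care about the consistency of the adapted bases on $U_{\mathrm{T}}\cap U_{\mathrm{L}}$ is a sound elaboration of a point the paper leaves implicit.
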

\begin{proof}
The argument parallels that of Proposition~\eqref{Prop: Ext0 for the unlink}. More precisely, given the local descriptions of $\sh{F}$ and $\sh{G}$ on $U_{\mathrm{T}}$ and $U_{\mathrm{L}}$ from Lemma~\eqref{Lemma: linear map description of an object on the regions U_T, U_L, and U_R}, the \emph{local characterization of $\lambda$ on $U_{\mathrm{T}}$ and $U_{\mathrm{L}}$} follows from Observation~\eqref{Obs: elements of Ext0 near arcs}. Furthermore, this characterization, combined with the standing \emph{assumption} and Lemmas~\eqref{Lemma: Ext0 for surjective linear maps} and~\eqref{Lemma: Ext0 for injective linear maps}, yields the \emph{block-diagonal properties}.
\end{proof}

Fix $j\in [1,\ell]$, and let $R_{j}$ be the vertical strap in $\mathbb{R}^2$ containing $\sigma_{i_{j}}$---the $j$-th crossing of $\beta$ (see Figure~\eqref{fig: A sub-regions R_j}). Building on the algebraic local models established in Lemmas~\eqref{Lemma: matrix local model for sigma_1} and~\eqref{Lemma: matrix local model for sigma_k}, we now give an explicit local characterization of the elements of the zero-degree morphism spaces in the category $\ccs{1}{\beta}$ on $R_j$.

\begin{lemma}\label{Lemma: Ext0 on R_j for k=1}
\textbf{Setup}: Let $\beta=\sigma_{i_{1}}\cdots \sigma_{i_{\ell}}\in\mathrm{Br}^{+}_{n}$ be a positive braid word, and let $\sh{F}$ and $\sh{G}$ be objects of the category $\ccs{1}{\beta}$. Fix $j\in[1,\ell]$, and let $R_{j}$ be the vertical strap in $\mathbb{R}^{2}$ containing $\sigma_{i_{j}}$---the $j$-th crossing of $\beta$ (see Figure~\eqref{fig: A sub-regions R_j}). 

\noindent
$\star$ \emph{Assumption 1}: Let $k:=i_{j}\in[1,n-1]$ denote the index of $\sigma_{i_{j}}$, and suppose that $k=1$.

Under this assumption, on $R_{j}$, $\sh{F}$ and $\sh{G}$ are specified by two collections of $n$ injective linear maps 
\begin{equation}\label{Eq: maps defining F and G on R_j for k=1}
\begin{aligned}
\big\{\phi^{\,(i)}_{\sh{F}}:\mathbb{K}^{i}\to \mathbb{K}^{i+1}\,\big\}_{i=1}^{n-1}&\,\cup\,\big\{ \widetilde{\phi}^{\,(1)}_{\sh{F}}:\mathbb{K}^{1}\to \mathbb{K}^{2}\, \big\}\, ,\\[6pt]
\big\{\phi^{\,(i)}_{\sh{G}}:\mathbb{K}^{i}\to \mathbb{K}^{i+1}\,\big\}_{i=1}^{n-1}&\,\cup\,\big\{ \widetilde{\phi}^{\,(1)}_{\sh{G}}:\mathbb{K}^{1}\to \mathbb{K}^{2}\, \big\}\, ,
\end{aligned}    
\end{equation}
respectively. For a schematic illustration of a generic representative of one of these sheaves, see Figure~\eqref{fig: a sheaf in the vertical strap R_j containing a crossing sigma_1}.

\noindent
$\star$ \emph{Assumption 2}: For each $i\in[1,n]$, let $\hat{\mathbf{f}}^{(i)}:=\big\{\hat{f}^{(i)}_{j}\big\}_{j=1}^{i}$ and $\hat{\mathbf{g}}^{(i)}:=\big\{\hat{g}^{(i)}_{j}\big\}_{j=1}^{i}$ be bases for $\mathbb{K}^{i}$. Based on Definition~\eqref{Def:flags and adapted bases}--\eqref{Def: adapted bases I}, we assume that: 
\begin{itemize}
\justifying
\item $\big\{\hat{\mathbf{f}}^{(i)}\big\}_{i=1}^{n}$ is a system of bases adapted to $\big\{\phi_{\sh{F}}^{(i)}\,\big\}_{i=1}^{n-1}$.
\item $\big\{\hat{\mathbf{g}}^{(i)}\big\}_{i=1}^{n}$ is a system of bases adapted to $\big\{\phi_{\sh{G}}^{(i)}\,\big\}_{i=1}^{n-1}$.
\end{itemize}

Under this assumption, Lemma~\eqref{Lemma: matrix local model for sigma_1} ensures that $\big(\big\{\hat{\mathbf{f}}^{(i)}\big\}_{i=1}^{n}, z \big)$ and $\big(\big\{\hat{\mathbf{g}}^{(i)}\big\}_{i=1}^{n}, z' \big)$ are system of bases adapted to $\sh{F}$ and $\sh{G}$ on $R_{j}$, where $z,\,z'\in \mathbb{K}$ algebraically parameterize, relative to the bases $\hat{\mathbf{f}}^{(n)}$ and $\hat{\mathbf{g}}^{(n)}$ for $\mathbb{K}^{n}$, the $s_{1}$-relative position between the pairs of complete flags in $\mathbb{K}^{n}$ that geometrically characterize $\sh{F}$ and $\sh{G}$ on $R_{j}$, respectively. In particular, following Definition~\eqref{Def: braid transformation of bases}, we denote by $\big\{\hat{\mathbf{f}}^{(i)}[\sigma_{1},z]\big\}_{i=1}^{n}$ and $\big\{\hat{\mathbf{g}}^{(i)}[\sigma_{1},z']\big\}_{i=1}^{n}$ the corresponding braid-transformed bases.

\vspace{4pt}
\noindent
\textbf{Main Conclusion}: Let $\lambda\in\mathrm{Ext}^{0}(\sh{F},\sh{G})$. Under the given \textbf{setup}, the following statements hold:

\noindent
$\star$ \emph{Local characterization of $\lambda$ on $R_{j}$}: On $R_{j}$, $\lambda$ is specified by a collection of $n+1$ linear maps
\begin{equation}\label{Eq: maps for lambda in R_j for k=1}
\big\{ T_{\lambda}^{(i)}:\mathbb{K}^{i}\to \mathbb{K}^{i}\big\}_{i=1}^{n}\,\cup\,\big\{ \widetilde{T}_{\lambda}^{\,(1)}:\mathbb{K}^{1}\to \mathbb{K}^{1}\big\}\, ,
\end{equation}
such that
\begin{equation}\label{Eq: morphism conditions for lambda in R_j for k=1}
\begin{aligned}
T^{(i+1)}_{\lambda}\circ \phi_{\sh{F}}^{(i)}=\phi^{(i)}_{\sh{G}}\circ T_{\lambda}^{(i)}\, , \quad \text{and} \quad
T^{(2)}_{\lambda}\circ \widetilde{\phi}_{\sh{F}}^{\,(1)}=\widetilde{\phi}^{\,(1)}_{\sh{G}}\circ \widetilde{T}_{\lambda}^{\,(1)}\, ,
\end{aligned}
\end{equation}
for each $i\in [1,n-1]$. 

\noindent
$\star$ \emph{Block-Triangular Properties I}: With respect to the bases $\big\{\hat{\mathbf{f}}^{(i)}\big\}_{i=1}^{n}$ and $\big\{\hat{\mathbf{g}}^{(i)}\big\}_{i=1}^{n}$, we have that: 
\begin{itemize}
\justifying
\item $\tensor[_{\hat{\mathbf{g}}^{(n)}}]{ \big[\, T_{\lambda}^{(n)}\,\big] }{_{\hat{\mathbf{f}}^{(n)}}}\in M(n,\mathbb{K})$ is an upper-triangular matrix. 
\item For each $i\in[1,n-1]$, the matrix $\tensor[_{\hat{\mathbf{g}}^{(i)}}]{ \big[\, T_{\lambda}^{(i)}\,\big] }{_{\hat{\mathbf{f}}^{(i)}}}\in M(i,\mathbb{K})$ coincides with the principal $i\times i$ submatrix of $\tensor[_{\hat{\mathbf{g}}^{(n)}}]{ \big[\, T_{\lambda}^{\,(n)}\,\big] }{_{\hat{\mathbf{f}}^{(n)}}}$, 
\end{itemize}

\noindent
$\star$ \emph{Block-Triangular Properties II}: With respect to the bases $\big\{\hat{\mathbf{f}}^{(i)}[\sigma_{1},z]\big\}_{i=1}^{n}$ and $\big\{\hat{\mathbf{g}}^{(i)}[\sigma_{1},z']\big\}_{i=1}^{n}$, we have that: 
\begin{itemize}
\justifying
\item $\tensor[_{\hat{\mathbf{g}}^{(n)}[\sigma_{1},z'] }]{ \big[\, T_{\lambda}^{\,(n)}\,\big] }{_{\hat{\mathbf{f}}^{(n)}[\sigma_{1},z]}}\in M(n,\mathbb{K})$ is upper-triangular. 

\item $\tensor[_{\hat{\mathbf{g}}^{(1)}[\sigma_{1},z'] }]{ \big[\, \widetilde{T}_{\lambda}^{\,(1)}\,\big] }{_{\hat{\mathbf{f}}^{(1)}[\sigma_{1},z]}} \in M(1,\mathbb{K})$ coincides with the principal $1\times 1$ submatrix of $\tensor[_{\hat{\mathbf{g}}^{(n)}[\sigma_{1},z'] }]{ \big[\, T_{\lambda}^{\,(n)}\,\big] }{_{\hat{\mathbf{f}}^{(n)}[\sigma_{1},z]}}$. 

\item For each $i\in[2,n-1]$, $\tensor[_{\hat{\mathbf{g}}^{(i)}[\sigma_{1},z'] }]{ \big[\, T_{\lambda}^{\,(i)}\,\big] }{_{\hat{\mathbf{f}}^{(i)}[\sigma_{1},z]}} \in M(i,\mathbb{K})$ coincides with the principal $i\times i$ submatrix of $\tensor[_{\hat{\mathbf{g}}^{(n)}[\sigma_{1},z'] }]{ \big[\, T_{\lambda}^{\,(n)}\,\big] }{_{\hat{\mathbf{f}}^{(n)}[\sigma_{1},z]}}$.
\end{itemize}

\noindent
$\star$ \emph{Refinement}: Suppose that $\tensor[_{\hat{\mathbf{g}}^{(n)}}]{ \big[\, T_{\lambda}^{\,(n)}\,\big] }{_{\hat{\mathbf{f}}^{(n)}}}$ is diagonal, that is,
\begin{equation*}
\tensor[_{\hat{\mathbf{g}}^{(n)}}]{ \big[\, T_{\lambda}^{\,(n)}\,\big] }{_{\hat{\mathbf{f}}^{(n)}}}=D(\vec{u})\, ,    
\end{equation*}
for some tuple $\vec{u}=(u_{1},\dots, u_{n})\in \mathbb{K}^{n}_{\mathrm{std}}$. Then:

\begin{itemize}
\item \textbf{Compatibility condition for $\lambda$}: The $(1+1,1)$-entry of the matrix product
\begin{equation*}
\big(B^{(n)}_{1}(z')\big)^{-1}\cdot D(\vec{u})\cdot B^{(n)}_{1}(z)    
\end{equation*}
must vanish. 

\item Under the above condition,
\begin{equation*}
\tensor[_{\hat{\mathbf{g}}^{(n)}[\sigma_{1},z'] }]{ \big[\, T_{\lambda}^{(n)}\,\big] }{_{\hat{\mathbf{f}}^{(n)}[\sigma_{1},z]}}=D(\pi_{1}(\vec{u}))\,,    
\end{equation*}
where $\pi_{1}(\vec{u})$ denotes the permutation of $\vec{u}$ associated with $\sigma_{1}$.
\end{itemize}
\end{lemma}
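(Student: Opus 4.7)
The plan is to establish each claim of the main conclusion in turn, leveraging the algebraic local model from Lemma~\eqref{Lemma: matrix local model for sigma_1} together with Lemma~\eqref{Lemma: Ext0 for injective linear maps}, and to conclude with a short explicit matrix computation for the refinement.

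First, I would derive the local characterization of $\lambda$ on $R_{j}$ by invoking Observation~\eqref{Obs: elements of Ext0 near arcs}. The intersection $R_{j}\cap\Pi_{x,z}(\Lambda(\beta))$ consists of the braid diagram on $n$ strands associated with $\sigma_{1}$, so its arc components correspond to the $n-2$ upper strands together with the two lower strands that meet at the crossing. Each arc forces $\lambda$ to carry a linear map intertwining the corresponding pair of defining maps of $\sh{F}$ and $\sh{G}$. Collecting these data along the left half of $R_{j}$ produces the collection $\{T^{\,(i)}_{\lambda}\}_{i=1}^{n}$ together with the identities $T^{\,(i+1)}_{\lambda}\circ\phi^{\,(i)}_{\sh{F}}=\phi^{\,(i)}_{\sh{G}}\circ T^{\,(i)}_{\lambda}$, while the lowest strand on the right of the crossing contributes the extra map $\widetilde{T}^{\,(1)}_{\lambda}$ and the identity $T^{\,(2)}_{\lambda}\circ\widetilde{\phi}^{\,(1)}_{\sh{F}}=\widetilde{\phi}^{\,(1)}_{\sh{G}}\circ\widetilde{T}^{\,(1)}_{\lambda}$. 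Constructibility of $\sh{F},\sh{G}$ with respect to $\mathcal{S}_{\Lambda(\beta)}$ together with the sheaf axioms guarantee that the same $T^{\,(2)}_{\lambda}$ appears in both equations.

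Second, both block-triangular properties follow from Lemma~\eqref{Lemma: Ext0 for injective linear maps}. For Block-Triangular Properties I, I would apply that lemma directly to the pair of collections $\{\phi^{\,(i)}_{\sh{F}}\}_{i=1}^{n-1}$ and $\{\phi^{\,(i)}_{\sh{G}}\}_{i=1}^{n-1}$ with their adapted systems of bases $\{\hat{\mathbf{f}}^{(i)}\}_{i=1}^{n}$ and $\{\hat{\mathbf{g}}^{(i)}\}_{i=1}^{n}$. For Block-Triangular Properties II, the key observation---already implicit in the proof of Lemma~\eqref{Lemma: matrix local model for sigma_1}---is that $\{\hat{\mathbf{f}}^{(i)}[\sigma_{1},z]\}_{i=1}^{n}$ is a system of bases adapted to the modified collection $\{\widetilde{\phi}^{\,(1)}_{\sh{F}},\phi^{\,(2)}_{\sh{F}},\dots,\phi^{\,(n-1)}_{\sh{F}}\}$, and analogously for $\{\hat{\mathbf{g}}^{(i)}[\sigma_{1},z']\}_{i=1}^{n}$. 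Reapplying Lemma~\eqref{Lemma: Ext0 for injective linear maps} to this modified data, with $\widetilde{T}^{\,(1)}_{\lambda}$ now playing the role of the bottom entry, yields the stated upper-triangular and nested-submatrix structure.

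Third, for the refinement, Definition~\eqref{Def: braid transformation of bases} identifies the change-of-basis matrices from $\hat{\mathbf{f}}^{(n)}$ to $\hat{\mathbf{f}}^{(n)}[\sigma_{1},z]$ and from $\hat{\mathbf{g}}^{(n)}$ to $\hat{\mathbf{g}}^{(n)}[\sigma_{1},z']$ as $B^{(n)}_{1}(z)$ and $B^{(n)}_{1}(z')$, respectively. The standard change-of-basis formula then gives
\begin{equation*}
\tensor[_{\hat{\mathbf{g}}^{(n)}[\sigma_{1},z']}]{\big[\,T^{\,(n)}_{\lambda}\,\big]}{_{\hat{\mathbf{f}}^{(n)}[\sigma_{1},z]}}=\big(B^{(n)}_{1}(z')\big)^{-1}\cdot D(\vec{u})\cdot B^{(n)}_{1}(z)\,.
\end{equation*}
Block-Triangular Properties II forces this matrix to be upper-triangular, so its $(2,1)$-entry must vanish; this is precisely the stated compatibility condition. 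Since $B^{(n)}_{1}(z)$ acts as the identity outside the top-left $2\times 2$ block and reduces there to $\bigl(\begin{smallmatrix} z & 1 \\ 1 & 0 \end{smallmatrix}\bigr)$, a direct $2\times 2$ computation of $\bigl(\begin{smallmatrix} 0 & 1 \\ 1 & -z' \end{smallmatrix}\bigr)\bigl(\begin{smallmatrix} u_{1} & 0 \\ 0 & u_{2}\end{smallmatrix}\bigr)\bigl(\begin{smallmatrix} z & 1 \\ 1 & 0 \end{smallmatrix}\bigr)=\bigl(\begin{smallmatrix} u_{2} & 0 \\ u_{1}z-u_{2}z' & u_{1}\end{smallmatrix}\bigr)$ shows that, under the vanishing of the $(2,1)$-entry, the full matrix is diagonal and equal to $D(\pi_{1}(\vec{u}))=D(u_{2},u_{1},u_{3},\dots,u_{n})$, as claimed.

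The main obstacle I anticipate is the first phase: carefully organizing the compatibility relations at the crossing and confirming that the same $T^{\,(2)}_{\lambda}$ bridges the left and right halves of $R_{j}$. Without this identification, Block-Triangular Properties II could not be deduced for the full collection $\{T^{\,(i)}_{\lambda}\}\cup\{\widetilde{T}^{\,(1)}_{\lambda}\}$ simultaneously. Once the local picture on $R_{j}$ is in place, the remaining two phases are essentially bookkeeping with adapted bases and a short explicit matrix computation.
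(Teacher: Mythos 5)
Your proposal is correct and follows essentially the same route as the paper's proof: Observation~\eqref{Obs: elements of Ext0 near arcs} for the local characterization, Lemma~\eqref{Lemma: Ext0 for injective linear maps} applied first to the adapted bases and then to the braid-transformed bases (after checking these are adapted to the modified collections containing $\widetilde{\phi}^{\,(1)}_{\sh{F}}$ and $\widetilde{\phi}^{\,(1)}_{\sh{G}}$), and the change-of-basis formula for the refinement. The only cosmetic difference is that you carry out the $2\times 2$ conjugation computation directly where the paper cites Lemma~\eqref{lemma for braid matrices and diagonal matrices}; your computation is correct.
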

\begin{proof}
According to \emph{Assumption 1}, on $R_{j}$, $\sh{F}$ and $\sh{G}$ are defined by the collections of $n$ injective linear maps listed in~\eqref{Eq: maps defining F and G on R_j for k=1}. Hence, the \emph{Local characterization of $\lambda$ on $R_{j}$}, specified by the $n+1$ linear maps listed in~\eqref{Eq: maps for lambda in R_j for k=1} and subject to the morphism conditions~\eqref{Eq: morphism conditions for lambda in R_j for k=1}, follows from Observation~\eqref{Obs: elements of Ext0 near arcs}, which establishes the local description of the elements of $\mathrm{Ext}^{0}(\sh{F},\sh{G})$ near an arc of the stratification $\mathcal{S}_{\Lambda(\beta)}$ of $\mathbb{R}^{2}$ induced by $\Lambda(\beta)\subset (\mathbb{R}^{3},\xi_{\mathrm{std}})$.   

By \emph{Assumption 2}, $\big\{\hat{\mathbf{f}}^{(i)}\big\}_{i=1}^{n}$ and $\big\{\hat{\mathbf{g}}^{(i)}\big\}_{i=1}^{n}$ are systems of bases adapted to $\big\{\phi_{\sh{F}}^{(i)}\big\}_{i=1}^{n-1}$ and $\big\{\phi_{\sh{G}}^{(i)}\big\}_{i=1}^{n-1}$, respectively. Therefore, by virtue of the morphism conditions~\eqref{Eq: morphism conditions for lambda in R_j for k=1}, the \emph{Block-Triangular Properties I} follow directly from Lemma~\eqref{Lemma: Ext0 for injective linear maps}.

Moreover, under \emph{Assumption 2}, Lemma~\eqref{Lemma: matrix local model for sigma_1} asserts that $\big(\big\{\hat{\mathbf{f}}^{(i)}\big\}_{i=1}^{n}, z \big)$ and $\big(\big\{\hat{\mathbf{g}}^{(i)}\big\}_{i=1}^{n}, z' \big)$ are system of bases adapted to $\sh{F}$ and $\sh{G}$ on $R_{j}$, where $z,z'\in \mathbb{K}$ algebraically parameterize, relative to the bases $\hat{\mathbf{f}}^{(n)}$ and $\hat{\mathbf{g}}^{(n)}$ for $\mathbb{K}^{n}$, the $s_{1}$-relative position between the pairs of complete flags in $\mathbb{K}^{n}$ that geometrically characterize $\sh{F}$ and $\sh{G}$ on $R_{j}$, respectively. Accordingly, we have that:  
\begin{itemize}
\item  With respect to the bases $\big\{\hat{\mathbf{f}}^{(i)}\big\}_{i=1}^{n}$, the linear maps determining $\sh{F}$ on $R_{j}$ have the matrix representations: 
\begin{equation*}
\begin{aligned}
\tensor[_{\hat{\mathbf{f}}^{(i+1)}}]{ \big[\,\phi^{\,(i)}_{\sh{F}}\,\big] }{_{\hat{\mathbf{f}}^{(i)}}}=\iota^{(i+1,i)}\, , \quad \text{for all $i\in[1,n-1]$}\, , \quad \text{and} \quad  
\tensor[_{\hat{\mathbf{f}}^{(2)}}]{ \big[\,\widetilde{\phi}^{\,(1)}_{\sh{F}}\,\big] }{_{\hat{\mathbf{f}}^{(1)}}}=B^{(2)}_{1}(z)\cdot\iota^{(2,1)}\, .
\end{aligned}   
\end{equation*} 
\item  With respect to the bases $\big\{\hat{\mathbf{g}}^{(i)}\big\}_{i=1}^{n}$, the linear maps determining $\sh{G}$ on $R_{j}$ have the matrix representations: 
\begin{equation*}
\begin{aligned}
\tensor[_{\hat{\mathbf{g}}^{(i+1)}}]{ \big[\,\phi^{\,(i)}_{\sh{G}}\,\big] }{_{\hat{\mathbf{g}}^{(i)}}}=\iota^{(i+1,i)}\, , \quad \text{for all $i\in[1,n-1]$}\, , \quad \text{and} \quad
\tensor[_{\hat{\mathbf{g}}^{(2)}}]{ \big[\,\widetilde{\phi}^{\,(1)}_{\sh{G}}\,\big] }{_{\hat{\mathbf{g}}^{(1)}}}=B^{(2)}_{1}(z')\cdot\iota^{(2,1)}\, .
\end{aligned}   
\end{equation*} 
\end{itemize}

Next, consider the braid-transformed bases $\big\{\hat{\mathbf{f}}^{(i)}[\sigma_{1},z]\big\}_{i=1}^{n}$ and $\big\{\hat{\mathbf{g}}^{(i)}[\sigma_{1},z']\big\}_{i=1}^{n}$ (cf. Definition~\eqref{Def: braid transformation of bases}). In particular, a direct calculation shows that:
\begin{itemize}
\item  With respect to the bases $\big\{\hat{\mathbf{f}}^{(i)}[\sigma_{1},z]\big\}_{i=1}^{n}$,
\begin{equation*}
\begin{aligned}
\tensor[_{\hat{\mathbf{f}}^{(i+1)}[\sigma_{1}, z] }]{ \big[\,\phi^{\,(i)}_{\sh{F}}\,\big] }{_{\hat{\mathbf{f}}^{(i)}[\sigma_{1},z]}}=\iota^{(i+1,i)}\, , \quad \text{for all $2\in[1,n-1]$}\, ,\quad \text{and} \quad   
\tensor[_{\hat{\mathbf{f}}^{(2)}[\sigma_{1}, z]}]{ \big[\,\widetilde{\phi}^{\,(1)}_{\sh{F}}\,\big] }{_{\hat{\mathbf{f}}^{(1)}[\sigma_{1}, z]}}=\iota^{(2,1)}\, .
\end{aligned}   
\end{equation*} 
\item  With respect to the bases $\big\{\hat{\mathbf{g}}^{(i)}[\sigma_{1},z']\big\}_{i=1}^{n}$,
\begin{equation*}
\begin{aligned}
\tensor[_{\hat{\mathbf{g}}^{(i+1)}[\sigma_{1}, z'] }]{ \big[\,\phi^{\,(i)}_{\sh{G}}\,\big] }{_{\hat{\mathbf{g}}^{(i)}[\sigma_{1},z' ]}}=\iota^{(i+1,i)}\, , \quad \text{for all $2\in[1,n-1]$}\, ,\quad \text{and} \quad   
\tensor[_{\hat{\mathbf{g}}^{(2)}[\sigma_{1}, z']}]{ \big[\,\widetilde{\phi}^{\,(1)}_{\sh{G}}\,\big] }{_{\hat{\mathbf{g}}^{(1)}[\sigma_{1}, z']}}=\iota^{(2,1)}\, . 
\end{aligned}   
\end{equation*} 
\end{itemize}
It follows from Definition~\eqref{Def:flags and adapted bases}--\eqref{Def: adapted bases I} that $\big\{\hat{\mathbf{f}}^{(i)}[\sigma_{1},z]\big\}_{i=1}^{n}$ and $\big\{\hat{\mathbf{g}}^{(i)}[\sigma_{1},z']\big\}_{i=1}^{n}$ are systems of bases adapted to $\big\{ \widetilde{\phi}_{\sh{F}}^{\,(1)},\phi_{\sh{F}}^{(2)},\dots,\phi_{\sh{F}}^{(n-1)}\big\}$ and $\big\{ \widetilde{\phi}_{\sh{G}}^{\,(1)},\phi_{\sh{G}}^{(2)},\dots,\phi_{\sh{G}}^{(n-1)}\big\}$, respectively. Hence, by virtue of the morphism conditions~\eqref{Eq: morphism conditions for lambda in R_j for k=1}, the \emph{Block-Triangular Properties II} follow directly from Lemma~\eqref{Lemma: Ext0 for injective linear maps}.

Finally, assume that $\tensor[_{\hat{\mathbf{g}}^{(n)}}]{ \big[\, T_{\lambda}^{(n)}\,\big] }{_{\hat{\mathbf{f}}^{(n)}}}=D(\vec{u})$ for some tuple $\vec{u}=(u_{1},\dots, u_{n})\in \mathbb{K}^{n}_{\mathrm{std}}$. In particular, observe that the bases $\hat{\mathbf{f}}^{(n)}[\sigma_{1},z]$ and $\hat{\mathbf{f}}^{(n)}$ for $\mathbb{K}^{n}$, as well as the bases $\hat{\mathbf{g}}^{(n)}[\sigma_{1},z']$ and $\hat{\mathbf{g}}^{(n)}$, are related by the change-of-basis matrices $B^{(n)}_{1}(z)$ and $B^{(n)}_{1}(z')$, respectively. Therefore, by the \textit{change-of-basis formula}, we deduce that 
\begin{equation*}
\tensor[_{\hat{\mathbf{g}}^{(n)}[\sigma_{1},z'] }]{ \big[\, T_{\lambda}^{(n)}\,\big] }{_{\hat{\mathbf{f}}^{(n)}[\sigma_{1},z]}}=\big( B^{(n)}_{1}(z')\big)^{-1}\cdot D(\vec{u})\cdot B^{(n)}_{1}(z)\, . 
\end{equation*}
Consequently, since $\tensor[_{\hat{\mathbf{g}}^{(n)}[\sigma_{1},z'] }]{ \big[\, T_{\lambda}^{\,(n)}\,\big] }{_{\hat{\mathbf{f}}^{(n)}[\sigma_{1},z]}}$ must be upper-triangular, a direct application of Lemma~\eqref{lemma for braid matrices and diagonal matrices} yields that:
\begin{itemize}
\justifying
\item The $(1+1,1)$-entry of the matrix $\big(B^{(n)}_{1}(z')\big)^{-1}\cdot D(\vec{u})\cdot B^{(n)}_{1}(z)$ must vanish.
\item Under the above condition, $\tensor[_{\hat{\mathbf{g}}^{(n)}[\sigma_{1},z'] }]{ \big[\, T_{\lambda}^{\,(n)}\,\big] }{_{\hat{\mathbf{f}}^{(n)}[\sigma_{1},z]}}=D(\pi_{1}(\vec{u}))$, 
\end{itemize}
where $\pi_{1}(\vec{u})$ denotes the permutation of $\vec{u}$ associated with $\sigma_{1}$. This completes the proof. 
\end{proof}

\begin{lemma}\label{Lemma: Ext0 for R_j for k geq 2}
\textbf{Setup}: Let $\beta=\sigma_{i_{1}}\cdots \sigma_{i_{\ell}}\in\mathrm{Br}^{+}_{n}$ be a positive braid word, and let $\sh{F}$ and $\sh{G}$ be objects of the category $\ccs{1}{\beta}$. Fix $j\in[1,\ell]$, and let $R_{j}$ be the vertical strap in $\mathbb{R}^{2}$ containing $\sigma_{i_{j}}$---the $j$-th crossing of $\beta$ (see Figure~\eqref{fig: A sub-regions R_j}). 

\noindent
$\star$ \emph{Assumption 1}: Let $k:=i_{j}\in[1,n-1]$ denote the index of $\sigma_{i_{j}}$, and suppose that $k\geq 2$.

Under this assumption, on $R_{j}$, $\sh{F}$ and $\sh{G}$ are specified by two collections of $n+1$ injective linear maps 
\begin{equation}\label{Eq: maps defining F and G on R_j for k geq 2}
\begin{aligned}
\big\{ \phi^{(i)}_{\sh{F}}:\mathbb{K}^{i}\to \mathbb{K}^{i+1} \big\}_{i=1}^{n-1}&\,\cup\,\big\{ \widetilde{\phi}^{\,(k-1)}_{\sh{F}}:\mathbb{K}^{k-1}\to \mathbb{K}^{k}\,, ~ \widetilde{\phi}^{\,(k)}_{\sh{F}}:\mathbb{K}^{k}\to \mathbb{K}^{k+1} \big\}\, ,\\[6pt]
\big\{ \phi^{(i)}_{\sh{G}}:\mathbb{K}^{i}\to \mathbb{K}^{i+1} \big\}_{i=1}^{n-1}&\,\cup\,\big\{ \widetilde{\phi}^{\,(k-1)}_{\sh{G}}:\mathbb{K}^{k-1}\to \mathbb{K}^{k}\,, ~ \widetilde{\phi}^{\,(k)}_{\sh{G}}:\mathbb{K}^{k}\to \mathbb{K}^{k+1} \big\}\, ,
\end{aligned}    
\end{equation}
respectively. For a schematic illustration of a generic representative of one of these sheaves, see Figure~\eqref{fig: a sheaf in the vertical strap R_j containing a crossing sigma_k}.

\noindent
$\star$ \emph{Assumption 2}: For each $i\in[1,n]$, let $\hat{\mathbf{f}}^{(i)}:=\big\{\hat{f}^{(i)}_{j}\big\}_{j=1}^{i}$ and $\hat{\mathbf{g}}^{(i)}:=\big\{\hat{g}^{(i)}_{j}\big\}_{j=1}^{i}$ be bases for $\mathbb{K}^{i}$. Based on Definition~\eqref{Def:flags and adapted bases}--\eqref{Def: adapted bases I}, we assume that: 
\begin{itemize}
\justifying
\item $\big\{\hat{\mathbf{f}}^{(i)}\big\}_{i=1}^{n}$ is a system of bases adapted to $\big\{\phi_{\sh{F}}^{(i)}\,\big\}_{i=1}^{n-1}$.
\item $\big\{\hat{\mathbf{g}}^{(i)}\big\}_{i=1}^{n}$ is a system of bases adapted to $\big\{\phi_{\sh{G}}^{(i)}\,\big\}_{i=1}^{n-1}$.
\end{itemize}

Under this assumption, Lemma~\eqref{Lemma: matrix local model for sigma_k} ensures that $\big(\big\{\hat{\mathbf{f}}^{(i)}\big\}_{i=1}^{n}, z \big)$ and $\big(\big\{\hat{\mathbf{g}}^{(i)}\big\}_{i=1}^{n}, z' \big)$ are system of bases adapted to $\sh{F}$ and $\sh{G}$ on $R_{j}$, where $z,z'\in \mathbb{K}$ algebraically parameterize, relative to the bases $\hat{\mathbf{f}}^{(n)}$ and $\hat{\mathbf{g}}^{(n)}$ for $\mathbb{K}^{n}$, the $s_{k}$-relative position between the pairs of complete flags in $\mathbb{K}^{n}$ that geometrically characterize $\sh{F}$ and $\sh{G}$ on $R_{j}$, respectively. In particular, following Definition~\eqref{Def: braid transformation of bases}, we denote by $\big\{\hat{\mathbf{f}}^{(i)}[\sigma_{k},z]\big\}_{i=1}^{n}$ and $\big\{\hat{\mathbf{g}}^{(i)}[\sigma_{k},z']\big\}_{i=1}^{n}$ the corresponding braid-transformed bases.

\vspace{4pt}
\noindent
\textbf{Main Conclusion}: Let $\lambda\in\mathrm{Ext}^{0}(\sh{F},\sh{G})$. Under the given \textbf{setup}, the following statements hold:

\noindent
$\star$ \emph{Local characterization of $\lambda$ on $R_{j}$}: On $R_{j}$, $\lambda$ is specified by a collection of $n+1$ linear maps
\begin{equation}\label{Eq: maps for lambda in R_j for k geq 2}
\big\{T_{\lambda}^{(i)}:\mathbb{K}^{i}\to \mathbb{K}^{i} \big\}_{i=1}^{n}\,\cup\, \big\{ \widetilde{T}_{\lambda}^{\,(k)}:\mathbb{K}^{k}\to \mathbb{K}^{k} \big\}\, ,
\end{equation}
such that
\begin{equation}\label{Eq: morphism conditions for lambda in R_j for k geq 2}
\begin{aligned}
T^{(i+1)}_{\lambda}\circ \phi_{\sh{F}}^{(i)}&=\phi^{(i)}_{\sh{G}}\circ T_{\lambda}^{(i)}\,,\\[2pt]
\widetilde{T}^{\,(k)}_{\lambda}\circ \widetilde{\phi}_{\sh{F}}^{\,(k-1)}&=\widetilde{\phi}^{\,(k-1)}_{\sh{G}}\circ T_{\lambda}^{(k-1)}\, , \\[2pt]
T^{(k+1)}_{\lambda}\circ \widetilde{\phi}_{\sh{F}}^{\,(k)}&=\widetilde{\phi}^{\,(k)}_{\sh{G}}\circ \widetilde{T}_{\lambda}^{\,(k)}\, ,     
\end{aligned}
\end{equation}
for each $i\in[1,n-1]$. 

\noindent
$\star$ \emph{Block-Triangular Properties I}: With respect to the bases $\big\{\hat{\mathbf{f}}^{(i)}\big\}_{i=1}^{n}$ and $\big\{\hat{\mathbf{g}}^{(i)}\big\}_{i=1}^{n}$, we have that: 
\begin{itemize}
\justifying
\item $\tensor[_{\hat{\mathbf{g}}^{(n)}}]{ \big[\, T_{\lambda}^{(n)}\,\big] }{_{\hat{\mathbf{f}}^{(n)}}}\in M(n,\mathbb{K})$ is upper-triangular. 
\item For each $i\in[1,n-1]$, $\tensor[_{\hat{\mathbf{g}}^{(i)}}]{ \big[\, T_{\lambda}^{(i)}\,\big] }{_{\hat{\mathbf{f}}^{(i)}}}\in M(i,\mathbb{K})$ coincides with the principal $i\times i$ submatrix of $\tensor[_{\hat{\mathbf{g}}^{(n)}}]{ \big[\, T_{\lambda}^{\,(n)}\,\big] }{_{\hat{\mathbf{f}}^{(n)}}}$.
\end{itemize}

\noindent
$\star$ \emph{Block-Triangular Properties II}: With respect to the bases $\big\{\hat{\mathbf{f}}^{(i)}[\sigma_{k},z]\big\}_{i=1}^{n}$ and $\big\{\hat{\mathbf{g}}^{(i)}[\sigma_{k},z']\big\}_{i=1}^{n}$, we have that: 
\begin{itemize}
\justifying
\item $\tensor[_{\hat{\mathbf{g}}^{(n)}[\sigma_{k},z'] }]{ \big[\, T_{\lambda}^{(n)}\,\big] }{_{\hat{\mathbf{f}}^{(n)}[\sigma_{k},z]}}\in M(n,\mathbb{K})$ is upper-triangular. 

\item $\tensor[_{\hat{\mathbf{g}}^{(k)}[\sigma_{k},z'] }]{ \big[\, \widetilde{T}_{\lambda}^{\,(k)}\,\big] }{_{\hat{\mathbf{f}}^{(k)}[\sigma_{k},z]}} \in M(k,\mathbb{K})$ coincides with the principal $k\times k$ submatrix of $\tensor[_{\hat{\mathbf{g}}^{(n)}[\sigma_{k},z'] }]{ \big[\, T_{\lambda}^{(n)}\,\big] }{_{\hat{\mathbf{f}}^{(n)}[\sigma_{k},z]}}$. 

\item For each $i\in[1,n-1]$ with $i\neq k$, $\tensor[_{\hat{\mathbf{g}}^{(i)}[\sigma_{k},z'] }]{ \big[\, T_{\lambda}^{(i)}\,\big] }{_{\hat{\mathbf{f}}^{(i)}[\sigma_{k},z]}} \in M(i,\mathbb{K})$ coincides with the principal $i\times i$ submatrix of $\tensor[_{\hat{\mathbf{g}}^{(n)}[\sigma_{k},z'] }]{ \big[\, T_{\lambda}^{(n)}\,\big] }{_{\hat{\mathbf{f}}^{(n)}[\sigma_{k},z]}}$. 
\end{itemize}

\noindent
$\star$ \emph{Refinement}: Suppose that $\tensor[_{\hat{\mathbf{g}}^{(n)}}]{ \big[\, T_{\lambda}^{\,(n)}\,\big] }{_{\hat{\mathbf{f}}^{(n)}}}$ is diagonal, that is,
\begin{equation*}
\tensor[_{\hat{\mathbf{g}}^{(n)}}]{ \big[\, T_{\lambda}^{\,(n)}\,\big] }{_{\hat{\mathbf{f}}^{(n)}}}=D(\vec{u})\, ,    
\end{equation*}
for some tuple $\vec{u}=(u_{1},\dots, u_{n})\in \mathbb{K}^{n}_{\mathrm{std}}$. Then:

\begin{itemize}
\item \textbf{Compatibility condition for $\lambda$}: The $(k+1,k)$-entry of the matrix product
\begin{equation*}
\big(B^{(n)}_{k}(z')\big)^{-1}\cdot D(\vec{u})\cdot B^{(n)}_{k}(z)    
\end{equation*}
must vanish. 

\item Under the above condition,
\begin{equation*}
\tensor[_{\hat{\mathbf{g}}^{(n)}[\sigma_{k},z'] }]{ \big[\, T_{\lambda}^{(n)}\,\big] }{_{\hat{\mathbf{f}}^{(n)}[\sigma_{k},z]}}=D(\pi_{k}(\vec{u}))\,,    
\end{equation*}
where $\pi_{k}(\vec{u})$ denotes the permutation of $\vec{u}$ associated with $\sigma_{k}$.
\end{itemize}
\end{lemma}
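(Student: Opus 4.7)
The proof will proceed in close parallel to Lemma~\eqref{Lemma: Ext0 on R_j for k=1}, adapted to the richer local picture at a crossing $\sigma_{k}$ with $k\geq 2$, where the strand structure forces two auxiliary maps $\widetilde{\phi}_{\sh{F}}^{\,(k-1)}, \widetilde{\phi}_{\sh{F}}^{\,(k)}$ (and likewise for $\sh{G}$) rather than a single one. The \emph{local characterization of $\lambda$ on $R_{j}$} is the easy starting point: applying Observation~\eqref{Obs: elements of Ext0 near arcs} to each of the $n+1$ arcs appearing in Figure~\eqref{fig: a sheaf in the vertical strap R_j containing a crossing sigma_k} produces the collection of linear maps in~\eqref{Eq: maps for lambda in R_j for k geq 2} and the morphism conditions~\eqref{Eq: morphism conditions for lambda in R_j for k geq 2}. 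Here one should note that the ``left-of-crossing'' arcs give the maps $T^{(i)}_{\lambda}$, the top and bottom arcs of the crossing region contribute $\widetilde{T}^{\,(k)}_{\lambda}$, and the ``right-of-crossing'' arcs reuse the same $T^{(i)}_{\lambda}$ thanks to the sheaf axioms on the overlaps of $R_j$ with its neighbors.

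For the \emph{Block-Triangular Properties I}, one feeds the morphism conditions $T_{\lambda}^{(i+1)}\circ \phi_{\sh{F}}^{(i)}=\phi_{\sh{G}}^{(i)}\circ T_{\lambda}^{(i)}$ directly into Lemma~\eqref{Lemma: Ext0 for injective linear maps}, using \emph{Assumption 2} that $\{\hat{\mathbf{f}}^{(i)}\}$ and $\{\hat{\mathbf{g}}^{(i)}\}$ are adapted to $\{\phi_{\sh{F}}^{(i)}\}$ and $\{\phi_{\sh{G}}^{(i)}\}$ respectively; this immediately gives that $\tensor[_{\hat{\mathbf{g}}^{(n)}}]{ \big[\, T_{\lambda}^{(n)}\,\big] }{_{\hat{\mathbf{f}}^{(n)}}}$ is upper triangular and that each $\tensor[_{\hat{\mathbf{g}}^{(i)}}]{ \big[\, T_{\lambda}^{(i)}\,\big] }{_{\hat{\mathbf{f}}^{(i)}}}$ sits as the principal $i\times i$ block. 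For \emph{Block-Triangular Properties II}, the plan is first to verify, via a direct computation using Lemma~\eqref{Lemma: matrix local model for sigma_k} and Definition~\eqref{Def: braid transformation of bases}, that the braid-transformed bases $\{\hat{\mathbf{f}}^{(i)}[\sigma_k,z]\}$ and $\{\hat{\mathbf{g}}^{(i)}[\sigma_k,z']\}$ are systems of bases adapted (in the sense of Definition~\eqref{Def:flags and adapted bases}--\eqref{Def: adapted bases I}) to the ``swapped'' collections
\begin{equation*}
\big\{\phi_{\sh{F}}^{(1)},\dots,\phi_{\sh{F}}^{(k-2)},\widetilde{\phi}_{\sh{F}}^{\,(k-1)},\widetilde{\phi}_{\sh{F}}^{\,(k)},\phi_{\sh{F}}^{(k+1)},\dots,\phi_{\sh{F}}^{(n-1)}\big\}
\end{equation*}
and its $\sh{G}$ analogue. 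Once this is in place, the morphism conditions in~\eqref{Eq: morphism conditions for lambda in R_j for k geq 2} involving $\widetilde{\phi}$'s and the $T^{(i)}_\lambda$ for $i\neq k$ together with $\widetilde{T}_{\lambda}^{\,(k)}$ again satisfy the hypotheses of Lemma~\eqref{Lemma: Ext0 for injective linear maps}, yielding upper-triangularity of $\tensor[_{\hat{\mathbf{g}}^{(n)}[\sigma_{k},z']}]{ \big[\, T_{\lambda}^{(n)}\,\big] }{_{\hat{\mathbf{f}}^{(n)}[\sigma_{k},z]}}$ and the stated principal-block identifications, with $\widetilde{T}_{\lambda}^{\,(k)}$ taking the role of the size-$k$ block.

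Finally, for the \emph{Refinement}, one exploits the fact that the bases $\hat{\mathbf{f}}^{(n)}$ and $\hat{\mathbf{f}}^{(n)}[\sigma_k,z]$ (respectively $\hat{\mathbf{g}}^{(n)}$ and $\hat{\mathbf{g}}^{(n)}[\sigma_k,z']$) differ by the change-of-basis matrix $B^{(n)}_{k}(z)$ (respectively $B^{(n)}_{k}(z')$), so that the standard change-of-basis formula gives
\begin{equation*}
\tensor[_{\hat{\mathbf{g}}^{(n)}[\sigma_{k},z']}]{ \big[\, T_{\lambda}^{(n)}\,\big] }{_{\hat{\mathbf{f}}^{(n)}[\sigma_{k},z]}}=\bigl(B^{(n)}_{k}(z')\bigr)^{-1}\cdot D(\vec{u})\cdot B^{(n)}_{k}(z).
\end{equation*}
Since the left-hand side must be upper triangular by Property II, a direct application of Lemma~\eqref{lemma for braid matrices and diagonal matrices} forces the $(k+1,k)$-entry of the right-hand side to vanish and, under this compatibility condition, identifies the result with $D(\pi_{k}(\vec{u}))$. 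The main technical hurdle will be the verification that the braid-transformed bases are indeed adapted to the swapped collection of maps; this is essentially a careful rereading of Lemma~\eqref{Lemma: matrix local model for sigma_k} but must be carried out for both $\sh{F}$ and $\sh{G}$ simultaneously and tracked through the three-line morphism condition system~\eqref{Eq: morphism conditions for lambda in R_j for k geq 2}, where $\widetilde{T}_{\lambda}^{\,(k)}$ plays the role of an intermediate block that is a priori unrelated to the $T_{\lambda}^{(i)}$'s before the adaptation is established.
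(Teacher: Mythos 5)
Your proposal is correct and follows essentially the same route as the paper: Observation~\eqref{Obs: elements of Ext0 near arcs} for the local characterization, Lemma~\eqref{Lemma: Ext0 for injective linear maps} for both block-triangular properties (after verifying via Lemma~\eqref{Lemma: matrix local model for sigma_k} that the braid-transformed bases are adapted to the swapped collections of maps), and the change-of-basis formula together with Lemma~\eqref{lemma for braid matrices and diagonal matrices} for the refinement. The "technical hurdle" you flag is handled in the paper exactly as you anticipate, by a direct matrix computation showing all the swapped maps become standard inclusions in the braid-transformed bases.
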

\begin{proof}
According to \emph{Assumption 1}, on $R_{j}$, $\sh{F}$ and $\sh{G}$ are defined by the collections of $n+1$ injective linear maps listed in~\eqref{Eq: maps defining F and G on R_j for k geq 2}. Hence, the \emph{Local characterization of $\lambda$ on $R_{j}$}, specified by the $n+1$ linear maps listed in~\eqref{Eq: maps for lambda in R_j for k geq 2} and subject to the morphism conditions~\eqref{Eq: morphism conditions for lambda in R_j for k geq 2}, follows from Observation~\eqref{Obs: elements of Ext0 near arcs}, which establishes the local description of the elements of $\mathrm{Ext}^{0}(\sh{F},\sh{G})$ near an arc of the stratification $\mathcal{S}_{\Lambda(\beta)}$ of $\mathbb{R}^{2}$ induced by $\Lambda(\beta)\subset (\mathbb{R}^{3},\xi_{\mathrm{std}})$.   

By \emph{Assumption 2}, $\big\{\hat{\mathbf{f}}^{(i)}\big\}_{i=1}^{n}$ and $\big\{\hat{\mathbf{g}}^{(i)}\big\}_{i=1}^{n}$ are systems of bases adapted to $\big\{\phi_{\sh{F}}^{(i)}\big\}_{i=1}^{n-1}$ and $\big\{\phi_{\sh{G}}^{(i)}\big\}_{i=1}^{n-1}$, respectively. Therefore, by virtue of the morphism conditions~\eqref{Eq: morphism conditions for lambda in R_j for k geq 2}, the \emph{Block-Triangular Properties I} follow directly from Lemma~\eqref{Lemma: Ext0 for injective linear maps}.

Moreover, under \emph{Assumption 2}, Lemma~\eqref{Lemma: matrix local model for sigma_k} asserts that $\big(\big\{\hat{\mathbf{f}}^{(i)}\big\}_{i=1}^{n}, z \big)$ and $\big(\big\{\hat{\mathbf{g}}^{(i)}\big\}_{i=1}^{n}, z' \big)$ are system of bases adapted to $\sh{F}$ and $\sh{G}$ on $R_{j}$, where $z,z'\in \mathbb{K}$ algebraically parameterize, relative to the bases $\hat{\mathbf{f}}^{(n)}$ and $\hat{\mathbf{g}}^{(n)}$ for $\mathbb{K}^{n}$, the $s_{k}$-relative position between the pairs of complete flags in $\mathbb{K}^{n}$ that geometrically characterize $\sh{F}$ and $\sh{G}$ on $R_{j}$, respectively. Accordingly, we have that:  
\begin{itemize}
\item  With respect to the bases $\big\{\hat{\mathbf{f}}^{(i)}\big\}_{i=1}^{n}$, the linear maps determining $\sh{F}$ on $R_{j}$ have the matrix representations: 
\begin{equation*}
\begin{aligned}
\tensor[_{\hat{\mathbf{f}}^{(i+1)}}]{ \big[\,\phi^{\,(i)}_{\sh{F}}\,\big] }{_{\hat{\mathbf{f}}^{(i)}}}&=\iota^{(i+1,i)}\, ,  &&\text{for all $i\in[1,n-1]$}\, ,\\
\tensor[_{\hat{\mathbf{f}}^{(k)}}]{ \big[\,\widetilde{\phi}^{\,(k-1)}_{\sh{F}}\,\big] }{_{\hat{\mathbf{f}}^{(k-1)}}}&=i^{(k,k-1)}\,, && \tensor[_{\hat{\mathbf{f}}^{(k+1)}}]{ \big[\,\widetilde{\phi}^{\,(k)}_{\sh{F}}\,\big] }{_{\hat{\mathbf{f}}^{(k)}}}=B^{(k+1)}_{k}(z)\cdot\iota^{(k+1,k)}\, .
\end{aligned}   
\end{equation*} 
\item  With respect to the bases $\big\{\hat{\mathbf{g}}^{(i)}\big\}_{i=1}^{n}$, the linear maps determining $\sh{G}$ on $R_{j}$ have the matrix representations: 
\begin{equation*}
\begin{aligned}
\tensor[_{\hat{\mathbf{g}}^{(i+1)}}]{ \big[\,\phi^{\,(i)}_{\sh{G}}\,\big] }{_{\hat{\mathbf{g}}^{(i)}}}&=\iota^{(i+1,i)}\, ,  &&\text{for all $i\in[1,n-1]$}\, ,\\
\tensor[_{\hat{\mathbf{g}}^{(k)}}]{ \big[\,\widetilde{\phi}^{\,(k-1)}_{\sh{G}}\,\big] }{_{\hat{\mathbf{g}}^{(k-1)}}}&=i^{(k,k-1)}\,, &&\tensor[_{\hat{\mathbf{g}}^{(k+1)}}]{ \big[\,\widetilde{\phi}^{\,(k)}_{\sh{G}}\,\big] }{_{\hat{\mathbf{g}}^{(k)}}}=B^{(k+1)}_{k}(z')\cdot\iota^{(k+1,k)}\, .
\end{aligned}   
\end{equation*} 
\end{itemize}

Next, consider the braid-transformed bases $\big\{\hat{\mathbf{f}}^{(i)}[\sigma_{k},z]\big\}_{i=1}^{n}$ and $\big\{\hat{\mathbf{g}}^{(i)}[\sigma_{k},z']\big\}_{i=1}^{n}$ (cf. Definition~\eqref{Def: braid transformation of bases}). In particular, a direct calculation shows that:
\begin{itemize}
\item  With respect to the bases $\big\{\hat{\mathbf{f}}^{(i)}[\sigma_{k},z]\big\}_{i=1}^{n}$,
\begin{equation*}
\begin{aligned}
\tensor[_{\hat{\mathbf{f}}^{(i+1)}[\sigma_{k}, z] }]{ \big[\,\phi^{\,(i)}_{\sh{F}}\,\big] }{_{\hat{\mathbf{f}}^{(i)}[\sigma_{k},z]}}&=\iota^{(i+1,i)}\, , &&   &&&\text{for all $i\in[1,n-1]$ with $i\neq k-1, k$,}\\[2pt]
\tensor[_{\hat{\mathbf{f}}^{(k)}[\sigma_{k}, z]}]{ \big[\,\widetilde{\phi}^{\,(k-1)}_{\sh{F}}\,\big] }{_{\hat{\mathbf{f}}^{(k-1)}[\sigma_{k}, z]}}&=\iota^{(k,k-1)}\, , &&  &&&\\[2pt]
\tensor[_{\hat{\mathbf{f}}^{(k+1)}[\sigma_{k}, z]}]{ \big[\,\widetilde{\phi}^{\,(k)}_{\sh{F}}\,\big] }{_{\hat{\mathbf{f}}^{(k)}[\sigma_{k}, z]}}&=\iota^{(k+1,k)} \, . &&   &&&
\end{aligned}   
\end{equation*} 
\item  With respect to the bases $\big\{\hat{\mathbf{g}}^{(i)}[\sigma_{k},z']\big\}_{i=1}^{n}$,
\begin{equation*}
\begin{aligned}
\tensor[_{\hat{\mathbf{g}}^{(i+1)}[\sigma_{k}, z'] }]{ \big[\,\phi^{\,(i)}_{\sh{G}}\,\big] }{_{\hat{\mathbf{g}}^{(i)}[\sigma_{k},z']}}&=\iota^{(i+1,i)}\, ,  &&   &&&\text{for all $i\in[1,n-1]$ with $i\neq k-1, k$,}\\[2pt]
\tensor[_{\hat{\mathbf{g}}^{(k)}[\sigma_{k}, z']}]{ \big[\,\widetilde{\phi}^{\,(k-1)}_{\sh{G}}\,\big] }{_{\hat{\mathbf{g}}^{(k-1)}[\sigma_{k}, z']}}&=\iota^{(k,k-1)}\, ,  &&  &&&\\[2pt]
\tensor[_{\hat{\mathbf{g}}^{(k+1)}[\sigma_{k}, z']}]{ \big[\,\widetilde{\phi}^{\,(k)}_{\sh{G}}\,\big] }{_{\hat{\mathbf{g}}^{(k)}[\sigma_{k}, z']}}&=\iota^{(k+1,k)}\, .  &&   &&&
\end{aligned}   
\end{equation*} 
\end{itemize}
It follows from Definition~\eqref{Def:flags and adapted bases}--\eqref{Def: adapted bases I} that $\big\{\hat{\mathbf{f}}^{(i)}[\sigma_{k},z]\big\}_{i=1}^{n}$ and $\big\{\hat{\mathbf{g}}^{(i)}[\sigma_{k},z']\big\}_{i=1}^{n}$ are systems of bases adapted to the collections
\begin{equation*}
\begin{aligned}
&\big\{ \phi_{\sh{F}}^{\,(1)},\dots, \phi^{\,(k-2)}_{\sh{F}},\widetilde{\phi}^{\,(k-1)}_{\sh{F}},\widetilde{\phi}^{\,(k)}_{\sh{F}},\phi_{\sh{F}}^{\,(k+1)},\dots,\phi_{\sh{F}}^{(n-1)}\big\}\, ,\\[6pt]
&\big\{ \phi_{\sh{G}}^{\,(1)},\dots, \phi^{\,(k-2)}_{\sh{G}},\widetilde{\phi}^{\,(k-1)}_{\sh{G}},\widetilde{\phi}^{\,(k)}_{\sh{G}},\phi_{\sh{G}}^{\,(k+1)},\dots,\phi_{\sh{G}}^{(n-1)}\big\}\, , 
\end{aligned}    
\end{equation*}
respectively. Hence, by virtue of the morphism conditions~\eqref{Eq: morphism conditions for lambda in R_j for k geq 2}, the \emph{Block-Triangular Properties II} follow directly from Lemma~\eqref{Lemma: Ext0 for injective linear maps}.

Finally, assume that $\tensor[_{\hat{\mathbf{g}}^{(n)}}]{ \big[\, T_{\lambda}^{(n)}\,\big] }{_{\hat{\mathbf{f}}^{(n)}}}=D(\vec{u})$ for some tuple $\vec{u}=(u_{1},\dots, u_{n})\in \mathbb{K}^{n}_{\mathrm{std}}$. In particular, observe that the bases $\hat{\mathbf{f}}^{(n)}[\sigma_{k},z]$ and $\hat{\mathbf{f}}^{(n)}$ for $\mathbb{K}^{n}$, as well as the bases $\hat{\mathbf{g}}^{(n)}[\sigma_{k},z']$ and $\hat{\mathbf{g}}^{(n)}$, are related by the change-of-basis matrices $B^{(n)}_{k}(z)$ and $B^{(n)}_{k}(z')$, respectively. Therefore, by the \textit{change-of-basis formula}, we deduce that 
\begin{equation*}
\tensor[_{\hat{\mathbf{g}}^{(n)}[\sigma_{k},z'] }]{ \big[\, T_{\lambda}^{(n)}\,\big] }{_{\hat{\mathbf{f}}^{(n)}[\sigma_{k},z]}}=\big( B^{(n)}_{k}(z')\big)^{-1}\cdot D(\vec{u})\cdot B^{(n)}_{k}(z)\, . 
\end{equation*}
Consequently, since $\tensor[_{\hat{\mathbf{g}}^{(n)}[\sigma_{k},z'] }]{ \big[\, T_{\lambda}^{\,(n)}\,\big] }{_{\hat{\mathbf{f}}^{(n)}[\sigma_{k},z]}}$ must be upper-triangular, a direct application of Lemma~\eqref{lemma for braid matrices and diagonal matrices} yields that:
\begin{itemize}
\justifying
\item The $(k+1,k)$-entry of the matrix product $\big(B^{(n)}_{k}(z')\big)^{-1}\cdot D(\vec{u})\cdot B^{(n)}_{k}(z)$ must vanish.
\item Under the above condition, $\tensor[_{\hat{\mathbf{g}}^{(n)}[\sigma_{k},z'] }]{ \big[\, T_{\lambda}^{\,(n)}\,\big] }{_{\hat{\mathbf{f}}^{(n)}[\sigma_{k},z]}}=D(\pi_{k}(\vec{u}))$, 
\end{itemize}
where $\pi_{k}(\vec{u})$ denotes the permutation of $\vec{u}$ associated with $\sigma_{k}$. This completes the proof. 
\end{proof}

Having established the preceding lemmas, we are now in a position to state and prove the first part of one of our main theorems.

\begin{theorem}\label{Theorem: Ext0 as the kernel of delta_F,G}
\textbf{Setup}: Let $\beta=\sigma_{i_{1}}\cdots\sigma_{i_{\ell}}\in\mathrm{Br}^{+}_{n}$ be a positive braid word, $\mathcal{U}_{\Lambda(\beta)}=\big\{U_{0}, U_{\mathrm{B}}, U_{\mathrm{L}}, U_{\mathrm{R}}, U_{\mathrm{T}}\big\}$ the open cover of $\mathbb{R}^{2}$ from Construction~\eqref{Cons: Finite open cover for R^2}, and $\sh{F}$ and $\sh{G}$ objects of the category $\ccs{1}{\beta}$. 

\noindent
$\star$ \emph{Local descriptions on $U_{\mathrm{T}}$ and $U_{\mathrm{L}}$}: According to Lemma~\eqref{Lemma: linear map description of an object on the regions U_T, U_L, and U_R}, $\sh{F}$ and $\sh{G}$ have the following local descriptions: 
\begin{itemize}
\justifying
\item  On $U_{\mathrm{T}}$, $\sh{F}$ and $\sh{G}$ are specified by two collections of $n-1$ surjective linear maps 
\begin{equation*}
\big\{\psi_{\sh{F}}^{(i)}:\mathbb{K}^{i+1}\to \mathbb{K}^{i}\big\}_{i=1}^{n-1}\, , \quad \text{and} \quad \big\{\psi_{\sh{G}}^{(i)}:\mathbb{K}^{i+1}\to \mathbb{K}^{i}\big\}_{i=1}^{n-1}\, ,    
\end{equation*}
respectively. For a schematic illustration of a generic representative of one of these sheaves on $U_{\mathrm{T}}$, see Figure~\eqref{Fig: an object F in the region U_T}.

\item On $U_{\mathrm{L}}$, $\sh{F}$ and $\sh{G}$ are specified by two collections of $n-1$ injective linear maps 
\begin{equation*}
\big\{\phi_{\sh{F}}^{(i)}:\mathbb{K}^{i}\to \mathbb{K}^{i+1}\big\}_{i=1}^{n-1}\, , \quad \text{and} \quad \big\{\phi_{\sh{G}}^{(i)}:\mathbb{K}^{i}\to \mathbb{K}^{i+1}\big\}_{i=1}^{n-1} \, ,    
\end{equation*}
respectively. For a schematic illustration of a generic representative of one of these sheaves on $U_{\mathrm{L}}$, see Figure~\eqref{Fig: an object F in the region U_L}.

\item \textbf{Compatibility conditions}: For each $i\in[1,n-1]$, 
\begin{equation*}
\psi^{(i)}_{\sh{F}}\circ \phi^{(i)}_{\sh{F}}=\mathrm{id}_{\mathbb{K}^{i}}\, , \quad \text{and} \quad \psi^{(i)}_{\sh{G}}\circ \phi^{(i)}_{\sh{G}}=\mathrm{id}_{\mathbb{K}^{i}}\, .    
\end{equation*}
\end{itemize}

\noindent
$\star$ \emph{Global flag data}: By Theorem~\eqref{Flags and constructible sheaves}, $\sh{F}$ and $\sh{G}$ are geometrically characterized by two sequence of complete flags $\big\{\fl{F}_{j}\big\}_{j=0}^{\ell+1}$ and $\big\{\fl{G}_{j}\big\}_{j=0}^{\ell+1}$ in $\mathbb{K}^{n}$, respectively, such that: 
\begin{itemize}
\item $\fl{F}_{0}$ is completely opposite to both $\fl{F}_{1}$ and $\fl{F}_{\ell+1}$, and for each $j\in[1,\ell]$, $\fl{F}_{j}$ is in $s_{i_{j}}$-relative position with respect to $\fl{F}_{j+1}$.
\item $\fl{G}_{0}$ is completely opposite to both $\fl{G}_{1}$ and $\fl{G}_{\ell+1}$, and for each $j\in[1,\ell]$, $\fl{G}_{j}$ is in $s_{i_{j}}$-relative position with respect to $\fl{G}_{j+1}$.
\end{itemize}
In particular, by Lemma~\eqref{lemma for F in the region U_{R}}, we know that:
\begin{equation*}
\begin{aligned}
\fl{F}_{0}:=\prescript{}{\mathcal{K}\,}{\fl{F}}\big(\psi_{\sh{F}}^{(1)},\dots, \psi_{\sh{F}}^{(n-1)}\big)\, , \quad \text{and} \quad
\fl{F}_{1}:=\prescript{}{\mathcal{I}\,}{\fl{F}}\big(\phi_{\sh{F}}^{(1)},\dots, \phi_{\sh{F}}^{(n-1)}\big)\, ,\\[4pt]    
\fl{G}_{0}:=\prescript{}{\mathcal{K}\,}{\fl{F}}\big(\psi_{\sh{G}}^{(1)},\dots, \psi_{\sh{G}}^{(n-1)}\big)\, , \quad \text{and} \quad
\fl{G}_{1}:=\prescript{}{\mathcal{I}\,}{\fl{F}}\big(\phi_{\sh{G}}^{(1)},\dots, \phi_{\sh{G}}^{(n-1)}\big)\, ,\\
\end{aligned}
\end{equation*}
are the type $\mathcal{K}$ and type $\mathcal{I}$ flags in $\mathbb{K}^{n}$ associated with $\big\{\psi_{\sh{F}}^{(i)}\big\}_{i=1}^{n-1}$, $\big\{\phi_{\sh{F}}^{(i)}\big\}_{i=1}^{n-1}$, $\big\{\psi_{\sh{G}}^{(i)}\big\}_{i=1}^{n-1}$, and $\big\{\phi_{\sh{G}}^{(i)}\big\}_{i=1}^{n-1}$, respectively (cf. Definition~\eqref{Def:flags and adapted bases}--\eqref{Def: type I flag}--\eqref{Def: type K flag}). 

\vspace{4pt}
\noindent
$\star$ \emph{Main assumption} (Adapted bases): Let \,$\hat{\mathbf{f}}^{(n)}, \, \hat{\mathbf{g}}^{(n)} $ be bases for $\mathbb{K}^{n}$, and let $\vec{z}=(z_{1},\dots, z_{\ell}), \; \vec{z}\,'=(z'_{1},\dots, z'_{\ell}) \in X(\beta,\mathbb{K})$ be points such that the pairs $(\,\hat{\mathbf{f}}^{(n)}, \,\vec{z}\, )$ and $(\,\hat{\mathbf{g}}^{(n)}, \,\vec{z}\,'\, )$ algebraically characterizes $\sh{F}$ and $\sh{G}$ according to Theorem~\eqref{Prop. for sheaves and braid matrices}, respectively. In this setting, we have that: 
\begin{itemize}
\item Relative to the basis $\hat{\mathbf{f}}^{(n)}$ for $\mathbb{K}^{n}$: $\fl{F}_{0}$ and $\fl{F}_{1}$ are the anti-standard and standard flags, respectively.  For each $j\in[1,\ell]$, the flag $\fl{F}_{j+1}$ is represented by the path matrix $P_{\beta_{j}}(\vec{z}_{j})=B^{(n)}_{i_{1}}(z_{1})\cdots B^{(n)}_{i_{j}}(z_{j})\,\in\mathrm{GL}(n,\mathbb{K})$ associated with the truncated braid word $\beta_{j}=\sigma_{i_{1}}\cdots \sigma_{i_{j}}\in\mathrm{Br}^{+}_{n}$ and the truncated tuple $\vec{z}_{j}=(z_{1},\dots, z_{j})\in\mathbb{K}^{j}_{\mathrm{std}}$.  

\item Relative to the basis $\hat{\mathbf{g}}^{(n)}$ for $\mathbb{K}^{n}$: $\fl{G}_{0}$ and $\fl{G}_{1}$ are the anti-standard and standard flags, respectively. For each $j\in[1,\ell]$, the flag $\fl{G}_{j+1}$ is represented by the path matrix $P_{\beta_{j}}(\vec{z}\,'_{j})=B^{(n)}_{i_{1}}(z'_{1})\cdots B^{(n)}_{i_{j}}(z'_{j})\,\in\mathrm{GL}(n,\mathbb{K})$ associated with the truncated braid word $\beta_{j}$ and the truncated tuple $\vec{z}\,'_{j}=(z'_{1},\dots, z'_{j})\in\mathbb{K}^{j}_{\mathrm{std}}$.  
\end{itemize}

Furthermore, under this assumption, the \textbf{compatibility conditions} and an inductive argument ensure that, for each $i\in [1,n-1]$, there are unique bases $\hat{\mathbf{f}}^{(i)}:=\big\{ \hat{f}^{(i)}_{j}\big\}_{j=1}^{i}$ and $\hat{\mathbf{g}}^{(i)}:=\big\{ \hat{g}^{(i)}_{j}\big\}_{j=1}^{i}$ for $\mathbb{K}^{i}$ such that (cf. Definition~\eqref{Def:flags and adapted bases}--\eqref{Def: adapted bases I}--\eqref{Def: adapted bases II}): 
\begin{itemize}
\item The collection $\big\{\hat{\mathbf{f}}^{(i)}\big\}_{i=1}^{n}$ is a system of bases adapted to both $\big\{\psi_{\sh{F}}^{(i)}\big\}_{i=1}^{n-1}$ and $\big\{\phi_{\sh{F}}^{(i)}\big\}_{i=1}^{n-1}$.
\item The collection $\big\{\hat{\mathbf{g}}^{(i)}\big\}_{i=1}^{n}$ is a system of bases adapted to both $\big\{\psi_{\sh{G}}^{(i)}\big\}_{i=1}^{n-1}$ and $\big\{\phi_{\sh{G}}^{(i)}\big\}_{i=1}^{n-1}$.
\end{itemize}
Building on this, for each $j\in[1,\ell]$, we denote by $\big\{\hat{\mathbf{f}}^{(i)}[\beta_{j},\vec{z}_{j} ]\big\}_{i=1}^{n}$ and $\big\{\hat{\mathbf{g}}^{(i)}[\beta_{j},\vec{z}\,'_{j} ]\big\}_{i=1}^{n}$ the braid-transformed bases obtained from $\big\{\hat{\mathbf{f}}^{(i)} \big\}_{i=1}^{n}$ and $\big\{\hat{\mathbf{g}}^{(i)} \big\}_{i=1}^{n}$ via the truncated braid word $\beta_{j}$ and the truncated tuples $\vec{z}_{j}$ and $\vec{z}\,'_{j}$, respectively (cf. Definition~\eqref{Def: braid transformation of bases}). Accordingly, by Theorem~\eqref{Theorem: adapted bases for an object at a region R_j}, we have that: 
\begin{itemize}
\justifying
\item $\big(\big\{\hat{\mathbf{f}}^{(i)} \big\}_{i=1}^{n}, z_{1}\big)$ is a system of bases adapted to $\sh{F}$ on $R_{1}$.
\item For each $j\in[1,\ell-1]$, $\big(\big\{\hat{\mathbf{f}}^{(i)}[\beta_{j},\vec{z}_{j} ]\big\}_{i=1}^{n}, z_{j+1}\big)$ is a system of bases adapted to $\sh{F}$ on $R_{j+1}$.
\item $\big(\big\{\hat{\mathbf{g}}^{(i)} \big\}_{i=1}^{n}, z'_{1}\big)$ is a system of bases adapted to $\sh{G}$ on $R_{1}$.
\item For each $j\in[1,\ell-1]$, $\big(\big\{\hat{\mathbf{g}}^{(i)}[\beta_{j},\vec{z}\,'_{j} ]\big\}_{i=1}^{n}, z'_{j+1}\big)$ is a system of bases adapted to $\sh{G}$ on $R_{j+1}$.
\end{itemize}

\vspace{4pt}
\noindent
$\star$ \emph{Main Conclusion}: Following Definition~\eqref{Def: linear map delta}, let \,$\delta_{\sh{F},\sh{G}}:\mathbb{K}^{n}_{\mathrm{std}}\to\mathbb{K}^{\ell}_{\mathrm{std}}$\, be the linear map associated with the pair $(\sh{F},\sh{G})$. Then, under the given \textbf{setup}, there is an isomorphism of vector spaces 
\begin{equation*}
\mathrm{Ext}^{0}(\sh{F},\sh{G})\cong \ker \delta_{\sh{F},\sh{G}}\, .
\end{equation*}
\end{theorem}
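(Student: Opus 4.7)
The plan is to show that any element $\lambda\in\mathrm{Ext}^0(\sh{F},\sh{G})$ is completely determined by a single tuple $\vec{u}\in\mathbb{K}^n_{\mathrm{std}}$, and that the admissibility of this tuple as the datum of a sheaf homomorphism is encoded precisely by the vanishing equations $\delta_j(\vec{u})=0$ for $j\in[1,\ell]$. The argument proceeds by propagating the local matrix description of $\lambda$ across $U_{\mathrm{T}}$, $U_{\mathrm{L}}$, the vertical straps $R_1,\dots,R_\ell$, and finally $U_{\mathrm{R}}$, using the adapted bases and their braid-transformed variants.

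First, by Lemma~\eqref{Lemma: Ext0 on U_T and U_L}, applied to the adapted systems of bases $\{\hat{\mathbf{f}}^{(i)}\}_{i=1}^n$ and $\{\hat{\mathbf{g}}^{(i)}\}_{i=1}^n$, the restriction of $\lambda$ to $U_{\mathrm{T}}\cup U_{\mathrm{L}}$ is determined by a collection of linear maps $\{T_\lambda^{(i)}\}_{i=1}^n$ whose top-level matrix $\tensor[_{\hat{\mathbf{g}}^{(n)}}]{[\,T_\lambda^{(n)}\,]}{_{\hat{\mathbf{f}}^{(n)}}}$ is diagonal, say $D(\vec{u})$ for some $\vec{u}=(u_1,\dots,u_n)\in\mathbb{K}^n_{\mathrm{std}}$; moreover all lower-dimensional matrices are principal submatrices of this diagonal one. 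This yields the assignment $\Psi:\mathrm{Ext}^0(\sh{F},\sh{G})\to\mathbb{K}^n_{\mathrm{std}}$, $\lambda\mapsto\vec{u}$, which is $\mathbb{K}$-linear and manifestly injective by the observation that $\lambda$ is determined on the overlap $U_{\mathrm{T}}\cap U_{\mathrm{L}}$ (and hence on $U_{\mathrm{T}}$ and $U_{\mathrm{L}}$) by $T_\lambda^{(n)}$.

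Next, I would show $\operatorname{Im}\Psi=\ker\delta_{\sh{F},\sh{G}}$ by traversing the straps $R_1,\dots,R_\ell$ in order. The key inductive statement is: for each $j\in[0,\ell]$, in the braid-transformed bases $\{\hat{\mathbf{f}}^{(i)}[\beta_j,\vec{z}_j]\}_{i=1}^n$ and $\{\hat{\mathbf{g}}^{(i)}[\beta_j,\vec{z}\,'_j]\}_{i=1}^n$, the top-level matrix representing $T_\lambda^{(n)}$ equals $D(\pi_{\beta_j}(\vec{u}))$. The base case $j=0$ is precisely the statement above. For the inductive step from $j$ to $j+1$, I would apply Lemma~\eqref{Lemma: Ext0 on R_j for k=1} if $i_{j+1}=1$ and Lemma~\eqref{Lemma: Ext0 for R_j for k geq 2} if $i_{j+1}\geq 2$. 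Indeed, since by Theorem~\eqref{Theorem: adapted bases for an object at a region R_j} the pairs $(\{\hat{\mathbf{f}}^{(i)}[\beta_j,\vec{z}_j]\}_i,z_{j+1})$ and $(\{\hat{\mathbf{g}}^{(i)}[\beta_j,\vec{z}\,'_j]\}_i,z'_{j+1})$ are adapted to $\sh{F}$ and $\sh{G}$ on $R_{j+1}$, the \emph{Refinement} part of the cited lemma says that the $(i_{j+1}+1,i_{j+1})$-entry of
\[
\bigl(B^{(n)}_{i_{j+1}}(z'_{j+1})\bigr)^{-1}\cdot D(\pi_{\beta_j}(\vec{u}))\cdot B^{(n)}_{i_{j+1}}(z_{j+1})
\]
must vanish, and that under this condition the matrix in the next pair of braid-transformed bases becomes $D(\pi_{\beta_{j+1}}(\vec{u}))$, completing the induction. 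By Definition~\eqref{Def: linear map delta}, the displayed vanishing condition is exactly $\delta_{j+1}(\vec{u})=0$, so the full list of obstructions across $R_1,\dots,R_\ell$ collapses to the single equation $\delta_{\sh{F},\sh{G}}(\vec{u})=0$.

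It remains to handle the region $U_{\mathrm{R}}$, and to verify surjectivity. After traversing all $\ell$ straps, the matrix representing $T_\lambda^{(n)}$ in the right-most braid-transformed bases is $D(\pi_{\beta}(\vec{u}))$, which is diagonal and hence automatically compatible with the right-cusp conditions dictated by Lemma~\eqref{Lemma: Ext0 on U_T and U_L} applied on $U_{\mathrm{T}}\cap U_{\mathrm{R}}$; this ensures no additional equations are imposed by $U_{\mathrm{R}}$. Conversely, for surjectivity, given $\vec{u}\in\ker\delta_{\sh{F},\sh{G}}$, one uses the same inductive scheme to \emph{construct} all the local matrices of a putative $\lambda$: start with $D(\vec{u})$ on $U_{\mathrm{T}}\cup U_{\mathrm{L}}$, extend across each $R_{j+1}$ using the braid-transformed bases (the vanishing of $\delta_{j+1}(\vec{u})$ guarantees upper-triangularity, and Lemma~\eqref{Lemma: matrix local model for sigma_1}/Lemma~\eqref{Lemma: matrix local model for sigma_k} together with Lemma~\eqref{lemma for braid matrices and diagonal matrices} ensure the necessary morphism identities hold), and match on $U_{\mathrm{R}}$ as above. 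The verification that the resulting local data glue into a sheaf homomorphism reduces to checking the morphism conditions on each overlap, which follows from our compatible choice of adapted bases. The main obstacle I anticipate is a careful bookkeeping on the overlaps $R_j\cap R_{j+1}$ to ensure that the two inductive extensions agree—this is where the braid-transformed bases of Definition~\eqref{Def: braid transformation of bases} do the essential work, since by construction they make the transition between adjacent straps trivial. Combining all of the above yields the desired isomorphism $\mathrm{Ext}^0(\sh{F},\sh{G})\cong\ker\delta_{\sh{F},\sh{G}}$.
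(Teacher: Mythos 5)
Your proposal is correct and follows essentially the same route as the paper's proof: establish the diagonal form $D(\vec{u})$ of $T_\lambda^{(n)}$ via Lemma~\eqref{Lemma: Ext0 on U_T and U_L}, then propagate it strap by strap using the \emph{Refinement} parts of Lemmas~\eqref{Lemma: Ext0 on R_j for k=1} and~\eqref{Lemma: Ext0 for R_j for k geq 2} in the braid-transformed bases, identifying the obstruction at the $j$-th crossing with the vanishing of $\delta_j(\vec{u})$. Your explicit treatment of injectivity, surjectivity, and the matching on $U_{\mathrm{R}}$ is slightly more careful than the paper's, which leaves those points implicit, but it is the same argument.
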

\begin{proof}
To begin, let $\mathcal{S}_{\Lambda(\beta)}$ be the stratification of $\mathbb{R}^{2}$ induced by $\Lambda(\beta)\subset \big(\mathbb{R}^{3},\xi_{\mathrm{std}}\big)$, $\mathcal{U}_{\Lambda(\beta)}=\big\{U_{0}, U_{\mathrm{B}}, U_{\mathrm{L}}, U_{\mathrm{R}}, U_{\mathrm{T}}\big\}$ the open cover of $\mathbb{R}^{2}$ from Construction~\eqref{Cons: Finite open cover for R^2}, and $\mathcal{R}_{\Lambda(\beta)}=\big\{R_{j}\big\}_{j=1}^{\ell}$ the partition of $U_{\mathrm{B}}$ into $\ell$ open vertical straps from Construction~\eqref{Cons: Definition of the vertical straps}.    

Let $\lambda\in\mathrm{Ext}^{0}(\sh{F},\sh{G})$. By Lemma~\eqref{Lemma: Ext0 on U_T and U_L}, on $U_{\mathrm{T}}$ and $U_{\mathrm{L}}$, $\lambda$ is characterized by a collection of $n$ linear maps $\big\{ T^{(i)}_{\lambda}:\mathbb{K}^{i}\to \mathbb{K}^{i} \big\}_{i=1}^{n}$ such that for each $i\in [1,n-1]$: 
\begin{equation*}
T^{(i)}_{\lambda}\circ \psi^{(i)}_{\sh{F}}=\psi^{(i)}_{\sh{G}}\circ T^{(i+1)}_{\lambda}\, , \quad \text{and} \quad T^{(i+1)}_{\lambda}\circ \phi^{(i)}_{\sh{F}}=\phi^{(i)}_{\sh{G}}\circ T^{(i)}_{\lambda}\, .    
\end{equation*} 

By assumption, $\big\{\mathbf{\hat{f}}^{(i)} \big\}_{i=1}^{n}$ and $\big\{\mathbf{\hat{g}}^{(i)} \big\}_{i=1}^{n}$ are systems of bases adapted to the pairs $\big(\big\{\psi_{\sh{F}}^{(i)}\, \big\}_{i=1}^{n-1}\,,\, \big\{\phi_{\sh{F}}^{(i)}\, \big\}_{i=1}^{n-1}\big)$ and $\big( \big\{\psi_{\sh{G}}^{(i)}\, \big\}_{i=1}^{n-1}\,,\, \big\{\phi_{\sh{G}}^{(i)}\, \big\}_{i=1}^{n-1}\big)$, respectively. Hence, by Lemma~\eqref{Lemma: Ext0 on U_T and U_L}, we obtain that, $\tensor[_{\hat{\mathbf{g}}^{(n)}}]{ \big[\, T_{\lambda}^{\,(n)}\,\big] }{_{\hat{\mathbf{f}}^{(n)}}}\in M(n,\mathbb{K})$ is diagonal, that is, $\tensor[_{\hat{\mathbf{g}}^{(n)}}]{ \big[\, T_{\lambda}^{\,(n)}\,\big] }{_{\hat{\mathbf{f}}^{(n)}}}=D(\vec{u})$
for some tuple $\vec{u}=(u_{1},\dots, u_{n})\in\mathbb{K}^{n}_{\mathrm{std}}$. 

Next, consider $R_{1}$, the vertical strap in $\mathbb{R}^{2}$ containing $\sigma_{i_{1}}$---the first crossing of $\beta$. In particular, we denote by $k=i_{1}\in [1,n-1]$ the index of $\sigma_{i_{1}}$. By construction, $R_{1}$ is the vertical strap immediately to the right of $U_{\mathrm{L}}$. Thus, by the constructibility of $\sh{F}$ and $\sh{G}$ with respect to $\mathcal{S}_{\Lambda(\beta)}$, we obtain one of the following two cases:
\begin{itemize}
\justifying
\item \textit{Case 1}: Suppose that $k=1$. Then, on $R_{1}$, $\sh{F}$ and $\sh{G}$ are described by two collections of $n$ injective linear maps: 
\begin{equation*}
\begin{aligned}
&\big\{\phi^{\,(i)}_{\sh{F}}:\mathbb{K}^{i}\to \mathbb{K}^{i+1}\,\big\}_{i=1}^{n-1}\,\cup\,\big\{\widetilde{\phi}^{\,(1)}_{\sh{F}}:\mathbb{K}^{1}\to \mathbb{K}^{2}\, \big\}\, ,\\[4pt]
&\big\{\phi^{\,(i)}_{\sh{G}}:\mathbb{K}^{i}\to \mathbb{K}^{i+1}\,\big\}_{i=1}^{n-1}\,\cup\,\big\{\widetilde{\phi}^{\,(1)}_{\sh{G}}:\mathbb{K}^{1}\to \mathbb{K}^{2}\, \big\} \, ,
\end{aligned}    
\end{equation*}
respectively. In this local configuration, Lemma~\eqref{Lemma: Ext0 on R_j for k=1} asserts that $\lambda$ is described by a collection of $n+1$ linear maps
\begin{equation*}
\big\{ T_{\lambda}^{(i)}:\mathbb{K}^{i}\to \mathbb{K}^{i}\big\}_{i=1}^{n}\,\cup\, \big\{ \widetilde{T}_{\lambda}^{\,(1)}:\mathbb{K}^{1}\to \mathbb{K}^{1}\big\}\, .
\end{equation*}

\item \textit{Case 2}: Suppose that $k\geq 2$. Then, on $R_{1}$, $\sh{F}$ and $\sh{G}$ are characterized by two collections of $n+1$ injective linear maps: 
\begin{equation*}
\begin{aligned}
&\big\{ \phi^{\,(i)}_{\sh{F}}:\mathbb{K}^{i}\to \mathbb{K}^{i+1}\,\big\}_{i=1}^{n-1}\,\cup\,\big\{ \widetilde{\phi}^{\,(k-1)}_{\sh{F}}:\mathbb{K}^{k-1}\to\mathbb{K}^{k}\,, ~\widetilde{\phi}^{\,(k)}_{\sh{F}}:\mathbb{K}^{k}\to \mathbb{K}^{k+1}\, \big\}\, ,\\[4pt]
&\big\{ \phi^{\,(i)}_{\sh{G}}:\mathbb{K}^{i}\to \mathbb{K}^{i+1}\,\big\}_{i=1}^{n-1}\,\cup\,\big\{ \widetilde{\phi}^{\,(k-1)}_{\sh{G}}:\mathbb{K}^{k-1}\to \mathbb{K}^{k}\,, ~\widetilde{\phi}^{\,(k)}_{\sh{G}}:\mathbb{K}^{k}\to \mathbb{K}^{k+1}\, \big\}\, ,
\end{aligned}    
\end{equation*}
respectively. In this local configuration, Lemma~\eqref{Lemma: Ext0 for R_j for k geq 2} asserts that $\lambda$ is described by a collection of $n+1$ linear maps 
\begin{equation*}
\big\{ T_{\lambda}^{(i)}:\mathbb{K}^{i}\to \mathbb{K}^{i}\big\}_{i=1}^{n}\,\cup\, \big\{\widetilde{T}_{\lambda}^{\,(k)}:\mathbb{K}^{k}\to \mathbb{K}^{k}\big\}\, .
\end{equation*}
\end{itemize}
In any of the above cases, $\big\{\phi^{(i)}_{\sh{F}}\big\}_{i=1}^{n-1}$, $\big\{\phi^{(i)}_{\sh{G}}\big\}_{i=1}^{n-1}$, and $\big\{T^{(i)}_{\lambda}\big\}_{i=1}^{n}$ are precisely the linear maps characterizing $\sh{F}$, $\sh{G}$, and $\lambda$ on $U_{\mathrm{L}}$.

Now, let $\vec{z}=(z_{1},\dots, z_{\ell})$ and $\vec{z}\,'=(z'_{1},\dots, z'_{\ell})$ be the points in the braid variety $X(\beta,\mathbb{K})$ such that the pairs $\big(\,\mathbf{\hat{f}}^{(n)}\,, \vec{z}\, \big)$ and $\big(\,\mathbf{\hat{g}}^{(n)}\,, \vec{z}\,'\, \big)$ algebraically characterize $\sh{F}$ and $\sh{G}$ according to Theorem~\eqref{Prop. for sheaves and braid matrices}, respectively. By Theorem~\eqref{Theorem: adapted bases for an object at a region R_j}, we know that: 
\begin{itemize}
\justifying
\item $\big(\big\{\hat{\mathbf{f}}^{(i)} \big\}_{i=1}^{n}, z_{1}\big)$ is a system of bases adapted to $\sh{F}$ on $R_{1}$.
\item $\big(\big\{\hat{\mathbf{g}}^{(i)} \big\}_{i=1}^{n}, z'_{1}\big)$ is a system of bases adapted to $\sh{G}$ on $R_{1}$.
\end{itemize}
Then, a direct application of either Lemma~\eqref{Lemma: Ext0 on R_j for k=1} or Lemma~\eqref{Lemma: Ext0 for R_j for k geq 2} depending on the value of $k$, considering that $k=i_{1}$, implies that:
\begin{itemize}
\justifying    
\item \textbf{Compatibility condition for $\lambda$ at $\sigma_{i_{1}}$}: The $(i_{1}+1,i_{1})$-entry of $\big(B^{(n)}_{i_{1}}(z_{1}')\big)^{-1} D(\vec{u})\, B^{(n)}_{i_{1}}(z_{1})$ must vanish. 
\item Under the above condition,
\begin{equation*}
\tensor[_{\hat{\mathbf{g}}^{(n)}[\sigma_{i_{1}},z'_{1}]}]{ \big[\, T_{\lambda}^{(n)}\,\big] }{_{\hat{\mathbf{f}}^{(n)}[\sigma_{i_{1}},z_{1}]}}=D(\pi_{i_{1}}(\vec{u}))\,,    
\end{equation*}
where $\pi_{i_{1}}(\vec{u})$ denotes the permutation of $\vec{u}$ associated with $\sigma_{i_{1}}$. 
\end{itemize}

Applying the same reasoning to $R_{2}$, the vertical strap in $\mathbb{R}^{2}$ containing $\sigma_{i_{2}}$---the second crossing of $\beta$---we obtain that: 
\begin{itemize}
\justifying    
\item \textbf{Compatibility condition for $\lambda$ at $\sigma_{i_{2}}$}: The $(i_{2}+1,i_{2})$-entry of $\big(B^{(n)}_{i_{2}}(z_{2}')\big)^{-1} D(\pi_{i_{1}}(\vec{u}))\, B^{(n)}_{i_{2}}(z_{2})$ must vanish. 

\item Under the above condition,
\begin{equation*}
\begin{aligned}
\tensor[_{\hat{\mathbf{g}}^{(n)}[\beta_{2},\vec{z}\,'_{2} ] }]{ \big[\, T_{\lambda}^{(n)}\,\big] }{_{\hat{\mathbf{f}}^{(n)}[\beta_{2} ,\vec{z}_{2}]}}&=D\big(\pi_{i_{2}}(\pi_{i_{1}}(\vec{u}))\big)\, ,\\[4pt]
&=D(\pi_{i_{1}\cdot\, i_{2}}(\vec{u}))\,,\\[4pt]
&=D(\pi_{\beta_{2}}(\vec{u}))\, ,
\end{aligned} 
\end{equation*}
\end{itemize}
where $\pi_{\beta_{2}}(\vec{u})$ denotes the permutation of $\vec{u}$ associated with $\beta_{2}=\sigma_{i_{1}}\sigma_{i_{2}}\in \mathrm{Br}^{+}_{n}$, the truncation of $\beta$ at its second crossing, and 
$\vec{z}_{2}=(z_{1}, z_{2}),\,\vec{z}\,'_{2}=(z'_{1}, z'_{2})\in\mathbb{K}^{2}_{\mathrm{std}}$ denote the truncation of the tuples $\vec{z}$ and $\vec{z}\,'$ at their second entry, respectively. 

Proceeding iteratively through all the vertical straps $R_{j}$, we obtain $\ell$ compatibility conditions for $\lambda$, one for each crossing $\sigma_{i_{j}}$ of $\beta$. More precisely, for each $j\in[1,\ell]$, we deduce that
\begin{equation*}
\Big[\,\big(B^{(n)}_{i_{j}}(z'_{j})\big)^{-1}D(\pi_{\beta_{j-1}}(\vec{u}))\, B^{(n)}_{i_{j}}(z_{j})\, \Big]_{i_{j}+1, i_{j}}=0\, .  
\end{equation*}
In particular, building on Definition~\eqref{Def: linear map delta}, we observe that the above compatibility conditions are precisely those characterizing the elements of the kernel of the linear map $\delta_{\sh{F},\sh{G}}$, which allows us to conclude that
\begin{equation*}
\mathrm{Ext}^{0}(\sh{F},\sh{G})\cong \mathrm{ker}\,\delta_{\sh{F},\sh{G}}\, .    
\end{equation*}
\end{proof}

Having established the previous result, we conclude the analysis of the zero-degree morphism spaces in the category $\ccs{1}{\beta}$ and proceed to study the structure of its one-degree morphism spaces. 

\subsection{\texorpdfstring{One-Degree Morphism Spaces in the Category $\ccs{1}{\beta}$}{One-degree Morphism Spaces in the Category HSh}} 
\noindent
Let $\beta \in \mathrm{Br}^{+}_{n}$ be a positive braid word. The main goal of this subsection is to establish the second part of Theorem~\eqref{Theorem: Lower-degree morphisms}, namely, the isomorphism of vector spaces in Equation~\eqref{Eq: iso for Ext1}, thereby providing an explicit algebraic characterization of the one-degree morphism spaces in the category $\ccs{1}{\beta}$. To this end, we begin by introducing some preliminaries. 

\subsubsection{Technical Background}
Let $\beta \in \mathrm{Br}^{+}_{n}$ be a positive braid word. We now collect some preliminaries that will play a fundamental role in the explicit computation of the one-degree morphism spaces in the category $\ccs{1}{\beta}$. In particular, we begin by establishing some notation.

\begin{notation}
Let $\mathscr{C}$ be a category, and suppose that $A$ is an object of the category $\mathscr{C}$. Then, when drawing diagrams, we introduce $A =\joinrel=\joinrel=\joinrel= \joinrel= A$ to denote the identity morphism between $A$ and $A$.       
\end{notation}

\begin{definition}
Let $\mathcal{M}$ be a smooth manifold, and let $\sh{F}$ and $\sh{G}$ be sheaves of $\mathbb{K}$-modules on $\mathcal{M}$. A sheaf $\sh{H}$ of $\mathbb{K}$-modules on $\mathcal{M}$ is called an \emph{extension of $\sh{F}$ by $\sh{G}$} if there exists a short exact sequence of the form
\begin{equation}
0 \longrightarrow \sh{G} \longrightarrow \sh{H} \longrightarrow \sh{F} \longrightarrow 0\, .   
\end{equation}      

In particular, two extensions $\sh{H}$ and $\sh{H}'$ of $\sh{F}$ by $\sh{G}$ are said to be \emph{equivalent} if there exists a sheaf isomorphism $\lambda:\sh{H}\to \sh{H}'$ such that the diagram in Figure~\eqref{Equivalen extenions of F by F'} commutes in each square.
\begin{figure}[ht]
\centering
\begin{tikzpicture}
\useasboundingbox (-2,-1.5) rectangle (2,1.5);
\scope[transform canvas={scale=1.2}]

\node at (-4,1) {\footnotesize $0$};
\node at (-2,1) {\footnotesize $\sh{G}$};
\node at (0,1) {\footnotesize $\sh{H}$};
\node at (2,1) {\footnotesize $\sh{F}$};
\node at (4,1) {\footnotesize $0$};

\node at (-4,-1) {\footnotesize $0$};
\node at (-2,-1) {\footnotesize $\sh{G}$};
\node at (0,-1) {\footnotesize $\sh{H}'$};
\node at (2,-1) {\footnotesize $\sh{F}$};
\node at (4,-1) {\footnotesize $0$};

\draw[->] (-4+0.3,1) -- (-2-0.30,1);
\draw[->] (-2+0.3,1) -- (-0-0.30,1);
\draw[->] (0+0.3,1) -- (2-0.30,1);
\draw[->] (2+0.3,1) -- (4-0.30,1);

\draw[->] (-4+0.3,-1) -- (-2-0.30,-1);
\draw[->] (-2+0.3,-1) -- (-0-0.30,-1);
\draw[->] (0+0.3,-1) -- (2-0.30,-1);
\draw[->] (2+0.3,-1) -- (4-0.30,-1);

\draw[-] (-2-0.05,1-0.30) -- (-2-0.05,-1+0.30);
\draw[-] (-2+0.05,1-0.30) -- (-2+0.05,-1+0.30);

\draw[-] (2-0.05,1-0.30) -- (2-0.05,-1+0.30);
\draw[-] (2+0.05,1-0.30) -- (2+0.05,-1+0.30);

\draw[->] (0,1-0.30) -- (0,-1+0.30);
\node[right] at (0,0) {\footnotesize $\lambda$};

\endscope
\end{tikzpicture}
\caption{Two equivalent extensions $\sh{H}$ and $\sh{H}'$ of $\sh{F}$ by $\sh{G}$.}
\label{Equivalen extenions of F by F'}
\end{figure}
\end{definition}

Now, recall that the category of sheaves of $\mathbb{K}$-modules on a smooth manifold is an abelian category with enough injectives~\cite{Hart1, KS1}. The following result is standard in classical sheaf theory and also follows from the general theory of abelian categories (see, for instance,~\cite{HS1}).

\begin{lemma}\label{Key lemma for Ext^{1} for sheaves}
Let $\mathcal{M}$ be a smooth manifold, and let $\sh{F}$ and $\sh{G}$ be sheaves of $\mathbb{K}$-modules on $\mathcal{M}$. Then there exists a canonical isomorphism
\begin{equation}
\mathrm{Ext}^{1}(\sh{F}, \sh{G}) \cong \Big\{   \text{equivalence classes of extensions of $\sh{F}$ by $\sh{G}$}  \Big\}\, .    
\end{equation}
\end{lemma}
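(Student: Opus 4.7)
The plan is to establish the bijection by constructing explicit maps in both directions, relying on the fact that sheaves of $\mathbb{K}$-modules on $\mathcal{M}$ form an abelian category with enough injectives~\cite{KS1, HS1}, so the long exact sequence of $\mathrm{Ext}$ groups is available. The whole argument is a standard Yoneda-type classification, adapted to the sheaf-theoretic setting.

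\emph{From extensions to $\mathrm{Ext}^{1}$.} Given an extension $0\to\sh{G}\to\sh{H}\to\sh{F}\to 0$, I would apply the covariant functor $\mathrm{Hom}(\sh{F},-)$ and use the induced long exact sequence
\begin{equation*}
\cdots\to \mathrm{Hom}(\sh{F},\sh{H})\to \mathrm{Hom}(\sh{F},\sh{F})\xrightarrow{\;\partial\;} \mathrm{Ext}^{1}(\sh{F},\sh{G})\to\cdots
\end{equation*}
to assign to the extension the class $\partial(\mathrm{id}_{\sh{F}})\in\mathrm{Ext}^{1}(\sh{F},\sh{G})$. Equivalent extensions are mapped to the same class: the isomorphism $\lambda:\sh{H}\to\sh{H}'$ of Figure~\eqref{Equivalen extenions of F by F'} induces, by naturality of $\partial$ and a diagram chase (the five-lemma), equality of the two connecting maps on $\mathrm{id}_{\sh{F}}$.

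\emph{From $\mathrm{Ext}^{1}$ to extensions.} For the inverse direction, I would embed $\sh{G}$ into an injective sheaf of $\mathbb{K}$-modules, $0\to\sh{G}\to\sh{I}\to\sh{Q}\to 0$. Applying $\mathrm{Hom}(\sh{F},-)$ and using $\mathrm{Ext}^{1}(\sh{F},\sh{I})=0$ yields the exact sequence
\begin{equation*}
\mathrm{Hom}(\sh{F},\sh{I})\to \mathrm{Hom}(\sh{F},\sh{Q})\xrightarrow{\;\partial\;} \mathrm{Ext}^{1}(\sh{F},\sh{G})\to 0\, .
\end{equation*}
For any class $\alpha\in\mathrm{Ext}^{1}(\sh{F},\sh{G})$, pick a lift $\varphi\in\mathrm{Hom}(\sh{F},\sh{Q})$ with $\partial(\varphi)=\alpha$, and form the pullback sheaf $\sh{H}:=\sh{I}\times_{\sh{Q}}\sh{F}$, together with its canonical projections. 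A direct check using the pullback property yields a short exact sequence $0\to\sh{G}\to\sh{H}\to\sh{F}\to 0$. The assignment $\alpha\mapsto [\sh{H}]$ is independent of the lift $\varphi$: any two lifts differ by an element of $\mathrm{Hom}(\sh{F},\sh{I})$, which produces an explicit isomorphism of the associated pullbacks realizing the equivalence of extensions.

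\emph{Mutual inverseness and canonicity.} Finally, I would verify that the two assignments are mutually inverse on equivalence classes. In one direction, starting with an extension and then taking the pullback along a lift of $\partial(\mathrm{id}_{\sh{F}})$ recovers the original extension up to equivalence (using universality of the pullback together with the given surjection $\sh{H}\twoheadrightarrow\sh{F}$). In the other direction, the composition yields $\alpha\mapsto\partial(\mathrm{id}_{\sh{F}})$, which equals $\partial(\varphi)=\alpha$ by naturality of $\partial$ applied to the morphism of short exact sequences provided by the pullback square. The independence of all constructions from the chosen injective embedding $\sh{G}\hookrightarrow\sh{I}$ is standard and ensures canonicity.

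The main obstacle will be the careful verification that the construction $\alpha\mapsto[\sh{H}]$ is well-defined on equivalence classes; concretely, showing that if $\varphi_{1}-\varphi_{2}$ factors through $\sh{I}$, then the corresponding pullback sheaves $\sh{H}_{1}$ and $\sh{H}_{2}$ admit a distinguished isomorphism making the diagram of Figure~\eqref{Equivalen extenions of F by F'} commute. Once this bookkeeping is in place, the rest of the argument reduces to diagram chases, and the canonical isomorphism follows.
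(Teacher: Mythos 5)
Your proposal is correct and is precisely the standard Baer--Yoneda argument (connecting homomorphism applied to $\mathrm{id}_{\sh{F}}$ in one direction, pullback along a lift through an injective embedding in the other) that the paper itself does not spell out but instead delegates to the cited references, noting that sheaves of $\mathbb{K}$-modules form an abelian category with enough injectives. Since the paper offers no proof of its own, your write-up simply supplies the standard argument it is citing, and the steps you flag as requiring care (well-definedness on equivalence classes, independence of the lift and of the injective embedding) are exactly the routine verifications of that classical proof.
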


Let $\sh{F}$ and $\sh{G}$ be objects of the category $\ccs{1}{\beta}$. Building on Lemma~\eqref{Key lemma for Ext^{1} for sheaves}, we proceed to characterize $\mathrm{Ext}^{1}(\sh{F}, \sh{G})$ via the equivalence classes of extensions of $\sh{F}$ by $\sh{G}$. To this end, we introduce several useful definitions and preliminary results. 

\begin{definition}\label{Def: Extensions of linear maps}
Let $A$, $B$, $C$, $X$, $Y$, $Z$ be vector spaces over $\mathbb{K}$, and let $\gamma_{\sh{G}}:A\to X$, $\Gamma_{\sh{H}}:B\to Y$, and $\gamma_{\sh{F}}:C\to Z$ be linear maps. We say that $\Gamma_{\sh{H}}$ is an \emph{extension of $\gamma_{\sh{F}}$ by $\gamma_{\sh{G}}$} if there exists a diagram as in Figure~\eqref{fig: diagram for extensions of linear maps} where the horizontal rows are short exact sequences and each square commutes. 

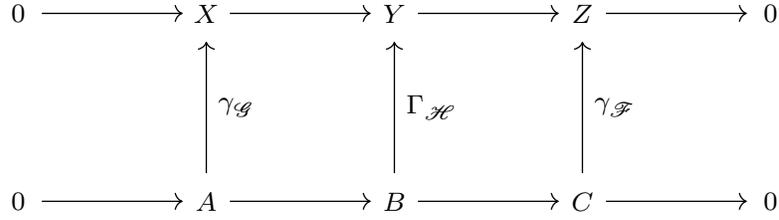
\begin{figure}[ht]
\centering
\begin{tikzpicture}
\useasboundingbox (-2,-1.65) rectangle (2,1.65);
\scope[transform canvas={scale=1.25}]

\node at (-4,1) {\footnotesize $0$};
\node at (-2, 1) {\footnotesize $X$};
\node at (0,1) {\footnotesize $Y$};
\node at (2,1) {\footnotesize $Z$};
\node at (4,1) {\footnotesize $0$};

\node at (-4,-1) {\footnotesize $0$};
\node at (-2, -1) {\footnotesize $A$};
\node at (0,-1) {\footnotesize $B$};
\node at (2,-1) {\footnotesize $C$};
\node at (4,-1) {\footnotesize $0$};

\draw[->] (-4 +0.25,1) -- (-2-0.25,1);
\draw[->] (-2+0.25,1) -- (-0-0.25,1);
\draw[->] (0+0.25,1) -- (2-0.25,1);
\draw[->] (2+0.25,1) -- (4-0.25,1);

\draw[->] (-4 +0.25,-1) -- (-2-0.25,-1);
\draw[->] (-2+0.25,-1) -- (-0-0.25,-1);
\draw[->] (0+0.25,-1) -- (2-0.25,-1);
\draw[->] (2+0.25,-1) -- (4-0.25,-1);

\draw[->] (0,-1+0.30)  -- (0,1-0.30) ;
\node[right] at (0,0) {\footnotesize $\Gamma_{\sh{H}}$};

\draw[->] (-2,-1+0.30) -- (-2,1-0.30);
\node[right] at (-2,0) {\footnotesize $\gamma_{\sh{G}}$};

\draw[->] (2, -1+0.30) -- (2,1-0.30);
\node[right] at (2,0) {\footnotesize $\gamma_{\sh{F}}$};

\endscope
\end{tikzpicture}
\caption{An extension $\Gamma_{\sh{H}}$ of $\gamma_{\sh{F}}$ by $\gamma_{\sh{G}}$.}
\label{fig: diagram for extensions of linear maps}
\end{figure}
\end{definition}

\begin{definition}\label{Def: equivalent extensions of linear maps}
Let $A$, $B$, $B'$, $C$, $X$, $Y$, $Y'$, $Z$ be vector spaces over $\mathbb{K}$, and let $\gamma_{\sh{G}}:A\to X$, $\Gamma_{\sh{H}}:B\to Y$, $\Gamma_{\sh{H'}}:B'\to Y'$, and $\gamma_{\sh{F}}:C\to Z$ be linear maps, with $\Gamma_{\sh{H}}$ and $\Gamma_{\sh{H'}}$ realizing extensions of $\gamma_{\sh{F}}$ by $\gamma_{\sh{G}}$. We say that $\Gamma_{\sh{H}}$ and $\Gamma_{\sh{H'}}$ are \emph{equivalent extensions of $\gamma_{\sh{F}}$ by $\gamma_{\sh{G}}$} if there exist linear isomorphisms $\xi: B \rightarrow B'$ and $\delta: Y \rightarrow Y'$ making the diagram in Figure~\eqref{fig: Commutative diagram for equivalent extensions of linear maps} commute in each square.

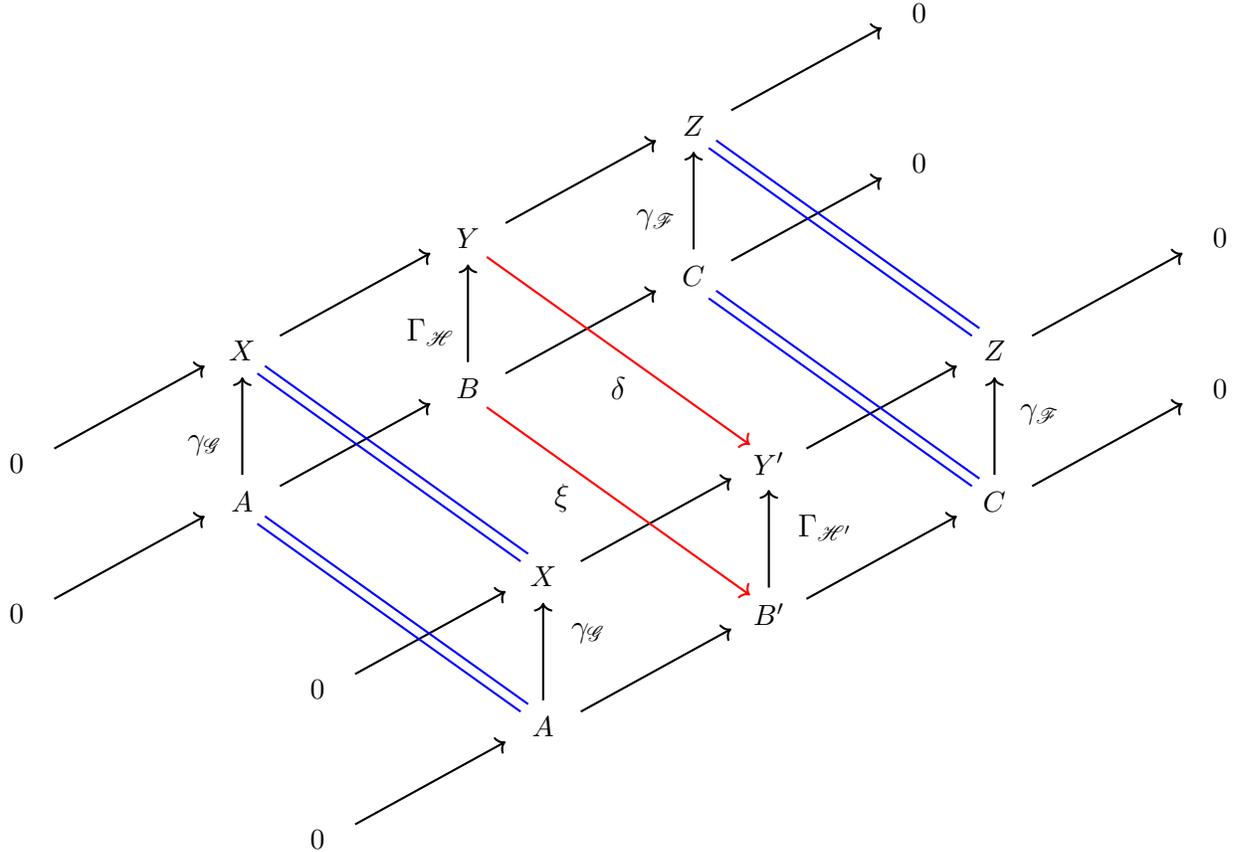
\begin{figure}[ht]
\centering
\begin{tikzpicture}
\useasboundingbox (-6,-5.5) rectangle (6,6.5);
\scope[transform canvas={scale=1}]

\node at (4,6) {\footnotesize\large $0$};
\node at (4,4) {\footnotesize\large $0$};

\node at (1,4.5) {\footnotesize\large $Z$};
\node at (1,2.5) {\footnotesize\large $C$};

\node at (-2,3) {\footnotesize\large $Y$};
\node at (-2,1) {\footnotesize\large $B$};

\node at (-5,1.5) {\footnotesize\large $X$};
\node at (-5,-0.5) {\footnotesize\large $A$};

\node at (-8,0) {\footnotesize\large $0$};
\node at (-8,-2) {\footnotesize\large $0$};

\node at (8,3) {\footnotesize\large $0$};
\node at (8,1) {\footnotesize\large $0$};

\node at (5,1.5) {\footnotesize\large $Z$};
\node at (5,-0.5) {\footnotesize\large $C$};

\node at (2,0) {\footnotesize\large $Y'$};
\node at (2,-2) {\footnotesize\large $B'$};

\node at (-1,-1.5) {\footnotesize\large $X$};
\node at (-1,-3.5) {\footnotesize\large $A$};

\node at (-4,-3) {\footnotesize\large $0$};
\node at (-4,-5) {\footnotesize\large $0$};

\draw[->, thick] (-8 +0.5,0+0.2) -- (-5-0.5,1.5-0.2);
\draw[->, thick] (-5 +0.5,1.5+0.2) -- (-2-0.5,3-0.2);
\draw[->, thick] (-2 +0.5,3+0.2) -- (1-0.5,4.5-0.2);
\draw[->, thick] (1 +0.5,4.5+0.2) -- (4-0.5,6-0.2);

\draw[->, thick] (-8 +0.5,-2+0.2) -- (-5-0.5,-0.5-0.2);
\draw[->, thick] (-5 +0.5,-0.5+0.2) -- (-2-0.5,1-0.2);
\draw[->, thick] (-2 +0.5,1+0.2) -- (1-0.5,2.5-0.2);
\draw[->, thick] (1 +0.5,2.5+0.2) -- (4-0.5,4-0.2);

\draw[->, thick] (-2,1+0.35) -- (-2,3-0.35);
\draw[->, thick] (1,2.5+0.35) -- (1,4.5-0.35);
\draw[->, thick] (-5,-0.5+0.35) -- (-5,1.5-0.35);

\draw[->, thick] (-4+0.5,-3+0.2) -- (-1-0.5,-1.5-0.2);
\draw[->, thick] (-1+0.5,-1.5+0.2) -- (2-0.5,0-0.2);
\draw[->, thick] (2+0.5,0+0.2) -- (5-0.5,1.5-0.2);
\draw[->, thick] (5+0.5,1.5+0.2) -- (8-0.5,3-0.2);

\draw[->, thick] (-4+0.5,-5+0.2) -- (-1-0.5,-3.5-0.2);
\draw[->, thick] (-1+0.5,-3.5+0.2) -- (2-0.5,-2-0.2);
\draw[->, thick] (2+0.5,-2+0.2) -- (5-0.5,-0.5-0.2);
\draw[->, thick] (5+0.5,-0.5+0.2) -- (8-0.5,1-0.2);

\draw[->, thick] (-1,-3.5+0.35) -- (-1,-1.5-0.35);
\draw[->, thick] (2,-2+0.35) -- (2,0-0.35);
\draw[->, thick] (5,-0.5+0.35) -- (5,1.5-0.35);

\draw[thick, blue] (-5+0.25-0.05, 1.5-0.25-0.05) -- (-1-0.25-0.05,-1.5+0.25-0.05);
\draw[thick, blue] (-5+0.25+0.05, 1.5-0.25+0.05) -- (-1-0.25+0.05,-1.5+0.25+0.05);

\draw[thick, blue] (-5+0.25-0.05, -0.5-0.25-0.05) -- (-1-0.25-0.05,-3.5+0.25-0.05);
\draw[thick, blue] (-5+0.25+0.05, -0.5-0.25+0.05) -- (-1-0.25+0.05,-3.5+0.25+0.05);

\draw[->, thick, red] (-2+0.25, 3-0.25) -- (2-0.25,0+0.25);
\draw[->, thick, red] (-2+0.25, 1-0.25) -- (2-0.25,-2+0.25);

\draw[thick, blue] (1+0.25-0.05, 4.5-0.25-0.05) -- (5-0.25-0.05,1.5+0.25-0.05);
\draw[thick, blue] (1+0.25+0.05, 4.5-0.25+0.05) -- (5-0.25+0.05,1.5+0.25+0.05);

\draw[thick, blue] (1+0.25-0.05, 2.5-0.25-0.05) -- (5-0.25-0.05,-0.5+0.25-0.05);
\draw[thick, blue] (1+0.25+0.05, 2.5-0.25+0.05) -- (5-0.25+0.05,-0.5+0.25+0.05);


\node at (-0.75,-0.45) {\Large $\xi$};
\node at (0,1) {\Large $\delta$};


\node at (-5-0.5,0.5-0.25) {\Large $\gamma_{\sh{G}}$};
\node at (-2-0.5,2-0.25) {\Large $\Gamma_{\sh{H}}$};
\node at (1-0.5,3.5-0.25) {\Large $\gamma_{\sh{F}}$};

\node at (-1+0.6,-2.5+0.25) {\Large $\gamma_{\sh{G}}$};
\node at (2+0.75,-1+0.15) {\Large $\Gamma_{\sh{H'}}$};
\node at (5+0.6,0.5+0.15) {\Large $\gamma_{\sh{F}}$};

\endscope
\end{tikzpicture}
\caption{Two equivalent extensions $\Gamma_{\sh{H}}$ and $\Gamma_{\sh{H'}}$ of $\gamma_{\sh{F}}$ by $\gamma_{\sh{G}}$.}
\label{fig: Commutative diagram for equivalent extensions of linear maps}
\end{figure}
\end{definition}

Throughout this subsection, we will implement the following notation. 

\begin{notation}
Let $A$, $B$, $X$, $Y$ be vector spaces over $\mathbb{K}$. Then we know that there exists a canonical isomorphism
\begin{equation*}
\mathrm{Hom}_{\,\mathbb{K}}(A\oplus B, X\oplus Y)\cong \mathrm{Hom}_{\,\mathbb{K}}(A, X)\oplus  \mathrm{Hom}_{\,\mathbb{K}}(B, X)  \oplus \mathrm{Hom}_{\,\mathbb{K}}(A,Y) \oplus \mathrm{Hom}_{\mathbb{K}}(B, Y)\,.     
\end{equation*}
More precisely, we have that, for any linear map $\xi:A \oplus B \to X\oplus Y$, there exist linear maps $\lambda^{(1)}_{\xi}: A\to X$, $\lambda^{(2)}_{\xi}: B\to X$, $\lambda^{(3)}_{\xi}: A\to Y$, and $\lambda^{(4)}_{\xi}: B\to Y$ such that 
\begin{equation}\label{Eq: decomposition for linear maps over direct sums of vector spaces}
\xi (a,b)=\big(\lambda^{(1)}_{\xi}(a)+\lambda^{(2)}_{\xi}(b), \lambda^{(3)}_{\xi}(a)+\lambda^{(4)}_{\xi}(b)\big)\, ,  \quad \text{for all $(a, b)\in A\oplus B$}\, .  
\end{equation}
Then, by abuse of notation, we denote the expression in equation~\eqref{Eq: decomposition for linear maps over direct sums of vector spaces} by
\begin{equation*}
\xi=\begin{bmatrix}
\lambda^{(1)}_{\xi} & \lambda^{(2)}_{\xi}\\
\lambda^{(3)}_{\xi} & \lambda^{(4)}_{\xi}
\end{bmatrix}\, .    
\end{equation*}
\end{notation}

\begin{definition}\label{Def: inclusion and projection maps}
Let $A$ and $B$ be vector spaces over $\mathbb{K}$. We denote by $\iota_A: A \to A \oplus B$ the canonical inclusion $\iota_A(a) := (a,0)$ for all $a \in A$, and by $\pi_B: A \oplus B \to B $ the canonical projection 
$\pi_B(a,b) := b$ for all $(a,b) \in A \oplus B$.
\end{definition}

The following lemma introduces a concept that will play a central role in the subsequent discussion.

\begin{lemma}\label{Lemma: block extensions of linear maps}
Let $A$, $C$, $X$, $Z$ be vector spaces over $\mathbb{K}$, and let  $\gamma_{\sh{G}}:A\to X$, $\gamma_{\sh{H}}:C\to X$, and $\gamma_{\sh{F}}:C\to Z$ be linear maps. Then the linear map $\tensor[_{\gamma_{\sh{G}}}]{ \langle \smash{\gamma_{\scriptscriptstyle \sh{H}}} \rangle }{_{\gamma_{\sh{F}}}}:A\oplus C \to X\oplus Z$ defined by
\begin{equation*}
\tensor[_{\gamma_{\sh{G}}}]{ \langle \smash{\gamma_{\scriptscriptstyle \sh{H}}} \rangle }{_{\gamma_{\sh{F}}}}  :=
\begin{bmatrix}
\gamma_{\sh{G}} & \gamma_{\sh{H}}\\
0 & \gamma_{\sh{F}}
\end{bmatrix}    
\end{equation*}
is an extension of $\gamma_{\sh{F}}$ by $\gamma_{\sh{G}}$, which we call the \emph{block extension of $\gamma_{\sh{F}}$ by $\gamma_{\sh{G}}$ associated with $\gamma_{\sh{H}}$}.
\end{lemma}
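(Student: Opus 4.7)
The plan is to exhibit an explicit commutative diagram of the shape required by Definition~\eqref{Def: Extensions of linear maps} and then verify the exactness of its two horizontal rows together with the commutativity of its two squares. Concretely, I would complete the diagram by placing $A\oplus C$ in the middle of the bottom row and $X\oplus Z$ in the middle of the top row, joining them by the prescribed map $\tensor[_{\gamma_{\sh{G}}}]{ \langle \smash{\gamma_{\scriptscriptstyle \sh{H}}} \rangle }{_{\gamma_{\sh{F}}}}$, and filling in the horizontal arrows by the canonical inclusions $\iota_A : A \hookrightarrow A\oplus C$, $\iota_X : X \hookrightarrow X\oplus Z$ of Definition~\eqref{Def: inclusion and projection maps} on the left, and the canonical projections $\pi_C : A\oplus C \twoheadrightarrow C$, $\pi_Z : X\oplus Z \twoheadrightarrow Z$ on the right.

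Next, I would observe that the two horizontal rows are the canonical short exact sequences associated with a direct sum decomposition, which are exact essentially by inspection: $\iota_A$ and $\iota_X$ are injective, $\pi_C$ and $\pi_Z$ are surjective, and $\ker \pi_C = \mathrm{im}\,\iota_A$, $\ker \pi_Z = \mathrm{im}\,\iota_X$. This reduces the lemma to the verification of the commutativity of the two squares, which is a direct block-matrix computation. For the left square, given $a\in A$, one computes
\begin{equation*}
\tensor[_{\gamma_{\sh{G}}}]{ \langle \smash{\gamma_{\scriptscriptstyle \sh{H}}} \rangle }{_{\gamma_{\sh{F}}}}(\iota_A(a)) \,=\, \begin{bmatrix} \gamma_{\sh{G}} & \gamma_{\sh{H}} \\ 0 & \gamma_{\sh{F}} \end{bmatrix}\!(a,0) \,=\, (\gamma_{\sh{G}}(a),0) \,=\, \iota_X(\gamma_{\sh{G}}(a))\, .
\end{equation*}
For the right square, given $(a,c)\in A\oplus C$, one similarly obtains
\begin{equation*}
\pi_Z\bigl( \tensor[_{\gamma_{\sh{G}}}]{ \langle \smash{\gamma_{\scriptscriptstyle \sh{H}}} \rangle }{_{\gamma_{\sh{F}}}}(a,c) \bigr) \,=\, \pi_Z\bigl( \gamma_{\sh{G}}(a)+\gamma_{\sh{H}}(c),\,\gamma_{\sh{F}}(c) \bigr) \,=\, \gamma_{\sh{F}}(c) \,=\, \gamma_{\sh{F}}(\pi_C(a,c))\, .
\end{equation*}

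Since the statement is an elementary unwinding of the definitions of a direct sum, a block matrix, and of Definition~\eqref{Def: Extensions of linear maps}, I do not anticipate any serious obstacle: the main task is simply bookkeeping, making sure the two canonical short exact sequences are introduced explicitly and that the block form of $\tensor[_{\gamma_{\sh{G}}}]{ \langle \smash{\gamma_{\scriptscriptstyle \sh{H}}} \rangle }{_{\gamma_{\sh{F}}}}$ is consistent with the notational conventions set by equation~\eqref{Eq: decomposition for linear maps over direct sums of vector spaces}. The only subtle point worth flagging is that the upper-triangular block shape, i.e.\ the vanishing of the $(2,1)$-entry, is precisely what guarantees commutativity of the left square; the freedom in the $(1,2)$-entry, occupied by $\gamma_{\sh{H}}$, is what parametrizes the different block extensions of $\gamma_{\sh{F}}$ by $\gamma_{\sh{G}}$, a fact that will presumably be leveraged in the subsequent development linking these block extensions to representatives of classes in $\mathrm{coker}\,\delta_{\sh{F},\sh{G}}$.
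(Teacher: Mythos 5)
Your proposal is correct and matches the paper's own proof essentially verbatim: both exhibit the two canonical split short exact sequences via the inclusions and projections of Definition~\eqref{Def: inclusion and projection maps} and verify commutativity of the two squares by the same direct block computation. No changes needed.
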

\begin{proof}
To begin, consider the canonical inclusion and projection maps $\iota_{A \oplus C}: A \to A \oplus C$, $\iota_{X \oplus Z}: X \to X \oplus Z$, $\rho_{A \oplus C}:A \oplus C\to C$, and $\rho_{X \oplus Z}:X \oplus Z \to Z$ (see Definition~\eqref{Def: inclusion and projection maps}). By construction, we have that: 
\begin{itemize}
\justifying
\item $\iota_{A\oplus C}$ and $\iota_{X\oplus Z}$ are injective.
\item $\rho_{A\oplus C}$ and $\rho_{X\oplus Z}$ are surjective. 
\item $\mathrm{im}\,\iota_{A\oplus C}=\ker\rho_{A\oplus C}$ and $\mathrm{im}\,\iota_{X\oplus Z}=\ker\rho_{X\oplus Z}$. 
\end{itemize}
In addition, a direct calculation shows that
\begin{equation*}
\begin{aligned}
\elmG\circ \iota_{A\oplus C}(a)&=\iota_{X\oplus Z}\circ \gamma_{\sh{G}}(a)\, ,    \quad   \text{for all $a\in A$}\, ,\\
\rho_{X\oplus Z}\circ \elmG (a,c)&=\gamma_{\sh{F}}\circ \rho_{A\oplus C}(a,c)\, ,  \quad \text{for all $(a,c)\in A\oplus C$}\, .     
\end{aligned}    
\end{equation*}

Combining the above results, we observe that each square in the diagram in Figure~\eqref{fig: diagram for the canonical extensions of linear maps} commutes and its horizontal rows are short exact sequences. Hence, building on Definition~\eqref{Def: Extensions of linear maps}, we conclude that $\elmG$ is an extension of $\gamma_{\sh{F}}$ by $\gamma_{\sh{G}}$. 

\begin{figure}[ht]
\centering
\begin{tikzpicture}
\useasboundingbox (-2,-2.35) rectangle (2,2.15);
\scope[transform canvas={scale=1.25}]

\node at (-4-1,1) {\footnotesize $0$};
\node at (-2-0.5, 1) {\footnotesize $X$};
\node at (0,1) {\footnotesize $X \oplus Z$};
\node at (2+0.5,1) {\footnotesize $Z$};
\node at (4+1,1) {\footnotesize $0$};

\node at (-4-1,-1) {\footnotesize $0$};
\node at (-2-0.5, -1) {\footnotesize $A$};
\node at (0,-1) {\footnotesize $A \oplus C$};
\node at (2+0.5,-1) {\footnotesize $C$};
\node at (4+1,-1) {\footnotesize $0$};

\draw[->] (-4-1 +0.25,1) -- (-2-0.5-0.25,1);
\draw[->] (-2-0.5+0.25,1) -- (-0-0.5,1);
\draw[->] (0+0.5,1) -- (2+0.5-0.25,1);
\draw[->] (2+0.5+0.25,1) -- (4+1-0.25,1);

\draw[->] (-4-1 +0.25,-1) -- (-2-0.5-0.25,-1);
\draw[->] (-2-0.5+0.25,-1) -- (-0-0.5,-1);
\draw[->] (0+0.5,-1) -- (2+0.5-0.25,-1);
\draw[->] (2+0.5+0.25,-1) -- (4+1-0.25,-1);

\draw[->] (0,-1+0.30)  -- (0,1-0.30) ;
\node[right] at (0,0) {\footnotesize $\elmG$};

\draw[->] (-2-0.5,-1+0.30) -- (-2-0.5,1-0.30);
\node[right] at (-2-0.5,0) {\footnotesize $\gamma_{\sh{G}}$};

\draw[->] (2+0.5, -1+0.30) -- (2+0.5,1-0.30);
\node[right] at (2+0.5,0) {\footnotesize $\gamma_{\sh{F}}$};

\node at (-1-0.25-0.1,1+0.4) {\scriptsize $\iota_{X\oplus Z}$};
\node at (1+0.25+0.1,1+0.4) {\scriptsize $\rho_{X\oplus Z}$};

\node at (-1-0.25-0.1,-1-0.4) {\scriptsize $\iota_{A\oplus C}$};
\node at (1+0.25+0.1,-1-0.4) {\scriptsize $\rho_{A\oplus C}$};

\endscope
\end{tikzpicture}
\caption{The block extension $\elmG$ of $\gamma_{\sh{F}}$ by $\gamma_{\sh{G}}$ associated with $\gamma_{\sh{H}}$.}
\label{fig: diagram for the canonical extensions of linear maps}
\end{figure}
\end{proof}

The following lemma shows that every extension of linear maps is equivalent to a block extension, which will simplify the analysis of the one-degree morphism spaces in the category $\ccs{1}{\beta}$.

\begin{lemma}\label{Lemma: equivalence of extensions and block extensions}
Let $A$, $B$, $C$, $X$, $Y$, $Z$ be vector spaces over $\mathbb{K}$, and let $\gamma_{\sh{G}}:A\to X$, $\Gamma_{\sh{H}}:B\to Y$, and $\gamma_{\sh{F}}:C\to Z$ be linear maps, with $\Gamma_{\sh{H}}$ realizing an extension of $\gamma_{\sh{F}}$ by $\gamma_{\sh{G}}$. 

Then there exists a linear map $\gamma_{\sh{H}}:C\to X$ such that $\Gamma_{\sh{H}}$ and $\elmG: A \oplus C \to X \oplus Z$ are equivalent extensions of $\gamma_{\sh{F}}$ by $\gamma_{\sh{G}}$, where $\elmG$ denotes the block extension of $\gamma_{\sh{F}}$ by $\gamma_{\sh{G}}$ associated with $\gamma_{\sh{H}}$.
\end{lemma}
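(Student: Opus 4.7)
My plan is to exploit the fact that we are working over a field: every short exact sequence of $\mathbb{K}$-vector spaces splits. Applied to the two horizontal rows in the diagram realizing $\Gamma_{\sh{H}}$ as an extension of $\gamma_{\sh{F}}$ by $\gamma_{\sh{G}}$ (cf. Definition~\eqref{Def: Extensions of linear maps}), this furnishes linear sections $s_B\colon C\to B$ and $s_Y\colon Z\to Y$ of the projections $\rho_B\colon B\to C$ and $\rho_Y\colon Y\to Z$ respectively. Together with the given inclusions $\iota_B\colon A\to B$ and $\iota_Y\colon X\to Y$, these yield canonical linear isomorphisms $\xi\colon A\oplus C \to B$ and $\delta\colon X\oplus Z \to Y$ defined by
\begin{equation*}
\xi(a,c) := \iota_B(a) + s_B(c)\, , \qquad \delta(x,z) := \iota_Y(x) + s_Y(z)\, .
\end{equation*}
A direct verification shows that $\xi$ and $\delta$ intertwine the inclusions and projections of the two short exact sequences, which recovers four of the six auxiliary commuting squares required by Definition~\eqref{Def: equivalent extensions of linear maps} and depicted in Figure~\eqref{fig: Commutative diagram for equivalent extensions of linear maps}.

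Next, I would extract the missing ingredient $\gamma_{\sh{H}}\colon C\to X$ directly from $\Gamma_{\sh{H}}$. For each $c\in C$, consider $\Gamma_{\sh{H}}(s_B(c))\in Y$. Commutativity of the right square in the original extension diagram gives $\rho_Y\bigl(\Gamma_{\sh{H}}(s_B(c))\bigr) = \gamma_{\sh{F}}\bigl(\rho_B(s_B(c))\bigr) = \gamma_{\sh{F}}(c)$, so the element $\Gamma_{\sh{H}}(s_B(c)) - s_Y(\gamma_{\sh{F}}(c))$ lies in $\ker\rho_Y = \mathrm{im}\,\iota_Y$. Since $\iota_Y$ is injective, there exists a unique $\gamma_{\sh{H}}(c)\in X$ satisfying
\begin{equation*}
\iota_Y\bigl(\gamma_{\sh{H}}(c)\bigr) \;=\; \Gamma_{\sh{H}}(s_B(c)) - s_Y\bigl(\gamma_{\sh{F}}(c)\bigr)\, .
\end{equation*}
Linearity of $\gamma_{\sh{H}}$ follows from the linearity of $\Gamma_{\sh{H}}$, $s_B$, $s_Y$, and $\gamma_{\sh{F}}$. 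Let $\elmG\colon A\oplus C\to X\oplus Z$ denote the resulting block extension supplied by Lemma~\eqref{Lemma: block extensions of linear maps}.

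The final step is to verify the middle square $\Gamma_{\sh{H}}\circ \xi = \delta \circ \elmG$ of the cube in Figure~\eqref{fig: Commutative diagram for equivalent extensions of linear maps}. A direct computation, using $\Gamma_{\sh{H}}\circ \iota_B = \iota_Y\circ \gamma_{\sh{G}}$ for the $A$-summand and the defining relation of $\gamma_{\sh{H}}$ for the $C$-summand, yields
\begin{equation*}
\Gamma_{\sh{H}}\bigl(\xi(a,c)\bigr) = \iota_Y\bigl(\gamma_{\sh{G}}(a) + \gamma_{\sh{H}}(c)\bigr) + s_Y\bigl(\gamma_{\sh{F}}(c)\bigr) = \delta\bigl(\elmG(a,c)\bigr)\, ,
\end{equation*}
which closes the argument. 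I anticipate no substantive obstacle beyond this bookkeeping: the splitting lemma provides the structural content, and the definition of $\gamma_{\sh{H}}$ is forced once the sections are fixed. The only subtlety worth flagging is that the map $\gamma_{\sh{H}}$ depends on the chosen sections $s_B$ and $s_Y$, but different choices produce equivalent block extensions, reflecting the familiar fact that $\mathrm{Ext}^{1}$ classifies extensions only up to equivalence.
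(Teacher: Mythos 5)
Your argument is correct and follows essentially the same route as the paper: both proofs split the two short exact sequences of $\mathbb{K}$-vector spaces and then read off the block form of $\Gamma_{\sh{H}}$, with $\gamma_{\sh{H}}$ arising as the off-diagonal component. The only differences are presentational---you fix explicit sections $s_B$, $s_Y$ and give a closed formula $\iota_Y(\gamma_{\sh{H}}(c))=\Gamma_{\sh{H}}(s_B(c))-s_Y(\gamma_{\sh{F}}(c))$, whereas the paper invokes the vanishing of $\mathrm{Ext}^{1}$ to get splitting isomorphisms and extracts $\gamma_{\sh{H}}$ as the $(1,2)$-block of $\delta\circ\Gamma_{\sh{H}}\circ\xi^{-1}$; your isomorphisms also point in the opposite direction, which is harmless since equivalence of extensions is symmetric.
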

\begin{proof}
By assumption, $\Gamma_{\sh{H}}$ is an extension of $\gamma_{\sh{F}}$ by $\gamma_{\sh{G}}$. Hence, according to Definition~\eqref{Def: Extensions of linear maps}, there exists a diagram as in Figure~\eqref{Commutative diagram for the equivalence extensions lemma} where the horizontal rows are short exact sequences and each square commutes. Furthermore, since $A$, $C$, $X$, $Z$ are vector spaces (hence free $\mathbb{K}$-modules), we have that $\mathrm{Ext}^1(A,C)=0$ and $\mathrm{Ext}^1(X,Z)=0$. It then follows that there exist linear isomorphisms $\xi:B\rightarrow A \oplus C$ and $\delta:Y\rightarrow X \oplus Z$ such that the diagrams in Figures~\eqref{fig: diagram for equivalent extensions of the vector spaces A and C} and~\eqref{fig: diagram for equivalent extensions of the vector spaces X and Z} commute in each square. 

\begin{figure}[ht]
\centering
\begin{tikzpicture}
\useasboundingbox (-2,-2) rectangle (2,2);
\scope[transform canvas={scale=1.25}]

\node at (-4,1) {\footnotesize $0$};
\node at (-2, 1) {\footnotesize $X$};
\node at (0,1) {\footnotesize $Y$};
\node at (2,1) {\footnotesize $Z$};
\node at (4,1) {\footnotesize $0$};

\node at (-4,-1) {\footnotesize $0$};
\node at (-2, -1) {\footnotesize $A$};
\node at (0,-1) {\footnotesize $B$};
\node at (2,-1) {\footnotesize $C$};
\node at (4,-1) {\footnotesize $0$};

\draw[->] (-4 +0.25,1) -- (-2-0.25,1);
\draw[->] (-2+0.25,1) -- (-0-0.25,1);
\draw[->] (0+0.25,1) -- (2-0.25,1);
\draw[->] (2+0.25,1) -- (4-0.25,1);

\draw[->] (-4 +0.25,-1) -- (-2-0.25,-1);
\draw[->] (-2+0.25,-1) -- (-0-0.25,-1);
\draw[->] (0+0.25,-1) -- (2-0.25,-1);
\draw[->] (2+0.25,-1) -- (4-0.25,-1);

\node at (-1,-1.35) {\footnotesize $\alpha_{1}$};
\node at (1,-1.35) {\footnotesize $\beta_{1}$};
\node at (-1,+1.35) {\footnotesize $\alpha_{2}$};
\node at (1,+1.35) {\footnotesize $\beta_{2}$};

\draw[->] (0,-1+0.30)  -- (0,1-0.30) ;
\node[right] at (0,0) {\footnotesize $\Gamma_{\sh{H}}$};

\draw[->] (-2,-1+0.30) -- (-2,1-0.30);
\node[right] at (-2,0) {\footnotesize $\gamma_{\sh{G}}$};

\draw[->] (2, -1+0.30) -- (2,1-0.30);
\node[right] at (2,0) {\footnotesize $\gamma_{\sh{F}}$};

\endscope
\end{tikzpicture}
\caption{An extension $\Gamma_{\sh{H}}$ of $\gamma_{\sh{F}}$ by $\gamma_{\sh{G}}$.}
\label{Commutative diagram for the equivalence extensions lemma}
\end{figure}

\begin{figure}[ht]
\centering
\begin{tikzpicture}
\useasboundingbox (-2,-2.35) rectangle (2,2.15);
\scope[transform canvas={scale=1.25}]

\node at (-4-1,1) {\footnotesize $0$};
\node at (-2-0.5, 1) {\footnotesize $A$};
\node at (0,1) {\footnotesize $A \oplus C$};
\node at (2+0.5,1) {\footnotesize $C$};
\node at (4+1,1) {\footnotesize $0$};

\node at (-4-1,-1) {\footnotesize $0$};
\node at (-2-0.5, -1) {\footnotesize $A$};
\node at (0,-1) {\footnotesize $B$};
\node at (2+0.5,-1) {\footnotesize $C$};
\node at (4+1,-1) {\footnotesize $0$};

\draw[->] (-4-1 +0.25,1) -- (-2-0.5-0.25,1);
\draw[->] (-2-0.5+0.25,1) -- (-0-0.5,1);
\draw[->] (0+0.5,1) -- (2+0.5-0.25,1);
\draw[->] (2+0.5+0.25,1) -- (4+1-0.25,1);

\draw[->] (-4-1 +0.25,-1) -- (-2-0.5-0.25,-1);
\draw[->] (-2-0.5+0.25,-1) -- (-0-0.5+0.25,-1);
\draw[->] (0+0.5-0.25,-1) -- (2+0.5-0.25,-1);
\draw[->] (2+0.5+0.25,-1) -- (4+1-0.25,-1);

\draw[->] (0,-1+0.30)  -- (0,1-0.30) ;
\node[right] at (0,0) {\footnotesize $\xi$};

\draw (-2-0.5-0.05,-1+0.30) -- (-2-0.5-0.05,1-0.30);
\draw (-2-0.5+0.05,-1+0.30) -- (-2-0.5+0.05,1-0.30);

\draw (2+0.5-0.05, -1+0.30) -- (2+0.5-0.05,1-0.30);
\draw (2+0.5+0.05, -1+0.30) -- (2+0.5+0.05,1-0.30);

\node at (-1-0.25-0.1,1+0.4) {\scriptsize $\iota_{A\oplus C}$};
\node at (1+0.25+0.1,1+0.4) {\scriptsize $\rho_{A\oplus C}$};

\node at (-1-0.25-0.1,-1-0.4) {\scriptsize $\alpha_{1}$};
\node at (1+0.25+0.1,-1-0.4) {\scriptsize $\beta_{1}$};

\endscope
\end{tikzpicture}
\caption{Two equivalent extensions $B$ and $A\oplus C$ of $A$ by $C$.}
\label{fig: diagram for equivalent extensions of the vector spaces A and C}
\end{figure}

\begin{figure}[ht]
\centering
\begin{tikzpicture}
\useasboundingbox (-2,-2.35) rectangle (2,2.15);
\scope[transform canvas={scale=1.25}]

\node at (-4-1,1) {\footnotesize $0$};
\node at (-2-0.5, 1) {\footnotesize $X$};
\node at (0,1) {\footnotesize $X \oplus Z$};
\node at (2+0.5,1) {\footnotesize $Z$};
\node at (4+1,1) {\footnotesize $0$};

\node at (-4-1,-1) {\footnotesize $0$};
\node at (-2-0.5, -1) {\footnotesize $X$};
\node at (0,-1) {\footnotesize $Y$};
\node at (2+0.5,-1) {\footnotesize $Z$};
\node at (4+1,-1) {\footnotesize $0$};

\draw[->] (-4-1 +0.25,1) -- (-2-0.5-0.25,1);
\draw[->] (-2-0.5+0.25,1) -- (-0-0.5,1);
\draw[->] (0+0.5,1) -- (2+0.5-0.25,1);
\draw[->] (2+0.5+0.25,1) -- (4+1-0.25,1);

\draw[->] (-4-1 +0.25,-1) -- (-2-0.5-0.25,-1);
\draw[->] (-2-0.5+0.25,-1) -- (-0-0.5+0.25,-1);
\draw[->] (0+0.5-0.25,-1) -- (2+0.5-0.25,-1);
\draw[->] (2+0.5+0.25,-1) -- (4+1-0.25,-1);

\draw[->] (0,-1+0.30)  -- (0,1-0.30) ;
\node[right] at (0,0) {\footnotesize $\delta$};

\draw (-2-0.5-0.05,-1+0.30) -- (-2-0.5-0.05,1-0.30);
\draw (-2-0.5+0.05,-1+0.30) -- (-2-0.5+0.05,1-0.30);

\draw (2+0.5-0.05, -1+0.30) -- (2+0.5-0.05,1-0.30);
\draw (2+0.5+0.05, -1+0.30) -- (2+0.5+0.05,1-0.30);

\node at (-1-0.25-0.1,1+0.4) {\scriptsize $\iota_{X\oplus Z}$};
\node at (1+0.25+0.1,1+0.4) {\scriptsize $\rho_{X\oplus Z}$};

\node at (-1-0.25-0.1,-1-0.4) {\scriptsize $\alpha_{2}$};
\node at (1+0.25+0.1,-1-0.4) {\scriptsize $\beta_{2}$};

\endscope
\end{tikzpicture}
\caption{Two equivalent extensions $Y$ and $X\oplus Z$ of $X$ by $Z$.}
\label{fig: diagram for equivalent extensions of the vector spaces X and Z}
\end{figure}

Next, consider the diagram in Figure~\eqref{fig: Commutative diagram for equivalent extensions lemma}, and define the linear map $\Gamma'_{\sh{H}}:A\oplus C\rightarrow X\oplus Z$ by $\Gamma'_{\sh{H}}:=\delta \circ \Gamma_{\sh{H}}\circ \xi^{-1}$. By the commutativity of the diagrams in Figures~\eqref{Commutative diagram for the equivalence extensions lemma},~\eqref{fig: diagram for equivalent extensions of the vector spaces A and C}, and~\eqref{fig: diagram for equivalent extensions of the vector spaces X and Z}, we have that 
\begin{equation*}
\begin{aligned}
\Gamma'_{\sh{H}}\circ \iota_{A\oplus C} &= \delta\circ \Gamma_{\sh{H}}\circ \xi^{-1}\circ \iota_{A\oplus C}\, , \\
&=\delta\circ \Gamma_{\sh{H}}\circ \xi^{-1}\circ\xi\circ \alpha_{1}\, ,\\
&=\delta\circ \Gamma_{\sh{H}}\circ \alpha_{1}\, ,\\
&=\delta\circ \alpha_{2}\circ \gamma_{\sh{G}}\, ,\\
&=\iota_{X\oplus Z}\circ \gamma_{\sh{G}}\, .\\
\end{aligned}    
\end{equation*}
Similarly, we deduce that
\begin{equation*}
\begin{aligned}
\rho_{X\oplus Z}\circ \Gamma'_{\sh{H}}&=\rho_{X\oplus Z}\circ \delta \circ \Gamma_{\sh{H}}\circ \xi^{-1}\, ,\\
&=\beta_{2}\circ \Gamma_{\sh{H}}\circ \xi^{-1}\, ,\\
&=\gamma_{\sh{F}}\circ \beta_{1}\circ \xi^{-1}\, ,\\
&=\gamma_{\sh{F}}\circ \rho_{A\oplus C}\circ \xi\circ \xi^{-1}\, ,\\
&=\gamma_{\sh{F}}\circ \rho_{A\oplus C}\, .
\end{aligned}    
\end{equation*}

Combining the above results, we conclude that there exist linear isomorphisms $\xi:B\rightarrow A \oplus C$ and $\delta:Y\rightarrow X \oplus Z$ such that the diagram in Figure~\eqref{fig: Commutative diagram for equivalent extensions lemma} commutes in each square. It then follows from Definition~\eqref{Def: equivalent extensions of linear maps} that $\Gamma_{\sh{H}}$ and $\Gamma'_{\sh{H}}$ are equivalent extensions of $\gamma_{\sh{F}}$ by $\gamma_{\sh{G}}$. 

Finally, consider the linear map $\Gamma'_{\sh{H}}:A\oplus C\rightarrow X\oplus Z$. By the homomorphism decomposition for direct sums, there exist linear maps $\lambda^{(1)}_{\sh{H}}:A\rightarrow X$,  $\lambda^{(2)}_{\sh{H}}:C\rightarrow X$,  $\lambda^{(3)}_{\sh{H}}:A\rightarrow Z$, and $\lambda^{(4)}_{\sh{H}}:C\rightarrow Z$ such that
\begin{equation*}
\Gamma'_{\sh{H}}= \begin{bmatrix}
\lambda^{(1)}_{\sh{H}} & \lambda^{(2)}_{\sh{H}}\\
\lambda^{(3)}_{\sh{H}} & \lambda^{(4)}_{\sh{H}}
\end{bmatrix}\, .
\end{equation*}
In particular, it follows from the commutativity of the diagram in Figure~\eqref{fig: Commutative diagram for equivalent extensions lemma} that
\begin{equation*}
\begin{aligned}
\lambda^{(1)}_{\sh{H}}&=\gamma_{\sh{G}}\, , \qquad \lambda^{(3)}_{\sh{H}}=0\, , \qquad \lambda^{(4)}_{\sh{H}}=\gamma_{\sh{F}}\, .     
\end{aligned}    
\end{equation*}
Hence, by setting $\gamma_{\sh{H}}:=\lambda^{(2)}_{\sh{H}}:C\rightarrow X$, the above relations ensure that $\Gamma'_{\sh{H}}=\elmG$, where $\elmG:A\oplus C\rightarrow X\oplus Z$ denotes the block extension of $\gamma_{\sh{F}}$ by $\gamma_{\sh{G}}$ associated with $\gamma_{\sh{H}}$. Bearing this in mind, we conclude that there exists a linear map $\gamma_{\sh{H}}:C\rightarrow X$ such that $\Gamma_{H}$ and $\elmG$ are equivalent extensions of $\gamma_{\sh{F}}$ by $\gamma_{\sh{G}}$. This completes the proof. 
 
\begin{figure}[ht]
\centering
\begin{tikzpicture}
\useasboundingbox (-6,-5.5) rectangle (6,6.5);
\scope[transform canvas={scale=1}]

\node at (4,6) {\footnotesize\large $0$};
\node at (4,4) {\footnotesize\large $0$};

\node at (1,4.5) {\footnotesize\large $Z$};
\node at (1,2.5) {\footnotesize\large $C$};

\node at (-2,3) {\footnotesize\large $Y$};
\node at (-2,1) {\footnotesize\large $B$};

\node at (-5,1.5) {\footnotesize\large $X$};
\node at (-5,-0.5) {\footnotesize\large $A$};

\node at (-8,0) {\footnotesize\large $0$};
\node at (-8,-2) {\footnotesize\large $0$};

\node at (8,3) {\footnotesize\large $0$};
\node at (8,1) {\footnotesize\large $0$};

\node at (5,1.5) {\footnotesize\large $Z$};
\node at (5,-0.5) {\footnotesize\large $C$};

\node at (2,0) {\footnotesize\large $X\oplus Z$};
\node at (2,-2) {\footnotesize\large $A\oplus C$};

\node at (-1,-1.5) {\footnotesize\large $X$};
\node at (-1,-3.5) {\footnotesize\large $A$};

\node at (-4,-3) {\footnotesize\large $0$};
\node at (-4,-5) {\footnotesize\large $0$};

\draw[->, thick] (-8 +0.5,0+0.2) -- (-5-0.5,1.5-0.2);
\draw[->, thick] (-5 +0.5,1.5+0.2) -- (-2-0.5,3-0.2);
\draw[->, thick] (-2 +0.5,3+0.2) -- (1-0.5,4.5-0.2);
\draw[->, thick] (1 +0.5,4.5+0.2) -- (4-0.5,6-0.2);

\draw[->, thick] (-8 +0.5,-2+0.2) -- (-5-0.5,-0.5-0.2);
\draw[->, thick] (-5 +0.5,-0.5+0.2) -- (-2-0.5,1-0.2);
\draw[->, thick] (-2 +0.5,1+0.2) -- (1-0.5,2.5-0.2);
\draw[->, thick] (1 +0.5,2.5+0.2) -- (4-0.5,4-0.2);

\draw[->, thick] (-2,1+0.35) -- (-2,3-0.35);
\draw[->, thick] (1,2.5+0.35) -- (1,4.5-0.35);
\draw[->, thick] (-5,-0.5+0.35) -- (-5,1.5-0.35);

\draw[->, thick] (-4+0.5,-3+0.2) -- (-1-0.5,-1.5-0.2);
\draw[->, thick] (-1+0.5,-1.5+0.2) -- (2-0.5-0.2,0-0.2-0.125);
\draw[->, thick] (2+0.5+0.2,0+0.2+0.125) -- (5-0.5,1.5-0.2);
\draw[->, thick] (5+0.5,1.5+0.2) -- (8-0.5,3-0.2);

\draw[->, thick] (-4+0.5,-5+0.2) -- (-1-0.5,-3.5-0.2);
\draw[->, thick] (-1+0.5,-3.5+0.15) -- (2-0.5-0.2,-2-0.2-0.125);
\draw[->, thick] (2+0.5+0.2,-2+0.2+0.125) -- (5-0.5,-0.5-0.2);
\draw[->, thick] (5+0.5,-0.5+0.2) -- (8-0.5,1-0.2);

\draw[->, thick] (-1,-3.5+0.35) -- (-1,-1.5-0.35);
\draw[->, thick] (2,-2+0.35) -- (2,0-0.35);
\draw[->, thick] (5,-0.5+0.35) -- (5,1.5-0.35);

\draw[thick, blue] (-5+0.25-0.05, 1.5-0.25-0.05) -- (-1-0.25-0.05,-1.5+0.25-0.05);
\draw[thick, blue] (-5+0.25+0.05, 1.5-0.25+0.05) -- (-1-0.25+0.05,-1.5+0.25+0.05);

\draw[thick, blue] (-5+0.25-0.05, -0.5-0.25-0.05) -- (-1-0.25-0.05,-3.5+0.25-0.05);
\draw[thick, blue] (-5+0.25+0.05, -0.5-0.25+0.05) -- (-1-0.25+0.05,-3.5+0.25+0.05);

\draw[->, thick, red] (-2+0.25, 3-0.2) -- (2-0.25-0.25,0+0.25+0.25);
\draw[->, thick, red] (-2+0.25, 1-0.2) -- (2-0.25-0.25,-2+0.25+0.25);

\draw[thick, blue] (1+0.25-0.05, 4.5-0.25-0.05) -- (5-0.25-0.05,1.5+0.25-0.05);
\draw[thick, blue] (1+0.25+0.05, 4.5-0.25+0.05) -- (5-0.25+0.05,1.5+0.25+0.05);

\draw[thick, blue] (1+0.25-0.05, 2.5-0.25-0.05) -- (5-0.25-0.05,-0.5+0.25-0.05);
\draw[thick, blue] (1+0.25+0.05, 2.5-0.25+0.05) -- (5-0.25+0.05,-0.5+0.25+0.05);

\node at (-0.75,-0.45) {\Large $\xi$};
\node at (0,1) {\Large $\delta$};

\node at (-5-0.5,0.5-0.25) {\Large $\gamma_{\sh{G}}$};
\node at (-2-0.5,2-0.25) {\Large $\Gamma_{\sh{H}}$};
\node at (1-0.5,3.5-0.25) {\Large $\gamma_{\sh{F}}$};

\node at (-1+0.6,-2.5+0.25) {\Large $\gamma_{\sh{G}}$};
\node at (2+0.75,-1+0.15) {\Large $\Gamma'_{\sh{H}}$};
\node at (5+0.6,0.5+0.15) {\Large $\gamma_{\sh{F}}$};

\node at (-4+0.6,0.5+0.25) {\large $\alpha_{1}$};
\node at (-1+0.6,1.95+0.25) {\large $\beta_{1}$};
\node at (-4+0.4,2.5+0.2) {\large $\alpha_{2}$};
\node at (-1+0.4,3.95+0.2) {\large $\beta_{2}$};

\node at (-4+0.6+4.25,0.5+0.25-3.75-0.12) {\large $\iota_{A\oplus C}$};
\node at (-1+0.6+4.25,1.95+0.25-3.75-0.12) {\large $\rho_{A\oplus C}$};
\node at (-4+0.6+3.75,0.5+0.25-1.95-0.12) {\large $\iota_{X\oplus Z}$};
\node at (-1+0.6+3.75,1.95+0.25-1.95-0.12) {\large $\rho_{X\oplus Z}$};

\endscope
\end{tikzpicture}
\caption{Two equivalent extensions $\Gamma_{\sh{H}}$ and $\Gamma'_{\sh{H}}$ of $\gamma_{\sh{F}}$ by $\gamma_{\sh{G}}$.}
\label{fig: Commutative diagram for equivalent extensions lemma}
\end{figure}
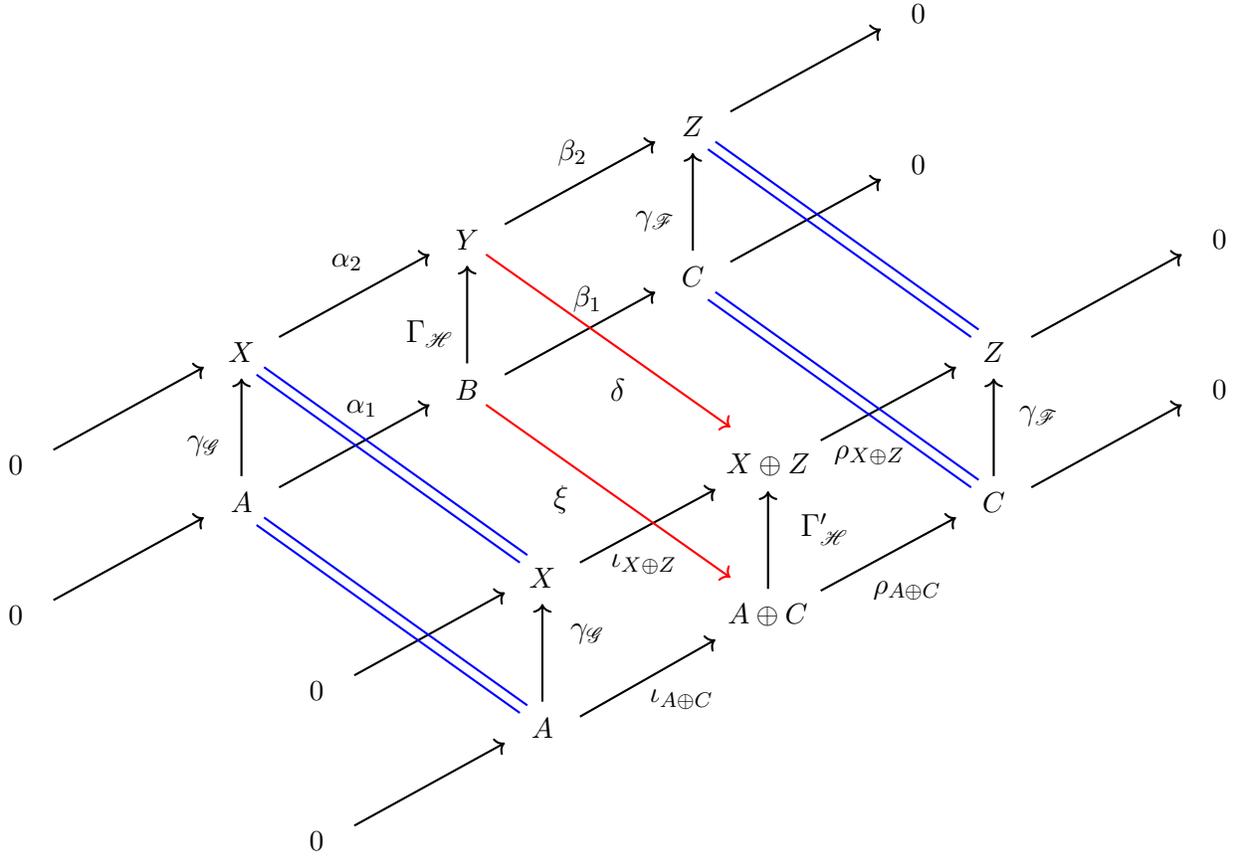
\end{proof}

The following lemma establishes necessary and sufficient conditions for the equivalence of block extensions of linear maps.

\begin{lemma}\label{Lemma: equivalence between block extensions}
Let $A$, $C$, $X$, $Z$ be vector spaces over $\mathbb{K}$, and let $\gamma_{\sh{G}}:A\rightarrow X$, $\gamma_{\sh{H}}:C\rightarrow X$, $\gamma_{\sh{H}'}:C\rightarrow X$, and $\gamma_{\sh{F}}:C\rightarrow Z$ be linear maps. In particular, consider $\elmG:A\oplus C\rightarrow X\oplus Z$ and $\elmg{F}{G}{H'}:A\oplus C \rightarrow X\oplus Z$ the block extensions of $\gamma_{\sh{F}}$ by $\gamma_{\sh{G}}$ associated with $\gamma_{\sh{H}}$ and $\gamma_{\sh{H}'}$, respectively. Then, we have that $\elmG$ and $\elmg{F}{G}{H'}$ are equivalent extensions of $\gamma_{\sh{F}}$ by $\gamma_{\sh{G}}$ if and only if there exist linear maps $\lambda_{\xi}:C\rightarrow A$ and $\lambda_{\delta}:Z\rightarrow X$ such that
\begin{equation*}
\gamma_{\sh{H}'}=\gamma_{\sh{H}}+\lambda_{\delta}\circ \gamma_{\sh{F}} - \gamma_{\sh{G}}\circ \lambda_{\xi}\, .       
\end{equation*}  
\end{lemma}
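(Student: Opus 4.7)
The plan is to reduce the problem to a block matrix computation by exploiting the rigid structure forced on any equivalence of extensions by the compatibility with the inclusion and projection maps in the definition of the block extensions $\elmG$ and $\elmg{F}{G}{H'}$.

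First, I would unpack Definition~\eqref{Def: equivalent extensions of linear maps} in the present situation. Assuming that $\elmG$ and $\elmg{F}{G}{H'}$ are equivalent extensions, there exist linear isomorphisms $\xi:A\oplus C\to A\oplus C$ and $\delta:X\oplus Z\to X\oplus Z$ such that the following four compatibility identities hold (these come from the commutativity of the side squares in the equivalence diagram with the inclusions $\iota_{A\oplus C}, \iota_{X\oplus Z}$ and projections $\rho_{A\oplus C}, \rho_{X\oplus Z}$ from Definition~\eqref{Def: inclusion and projection maps}):
\begin{equation*}
\xi\circ \iota_{A\oplus C}=\iota_{A\oplus C}\, , \quad \rho_{A\oplus C}\circ \xi=\rho_{A\oplus C}\, , \quad \delta\circ \iota_{X\oplus Z}=\iota_{X\oplus Z}\, , \quad \rho_{X\oplus Z}\circ \delta=\rho_{X\oplus Z}\, ,
\end{equation*}
together with the central square identity
\begin{equation*}
\elmg{F}{G}{H'}\circ \xi=\delta\circ \elmG\, .
\end{equation*}

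The key observation is that writing $\xi$ and $\delta$ in block form with respect to the direct sum decompositions and applying the above four identities forces
\begin{equation*}
\xi=\begin{bmatrix}
\mathrm{id}_{A} & \lambda_{\xi}\\
0 & \mathrm{id}_{C}
\end{bmatrix}\, , \qquad \delta=\begin{bmatrix}
\mathrm{id}_{X} & \lambda_{\delta}\\
0 & \mathrm{id}_{Z}
\end{bmatrix}\, ,
\end{equation*}
for some linear maps $\lambda_{\xi}:C\to A$ and $\lambda_{\delta}:Z\to X$. Carrying out the two block products yields
\begin{equation*}
\elmg{F}{G}{H'}\circ \xi=\begin{bmatrix}
\gamma_{\sh{G}} & \gamma_{\sh{G}}\circ \lambda_{\xi}+\gamma_{\sh{H}'}\\
0 & \gamma_{\sh{F}}
\end{bmatrix}\, , \qquad \delta\circ \elmG=\begin{bmatrix}
\gamma_{\sh{G}} & \gamma_{\sh{H}}+\lambda_{\delta}\circ \gamma_{\sh{F}}\\
0 & \gamma_{\sh{F}}
\end{bmatrix}\, ,
\end{equation*}
and equating the $(1,2)$-entries gives exactly the stated identity $\gamma_{\sh{H}'}=\gamma_{\sh{H}}+\lambda_{\delta}\circ \gamma_{\sh{F}}-\gamma_{\sh{G}}\circ \lambda_{\xi}$.

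For the converse, starting from linear maps $\lambda_{\xi}:C\to A$ and $\lambda_{\delta}:Z\to X$ satisfying the displayed identity, I would define $\xi$ and $\delta$ by the block formulas above. Both maps are linear isomorphisms because their block-triangular form with identity diagonal blocks makes them invertible (with explicit inverses obtained by negating $\lambda_{\xi}$ and $\lambda_{\delta}$). The four compatibility identities with the inclusions and projections follow immediately from the block form, and the central square identity follows from the same block matrix computation run in reverse, giving the desired equivalence of extensions via Definition~\eqref{Def: equivalent extensions of linear maps}. No step here is genuinely hard; the only care required is in verifying that the four side-square identities really do force $\xi$ and $\delta$ into upper block-triangular form with identities on the diagonal, which is a direct consequence of the universal properties of direct sums applied componentwise.
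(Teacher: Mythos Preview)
Your proposal is correct and follows essentially the same approach as the paper: both directions proceed by writing $\xi$ and $\delta$ in block form, using the compatibility with the canonical inclusions and projections to force the upper-triangular structure with identity diagonal blocks, and then reading off the relation $\gamma_{\sh{H}'}=\gamma_{\sh{H}}+\lambda_{\delta}\circ \gamma_{\sh{F}}-\gamma_{\sh{G}}\circ \lambda_{\xi}$ from the central square. Your explicit remark that the block-triangular form with identity diagonals guarantees invertibility (with inverses obtained by negating $\lambda_{\xi}$ and $\lambda_{\delta}$) is a nice touch that the paper leaves implicit.
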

\begin{proof}
To begin, suppose that $\elmG$ and $\elmg{F}{G}{H'}$ are equivalent extensions of $\gamma_{\sh{F}}$ by $\gamma_{\sh{G}}$. Thus, according to Definition~\eqref{Def: equivalent extensions of linear maps}, there exist linear isomorphisms $\xi: A\oplus C\rightarrow A\oplus C$ and $\delta: X \oplus Z\rightarrow X\oplus Z$ such that the diagram in Figure~\eqref{fig: Commutative diagram for two equivalent canonical extensions of linear maps} commutes in each square. By the homomorphism decomposition for direct sums, there exist linear maps $\lambda^{(1)}_{\xi}:A\rightarrow A$,  $\lambda^{(2)}_{\xi}:C\rightarrow A$,  $\lambda^{(3)}_{\xi}:A\rightarrow C$, and  $\lambda^{(4)}_{\xi}:C\rightarrow C$ such that $\xi$ can be written as
\begin{equation*}
\xi=\begin{bmatrix}
\lambda^{(1)}_{\xi} & \lambda^{(2)}_{\xi}\\
\lambda^{(3)}_{\xi} & \lambda^{(4)}_{\xi}
\end{bmatrix}\, .    
\end{equation*}
Likewise, there exist linear maps $\lambda^{(1)}_{\delta}:X\rightarrow X$,  $\lambda^{(2)}_{\delta}:Z\rightarrow X$,  $\lambda^{(3)}_{\delta}:X\rightarrow Z$, and  $\lambda^{(4)}_{\delta}:Z\rightarrow Z$ such that $\delta$ can be expressed as
\begin{equation*}
\delta=\begin{bmatrix}
\lambda^{(1)}_{\delta} & \lambda^{(2)}_{\delta}\\
\lambda^{(3)}_{\delta} & \lambda^{(4)}_{\delta}
\end{bmatrix}\, .   
\end{equation*}
From these decompositions, the commutativity of the diagram in Figure~\eqref{fig: Commutative diagram for two equivalent canonical extensions of linear maps} implies that
\begin{equation*}
\begin{aligned}
\lambda^{(1)}_{\xi}&=\mathrm{id}_{A}\, , \\
\lambda^{(3)}_{\xi}&=0\, , \\
\lambda^{(4)}_{\xi}&=\mathrm{id}_{C}\, , \\
\lambda^{(1)}_{\delta}&=\mathrm{id}_{X}\, , \\
\lambda^{(3)}_{\delta}&=0\, , \\
\lambda^{(4)}_{\delta}&=\mathrm{id}_{Z}\, , \\
\gamma_{\sh{H}'}+\gamma_{\sh{G}}\circ \lambda^{(2)}_{\xi}&=\gamma_{\sh{H}}+\lambda^{(2)}_{\delta}\circ \gamma_{\sh{F}} \, .    
\end{aligned}    
\end{equation*}
Thus, by setting $\lambda_{\xi}:=\lambda^{(2)}_{\xi}:C\rightarrow A$ and $\lambda_{\delta}:=\lambda^{(2)}_{\delta}:Z\rightarrow X$, the deduce that there exist linear maps $\lambda_{\xi}$ and $\lambda_{\delta}$ such that 
\begin{equation*}
\gamma_{\sh{H}'}=\gamma_{\sh{H}}+\lambda_{\delta}\circ \gamma_{\sh{F}} - \gamma_{\sh{G}}\circ \lambda_{\xi}\, .       
\end{equation*}  

\begin{figure}[ht]
\centering
\begin{tikzpicture}
\useasboundingbox (-6,-5.5) rectangle (6,6.5);
\scope[transform canvas={scale=1}]

\node at (8-4,3+3) {\footnotesize\large $0$};
\node at (8-4,1+3) {\footnotesize\large $0$};

\node at (5-4,1.5+3) {\footnotesize\large $Z$};
\node at (5-4,-0.5+3) {\footnotesize\large $C$};

\node at (2-4,0+3) {\footnotesize\large $X\oplus Z$};
\node at (2-4,-2+3) {\footnotesize\large $A\oplus C$};

\node at (-1-4,-1.5+3) {\footnotesize\large $X$};
\node at (-1-4,-3.5+3) {\footnotesize\large $A$};

\node at (-4-4,-3+3) {\footnotesize\large $0$};
\node at (-4-4,-5+3) {\footnotesize\large $0$};

\node at (8,3) {\footnotesize\large $0$};
\node at (8,1) {\footnotesize\large $0$};

\node at (5,1.5) {\footnotesize\large $Z$};
\node at (5,-0.5) {\footnotesize\large $C$};

\node at (2,0) {\footnotesize\large $X\oplus Z$};
\node at (2,-2) {\footnotesize\large $A\oplus C$};

\node at (-1,-1.5) {\footnotesize\large $X$};
\node at (-1,-3.5) {\footnotesize\large $A$};

\node at (-4,-3) {\footnotesize\large $0$};
\node at (-4,-5) {\footnotesize\large $0$};

\draw[->, thick] (-8 +0.5,0+0.2) -- (-5-0.5,1.5-0.2);
\draw[->, thick] (-5 +0.5,1.5+0.15) -- (-2-0.5-0.15,3-0.2-0.13);
\draw[->, thick] (-2 +0.5+0.2,3+0.2+0.125) -- (1-0.5,4.5-0.2);
\draw[->, thick] (1 +0.5,4.5+0.2) -- (4-0.5,6-0.2);

\draw[->, thick] (-8 +0.5,-2+0.2) -- (-5-0.5,-0.5-0.2);
\draw[->, thick] (-5 +0.5,-0.5+0.15) -- (-2-0.5-0.15,1-0.2-0.11);
\draw[->, thick] (-2 +0.5+0.2,1+0.2+0.125) -- (1-0.5,2.5-0.2);
\draw[->, thick] (1 +0.5,2.5+0.2) -- (4-0.5,4-0.2);

\draw[->, thick] (-2,1+0.35) -- (-2,3-0.35);
\draw[->, thick] (1,2.5+0.35) -- (1,4.5-0.35);
\draw[->, thick] (-5,-0.5+0.35) -- (-5,1.5-0.35);

\draw[->, thick] (-4+0.5,-3+0.2) -- (-1-0.5,-1.5-0.2);
\draw[->, thick] (-1+0.5,-1.5+0.15) -- (2-0.5-0.15,0-0.2-0.13);
\draw[->, thick] (2+0.5+0.2,0+0.2+0.125) -- (5-0.5,1.5-0.2);
\draw[->, thick] (5+0.5,1.5+0.2) -- (8-0.5,3-0.2);

\draw[->, thick] (-4+0.5,-5+0.2) -- (-1-0.5,-3.5-0.2);
\draw[->, thick] (-1+0.5,-3.5+0.15) -- (2-0.5-0.2,-2-0.2-0.125);
\draw[->, thick] (2+0.5+0.2,-2+0.2+0.125) -- (5-0.5,-0.5-0.2);
\draw[->, thick] (5+0.5,-0.5+0.2) -- (8-0.5,1-0.2);

\draw[->, thick] (-1,-3.5+0.35) -- (-1,-1.5-0.35);
\draw[->, thick] (2,-2+0.35) -- (2,0-0.35);
\draw[->, thick] (5,-0.5+0.35) -- (5,1.5-0.35);

\draw[thick, blue] (-5+0.25-0.035, 1.5-0.25-0.035) -- (-1-0.25-0.035,-1.5+0.25-0.035);
\draw[thick, blue] (-5+0.25+0.035, 1.5-0.25+0.035) -- (-1-0.25+0.035,-1.5+0.25+0.035);

\draw[thick, blue] (-5+0.25-0.035, -0.5-0.25-0.035) -- (-1-0.25-0.035,-3.5+0.25-0.035);
\draw[thick, blue] (-5+0.25+0.035, -0.5-0.25+0.035) -- (-1-0.25+0.035,-3.5+0.25+0.035);

\draw[->, thick, red] (-2+0.25+0.25-0.3, 3-0.25-0.0) -- (2-0.25-0.1+0.3,0+0.25+0.15-0.15);
\draw[->, thick, red] (-2+0.25+0.25-0.3, 1-0.25-0.0) -- (2-0.25-0.1+0.2,-2+0.25+0.15-0.15);

\draw[thick, blue] (1+0.25-0.035, 4.5-0.25-0.035) -- (5-0.25-0.035,1.5+0.25-0.035);
\draw[thick, blue] (1+0.25+0.035, 4.5-0.25+0.035) -- (5-0.25+0.035,1.5+0.25+0.035);

\draw[thick, blue] (1+0.25-0.035, 2.5-0.25-0.035) -- (5-0.25-0.035,-0.5+0.25-0.035);
\draw[thick, blue] (1+0.25+0.035, 2.5-0.25+0.035) -- (5-0.25+0.035,-0.5+0.25+0.035);

\node at (-0.75,-0.45) {\large $\xi$};
\node at (0,1) {\large $\delta$};

\node at (-5-0.5,0.5-0.25) {\large $\gamma_{\sh{G}}$};
\node at (-2-0.75,2-0.3) {\large $\elmG$};
\node at (1-0.5,3.5-0.25) {\large $\gamma_{\sh{F}}$};

\node at (-1+0.6,-2.5+0.25) {\large $\gamma_{\sh{G}}$};
\node at (2+0.8,-1+0.15) {\large $\elmg{F}{G}{H'}$};
\node at (5+0.6,0.5+0.15) {\large $\gamma_{\sh{F}}$};

\node at (-4+0.6,0.5+0.45) {\large $\iota_{A\oplus C}$};
\node at (-1+0.6,1.95+0.45) {\large $\rho_{A\oplus C}$};
\node at (-4+0.3,2.5+0.3) {\large $\iota_{X\oplus Z}$};
\node at (-1+0.3,3.95+0.3) {\large $\rho_{X\oplus Z}$};

\node at (-4+0.6+4.25,0.5+0.05-3.75) {\large $\iota_{A\oplus C}$};
\node at (-1+0.6+4.25,1.95+0.2-3.75) {\large $\rho_{A\oplus C}$};
\node at (-4+0.6+3.75,0.5+0.05-1.95) {\large $\iota_{X\oplus Z}$};
\node at (-1+0.6+3.75,1.95+0.2-1.95) {\large $\rho_{X\oplus Z}$};

\endscope
\end{tikzpicture}
\caption{Two equivalent extensions $\elmG$ and $\elmg{F}{G}{H'}$ of $\gamma_{\sh{F}}$ by $\gamma_{\sh{G}}$.}
\label{fig: Commutative diagram for two equivalent canonical extensions of linear maps}
\end{figure}

Conversely, suppose that there exist linear maps $\lambda_{\xi}:C\rightarrow A$ and  $\lambda_{\delta}:Z\rightarrow X$ such that 
\begin{equation*}
\gamma_{\sh{H}'}=\gamma_{\sh{H}}+\lambda_{\delta}\circ \gamma_{\sh{F}} - \gamma_{\sh{G}}\circ \lambda_{\xi}\, .       
\end{equation*}  
Thus, we introduce $\xi: A\oplus C\rightarrow A\oplus C$ and $\delta: X \oplus Z\rightarrow X\oplus Z$ to denote the linear isomorphisms defined by 
\begin{equation*}
\xi:=\begin{bmatrix}
\mathrm{id}_{A} & \lambda_{\xi}\\
0 & \mathrm{id}_{C}
\end{bmatrix}\, , ~~ \qquad ~~  \delta:=\begin{bmatrix}
\mathrm{id}_{X} & \lambda_{\delta}\\
0 & \mathrm{id}_{Z}
\end{bmatrix}\, .
\end{equation*}
In particular, a direct calculation shows the diagram in Figure~\eqref{fig: Commutative diagram for two equivalent canonical extensions of linear maps} commutes in each square. Hence, building on Definition~\eqref{Def: equivalent extensions of linear maps}, we conclude that $\elmG$ and $\elmg{F}{G}{H'}$ are equivalent extensions of $\gamma_{\sh{F}}$ by $\gamma_{\sh{G}}$. This concludes the proof.
\end{proof}

Next, we state a lemma and a related observation that make explicit how extensions of sheaves with singular support in $L(\Lambda(\beta))$ inherit the microlocal support and rank conditions, thus elucidating the structure of the one-degree morphism spaces in the category $\ccs{1}{\beta}$. 

\begin{lemma}\label{Lemma: extensions are microlocal supported sheaves}
Let $\beta=\sigma_{i_{1}}\cdots \sigma_{i_{\ell}}\in \mathrm{Br}^{+}_{n}$ be a positive braid word, and let $L(\Lambda(\beta))\subset T^{*}\mathbb{R}^{2}$ be the closed conic Lagrangian associated with $\Lambda(\beta)\subset(\mathbb{R}^{3},\xi_{\mathrm{std}})$ (see Construction~\eqref{Def: closed conic Lagrangian associated with a Legendrian}). Let $\sh{F}$ and $\sh{G}$ be objects of the category $\ccs{1}{\beta}$, and suppose that $\sh{H}$ is an extension of $\sh{F}$ by $\sh{G}$. Then: 
\begin{itemize}
\justifying
\item The singular support of $\sh{H}$ is contained in $L(\Lambda(\beta))$, namely $SS(\sh{H})\subset L(\Lambda(\beta))$. 
\item $\sh{H}$ is compactly supported. 
\item $\sh{H}$ has microlocal rank $2$.
\end{itemize}
\end{lemma}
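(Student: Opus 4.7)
The plan is to derive all three properties directly from the short exact sequence \,$0\to\sh{G}\to\sh{H}\to\sh{F}\to 0$ defining the extension, by exploiting the fact that $\sh{F},\sh{G}$ already satisfy all the constraints required of objects of $\mathcal{S}h_{1}(\Lambda(\beta),\mathbb{K})_{0}$. Since the short exact sequence induces a distinguished triangle in the derived category of sheaves of $\mathbb{K}$-modules on $\mathbb{R}^{2}$, each property reduces to a standard additivity/stability statement.

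For the singular support claim, I would invoke the classical Kashiwara--Schapira stability of singular support under exact triangles~\cite{KS1}: given a distinguished triangle $\sh{G}\to\sh{H}\to\sh{F}\xrightarrow{+1}$, one has
\begin{equation*}
SS(\sh{H})\,\subset\,SS(\sh{F})\,\cup\,SS(\sh{G})\, .
\end{equation*}
Since $\sh{F},\sh{G}$ are objects of $\ccs{1}{\beta}$, both singular supports lie in $L(\Lambda(\beta))$, and therefore so does $SS(\sh{H})$. For compact support, I would pass to stalks: by exactness, for each $x\in\mathbb{R}^{2}$ the sequence $0\to\sh{G}_{x}\to\sh{H}_{x}\to\sh{F}_{x}\to 0$ is short exact, so $\sh{H}_{x}=0$ if and only if both $\sh{F}_{x}=0$ and $\sh{G}_{x}=0$. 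Consequently,
\begin{equation*}
\mathrm{supp}(\sh{H})=\mathrm{supp}(\sh{F})\,\cup\,\mathrm{supp}(\sh{G})\, ,
\end{equation*}
which is compact as a finite union of compact sets.

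For the microlocal-rank claim, I would again use stalkwise exactness, but now near an arc $a$ of the stratification $\mathcal{S}_{\Lambda(\beta)}$ induced by $\Lambda(\beta)$. Let $U$ and $D$ be the upper and lower two-dimensional strata adjacent to $a$ (as in Sub-figure~\eqref{sub-fig: Strata near an arc}), and pick $p\in U,\,q\in D$. Exactness of the stalk sequences at $p$ and $q$ yields
\begin{equation*}
\dim_{\,\mathbb{K}}\sh{H}_{p}=\dim_{\,\mathbb{K}}\sh{F}_{p}+\dim_{\,\mathbb{K}}\sh{G}_{p}\, ,\qquad \dim_{\,\mathbb{K}}\sh{H}_{q}=\dim_{\,\mathbb{K}}\sh{F}_{q}+\dim_{\,\mathbb{K}}\sh{G}_{q}\, ,
\end{equation*}
so the jumps of $\sh{H}$ across $a$ are the sum of the jumps of $\sh{F}$ and $\sh{G}$. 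Since each of $\sh{F},\sh{G}$ has microlocal rank $1$ with respect to the binary Maslov potential assigned to $\Pi_{x,z}(\Lambda(\beta))$, the corresponding jumps of $\sh{H}$ equal $2$, with the sign determined by whether the Maslov potential of the strand containing $a$ is $0$ or $1$. This establishes that $\sh{H}$ has microlocal rank $2$.

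I do not anticipate a serious obstacle here: the argument is essentially the standard linear-algebra behavior of stalks in a short exact sequence combined with the singular-support triangle inequality. The only point that requires mild care is checking that the sign conventions encoded by the binary Maslov potential are respected on both strands of a cusp and on both sides of a crossing, but this is automatic since the same identities hold stalkwise for $\sh{F}$ and $\sh{G}$ and they add under the extension.
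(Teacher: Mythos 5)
Your proposal is correct and follows essentially the same route as the paper's proof: the singular-support triangle inequality for the defining short exact sequence, stalkwise exactness to identify $\mathrm{supp}(\sh{H})$ with the union of the supports, and additivity of stalk dimensions across an arc split by Maslov potential to obtain microlocal rank $2$. No gaps.
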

\begin{proof}
By assumption, $\sh{H}$ is an extension of $\sh{F}$ by $\sh{G}$. Hence, $\sh{H}$ fits into a short exact sequence of the form
\begin{equation}\label{Eq: short exact sequence of a sheaf extension}
0 \longrightarrow \sh{G} \longrightarrow \sh{H} \longrightarrow \sh{F} \longrightarrow 0\, .      
\end{equation}

Next, we verify that $SS(\sh{H})\subset L(\Lambda(\beta))$. In particular, a direct application of the triangle inequality for the singular support to the short exact sequence in~\eqref{Eq: short exact sequence of a sheaf extension} (see~\cite{KS1})
yields that $SS(\sh{H})\subset SS(\sh{F})\;\cup\; SS(\sh{G})$. Here, recall that $\sh{F}$ and $\sh{G}$ are objects of the category $\ccs{1}{\beta}$, and therefore $SS(\sh{F})\subset L(\Lambda(\beta))$ and $SS(\sh{G})\subset L(\Lambda(\beta))$, which implies that $SS(\sh{H})\subset L(\Lambda(\beta))$. 

Now, we prove that $\sh{H}$ is compactly supported. To begin, observe that for any $x\in \mathbb{R}^{2}$, the short exact sequence in~\eqref{Eq: short exact sequence of a sheaf extension} yields a short exact sequence of $\mathbb{K}$-modules at the stalks: 
\begin{equation*}
0 \longrightarrow \sh{G}_{x} \longrightarrow \sh{H}_{x} \longrightarrow \sh{F}_{x} \longrightarrow 0\, .      
\end{equation*}
Bearing this in mind, we deduce that $\mathrm{supp}(\sh{H})=\mathrm{supp}(\sh{F})\;\cup\; \mathrm{supp}(\sh{G})$. In particular, recall that, since $\sh{F}$ and $\sh{G}$ are objects of the category $\ccs{1}{\beta}$, $\mathrm{supp}(\sh{F})$ and $\mathrm{supp}(\sh{G})$ are compact sets in $\mathbb{R}^{2}$. Then, since the finite union of compact sets is compact, we conclude that $\mathrm{supp}(\sh{H})$ is also a compact set in $\mathbb{R}^{2}$, thereby obtaining that $\sh{H}$ is compactly supported. 

Finally, we show that $\sh{H}$ has microlocal rank $2$. To this end, let $a$ be an arc in $\mathcal{S}_{\Lambda(\beta)}$, the stratification of $\mathbb{R}^{2}$ induced by $\Lambda(\beta)$. Near $a$, $\mathcal{S}_{\Lambda(\beta)}$ consists of an upper $2$-dimensional stratum $U$ and a lower $2$-dimensional stratum $D$, as illustrated in Sub-figure~\eqref{sub-fig: Strata near an arc}. Given this local configuration, let $p\in U$ and $q\in D$ be two arbitrary points. Then, the short exact sequence in~\eqref{Eq: short exact sequence of a sheaf extension} yields short exact sequences of $\mathbb{K}$-modules, for the stalks at $p$ and $q$, of the form 
\begin{equation*}
\begin{aligned}
&0 \longrightarrow \sh{G}_{p} \longrightarrow \sh{H}_{p} \longrightarrow \sh{F}_{p} \longrightarrow 0\, , \\[6pt]
&0 \longrightarrow \sh{G}_{q} \longrightarrow \sh{H}_{q} \longrightarrow \sh{F}_{q} \longrightarrow 0\, . \\
\end{aligned}   
\end{equation*}
Bearing this in mind, we obtain that $\mathrm{dim}_{\,\mathbb{K}}\sh{H}_{p}=\mathrm{dim}_{\,\mathbb{K}}\sh{F}_{p} +\mathrm{dim}_{\,\mathbb{K}}\sh{G}_{p} $ and $\mathrm{dim}_{\,\mathbb{K}}\sh{H}_{q}=\mathrm{dim}_{\,\mathbb{K}}\sh{F}_{q} +\mathrm{dim}_{\,\mathbb{K}}\sh{G}_{q} $. Accordingly, we distinguish two cases: 
\begin{itemize}
\justifying
\item[-] Suppose that $a$ belongs to a strand with Maslov potential $0$. By the microlocal rank conditions, $\mathrm{dim}_{\,\mathbb{K}} \sh{F}_{p} - \mathrm{dim}_{\,\mathbb{K}} \sh{F}_{q}= 1$ and $\mathrm{dim}_{\,\mathbb{K}} \sh{G}_{p} - \mathrm{dim}_{\,\mathbb{K}} \sh{G}_{q}=1$. Hence, in this case $\mathrm{dim}_{\,\mathbb{K}} \sh{H}_{p} - \mathrm{dim}_{\,\mathbb{K}} \sh{H}_{q}=2$. 
\item[-] Suppose that $a$ belongs to a strand with Maslov potential $1$. By the microlocal rank conditions, $\mathrm{dim}_{\,\mathbb{K}} \sh{F}_{q} - \mathrm{dim}_{\,\mathbb{K}} \sh{F}_{p}= 1$ and $\mathrm{dim}_{\,\mathbb{K}} \sh{G}_{q} - \mathrm{dim}_{\,\mathbb{K}} \sh{G}_{p}=1$. Hence, in this case $\mathrm{dim}_{\,\mathbb{K}} \sh{H}_{q} - \mathrm{dim}_{\,\mathbb{K}} \sh{H}_{p}=2$. 
\end{itemize}
Therefore, since $a$ is an arbitrary arc in $\mathcal{S}_{\Lambda(\beta)}$, we conclude that $\sh{H}$ has microlocal rank $2$. This completes the proof. 
\end{proof}

\begin{observation}\label{Obs: elements of Ext1 at the local models}
Let $\beta\in\mathrm{Br}^{+}_{n}$ be a positive braid word, and let $\mathcal{S}_{\Lambda(\beta)}$ denote the stratification of $\mathbb{R}^{2}$ induced by $\Lambda(\beta)$. Let $\sh{F}$ and $\sh{G}$ be objects of the category $\ccs{1}{\beta}$, and suppose that $\sh{H}$ is an extension of $\sh{F}$ by $\sh{G}$.

By Lemma~\eqref{Lemma: extensions are microlocal supported sheaves}, $\sh{H}$ is a compactly supported sheaf of $\mathbb{K}$-modules on $\mathbb{R}^{2}$ of microlocal rank $2$, whose singular support is contained in $L(\Lambda(\beta))$, the closed conic Lagrangian associated with $\Lambda(\beta)$. It then follows from~\cite{STZ1} that $\sh{H}$ is not only constructible with respect to $\mathcal{S}_{\Lambda(\beta)}$, but is also subject to the microlocal support conditions. Building on this, we now provide a detailed characterization of $\sh{H}$ at the key local models: arcs, cusps, and crossings.

\noindent
$\star$ \emph{\textbf{Arcs}}: Let $a$ be an arc in $\mathcal{S}_{\Lambda(\beta)}$. Thus, near $a$, $\mathcal{S}_{\Lambda(\beta)}$ consists of $a$, an upper $2$-dimensional stratum $U$, and a lower $2$-dimensional stratum $D$, as illustrated in Sub-figure~\eqref{sub-fig: Strata near an arc}. Choose arbitrary points $p \in U$ and $q \in D$, and denote the stalks of $\sh{G}$, $\sh{H}$, and $\sh{F}$ at these points by
\begin{equation*}
\begin{aligned}
&X = \sh{G}_p, \qquad Y = \sh{H}_p, \qquad Z = \sh{F}_p, \\
&A = \sh{G}_q, \qquad B = \sh{H}_q, \qquad C = \sh{F}_q.        
\end{aligned}
\end{equation*}
Thus, near $a$, the microlocal support conditions assert that $\sh{G}$, $\sh{H}$, and $\sh{F}$ are specified by linear maps $\gamma_{\sh{G}}:A\to X$, $\Gamma_{\sh{H}}:B\to Y$, and $\gamma_{\sh{F}}:C\to Z$. Moreover, since $\sh{H}$ is an extension of $\sh{F}$ by $\sh{G}$, we deduce that $\Gamma_{\sh{H}}$ is an extension of $\gamma_{\sh{F}}$ by $\gamma_{\sh{G}}$, see Definition~\eqref{Def: Extensions of linear maps}. 

Next, recall that (1) $\mathrm{Ext}^{1}(\sh{F},\sh{G})$ is identified with the set of equivalence classes of extensions of $\sh{F}$ by $\sh{G}$, and (2) every extension of linear maps is equivalent to a block extension, as established in Lemma~\eqref{Lemma: equivalence of extensions and block extensions}. Hence, we deduce that there exists a linear map $\gamma_{\sh{H}}:C\to X$ such that, viewed as an element of $\mathrm{Ext}^{1}(\sh{F},\sh{G})$, $\sh{H}$ is locally specified near $a$ by the block extension $\elmG:A\oplus C \to X\oplus Z$ of $\gamma_{\sh{F}}$ by $\gamma_{\sh{G}}$ associated with $\gamma_{\sh{H}}$ (see Lemma~\eqref{Lemma: block extensions of linear maps}). In particular, we call $\gamma_{\sh{H}}$ the \emph{characteristic map} of $\sh{H}$ near $a$. 

Let $\sh{H}'$ be an extension of $\sh{F}$ by $\sh{G}$ representing the same equivalence class as $\sh{H}$ in $\mathrm{Ext}^{1}(\sh{F},\sh{G})$, so that there exists a sheaf isomorphism $\lambda: \sh{H} \rightarrow \sh{H}'$ such that the diagram in Figure~\eqref{Equivalen extenions of F by F'} commutes in each square. Analogously to $\sh{H}$, we have that there exists a linear map $\gamma_{\sh{H}'}:C\to X$ such that, viewed as an element of $\mathrm{Ext}^{1}(\sh{F},\sh{G})$, $\sh{H}'$ is locally specified near $a$ by the block extension $\elmg{F}{G}{H'}:A\oplus C \to X\oplus Z$ of $\gamma_{\sh{F}}$ by $\gamma_{\sh{G}}$ associated with $\gamma_{\sh{H}'}$ (the characteristic map of $\sh{H}'$ near $a$). Consequently, since $\sh{H}$ and $\sh{H}'$ represent the same equivalence class in $\mathrm{Ext}^{1}(\sh{F},\sh{G})$, we deduce that $\elmG$ and $\elmg{F}{G}{H'}$ must be equivalent extensions of $\gamma_{\sh{F}}$ by $\gamma_{\sh{G}}$. It then follows from Lemma~\eqref{Lemma: equivalence between block extensions} that there exist linear maps $\lambda_{\xi}:C\rightarrow A$ and $\lambda_{\delta}:Z\rightarrow X$
such that 
\begin{equation*}
\gamma_{\sh{H}'}=\gamma_{\sh{H}}+\lambda_{\delta}\circ \gamma_{\sh{F}} - \gamma_{\sh{G}}\circ \lambda_{\xi}\, ,      
\end{equation*}  
with the pair $(\lambda_{\xi}, \lambda_{\delta})$ realizing the sheaf isomorphism $\lambda$ near $a$, providing a concrete description of the relationship between two representatives of the same class in $\mathrm{Ext}^{1}(\sh{F},\sh{G})$ at the level of characteristic maps near an arbitrary arc $a$.   

\noindent
$\star$ \emph{\textbf{Cusps}}: Let $c$ be a cusp in $\mathcal{S}_{\Lambda(\beta)}$. Near $c$, $\sh{F}$ and $\sh{G}$ are specified by two pairs of linear maps between vector spaces, say $\alpha_{\sh{F}}:X_{1}\to X_{2}$, $\beta_{\sh{F}}:X_{2}\to X_{1}$, $\alpha_{\sh{G}}:Z_{1}\to Z_{2}$, and $\beta_{\sh{G}}:Z_{2}\to Z_{1}$ such that: 
\begin{equation*}
\beta_{\sh{F}}\circ \alpha_{\sh{F}}=\mathrm{id}_{X_{1}}\, ,\quad \text{and} \quad \beta_{\sh{G}}\circ \alpha_{\sh{G}}=\mathrm{id}_{Z_{1}}\, .
\end{equation*}
Hence, based on our previous discussion, an extension $\sh{H}$ of $\sh{F}$ by $\sh{G}$ is specified by two characteristic maps $\alpha_{\sh{H}}:X_{1}\to Z_{2}$ and $\beta_{\sh{H}}:X_{2}\to Z_{1}$. 

Now, let $\be{\alpha_{\sh{F}}}{\alpha_{\sh{G}}}{ \alpha_{\scriptscriptstyle \sh{H}}  }:Z_{1}\oplus X_{1}\to Z_{2}\oplus X_{2}$ and $\be{\beta_{\sh{F}}}{\beta_{\sh{G}}}{ \beta_{\scriptscriptstyle \sh{H}}  }: Z_{2}\oplus X_{2}\to Z_{1}\oplus X_{1}$ be the block extensions of $\alpha_{\sh{F}}$ by $\alpha_{\sh{G}}$ and of $\beta_{\sh{F}}$ by $\beta_{\sh{G}}$ associated with the characteristic maps $\alpha_{\sh{H}}$ and $\beta_{\sh{H}}$, respectively. That is, the linear maps describing $\sh{H}$ near $c$, which explicitly read as
\begin{equation*}
\be{\alpha_{\sh{F}}}{\alpha_{\sh{G}}}{ \alpha_{\scriptscriptstyle \sh{H}} }=\begin{bmatrix}
\alpha_{\sh{G}} & \alpha_{\sh{H}}\\
0 & \alpha_{\sh{F}}
\end{bmatrix}\, , \quad \text{and} \quad \be{\beta_{\sh{F}}}{\beta_{\sh{G}}}{ \beta_{\scriptscriptstyle \sh{H}} }=\begin{bmatrix}
\beta_{\sh{G}} & \beta_{\sh{H}}\\
0 & \beta_{\sh{F}}
\end{bmatrix}\, .   
\end{equation*}
Then, by the microlocal support conditions near the cusps, we require the composition $\be{\beta_{\sh{F}}}{\beta_{\sh{G}}}{ \beta_{\scriptscriptstyle \sh{H}} } \circ\be{\alpha_{\sh{F}}}{\alpha_{\sh{G}}}{ \alpha_{\scriptscriptstyle \sh{H}} }$ to be an isomorphism. In particular, observe that
\begin{equation*}
\be{\beta_{\sh{F}}}{\beta_{\sh{G}}}{ \beta_{\scriptscriptstyle \sh{H}} } \circ\be{\alpha_{\sh{F}}}{\alpha_{\sh{G}}}{ \alpha_{\scriptscriptstyle \sh{H}} }= \begin{bmatrix}
\beta_{\sh{G}} \circ \alpha_{\sh{G}} & ~~ & \beta_{\sh{G}} \circ \alpha_{\sh{H}}+ \beta_{\sh{H}} \circ \alpha_{\sh{F}} \\
0 & ~~ & \beta_{\sh{F}} \circ \alpha_{\sh{F}}
\end{bmatrix}\, .
\end{equation*}
Consequently, since $\beta_{\sh{F}}\circ \alpha_{\sh{F}}=\mathrm{id}_{X_{1}}$ and $\beta_{\sh{G}}\circ \alpha_{\sh{G}}=\mathrm{id}_{Z_{1}}$, we conclude that $\be{\beta_{\sh{F}}}{\beta_{\sh{G}}}{ \beta_{\scriptscriptstyle \sh{H}} } \circ\be{\alpha_{\sh{F}}}{\alpha_{\sh{G}}}{ \alpha_{\scriptscriptstyle \sh{H}} }$ is automatically an isomorphism. Therefore, the microlocal support conditions near the cusps impose no additional constraints on extensions of $\sh{F}$ by $\sh{G}$. 

\noindent
$\star$ \emph{\textbf{Crossings}}: Let $x$ be a crossing in $\mathcal{S}_{\Lambda(\beta)}$. Near $x$, each of $\sh{F}$ and $\sh{G}$ is determined by four linear maps between certain vector spaces, namely $\alpha_{\sh{F}}:A\to B$, $\beta_{\sh{F}}:B\to C$, $\widetilde{\alpha}_{\sh{F}}:A\to D$, $\widetilde{\beta}_{\sh{F}}:D\to C$, and $\alpha_{\sh{G}}:X\to Y$, $\beta_{\sh{G}}:Y\to Z$,  $\widetilde{\alpha}_{\sh{G}}:X\to W$, $\widetilde{\beta}_{\sh{G}}:W\to Z$, respectively. By the microlocal support conditions near the crossings, we have that: 
\begin{itemize}
\justifying
\item $\beta_{\sh{F}}\circ \alpha_{\sh{F}} = \widetilde{\beta}_{\sh{F}}\circ\widetilde{\alpha}_{\sh{F}}$ and $\beta_{\sh{G}}\circ \alpha_{\sh{G}} = \widetilde{\beta}_{\sh{G}}\circ\widetilde{\alpha}_{\sh{G}} $.
\item The following sequences are short exact: 
\begin{equation*}
\begin{tikzpicture}
\useasboundingbox (-4.5,-0.25) rectangle (4.5,1.15);
\scope[transform canvas={scale=0.8}]
\node at (-5.5-2,0.05) {\large$0$}; 
\node at (-3-1-0.5,0.05) {\large$ A$};    
\node at (0,0.05) {\large$B \oplus D$};   
\node at (3+1+0.5,0.05) {\large$C$};
\node at (5.5+2,0.05) {\large$0$}; 

\draw[->, shorten <=0.85cm,  shorten >=0.6cm, line width=0.02cm] (-8, 0) -- (-4-0.5,0);
\draw[->, shorten <=0.6cm,  shorten >=1.0cm, line width=0.02cm] (-4-0.5, 0) -- (0,0);
\draw[->, shorten <=1.0cm,  shorten >=0.6cm, line width=0.02cm] (0, 0) -- (4+0.5,0);
\draw[->, shorten <=0.6cm,  shorten >=0.85cm, line width=0.02cm] (4+0.5, 0) -- (8,0);

\node at (-1.5-0.75-0.25, 0.7) {\normalsize$\Big(\, \alpha_{\sh{F}},\, \widetilde{\alpha}_{\sh{F}}\,\Big)$};
\node at (1.5+0.75+0.25, 0.7) {\normalsize$\beta_{\sh{F}} \oplus\Big(-\widetilde{\beta}_{\sh{F}}\,\Big)$};

\node at (5.8+2,-0.025) {\large$.$}; 

\endscope
\end{tikzpicture}  
\end{equation*}
 
\begin{equation*}
\begin{tikzpicture}
\useasboundingbox (-4.5,-0.25) rectangle (4.5,1.15);
\scope[transform canvas={scale=0.8}]
\node at (-5.5-2,0.05) {\large$0$}; 
\node at (-3-1-0.5,0.05) {\large$ X$};    
\node at (0,0.05) {\large$Y \oplus W$};   
\node at (3+1+0.5,0.05) {\large$Z$};
\node at (5.5+2,0.05) {\large$0$}; 

\draw[->, shorten <=0.85cm,  shorten >=0.6cm, line width=0.02cm] (-8, 0) -- (-4-0.5,0);
\draw[->, shorten <=0.6cm,  shorten >=1.0cm, line width=0.02cm] (-4-0.5, 0) -- (0,0);
\draw[->, shorten <=1.0cm,  shorten >=0.6cm, line width=0.02cm] (0, 0) -- (4+0.5,0);
\draw[->, shorten <=0.6cm,  shorten >=0.85cm, line width=0.02cm] (4+0.5, 0) -- (8,0);

\node at (-1.5-0.75-0.25, 0.7) {\normalsize$\Big(\, \alpha_{\sh{G}},\, \widetilde{\alpha}_{\sh{G}}\,\Big)$};
\node at (1.5+0.75+0.25, 0.7) {\normalsize$\beta_{\sh{G}} \oplus\Big(-\widetilde{\beta}_{\sh{G}}\,\Big)$};

\node at (5.8+2,-0.025) {\large$.$}; 

\endscope
\end{tikzpicture}  
\end{equation*}
\end{itemize}
Thus, building on our previous discussion, an extension $\sh{H}$ of $\sh{F}$ by $\sh{G}$ is determined by a collection of four characteristic maps $\alpha_{\sh{H}}:A\to Y$, $\beta_{\sh{H}}:B\to Z$, $\widetilde{\alpha}_{\sh{H}}:A\to W$, and $\widetilde{\beta}_{\sh{H}}:D\to Z$.    

Next, consider the block extensions $\be{\alpha_{\sh{F}}}{\alpha_{\sh{G}}}{ \alpha_{\scriptscriptstyle \sh{H}}  }:X \oplus A\to Y\oplus B$,  $\be{\beta_{\sh{F}}}{\beta_{\sh{G}}}{ \beta_{\scriptscriptstyle \sh{H}}  }:Y \oplus B \to Z\oplus C$, $\be{\widetilde{\alpha}_{\sh{F}}}{\widetilde{\alpha}_{\sh{G}}}{ \widetilde{\alpha}_{\scriptscriptstyle \sh{H}}  }:X \oplus A\to W\oplus D$, and $\be{\widetilde{\beta}_{\sh{F}}}{\widetilde{\beta}_{\sh{G}}}{ \widetilde{\beta}_{\scriptscriptstyle \sh{H}}  }:W \oplus D \to Z\oplus C$. That is, the linear maps determining $\sh{H}$ near $x$, which explicitly read as
\begin{equation*}
\begin{aligned}
\be{\alpha_{\sh{F}}}{\alpha_{\sh{G}}}{ \alpha_{\scriptscriptstyle \sh{H}} }=\begin{bmatrix}
\alpha_{\sh{G}} & \alpha_{\sh{H}}\\
0 & \alpha_{\sh{F}}
\end{bmatrix}\, , \quad \text{and} \quad \be{\beta_{\sh{F}}}{\beta_{\sh{G}}}{ \beta_{\scriptscriptstyle \sh{H}} }=\begin{bmatrix}
\beta_{\sh{G}} & \beta_{\sh{H}}\\
0 & \beta_{\sh{F}}
\end{bmatrix}\, ,  \\[6pt]
\be{\widetilde{\alpha}_{\sh{F}}}{\widetilde{\alpha}_{\sh{G}}}{ \widetilde{\alpha}_{\scriptscriptstyle \sh{H}} }=\begin{bmatrix}
\widetilde{\alpha}_{\sh{G}} & \widetilde{\alpha}_{\sh{H}}\\
0 & \widetilde{\alpha}_{\sh{F}}
\end{bmatrix}\, , \quad \text{and} \quad \be{\widetilde{\beta}_{\sh{F}}}{\widetilde{\beta}_{\sh{G}}}{ \widetilde{\beta}_{\scriptscriptstyle \sh{H}} }=\begin{bmatrix}
\widetilde{\beta}_{\sh{G}} & \widetilde{\beta}_{\sh{H}}\\
0 & \widetilde{\beta}_{\sh{F}}
\end{bmatrix}\, . 
\end{aligned}
\end{equation*}
Then, the microlocal support conditions near the crossings require:
\begin{itemize}
\justifying
\item[(a)] $\be{\beta_{\sh{F}}}{\beta_{\sh{G}}}{ \beta_{\scriptscriptstyle \sh{H}} } \circ \be{\alpha_{\sh{F}}}{\alpha_{\sh{G}}}{ \alpha_{\scriptscriptstyle \sh{H}} }=\be{\widetilde{\beta}_{\sh{F}}}{\widetilde{\beta}_{\sh{G}}}{ \widetilde{\beta}_{\scriptscriptstyle \sh{H}} } \circ \be{\widetilde{\alpha}_{\sh{F}}}{\widetilde{\alpha}_{\sh{G}}}{ \widetilde{\alpha}_{\scriptscriptstyle \sh{H}} }$. 
\item[(b)] The following sequence must be short exact: 
\begin{equation*}
\begin{tikzpicture}
\useasboundingbox (-4.5,-0.25) rectangle (4.5,1.25);
\scope[transform canvas={scale=0.8}]
\node at (-5.5-2,0.05) {\large$0$}; 
\node at (-3-1-0.5,0.05) {\large$ X\oplus A$};    
\node at (0,0.05) {\large$ (Y\oplus B) \oplus (W\oplus D)$};   
\node at (3+1+0.5,0.05) {\large$Z\oplus C$};
\node at (5.5+2,0.05) {\large$0$}; 

\draw[->, shorten <=0.85cm,  shorten >=0.8cm, line width=0.02cm] (-8, 0) -- (-4-0.5,0);
\draw[->, shorten <=0.8cm,  shorten >=1.8cm, line width=0.02cm] (-4-0.5, 0) -- (0,0);
\draw[->, shorten <=1.8cm,  shorten >=0.8cm, line width=0.02cm] (0, 0) -- (4+0.5,0);
\draw[->, shorten <=0.8cm,  shorten >=0.85cm, line width=0.02cm] (4+0.5, 0) -- (8,0);

\node at (-1.5-0.75-0.25, 0.7+0.25) {\normalsize$\big(\,\be{\alpha_{\sh{F}}}{\alpha_{\sh{G}}}{ \alpha_{\scriptscriptstyle \sh{H}} } ,\,  \be{\widetilde{\alpha}_{\sh{F}}}{\widetilde{\alpha}_{\sh{G}}}{ \widetilde{\alpha}_{\scriptscriptstyle \sh{H}} } \,\big)$};
\node at (1.5+0.75+0.25, 0.7+0.25) {\normalsize$ \be{\beta_{\sh{F}}}{\beta_{\sh{G}}}{ \beta_{\scriptscriptstyle \sh{H}} }\oplus\big(- \be{\widetilde{\beta}_{\sh{F}}}{\widetilde{\beta}_{\sh{G}}}{ \widetilde{\beta}_{\scriptscriptstyle \sh{H}} } \,\big)$};

\node at (5.8+2,-0.025) {\large$.$}; 

\endscope
\end{tikzpicture}  
\end{equation*}
\end{itemize}
In fact, once the commutativity condition~(a) holds, the defining properties of the maps characterizing $\sh{F}$ and $\sh{G}$ near $x$ automatically guarantee that the sequence in~(b) is short exact. In other words, the microlocal support conditions near the crossings only impose on $\sh{H}$ the commutativity condition~(a). Concretely, a direct calculation shows that at the level of characteristic maps, the constraint on $\sh{H}$ is given by
\begin{equation*}
\beta_{\sh{G}}\circ\alpha_{\sh{H}}+\beta_{\sh{H}}\circ \alpha_{\sh{F}}=\widetilde{\beta}_{\sh{G}}\circ \widetilde{\alpha}_{\sh{H}}+\widetilde{\beta}_{\sh{H}}\circ \widetilde{\alpha}_{\sh{F}}\, .     
\end{equation*}
\end{observation}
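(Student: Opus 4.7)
The plan is to establish the three local descriptions by combining the structural lemmas just proved with direct block-matrix computations, then to use the identification from Lemma~\eqref{Key lemma for Ext^{1} for sheaves} to transport the conclusions to $\mathrm{Ext}^{1}(\sh{F},\sh{G})$.

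For the arc case, I would first invoke Lemma~\eqref{Lemma: extensions are microlocal supported sheaves} to conclude that every extension $\sh{H}$ of $\sh{F}$ by $\sh{G}$ is itself a compactly supported, constructible sheaf of microlocal rank $2$ on $\mathbb{R}^{2}$ whose singular support is contained in $L(\Lambda(\beta))$, so that $\sh{H}$ satisfies the microlocal support conditions from~\cite{STZ1} relative to $\mathcal{S}_{\Lambda(\beta)}$. In particular, near an arc $a$ one reads off linear maps $\gamma_{\sh{G}}:A\to X$, $\Gamma_{\sh{H}}:B\to Y$, $\gamma_{\sh{F}}:C\to Z$ on stalks. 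Applying $(-)_{p}$ and $(-)_{q}$ to the short exact sequence $0\to\sh{G}\to\sh{H}\to\sh{F}\to 0$ produces the two horizontal short exact rows of Figure~\eqref{fig: diagram for extensions of linear maps}, whose squares commute by naturality of restriction; hence $\Gamma_{\sh{H}}$ is an extension of $\gamma_{\sh{F}}$ by $\gamma_{\sh{G}}$ in the sense of Definition~\eqref{Def: Extensions of linear maps}. Lemma~\eqref{Lemma: equivalence of extensions and block extensions} then yields a characteristic map $\gamma_{\sh{H}}:C\to X$ with $\Gamma_{\sh{H}}$ equivalent to $\elmG$, and the comparison formula for two representatives $\sh{H},\sh{H}'$ of the same class in $\mathrm{Ext}^{1}(\sh{F},\sh{G})$ is obtained by applying Lemma~\eqref{Lemma: equivalence between block extensions} to the pair $(\elmG,\elmg{F}{G}{H'})$, since the sheaf isomorphism $\lambda:\sh{H}\to\sh{H}'$ of Figure~\eqref{Equivalen extenions of F by F'} restricts near $a$ to the required linear data $(\lambda_{\xi},\lambda_{\delta})$.

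For the cusp case, the argument is a direct block computation. Writing the two block extensions $\be{\alpha_{\sh{F}}}{\alpha_{\sh{G}}}{ \alpha_{\scriptscriptstyle \sh{H}}}$ and $\be{\beta_{\sh{F}}}{\beta_{\sh{G}}}{ \beta_{\scriptscriptstyle \sh{H}}}$ explicitly as $2\times 2$ upper-triangular matrices and composing, one obtains
\[
\be{\beta_{\sh{F}}}{\beta_{\sh{G}}}{ \beta_{\scriptscriptstyle \sh{H}}}\circ \be{\alpha_{\sh{F}}}{\alpha_{\sh{G}}}{ \alpha_{\scriptscriptstyle \sh{H}}}=\begin{bmatrix}\beta_{\sh{G}}\circ\alpha_{\sh{G}} & \beta_{\sh{G}}\circ\alpha_{\sh{H}}+\beta_{\sh{H}}\circ\alpha_{\sh{F}}\\ 0 & \beta_{\sh{F}}\circ\alpha_{\sh{F}}\end{bmatrix},
\]
which is upper triangular with diagonal blocks $\mathrm{id}_{Z_{1}}$ and $\mathrm{id}_{X_{1}}$, and is therefore automatically invertible. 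Hence the cusp condition places no constraint on the characteristic maps $\alpha_{\sh{H}},\beta_{\sh{H}}$.

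For the crossing case, I would similarly expand $\be{\beta_{\sh{F}}}{\beta_{\sh{G}}}{ \beta_{\scriptscriptstyle \sh{H}}}\circ\be{\alpha_{\sh{F}}}{\alpha_{\sh{G}}}{ \alpha_{\scriptscriptstyle \sh{H}}}$ and $\be{\widetilde{\beta}_{\sh{F}}}{\widetilde{\beta}_{\sh{G}}}{ \widetilde{\beta}_{\scriptscriptstyle \sh{H}}}\circ\be{\widetilde{\alpha}_{\sh{F}}}{\widetilde{\alpha}_{\sh{G}}}{ \widetilde{\alpha}_{\scriptscriptstyle \sh{H}}}$: the $(1,1)$- and $(2,2)$-entries reproduce the known crossing identities for $\sh{G}$ and $\sh{F}$, while equating the $(1,2)$-entries yields exactly $\beta_{\sh{G}}\circ\alpha_{\sh{H}}+\beta_{\sh{H}}\circ\alpha_{\sh{F}}=\widetilde{\beta}_{\sh{G}}\circ\widetilde{\alpha}_{\sh{H}}+\widetilde{\beta}_{\sh{H}}\circ\widetilde{\alpha}_{\sh{F}}$. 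For the claimed automatic exactness of the big short exact sequence, I would run a snake-lemma diagram chase: the inclusions $X\hookrightarrow X\oplus A$ and $Z\hookrightarrow Z\oplus C$ together with the projections $X\oplus A\twoheadrightarrow A$ and $Z\oplus C\twoheadrightarrow C$ fit into a $3\times 3$ array whose outer rows are the short exact sequences for $\sh{G}$ and $\sh{F}$; a direct diagram chase, using only the commutativity forced by condition~(a) and the injectivity/surjectivity already available, produces the required exactness of the middle row.

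The main obstacle I anticipate is the diagram chase in the crossing case: one must verify that kernel/cokernel information of the maps between direct sums is entirely determined by that of the constituent maps, which requires a careful bookkeeping of the block-upper-triangular structure to confirm that the $\mathrm{Ext}^{1}$ obstruction really vanishes. Once this is cleanly handled, all three items assemble into the stated local description of an arbitrary class in $\mathrm{Ext}^{1}(\sh{F},\sh{G})$.
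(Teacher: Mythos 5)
Your proposal is correct and follows essentially the same route as the paper's own justification: stalkwise restriction of the short exact sequence plus Lemmas~\eqref{Lemma: extensions are microlocal supported sheaves}, \eqref{Lemma: equivalence of extensions and block extensions}, and \eqref{Lemma: equivalence between block extensions} for the arc case, and the same block-upper-triangular computations at cusps and crossings. The only place you go slightly beyond the paper is in proposing an explicit $3\times 3$ diagram chase for the automatic exactness in the crossing case, which the paper merely asserts; that is a reasonable way to fill in the same implicit step.
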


With the above observation at hand, we close this subsection by presenting several technical lemmas that will play a crucial role in the explicit computation of the one-degree morphism spaces in the category $\ccs{1}{\beta}$. 

\begin{lemma}\label{Lemma: system of equations for equivalent collections of injective maps}
Fix an integer $n\geq 2$, let $\big\{ X^{(i)}\in \mathrm{M}(i+1,i, \mathbb{K}) \big\}_{i=1}^{n-1}$ be a collection of matrices, and consider the system of equations 
\begin{equation*}
X'^{\,(i)} = X^{(i)}+ Y^{(i+1)}\cdot \iota^{(i+1,i)}-\iota^{(i+1,i)}\cdot Y^{(i)}\, , \qquad \text{for all $i\in [1,n-1]$}\, ,     
\end{equation*}
with $X'^{\,(i)}\in \mathrm{M}(i+1, i, \mathbb{K})$ and  $Y^{(j)}\in \mathrm{M}(j,\mathbb{K})$ for all $i\in [1,n-1]$ and $j\in [1,n]$. Then: 
\begin{itemize}
\justifying
\item \textbf{General Case}: For any choice of $\big\{ X'^{\,(i)}\big\}_{i=1}^{n-1}$ there exists a collection of matrices $\big\{ Y^{(i)}\big\}_{i=1}^{n}$ solving the system, with the upper-triangular part of $Y^{(n)}$ chosen arbitrarily. More precisely, the solution space of the system is affine over the entries of the matrices $\big\{X'^{\,(i)}\big\}_{i=1}^{n-1}$ and the upper-triangular part of $Y^{(n)}$. 
\item  \textbf{Special case}: if $X^{(i)}=X'^{\,(i)}=\mathbf{0}_{(i+1)\times i}$ for all $i\in[1,n-1]$, then every solution $\big\{ Y^{(i)}\big\}_{i=1}^{n}$ of the system arises from an arbitrary upper-triangular matrix $Y^{(n)}$, with $Y^{(i)}$ given by the principal $i\times i$ submatrix of $Y^{(n)}$, for all $i\in[1,n-1]$. 
\end{itemize}
\end{lemma}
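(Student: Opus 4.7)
The plan is to interpret the equation directly in terms of the block structure of $Y^{(i+1)}$, and then to construct solutions recursively. First, I would unpack the matrix products: right-multiplication by $\iota^{(i+1,i)}$ extracts the first $i$ columns of $Y^{(i+1)}$, whereas left-multiplication by $\iota^{(i+1,i)}$ pads $Y^{(i)}$ with a zero row at the bottom. Writing $Y^{(i+1)}$ in block form with principal $i\times i$ submatrix $A^{(i+1)}$, the equation for index $i$ then decomposes into two independent constraints: the principal $i\times i$ block satisfies $A^{(i+1)} = Y^{(i)} + \pi^{(i,i+1)}(X'^{(i)} - X^{(i)})$, i.e., it equals $Y^{(i)}$ plus the top $i\times i$ block of $X'^{(i)} - X^{(i)}$; and the first $i$ entries of the last row of $Y^{(i+1)}$ are fixed to be the bottom row of $X'^{(i)} - X^{(i)}$. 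Crucially, the last column of $Y^{(i+1)}$ is entirely unconstrained by this equation.

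Next I would prove the general statement by an explicit recursive construction. Freely choose $Y^{(1)}\in\mathbb{K}$, and for each $i=1,\dots,n-1$ define the first $i$ columns of $Y^{(i+1)}$ using the two constraints above while choosing its last column ($i+1$ free entries) arbitrarily. This yields $1+2+\cdots+n = n(n+1)/2$ free parameters, matching the dimension of the upper-triangular part of $Y^{(n)}$. To identify these parameters with the upper-triangular entries of $Y^{(n)}$, I would trace the construction column by column: column $n$ of $Y^{(n)}$ is the freely chosen last column ($n$ entries, all lying on or above the diagonal); for $j<n$, the first $j$ entries of column $j$ of $Y^{(n)}$ inherit the last column of $Y^{(j)}$ up to an additive shift by the data, while the remaining $n-j$ entries (strictly below the diagonal) are prescribed purely by the data. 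Hence the strictly lower-triangular entries of $Y^{(n)}$ are determined affinely by the $X^{(i)}$'s and $X'^{(i)}$'s, while the upper-triangular entries can be chosen arbitrarily and uniquely determine the full solution, verifying the general case.

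For the special case $X^{(i)}=X'^{(i)}=\mathbf{0}_{(i+1)\times i}$, the two constraints for index $i$ collapse to $A^{(i+1)}=Y^{(i)}$ together with the vanishing of the first $i$ entries in the last row of $Y^{(i+1)}$. Iterating from $i=n-1$ down to $i=1$ forces every strictly lower-triangular entry of $Y^{(n)}$ to vanish and each $Y^{(i)}$ to coincide with the principal $i\times i$ submatrix of $Y^{(n)}$, as claimed.

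The main obstacle I expect is purely combinatorial bookkeeping: tracking exactly how the freely chosen entries at each recursion step land in the prescribed positions of $Y^{(n)}$, and being careful that \emph{free} here means free modulo a prescribed affine shift by the data. Conceptually the argument is transparent, but a clean write-up requires a patient index calculation to make the bijection between the construction parameters and the upper-triangular entries of $Y^{(n)}$ fully explicit.
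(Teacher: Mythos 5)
Your proposal is correct and takes essentially the same route the paper intends: the paper's entire proof is the one-line remark that the result ``follows by induction on $n-1$, keeping track of the entries of $X'^{\,(i)}$ and the upper-triangular part of $Y^{(n)}$,'' and your block decomposition of $Y^{(i+1)}$ together with the recursive column-by-column construction supplies exactly the bookkeeping the paper omits. The parameter count $1+2+\cdots+n=n(n+1)/2$, the identification of the free parameters with the upper-triangular part of $Y^{(n)}$ modulo affine shifts by the data, and the collapse in the special case to $A^{(i+1)}=Y^{(i)}$ with vanishing sub-diagonal row entries are all accurate.
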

\begin{proof}
The proof follows by induction on $n-1$, keeping track of the entries of $X'^{(i)}$ and the upper-triangular part of $Y^{(n)}$.
\end{proof}

\begin{lemma}\label{Lemma: system of equations for equivalent collections of surjective maps}
Fix an integer $n\geq 2$, let $\big\{ X^{(i)}\in \mathrm{M}(i,i+1, \mathbb{K}) \big\}_{i=1}^{n-1}$ be a collection of matrices, and consider the system of equations 
\begin{equation*}
X'^{\,(i)} = X^{(i)}+ Y^{(i)}\cdot \pi^{(i,i+1)}-\pi^{(i,i+1)}\cdot Y^{(i+1)}\, , \qquad \text{for all $i\in [1,n-1]$}\, ,     
\end{equation*}
with $X'^{\,(i)}\in \mathrm{M}(i, i+1, \mathbb{K})$ and  $Y^{(j)}\in \mathrm{M}(j,\mathbb{K})$ for all $i\in [1,n-1]$ and $j\in [1,n]$. Then: 
\begin{itemize}
\justifying
\item \textbf{General Case}: For any choice of $\big\{ X'^{\,(i)}\big\}_{i=1}^{n-1}$ there exists a collection of matrices $\big\{Y^{(i)}\big\}_{i=1}^{n}$ solving the system, with the lower-triangular part of $Y^{(n)}$ chosen arbitrarily. More precisely, the solution space of the system is affine over the entries of the matrices $\big\{X'^{\,(i)}\big\}_{i=1}^{n-1}$ and the lower-triangular part of $Y^{(n)}$. 
\item  \textbf{Special case}: if $X^{(i)}=X'^{\,(i)}=\mathbf{0}_{i\times (i+1)}$ for all $i\in[1,n-1]$, then every solution $\big\{ Y^{(i)}\big\}_{i=1}^{n}$ of the system arises from an arbitrary lower-triangular matrix $Y^{(n)}$, with $Y^{(i)}$ given by the principal $i\times i$ submatrix of $Y^{(n)}$, for all $i\in[1,n-1]$. 
\end{itemize}
\end{lemma}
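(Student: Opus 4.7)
The strategy is to reduce this statement to the previously established lemma for collections of injective maps via a transposition argument. The key observation is the matrix identity
\begin{equation*}
\bigl(\pi^{(i,i+1)}\bigr)^{T} \;=\; \iota^{(i+1,i)},
\end{equation*}
which exchanges the standard projection and inclusion matrices used in the two systems.

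First, I would transpose the $i$-th equation of the system. Using $(AB)^{T}=B^{T}A^{T}$ together with the identity above, the equation
\begin{equation*}
X'^{\,(i)} \;=\; X^{(i)} + Y^{(i)}\cdot \pi^{(i,i+1)} - \pi^{(i,i+1)}\cdot Y^{(i+1)}
\end{equation*}
transforms into
\begin{equation*}
\bigl(X'^{\,(i)}\bigr)^{T} \;=\; \bigl(X^{(i)}\bigr)^{T} + \iota^{(i+1,i)}\cdot \bigl(Y^{(i)}\bigr)^{T} - \bigl(Y^{(i+1)}\bigr)^{T}\cdot \iota^{(i+1,i)}.
\end{equation*}
Setting $\widetilde{Y}^{(j)}:=\bigl(Y^{(j)}\bigr)^{T}$, $\overline{X}^{(i)}:=-\bigl(X^{(i)}\bigr)^{T}$, and $\overline{X}'^{\,(i)}:=-\bigl(X'^{\,(i)}\bigr)^{T}$, a short rearrangement yields
\begin{equation*}
\overline{X}'^{\,(i)} \;=\; \overline{X}^{(i)} + \widetilde{Y}^{(i+1)}\cdot \iota^{(i+1,i)} - \iota^{(i+1,i)}\cdot \widetilde{Y}^{(i)},
\end{equation*}
which is precisely the system of equations treated in the previous lemma for collections of injective maps.

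Next, I would invoke that lemma. For the general case, it guarantees that the transformed system admits solutions $\bigl\{\widetilde{Y}^{(i)}\bigr\}_{i=1}^{n}$ whose solution space is affine over the entries of $\bigl\{\overline{X}'^{\,(i)}\bigr\}_{i=1}^{n-1}$ and the upper-triangular part of $\widetilde{Y}^{(n)}$. Since transposition is an involutive linear isomorphism that exchanges upper- and lower-triangular parts and preserves affine structure, the original system admits solutions $\bigl\{Y^{(i)}\bigr\}_{i=1}^{n}$ whose solution space is affine over the entries of $\bigl\{X'^{\,(i)}\bigr\}_{i=1}^{n-1}$ and the lower-triangular part of $Y^{(n)}$, as claimed. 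For the special case, the hypothesis $X^{(i)}=X'^{\,(i)}=\mathbf{0}_{i\times(i+1)}$ translates into $\overline{X}^{(i)}=\overline{X}'^{\,(i)}=\mathbf{0}_{(i+1)\times i}$, so the previous lemma ensures that $\widetilde{Y}^{(n)}$ is upper-triangular and $\widetilde{Y}^{(i)}$ coincides with its principal $i\times i$ submatrix. Transposing back, $Y^{(n)}$ is lower-triangular and $Y^{(i)}$ is its principal $i\times i$ submatrix, since transposition commutes with extraction of principal submatrices.

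The proof presents no genuine difficulty: the work is essentially bookkeeping, namely verifying the transposition identity $(\pi^{(i,i+1)})^{T}=\iota^{(i+1,i)}$, carefully tracking the minus signs introduced when rearranging the transposed equation into the form required by the previous lemma, and confirming the standard fact that the principal $i\times i$ submatrix of $A^{T}$ equals the transpose of the principal $i\times i$ submatrix of $A$. An alternative route would be a direct proof by induction on $n$, performing the same block decomposition used for the injective case and propagating constraints from $i=n-1$ downward to $i=1$; this would yield the result without invoking duality, but at the cost of repeating a calculation that is already available.
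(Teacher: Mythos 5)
Your proof is correct, but it takes a different route from the paper. The paper disposes of this lemma with a direct induction on $n-1$ (mirroring its treatment of Lemma~\ref{Lemma: system of equations for equivalent collections of injective maps}), propagating the constraints and tracking the free parameters at each step. You instead reduce the statement to the already-established injective-maps lemma by transposition, using $(\pi^{(i,i+1)})^{T}=\iota^{(i+1,i)}$; your sign bookkeeping is consistent (negating the transposed equation and setting $\overline{X}^{(i)}=-(X^{(i)})^{T}$, $\widetilde{Y}^{(j)}=(Y^{(j)})^{T}$ does land exactly on the form of the injective system), and the translations of the conclusions — transposition exchanges upper- and lower-triangular parts, is a linear isomorphism on the $X'^{\,(i)}$ data, and commutes with taking principal submatrices — are all valid. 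What your approach buys is economy and an explicit duality between the two lemmas, so the inductive bookkeeping need only be done once; what the paper's direct induction buys is self-containedness, at the cost of repeating an argument that is formally identical to the one for the injective case. Either is acceptable here.
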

\begin{proof}
The proof follows by induction on $n-1$, keeping track of the entries of $X'^{(i)}$ and the lower-triangular part of $Y^{(n)}$.
\end{proof}

\begin{lemma}\label{Lemma: system of equations for equivalent extensions at a crossing sigma_1}
Fix an integer $n\geq 2$, and let $\bigl\{\, Y{^{(i)}}\in \mathrm{M}(i, \mathbb{K}) \,\bigr\}_{i=1}^{n}$ be a collection of matrices such that: 
\begin{equation*}
\begin{aligned}
Y^{(n)}&= D(\vec{u})\, , \quad \text{for some $\vec{u}=(u_{1},\dots, u_{n})\in \mathbb{K}^{n}_{\mathrm{std}}$}\, .   \\[2pt]
Y^{(i)}&= \text{ principal $i\times i$ submatrix of }\,D(\vec{u})\, , \quad \text{for all $i\in [1,n-1]$}\, .
\end{aligned} 
\end{equation*}
Furthermore, let $\widetilde{X}^{(1)}\in \mathrm{M}(2,1, \mathbb{K})$ be a fixed matrix, and let $z, z'\in \mathbb{K}$ be two fixed parameters. Then the system of equations
\begin{equation}\label{Eq: system of equations for k=1}
\widetilde{X}'^{\,(1)}=\widetilde{X}^{(1)} + \Big(\big(B^{(2)}_{1}(z')\big)^{-1}\cdot Y^{(2)}\cdot B^{(2)}_{1}(z)\Big)\cdot \iota^{(2,1)}-\iota^{(2,1)}\cdot \widetilde{Y}^{(1)} \, , 
\end{equation}
with $\widetilde{X}'^{\,(1)}\in \mathrm{M}(2,1,\mathbb{K})$ and $\widetilde{Y}^{(1)}\in \mathrm{M}(1,\mathbb{K})$, has the following properties: 
\begin{itemize}
\justifying
\item[(a)] \textbf{General Solution}: The solution set $\bigl(\widetilde{X}'^{\,(1)}, \widetilde{Y}^{(1)}\bigr)$ of~\eqref{Eq: system of equations for k=1} is an affine line parametrized by the $(1,1)$-entry of $\widetilde{X}'^{\,(1)}$. More precisely, once the $(1,1)$-entry of $\widetilde{X}'^{\,(1)}$ is chosen, the system of equations~\eqref{Eq: system of equations for k=1} uniquely determines $\widetilde{Y}^{(1)}$ and the remaining entry of $\widetilde{X}'^{\,(1)}$.  

\item[(b)] \textbf{Special case}: Consider
\begin{equation*}
\widetilde{X}'^{\,(1)}=\begin{bmatrix}
0\\ 
\tilde{x}'_{2,1}
\end{bmatrix}\, , \quad \text{and} \quad \widetilde{X}^{(1)}=\begin{bmatrix}
0\\ 
\tilde{x}_{2,1}
\end{bmatrix}\, .
\end{equation*} 
Then the solution set $\bigl(\widetilde{X}'^{\,(1)}, \widetilde{Y}^{(1)}\bigr)$ of~\eqref{Eq: system of equations for k=1} reduces to a single point:
\begin{equation*}
\begin{aligned}
~~\widetilde{Y}^{(1)}&= \,\text{principal $1\times 1$ submatrix of } \,\big(B^{(n)}_{1}(z')\big)^{-1}\cdot D(\vec{u})\cdot B^{(n)}_{1}(z)\,  ,\\[2pt]  
~~\tilde{x}_{2,1}'&=\tilde{x}_{2,1} + d(z', \vec{u},z)\, ,
\end{aligned}    
\end{equation*}
where
\begin{equation*}
d(z',\vec{u},z):= \text{the $(2,1)$-entry of } \,\big(B^{(n)}_{1}(z')\big)^{-1}\cdot D(\vec{u})\cdot B^{(n)}_{1}(z)\, .     
\end{equation*}
\end{itemize}
\end{lemma}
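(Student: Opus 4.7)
The plan is to solve the matrix equation in~\eqref{Eq: system of equations for k=1} directly, reducing it to two scalar equations by exploiting the explicit form of the $2$-dimensional braid matrix and the diagonal structure of $Y^{(2)}$.

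First, using Definition~\eqref{Def: braid matrices}, I would write out $B^{(2)}_{1}(z)$ and $\big(B^{(2)}_{1}(z')\big)^{-1}$ explicitly, and recall that by hypothesis $Y^{(2)} = \mathrm{diag}\{u_{1}, u_{2}\}$. A short direct computation then yields the $2\times 2$ matrix
\begin{equation*}
M(z', \vec{u}, z) \;:=\; \big(B^{(2)}_{1}(z')\big)^{-1}\cdot Y^{(2)}\cdot B^{(2)}_{1}(z)\, ,
\end{equation*}
whose first column is $M(z', \vec{u}, z)\cdot \iota^{(2,1)}$. Combined with the elementary identity that $\iota^{(2,1)}\cdot \widetilde{Y}^{(1)}$ is the column vector with $\widetilde{Y}^{(1)}$ in the top entry and $0$ in the bottom entry, this allows me to rewrite~\eqref{Eq: system of equations for k=1} as two scalar equations in the three unknowns $\tilde{x}'_{1,1}$, $\tilde{x}'_{2,1}$, and $\widetilde{Y}^{(1)}$.

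Next, I would observe that the scalar equation arising from the top entry involves only $\tilde{x}'_{1,1}$ and $\widetilde{Y}^{(1)}$, so that once $\tilde{x}'_{1,1}$ is chosen freely, $\widetilde{Y}^{(1)}$ is uniquely determined. The scalar equation arising from the bottom entry is independent of $\widetilde{Y}^{(1)}$ and uniquely determines $\tilde{x}'_{2,1}$. This immediately establishes statement~(a): the solution set is an affine line parametrized by $\tilde{x}'_{1,1}$.

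For statement~(b), I would substitute $\tilde{x}'_{1,1} = \tilde{x}_{1,1} = 0$ into the two scalar equations and match the resulting values with the principal $1\times 1$ submatrix and the $(2,1)$-entry of the $n$-dimensional conjugate $\big(B^{(n)}_{1}(z')\big)^{-1}\cdot D(\vec{u})\cdot B^{(n)}_{1}(z)$. The match reduces to a block-compatibility observation: by Definition~\eqref{Def: braid matrices}, both $B^{(n)}_{1}(z)$ and $\big(B^{(n)}_{1}(z')\big)^{-1}$ agree with the identity outside the upper-left $2\times 2$ block, so only the top-left $2\times 2$ block of $D(\vec{u})$---which is precisely $Y^{(2)}$---contributes to the first column of this product. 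No substantive obstacle is anticipated; the argument is essentially bookkeeping, and the sole subtlety is this block-compatibility check between the $2$-dimensional and $n$-dimensional computations in part~(b).
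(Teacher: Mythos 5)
Your proposal is correct and follows essentially the same route as the paper's proof: both reduce~\eqref{Eq: system of equations for k=1} to two scalar equations (the top one linking $\tilde{x}'_{1,1}$ and $\widetilde{Y}^{(1)}$, the bottom one determining $\tilde{x}'_{2,1}$ independently of $\widetilde{Y}^{(1)}$), and both invoke the same block-compatibility observation that $\big(B^{(2)}_{1}(z')\big)^{-1}\cdot Y^{(2)}\cdot B^{(2)}_{1}(z)$ is the principal $2\times 2$ submatrix of $\big(B^{(n)}_{1}(z')\big)^{-1}\cdot D(\vec{u})\cdot B^{(n)}_{1}(z)$. The only cosmetic difference is that the paper records this block identity at the outset while you defer it to part~(b).
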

\begin{proof}
Set $M:=\big(B^{(2)}_{1}(z')\big)^{-1}\cdot Y^{(2)}\cdot B^{(2)}_{1}(z)$. Then, since $Y^{(2)}$ is the principal $2\times 2$ submatrix of $D(\vec{u})$, the block structure of the braid matrices guarantees that
\begin{equation*}
M= \text{ principal $2\times 2$ submatrix of }~ \big(B^{(n)}_{1}(z')\big)^{-1}\cdot D(\vec{u})\cdot B^{(n)}_{1}(z) \, .
\end{equation*}

Next, let us write
\begin{equation*}
\widetilde{X}'^{\,(1)}=\begin{bmatrix}
\tilde{x}'_{1,1}\\ 
\tilde{x}'_{2,1}
\end{bmatrix}\, , \qquad \widetilde{X}^{(1)}=\begin{bmatrix}
\tilde{x}_{1,1}\\ 
\tilde{x}_{2,1}
\end{bmatrix}\, , \qquad \widetilde{Y}^{(1)}=[\,\tilde{y}_{1,1}\, ]\, .    
\end{equation*}
Then, the system of equations in~\eqref{Eq: system of equations for k=1} reduces to
\begin{equation*}
\begin{aligned}
\tilde{x}'_{1,1}&=\tilde{x}_{1,1}+m_{1,1}-\tilde{y}_{1,1}\, ,   \\[4pt]
\tilde{x}'_{2,1}&=\tilde{x}_{2,1}+m_{2,1}\, ,   \\
\end{aligned} 
\end{equation*}
where $m_{1,1}$ and $m_{2,1}$ denote the $(1,1)$ and $(2,1)$ entries of $M$, respectively. This shows that once $\tilde{x}'_{1,1}$ is chosen, $\tilde{y}_{1,1}$ and $\tilde{x}'_{2,1}$ are uniquely determined. In other words, the solution set $\bigl(\widetilde{X}'^{\,(1)}, \widetilde{Y}^{(1)}\bigr)$ of~\eqref{Eq: system of equations for k=1} is an affine line parametrized by the $(1,1)$-entry of $\widetilde{X}'^{\,(1)}$.

Finally, in the special case $\tilde{x}_{1,1} = \tilde{x}'_{1,1} = 0$, the solution of the system of equations in~\eqref{Eq: system of equations for k=1} is uniquely given by
\begin{equation*}
\begin{aligned}
\tilde{y}_{1,1}&=m_{1,1}\, ,   \\[2pt]
\tilde{x}'_{2,1}&=\tilde{x}_{2,1}+m_{2,1}\, .   \\
\end{aligned} 
\end{equation*}
The result follows once we identify $d(z', \vec{u},z)=m_{2,1}$.  
\end{proof}

\begin{lemma}\label{Lemma: system of equations for the extension crossing constraint}
Fix an integer $k\geq 2$. Let $\widetilde{X}^{(k-1)}\in \mathrm{M}(k,k-1, \mathbb{K})$ and $\widetilde{X}^{(k)}\in \mathrm{M}(k+1,k, \mathbb{K})$ be fixed matrices, and let $z'\in \mathbb{K}$ be a fixed parameter. Consider the system of equations 
\begin{equation}\label{Eq: commutativity conditions for extensions}
\big(B^{(k+1)}_{k}(z')\big)^{-1}\cdot\bigg( \iota^{(k+1, k)}\cdot \widetilde{X}^{(k-1)}+ \widetilde{X}^{(k)}\cdot\iota^{(k, k-1)} \bigg)=  \iota^{(k+1, k)}\cdot \widetilde{X}'^{\,(k-1)}+ \widetilde{X}'^{\,(k)}\cdot\iota^{(k, k-1)}\, ,   
\end{equation}
with $\widetilde{X}'^{\,(k-1)}\in \mathrm{M}(k,k-1, \mathbb{K})$ and $\widetilde{X}'^{\,(k)}\in \mathrm{M}(k+1,k, \mathbb{K})$. Then the solution set $\big(\widetilde{X}'^{\,(k-1)}, \widetilde{X}'^{\,(k)}\big)$ of~\eqref{Eq: commutativity conditions for extensions} is an affine space parametrized by the entries of $\widetilde{X}'^{\,(k-1)}$ and the entries of the $k$-th column of $\widetilde{X}'^{\,(k)}$. More precisely, once $\widetilde{X}'^{\,(k-1)}$ and the $k$-th column of $\widetilde{X}'^{\,(k)}$ are chosen, the system of equations~\eqref{Eq: commutativity conditions for extensions} uniquely determines the first $k-1$ columns of $\widetilde{X}'^{\,(k)}$. 
\end{lemma}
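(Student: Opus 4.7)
The plan is to reduce the matrix equation to a direct assignment by computing both sides in coordinates. The crux is the sparsity pattern of the standard inclusion matrices $\iota^{(k+1,k)}$ and $\iota^{(k,k-1)}$: multiplying on the left by $\iota^{(k+1,k)}$ appends a zero row at the bottom, while multiplying on the right by $\iota^{(k, k-1)}$ extracts the first $k-1$ columns. This will let me read off which entries of $\widetilde{X}'^{\,(k-1)}$ and $\widetilde{X}'^{\,(k)}$ are constrained by each scalar equation.

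First I would fix notation and set $N:=(B^{(k+1)}_{k}(z'))^{-1}\cdot\bigl(\iota^{(k+1,k)}\cdot \widetilde{X}^{(k-1)}+\widetilde{X}^{(k)}\cdot\iota^{(k,k-1)}\bigr)\in \mathrm{M}(k+1,k-1,\mathbb{K})$, which is entirely determined by the given data. Writing $\widetilde{X}'^{\,(k)}=[\,c_{1}\,|\,\cdots\,|\,c_{k}\,]$ by its columns, the identity $\widetilde{X}'^{\,(k)}\cdot\iota^{(k,k-1)}=[\,c_{1}\,|\,\cdots\,|\,c_{k-1}\,]$ shows that the $k$-th column $c_{k}$ does not appear in the equation at all, hence it is automatically a free parameter with $k+1$ degrees of freedom. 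Similarly, $\iota^{(k+1,k)}\cdot \widetilde{X}'^{\,(k-1)}$ is the $(k+1)\times(k-1)$ matrix obtained from $\widetilde{X}'^{\,(k-1)}$ by appending a row of zeros at the bottom.

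Next, I would compare the equation entrywise. For $j\in[1,k-1]$ and $i\in[1,k]$, the equation $N=\iota^{(k+1,k)}\cdot\widetilde{X}'^{\,(k-1)}+\widetilde{X}'^{\,(k)}\cdot\iota^{(k,k-1)}$ reads
\begin{equation*}
N_{i,j}=\bigl[\widetilde{X}'^{\,(k-1)}\bigr]_{i,j}+\bigl[\widetilde{X}'^{\,(k)}\bigr]_{i,j}\, ,
\end{equation*}
while for $i=k+1$ and $j\in[1,k-1]$ it reduces to $N_{k+1,j}=\bigl[\widetilde{X}'^{\,(k)}\bigr]_{k+1,j}$. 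Therefore, once $\widetilde{X}'^{\,(k-1)}$ is prescribed arbitrarily (that is, $k(k-1)$ free scalars), the first $k-1$ columns of $\widetilde{X}'^{\,(k)}$ are forced uniquely by
\begin{equation*}
\bigl[\widetilde{X}'^{\,(k)}\bigr]_{i,j}=N_{i,j}-\bigl[\widetilde{X}'^{\,(k-1)}\bigr]_{i,j}\,\,(i\le k)\, ,\qquad \bigl[\widetilde{X}'^{\,(k)}\bigr]_{k+1,j}=N_{k+1,j}\, ,
\end{equation*}
for all $j\in[1,k-1]$. Combining this with the freedom in $c_{k}$, the solution set is parametrized by exactly the entries of $\widetilde{X}'^{\,(k-1)}$ together with the $k$-th column of $\widetilde{X}'^{\,(k)}$, giving an affine space of dimension $k(k-1)+(k+1)=k^{2}+1$.

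There is no genuine obstacle here: the braid matrix $B^{(k+1)}_{k}(z')$ only enters in the definition of $N$ and plays no further role after the equation is rewritten, since only the algebraic identities defining $\iota^{(k+1,k)}$ and $\iota^{(k,k-1)}$ are used in the entrywise matching. The only mild bookkeeping point will be to double-check the dimension count $(k+1)(k-1)=k^{2}-1$ for the constrained entries, which matches the total number of scalar equations, confirming that the affine description above is complete and that no hidden compatibility conditions appear.
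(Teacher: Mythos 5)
Your proof is correct and follows essentially the same route as the paper's: both rest on the observation that right-multiplication by $\iota^{(k,k-1)}$ discards the $k$-th column of $\widetilde{X}'^{\,(k)}$, so that column is free, while the remaining $(k+1)(k-1)$ entries are determined uniquely and affinely once $\widetilde{X}'^{\,(k-1)}$ is prescribed. Your version merely makes explicit the entrywise bookkeeping (including the appended zero row from $\iota^{(k+1,k)}$ and the dimension count) that the paper leaves implicit.
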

\begin{proof}
The system of equations~\eqref{Eq: commutativity conditions for extensions} is linear in $\widetilde{X}'^{\,(k-1)}$ and $\widetilde{X}'^{\,(k)}$. In particular, since $\iota^{(k,k-1)}$ has only $k-1$ columns, the $k$-th column of $\widetilde{X}'^{\,(k)}$ is not involved in the system of equations, and is therefore free. The remaining columns of $\widetilde{X}'^{\,(k)}$ are uniquely determined once $\widetilde{X}'^{\,(k-1)}$ is chosen. It follows that the solution set $\big(\widetilde{X}'^{\,(k-1)}, \widetilde{X}'^{\,(k)}\big)$ of~\eqref{Eq: commutativity conditions for extensions} is an affine space parametrized by the entries of $\widetilde{X}'^{\,(k-1)}$ and the entries in the $k$-th column of $\widetilde{X}'^{\,(k)}$.
\end{proof}

\begin{lemma}\label{Lemma: system of equations for equivalent extensions at a crossing sigma_k}
Fix an integer $n\geq 2$, and let $\bigl\{\, Y{^{(i)}}\in \mathrm{M}(i, \mathbb{K}) \,\bigr\}_{i=1}^{n}$ be a collection of matrices such that: 
\begin{equation*}
\begin{aligned}
Y^{(n)}&= D(\vec{u})\, , \quad \text{for some $\vec{u}=(u_{1},\dots, u_{n})\in \mathbb{K}^{n}_{\mathrm{std}}$}\, .   \\[2pt]
Y^{(i)}&= \text{ principal $i\times i$ submatrix of }\,D(\vec{u})\, , \quad \text{for all $i\in [1,n-1]$}\, .
\end{aligned} 
\end{equation*}

In addition, fix an integer $k\in [2,n-1]$. Let $\widetilde{X}^{(k-1)}\in \mathrm{M}(k,k-1, \mathbb{K})$ and $\widetilde{X}^{(k)}\in \mathrm{M}(k+1,k, \mathbb{K})$ be fixed matrices, and let $z,z'\in\mathbb K$ be two fixed parameters. Then the system of equations
\begin{subequations}\label{Eq: system of equations for k geq 2}
\begin{align}
\widetilde{X}'^{\,(k-1)} &= \widetilde{X}^{(k-1)} + \widetilde{Y}^{(k)}\cdot\iota^{(k,k-1)} - \iota^{(k,k-1)}\cdot Y^{(k-1)}, \label{Eq: system of equations for k geq 2_a} \\[4pt]
\widetilde{X}'^{\,(k)}   &= \widetilde{X}^{(k)} + \big(B^{(k+1)}_{k}(z')\big)^{-1}\cdot Y^{(k+1)}\cdot B^{(k+1)}_k(z)\cdot\iota^{(k+1,k)} - \iota^{(k+1,k)}\cdot \widetilde{Y}^{(k)}, \label{Eq: system of equations for k geq 2_b}
\end{align}   
\end{subequations}    
with $\widetilde{X}'^{\,(k-1)}\in \mathrm{M}(k,k-1, \mathbb{K})$, $\widetilde{X}'^{\,(k)}\in \mathrm{M}(k+1,k, \mathbb{K})$, and $\widetilde{Y}^{(k)}\in \mathrm{M}(k, \mathbb{K})$, has the following properties:
\begin{itemize}
\justifying
\item[(a)] \textbf{General case}: The solution set $\bigl(\widetilde{X}'^{\,(k-1)}, \widetilde{X}'^{\,(k)}, \widetilde{Y}^{(k)} \bigr)$ of~\eqref{Eq: system of equations for k geq 2} is an affine space parametrized by the entries of $\widetilde{X}'^{\,(k-1)}$ and the entries of the $k$-th column of $\widetilde{X}'^{\,(k)}$ except its last one. More precisely, once $\widetilde{X}'^{\,(k-1)}$ and the $k$-th column of $\widetilde{X}'^{\,(k)}$ except its last entry are chosen, the system of equations~\eqref{Eq: system of equations for k geq 2} uniquely determines $\widetilde{Y}^{(k)}$ and the remaining entries of $\widetilde{X}'^{\,(k)}$. 

\item[(b)] \textbf{Special case}: Consider
\begin{equation*}
\begin{aligned}
\widetilde{X}'^{\,(k-1)}=\mathbf{0}_{k\times (k-1)}\, , \quad \text{and} \quad \widetilde{X}'^{\,(k)}=\begin{bmatrix}
\tilde{x}'_{1,1} & \cdots & \widetilde{x}'_{1,k-1} & 0\\
\vdots     & \ddots &   \vdots     & \vdots\\
\tilde{x}'_{k,1} & \cdots & \widetilde{x}'_{k, k-1} & 0 \\
\tilde{x}'_{k+1,1} & \cdots & \widetilde{x}'_{k+1, k-1} & \widetilde{x}'_{k+1,k}
\end{bmatrix}\, , \\[8pt]
\widetilde{X}^{(k-1)}=\mathbf{0}_{k\times (k-1)}\, , \quad \text{and} \quad  \widetilde{X}^{(k)}=\begin{bmatrix}
x_{1,1} & \cdots & x_{1,k-1} & 0\\
\vdots     & \ddots &   \vdots     & \vdots\\
x_{k,1} & \cdots & x_{k, k-1} & 0 \\
x_{k+1,1} & \cdots & x_{k+1, k-1} & x_{k+1,k}
\end{bmatrix}\, .   
\end{aligned}
\end{equation*}
Then the solution set $\bigl(\widetilde{X}'^{\,(k-1)}, \widetilde{X}'^{\,(k)}, \widetilde{Y}^{(k)} \bigr)$  of~\eqref{Eq: system of equations for k geq 2} reduces to a single point: 
\begin{equation*}
\begin{aligned}
~\widetilde{Y}^{(k)}&= \text{principal $k\times k$ submatrix of } ~\big(B^{(n)}_{k}(z')\big)^{-1}\cdot D(\vec{u}) \cdot B^{(n)}_{k}(z)\,  .\\[4pt]   
~\tilde{x}'_{i,j}&=\tilde{x}_{i,j}\, , \quad \text{for all $i\in[1,k+1]$ and $j\in [1,k-1]$} \, , \\[4pt]  
~\tilde{x}'_{k+1,k}&=\tilde{x}_{k+1,k} + d(z', \vec{u}, z) \, ,
\end{aligned}  
\end{equation*}
where
\begin{equation*}
d(z', \vec{u}, z):=\text{the $(k+1,k)$-entry of } \,\big(B^{(n)}_{k}(z')\big)^{-1}\cdot D(\vec{u}) \cdot B^{(n)}_{k}(z) \, .    
\end{equation*}
\end{itemize}
\end{lemma}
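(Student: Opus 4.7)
The plan is to treat~\eqref{Eq: system of equations for k geq 2} as a linear system in the unknowns $\big(\widetilde{X}'^{\,(k-1)}, \widetilde{X}'^{\,(k)}, \widetilde{Y}^{(k)} \big)$ and to analyze it entry by entry by exploiting the block structure of the standard inclusion matrices $\iota^{(k,k-1)}$ and $\iota^{(k+1,k)}$. The first step is to note that right-multiplication by $\iota^{(k,k-1)}$ extracts the first $k-1$ columns of a matrix, whereas left-multiplication by $\iota^{(k,k-1)}$ (resp.\ $\iota^{(k+1,k)}$) embeds a matrix as the top block of a larger one with a zero row appended at the bottom. With these identifications, equation~\eqref{Eq: system of equations for k geq 2_a} becomes an equality of $k\times(k-1)$ matrices that, once $\widetilde{X}'^{\,(k-1)}$ is chosen, uniquely determines the first $k-1$ columns of $\widetilde{Y}^{(k)}$ in terms of $\widetilde{X}'^{\,(k-1)}$, $\widetilde{X}^{(k-1)}$, and $Y^{(k-1)}$, while the $k$-th column of $\widetilde{Y}^{(k)}$ is left unconstrained by~\eqref{Eq: system of equations for k geq 2_a}.

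Turning to~\eqref{Eq: system of equations for k geq 2_b}, once $\widetilde{Y}^{(k)}$ is fixed the right-hand side determines $\widetilde{X}'^{\,(k)}$ entry by entry. Since $\iota^{(k+1,k)}\cdot \widetilde{Y}^{(k)}$ has a zero bottom row, the entry $\widetilde{x}'_{k+1,k}$ depends only on the prescribed data and the $(k+1,k)$-entry of $(B^{(k+1)}_{k}(z'))^{-1}\,Y^{(k+1)}\,B^{(k+1)}_{k}(z)$, and therefore cannot play the role of a free parameter. Conversely, the first $k-1$ columns of $\widetilde{X}'^{\,(k)}$ are determined by the first $k-1$ columns of $\widetilde{Y}^{(k)}$, hence ultimately by $\widetilde{X}'^{\,(k-1)}$, while the first $k$ entries of the $k$-th column of $\widetilde{X}'^{\,(k)}$ are in affine bijection with the $k$-th column of $\widetilde{Y}^{(k)}$ via~\eqref{Eq: system of equations for k geq 2_b}. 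Combining these observations, the solution set is an affine space of dimension $k(k-1)+k=k^{2}$, coordinatized either by $\widetilde{X}'^{\,(k-1)}$ together with the $k$-th column of $\widetilde{Y}^{(k)}$, or, via the change of variables just described, by $\widetilde{X}'^{\,(k-1)}$ together with the first $k$ entries of the $k$-th column of $\widetilde{X}'^{\,(k)}$. This establishes part~(a).

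For part~(b), the strategy is to substitute the prescribed zero-patterns into the system. Under these assumptions, equation~\eqref{Eq: system of equations for k geq 2_a} forces the first $k-1$ columns of $\widetilde{Y}^{(k)}$ to coincide with those of $\iota^{(k,k-1)}\cdot Y^{(k-1)}$, and the vanishing of the first $k$ entries of the $k$-th column of $\widetilde{X}'^{\,(k)}$, together with the corresponding vanishing in $\widetilde{X}^{(k)}$, rigidly specifies the $k$-th column of $\widetilde{Y}^{(k)}$ via~\eqref{Eq: system of equations for k geq 2_b}. To identify the resulting $\widetilde{Y}^{(k)}$ with the principal $k\times k$ submatrix of $(B^{(n)}_{k}(z'))^{-1}\cdot D(\vec{u})\cdot B^{(n)}_{k}(z)$, I would invoke the block structure of $B^{(n)}_{k}$, which acts nontrivially only on rows and columns $k$ and $k+1$ and whose action on the principal $(k+1)\times(k+1)$ block coincides with $B^{(k+1)}_{k}$, combined with the hypothesis that $Y^{(k+1)}$ is the principal $(k+1)\times(k+1)$ submatrix of $D(\vec{u})$. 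This is analogous to the reasoning used in the proof of Lemma~\eqref{Lemma: system of equations for equivalent extensions at a crossing sigma_1}. The closed-form expression $\widetilde{x}'_{k+1,k}=\widetilde{x}_{k+1,k}+d(z',\vec{u},z)$ then follows by reading off the $(k+1,k)$-entry of~\eqref{Eq: system of equations for k geq 2_b}.

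I expect the main obstacle to lie in part~(b): the bookkeeping required to verify that the size-$k$ braid-conjugation appearing in~\eqref{Eq: system of equations for k geq 2_b} genuinely matches the restriction of the size-$n$ braid-conjugation to the appropriate principal submatrix is true by design, but must be spelled out carefully from the explicit form of the braid matrices to ensure that no off-diagonal contribution from the rows and columns outside the relevant block is overlooked.
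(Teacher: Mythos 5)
Your proposal is correct and follows essentially the same route as the paper's proof: solve~\eqref{Eq: system of equations for k geq 2_a} for the first $k-1$ columns of $\widetilde{Y}^{(k)}$, use the residual freedom in its $k$-th column to absorb the first $k$ entries of the $k$-th column of $\widetilde{X}'^{\,(k)}$ in~\eqref{Eq: system of equations for k geq 2_b}, and in the special case identify $\widetilde{Y}^{(k)}$ with the principal $k\times k$ submatrix of $\big(B^{(n)}_{k}(z')\big)^{-1}D(\vec{u})B^{(n)}_{k}(z)$ via the block structure of the braid matrices (Lemma~\eqref{lemma for braid matrices and diagonal matrices}). The ``bookkeeping'' you flag at the end is exactly the step the paper dispatches by observing that this conjugated matrix is diagonal except at the $(k+1,k)$-entry, so that $\widetilde{Y}^{(k)}$ cancels all of its contribution in the first $k-1$ columns.
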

\begin{proof}
Set $M=\big(B^{(k+1)}_{k}(z')\big)^{-1}\cdot Y^{(k+1)}\cdot B^{(k+1)}_{k}(z)$. Then, since $Y^{(k+1)}$ is the principal $(k+1)\times (k+1)$ submatrix of $D(\vec{u})$, the block structure of the braid matrices guarantees that
\begin{equation*}
M= \text{ principal $(k+1)\times (k+1)$ submatrix of }~ \big(B^{(n)}_{k}(z')\big)^{-1}\cdot D(\vec{u})\cdot B^{(n)}_{k}(z).    
\end{equation*}

Next, observe that the set of equations~\eqref{Eq: system of equations for k geq 2_a} allows us to solve for the first $k-1$ columns of $\widetilde{Y}^{(k)}$, while leaving its $k$-th column and all entries of $\widetilde{X}'^{\,(k-1)}$ undetermined. Then, after substituting our partial solution into the set of equations~\eqref{Eq: system of equations for k geq 2_b}, we can exploit the remaining freedom in the $k$-th column of $\widetilde{Y}^{(k)}$ to solve the system completely, while keeping all entries of $\widetilde{X}'^{\,(k-1)}$ and the entries of the $k$-th column of $\widetilde{X}'^{\,(k)}$, except for its very last entry, arbitrary.  

Finally, in the special case where $\widetilde{X}^{(k-1)} = \widetilde{X}'^{\,(k-1)} = \mathbf{0}_{k\times (k-1)}$ and the constraints on the last columns of $\widetilde{X}^{(k)}$ and $\widetilde{X}'^{\,(k)}$ required in the lemma are imposed, the procedure described above yields a unique solution. In particular, the set of equations~\eqref{Eq: system of equations for k geq 2_a} implies that the principal $(k-1)\times(k-1)$ submatrix of $\widetilde{Y}^{(k)}$ is equal to $Y^{(k-1)}$, while its remaining entries outside its last column are zero. 

Furthermore, since $Y^{(k+1)}$ is the principal $(k+1)\times(k+1)$ submatrix of the diagonal matrix $D(\vec{u})$, the block structure of the braid matrices guarantee that the principal $(k-1)\times (k-1)$ sub-matrices of $Y^{(k+1)}$ and $M$ coincide. Hence, under the assumptions of the special case, the set of equations~\eqref{Eq: system of equations for k geq 2_b} further implies that $\widetilde{Y}^{(k)}$ is equal to the principal $k\times k$ submatrix of $M$. 

Lastly, recall that $M$ is almost diagonal, with the only potentially non-zero off-diagonal entry at the $(k+1,k)$ position. Consequently, in the set of equations~\eqref{Eq: system of equations for k geq 2_b}, $\widetilde{Y}^{(k)}$ cancels almost all the contribution of $M$, thus yielding that $\widetilde{X}'^{\,(k)}$ coincides with $\widetilde{X}^{(k)}$ entry-wise, except for the $(k+1,k)$-entry, which differs by the corresponding entry of $M$. This completes the proof.
\end{proof}

Having established the relevant technical lemmas and the local structure of the one-degree morphism spaces in the category $\ccs{1}{\beta}$, we turn to their global analysis, aiming for an explicit description of these morphism spaces. As a first step, we consider the case of the Legendrian unlink on $n$ strands, $\beta = e_n$.

\subsubsection{The Case of the Trivial Braid} Let $e_{n}\in\mathrm{Br}^{+}_{n}$ be the trivial braid word. Next, we study the one-degree morphism spaces in the category $\ccs{1}{e_{n}}$. 

To begin, consider the open cover $\mathcal{U}_{\Lambda(e_{n})}=\big\{ U_{0}, U_{\mathrm{B}}, U_{\mathrm{T}}\big\}$ of $\mathbb{R}^{2}$ introduced in Construction~\eqref{Const: finite open cover of R^2 for the unlink}. Let $\sh{F}$ and $\sh{G}$ be objects of the category $\ccs{1}{e_{n}}$, and let $\sh{H}$ be an extension of $\sh{F}$ by $\sh{G}$. By Proposition~\eqref{Sheaves for the unlink on n strands}, $\sh{F}$ and $\sh{G}$ are identically zero on $U_{0}$, and as a result, $\sh{H}$ is also identically zero on this region. It follows that $\sh{H}$ is entirely determined by its behavior on the regions $U_{\mathrm{B}}$ and $U_{\mathrm{T}}$. Having established this, we present the following proposition.

\begin{proposition}
\textbf{Setup}: Let $e_{n}\in\mathrm{Br}^{+}_{n}$ be the trivial braid word, $\mathcal{U}_{\Lambda(e_{n})}=\big\{ U_{0}, U_{\mathrm{B}}, U_{\mathrm{T}}\big\}$ the open cover of $\mathbb{R}^{2}$ introduced in Construction~\eqref{Const: finite open cover of R^2 for the unlink}, and $\sh{F}$ and $\sh{G}$ objects of the category $\ccs{1}{e_{n}}$. In this setting, Proposition~\eqref{Prop: linear map description of a sheaf for the unlink on n strands} asserts that: 
\begin{itemize}
\justifying
\item  On $U_{\mathrm{T}}$, $\sh{F}$ and $\sh{G}$ are specified by two collections of surjective linear maps 
\begin{equation*}
\big\{\psi_{\sh{F}}^{(i)}:\mathbb{K}^{i+1}\to \mathbb{K}^{i}\big\}_{i=1}^{n-1}\, , \quad \text{and} \quad \big\{\psi_{\sh{G}}^{(i)}:\mathbb{K}^{i+1}\to \mathbb{K}^{i}\big\}_{i=1}^{n-1}\, ,    
\end{equation*}
respectively (see Figure~\eqref{Fig: an object F on the region U_T for the unlink} for a schematic representation).

\item On $U_{\mathrm{B}}$, $\sh{F}$ and $\sh{G}$ are specified by two collections of injective linear maps 
\begin{equation*}
\big\{\phi_{\sh{F}}^{(i)}:\mathbb{K}^{i}\to \mathbb{K}^{i+1}\big\}_{i=1}^{n-1}\, , \quad \text{and} \quad \big\{\phi_{\sh{G}}^{(i)}:\mathbb{K}^{i}\to \mathbb{K}^{i+1}\big\}_{i=1}^{n-1} \, ,    
\end{equation*}
respectively (see Figure~\eqref{Fig: an object F on the region U_B for the unlink} for a schematic representation).

\item \textbf{Compatibility conditions}: For each $i\in[1,n-1]$, 
\begin{equation*}
\psi^{(i)}_{\sh{F}}\circ \phi^{(i)}_{\sh{F}}=\mathrm{id}_{\mathbb{K}^{i}}\, , \quad \text{and} \quad \psi^{(i)}_{\sh{G}}\circ \phi^{(i)}_{\sh{G}}=\mathrm{id}_{\mathbb{K}^{i}}\, .    
\end{equation*}
\end{itemize}

\noindent
$\star$ \emph{Assumption}: For each $i\in[1,n]$, let $\hat{\mathbf{f}}^{(i)}:=\big\{\hat{f}^{(i)}_{j}\big\}_{j=1}^{i}$ and $\hat{\mathbf{g}}^{(i)}:=\big\{\hat{g}^{(i)}_{j}\big\}_{j=1}^{i}$ be bases for $\mathbb{K}^{i}$. Building on Definition~\eqref{Def:flags and adapted bases}--\eqref{Def: adapted bases I}--\eqref{Def: adapted bases II}, we assume that: 
\begin{itemize}
\justifying
\item $\big\{\hat{\mathbf{f}}^{(i)}\big\}_{i=1}^{n}$ is a system of bases adapted to both $\big\{\psi_{\sh{F}}^{(i)}\big\}_{i=1}^{n-1}$ and $\big\{\phi_{\sh{F}}^{(i)}\,\big\}_{i=1}^{n-1}$.
\item $\big\{\hat{\mathbf{g}}^{(i)}\big\}_{i=1}^{n}$ is a system of bases adapted to both $\big\{\psi_{\sh{G}}^{(i)}\big\}_{i=1}^{n-1}$ and $\big\{\phi_{\sh{G}}^{(i)}\,\big\}_{i=1}^{n-1}$.
\end{itemize}

\smallskip
\noindent
\textbf{Main Conclusion}: Let $\sh{H}$ be an extension of $\sh{F}$ by $\sh{G}$. Under the given \textbf{setup}, $\sh{H}$ is equivalent to the trivial extension $\mathbf{0}_{\mathrm{Ext}}$ of $\sh{F}$ by $\sh{G}$. In other words, we have that
\begin{equation*}
    \mathrm{Ext}^{1}(\sh{F},\sh{G})\cong 0 \, .
\end{equation*}
\end{proposition}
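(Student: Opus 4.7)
By Lemma~\eqref{Key lemma for Ext^{1} for sheaves}, it suffices to show that every extension $\sh{H}$ of $\sh{F}$ by $\sh{G}$ is equivalent to the trivial extension $\mathbf{0}_{\mathrm{Ext}}$. Invoking Lemma~\eqref{Lemma: extensions are microlocal supported sheaves}, Observation~\eqref{Obs: elements of Ext1 at the local models}, and Lemma~\eqref{Lemma: equivalence of extensions and block extensions}, we may assume that on $U_{\mathrm{T}}$ the extension $\sh{H}$ is encoded, up to equivalence, by block extensions of the $\psi^{(i)}_{\sh{F}}$ by $\psi^{(i)}_{\sh{G}}$ associated with characteristic maps $\big\{\widetilde{\psi}^{(i)}_{\sh{H}}:\mathbb{K}^{i+1}\to\mathbb{K}^{i}\big\}_{i=1}^{n-1}$, while on $U_{\mathrm{B}}$ it is encoded by block extensions of the $\phi^{(i)}_{\sh{F}}$ by $\phi^{(i)}_{\sh{G}}$ associated with characteristic maps $\big\{\widetilde{\phi}^{(i)}_{\sh{H}}:\mathbb{K}^{i}\to\mathbb{K}^{i+1}\big\}_{i=1}^{n-1}$. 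As noted in Observation~\eqref{Obs: elements of Ext1 at the local models}, the microlocal support conditions at the cusps in $U_{\mathrm{T}}\cap U_{\mathrm{B}}$ impose no further constraints on these data.

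The plan is to construct a single global equivalence, in the sense of Lemma~\eqref{Lemma: equivalence between block extensions}, by producing a collection of linear maps $\big\{T^{(i)}:\mathbb{K}^{i}\to\mathbb{K}^{i}\big\}_{i=1}^{n}$ whose matrix representations in the adapted bases $\big\{\hat{\mathbf{f}}^{(i)}\big\}_{i=1}^{n}$ and $\big\{\hat{\mathbf{g}}^{(i)}\big\}_{i=1}^{n}$ annihilate every characteristic map on both regions simultaneously. In these bases, where the $\psi^{(i)}_{\sh{F}},\psi^{(i)}_{\sh{G}}$ are represented by $\pi^{(i,i+1)}$ and the $\phi^{(i)}_{\sh{F}},\phi^{(i)}_{\sh{G}}$ by $\iota^{(i+1,i)}$, the vanishing of the characteristic maps after equivalence translates into two matrix systems of precisely the types treated in Lemma~\eqref{Lemma: system of equations for equivalent collections of surjective maps} (for the unknowns $T^{(i)}$ on $U_{\mathrm{T}}$) and Lemma~\eqref{Lemma: system of equations for equivalent collections of injective maps} (for the unknowns $T^{(i)}$ on $U_{\mathrm{B}}$), both with the target set to zero.

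By those two lemmas each system individually admits solutions: the top system determines its unknowns up to an arbitrary lower-triangular modification of $T^{(n)}$, while the bottom system determines them up to an arbitrary upper-triangular modification. The gluing step is then pure linear algebra. Given any particular solutions $T^{(n)}_{\mathrm{T}}$ and $T^{(n)}_{\mathrm{B}}$ of the two systems, the difference $T^{(n)}_{\mathrm{B}}-T^{(n)}_{\mathrm{T}}$ can always be written as $L+U$ with $L$ strictly lower-triangular and $U$ upper-triangular, so setting $T^{(n)}:=T^{(n)}_{\mathrm{T}}+L=T^{(n)}_{\mathrm{B}}-U$ yields a common $T^{(n)}$. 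The intermediate $T^{(i)}$ for $i<n$ are then read off from the principal-submatrix rule appearing in the \emph{Special case} parts of those lemmas, and since that rule is the same on both sides, the data agree on the overlap $U_{\mathrm{T}}\cap U_{\mathrm{B}}$.

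The main technical obstacle will be verifying that the resulting collection $\big\{T^{(i)}\big\}_{i=1}^{n}$ genuinely defines a sheaf isomorphism $\sh{H}\to\mathbf{0}_{\mathrm{Ext}}$ at every cusp of $\Pi_{x,z}(\Lambda(e_{n}))$; this reduces to checking, via the compatibility conditions $\psi^{(i)}_{\sh{F}}\circ\phi^{(i)}_{\sh{F}}=\mathrm{id}_{\mathbb{K}^{i}}$ and $\psi^{(i)}_{\sh{G}}\circ\phi^{(i)}_{\sh{G}}=\mathrm{id}_{\mathbb{K}^{i}}$, that the principal-submatrix structure forced by each of the two systems survives the gluing. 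Because both the top and bottom systems impose the same principal-submatrix relation between $\big\{T^{(i)}\big\}_{i=1}^{n-1}$ and $T^{(n)}$ once the characteristic maps have been set to zero, this compatibility should hold automatically. Once it is established, every characteristic map of $\sh{H}$ vanishes under the equivalence, so $\sh{H}\cong\mathbf{0}_{\mathrm{Ext}}$ and consequently $\mathrm{Ext}^{1}(\sh{F},\sh{G})\cong 0$.
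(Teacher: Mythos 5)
Your proposal is correct and follows essentially the same route as the paper: both reduce the vanishing of $\mathrm{Ext}^{1}(\sh{F},\sh{G})$ to the two matrix systems of Lemmas~\eqref{Lemma: system of equations for equivalent collections of surjective maps} and~\eqref{Lemma: system of equations for equivalent collections of injective maps} in the adapted bases, glued only through the single constraint that the level-$n$ equivalence matrices on $U_{\mathrm{T}}$ and $U_{\mathrm{B}}$ coincide (your explicit $L+U$ splitting is just a concrete realization of the paper's observation that the complementary triangular parts are free). Your final paragraph's worry is moot: the intermediate maps $T^{(i)}$, $i<n$, on $U_{\mathrm{T}}$ and $U_{\mathrm{B}}$ live on disjoint strata (only the $\mathbb{K}^{n}$ stalk is shared), so no agreement beyond $S^{(n)}=T^{(n)}$ is required, and the special-case principal-submatrix rule is not what determines them here since the characteristic maps of $\sh{H}$ are generally nonzero.
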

\begin{proof}
Let $\sh{H}'$ be an extension of $\sh{F}$ by $\sh{G}$ representing the same equivalence class as $\sh{H}$ in $\mathrm{Ext}^{1}(\sh{F},\sh{G})$, so that there exists a sheaf isomorphism $\lambda: \sh{H}\to \sh{H}'$ such that the diagram in Figure~\eqref{Equivalen extenions of F by F'} commutes in each square. Building on Observation~\eqref{Obs: elements of Ext1 at the local models}, we have that:     
\begin{itemize}
\item On $U_{\mathrm{T}}$, $\sh{H}$ and $\sh{H}'$ are determined by collections of $n-1$ characteristic maps
\begin{equation*}
\big\{ \psi^{(i)}_{\sh{H}}: \mathbb{K}^{i+1}\to \mathbb{K}^{i}\big\}_{i=1}^{n-1}\, , \quad \text{and} \quad \big\{ \psi^{(i)}_{\sh{H}'}: \mathbb{K}^{i+1}\to \mathbb{K}^{i}\big\}_{i=1}^{n-1}\, ,
\end{equation*}
while $\lambda$ is characterized by a collection of $n$ linear maps $\big\{ S^{(i)}_{\lambda}: \mathbb{K}^{i}\to \mathbb{K}^{i}\big\}_{i=1}^{n}$ such that
\begin{equation}\label{Eq: equivalence conditions 1 for the unlink}
 \psi^{(i)}_{\sh{H}'}= \psi^{(i)}_{\sh{H}} + S_{\lambda}^{(i)}\circ\psi^{(i)}_{\sh{F}}-\psi^{(i)}_{\sh{G}}\circ S^{(i+1)}_{\lambda}\, ,    
\end{equation}
for all $i\in[1,n-1]$. 

\item On $U_{\mathrm{B}}$, $\sh{H}$ and $\sh{H}'$ are determined by collections of $n-1$ characteristic maps
\begin{equation*}
\big\{ \phi^{(i)}_{\sh{H}}: \mathbb{K}^{i}\to \mathbb{K}^{i+1}\big\}_{i=1}^{n-1}\, , \quad \text{and} \quad \big\{ \phi^{(i)}_{\sh{H}'}: \mathbb{K}^{i}\to \mathbb{K}^{i+1}\big\}_{i=1}^{n-1}\, ,
\end{equation*}
while $\lambda$ is characterized by a collection of $n$ linear maps $\big\{ T^{(i)}_{\lambda}: \mathbb{K}^{i}\to \mathbb{K}^{i}\big\}_{i=1}^{n}$ such that
\begin{equation}\label{Eq: equivalence conditions 2 for the unlink}
 \phi^{(i)}_{\sh{H}'}= \phi^{(i)}_{\sh{H}} + T_{\lambda}^{(i+1)}\circ\phi^{(i)}_{\sh{F}}-\phi^{(i)}_{\sh{G}}\circ T^{(i)}_{\lambda}\, ,    
\end{equation}
for all $i\in[1,n-1]$.  
\end{itemize}
In particular, applying the sheaf axioms to the intersection $U_{\mathrm{T}}\cap U_{\mathrm{B}}$ yields that $S^{(n)}_{\lambda}=T^{(n)}_{\lambda}$.

By assumption, $\big\{\hat{\mathbf{f}}^{(i)}\big\}_{i=1}^{n}$ and $\big\{\hat{\mathbf{g}}^{(i)}\big\}_{i=1}^{n}$ are systems of bases adapted to the pairs $\big(\big\{\psi_{\sh{F}}^{(i)}\big\}_{i=1}^{n-1}, \, \big\{\phi_{\sh{F}}^{(i)}\big\}_{i=1}^{n-1}\big)$ and $\big(\big\{\psi_{\sh{G}}^{(i)}\big\}_{i=1}^{n-1}, \, \big\{\phi_{\sh{G}}^{(i)}\big\}_{i=1}^{n-1}\big)$, respectively. Consequently, with respect to these bases, the equivalence conditions~\eqref{Eq: equivalence conditions 1 for the unlink} and~\eqref{Eq: equivalence conditions 2 for the unlink} translate into the system of equations
\begin{equation*}
\begin{aligned}
\tensor[_{\hat{\mathbf{g}}^{(i)}}]{ \big[\, \psi^{(i)}_{\sh{H}'}\,\big] }{_{\hat{\mathbf{f}}^{(i+1)}}}&= \tensor[_{\hat{\mathbf{g}}^{(i)}}]{ \big[\, \psi^{(i)}_{\sh{H}}\,\big] }{_{\hat{\mathbf{f}}^{(i+1)}}}+ \tensor[_{\hat{\mathbf{g}}^{(i)}}]{ \big[\, S^{(i)}_{\lambda}\,\big] }{_{\hat{\mathbf{f}}^{(i)}}}\cdot \pi^{(i,i+1)}-\pi^{(i,i+1)}\cdot \tensor[_{\hat{\mathbf{g}}^{(i+1)}}]{ \big[\, S^{(i+1)}_{\lambda}\,\big] }{_{\hat{\mathbf{f}}^{(i+1)}}}\, ,   \\[6pt]       
\tensor[_{\hat{\mathbf{g}}^{(i+1)}}]{ \big[\, \phi^{(i)}_{\sh{H}'}\,\big] }{_{\hat{\mathbf{f}}^{(i)}}}&= \tensor[_{\hat{\mathbf{g}}^{(i+1)}}]{ \big[\, \phi^{(i)}_{\sh{H}}\,\big] }{_{\hat{\mathbf{f}}^{(i)}}}+ \tensor[_{\hat{\mathbf{g}}^{(i+1)}}]{ \big[\, T^{(i+1)}_{\lambda}\,\big] }{_{\hat{\mathbf{f}}^{(i+1)}}}\cdot \iota^{(i+1,i)}-\iota^{(i+1,i)}\cdot \tensor[_{\hat{\mathbf{g}}^{(i)}}]{ \big[\, T^{(i)}_{\lambda}\,\big] }{_{\hat{\mathbf{f}}^{(i)}}}\, ,     
\end{aligned}
\end{equation*}
for all $i\in[1,n-1]$. 

Now, since $\tensor[_{\hat{\mathbf{g}}^{(n)}}]{ \big[\, T^{(n)}_{\lambda}\,\big] }{_{\hat{\mathbf{f}}^{(n)}}}= \tensor[_{\hat{\mathbf{g}}^{(n)}}]{ \big[\, S^{(n)}_{\lambda}\,\big] }{_{\hat{\mathbf{f}}^{(n)}}}$, Lemmas~\eqref{Lemma: system of equations for equivalent collections of surjective maps} and~\eqref{Lemma: system of equations for equivalent collections of injective maps} guarantee that for any choice of matrices $\big\{ \tensor[_{\hat{\mathbf{g}}^{(i)}}]{ \big[\, \psi^{(i)}_{\sh{H}'}\,\big] }{_{\hat{\mathbf{f}}^{(i+1)}}}\big\}_{i=1}^{n-1}$ and $\big\{ \tensor[_{\hat{\mathbf{g}}^{(i+1)}}]{ \big[\, \phi^{(i)}_{\sh{H}}\,\big] }{_{\hat{\mathbf{f}}^{(i)}}} \big\}_{i=1}^{n-1}$ there exist collections of matrices $\big\{ \tensor[_{\hat{\mathbf{g}}^{(i)}}]{ \big[\, S^{(i)}_{\lambda}\,\big] }{_{\hat{\mathbf{f}}^{(i)}}} \big\}_{i=1}^{n}$ and $\big\{  \tensor[_{\hat{\mathbf{g}}^{(i)}}]{ \big[\, T^{(i)}_{\lambda}\,\big] }{_{\hat{\mathbf{f}}^{(i)}}} \big\}_{i=1}^{n}$ solving the system of equations, with the diagonal part of $\tensor[_{\hat{\mathbf{g}}^{(n)}}]{ \big[\, T^{(n)}_{\lambda}\,\big] }{_{\hat{\mathbf{f}}^{(n)}}}$ chosen arbitrarily.

Bearing this in mind, we deduce that $\sh{H}$ is equivalent to any extension of $\sh{F}$ by $\sh{G}$, and in particular, it is equivalent to the trivial extension $\mathbf{0}_{\mathrm{Ext}}$. Hence, since $\sh{H}$ is arbitrary, we conclude that $\mathrm{Ext}^{1}(\sh{F},\sh{G})\cong 0$. 
\end{proof}

With the above result at hand, we conclude our study of the one-degree morphism spaces in the category $\ccs{1}{e_{n}}$, and now turn to their analysis in the category $\ccs{1}{\beta}$ for an arbitrary positive braid word $\beta\in\mathrm{Br}^{+}_{n}$.

\subsubsection{General Positive Braids} Let $\beta:=\sigma_{i_{1}}\cdots \sigma_{i_{\ell}}\in\mathrm{Br}^{+}_{n}$ be a positive braid word. Next, we analyze the one-degree morphism spaces in the category $\ccs{1}{\beta}$.   

Let $\mathcal{U}_{\Lambda(\beta)}=\big\{U_{0}, U_{\mathrm{B}}, U_{\mathrm{L}}, U_{\mathrm{R}}, U_{\mathrm{T}}\big\}$ be the open cover of $\mathbb{R}^{2}$ from Construction~\eqref{Cons: Finite open cover for R^2}, $\mathcal{R}_{\Lambda(\beta)}=\big\{R_{j}\big\}_{j=1}^{\ell}$ the partition of $U_{\mathrm{B}}$ into $\ell$ open vertical straps from Construction~\eqref{Cons: Definition of the vertical straps}, $\sh{F}$ and $\sh{G}$ objects of the category $\ccs{1}{\beta}$, and $\sh{H}$ an extension of $\sh{F}$ by $\sh{G}$. Building on Observations~\eqref{Obs: objects on U_T and vertical straps} and~\eqref{Obs: elements of Ext1 at the local models}, we have that the global structure of $\sh{H}$ can be fully recovered from its local description on the region $U_{\mathrm{T}}$ and the collection of vertical straps $\mathcal{R}_{\Lambda(\beta)}$, together with the appropriate compatibility conditions. Accordingly, we present the following result.

\begin{lemma}\label{Lemma: Ext1 on U_T and U_L}
\textbf{Setup}: Let $\beta=\sigma_{i_{1}}\cdots \sigma_{i_{\ell}} \in\mathrm{Br}^{+}_{n}$ be a positive braid word, $\mathcal{U}_{\Lambda(\beta)}=\big\{U_{0}, U_{\mathrm{B}}, U_{\mathrm{L}}, U_{\mathrm{R}}, U_{\mathrm{T}}\big\}$ the open cover of $\mathbb{R}^{2}$ from Construction~\eqref{Cons: Finite open cover for R^2}, and $\sh{F}$ and $\sh{G}$ objects of the category $\ccs{1}{\beta}$. According to Lemma~\eqref{Lemma: linear map description of an object on the regions U_T, U_L, and U_R}, $\sh{F}$ and $\sh{G}$ have the following local descriptions: 
\begin{itemize}
\justifying
\item  On $U_{\mathrm{T}}$, $\sh{F}$ and $\sh{G}$ are specified by collections of $n-1$ surjective linear maps 
\begin{equation*}
\big\{\psi_{\sh{F}}^{(i)}:\mathbb{K}^{i+1}\to \mathbb{K}^{i}\big\}_{i=1}^{n-1}\, , \quad \text{and} \quad \big\{\psi_{\sh{G}}^{(i)}:\mathbb{K}^{i+1}\to \mathbb{K}^{i}\big\}_{i=1}^{n-1}\, ,    
\end{equation*}
respectively. For a schematic illustration of a generic representative of one of these sheaves on $U_{\mathrm{T}}$, see Figure~\eqref{Fig: an object F in the region U_T}.

\item On $U_{\mathrm{L}}$, $\sh{F}$ and $\sh{G}$ are specified by collections of $n-1$ injective linear maps 
\begin{equation*}
\big\{\phi_{\sh{F}}^{(i)}:\mathbb{K}^{i}\to \mathbb{K}^{i+1}\big\}_{i=1}^{n-1}\, , \quad \text{and} \quad \big\{\phi_{\sh{G}}^{(i)}:\mathbb{K}^{i}\to \mathbb{K}^{i+1}\big\}_{i=1}^{n-1} \, ,    
\end{equation*}
respectively. For a schematic illustration of a generic representative of one of these sheaves on $U_{\mathrm{L}}$, see Figure~\eqref{Fig: an object F in the region U_L}.

\item \textbf{Compatibility conditions}: For each $i\in[1,n-1]$, 
\begin{equation*}
\psi^{(i)}_{\sh{F}}\circ \phi^{(i)}_{\sh{F}}=\mathrm{id}_{\mathbb{K}^{i}}\, , \quad \text{and} \quad \psi^{(i)}_{\sh{G}}\circ \phi^{(i)}_{\sh{G}}=\mathrm{id}_{\mathbb{K}^{i}}\, .    
\end{equation*}
\end{itemize}

\noindent
$\star$ \emph{Assumption}: For each $i\in[1,n]$, let $\hat{\mathbf{f}}^{(i)}:=\big\{\hat{f}^{(i)}_{j}\big\}_{j=1}^{i}$ and $\hat{\mathbf{g}}^{(i)}:=\big\{\hat{g}^{(i)}_{j}\big\}_{j=1}^{i}$ be bases for $\mathbb{K}^{i}$. Building on Definition~\eqref{Def:flags and adapted bases}--\eqref{Def: adapted bases I}--\eqref{Def: adapted bases II}, we assume that: 
\begin{itemize}
\justifying
\item $\big\{\hat{\mathbf{f}}^{(i)}\big\}_{i=1}^{n}$ is a system of bases adapted to both $\big\{\psi_{\sh{F}}^{(i)}\big\}_{i=1}^{n-1}$ and $\big\{\phi_{\sh{F}}^{(i)}\,\big\}_{i=1}^{n-1}$.
\item  $\big\{\hat{\mathbf{g}}^{(i)}\big\}_{i=1}^{n}$ is a system of bases adapted to both $\big\{\psi_{\sh{G}}^{(i)}\big\}_{i=1}^{n-1}$ and $\big\{\phi_{\sh{G}}^{(i)}\,\big\}_{i=1}^{n-1}$.
\end{itemize}

\smallskip
\noindent
\textbf{Main Conclusion}: Under the given \textbf{setup}, the following statements hold: 

\noindent
$\star$ \emph{Local characterization of $\mathrm{Ext}^{1}(\sh{F},\sh{G})$ on $U_{\mathrm{T}}\;\cup\; U_{\mathrm{L}}$}: Any extension $\sh{H}$ of $\sh{F}$ by $\sh{G}$ is locally equivalent to the trivial extension $\mathbf{0}_{\mathrm{Ext}}$ on $U_{\mathrm{T}}\;\cup\;U_{\mathrm{L}}$. 

\noindent
$\star$ \emph{Block-diagonal properties}: Let $\sh{H}$ and $\sh{H}'$ be two extensions of $\sh{F}$ by $\sh{G}$ representing the same equivalence class in $\mathrm{Ext}^{1}(\sh{F},\sh{G})$, and suppose that both $\sh{H}$ and $\sh{H}'$ coincide with the trivial extension $\mathbf{0}_{\mathrm{Ext}}$ on $U_{\mathrm{T}}\;\cup\; U_{\mathrm{L}}$. Let $\lambda:\sh{H}\to \sh{H}'$ be a sheaf isomorphism realizing the equivalence between $\sh{H}$ and $\sh{H}'$. Then, $\lambda$ is characterized by a collection of $n$ linear maps $\big\{ T^{(i)}_{\lambda}:\mathbb{K}^{i}\to \mathbb{K}^{i} \big\}_{i=1}^{n}$, which, with respect to the bases $\big\{\hat{\mathbf{f}}^{(i)}\big\}_{i=1}^{n}$ and $\big\{\hat{\mathbf{g}}^{(i)}\big\}_{i=1}^{n}$, satisfy the following properties:  
\begin{itemize}
\justifying
\item $\tensor[_{\hat{\mathbf{g}}^{(n)}}]{ \Big[\, T_{\lambda}^{\,(n)}\,\Big] }{_{\hat{\mathbf{f}}^{(n)}}}\in M(n,\mathbb{K})$ is diagonal. 
\item For each $i\in[1,n-1]$, $\tensor[_{\hat{\mathbf{g}}^{(i)}}]{ \big[\, T_{\lambda}^{(i)}\,\big] }{_{\hat{\mathbf{f}}^{(i)}}}\in M(i,\mathbb{K})$ is given by the principal $i\times i$ submatrix of $\tensor[_{\hat{\mathbf{g}}^{(n)}}]{ \big[\, T_{\lambda}^{\,(n)}\,\big] }{_{\hat{\mathbf{f}}^{(n)}}}$.  
\end{itemize}
\end{lemma}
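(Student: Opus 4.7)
The plan mirrors the arguments used for Proposition~\eqref{Prop: Ext0 for the unlink} and Lemma~\eqref{Lemma: Ext0 on U_T and U_L} in the zero-degree case, with the local description of extensions near arcs supplied by Observation~\eqref{Obs: elements of Ext1 at the local models}, and the resulting systems of matrix equations handled by the general and special cases of Lemmas~\eqref{Lemma: system of equations for equivalent collections of injective maps} and~\eqref{Lemma: system of equations for equivalent collections of surjective maps}. A key preliminary observation is that since $U_{\mathrm{T}}\cup U_{\mathrm{L}}$ contains only arcs and cusps (no crossings), and since cusps impose no extra condition on extensions beyond those already imposed on $\sh{F}$ and $\sh{G}$ (cf. the cusp discussion in Observation~\eqref{Obs: elements of Ext1 at the local models}), the only genuine constraints come from arcs.

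For the first bullet, let $\sh{H}$ be any extension of $\sh{F}$ by $\sh{G}$. By Observation~\eqref{Obs: elements of Ext1 at the local models}, on $U_{\mathrm{T}}$ the sheaf $\sh{H}$ is described, up to equivalence, by characteristic maps $\{\psi^{(i)}_{\sh{H}}:\mathbb{K}^{i+1}\to\mathbb{K}^i\}_{i=1}^{n-1}$, and on $U_{\mathrm{L}}$ by $\{\phi^{(i)}_{\sh{H}}:\mathbb{K}^i\to\mathbb{K}^{i+1}\}_{i=1}^{n-1}$. To establish equivalence with $\mathbf{0}_{\mathrm{Ext}}$ on $U_{\mathrm{T}}\cup U_{\mathrm{L}}$, I would construct a sheaf isomorphism $\lambda:\sh{H}\to\mathbf{0}_{\mathrm{Ext}}$ whose local data $\{S^{(i)}_\lambda\}_{i=1}^n$ on $U_{\mathrm{T}}$ and $\{T^{(i)}_\lambda\}_{i=1}^n$ on $U_{\mathrm{L}}$ (coupled by the cusp compatibility $S^{(i)}_\lambda=T^{(i)}_\lambda$, exactly as in the proof of Proposition~\eqref{Prop: Ext0 for the unlink}) satisfy
\begin{equation*}
\psi^{(i)}_{\sh{H}} = \psi^{(i)}_{\sh{G}}\circ S^{(i+1)}_\lambda - S^{(i)}_\lambda\circ\psi^{(i)}_{\sh{F}},\qquad \phi^{(i)}_{\sh{H}} = \phi^{(i)}_{\sh{G}}\circ T^{(i)}_\lambda - T^{(i+1)}_\lambda\circ\phi^{(i)}_{\sh{F}}.
\end{equation*}
Passing to the adapted bases supplied by the \emph{Assumption}, the maps $\psi^{(i)}_{\sh{F}},\psi^{(i)}_{\sh{G}}$ become $\pi^{(i,i+1)}$ and $\phi^{(i)}_{\sh{F}},\phi^{(i)}_{\sh{G}}$ become $\iota^{(i+1,i)}$, converting the two equivalence conditions into the systems addressed by the \emph{General Case} of Lemmas~\eqref{Lemma: system of equations for equivalent collections of surjective maps} and~\eqref{Lemma: system of equations for equivalent collections of injective maps}. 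Those general cases guarantee existence of a solution with the lower-triangular part of $S^{(n)}_\lambda$ and the upper-triangular part of $T^{(n)}_\lambda$ chosen freely, so one may pick them to agree on the overlap (the diagonal), producing a single matrix $S^{(n)}_\lambda=T^{(n)}_\lambda$ that trivializes $\sh{H}$ on $U_{\mathrm{T}}\cup U_{\mathrm{L}}$.

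For the second bullet, assume $\sh{H}=\sh{H}'=\mathbf{0}_{\mathrm{Ext}}$ on $U_{\mathrm{T}}\cup U_{\mathrm{L}}$, so all four families of characteristic maps vanish. The equivalence conditions become the \emph{Special Case} of Lemmas~\eqref{Lemma: system of equations for equivalent collections of injective maps} and~\eqref{Lemma: system of equations for equivalent collections of surjective maps}. The special case of Lemma~\eqref{Lemma: system of equations for equivalent collections of injective maps} forces $\tensor[_{\hat{\mathbf{g}}^{(n)}}]{\bigl[T^{(n)}_\lambda\bigr]}{_{\hat{\mathbf{f}}^{(n)}}}$ to be upper-triangular with each $\tensor[_{\hat{\mathbf{g}}^{(i)}}]{\bigl[T^{(i)}_\lambda\bigr]}{_{\hat{\mathbf{f}}^{(i)}}}$ equal to the corresponding principal $i\times i$ submatrix; the special case of Lemma~\eqref{Lemma: system of equations for equivalent collections of surjective maps} forces $\tensor[_{\hat{\mathbf{g}}^{(n)}}]{\bigl[S^{(n)}_\lambda\bigr]}{_{\hat{\mathbf{f}}^{(n)}}}$ to be lower-triangular with the analogous principal-submatrix property. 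Combining these via the cusp compatibility $S^{(n)}_\lambda=T^{(n)}_\lambda$ yields that $\tensor[_{\hat{\mathbf{g}}^{(n)}}]{\bigl[T^{(n)}_\lambda\bigr]}{_{\hat{\mathbf{f}}^{(n)}}}$ is diagonal and that each $\tensor[_{\hat{\mathbf{g}}^{(i)}}]{\bigl[T^{(i)}_\lambda\bigr]}{_{\hat{\mathbf{f}}^{(i)}}}$ is the principal $i\times i$ submatrix, as claimed.

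The main obstacle I anticipate is the careful bookkeeping needed to reconcile the two systems arising on $U_{\mathrm{T}}$ (surjective data, yielding a lower-triangular degree of freedom) and on $U_{\mathrm{L}}$ (injective data, yielding an upper-triangular degree of freedom) through the single matrix $T^{(n)}_\lambda=S^{(n)}_\lambda$ dictated by the cusp compatibility. Once this reconciliation is tracked correctly, the rest of the argument is a direct invocation of Lemmas~\eqref{Lemma: system of equations for equivalent collections of injective maps} and~\eqref{Lemma: system of equations for equivalent collections of surjective maps}, exactly as in the zero-degree analogue.
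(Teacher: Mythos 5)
Your proposal follows essentially the same route as the paper's proof: the local description of extensions and of the equivalence isomorphism via characteristic maps (Observation~\eqref{Obs: elements of Ext1 at the local models}), the cusp/overlap compatibility $S^{(n)}_{\lambda}=T^{(n)}_{\lambda}$, passage to adapted bases, and then the general cases of Lemmas~\eqref{Lemma: system of equations for equivalent collections of surjective maps} and~\eqref{Lemma: system of equations for equivalent collections of injective maps} for triviality and their special cases for the block-diagonal properties. The reconciliation of the lower-triangular and upper-triangular freedoms through the single matrix $S^{(n)}_{\lambda}=T^{(n)}_{\lambda}$ is handled at the same level of detail as in the paper, so the argument is correct as proposed.
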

\begin{proof}
Let $\sh{H}'$ be an extension of $\sh{F}$ by $\sh{G}$ representing the same equivalence class as $\sh{H}$ in $\mathrm{Ext}^{1}(\sh{F},\sh{G})$, and let $\lambda: \sh{H}\to \sh{H}'$ be a sheaf isomorphism such that the diagram in Figure~\eqref{Equivalen extenions of F by F'} commutes in each square. Building on Observation~\eqref{Obs: elements of Ext1 at the local models}, we have that:     
\begin{itemize}
\item On $U_{\mathrm{T}}$, $\sh{H}$ and $\sh{H}'$ are determined by collections of $n-1$ characteristic maps
\begin{equation*}
\big\{ \psi^{(i)}_{\sh{H}}: \mathbb{K}^{i+1}\to \mathbb{K}^{i}\big\}_{i=1}^{n-1}\, , \quad \text{and} \quad \big\{ \psi^{(i)}_{\sh{H}'}: \mathbb{K}^{i+1}\to \mathbb{K}^{i}\big\}_{i=1}^{n-1}\, ,
\end{equation*}
while $\lambda$ is characterized by a collection of $n$ linear maps $\big\{ S^{(i)}_{\lambda}: \mathbb{K}^{i}\to \mathbb{K}^{i}\big\}_{i=1}^{n}$ such that
\begin{equation}\label{Eq: equivalence conditions on U_T}
 \psi^{(i)}_{\sh{H}'}= \psi^{(i)}_{\sh{H}} + S_{\lambda}^{(i)}\circ\psi^{(i)}_{\sh{F}}-\psi^{(i)}_{\sh{G}}\circ S^{(i+1)}_{\lambda}\, ,    
\end{equation}
for all $i\in[1,n-1]$. 

\item On $U_{\mathrm{L}}$, $\sh{H}$ and $\sh{H}'$ are determined by collections of $n-1$ characteristic maps
\begin{equation*}
\big\{ \phi^{(i)}_{\sh{H}}: \mathbb{K}^{i}\to \mathbb{K}^{i+1}\big\}_{i=1}^{n-1}\, , \quad \text{and} \quad \big\{ \phi^{(i)}_{\sh{H}'}: \mathbb{K}^{i}\to \mathbb{K}^{i+1}\big\}_{i=1}^{n-1}\, ,
\end{equation*}
while $\lambda$ is characterized by a collection of $n$ linear maps $\big\{ T^{(i)}_{\lambda}: \mathbb{K}^{i}\to \mathbb{K}^{i}\big\}_{i=1}^{n}$ such that
\begin{equation}\label{Eq: equivalence conditions on U_L}
 \phi^{(i)}_{\sh{H}'}= \phi^{(i)}_{\sh{H}} + T_{\lambda}^{(i+1)}\circ\phi^{(i)}_{\sh{F}}-\phi^{(i)}_{\sh{G}}\circ T^{(i)}_{\lambda}\, ,    
\end{equation}
for all $i\in[1,n-1]$.  
\end{itemize}
In particular, applying the sheaf axioms to the intersection $U_{\mathrm{T}}\cap U_{\mathrm{L}}$ yields that $S^{(n)}_{\lambda}=T^{(n)}_{\lambda}$.

By assumption, $\big\{\hat{\mathbf{f}}^{(i)}\big\}_{i=1}^{n}$ and $\big\{\hat{\mathbf{g}}^{(i)}\big\}_{i=1}^{n}$ are systems of bases adapted to the pairs $\big(\big\{\psi_{\sh{F}}^{(i)}\big\}_{i=1}^{n-1}, \, \big\{\phi_{\sh{F}}^{(i)}\big\}_{i=1}^{n-1}\big)$ and $\big(\big\{\psi_{\sh{G}}^{(i)}\big\}_{i=1}^{n-1}, \, \big\{\phi_{\sh{G}}^{(i)}\big\}_{i=1}^{n-1}\big)$, respectively. Consequently, with respect to these bases, the equivalence conditions~\eqref{Eq: equivalence conditions on U_T} and~\eqref{Eq: equivalence conditions on U_L} translate into the system of equations
\begin{equation}\label{Eq: system of equivalence equations for U_T and U_L}
\begin{aligned}
\tensor[_{\hat{\mathbf{g}}^{(i)}}]{ \big[\, \psi^{(i)}_{\sh{H}'}\,\big] }{_{\hat{\mathbf{f}}^{(i+1)}}}&= \tensor[_{\hat{\mathbf{g}}^{(i)}}]{ \big[\, \psi^{(i)}_{\sh{H}}\,\big] }{_{\hat{\mathbf{f}}^{(i+1)}}}+ \tensor[_{\hat{\mathbf{g}}^{(i)}}]{ \big[\, S^{(i)}_{\lambda}\,\big] }{_{\hat{\mathbf{f}}^{(i)}}}\cdot \pi^{(i,i+1)}-\pi^{(i,i+1)}\cdot \tensor[_{\hat{\mathbf{g}}^{(i+1)}}]{ \big[\, S^{(i+1)}_{\lambda}\,\big] }{_{\hat{\mathbf{f}}^{(i+1)}}}\, ,   \\[6pt]       
\tensor[_{\hat{\mathbf{g}}^{(i+1)}}]{ \big[\, \phi^{(i)}_{\sh{H}'}\,\big] }{_{\hat{\mathbf{f}}^{(i)}}}&= \tensor[_{\hat{\mathbf{g}}^{(i+1)}}]{ \big[\, \phi^{(i)}_{\sh{H}}\,\big] }{_{\hat{\mathbf{f}}^{(i)}}}+ \tensor[_{\hat{\mathbf{g}}^{(i+1)}}]{ \big[\, T^{(i+1)}_{\lambda}\,\big] }{_{\hat{\mathbf{f}}^{(i+1)}}}\cdot \iota^{(i+1,i)}-\iota^{(i+1,i)}\cdot \tensor[_{\hat{\mathbf{g}}^{(i)}}]{ \big[\, T^{(i)}_{\lambda}\,\big] }{_{\hat{\mathbf{f}}^{(i)}}}\, ,     
\end{aligned}
\end{equation}
for all $i\in[1,n-1]$. 

Now, since $\tensor[_{\hat{\mathbf{g}}^{(n)}}]{ \big[\, T^{(n)}_{\lambda}\,\big] }{_{\hat{\mathbf{f}}^{(n)}}}= \tensor[_{\hat{\mathbf{g}}^{(n)}}]{ \big[\, S^{(n)}_{\lambda}\,\big] }{_{\hat{\mathbf{f}}^{(n)}}}$, Lemmas~\eqref{Lemma: system of equations for equivalent collections of surjective maps} and~\eqref{Lemma: system of equations for equivalent collections of injective maps} guarantee that for any choice of matrices $\big\{ \tensor[_{\hat{\mathbf{g}}^{(i)}}]{ \big[\, \psi^{(i)}_{\sh{H}'}\,\big] }{_{\hat{\mathbf{f}}^{(i+1)}}}\big\}_{i=1}^{n-1}$ and $\big\{ \tensor[_{\hat{\mathbf{g}}^{(i+1)}}]{ \big[\, \phi^{(i)}_{\sh{H}}\,\big] }{_{\hat{\mathbf{f}}^{(i)}}} \big\}_{i=1}^{n-1}$ there exist collections of matrices $\big\{ \tensor[_{\hat{\mathbf{g}}^{(i)}}]{ \big[\, S^{(i)}_{\lambda}\,\big] }{_{\hat{\mathbf{f}}^{(i)}}} \big\}_{i=1}^{n}$ and $\big\{  \tensor[_{\hat{\mathbf{g}}^{(i)}}]{ \big[\, T^{(i)}_{\lambda}\,\big] }{_{\hat{\mathbf{f}}^{(i)}}} \big\}_{i=1}^{n}$ solving the system of equations in~\eqref{Eq: system of equivalence equations for U_T and U_L}, with the diagonal part of $\tensor[_{\hat{\mathbf{g}}^{(n)}}]{ \big[\, T^{(n)}_{\lambda}\,\big] }{_{\hat{\mathbf{f}}^{(n)}}}$ chosen arbitrarily.

Bearing this in mind, we deduce that $\sh{H}$ is locally equivalent to any extension of $\sh{F}$ by $\sh{G}$ on $U_\mathrm{T}\;\cup\; U_{\mathrm{L}}$, and in particular, it is equivalent to the trivial extension $\mathbf{0}_{\mathrm{Ext}}$ on this region, that is, the extension whose characteristic maps are zero. 

Next, suppose that $\sh{H}$ and $\sh{H}'$ are two equivalent extensions of $\sh{F}$ by $\sh{G}$ which coincide with the trivial extension of $U_\mathrm{T}\;\cup\; U_{\mathrm{L}}$. More precisely, assume that $\psi^{(i)}_{\sh{H}}=\psi^{(i)}_{\sh{H}'}=0$ and $\phi^{(i)}_{\sh{H}}=\phi^{(i)}_{\sh{H}'}=0$ for all $i\in [1,n-1]$. Then, the system of equations in~\eqref{Eq: system of equivalence equations for U_T and U_L} reduces to 
\begin{equation}\label{Eq: reduced system of equivalence equations for U_T and U_L}
\begin{aligned}
\mathbf{0}_{i\times(i+1)}&= \mathbf{0}_{i\times(i+1)}+ \tensor[_{\hat{\mathbf{g}}^{(i)}}]{ \big[\, S^{(i)}_{\lambda}\,\big] }{_{\hat{\mathbf{f}}^{(i)}}}\cdot \pi^{(i,i+1)}-\pi^{(i,i+1)}\cdot \tensor[_{\hat{\mathbf{g}}^{(i+1)}}]{ \big[\, S^{(i+1)}_{\lambda}\,\big] }{_{\hat{\mathbf{f}}^{(i+1)}}}\, ,   \\[6pt]       
\mathbf{0}_{(i+1)\times i}&= \mathbf{0}_{(i+1)\times i}+ \tensor[_{\hat{\mathbf{g}}^{(i+1)}}]{ \big[\, T^{(i+1)}_{\lambda}\,\big] }{_{\hat{\mathbf{f}}^{(i+1)}}}\cdot \iota^{(i+1,i)}-\iota^{(i+1,i)}\cdot \tensor[_{\hat{\mathbf{g}}^{(i)}}]{ \big[\, T^{(i)}_{\lambda}\,\big] }{_{\hat{\mathbf{f}}^{(i)}}}\, .     
\end{aligned}
\end{equation}
Finally, recall that $\tensor[_{\hat{\mathbf{g}}^{(n)}}]{ \big[\, T^{(n)}_{\lambda}\,\big] }{_{\hat{\mathbf{f}}^{(n)}}}= \tensor[_{\hat{\mathbf{g}}^{(n)}}]{ \big[\, S^{(n)}_{\lambda}\,\big] }{_{\hat{\mathbf{f}}^{(n)}}}$, and hence Lemmas~\eqref{Lemma: system of equations for equivalent collections of surjective maps} and~\eqref{Lemma: system of equations for equivalent collections of injective maps} ensure that any solution of the reduced system of equations in~\eqref{Eq: reduced system of equivalence equations for U_T and U_L} arises from a diagonal matrix $\tensor[_{\hat{\mathbf{g}}^{(n)}}]{ \Big[\, T_{\lambda}^{\,(n)}\,\Big] }{_{\hat{\mathbf{f}}^{(n)}}}\in M(n,\mathbb{K})$, with $\tensor[_{\hat{\mathbf{g}}^{(i)}}]{ \big[\, S_{\lambda}^{(i)}\,\big] }{_{\hat{\mathbf{f}}^{(i)}}}\in M(i,\mathbb{K})$ and  $\tensor[_{\hat{\mathbf{g}}^{(i)}}]{ \big[\, T_{\lambda}^{(i)}\,\big] }{_{\hat{\mathbf{f}}^{(i)}}}\in M(i,\mathbb{K})$ given by the principal $i\times i$ submatrix of $\tensor[_{\hat{\mathbf{g}}^{(n)}}]{ \big[\, T_{\lambda}^{\,(n)}\,\big] }{_{\hat{\mathbf{f}}^{(n)}}}$ for each $i\in[1,n-1]$. It follows that $S_{\lambda}^{(i)}=T^{(i)}_{\lambda}$ for each $i\in[1,n-1]$, and hence $\lambda$ is characterized by a collection of linear maps $\big\{T^{(i)}_{\lambda}\big\}_{i=1}^{n}$ satisfying the conditions stated in the lemma. 
\end{proof}

The next lemmas will help us analyze the structure of the one-degree morphism spaces in the category of our interest on a region $R_{j}$. In order to state the lemmas, let us first introduce a setup. 

\begin{setup}\label{Setup: Objects on R_j for k=1 and adapted bases}
Let $\beta=\sigma_{i_{1}}\cdots \sigma_{i_{\ell}}\in\mathrm{Br}^{+}_{n}$ be a positive braid word, and let $\sh{F}$ and $\sh{G}$ be objects of the category $\ccs{1}{\beta}$. Fix $j\in[1,\ell]$, and let $R_{j}$ be the vertical strap in $\mathbb{R}^{2}$ containing $\sigma_{i_{j}}$---the $j$-th crossing of $\beta$ (see Figure~\eqref{fig: A sub-regions R_j}). 

\noindent
$\star$ \emph{Assumption 1}: Let $k:=i_{j}\in[1,n-1]$ denote the index of $\sigma_{i_{j}}$, and suppose that $k=1$.

Under this assumption, on $R_{j}$, $\sh{F}$ and $\sh{G}$ are specified by two collections of $n$ injective linear maps 
\begin{equation*}
\begin{aligned}
\big\{\phi^{\,(i)}_{\sh{F}}:\mathbb{K}^{i}\to \mathbb{K}^{i+1}\,\big\}_{i=1}^{n-1}&\,\cup\,\big\{ \widetilde{\phi}^{\,(1)}_{\sh{F}}:\mathbb{K}^{1}\to \mathbb{K}^{2}\, \big\}\, ,\\[6pt]
\big\{\phi^{\,(i)}_{\sh{G}}:\mathbb{K}^{i}\to \mathbb{K}^{i+1}\,\big\}_{i=1}^{n-1}&\,\cup\,\big\{ \widetilde{\phi}^{\,(1)}_{\sh{G}}:\mathbb{K}^{1}\to \mathbb{K}^{2}\, \big\}\, ,
\end{aligned}    
\end{equation*}
respectively. For a schematic illustration of a generic representative of one of these sheaves, see Figure~\eqref{fig: a sheaf in the vertical strap R_j containing a crossing sigma_1}.

\noindent
$\star$ \emph{Assumption 2}: For each $i\in[1,n]$, let $\hat{\mathbf{f}}^{(i)}:=\big\{\hat{f}^{(i)}_{j}\big\}_{j=1}^{i}$ and $\hat{\mathbf{g}}^{(i)}:=\big\{\hat{g}^{(i)}_{j}\big\}_{j=1}^{i}$ be bases for $\mathbb{K}^{i}$. Following Definition~\eqref{Def:flags and adapted bases}--\eqref{Def: adapted bases I}, we assume that: 
\begin{itemize}
\justifying
\item $\big\{\hat{\mathbf{f}}^{(i)}\big\}_{i=1}^{n}$ is a system of bases adapted to $\big\{\phi_{\sh{F}}^{(i)}\,\big\}_{i=1}^{n-1}$.
\item $\big\{\hat{\mathbf{g}}^{(i)}\big\}_{i=1}^{n}$ is a system of bases adapted to $\big\{\phi_{\sh{G}}^{(i)}\,\big\}_{i=1}^{n-1}$.
\end{itemize}

Under this assumption, Lemma~\eqref{Lemma: matrix local model for sigma_1} ensures that $\big(\big\{\hat{\mathbf{f}}^{(i)}\big\}_{i=1}^{n}, z \big)$ and $\big(\big\{\hat{\mathbf{g}}^{(i)}\big\}_{i=1}^{n}, z' \big)$ are system of bases adapted to $\sh{F}$ and $\sh{G}$ on $R_{j}$, where $z,\,z'\in \mathbb{K}$ parameterize, relative to the bases $\hat{\mathbf{f}}^{(n)}$ and $\hat{\mathbf{g}}^{(n)}$ for $\mathbb{K}^{n}$, the $s_{1}$-relative position between the pair of complete flags in $\mathbb{K}^{n}$ that geometrically characterize $\sh{F}$ and $\sh{G}$ on $R_{j}$, respectively. Following Definition~\eqref{Def: braid transformation of bases}, we denote by $\big\{\hat{\mathbf{f}}^{(i)}[\sigma_{1},z]\big\}_{i=1}^{n}$ and $\big\{\hat{\mathbf{g}}^{(i)}[\sigma_{1},z']\big\}_{i=1}^{n}$ the corresponding braid-transformed bases.    
\end{setup}

\begin{lemma}\label{Lemma: Ext1 on R_j for k=1}
Consider the assumptions of Setup~\eqref{Setup: Objects on R_j for k=1 and adapted bases} and fix an equivalence class $\xi\in \mathrm{Ext}^{1}(\sh{F},\sh{G})$. Let $\sh{H}$ and $\sh{H}'$ be two equivalent extensions of $\sh{F}$ by $\sh{G}$ representing $\xi$, and let $\lambda:\sh{H}\to \sh{H}'$ be a sheaf isomorphism realizing their equivalence. 

On $R_{j}$, $\sh{H}$ and $\sh{H}'$ are specified by two collections of $n$ characteristic maps
\begin{equation}\label{Eq: characteristic maps of two extensions on R_j with k=1}
\begin{aligned}
\big\{\phi^{(i)}_{\sh{H}}:\mathbb{K}^{i}\to \mathbb{K}^{i+1}\,\big\}_{i=1}^{n-1}\,&\cup\,\big\{ \widetilde{\phi}^{\,(1)}_{\sh{H}}:\mathbb{K}^{1}\to \mathbb{K}^{2}\, \big\}\, , \\[2pt]
\big\{\phi^{(i)}_{\sh{H}'}:\mathbb{K}^{i}\to \mathbb{K}^{i+1}\,\big\}_{i=1}^{n-1}\,&\cup\,\big\{ \widetilde{\phi}^{\,(1)}_{\sh{H}'}:\mathbb{K}^{1}\to \mathbb{K}^{2}\, \big\}\, , 
\end{aligned}
\end{equation}
while $\lambda$ is characterized by a collection of $n+1$ linear maps
\begin{equation}\label{Eq: maps characterizing lambda on R_j with k=1}
\big\{ T^{(i)}_{\lambda}: \mathbb{K}^{i}\to \mathbb{K}^{i} \big\}_{i=1}^{n}\;\cup\; \big\{ \widetilde{T}^{(1)}:\mathbb{K}^{1}\to \mathbb{K}^{1} \big\}    
\end{equation}
such that:
\begin{subequations}\label{Eq: equivalence relations on R_j with k=1}
\begin{align}
\phi^{(i)}_{\sh{H}'} &= \phi^{(i)}_{\sh{H}} + T^{(i+1)}_{\lambda}\circ \phi^{(i)}_{\sh{F}}- \phi^{(i)}_{\sh{G}}\circ T^{(i)}_{\lambda} \, , \qquad \text{for all $i\in [1,n-1]$,} \label{Eq: equivalence relations for R_j with k=1 a} \\[2pt] 
\widetilde{\phi}^{\,(1)}_{\sh{H}'} &= \widetilde{\phi}^{\,(1)}_{\sh{H}} + T^{(2)}_{\lambda}\circ \widetilde{\phi}^{\,(1)}_{\sh{F}}- \widetilde{\phi}^{\,(1)}_{\sh{G}}\circ \widetilde{T}^{\,(1)}_{\lambda} \, . \label{Eq: equivalence relations on R_j with k=1 b}
\end{align}
\end{subequations}

\noindent
$\star$ \emph{Assumption 1}: Suppose that there are linear maps $\big\{\gamma^{(i)}: \mathbb{K}^{i}\to \mathbb{K}^{i+1} \big\}_{i=1}^{n-1}$ that fully characterize $\xi$, up to equivalence, on the left of the crossing in $R_{j}$. Building on this assumption, we choose representatives $\sh{H}$ and $\sh{H}'$ such that:
\begin{equation*}
\phi^{(i)}_{\sh{H}}=\phi^{(i)}_{\sh{H}'}=\gamma^{(i)}\, , \qquad \text{for all $i\in [1,n-1]$}\, .    
\end{equation*}

\noindent
$\star$ \emph{Assumption 2}: Suppose that, with respect to the bases $\big\{\hat{\mathbf{f}}^{(i)}\big\}_{i=1}^{n}$ and $\big\{\hat{\mathbf{g}}^{(i)}\big\}_{i=1}^{n}$, the matrices representing the maps $\big\{ T^{(i)}_{\lambda} \big\}_{i=1}^{n}$, which characterize $\lambda$ on the left of the crossing in $R_{j}$, satisfy: 
\begin{equation*}
\begin{aligned}
 \tensor[_{\hat{\mathbf{g}}^{(n)}}]{ \big[\, T_{\lambda}^{(n)}\,\big] }{_{\hat{\mathbf{f}}^{(n)}}} &\equiv D(\vec{u}),~\text{ for some $\vec{u}=(u_{1},\dots, u_{n})\in \mathbb{K}^{n}_{\mathrm{srd}}$}\, ,\\[4pt]
\tensor[_{\hat{\mathbf{g}}^{(i)}}]{ \big[\, T_{\lambda}^{(i)}\,\big] }{_{\hat{\mathbf{f}}^{(i)}}}&\equiv \,\text{principal $i\times i$ submatrix of } D(\vec{u})\, , \quad\text{for all  $i\in[1,n-1]$}\, .
\end{aligned}    
\end{equation*}

In particular, under the above assumptions, the equivalence conditions in~\eqref{Eq: equivalence relations for R_j with k=1 a} are automatically satisfied.

\noindent
$\star$ \emph{Main Conclusion}: $\sh{H}$ is equivalent to a representative $\sh{H}'$ such that, with respect to the bases $\hat{\mathbf{g}}^{(2)}[\sigma_{1}, z']$ and $\hat{\mathbf{f}}^{(1)}[\sigma_{1}, z]$, the matrix representing $\widetilde{\phi}^{\,(1)}_{\sh{H}'}$ is given by 
\begin{equation*}
\tensor[_{\hat{\mathbf{g}}^{(2)}[\sigma_{1}, z']}]{ \big[\, \widetilde{\phi}^{\,(1)}_{\sh{H}'} \,\big] }{_{\hat{\mathbf{f}}^{(1)}[\sigma_{1}, z]}}=\begin{bmatrix}
0\\
a'
\end{bmatrix}\, ,
\end{equation*}
for some $a'\in \mathbb{K}$. In other words, the parameter $a'$, together with the data encoded in $\big\{\gamma^{(i)}\big\}_{i=1}^{n-1}$, completely determines $\xi$ on $R_{j}$, up to equivalence. 
\end{lemma}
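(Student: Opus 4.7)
The plan is to reduce the equivalence condition~\eqref{Eq: equivalence relations on R_j with k=1 b} to the explicit matrix system analyzed in Lemma~\eqref{Lemma: system of equations for equivalent extensions at a crossing sigma_1}, whose part~(a) will immediately yield the claim. The heart of the argument is that, after passing to the braid-transformed bases $\big\{\hat{\mathbf{f}}^{(i)}[\sigma_{1},z]\big\}_{i=1}^{n}$ and $\big\{\hat{\mathbf{g}}^{(i)}[\sigma_{1},z']\big\}_{i=1}^{n}$, the matrices representing $\widetilde{\phi}^{\,(1)}_{\sh{F}}$ and $\widetilde{\phi}^{\,(1)}_{\sh{G}}$ both collapse to the standard inclusion $\iota^{(2,1)}$ (by Lemma~\eqref{Lemma: matrix local model for sigma_1}), so that the equivalence becomes a clean affine system with a single free parameter.

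First, I would invoke Lemma~\eqref{Lemma: matrix local model for sigma_1} on both $\sh{F}$ and $\sh{G}$ to secure, relative to the braid-transformed bases, the identities
\begin{equation*}
\tensor[_{\hat{\mathbf{f}}^{(2)}[\sigma_{1},z]}]{\big[\,\widetilde{\phi}^{\,(1)}_{\sh{F}}\,\big]}{_{\hat{\mathbf{f}}^{(1)}[\sigma_{1},z]}}=\iota^{(2,1)}\, ,\qquad
\tensor[_{\hat{\mathbf{g}}^{(2)}[\sigma_{1},z']}]{\big[\,\widetilde{\phi}^{\,(1)}_{\sh{G}}\,\big]}{_{\hat{\mathbf{g}}^{(1)}[\sigma_{1},z']}}=\iota^{(2,1)}\, .
\end{equation*}
Next, using Assumption 2 together with the change-of-basis formula, and noting that the principal $2\times 2$ submatrix of $D(\vec{u})$ is $D(u_{1},u_{2})$, I would compute
\begin{equation*}
\tensor[_{\hat{\mathbf{g}}^{(2)}[\sigma_{1},z']}]{\big[\,T^{(2)}_{\lambda}\,\big]}{_{\hat{\mathbf{f}}^{(2)}[\sigma_{1},z]}}=\big(B^{(2)}_{1}(z')\big)^{-1}\cdot D(u_{1},u_{2})\cdot B^{(2)}_{1}(z)\, .
\end{equation*}
Setting $\widetilde{X}^{(1)}:=\tensor[_{\hat{\mathbf{g}}^{(2)}[\sigma_{1},z']}]{\big[\,\widetilde{\phi}^{\,(1)}_{\sh{H}}\,\big]}{_{\hat{\mathbf{f}}^{(1)}[\sigma_{1},z]}}$, $\widetilde{X}'^{\,(1)}:=\tensor[_{\hat{\mathbf{g}}^{(2)}[\sigma_{1},z']}]{\big[\,\widetilde{\phi}^{\,(1)}_{\sh{H}'}\,\big]}{_{\hat{\mathbf{f}}^{(1)}[\sigma_{1},z]}}$ and $\widetilde{Y}^{(1)}:=\tensor[_{\hat{\mathbf{g}}^{(1)}[\sigma_{1},z']}]{\big[\,\widetilde{T}^{\,(1)}_{\lambda}\,\big]}{_{\hat{\mathbf{f}}^{(1)}[\sigma_{1},z]}}$, equation~\eqref{Eq: equivalence relations on R_j with k=1 b} translates verbatim into the system of equations~\eqref{Eq: system of equations for k=1} of Lemma~\eqref{Lemma: system of equations for equivalent extensions at a crossing sigma_1} (with $Y^{(2)}=D(u_{1},u_{2})$).

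With the translation in place, I would invoke part~(a) of Lemma~\eqref{Lemma: system of equations for equivalent extensions at a crossing sigma_1}, which asserts that the solution set $(\widetilde{X}'^{\,(1)},\widetilde{Y}^{(1)})$ is an affine line parametrized by the $(1,1)$-entry of $\widetilde{X}'^{\,(1)}$. Consequently, starting from the given representative $\sh{H}$ (with matrix $\widetilde{X}^{(1)}$), I may freely prescribe the $(1,1)$-entry of $\widetilde{X}'^{\,(1)}$ to be zero; the lemma then supplies a unique $\widetilde{Y}^{(1)}$ and a unique remaining entry $a'\in\mathbb{K}$ of $\widetilde{X}'^{\,(1)}$ producing a bona fide solution of the equivalence equation. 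The corresponding extension $\sh{H}'$ is equivalent to $\sh{H}$ via the sheaf isomorphism built from $\big\{T^{(i)}_{\lambda}\big\}_{i=1}^{n}$ and $\widetilde{T}^{\,(1)}_{\lambda}$, and satisfies the advertised normal form.

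The only non-bookkeeping point requiring care is the consistency of Assumptions 1 and 2 with equation~\eqref{Eq: equivalence relations for R_j with k=1 a}: I should verify that with the choice $\phi^{(i)}_{\sh{H}}=\phi^{(i)}_{\sh{H}'}=\gamma^{(i)}$ and the diagonal/submatrix structure of $\tensor[_{\hat{\mathbf{g}}^{(i)}}]{\big[\,T^{(i)}_{\lambda}\,\big]}{_{\hat{\mathbf{f}}^{(i)}}}$ inherited from Assumption 2, the left-of-crossing equivalence equations hold automatically. This is a direct computation: in the bases $\hat{\mathbf{f}}^{(i+1)},\hat{\mathbf{f}}^{(i)},\hat{\mathbf{g}}^{(i+1)},\hat{\mathbf{g}}^{(i)}$, the maps $\phi^{(i)}_{\sh{F}}$ and $\phi^{(i)}_{\sh{G}}$ are both represented by $\iota^{(i+1,i)}$, and the block-compatibility between the principal submatrices of $D(\vec{u})$ gives $D(u_{1},\dots,u_{i+1})\cdot \iota^{(i+1,i)}=\iota^{(i+1,i)}\cdot D(u_{1},\dots,u_{i})$, so the right-hand side of~\eqref{Eq: equivalence relations for R_j with k=1 a} reduces to $\gamma^{(i)}$, as required. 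With that consistency in hand, the rest of the argument is a direct appeal to the technical lemma, so I do not anticipate any serious obstacle beyond careful bookkeeping of the bases and of the change-of-basis matrices.
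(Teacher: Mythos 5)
Your proposal is correct and follows essentially the same route as the paper's proof: translate the equivalence relation for $\widetilde{\phi}^{\,(1)}_{\sh{H}}$ and $\widetilde{\phi}^{\,(1)}_{\sh{H}'}$ into the matrix system of Lemma~\eqref{Lemma: system of equations for equivalent extensions at a crossing sigma_1} with respect to the braid-transformed bases, and then invoke part~(a) of that lemma to normalize the $(1,1)$-entry to zero. The extra details you supply (the explicit change-of-basis computation for $T^{(2)}_{\lambda}$ and the verification that~\eqref{Eq: equivalence relations for R_j with k=1 a} holds automatically via $D(u_{1},\dots,u_{i+1})\cdot\iota^{(i+1,i)}=\iota^{(i+1,i)}\cdot D(u_{1},\dots,u_{i})$) are correct and consistent with what the paper leaves implicit.
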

\begin{proof}
To begin, recall that $\sh{H}$ and $\sh{H}'$ are two extensions of $\sh{F}$ by $\sh{G}$, and hence their local description on $R_{j}$ via the characteristic maps in~\eqref{Eq: characteristic maps of two extensions on R_j with k=1} follows from Observation~\eqref{Obs: elements of Ext1 at the local models}. Moreover, building on \emph{Assumption 1}, we suppose that $\sh{H}$ and $\sh{H}'$ are representatives with $\phi^{(i)}_{\sh{H}}=\phi^{(i)}_{\sh{H}'}=\gamma^{(i)}$, for all $i\in [1,n-1]$. 

Analogously to the case of $\sh{H}$ and $\sh{H}'$, it follows from Observation~\eqref{Obs: elements of Ext1 at the local models} that $\lambda:\sh{H}\to \sh{H}'$, the sheaf isomorphism realizing the equivalence between $\sh{H}$ and $\sh{H}'$, is locally characterized on $R_{j}$ by a collection of linear maps as in~\eqref{Eq: maps characterizing lambda on R_j with k=1}, which are subject to the equivalence conditions~\eqref{Eq: equivalence relations on R_j with k=1}.

Now, given the bases $\big\{\hat{\mathbf{f}}^{(i)}\big\}_{i=1}^{n}$ and $\big\{\hat{\mathbf{g}}^{(i)}\big\}_{i=1}^{n}$, let us define
\begin{equation*}
\begin{aligned}
Y^{(n)}&:=\tensor[_{\hat{\mathbf{g}}^{(n)}}]{ \big[\, T_{\lambda}^{(n)}\,\big] }{_{\hat{\mathbf{f}}^{(n)}}}\in \mathrm{M}(n,\mathbb{K})\, ,  \\[2pt]
Y^{(i)}&:=\tensor[_{\hat{\mathbf{g}}^{(i)}}]{ \big[\, T_{\lambda}^{(i)}\,\big] }{_{\hat{\mathbf{f}}^{(i)}}}\in \mathrm{M}(i,\mathbb{K})\, ,   \quad \text{for all $i\in [1,n-1]$}.
\end{aligned}
\end{equation*}
By \emph{Assumption 2}, we have that $Y^{(n)}=D(\vec{u})$, for some $\vec{u}=(u_{1},\dots, u_{n})\in \mathbb{K}^{n}_{\mathrm{std}}$, and that $Y^{(i)}$ is given by the principal $i\times i$ submatrix of $D(\vec{u})$, for all $i\in [1,n-1]$. 

Next, given the bases $\big\{\hat{\mathbf{f}}^{(i)}[\sigma_{1}, z]\big\}_{i=1}^{n}$ and $\big\{\hat{\mathbf{g}}^{(i)}[\sigma_{1},z']\big\}_{i=1}^{n}$, let us define
\begin{equation*}
\begin{aligned}
\widetilde{X}^{\,(1)}&:=\tensor[_{\hat{\mathbf{g}}^{(2)}[\sigma_{1}, z']}]{ \big[\, \widetilde{\phi}^{\,(1)}_{\sh{H}} \,\big] }{_{\hat{\mathbf{f}}^{(1)}[\sigma_{1}, z]}}=\begin{bmatrix}
\tilde{x}_{1,1}\\
\tilde{x}_{2,1}
\end{bmatrix}\in\mathrm{M}(2,1, \mathbb{K})\, , \\[4pt]
\widetilde{X}'^{\,(1)}&:=\tensor[_{\hat{\mathbf{g}}^{(2)}[\sigma_{1}, z']}]{ \big[\, \widetilde{\phi}^{\,(1)}_{\sh{H}'} \,\big] }{_{\hat{\mathbf{f}}^{(1)}[\sigma_{1}, z]}}= \begin{bmatrix}
\tilde{x}'_{1,1}\\
\tilde{x}'_{2,1}
\end{bmatrix}\in\mathrm{M}(2,1, \mathbb{K})\,, \\[4pt]
\widetilde{Y}^{(1)}&:=\tensor[_{\hat{\mathbf{g}}^{(1)}[\sigma_{1},z']}]{ \big[\, \widetilde{T}^{(1)}_{\lambda}\,\big] }{_{\hat{\mathbf{f}}^{(1)}[\sigma_{1},z]}}=[\tilde{y}_{1,1}]\in \mathrm{M}(1,\mathbb{K})\, .
\end{aligned}
\end{equation*}
Then, with respect to the braid transformed bases, relation~\eqref{Eq: equivalence relations on R_j with k=1 b} translates into the system of equations
\begin{equation}\label{Eq: system of equations for R_j with k=1}
\widetilde{X}'^{\,(1)}= \widetilde{X}^{(1)}+\Big( \big(B^{(2)}_{1}(z')\big)^{-1}\cdot Y^{(2)} \cdot B^{(2)}_{1}(z)\Big) \cdot \iota^{(2,1)} -\iota^{(2,1)}\cdot \widetilde{Y}^{(1)} \, .  
\end{equation}
It follows from Lemma~\eqref{Lemma: system of equations for equivalent extensions at a crossing sigma_1}(a) that, for any choice of $\tilde{x}'_{1,1}$, there exist unique $\tilde{y}_{1,1}$ and $\tilde{x}'_{2,1}$ solving~\eqref{Eq: system of equations for R_j with k=1}, and hence $\sh{H}$ is equivalent to a representative $\sh{H}'$ with $\tilde{x}'_{1,1}=0$ and $\tilde{x}'_{2,1}=a$, for some $a\in \mathbb{K}$, as claimed. 
\end{proof}

\begin{lemma}\label{Lemma: Equivalence conditions for Ext^{1} on R_j with k=1}
Consider the assumptions of Setup~\eqref{Setup: Objects on R_j for k=1 and adapted bases} and fix an equivalence class $\xi\in \mathrm{Ext}^{1}(\sh{F},\sh{G})$. Let $\sh{H}$ and $\sh{H}'$ be two equivalent extensions of $\sh{F}$ by $\sh{G}$ representing $\xi$, and let $\lambda:\sh{H}\to \sh{H}'$ be a sheaf isomorphism realizing their equivalence. 

On $R_{j}$, $\sh{H}$ and $\sh{H}'$ are specified by two collections of $n$ characteristic maps
\begin{equation*}
\begin{aligned}
\big\{\phi^{(i)}_{\sh{H}}:\mathbb{K}^{i}\to \mathbb{K}^{i+1}\,\big\}_{i=1}^{n-1}\,&\cup\,\big\{ \widetilde{\phi}^{\,(1)}_{\sh{H}}:\mathbb{K}^{1}\to \mathbb{K}^{2}\, \big\}\, , \\[2pt]
\big\{\phi^{(i)}_{\sh{H}'}:\mathbb{K}^{i}\to \mathbb{K}^{i+1}\,\big\}_{i=1}^{n-1}\,&\cup\,\big\{ \widetilde{\phi}^{\,(1)}_{\sh{H}'}:\mathbb{K}^{1}\to \mathbb{K}^{2}\, \big\}\, , 
\end{aligned}
\end{equation*}
while $\lambda$ is characterized by a collection of $n+1$ linear maps $\big\{ T^{(i)}_{\lambda}: \mathbb{K}^{i}\to \mathbb{K}^{i} \big\}\;\cup\; \big\{ \widetilde{T}^{(1)}:\mathbb{K}^{1}\to \mathbb{K}^{1} \big\}$ such that
\begin{equation*}
\begin{aligned}
\phi^{(i)}_{\sh{H}'} &= \phi^{(i)}_{\sh{H}} + T^{(i+1)}_{\lambda}\circ \phi^{(i)}_{\sh{F}}- \phi^{(i)}_{\sh{G}}\circ T^{(i)}_{\lambda} \, , \qquad \text{for all $i\in [1,n-1]$,} \\[2pt] 
\widetilde{\phi}^{\,(1)}_{\sh{H}'} &= \widetilde{\phi}^{\,(1)}_{\sh{H}} + T^{(2)}_{\lambda}\circ \widetilde{\phi}^{\,(1)}_{\sh{F}}- \widetilde{\phi}^{\,(1)}_{\sh{G}}\circ \widetilde{T}^{\,(1)}_{\lambda} \, .
\end{aligned}    
\end{equation*}

\noindent
$\star$ \emph{Assumption 1}: Suppose that there are linear maps $\big\{\gamma^{(i)}: \mathbb{K}^{i}\to \mathbb{K}^{i+1} \big\}_{i=1}^{n-1}$ that fully characterize $\xi$, up to equivalence, on the left of the crossing in $R_{j}$. Under this assumption, we choose representatives $\sh{H}$ and $\sh{H}'$ such that
\begin{equation*}
\phi^{(i)}_{\sh{H}}=\phi^{(i)}_{\sh{H}'}=\gamma^{(i)}\, , \qquad \text{for all $i\in [1,n-1]$}\, .    
\end{equation*}

\noindent
$\star$ \emph{Assumption 2}: Suppose that, with respect to the bases $\big\{\hat{\mathbf{f}}^{(i)}\big\}_{i=1}^{n}$ and $\big\{\hat{\mathbf{g}}^{(i)}\big\}_{i=1}^{n}$, the matrices representing the maps $\big\{ T^{(i)}_{\lambda} \big\}_{i=1}^{n}$, which characterize $\lambda$ on the left of the crossing in $R_{j}$, satisfy: 
\begin{equation*}
\begin{aligned}
 \tensor[_{\hat{\mathbf{g}}^{(n)}}]{ \big[\, T_{\lambda}^{(n)}\,\big] }{_{\hat{\mathbf{f}}^{(n)}}} &\equiv D(\vec{u}),~\text{ for some $\vec{u}=(u_{1},\dots, u_{n})\in \mathbb{K}^{n}_{\mathrm{std}}$}\, ,\\[2pt]
\tensor[_{\hat{\mathbf{g}}^{(i)}}]{ \big[\, T_{\lambda}^{(i)}\,\big] }{_{\hat{\mathbf{f}}^{(i)}}}&\equiv \,\text{principal $i\times i$ submatrix of } D(\vec{u})\, , \quad\text{for all  $i\in[1,n-1]$}\, .
\end{aligned}    
\end{equation*}

Under this assumption, Lemma~\eqref{Lemma: Ext1 on R_j for k=1} allows us to choose representatives $\sh{H}$ and $\sh{H}'$ such that, with respect to the bases $\hat{\mathbf{g}}^{(2)}[\sigma_{1}, z']$ and $\hat{\mathbf{f}}^{(1)}[\sigma_{1}, z]$, the matrices representing $\widetilde{\phi}^{\,(1)}_{\sh{H}}$ and $\widetilde{\phi}^{\,(1)}_{\sh{H}'}$ are given by 
\begin{equation*}
\tensor[_{\hat{\mathbf{g}}^{(2)}[\sigma_{1}, z']}]{ \big[\, \widetilde{\phi}^{\,(1)}_{\sh{H}} \,\big] }{_{\hat{\mathbf{f}}^{(1)}[\sigma_{1}, z]}}=\begin{bmatrix}
0\\
a
\end{bmatrix}\, , \quad \text{and} \quad \tensor[_{\hat{\mathbf{g}}^{(2)}[\sigma_{1}, z']}]{ \big[\, \widetilde{\phi}^{\,(1)}_{\sh{H}'} \,\big] }{_{\hat{\mathbf{f}}^{(1)}[\sigma_{1}, z]}}=\begin{bmatrix}
0\\
a'
\end{bmatrix}\, ,
\end{equation*}
for some $a, a'\in \mathbb{K}$. 

\noindent
$\star$ \emph{Main Conclusion}: Under the given assumptions, the following statements hold: 
\begin{itemize}
\justifying
\item \emph{Equivalence Condition}: $a'=a+d(z', \vec{u, z})$, where the residual parameter controlling the equivalence class in $\mathrm{Ext}^{1}(\sh{F},\sh{G})$ after the crossing in $R_{j}$ is given by
\begin{equation*}
d(z', \vec{u}, z):= \text{$(2,1)$-entry of}~~\big(B^{(n)}_{1}(z')\big)^{-1} \cdot D(\vec{u}) \cdot B^{(n)}_{1}(z)\, .   
\end{equation*}
\item With respect to the bases $\big\{\hat{\mathbf{f}}^{(i)}[\sigma_{1},z]\big\}_{i=1}^{n}$ and $\big\{\hat{\mathbf{g}}^{(i)}[\sigma_{1},z']\big\}_{i=1}^{n}$, the matrices representing the maps ${\widetilde{T}^{(1)}} \,\cup\, \big\{ T^{(i)}_{\lambda}\big\}_{i=2}^{n}$, which characterize $\lambda$ on the right of the crossing in $R_{j}$, satisfy:  
\begin{equation*}
\begin{aligned}
\tensor[_{\hat{\mathbf{g}}^{(n)}[\sigma_{1},z'] }]{ \big[\, T_{\lambda}^{\,(n)}\,\big] }{_{\hat{\mathbf{f}}^{(n)}[\sigma_{1},z]}} &\equiv D(\pi_{1}(\vec{u}))\,, \\[2pt]
 \tensor[_{\hat{\mathbf{g}}^{(1)}[\sigma_{1},z'] }]{ \big[\, \widetilde{T}_{\lambda}^{\,(1)}\,\big] }{_{\hat{\mathbf{f}}^{(1)}[\sigma_{1},z]}}&\equiv \,\text{principal $1\times 1$ submatrix of } D(\pi_{1}(\vec{u}))\, ,\\[2pt]
 \tensor[_{\hat{\mathbf{g}}^{(i)}[\sigma_{1},z'] }]{ \big[\, T_{\lambda}^{\,(i)}\,\big] }{_{\hat{\mathbf{f}}^{(i)}[\sigma_{1},z]}}& \equiv \,\text{principal $i\times i$ submatrix of } D(\pi_{1}(\vec{u}))\, , \quad \text{for all $i\in[2,n-1]$,}
\end{aligned}
\end{equation*}
where $\pi_{1}$ denotes the permutation associated with $\sigma_{1}$. 
\end{itemize}
\end{lemma}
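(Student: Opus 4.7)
The plan is to convert the abstract equivalence~\eqref{Eq: equivalence relations on R_j with k=1 b} into a concrete matrix identity in the braid-transformed bases, and then read off both conclusions from Lemma~\eqref{Lemma: system of equations for equivalent extensions at a crossing sigma_1}(b). To begin, I would use the algebraic local model of Lemma~\eqref{Lemma: matrix local model for sigma_1} to deduce that, in the braid-transformed bases,
\begin{equation*}
\tensor[_{\hat{\mathbf{f}}^{(2)}[\sigma_{1},z]}]{\big[\,\widetilde{\phi}^{(1)}_{\sh{F}}\,\big]}{_{\hat{\mathbf{f}}^{(1)}[\sigma_{1},z]}}=\iota^{(2,1)},
\qquad
\tensor[_{\hat{\mathbf{g}}^{(2)}[\sigma_{1},z']}]{\big[\,\widetilde{\phi}^{(1)}_{\sh{G}}\,\big]}{_{\hat{\mathbf{g}}^{(1)}[\sigma_{1},z']}}=\iota^{(2,1)},
\end{equation*}
while the change-of-basis formula, combined with the diagonal form $Y^{(2)}$ in Assumption~2, gives
\begin{equation*}
\tensor[_{\hat{\mathbf{g}}^{(2)}[\sigma_{1},z']}]{\big[\,T_{\lambda}^{(2)}\,\big]}{_{\hat{\mathbf{f}}^{(2)}[\sigma_{1},z]}}=\bigl(B^{(2)}_{1}(z')\bigr)^{-1}\cdot Y^{(2)}\cdot B^{(2)}_{1}(z).
\end{equation*}
Setting $\widetilde{X}^{(1)}=\bigl[\,0\;;\,a\,\bigr]^{\mathsf{T}}$ and $\widetilde{X}'^{\,(1)}=\bigl[\,0\;;\,a'\,\bigr]^{\mathsf{T}}$ (as provided by Lemma~\eqref{Lemma: Ext1 on R_j for k=1}), the matrix translation of~\eqref{Eq: equivalence relations on R_j with k=1 b} is precisely equation~\eqref{Eq: system of equations for k=1}.

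Next, I would invoke the special case~(b) of Lemma~\eqref{Lemma: system of equations for equivalent extensions at a crossing sigma_1}, whose hypotheses are exactly satisfied. The lemma immediately yields the unique solution: the relation $a'=a+d(z',\vec{u},z)$, which is the announced equivalence condition, and the identity
\begin{equation*}
\tensor[_{\hat{\mathbf{g}}^{(1)}[\sigma_{1},z']}]{\big[\,\widetilde{T}^{(1)}_{\lambda}\,\big]}{_{\hat{\mathbf{f}}^{(1)}[\sigma_{1},z]}}=\text{principal $1\times1$ submatrix of }\bigl(B^{(n)}_{1}(z')\bigr)^{-1}\!\cdot D(\vec{u})\cdot B^{(n)}_{1}(z).
\end{equation*}
A direct calculation with the $2\times2$ block of $B^{(n)}_{1}$ shows this entry equals $u_{2}$, which is the first diagonal entry of $D(\pi_{1}(\vec{u}))$, matching the claim.

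For the remaining representations $\tensor[...]{[\,T^{(i)}_{\lambda}\,]}{...}$ for $i\in[2,n]$, I would use that on $R_{j}$ the linear map $T^{(i)}_{\lambda}$ is the very same map used on the left of the crossing (since the corresponding upper strands are continuous across $\sigma_{1}$), and then apply the change-of-basis formula
\begin{equation*}
\tensor[_{\hat{\mathbf{g}}^{(i)}[\sigma_{1},z']}]{\big[\,T^{(i)}_{\lambda}\,\big]}{_{\hat{\mathbf{f}}^{(i)}[\sigma_{1},z]}}=\bigl(B^{(i)}_{1}(z')\bigr)^{-1}\cdot Y^{(i)}\cdot B^{(i)}_{1}(z),
\end{equation*}
together with Lemma~\eqref{lemma for braid matrices and diagonal matrices}. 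Since $B^{(i)}_{1}$ acts nontrivially only on the top-left $2\times 2$ block, the permutation $\pi_{1}$ exchanges $u_{1}$ and $u_{2}$ while leaving $u_{3},\dots,u_{i}$ fixed, exactly reproducing the principal $i\times i$ submatrix of $D(\pi_{1}(\vec{u}))$ and, for $i=n$, the full matrix $D(\pi_{1}(\vec{u}))$.

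The main subtlety is bookkeeping: one must be careful that the braid-transformed bases genuinely make the right-of-crossing data ``look like'' a new left-of-crossing configuration—this is what Lemma~\eqref{Lemma: matrix local model for sigma_1} secures—and one must track how the permutation $\pi_{1}$ acts on the diagonal entries after the conjugation by $B^{(i)}_{1}$. Once these are handled via Lemma~\eqref{lemma for braid matrices and diagonal matrices}, the argument reduces cleanly to a single invocation of Lemma~\eqref{Lemma: system of equations for equivalent extensions at a crossing sigma_1}(b) together with routine matrix algebra.
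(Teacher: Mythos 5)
Your overall strategy coincides with the paper's: translate the equivalence relation for $\widetilde{\phi}^{\,(1)}_{\sh{H}}$ and $\widetilde{\phi}^{\,(1)}_{\sh{H}'}$ into the matrix system~\eqref{Eq: system of equations for k=1} in the braid-transformed bases, invoke Lemma~\eqref{Lemma: system of equations for equivalent extensions at a crossing sigma_1}(b) to obtain $a'=a+d(z',\vec{u},z)$ and the formula for $\widetilde{Y}^{(1)}$, and then use the change-of-basis formula together with Lemma~\eqref{lemma for braid matrices and diagonal matrices} to identify the matrices representing $\lambda$ to the right of the crossing. Up to that point the argument is sound.

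The gap is in your final step, where you assert that the conjugation ``exactly reproduces'' $D(\pi_{1}(\vec{u}))$. By Lemma~\eqref{lemma for braid matrices and diagonal matrices},
\begin{equation*}
\big(B^{(n)}_{1}(z')\big)^{-1}\cdot D(\vec{u})\cdot B^{(n)}_{1}(z)=D(\pi_{1}(\vec{u}))+\big(u_{1}z-z'u_{2}\big)\,E_{2,1}\, ,
\end{equation*}
and the $(2,1)$-entry $u_{1}z-z'u_{2}$ is precisely $d(z',\vec{u},z)$, which is not zero in general. So as an exact matrix identity the conjugate is \emph{not} diagonal, and the second bullet of the Main Conclusion does not follow from the computation alone. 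The missing observation --- which is exactly how the paper closes the argument --- is that $d(z',\vec{u},z)$ is the residual gauge parameter controlling the choice of representative in $\mathrm{Ext}^{1}(\sh{F},\sh{G})$ after the crossing; since this redundancy is quotiented out when one passes to equivalence classes, one has $d(z',\vec{u},z)\equiv 0$, and the claimed identities hold only in the sense of the congruence ``$\equiv$'' appearing in the statement (this is also why the statement is phrased with $\equiv$ rather than $=$). You should add this observation explicitly; without it, the diagonal form needed as the inductive input for the next crossing is not justified. The same remark applies to your $i=n$ and $i\in[2,n-1]$ cases, where the block structure gives the principal submatrices of the conjugated matrix, again only congruent to (not equal to) the corresponding submatrices of $D(\pi_{1}(\vec{u}))$.
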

\begin{proof}
Consider the bases $\big\{\hat{\mathbf{f}}^{(i)}\big\}_{i=1}^{n}$ and $\big\{\hat{\mathbf{g}}^{(i)}\big\}_{i=1}^{n}$, and set
\begin{equation*}
\begin{aligned}
Y^{(n)}&:=\tensor[_{\hat{\mathbf{g}}^{(n)}}]{ \big[\, T_{\lambda}^{(n)}\,\big] }{_{\hat{\mathbf{f}}^{(n)}}}\in \mathrm{M}(n,\mathbb{K})\, ,  \\[2pt]
Y^{(i)}&:=\tensor[_{\hat{\mathbf{g}}^{(i)}}]{ \big[\, T_{\lambda}^{(i)}\,\big] }{_{\hat{\mathbf{f}}^{(i)}}}\in \mathrm{M}(i,\mathbb{K})\, ,   \qquad \text{for all $i\in [1,n-1]$}.
\end{aligned}
\end{equation*}
By \emph{Assumption 2}, we have that $Y^{(n)}=D(\vec{u})$, and $Y^{(i)}$ is given by the principal $i\times i$ submatrix of $D(\vec{u})$, for all $i\in[1,n-1]$. 

Next, given the bases $\big\{\hat{\mathbf{f}}^{(i)}[\sigma_{1}, z]\big\}_{i=1}^{n}$ and $\big\{\hat{\mathbf{g}}^{(i)}[\sigma_{1},z']\big\}_{i=1}^{n}$, let us define
\begin{equation*}
\begin{aligned}
\widetilde{X}^{\,(1)}&:=\tensor[_{\hat{\mathbf{g}}^{(2)}[\sigma_{1}, z']}]{ \big[\, \widetilde{\phi}^{\,(1)}_{\sh{H}} \,\big] }{_{\hat{\mathbf{f}}^{(1)}[\sigma_{1}, z]}}=\begin{bmatrix}
0\\
a
\end{bmatrix}\in\mathrm{M}(2,1, \mathbb{K})\, , \\[4pt]
\widetilde{X}'^{\,(1)}&:=\tensor[_{\hat{\mathbf{g}}^{(2)}[\sigma_{1}, z']}]{ \big[\, \widetilde{\phi}^{\,(1)}_{\sh{H}'} \,\big] }{_{\hat{\mathbf{f}}^{(1)}[\sigma_{1}, z]}}= \begin{bmatrix}
0\\
a'
\end{bmatrix}\in\mathrm{M}(2,1, \mathbb{K})\,, \\[4pt]
\widetilde{Y}^{(1)}&:=\tensor[_{\hat{\mathbf{g}}^{(1)}[\sigma_{1},z']}]{ \big[\, \widetilde{T}^{(1)}_{\lambda}\,\big] }{_{\hat{\mathbf{f}}^{(1)}[\sigma_{1},z]}}=[\tilde{y}_{1,1}]\in \mathrm{M}(1,\mathbb{K})\, .
\end{aligned}
\end{equation*}
Then, with respect to the braid-transformed bases, the equivalence condition for the maps $\widetilde{\phi}^{\,(1)}_{\sh{H}}$ and $\widetilde{\phi}^{\,(1)}_{\sh{H}'}$ translates into the system of equations
\begin{equation}\label{Eq: special system of equations on R_j with k=1}
\widetilde{X}'^{\,(1)}= \widetilde{X}^{(1)}+\Big( \big(B^{(2)}_{1}(z')\big)^{-1}\cdot Y^{(2)} \cdot B^{(2)}_{1}(z)\Big) \cdot \iota^{(2,1)} -\iota^{(2,1)}\cdot \widetilde{Y}^{(1)} \, .  
\end{equation}
It follows from Lemma~\eqref{Lemma: system of equations for equivalent extensions at a crossing sigma_1}(b) that the system of equations in~\eqref{Eq: special system of equations on R_j with k=1} has a unique solution:
\begin{equation*}
\begin{aligned}
~\widetilde{Y}^{(1)}&= \,\text{principal $1\times 1$ submatrix of } \,\big(B^{(n)}_{1}(z')\big)^{-1}\cdot D(\vec{u})\cdot B^{(n)}_{1}(z)\,  ,\\[4pt]  
~a'&=a + d(z',\vec{u}, z) \, ,
\end{aligned}    
\end{equation*}
where
\begin{equation*}
d(z',\vec{u}, z)= \,\text{the $(2,1)$-entry of } \,\big(B^{(n)}_{1}(z')\big)^{-1}\cdot D(\vec{u})\cdot B^{(n)}_{1}(z)\, .   
\end{equation*}
Specifically, we have that $d(z', \vec{u}, z)$ measures the redundancy in the choice of representatives for the equivalence classes in $\mathrm{Ext}^{1}(\sh{F},\sh{G})$ on the right of the crossing in $R_{j}$, and since redundancies vanish when restricting to $\mathrm{Ext}^{1}(\sh{F},\sh{G})$, we deduce that $d(z', \vec{u}, z)\equiv 0 $.

In particular, observe that
\begin{equation*}
\big(B^{(n)}_{1}(z')\big)^{-1}\cdot D(\vec{u})\cdot B^{(n)}_{1}(z)=D(\pi_{1}(\vec{u}))+d(z',\vec{u}, z)\,E_{2,1}\, ,    
\end{equation*}
where $E_{2,1}\in \mathrm{M}(n,\mathbb{K})$ denotes the elementary matrix with 1 in the position $(2,1)$ and zeros everywhere else. Moreover, since $Y^{(i)}$ is given by the principal $i\times i$ submatrix of $D(\vec{u})$ for all $i\in[2,n-1]$, the block structure of the braid matrices guarantees that
\begin{equation*}
\big(B^{(i)}_{1}(z')\big)^{-1} \cdot Y^{(i)} \cdot B^{(i)}_{1}(z)= \text{ principal $i\times i$ submatrix of } ~\big(B^{(n)}_{1}(z')\big)^{-1}\cdot D(\vec{u})\cdot B^{(n)}_{1}(z)\,  ,   
\end{equation*}    
for all $i\in [2,n-1]$. 

Finally, note that, with respect to the bases $\big\{\hat{\mathbf{f}}^{(i)}[\sigma_{1},z]\big\}_{i=2}^{n}$ and $\big\{\hat{\mathbf{g}}^{(i)}[\sigma_{1},z']\big\}_{i=2}^{n}$, 
\begin{equation*}
\tensor[_{\hat{\mathbf{g}}^{(i)}[\sigma_{1},z'] }]{ \big[\, T_{\lambda}^{\,(i)}\,\big] }{_{\hat{\mathbf{f}}^{(i)}[\sigma_{1},z]}}= \big(B^{(i)}_{1}(z')\big)^{-1} \cdot \tensor[_{\hat{\mathbf{g}}^{(i)}}]{ \big[\, T_{\lambda}^{(i)}\,\big] }{_{\hat{\mathbf{f}}^{(i)}}} \cdot B^{(i)}_{1}(z)\, ,    
\end{equation*}
for all $i\in [2,n-1]$. Hence, when restricting to $\mathrm{Ext}^{1}(\sh{F},\sh{G})$, we obtain that
\begin{equation*}
\begin{aligned}
\tensor[_{\hat{\mathbf{g}}^{(n)}[\sigma_{1},z'] }]{ \big[\, T_{\lambda}^{\,(n)}\,\big] }{_{\hat{\mathbf{f}}^{(n)}[\sigma_{1},z]}} &\equiv D(\pi_{1}(\vec{u}))\,, \\[2pt]
 \tensor[_{\hat{\mathbf{g}}^{(1)}[\sigma_{1},z'] }]{ \big[\, \widetilde{T}_{\lambda}^{\,(1)}\,\big] }{_{\hat{\mathbf{f}}^{(1)}[\sigma_{1},z]}}&\equiv \,\text{principal $1\times 1$ submatrix of } D(\pi_{1}(\vec{u}))\, ,\\[2pt]
 \tensor[_{\hat{\mathbf{g}}^{(i)}[\sigma_{1},z'] }]{ \big[\, T_{\lambda}^{\,(i)}\,\big] }{_{\hat{\mathbf{f}}^{(i)}[\sigma_{1},z]}}& \equiv \,\text{principal $i\times i$ submatrix of } D(\pi_{1}(\vec{u}))\, , \quad \text{for all $i\in[2,n-1]$}. \\[2pt]
\end{aligned}
\end{equation*}
This completes the proof. 
\end{proof}

\begin{setup}\label{Setup: Objects on R_j for k geq 2 and adapted bases}
Let $\beta=\sigma_{i_{1}}\cdots \sigma_{i_{\ell}}\in\mathrm{Br}^{+}_{n}$ be a positive braid word, and let $\sh{F}$ and $\sh{G}$ be objects of the category $\ccs{1}{\beta}$. Fix $j\in[1,\ell]$, and let $R_{j}$ be the vertical strap in $\mathbb{R}^{2}$ containing $\sigma_{i_{j}}$---the $j$-th crossing of $\beta$ (see Figure~\eqref{fig: A sub-regions R_j}). 

\noindent
$\star$ \emph{Assumption 1}: Let $k:=i_{j}\in[1,n-1]$ denote the index of $\sigma_{i_{j}}$, and suppose that $k\geq 2$.

Under this assumption, on $R_{j}$, $\sh{F}$ and $\sh{G}$ are specified by two collections of $n+1$ injective linear maps 
\begin{equation*}
\begin{aligned}
\big\{ \phi^{(i)}_{\sh{F}}:\mathbb{K}^{i}\to \mathbb{K}^{i+1} \big\}_{i=1}^{n-1}&\,\cup\,\big\{ \widetilde{\phi}^{\,(k-1)}_{\sh{F}}:\mathbb{K}^{k-1}\to \mathbb{K}^{k}\,, ~ \widetilde{\phi}^{\,(k)}_{\sh{F}}:\mathbb{K}^{k}\to \mathbb{K}^{k+1} \big\}\, ,\\[6pt]
\big\{ \phi^{(i)}_{\sh{G}}:\mathbb{K}^{i}\to \mathbb{K}^{i+1} \big\}_{i=1}^{n-1}&\,\cup\,\big\{ \widetilde{\phi}^{\,(k-1)}_{\sh{G}}:\mathbb{K}^{k-1}\to \mathbb{K}^{k}\,, ~ \widetilde{\phi}^{\,(k)}_{\sh{G}}:\mathbb{K}^{k}\to \mathbb{K}^{k+1} \big\}\, ,
\end{aligned}    
\end{equation*}
respectively. For a schematic illustration of a generic representative of one of these sheaves, see Figure~\eqref{fig: a sheaf in the vertical strap R_j containing a crossing sigma_k}.

\noindent
$\star$ \emph{Assumption 2}: For each $i\in[1,n]$, let $\hat{\mathbf{f}}^{(i)}:=\big\{\hat{f}^{(i)}_{j}\big\}_{j=1}^{i}$ and $\hat{\mathbf{g}}^{(i)}:=\big\{\hat{g}^{(i)}_{j}\big\}_{j=1}^{i}$ be bases for $\mathbb{K}^{i}$. Following Definition~\eqref{Def:flags and adapted bases}--\eqref{Def: adapted bases I}, we assume that: 
\begin{itemize}
\justifying
\item $\big\{\hat{\mathbf{f}}^{(i)}\big\}_{i=1}^{n}$ is a system of bases adapted to $\big\{\phi_{\sh{F}}^{(i)}\,\big\}_{i=1}^{n-1}$.
\item $\big\{\hat{\mathbf{g}}^{(i)}\big\}_{i=1}^{n}$ is a system of bases adapted to $\big\{\phi_{\sh{G}}^{(i)}\,\big\}_{i=1}^{n-1}$.
\end{itemize}

Under this assumption, Lemma~\eqref{Lemma: matrix local model for sigma_k} ensures that $\big(\big\{\hat{\mathbf{f}}^{(i)}\big\}_{i=1}^{n}, z \big)$ and $\big(\big\{\hat{\mathbf{g}}^{(i)}\big\}_{i=1}^{n}, z' \big)$ are system of bases adapted to $\sh{F}$ and $\sh{G}$ on $R_{j}$, where $z,\,z'\in \mathbb{K}$ parameterize, relative to the bases $\hat{\mathbf{f}}^{(n)}$ and $\hat{\mathbf{g}}^{(n)}$ for $\mathbb{K}^{n}$, the $s_{k}$-relative position between the pairs of complete flags in $\mathbb{K}^{n}$ that geometrically characterize $\sh{F}$ and $\sh{G}$ on $R_{j}$, respectively. Following Definition~\eqref{Def: braid transformation of bases}, we denote by $\big\{\hat{\mathbf{f}}^{(i)}[\sigma_{k},z]\big\}_{i=1}^{n}$ and $\big\{\hat{\mathbf{g}}^{(i)}[\sigma_{k},z']\big\}_{i=1}^{n}$ the corresponding braid-transformed bases.   
\end{setup}

\begin{lemma}\label{Lemma: Ext1 on R_j for k geq 2}
Consider the assumptions of Setup~\eqref{Setup: Objects on R_j for k geq 2 and adapted bases} and fix an equivalence class $\xi\in \mathrm{Ext}^{1}(\sh{F},\sh{G})$. Let $\sh{H}$ and $\sh{H}'$ be two equivalent extensions of $\sh{F}$ by $\sh{G}$ representing $\xi$, and let $\lambda:\sh{H}\to \sh{H}'$ be a sheaf isomorphism realizing their equivalence. 

On $R_{j}$, $\sh{H}$ and $\sh{H}'$ are specified by two collections of $n+1$ characteristic maps
\begin{equation}\label{Eq: characteristic maps of two extensions on R_j with k geq 2}
\begin{aligned}
\big\{ \phi^{(i)}_{\sh{H}}:\mathbb{K}^{i}\to \mathbb{K}^{i+1} \big\}_{i=1}^{n-1}&\,\cup\,\big\{ \widetilde{\phi}^{\,(k-1)}_{\sh{H}}:\mathbb{K}^{k-1}\to \mathbb{K}^{k}\,, ~ \widetilde{\phi}^{\,(k)}_{\sh{H}}:\mathbb{K}^{k}\to \mathbb{K}^{k+1} \big\}\, , \\[2pt]
\big\{ \phi^{(i)}_{\sh{H}'}:\mathbb{K}^{i}\to \mathbb{K}^{i+1} \big\}_{i=1}^{n-1}&\,\cup\,\big\{ \widetilde{\phi}^{\,(k-1)}_{\sh{H}'}:\mathbb{K}^{k-1}\to \mathbb{K}^{k}\,, ~ \widetilde{\phi}^{\,(k)}_{\sh{H}'}:\mathbb{K}^{k}\to \mathbb{K}^{k+1} \big\}\, ,  
\end{aligned}
\end{equation}
while $\lambda$ is characterized by a collection of $n+1$ linear maps 
\begin{equation}\label{Eq: maps characterizing lambda on R_j with k geq 2}
\big\{ T^{(i)}_{\lambda}: \mathbb{K}^{i}\to \mathbb{K}^{i} \big\}_{i=1}^{n}\;\cup\; \big\{ \widetilde{T}^{\,(k)}:\mathbb{K}^{k}\to \mathbb{K}^{k} \big\}    
\end{equation}
such that:
\begin{subequations}\label{Eq: equivalence relations on R_j with k geq 2}
\begin{align}
\phi^{(i)}_{\sh{H}'} &= \phi^{(i)}_{\sh{H}} + T^{(i+1)}_{\lambda}\circ \phi^{(i)}_{\sh{F}}- \phi^{(i)}_{\sh{G}}\circ T^{(i)}_{\lambda} \, , \qquad \text{for all $i\in [1,n-1]$}\, , \label{Eq: equivalence relations for R_j with k geq 2 a}\\[4pt] 
\widetilde{\phi}^{\,(k)}_{\sh{H}'} &= \widetilde{\phi}^{\,(k)}_{\sh{H}} + T^{(k+1)}_{\lambda}\circ \widetilde{\phi}^{\,(k)}_{\sh{F}}- \widetilde{\phi}^{\,(k)}_{\sh{G}}\circ \widetilde{T}^{\,(k)}_{\lambda} \, , \label{Eq: equivalence relations for R_j with k geq 2 b} \\[4pt]
\widetilde{\phi}^{\,(k-1)}_{\sh{H}'} &= \widetilde{\phi}^{\,(k-1)}_{\sh{H}} + \widetilde{T}^{\,(k)}_{\lambda}\circ \widetilde{\phi}^{\,(k-1)}_{\sh{F}}- \widetilde{\phi}^{\,(k-1)}_{\sh{G}}\circ T^{\,(k-1)}_{\lambda} \, . \label{Eq: equivalence relations for R_j with k geq 2 c}
\end{align}    
\end{subequations}

\noindent
$\star$ \emph{Assumption 1}: Suppose that there are linear maps $\big\{\gamma^{(i)}: \mathbb{K}^{i}\to \mathbb{K}^{i+1} \big\}_{i=1}^{n-1}$ that fully characterize $\xi$, up to equivalence, on the left of the crossing in $R_{j}$. Building on this assumption, we choose representatives $\sh{H}$ and $\sh{H}'$ such that:
\begin{equation*}
\phi^{(i)}_{\sh{H}}=\phi^{(i)}_{\sh{H}'}=\gamma^{(i)}\, , \qquad \text{for all $i\in [1,n-1]$}\, .    
\end{equation*}

\noindent
$\star$ \emph{Assumption 2}: Suppose that, for the above choice of representatives and with respect to the bases $\big\{\hat{\mathbf{f}}^{(i)}\big\}_{i=1}^{n}$ and $\big\{\hat{\mathbf{g}}^{(i)}\big\}_{i=1}^{n}$, the matrices representing the maps $\big\{ T^{(i)}_{\lambda} \big\}_{i=1}^{n}$, which characterize $\lambda$ on the left of the crossing in $R_{j}$, satisfy: 
\begin{equation*}
\begin{aligned}
 \tensor[_{\hat{\mathbf{g}}^{(n)}}]{ \big[\, T_{\lambda}^{(n)}\,\big] }{_{\hat{\mathbf{f}}^{(n)}}} &\equiv D(\vec{u}),~\text{ for some $\vec{u}=(u_{1},\dots, u_{n})\in \mathbb{K}^{n}_{\mathrm{std}}$}\, ,\\[4pt]
\tensor[_{\hat{\mathbf{g}}^{(i)}}]{ \big[\, T_{\lambda}^{(i)}\,\big] }{_{\hat{\mathbf{f}}^{(i)}}}&\equiv \,\text{principal $i\times i$ submatrix of } D(\vec{u})\, , \quad\text{for all $i\in[1,n-1]$}\, .
\end{aligned}    
\end{equation*}

In particular, under the above assumptions, the equivalence conditions in~\eqref{Eq: equivalence relations for R_j with k geq 2 a} are automatically satisfied.

\noindent
$\star$ \emph{Main Conclusion}: $\sh{H}$ is equivalent to a representative $\sh{H}'$ such that, with respect to the bases $\big\{\hat{\mathbf{f}}^{(i)}[\sigma_{k}, z]\big\}_{i=1}^{n}$ and $\big\{\hat{\mathbf{g}}^{(i)}[\sigma_{k}, z']\big\}_{i=1}^{n}$, the matrices representing $\widetilde{\phi}^{\,(k-1)}_{\sh{H}'}$ and $\widetilde{\phi}^{\,(k)}_{\sh{H}'}$ are given by 
\begin{equation*}
\tensor[_{\hat{\mathbf{g}}^{(k)}[\sigma_{k}, z']}]{ \big[\, \widetilde{\phi}^{\,(k-1)}_{\sh{H}'} \,\big] }{_{\hat{\mathbf{f}}^{(k-1)}[\sigma_{k}, z]}}= \mathbf{0}_{k\times (k-1)} \, , \quad \text{and} \quad   \tensor[_{\hat{\mathbf{g}}^{(k+1)}[\sigma_{k}, z']}]{ \big[\, \widetilde{\phi}^{\,(k)}_{\sh{H}'} \,\big] }{_{\hat{\mathbf{f}}^{(k)}[\sigma_{k}, z]}}= \begin{bmatrix}
\star & \cdots & \star  & 0\\
     \vdots     & \ddots &         \vdots        & \vdots\\     
\star  & \cdots & \star  & 0\\
\star  & \cdots & \star & a'
\end{bmatrix}\, , \\[6pt]    
\end{equation*}
for some $a' \in \mathbb{K}$, where the starred entries are uniquely determined by the data associated with $\phi^{(k)}_{\sh{G}}$, $\gamma^{(k-1)}$, $\gamma^{(k)}$, and $\phi^{(k-1)}_{\sh{F}}$. In other words, the parameter $a'$, together with the data encoded in $\big\{\gamma^{(i)}\big\}_{i=1}^{n-1}$, completely determines $\xi$ on $R_{j}$, up to equivalence. 
\end{lemma}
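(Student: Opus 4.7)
The plan is to mirror the argument of the preceding $k=1$ lemma, replacing the scalar column-system at $\sigma_1$ by the block system at a $\sigma_k$ crossing with $k \geq 2$, and then appealing to Lemma~\eqref{Lemma: system of equations for equivalent extensions at a crossing sigma_k}(a) to normalize the representative. By Observation~\eqref{Obs: elements of Ext1 at the local models}, the local description of $\sh{H}$, $\sh{H}'$ and the characterization of $\lambda$ on $R_j$, together with the equivalence relations \eqref{Eq: equivalence relations for R_j with k geq 2 a}--\eqref{Eq: equivalence relations for R_j with k geq 2 c}, are automatic. The first step is to verify that, under \emph{Assumptions 1} and \emph{2}, the relations \eqref{Eq: equivalence relations for R_j with k geq 2 a} hold identically. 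Writing them in the bases $\bigl\{\hat{\mathbf{f}}^{(i)}\bigr\}_{i=1}^{n}$, $\bigl\{\hat{\mathbf{g}}^{(i)}\bigr\}_{i=1}^{n}$, which are adapted to $\bigl\{\phi_{\sh{F}}^{(i)}\bigr\}$ and $\bigl\{\phi_{\sh{G}}^{(i)}\bigr\}$, both maps are represented by $\iota^{(i+1,i)}$ and each $Y^{(i)} := \tensor[_{\hat{\mathbf{g}}^{(i)}}]{\bigl[\,T^{(i)}_\lambda\,\bigr]}{_{\hat{\mathbf{f}}^{(i)}}}$ is the principal $i \times i$ submatrix of the diagonal matrix $D(\vec{u})$; the correction terms $Y^{(i+1)} \iota^{(i+1,i)} - \iota^{(i+1,i)} Y^{(i)}$ then vanish by the special case of Lemma~\eqref{Lemma: system of equations for equivalent collections of injective maps}.

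Next I would pass to the braid-transformed bases $\bigl\{\hat{\mathbf{f}}^{(i)}[\sigma_k,z]\bigr\}_{i=1}^{n}$ and $\bigl\{\hat{\mathbf{g}}^{(i)}[\sigma_k,z']\bigr\}_{i=1}^{n}$. From the proof template of Lemma~\eqref{Lemma: matrix local model for sigma_k}, in these bases the maps $\widetilde{\phi}^{\,(k-1)}_{\sh{F}}$, $\widetilde{\phi}^{\,(k-1)}_{\sh{G}}$, $\widetilde{\phi}^{\,(k)}_{\sh{F}}$, $\widetilde{\phi}^{\,(k)}_{\sh{G}}$ are represented by $\iota^{(k,k-1)}$ and $\iota^{(k+1,k)}$, while $\tensor[_{\hat{\mathbf{g}}^{(k+1)}}]{\bigl[\,T^{(k+1)}_\lambda\,\bigr]}{_{\hat{\mathbf{f}}^{(k+1)}}}$ conjugates as $\bigl(B^{(k+1)}_k(z')\bigr)^{-1} Y^{(k+1)} B^{(k+1)}_k(z)$. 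Setting
\[
\widetilde{X}^{(k-1)}, \widetilde{X}^{(k)}, \widetilde{X}'^{\,(k-1)}, \widetilde{X}'^{\,(k)}, \widetilde{Y}^{(k)}
\]
to be the matrices of $\widetilde{\phi}^{\,(k-1)}_{\sh{H}}$, $\widetilde{\phi}^{\,(k)}_{\sh{H}}$, $\widetilde{\phi}^{\,(k-1)}_{\sh{H}'}$, $\widetilde{\phi}^{\,(k)}_{\sh{H}'}$, $\widetilde{T}^{\,(k)}_\lambda$ in these bases, relations \eqref{Eq: equivalence relations for R_j with k geq 2 b} and \eqref{Eq: equivalence relations for R_j with k geq 2 c} translate verbatim into the matrix system \eqref{Eq: system of equations for k geq 2} of Lemma~\eqref{Lemma: system of equations for equivalent extensions at a crossing sigma_k}. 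By \emph{Assumption 2}, the $Y^{(i)}$ satisfy the diagonal / principal-submatrix hypothesis of that lemma.

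Applying Lemma~\eqref{Lemma: system of equations for equivalent extensions at a crossing sigma_k}(a), the solution set of this system is an affine space parametrized by the entries of $\widetilde{X}'^{\,(k-1)}$ together with the first $k-1$ entries of the $k$-th column of $\widetilde{X}'^{\,(k)}$. Setting all these free parameters to zero produces, after solving uniquely for $\widetilde{Y}^{(k)}$ and the remaining entries of $\widetilde{X}'^{\,(k)}$, a unique equivalent representative $\sh{H}'$ of $\xi$ whose characteristic maps have the prescribed normal form: $\widetilde{X}'^{\,(k-1)} = \mathbf{0}_{k \times (k-1)}$, the upper $k$ entries of the $k$-th column of $\widetilde{X}'^{\,(k)}$ vanish, the bottom entry is a residual scalar $a' \in \mathbb{K}$, and the starred entries in its first $k-1$ columns are uniquely determined by the system.

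The main delicate point I expect is verifying that the starred entries depend only on the left-side data $\bigl\{\gamma^{(i)}\bigr\}_{i=1}^{n-1}$ together with $\phi^{(k-1)}_{\sh{F}}$ and $\phi^{(k)}_{\sh{G}}$, and not on the original representative $\sh{H}$. This requires combining the explicit solution from Lemma~\eqref{Lemma: system of equations for equivalent extensions at a crossing sigma_k}(a) with the microlocal crossing-commutativity constraint for $\sh{H}'$, i.e.\ Lemma~\eqref{Lemma: system of equations for the extension crossing constraint} applied to $\widetilde{X}'^{\,(k-1)}$ and $\widetilde{X}'^{\,(k)}$; this constraint confines all residual freedom to the single entry $a'$. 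Together with \emph{Assumption 1}, this shows that $a'$ and $\bigl\{\gamma^{(i)}\bigr\}_{i=1}^{n-1}$ completely determine $\xi$ on $R_j$ up to equivalence, as asserted in the \emph{Main Conclusion}.
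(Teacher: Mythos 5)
Your proposal follows essentially the same route as the paper's proof: translate the equivalence relations~\eqref{Eq: equivalence relations for R_j with k geq 2 b}--\eqref{Eq: equivalence relations for R_j with k geq 2 c} into the matrix system of Lemma~\eqref{Lemma: system of equations for equivalent extensions at a crossing sigma_k} in the braid-transformed bases, use part~(a) of that lemma to normalize the representative, and then invoke Lemma~\eqref{Lemma: system of equations for the extension crossing constraint} to pin down the starred entries from the left-side data, exactly as the paper does. The only slip is a harmless off-by-one: the free entries in the $k$-th column of $\widetilde{X}'^{\,(k)}$ are the first $k$ (all but the last), not the first $k-1$; your stated normal form is nonetheless correct.
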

\begin{proof}
To begin, recall that $\sh{H}$ and $\sh{H}'$ are two extensions of $\sh{F}$ by $\sh{G}$, and hence their local description on $R_{j}$ via the characteristic maps in~\eqref{Eq: characteristic maps of two extensions on R_j with k geq 2} follows from Observation~\eqref{Obs: elements of Ext1 at the local models}. Moreover, building on \emph{Assumption 1}, we suppose that $\sh{H}$ and $\sh{H}'$ are representatives with $\phi^{(i)}_{\sh{H}}=\phi^{(i)}_{\sh{H}'}=\gamma^{(i)}$, for all $i\in [1,n-1]$. 

Analogously to the case of $\sh{H}$ and $\sh{H}'$, it follows from Observation~\eqref{Obs: elements of Ext1 at the local models} that $\lambda:\sh{H}\to \sh{H}'$, the sheaf isomorphism realizing the equivalence between $\sh{H}$ and $\sh{H}'$, is locally characterized on $R_{j}$ by a collection of linear maps as in~\eqref{Eq: maps characterizing lambda on R_j with k geq 2}, which are subject to the equivalence conditions~\eqref{Eq: equivalence relations on R_j with k geq 2}.

Moreover, building on Observation~\eqref{Obs: elements of Ext1 at the local models}, we know that the microlocal support conditions near the crossing impose the following constraints on the characteristic maps of $\sh{H}$ and $\sh{H}'$
\begin{equation*}
\begin{aligned}
\phi^{(k)}_{\sh{G}}\,\circ \gamma^{(k)} + \gamma^{(k-1)}\,\circ \phi^{(k-1)}_{\sh{F}} = \widetilde{\phi}^{\,(k)}_{\sh{G}}\,\circ \widetilde{\phi}^{\,(k-1)}_{\sh{H}} + \widetilde{\phi}^{\,(k)}_{\sh{H}}\,\circ \widetilde{\phi}^{\,(k-1)}_{\sh{F}} \, ,\\[2pt] 
\phi^{(k)}_{\sh{G}}\,\circ \gamma^{(k)} + \gamma^{(k-1)}\,\circ \phi^{(k-1)}_{\sh{F}} = \widetilde{\phi}^{\,(k)}_{\sh{G}}\,\circ \widetilde{\phi}^{\,(k-1)}_{\sh{H}'} + \widetilde{\phi}^{\,(k)}_{\sh{H}'}\,\circ \widetilde{\phi}^{\,(k-1)}_{\sh{F}} \, ,\\[2pt] 
\end{aligned}
\end{equation*}
where have used that $\phi^{(i)}_{\sh{H}}=\phi^{(i)}_{\sh{H}'}=\gamma^{(i)}$, for all $i\in [1,n-1]$. 

Here, given the bases $\big\{\hat{\mathbf{f}}^{(i)}\big\}_{i=1}^{n}$ and $\big\{\hat{\mathbf{g}}^{(i)}\big\}_{i=1}^{n}$, let us define
\begin{equation*}
\begin{aligned}
Y^{(n)}&:=\tensor[_{\hat{\mathbf{g}}^{(n)}}]{ \big[\, T_{\lambda}^{(n)}\,\big] }{_{\hat{\mathbf{f}}^{(n)}}}\in \mathrm{M}(n,\mathbb{K})\, ,  \\[2pt]
Y^{(i)}&:=\tensor[_{\hat{\mathbf{g}}^{(i)}}]{ \big[\, T_{\lambda}^{(i)}\,\big] }{_{\hat{\mathbf{f}}^{(i)}}}\in \mathrm{M}(i,\mathbb{K})\, ,   \quad \text{for all $i\in [1,n-1]$}\, ,\\[2pt]
Z^{(k-1)} &:=\tensor[_{\hat{\mathbf{g}}^{(k)}}]{ \big[\, \gamma^{(k-1)} \,\big] }{_{\hat{\mathbf{f}}^{(k-1)}}}\in \mathrm{M}(k, k-1,\mathbb{K})\, ,  \\[2pt]
Z^{(k)} &:=\tensor[_{\hat{\mathbf{g}}^{(k+1)}}]{ \big[\, \gamma^{(k)} \,\big] }{_{\hat{\mathbf{f}}^{(k)}}}\in \mathrm{M}(k+1, k,\mathbb{K})\, .  \\[2pt]
\end{aligned}
\end{equation*}
In particular, by \emph{Assumption 2}, we have that $Y^{(n)}=D(\vec{u})$, and $Y^{(i)}$ is given by the principal $i\times i$ submatrix of $D(\vec{u})$, for all $i\in[1,n-1]$. 

Also, consider the bases $\big\{\hat{\mathbf{f}}^{(i)}[\sigma_{k}, z]\big\}_{i=1}^{n}$ and $\big\{\hat{\mathbf{g}}^{(i)}[\sigma_{k},z']\big\}_{i=1}^{n}$, and set
\begin{equation*}
\begin{aligned}
\widetilde{Y}^{(k)}&:= \tensor[_{\hat{\mathbf{g}}^{(k)}[\sigma_{k}, z']}]{ \big[\, \widetilde{T}^{\,(k)}_{\lambda} \,\big] }{_{\hat{\mathbf{f}}^{(k)}[\sigma_{k}, z]}}\in \mathrm{M}(k,\mathbb{K})\, , \\[2pt]
\widetilde{X}^{(k-1)} &:=\tensor[_{\hat{\mathbf{g}}^{(k)}[\sigma_{k}, z']}]{ \big[\, \widetilde{\phi}^{\,(k-1)}_{\sh{H}'} \,\big] }{_{\hat{\mathbf{f}}^{(k-1)}[\sigma_{k}, z]}}\in \mathrm{M}(k, k-1,\mathbb{K})\, ,  \\[2pt]
\widetilde{X}^{(k)} &:=\tensor[_{\hat{\mathbf{g}}^{(k+1)}[\sigma_{k}, z']}]{ \big[\, \widetilde{\phi}^{\,(k)}_{\sh{H}'} \,\big] }{_{\hat{\mathbf{f}}^{(k)}[\sigma_{k}, z]}}\in \mathrm{M}(k+1, k,\mathbb{K})\, ,  \\[2pt]
\widetilde{X}'^{\,(k-1)} &:=\tensor[_{\hat{\mathbf{g}}^{(k)}[\sigma_{k}, z']}]{ \big[\, \widetilde{\phi}^{\,(k-1)}_{\sh{H}'} \,\big] }{_{\hat{\mathbf{f}}^{(k-1)}[\sigma_{k}, z]}}\in \mathrm{M}(k, k-1,\mathbb{K})\, ,  \\[2pt]
\widetilde{X}'^{\,(k)} &:=\tensor[_{\hat{\mathbf{g}}^{(k+1)}[\sigma_{k}, z']}]{ \big[\, \widetilde{\phi}^{\,(k)}_{\sh{H}'} \,\big] }{_{\hat{\mathbf{f}}^{(k)}[\sigma_{k}, z]}}\in \mathrm{M}(k+1, k,\mathbb{K})\, .  \\[2pt]    
\end{aligned}    
\end{equation*}

Next, observe that, with respect to the bases $\big\{\hat{\mathbf{f}}^{(i)}[\sigma_{k}, z]\big\}_{i=1}^{n}$ and $\big\{\hat{\mathbf{g}}^{(i)}[\sigma_{k},z']\big\}_{i=1}^{n}$, the equivalence relations~\eqref{Eq: equivalence relations for R_j with k geq 2 b} and~\eqref{Eq: equivalence relations for R_j with k geq 2 b} translate into the system of equations
\begin{equation*}
\begin{aligned}
\widetilde{X}'^{\,(k-1)} &= \widetilde{X}^{(k-1)} + \widetilde{Y}^{(k)}\cdot\iota^{(k,k-1)} - \iota^{(k,k-1)}\cdot Y^{(k-1)}\,, \\[4pt]
\widetilde{X}'^{\,(k)}   &= \widetilde{X}^{(k)} + \big(B^{(k+1)}_{k}(z')\big)^{-1}\cdot Y^{(k+1)}\cdot B^{(k+1)}_k(z)\cdot\iota^{(k+1,k)} - \iota^{(k+1,k)}\cdot \widetilde{Y}^{(k)}\, .     
\end{aligned}     
\end{equation*}
It follows from Lemma~\eqref{Lemma: system of equations for equivalent extensions at a crossing sigma_k}(a) that for any choice of $\widetilde{X}'^{\,(k-1)}$ and the $k$-th column of $\widetilde{X}'^{\,(k)}$ expect its very last entry, the above system of equations uniquely determines $\widetilde{Y}^{(k)}$ and the remaining entries of $\widetilde{X}'^{\,(k)}$, and hence, $\sh{H}$ is equivalent to a representative $\sh{H}'$ such that with respect to the bases $\big\{\hat{\mathbf{f}}^{(i)}[\sigma_{k}, z]\big\}_{i=1}^{n}$ and $\big\{\hat{\mathbf{g}}^{(i)}[\sigma_{k}, z']\big\}_{i=1}^{n}$, the matrices representing $\widetilde{\phi}^{\,(k-1)}_{\sh{H}'}$ and $\widetilde{\phi}^{\,(k)}_{\sh{H}'}$ are given by 
\begin{equation}\label{Eq: choice for an auxiliar extension for k geq 2}
\widetilde{X}'^{\,(k-1)}=\mathbf{0}_{k\times (k-1)}\, , \quad \text{and} \quad \widetilde{X}'^{\,(k)}=\begin{bmatrix}
\tilde{x}'_{1,1} & \cdots & \widetilde{x}'_{1,k-1} & 0\\
\vdots     & \ddots &   \vdots     & \vdots\\
\tilde{x}'_{k,1} & \cdots & \widetilde{x}'_{k, k-1} & 0 \\
\tilde{x}'_{k+1,1} & \cdots & \widetilde{x}'_{k+1, k-1} & \widetilde{x}'_{k+1,k}
\end{bmatrix}\, .
\end{equation}

Finally, note that, with respect to the bases $\big\{\hat{\mathbf{f}}^{(i)}[\sigma_{k}, z]\big\}_{i=1}^{n}$ and $\big\{\hat{\mathbf{g}}^{(i)}[\sigma_{k},z']\big\}_{i=1}^{n}$, the crossing constraint for the extensions $\sh{H}'$ translate into the system of equations
\begin{equation}\label{Eq: system of equations for the crossing contrain of an auxiliar extension}
\begin{aligned}
\big(B^{(k+1)}_{k}(z')\big)^{-1}\cdot \Big( \iota^{(k+1,k)}\cdot Z^{(k-1)}+ Z^{(k)}\cdot \iota^{(k,k-1)} \Big)  =   \iota^{(k+1,k)}\cdot \widetilde{X}'^{\,(k-1)}+ \widetilde{X}'^{\,(k)}\cdot \iota^{(k,k-1)}\, .    
\end{aligned}
\end{equation}
In particular, given our choice of representative $\sh{H}'$ in~\eqref{Eq: choice for an auxiliar extension for k geq 2}, Lemma~\eqref{Lemma: system of equations for the extension crossing constraint} ensures that 
the first $k-1$ columns of $\widetilde{X}'^{\,(k)}$ are uniquely determined by the system of equations in~\eqref{Eq: system of equations for the crossing contrain of an auxiliar extension}, and in this case, these columns are uniquely determined by the data associated with $\phi^{(k)}_{\sh{G}}$, $\gamma^{(k-1)}$, $\gamma^{(k)}$, and $\phi^{(k-1)}_{\sh{F}}$. Thus, the result follows once we identify $a'=\tilde{x}'_{k+1,k}$. 
\end{proof}

\begin{lemma}\label{Lemma: Equivalence conditions for Ext^{1} on R_j with k geq 2}
Consider the assumptions of Setup~\eqref{Setup: Objects on R_j for k geq 2 and adapted bases} and fix an equivalence class $\xi\in \mathrm{Ext}^{1}(\sh{F},\sh{G})$. Let $\sh{H}$ and $\sh{H}'$ be two equivalent extensions of $\sh{F}$ by $\sh{G}$ representing $\xi$, and let $\lambda:\sh{H}\to \sh{H}'$ be a sheaf isomorphism realizing their equivalence. 

On $R_{j}$, $\sh{H}$ and $\sh{H}'$ are specified by two collections of $n+1$ characteristic maps
\begin{equation*}
\begin{aligned}
\big\{ \phi^{(i)}_{\sh{H}}:\mathbb{K}^{i}\to \mathbb{K}^{i+1} \big\}_{i=1}^{n-1}&\,\cup\,\big\{ \widetilde{\phi}^{\,(k-1)}_{\sh{H}}:\mathbb{K}^{k-1}\to \mathbb{K}^{k}\,, ~ \widetilde{\phi}^{\,(k)}_{\sh{H}}:\mathbb{K}^{k}\to \mathbb{K}^{k+1} \big\}\, , \\[2pt]
\big\{ \phi^{(i)}_{\sh{H}'}:\mathbb{K}^{i}\to \mathbb{K}^{i+1} \big\}_{i=1}^{n-1}&\,\cup\,\big\{ \widetilde{\phi}^{\,(k-1)}_{\sh{H}'}:\mathbb{K}^{k-1}\to \mathbb{K}^{k}\,, ~ \widetilde{\phi}^{\,(k)}_{\sh{H}'}:\mathbb{K}^{k}\to \mathbb{K}^{k+1} \big\}\, ,  
\end{aligned}
\end{equation*}
while $\lambda$ is characterized by a collection of $n+1$ linear maps 
\begin{equation*}
\big\{ T^{(i)}_{\lambda}: \mathbb{K}^{i}\to \mathbb{K}^{i} \big\}_{i=1}^{n}\;\cup\; \big\{ \widetilde{T}^{\,(k)}:\mathbb{K}^{k}\to \mathbb{K}^{k} \big\}    
\end{equation*}
such that:
\begin{equation*}
\begin{aligned}
\phi^{(i)}_{\sh{H}'} &= \phi^{(i)}_{\sh{H}} + T^{(i+1)}_{\lambda}\circ \phi^{(i)}_{\sh{F}}- \phi^{(i)}_{\sh{G}}\circ T^{(i)}_{\lambda} \, , \qquad \text{for all $i\in [1,n-1]$}\, ,\\[4pt] 
\widetilde{\phi}^{\,(k)}_{\sh{H}'} &= \widetilde{\phi}^{\,(k)}_{\sh{H}} + T^{(k+1)}_{\lambda}\circ \widetilde{\phi}^{\,(k)}_{\sh{F}}- \widetilde{\phi}^{\,(k)}_{\sh{G}}\circ \widetilde{T}^{\,(k)}_{\lambda} \, , \\[4pt]
\widetilde{\phi}^{\,(k-1)}_{\sh{H}'} &= \widetilde{\phi}^{\,(k-1)}_{\sh{H}} + \widetilde{T}^{\,(k)}_{\lambda}\circ \widetilde{\phi}^{\,(k-1)}_{\sh{F}}- \widetilde{\phi}^{\,(k-1)}_{\sh{G}}\circ T^{\,(k-1)}_{\lambda} \, .    
\end{aligned}    
\end{equation*}

\noindent
$\star$ \emph{Assumption 1}: Suppose that there are linear maps $\big\{\gamma^{(i)}: \mathbb{K}^{i}\to \mathbb{K}^{i+1} \big\}_{i=1}^{n-1}$ that fully characterize $\xi$, up to equivalence, on the left of the crossing in $R_{j}$. Building on this assumption, we choose representatives $\sh{H}$ and $\sh{H}'$ such that:
\begin{equation*}
\phi^{(i)}_{\sh{H}}=\phi^{(i)}_{\sh{H}'}=\gamma^{(i)}\, , \qquad \text{for all $i\in [1,n-1]$}\, .    
\end{equation*}

\noindent
$\star$ \emph{Assumption 2}: Suppose that, with respect to the bases $\big\{\hat{\mathbf{f}}^{(i)}\big\}_{i=1}^{n}$ and $\big\{\hat{\mathbf{g}}^{(i)}\big\}_{i=1}^{n}$, the matrices representing the maps $\big\{ T^{(i)}_{\lambda} \big\}_{i=1}^{n}$, which characterize $\lambda$ on the left of the crossing in $R_{j}$, satisfy: 
\begin{equation*}
\begin{aligned}
 \tensor[_{\hat{\mathbf{g}}^{(n)}}]{ \big[\, T_{\lambda}^{(n)}\,\big] }{_{\hat{\mathbf{f}}^{(n)}}} &\equiv D(\vec{u}),~\text{ for some $\vec{u}=(u_{1},\dots, u_{n})\in \mathbb{K}^{n}_{\mathrm{std}}$}\, ,\\[2pt]
\tensor[_{\hat{\mathbf{g}}^{(i)}}]{ \big[\, T_{\lambda}^{(i)}\,\big] }{_{\hat{\mathbf{f}}^{(i)}}}&\equiv \,\text{principal $i\times i$ submatrix of } D(\vec{u})\, , \quad\text{for all $i\in[1,n-1]$}\, .
\end{aligned}    
\end{equation*}

Under this assumption, Lemma~\eqref{Lemma: Ext1 on R_j for k geq 2} allows us to choose representatives $\sh{H}$ and $\sh{H}'$ such that with respect to the bases $\big\{\hat{\mathbf{f}}^{(i)}[\sigma_{k}, z]\big\}_{i=1}^{n}$ and $\big\{\hat{\mathbf{g}}^{(i)}[\sigma_{k}, z']\big\}_{i=1}^{n}$, the matrices representing $\widetilde{\phi}^{\,(k-1)}_{\sh{H}}$, $\widetilde{\phi}^{\,(k)}_{\sh{H}}$, $\widetilde{\phi}^{\,(k-1)}_{\sh{H}'}$ and $\widetilde{\phi}^{\,(k)}_{\sh{H}'}$ are given by 
\begin{equation*}
\begin{aligned}
\tensor[_{\hat{\mathbf{g}}^{(k)}[\sigma_{k}, z']}]{ \big[\, \widetilde{\phi}^{\,(k-1)}_{\sh{H}} \,\big] }{_{\hat{\mathbf{f}}^{(k-1)}[\sigma_{k}, z]}}&= \mathbf{0}_{k\times (k-1)} \, , \quad \text{and} \quad   \tensor[_{\hat{\mathbf{g}}^{(k+1)}[\sigma_{k}, z']}]{ \big[\, \widetilde{\phi}^{\,(k)}_{\sh{H}} \,\big] }{_{\hat{\mathbf{f}}^{(k)}[\sigma_{k}, z]}}= \begin{bmatrix}
\star & \cdots & \star  & 0\\
     \vdots     & \ddots &         \vdots        & \vdots\\     
\star  & \cdots & \star  & 0\\
\star  & \cdots & \star & a
\end{bmatrix}\, , \\[8pt]    
\tensor[_{\hat{\mathbf{g}}^{(k)}[\sigma_{k}, z']}]{ \big[\, \widetilde{\phi}^{\,(k-1)}_{\sh{H}'} \,\big] }{_{\hat{\mathbf{f}}^{(k-1)}[\sigma_{k}, z]}} &= \mathbf{0}_{k\times (k-1)} \, , \quad \text{and} \quad   \tensor[_{\hat{\mathbf{g}}^{(k+1)}[\sigma_{k}, z']}]{ \big[\, \widetilde{\phi}^{\,(k)}_{\sh{H}'} \,\big] }{_{\hat{\mathbf{f}}^{(k)}[\sigma_{k}, z]}}= \begin{bmatrix}
\star & \cdots & \star  & 0\\
     \vdots     & \ddots &         \vdots        & \vdots\\     
\star  & \cdots & \star  & 0\\
\star  & \cdots & \star & a'
\end{bmatrix}\, , 
\end{aligned}
\end{equation*}
for some $a, a' \in \mathbb{K}$. 

\noindent
$\star$ \emph{Main Conclusion}: Under the given assumptions, the following statements hold: 
\begin{itemize}
\justifying
\item \emph{Equivalence Condition}: $a'=a+d(z', \vec{u, z})$, where the residual parameter controlling the equivalence class in $\mathrm{Ext}^{1}(\sh{F},\sh{G})$ after the crossing in $R_{j}$ is given by
\begin{equation*}
d(z', \vec{u}, z):= \text{$(k+1,k)$-entry of}~~\big(B^{(n)}_{k}(z')\big)^{-1} \cdot D(\vec{u}) \cdot B^{(n)}_{k}(z)\, .   
\end{equation*}
\item With respect to the bases $\big\{\hat{\mathbf{f}}^{(i)}[\sigma_{k},z]\big\}_{i=1}^{n}$ and $\big\{\hat{\mathbf{g}}^{(i)}[\sigma_{k},z']\big\}_{i=1}^{n}$, the matrices representing the collection of maps $\big\{ T^{(i)}_{\lambda}\big\}_{i\in [1,n]\setminus \{k\} }\,\cup\, \big\{\widetilde{T}^{\,(k)}_{\lambda}\big\}$, which characterize $\lambda$ on the right of the crossing in $R_{j}$, satisfy:  
\begin{equation*}
\begin{aligned}
\tensor[_{\hat{\mathbf{g}}^{(n)}[\sigma_{k},z'] }]{ \big[\, T_{\lambda}^{\,(n)}\,\big] }{_{\hat{\mathbf{f}}^{(n)}[\sigma_{k},z]}} &\equiv D(\pi_{k}(\vec{u}))\,, \\[2pt]
\tensor[_{\hat{\mathbf{g}}^{(k)}[\sigma_{k},z'] }]{ \big[\, \widetilde{T}_{\lambda}^{\,(k)}\,\big] }{_{\hat{\mathbf{f}}^{(k)}[\sigma_{k},z]}}&\equiv \,\text{principal $k\times k$ submatrix of } D(\pi_{k}(\vec{u}))\, ,\\[2pt]
\tensor[_{\hat{\mathbf{g}}^{(i)}[\sigma_{k},z'] }]{ \big[\, T_{\lambda}^{\,(i)}\,\big] }{_{\hat{\mathbf{f}}^{(i)}[\sigma_{k},z]}}& \equiv \,\text{principal $i\times i$ submatrix of } D(\pi_{k}(\vec{u}))\, , \quad \text{for all $i\in[1,n-1]\setminus \{k\}$}\, ,\\[4pt]
\end{aligned}
\end{equation*}
where $\pi_{k}$ denotes the permutation associated with $\sigma_{k}$. 
\end{itemize}
\end{lemma}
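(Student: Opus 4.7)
The plan is to mirror the argument of Lemma~\eqref{Lemma: Equivalence conditions for Ext^{1} on R_j with k=1}, with the key technical input now being Lemma~\eqref{Lemma: system of equations for equivalent extensions at a crossing sigma_k} in its special case~(b). Working in the braid-transformed bases $\big\{\hat{\mathbf{f}}^{(i)}[\sigma_{k},z]\big\}_{i=1}^{n}$ and $\big\{\hat{\mathbf{g}}^{(i)}[\sigma_{k},z']\big\}_{i=1}^{n}$, I first set
\begin{equation*}
\begin{aligned}
Y^{(i)} &:= \tensor[_{\hat{\mathbf{g}}^{(i)}}]{\big[\,T_{\lambda}^{(i)}\,\big]}{_{\hat{\mathbf{f}}^{(i)}}}\, ,\quad i\in[1,n]\,,\\[2pt]
\widetilde{Y}^{(k)} &:= \tensor[_{\hat{\mathbf{g}}^{(k)}[\sigma_{k},z']}]{\big[\,\widetilde{T}^{\,(k)}_{\lambda}\,\big]}{_{\hat{\mathbf{f}}^{(k)}[\sigma_{k},z]}}\,,\\[2pt]
\widetilde{X}^{(k-1)},\,\widetilde{X}'^{\,(k-1)} &\in \mathrm{M}(k,k-1,\mathbb{K})\,,\qquad \widetilde{X}^{(k)},\,\widetilde{X}'^{\,(k)}\in \mathrm{M}(k+1,k,\mathbb{K})\,,
\end{aligned}
\end{equation*}
for the matrix representatives of the two pairs $\widetilde{\phi}^{\,(k-1)}_{\sh{H}},\widetilde{\phi}^{\,(k)}_{\sh{H}}$ and $\widetilde{\phi}^{\,(k-1)}_{\sh{H}'},\widetilde{\phi}^{\,(k)}_{\sh{H}'}$. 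By Assumption~2, $Y^{(i)}$ is the principal $i\times i$ submatrix of $D(\vec{u})$ for every $i\in[1,n]$, and by the normalization provided by Lemma~\eqref{Lemma: Ext1 on R_j for k geq 2}, the $\widetilde{X},\widetilde{X}'$ matrices take exactly the form required to invoke part~(b) of Lemma~\eqref{Lemma: system of equations for equivalent extensions at a crossing sigma_k}.

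Second, I translate the equivalence conditions~\eqref{Eq: equivalence relations for R_j with k geq 2 b} and~\eqref{Eq: equivalence relations for R_j with k geq 2 c} into the system~\eqref{Eq: system of equations for k geq 2}. Applying Lemma~\eqref{Lemma: system of equations for equivalent extensions at a crossing sigma_k}(b) directly produces the unique solution
\begin{equation*}
\widetilde{Y}^{(k)} = \text{principal $k\times k$ submatrix of } \big(B^{(n)}_{k}(z')\big)^{-1}\cdot D(\vec{u})\cdot B^{(n)}_{k}(z)\,,
\end{equation*}
together with the entry-wise equality $a' = a + d(z',\vec{u},z)$, where $d(z',\vec{u},z)$ is the $(k+1,k)$-entry of the same matrix product. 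Since $d(z',\vec{u},z)$ parametrizes the redundancy in the choice of representatives for a fixed class in $\mathrm{Ext}^{1}(\sh{F},\sh{G})$, upon descending to $\mathrm{Ext}^{1}(\sh{F},\sh{G})$ the congruence $d(z',\vec{u},z)\equiv 0$ holds. This yields the stated equivalence condition.

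Third, to obtain the matrix statements for the collection $\big\{T^{(i)}_{\lambda}\big\}_{i\in[1,n]\setminus\{k\}}\cup\big\{\widetilde{T}^{\,(k)}_{\lambda}\big\}$ in the braid-transformed bases on the right of the crossing, I invoke Lemma~\eqref{lemma for braid matrices and diagonal matrices} (or, equivalently, a direct calculation using the block structure of the braid matrices) to decompose
\begin{equation*}
\big(B^{(n)}_{k}(z')\big)^{-1}\cdot D(\vec{u})\cdot B^{(n)}_{k}(z) = D\!\big(\pi_{k}(\vec{u})\big) + d(z',\vec{u},z)\,E_{k+1,k}\,,
\end{equation*}
so that modulo $d(z',\vec{u},z)\equiv 0$ in $\mathrm{Ext}^{1}(\sh{F},\sh{G})$ the right-hand side becomes $D(\pi_{k}(\vec{u}))$. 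For $i\in[1,n-1]\setminus\{k\}$, the change-of-basis formula gives $\tensor[_{\hat{\mathbf{g}}^{(i)}[\sigma_{k},z']}]{\big[\,T_{\lambda}^{(i)}\,\big]}{_{\hat{\mathbf{f}}^{(i)}[\sigma_{k},z]}} = \big(B^{(i)}_{k}(z')\big)^{-1}\cdot Y^{(i)}\cdot B^{(i)}_{k}(z)$ (with the convention that $B^{(i)}_{k}$ is the identity whenever $i\leq k$), and the block structure of the braid matrices ensures this coincides with the principal $i\times i$ submatrix of $D(\pi_{k}(\vec{u}))$. The analogous identification for $\widetilde{Y}^{(k)}$ was already established in the first step.

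The bookkeeping of which index gets replaced by $\widetilde{T}^{\,(k)}_{\lambda}$ versus which $T^{(i)}_{\lambda}$ survives untouched is the main subtlety; once the block-structure observations for the braid matrices of varying dimension are arranged consistently, the result follows exactly as in the $k=1$ case treated in Lemma~\eqref{Lemma: Equivalence conditions for Ext^{1} on R_j with k=1}.
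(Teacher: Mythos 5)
Your proposal is correct and follows essentially the same route as the paper's proof: both reduce the equivalence relations in the braid-transformed bases to the system of Lemma~\eqref{Lemma: system of equations for equivalent extensions at a crossing sigma_k}, apply its special case (b) to obtain $a'=a+d(z',\vec{u},z)$ and the unique $\widetilde{Y}^{(k)}$, quotient by the gauge redundancy $d\equiv 0$, and then use the decomposition $\big(B^{(n)}_{k}(z')\big)^{-1}D(\vec{u})B^{(n)}_{k}(z)=D(\pi_{k}(\vec{u}))+d\,E_{k+1,k}$ together with the block structure of the braid matrices and the change-of-basis formula (trivial for $i\leq k$, conjugation for $i\geq k+1$) to get the stated block-diagonal properties. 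No gaps.
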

\begin{proof}
Consider the bases $\big\{\hat{\mathbf{f}}^{(i)}\big\}_{i=1}^{n}$ and $\big\{\hat{\mathbf{g}}^{(i)}\big\}_{i=1}^{n}$, and set
\begin{equation*}
\begin{aligned}
Y^{(n)}&:=\tensor[_{\hat{\mathbf{g}}^{(n)}}]{ \big[\, T_{\lambda}^{(n)}\,\big] }{_{\hat{\mathbf{f}}^{(n)}}}\in \mathrm{M}(n,\mathbb{K})\, ,  \\[2pt]
Y^{(i)}&:=\tensor[_{\hat{\mathbf{g}}^{(i)}}]{ \big[\, T_{\lambda}^{(i)}\,\big] }{_{\hat{\mathbf{f}}^{(i)}}}\in \mathrm{M}(i,\mathbb{K})\, ,   \qquad \text{for all $i\in [1,n-1]$}.
\end{aligned}
\end{equation*}
By \emph{Assumption 2}, we have that $Y^{(n)}=D(\vec{u})$, and $Y^{(i)}$ is given by the principal $i\times i$ submatrix of $D(\vec{u})$, for all $i\in[1,n-1]$. 

Next, given the bases $\big\{\hat{\mathbf{f}}^{(i)}[\sigma_{k}, z]\big\}_{i=1}^{n}$ and $\big\{\hat{\mathbf{g}}^{(i)}[\sigma_{k},z']\big\}_{i=1}^{n}$, let us define
\begin{equation*}
\begin{aligned}
\widetilde{X}^{(k-1)}&:=\tensor[_{\hat{\mathbf{g}}^{(k)}[\sigma_{k}, z']}]{ \big[\, \widetilde{\phi}^{\,(k-1)}_{\sh{H}} \,\big] }{_{\hat{\mathbf{f}}^{(k-1)}[\sigma_{k}, z]}}= \mathbf{0}_{k\times (k-1)} \, , \qquad  \widetilde{X}^{(k)}:=\tensor[_{\hat{\mathbf{g}}^{(k+1)}[\sigma_{k}, z']}]{ \big[\, \widetilde{\phi}^{\,(k)}_{\sh{H}} \,\big] }{_{\hat{\mathbf{f}}^{(k)}[\sigma_{k}, z]}}= \begin{bmatrix}
\star & \cdots & \star  & 0\\
     \vdots     & \ddots &         \vdots        & \vdots\\     
\star  & \cdots & \star  & 0\\
\star  & \cdots & \star & a
\end{bmatrix}\, , \\[8pt]    
\widetilde{X}'^{\,(k-1)}&:=\tensor[_{\hat{\mathbf{g}}^{(k)}[\sigma_{k}, z']}]{ \big[\, \widetilde{\phi}^{\,(k-1)}_{\sh{H}'} \,\big] }{_{\hat{\mathbf{f}}^{(k-1)}[\sigma_{k}, z]}}= \mathbf{0}_{k\times (k-1)} \, , \qquad  \widetilde{X}'^{\,(k)}:=\tensor[_{\hat{\mathbf{g}}^{(k+1)}[\sigma_{k}, z']}]{ \big[\, \widetilde{\phi}^{\,(k)}_{\sh{H}'} \,\big] }{_{\hat{\mathbf{f}}^{(k)}[\sigma_{k}, z]}}= \begin{bmatrix}
\star & \cdots & \star  & 0\\
     \vdots     & \ddots &         \vdots        & \vdots\\     
\star  & \cdots & \star  & 0\\
\star  & \cdots & \star & a'
\end{bmatrix}\, .\\   
\end{aligned}
\end{equation*}
Then, with respect to the braid-transformed bases, the equivalence conditions for the pairs of maps $\big(\widetilde{\phi}^{\,(k-1)}_{\sh{H}}, \widetilde{\phi}^{\,(k-1)}_{\sh{H}'}\big)$ and $\big(\widetilde{\phi}^{\,(k)}_{\sh{H}}, \widetilde{\phi}^{\,(k)}_{\sh{H}'}\big)$ translate into the system of equations
\begin{equation}\label{Eq: special system of equations on R_j with k geq 2}
\begin{aligned}
\widetilde{X}'^{\,(k-1)} &= \widetilde{X}^{(k-1)} + \widetilde{Y}^{(k)}\cdot\iota^{(k,k-1)} - \iota^{(k,k-1)}\cdot Y^{(k-1)}\,, \\[4pt]
\widetilde{X}'^{\,(k)}   &= \widetilde{X}^{(k)} + \big(B^{(k+1)}_{k}(z')\big)^{-1}\cdot Y^{(k+1)}\cdot B^{(k+1)}_k(z)\cdot\iota^{(k+1,k)} - \iota^{(k+1,k)}\cdot \widetilde{Y}^{(k)}\, .     
\end{aligned}     
\end{equation}

It follows from Lemma~\eqref{Lemma: system of equations for equivalent extensions at a crossing sigma_k}(b) that the system of equations in~\eqref{Eq: special system of equations on R_j with k geq 2} has a unique solution:
\begin{equation*}
\begin{aligned}
~\widetilde{Y}^{(k)}&= \,\text{principal $k\times k$ submatrix of } \,\big(B^{(n)}_{k}(z')\big)^{-1}\cdot D(\vec{u})\cdot B^{(n)}_{k}(z)\,  ,\\[4pt]  
\star&=\star\, ,\\[4pt]
~a'&=a + d(z',\vec{u}, z) \, ,
\end{aligned}    
\end{equation*}
where
\begin{equation*}
d(z',\vec{u}, z)= \,\text{the $(k+1,k)$-entry of } \,\big(B^{(n)}_{k}(z')\big)^{-1}\cdot D(\vec{u})\cdot B^{(n)}_{k}(z)\, .   
\end{equation*}
Specifically, we have that $d(z', \vec{u}, z)$ measures the redundancy in the choice of representatives for the equivalence classes in $\mathrm{Ext}^{1}(\sh{F},\sh{G})$ on the right of the crossing in $R_{j}$, and since redundancies vanish when restricting to $\mathrm{Ext}^{1}(\sh{F},\sh{G})$, we deduce that $d(z', \vec{u}, z)\equiv 0 $.

In particular, observe that
\begin{equation*}
\big(B^{(n)}_{k}(z')\big)^{-1}\cdot D(\vec{u})\cdot B^{(n)}_{k}(z)=D(\pi_{k}(\vec{u}))+d(z',\vec{u}, z)\,E_{k+1,k}\, ,    
\end{equation*}
where $E_{k+1,k}\in \mathrm{M}(n,\mathbb{K})$ denotes the elementary matrix with 1 in the position $(k+1,k)$ and zeros everywhere else. Moreover, since $Y^{(i)}$ is given by the principal $i\times i$ submatrix of $D(\vec{u})$ for all $i\in[1,n-1]$, the block structure of the braid matrices guarantees that:
\begin{itemize}
\justifying
\item For all $i\in [1, k-1]$, 
\begin{equation*}
Y^{(i)}= \text{ principal $i\times i$ submatrix of } ~\big(B^{(n)}_{k}(z')\big)^{-1}\cdot D(\vec{u})\cdot B^{(n)}_{k}(z)\, .    
\end{equation*}
\item For all $i\in [k+1, n-1]$, 
\begin{equation*}
\big(B^{(i)}_{k}(z')\big)^{-1} \cdot Y^{(i)} \cdot B^{(i)}_{k}(z)= \text{ principal $i\times i$ submatrix of } ~\big(B^{(n)}_{k}(z')\big)^{-1}\cdot D(\vec{u})\cdot B^{(n)}_{k}(z)\,  .    
\end{equation*}
\end{itemize}

Finally, note that, with respect to the bases $\big\{\hat{\mathbf{f}}^{(i)}[\sigma_{1},z]\big\}_{i=1}^{n}$ and $\big\{\hat{\mathbf{g}}^{(i)}[\sigma_{1},z']\big\}_{i=1}^{n}$, 
\begin{equation*}
\begin{aligned}
\tensor[_{\hat{\mathbf{g}}^{(i)}[\sigma_{k},z'] }]{ \big[\, T_{\lambda}^{\,(i)}\,\big] }{_{\hat{\mathbf{f}}^{(i)}[\sigma_{k},z]}}&= \tensor[_{\hat{\mathbf{g}}^{(i)}}]{ \big[\, T_{\lambda}^{(i)}\,\big] }{_{\hat{\mathbf{f}}^{(i)}}}\, ,    \quad \text{for all $i\in [1,k-1]$}\, ,\\[4pt]
\tensor[_{\hat{\mathbf{g}}^{(i)}[\sigma_{k},z'] }]{ \big[\, T_{\lambda}^{\,(i)}\,\big] }{_{\hat{\mathbf{f}}^{(i)}[\sigma_{k},z]}}&= \big(B^{(i)}_{k}(z')\big)^{-1} \cdot \tensor[_{\hat{\mathbf{g}}^{(i)}}]{ \big[\, T_{\lambda}^{(i)}\,\big] }{_{\hat{\mathbf{f}}^{(i)}}} \cdot B^{(i)}_{k}(z)\, ,    \quad \text{for all $i\in [k+1,n]$}\, .\\[4pt]  
\end{aligned}
\end{equation*}
Hence, when restricting to $\mathrm{Ext}^{1}(\sh{F},\sh{G})$, we obtain that
\begin{equation*}
\begin{aligned}
\tensor[_{\hat{\mathbf{g}}^{(n)}[\sigma_{k},z'] }]{ \big[\, T_{\lambda}^{\,(n)}\,\big] }{_{\hat{\mathbf{f}}^{(n)}[\sigma_{k},z]}} &\equiv D(\pi_{k}(\vec{u}))\,, \\[2pt]
\tensor[_{\hat{\mathbf{g}}^{(k)}[\sigma_{k},z'] }]{ \big[\, \widetilde{T}_{\lambda}^{\,(k)}\,\big] }{_{\hat{\mathbf{f}}^{(k)}[\sigma_{k},z]}}&\equiv \,\text{principal $k\times k$ submatrix of } D(\pi_{k}(\vec{u}))\, ,\\[2pt]
\tensor[_{\hat{\mathbf{g}}^{(i)}[\sigma_{k},z'] }]{ \big[\, T_{\lambda}^{\,(i)}\,\big] }{_{\hat{\mathbf{f}}^{(i)}[\sigma_{k},z]}}& \equiv \,\text{principal $i\times i$ submatrix of } D(\pi_{k}(\vec{u}))\, , \qquad \text{for all $i\in[1,n-1]\setminus \{k\}$}\, ,\\[4pt]
\end{aligned}
\end{equation*}
This completes the proof. 
\end{proof}

Having established the preceding lemmas, we are now in a position to state and prove the second part of our first main theorem.

\begin{theorem}\label{Theorem: Ext1 as the cokernel of delta_F,G}
\textbf{Setup}: Let $\beta=\sigma_{i_{1}}\cdots\sigma_{i_{\ell}}\in\mathrm{Br}^{+}_{n}$ be a positive braid word, $\mathcal{U}_{\Lambda(\beta)}:=\big\{U_{0}, U_{\mathrm{B}}, U_{\mathrm{L}}, U_{\mathrm{R}}, U_{\mathrm{T}}\big\}$ the open cover of $\mathbb{R}^{2}$ from Construction~\eqref{Cons: Finite open cover for R^2}, and $\sh{F}$ and $\sh{G}$ objects of the category $\ccs{1}{\beta}$. 

\noindent
$\star$ \emph{Local descriptions on $U_{\mathrm{T}}$ and $U_{\mathrm{L}}$}: According to Lemma~\eqref{Lemma: linear map description of an object on the regions U_T, U_L, and U_R}, $\sh{F}$ and $\sh{G}$ have the following local descriptions: 
\begin{itemize}
\justifying
\item  On $U_{\mathrm{T}}$, $\sh{F}$ and $\sh{G}$ are specified by two collections of $n-1$ surjective linear maps 
\begin{equation*}
\big\{\psi_{\sh{F}}^{(i)}:\mathbb{K}^{i+1}\to \mathbb{K}^{i}\big\}_{i=1}^{n-1}\, , \quad \text{and} \quad \big\{\psi_{\sh{G}}^{(i)}:\mathbb{K}^{i+1}\to \mathbb{K}^{i}\big\}_{i=1}^{n-1}\, ,    
\end{equation*}
respectively. For a schematic illustration of a generic representative of one of these sheaves on $U_{\mathrm{T}}$, see Figure~\eqref{Fig: an object F in the region U_T}.

\item On $U_{\mathrm{L}}$, $\sh{F}$ and $\sh{G}$ are specified by two collections of $n-1$ injective linear maps 
\begin{equation*}
\big\{\phi_{\sh{F}}^{(i)}:\mathbb{K}^{i}\to \mathbb{K}^{i+1}\big\}_{i=1}^{n-1}\, , \quad \text{and} \quad \big\{\phi_{\sh{G}}^{(i)}:\mathbb{K}^{i}\to \mathbb{K}^{i+1}\big\}_{i=1}^{n-1} \, ,    
\end{equation*}
respectively. For a schematic illustration of a generic representative of one of these sheaves on $U_{\mathrm{L}}$, see Figure~\eqref{Fig: an object F in the region U_L}.

\item \textbf{Compatibility conditions}: For each $i\in[1,n-1]$, 
\begin{equation*}
\psi^{(i)}_{\sh{F}}\circ \phi^{(i)}_{\sh{F}}=\mathrm{id}_{\mathbb{K}^{i}}\, , \quad \text{and} \quad \psi^{(i)}_{\sh{G}}\circ \phi^{(i)}_{\sh{G}}=\mathrm{id}_{\mathbb{K}^{i}}\, .    
\end{equation*}
\end{itemize}

\noindent
$\star$ \emph{Global flag data}: By Theorem~\eqref{Flags and constructible sheaves}, $\sh{F}$ and $\sh{G}$ are geometrically characterized by two sequence of complete flags $\big\{\fl{F}_{j}\big\}_{j=0}^{\ell+1}$ and $\big\{\fl{G}_{j}\big\}_{j=0}^{\ell+1}$ in $\mathbb{K}^{n}$, respectively, such that: 
\begin{itemize}
\item $\fl{F}_{0}$ is completely opposite to both $\fl{F}_{1}$ and $\fl{F}_{\ell+1}$, and for each $j\in[1,\ell]$, $\fl{F}_{j}$ is in $s_{i_{j}}$-relative position with respect to $\fl{F}_{j+1}$.
\item $\fl{G}_{0}$ is completely opposite to both $\fl{G}_{1}$ and $\fl{G}_{\ell+1}$, and for each $j\in[1,\ell]$, $\fl{G}_{j}$ is in $s_{i_{j}}$-relative position with respect to $\fl{G}_{j+1}$.
\end{itemize}
In particular, by Lemma~\eqref{lemma for F in the region U_{R}}, we know that:
\begin{equation*}
\begin{aligned}
\fl{F}_{0}:=\prescript{}{\mathcal{K}\,}{\fl{F}}\big(\psi_{\sh{F}}^{(1)},\dots, \psi_{\sh{F}}^{(n-1)}\big)\, , \quad \text{and} \quad
\fl{F}_{1}:=\prescript{}{\mathcal{I}\,}{\fl{F}}\big(\phi_{\sh{F}}^{(1)},\dots, \phi_{\sh{F}}^{(n-1)}\big)\, ,\\[6pt]    
\fl{G}_{0}:=\prescript{}{\mathcal{K}\,}{\fl{F}}\big(\psi_{\sh{G}}^{(1)},\dots, \psi_{\sh{G}}^{(n-1)}\big)\, , \quad \text{and} \quad
\fl{G}_{1}:=\prescript{}{\mathcal{I}\,}{\fl{F}}\big(\phi_{\sh{G}}^{(1)},\dots, \phi_{\sh{G}}^{(n-1)}\big)\, ,\\
\end{aligned}
\end{equation*}
are the type $\mathcal{K}$ and type $\mathcal{I}$ flags in $\mathbb{K}^{n}$ associated with $\big\{\psi_{\sh{F}}^{(i)}\big\}_{i=1}^{n-1}$, $\big\{\phi_{\sh{F}}^{(i)}\big\}_{i=1}^{n-1}$, $\big\{\psi_{\sh{G}}^{(i)}\big\}_{i=1}^{n-1}$, and $\big\{\phi_{\sh{G}}^{(i)}\big\}_{i=1}^{n-1}$, respectively (cf. Definition~\eqref{Def:flags and adapted bases}--\eqref{Def: type I flag}--\eqref{Def: type K flag}). 

\vspace{4pt}
\noindent
$\star$ \emph{Main assumption} (Adapted bases): Let \,$\hat{\mathbf{f}}^{(n)}, \, \hat{\mathbf{g}}^{(n)} $ be bases for $\mathbb{K}^{n}$, and let $\vec{z}=(z_{1},\dots, z_{\ell}), \; \vec{z}\,'=(z'_{1},\dots, z'_{\ell}) \in X(\beta,\mathbb{K})$ be points such that the pairs $(\,\hat{\mathbf{f}}^{(n)}, \,\vec{z}\, )$ and $(\,\hat{\mathbf{g}}^{(n)}, \,\vec{z}\,'\, )$ algebraically characterizes $\sh{F}$ and $\sh{G}$ according to Theorem~\eqref{Prop. for sheaves and braid matrices}, respectively. In this setting, we have that: 
\begin{itemize}
\item Relative to the basis $\hat{\mathbf{f}}^{(n)}$ for $\mathbb{K}^{n}$: $\fl{F}_{0}$ and $\fl{F}_{1}$ are the anti-standard and standard flags, respectively.  For each $j\in[1,\ell]$, the flag $\fl{F}_{j+1}$ is represented by the path matrix $P_{\beta_{j}}(\vec{z}_{j})=B^{(n)}_{i_{1}}(z_{1})\cdots B^{(n)}_{i_{j}}(z_{j})\,\in\mathrm{GL}(n,\mathbb{K})$ associated with the truncated braid word $\beta_{j}=\sigma_{i_{1}}\cdots \sigma_{i_{j}}\in\mathrm{Br}^{+}_{n}$ and the truncated tuple $\vec{z}_{j}=(z_{1},\dots, z_{j})\in\mathbb{K}^{j}_{\mathrm{std}}$.  

\item Relative to the basis $\hat{\mathbf{g}}^{(n)}$ for $\mathbb{K}^{n}$: $\fl{G}_{0}$ and $\fl{G}_{1}$ are the anti-standard and standard flags, respectively. For each $j\in[1,\ell]$, the flag $\fl{G}_{j+1}$ is represented by the path matrix $P_{\beta_{j}}(\vec{z}\,'_{j})=B^{(n)}_{i_{1}}(z'_{1})\cdots B^{(n)}_{i_{j}}(z'_{j})\,\in\mathrm{GL}(n,\mathbb{K})$ associated with the truncated braid word $\beta_{j}$ and the truncated tuple $\vec{z}\,'_{j}=(z'_{1},\dots, z'_{j})\in\mathbb{K}^{j}_{\mathrm{std}}$.  
\end{itemize}

Furthermore, under this assumption, the \textbf{compatibility conditions} and an inductive argument ensure that, for each $i\in [1,n-1]$, there are unique bases $\hat{\mathbf{f}}^{(i)}:=\big\{ \hat{f}^{(i)}_{j}\big\}_{j=1}^{i}$ and $\hat{\mathbf{g}}^{(i)}:=\big\{ \hat{g}^{(i)}_{j}\big\}_{j=1}^{i}$ for $\mathbb{K}^{i}$ such that (cf. Definition~\eqref{Def:flags and adapted bases}--\eqref{Def: adapted bases I}--\eqref{Def: adapted bases II}): 
\begin{itemize}
\item The collection $\big\{\hat{\mathbf{f}}^{(i)}\big\}_{i=1}^{n}$ is a system of bases adapted to both $\big\{\psi_{\sh{F}}^{(i)}\big\}_{i=1}^{n-1}$ and $\big\{\phi_{\sh{F}}^{(i)}\big\}_{i=1}^{n-1}$.
\item The collection $\big\{\hat{\mathbf{g}}^{(i)}\big\}_{i=1}^{n}$ is a system of bases adapted to both $\big\{\psi_{\sh{G}}^{(i)}\big\}_{i=1}^{n-1}$ and $\big\{\phi_{\sh{G}}^{(i)}\big\}_{i=1}^{n-1}$.
\end{itemize}
Building on this, for each $j\in[1,\ell]$, we denote by $\big\{\hat{\mathbf{f}}^{(i)}[\beta_{j},\vec{z}_{j} ]\big\}_{i=1}^{n}$ and $\big\{\hat{\mathbf{g}}^{(i)}[\beta_{j},\vec{z}\,'_{j} ]\big\}_{i=1}^{n}$ the braid-transformed bases obtained from $\big\{\hat{\mathbf{f}}^{(i)} \big\}_{i=1}^{n}$ and $\big\{\hat{\mathbf{g}}^{(i)} \big\}_{i=1}^{n}$ via the truncated braid word $\beta_{j}$ and the truncated tuples $\vec{z}_{j}$ and $\vec{z}\,'_{j}$, respectively (cf. Definition~\eqref{Def: braid transformation of bases}). Accordingly, by Theorem~\eqref{Theorem: adapted bases for an object at a region R_j}, we have that: 
\begin{itemize}
\justifying
\item $\big(\big\{\hat{\mathbf{f}}^{(i)} \big\}_{i=1}^{n}, z_{1}\big)$ is a system of bases adapted to $\sh{F}$ on $R_{1}$.
\item For each $j\in[1,\ell-1]$, $\big(\big\{\hat{\mathbf{f}}^{(i)}[\beta_{j},\vec{z}_{j} ]\big\}_{i=1}^{n}, z_{j+1}\big)$ is a system of bases adapted to $\sh{F}$ on $R_{j+1}$.
\item $\big(\big\{\hat{\mathbf{g}}^{(i)} \big\}_{i=1}^{n}, z'_{1}\big)$ is a system of bases adapted to $\sh{G}$ on $R_{1}$.
\item For each $j\in[1,\ell-1]$, $\big(\big\{\hat{\mathbf{g}}^{(i)}[\beta_{j},\vec{z}\,'_{j} ]\big\}_{i=1}^{n}, z'_{j+1}\big)$ is a system of bases adapted to $\sh{G}$ on $R_{j+1}$.
\end{itemize}

\vspace{4pt}
\noindent
$\star$ \emph{Main Conclusion}: Following Definition~\eqref{Def: linear map delta}, let \,$\delta_{\sh{F},\sh{G}}: \mathbb{K}^{n}_{\mathrm{std}}\to \mathbb{K}^{\ell}_{\mathrm{std}}$\, be the linear map associated with the pair $(\sh{F}, \sh{G})$. Then, under the given \textbf{setup}, there is an isomorphism of vector spaces 
\begin{equation*}
\mathrm{Ext}^{1}(\sh{F},\sh{G})\cong \mathrm{coker} \, \delta_{\sh{F},\sh{G}}\, .
\end{equation*}
\end{theorem}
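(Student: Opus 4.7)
The plan is to apply Lemma~\eqref{Key lemma for Ext^{1} for sheaves} to identify $\mathrm{Ext}^{1}(\sh{F},\sh{G})$ with equivalence classes of extensions of $\sh{F}$ by $\sh{G}$, and then to reduce any such extension to a normal form whose data is encoded by a tuple $\vec{p}\in \mathbb{K}^{\ell}_{\mathrm{std}}$, one entry per crossing of $\beta$. More precisely, starting from an arbitrary extension $\sh{H}$ of $\sh{F}$ by $\sh{G}$, I would first invoke Lemma~\eqref{Lemma: Ext1 on U_T and U_L} to replace $\sh{H}$ by an equivalent representative whose characteristic maps vanish identically on $U_{\mathrm{T}}\cup U_{\mathrm{L}}$. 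By Observation~\eqref{Obs: objects on U_T and vertical straps} and the sheaf axioms, after gluing across $U_{\mathrm{L}}\cap R_{1}$ the characteristic maps of $\sh{H}$ also vanish on the left half of $R_{1}$, which places us in the hypotheses of Lemma~\eqref{Lemma: Ext1 on R_j for k=1} or Lemma~\eqref{Lemma: Ext1 on R_j for k geq 2} depending on whether $i_{1}=1$ or $i_{1}\geq 2$.

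Next I would iterate this normalization strap by strap. Applying at $R_{j}$ the relevant one among Lemmas~\eqref{Lemma: Ext1 on R_j for k=1} and~\eqref{Lemma: Ext1 on R_j for k geq 2}, with adapted bases $\big(\{\hat{\mathbf{f}}^{(i)}[\beta_{j-1},\vec{z}_{j-1}]\}_{i=1}^{n},z_{j}\big)$ and $\big(\{\hat{\mathbf{g}}^{(i)}[\beta_{j-1},\vec{z}\,'_{j-1}]\}_{i=1}^{n},z'_{j}\big)$, I would show that the only residual freedom in $\sh{H}$ across the crossing $\sigma_{i_{j}}$ is a single scalar $p_{j}\in\mathbb{K}$ sitting in the $(i_{j}+1,i_{j})$-entry of the matrix representing $\widetilde{\phi}^{\,(i_{j})}_{\sh{H}}$ in the braid-transformed bases. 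The characteristic maps on the left half of $R_{j+1}$ then again vanish identically, so the inductive step is set up for the next strap. This produces an assignment $\sh{H}\mapsto \vec{p}\in\mathbb{K}^{\ell}_{\mathrm{std}}$, and conversely every choice of $\vec{p}$ gives a well-defined extension by building the local data on each strap using the algebraic local models of Lemmas~\eqref{Lemma: matrix local model for sigma_1} and~\eqref{Lemma: matrix local model for sigma_k} and gluing via the sheaf axioms; compatibility at the right cusps (i.e.\ on $U_{\mathrm{R}}$) is automatic from Observation~\eqref{Obs: elements of Ext1 at the local models} since the cusp constraint imposes no condition on characteristic maps.

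The key step, and the main technical hurdle, is to correctly identify the redundancy introduced when two such normal-form representatives $\sh{H}_{\vec{p}}$ and $\sh{H}_{\vec{p}\,'}$ represent the same equivalence class in $\mathrm{Ext}^{1}(\sh{F},\sh{G})$. Given a sheaf isomorphism $\lambda:\sh{H}_{\vec{p}}\to \sh{H}_{\vec{p}\,'}$, Lemma~\eqref{Lemma: Ext0 on U_T and U_L} (applied here not to $\mathrm{Ext}^{0}(\sh{F},\sh{G})$ but to the endomorphism data of $\lambda$ on $U_{\mathrm{T}}\cup U_{\mathrm{L}}$) forces $\tensor[_{\hat{\mathbf{g}}^{(n)}}]{[T^{(n)}_{\lambda}]}{_{\hat{\mathbf{f}}^{(n)}}}=D(\vec{u})$ for some $\vec{u}=(u_{1},\dots,u_{n})\in\mathbb{K}^{n}_{\mathrm{std}}$, with each $T^{(i)}_{\lambda}$ the corresponding principal submatrix. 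Propagating this data across each strap $R_{j}$ via Lemma~\eqref{Lemma: Equivalence conditions for Ext^{1} on R_j with k=1} or Lemma~\eqref{Lemma: Equivalence conditions for Ext^{1} on R_j with k geq 2}, I obtain simultaneously that
\begin{equation*}
p'_{j}=p_{j}+d(z'_{j},\pi_{\beta_{j-1}}(\vec{u}),z_{j}), \qquad j\in[1,\ell],
\end{equation*}
where $d(z'_{j},\pi_{\beta_{j-1}}(\vec{u}),z_{j})$ is the $(i_{j}+1,i_{j})$-entry of $\bigl(B^{(n)}_{i_{j}}(z'_{j})\bigr)^{-1}D(\pi_{\beta_{j-1}}(\vec{u}))B^{(n)}_{i_{j}}(z_{j})$, and simultaneously that the diagonal data propagates as $D(\pi_{\beta_{j}}(\vec{u}))$ into the next strap, closing the induction. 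Comparing with Definition~\eqref{Def: linear map delta}, the correction vector is exactly $\delta_{\sh{F},\sh{G}}(\vec{u})$, so $\vec{p}\sim\vec{p}\,'$ if and only if $\vec{p}\,'-\vec{p}\in\mathrm{im}\,\delta_{\sh{F},\sh{G}}$.

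Putting everything together, the map $\sh{H}_{\vec{p}}\mapsto [\vec{p}\,]$ descends to a well-defined linear isomorphism
\begin{equation*}
\mathrm{Ext}^{1}(\sh{F},\sh{G})\;\cong\;\mathbb{K}^{\ell}_{\mathrm{std}}\big/\mathrm{im}\,\delta_{\sh{F},\sh{G}}\;=\;\mathrm{coker}\,\delta_{\sh{F},\sh{G}},
\end{equation*}
as desired. I anticipate that the bookkeeping of diagonal propagation across the straps---namely, verifying that the block structure of the braid matrices ensures the principal-submatrix hypothesis of Lemmas~\eqref{Lemma: Ext1 on R_j for k=1} and~\eqref{Lemma: Ext1 on R_j for k geq 2} persists from one strap to the next after the $\pi_{i_{j}}$-permutation---will be the subtle part, but this is precisely what the braid-transformed bases were designed to handle, and it mirrors the structure already proved for $\mathrm{Ext}^{0}$ in Theorem~\eqref{Theorem: Ext0 as the kernel of delta_F,G}.
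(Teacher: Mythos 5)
Your proposal is correct and follows essentially the same route as the paper's proof: normalize any extension to be trivial on $U_{\mathrm{T}}\cup U_{\mathrm{L}}$ via Lemma~\eqref{Lemma: Ext1 on U_T and U_L}, propagate strap by strap using Lemmas~\eqref{Lemma: Ext1 on R_j for k=1} and~\eqref{Lemma: Ext1 on R_j for k geq 2} to extract one scalar per crossing, and then use the diagonal form $D(\vec{u})$ of the gluing isomorphism together with Lemmas~\eqref{Lemma: Equivalence conditions for Ext^{1} on R_j with k=1} and~\eqref{Lemma: Equivalence conditions for Ext^{1} on R_j with k geq 2} to identify the redundancy with $\mathrm{im}\,\delta_{\sh{F},\sh{G}}$. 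The only cosmetic difference is that the block-diagonal structure of $\lambda$ is supplied in the paper by the second part of Lemma~\eqref{Lemma: Ext1 on U_T and U_L} rather than by Lemma~\eqref{Lemma: Ext0 on U_T and U_L}, but the underlying argument is the same.
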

\begin{proof}
To begin, let $\mathcal{S}_{\Lambda(\beta)}$ be the stratification of $\mathbb{R}^{2}$ induced by $\Lambda(\beta)\subset \big(\mathbb{R}^{3},\xi_{\mathrm{std}}\big)$, $\mathcal{U}_{\Lambda(\beta)}=\big\{U_{0}, U_{\mathrm{B}}, U_{\mathrm{L}}, U_{\mathrm{R}}, U_{\mathrm{T}}\big\}$ the open cover of $\mathbb{R}^{2}$ from Construction~\eqref{Cons: Finite open cover for R^2}, and $\mathcal{R}_{\Lambda(\beta)}=\big\{R_{j}\big\}_{j=1}^{\ell}$ the partition of $U_{\mathrm{B}}$ into $\ell$ open vertical straps from Construction~\eqref{Cons: Definition of the vertical straps}.    

Fix an equivalence class $\xi\in\mathrm{Ext}^{1}(\sh{F},\sh{G})$. Let $\sh{H}$ and $\sh{H}'$ be two equivalent extensions of $\sh{F}$ by $\sh{G}$ representing $\xi$, and let $\lambda:\sh{H}\to \sh{H}'$ be a sheaf isomorphism realizing their equivalence. 

Then, building on Observation~\eqref{Obs: elements of Ext1 at the local models}, we have that:     
\begin{itemize}
\justifying
\item On $U_{\mathrm{T}}$, $\sh{H}$ and $\sh{H}'$ are determined by two collections of $n-1$ characteristic maps
\begin{equation*}
\big\{ \psi^{(i)}_{\sh{H}}: \mathbb{K}^{i+1}\to \mathbb{K}^{i}\big\}_{i=1}^{n-1}\, , \quad \text{and} \quad \big\{ \psi^{(i)}_{\sh{H}'}: \mathbb{K}^{i+1}\to \mathbb{K}^{i}\big\}_{i=1}^{n-1}\, ,
\end{equation*}
while $\lambda$ is characterized by a collection of $n$ linear maps $\big\{ S^{(i)}_{\lambda}: \mathbb{K}^{i}\to \mathbb{K}^{i}\big\}_{i=1}^{n}$ such that
\begin{equation*}
 \psi^{(i)}_{\sh{H}'}= \psi^{(i)}_{\sh{H}} + S_{\lambda}^{(i)}\circ\psi^{(i)}_{\sh{F}}-\psi^{(i)}_{\sh{G}}\circ S^{(i+1)}_{\lambda}\, ,    
\end{equation*}
for all $i\in[1,n-1]$. 

\item On $U_{\mathrm{B}}$, $\sh{H}$ and $\sh{H}'$ are determined by two collections of $n-1$ characteristic maps
\begin{equation*}
\big\{ \phi^{(i)}_{\sh{H}}: \mathbb{K}^{i}\to \mathbb{K}^{i+1}\big\}_{i=1}^{n-1}\, , \quad \text{and} \quad \big\{ \phi^{(i)}_{\sh{H}'}: \mathbb{K}^{i}\to \mathbb{K}^{i+1}\big\}_{i=1}^{n-1}\, ,
\end{equation*}
while $\lambda$ is characterized by a collection of $n$ linear maps $\big\{ T^{(i)}_{\lambda}: \mathbb{K}^{i}\to \mathbb{K}^{i}\big\}_{i=1}^{n}$ such that
\begin{equation*}
 \phi^{(i)}_{\sh{H}'}= \phi^{(i)}_{\sh{H}} + T_{\lambda}^{(i+1)}\circ\phi^{(i)}_{\sh{F}}-\phi^{(i)}_{\sh{G}}\circ T^{(i)}_{\lambda}\, ,    
\end{equation*}
for all $i\in[1,n-1]$.  
\end{itemize}

According to Lemma~\eqref{Lemma: Ext1 on U_T and U_L}, any extension of $\sh{F}$ by $\sh{G}$ is locally equivalent to the trivial extension $\mathbf{0}_{\mathrm{Ext}}$ on $U_{\mathrm{T}}\;\cup\;U_{\mathrm{L}}$, and hence we choose representatives $\sh{H}$ and $\sh{H}'$ such that $\phi^{(i)}_{\sh{H}}= \phi^{(i)}_{\sh{H}'}= 0$ and $\psi^{(i)}_{\sh{H}}= \psi^{(i)}_{\sh{H}'}= 0$, for all $i\in [1,n-1]$. Under these conditions, Lemma~\eqref{Lemma: Ext1 on U_T and U_L} further asserts that the maps characterizing $\lambda$ on $U_{\mathrm{T}}\;\cup\;U_{\mathrm{L}}$ satisfy the following properties: 
\begin{itemize}
\item For all $i\in [1,n]$, $S^{(i)}_{\lambda}=T^{(i)}_{\lambda}$.
\item With respect to the bases $\big\{\hat{\mathbf{f}}^{(i)}\big\}_{i=1}^{n}$ and $\big\{\hat{\mathbf{g}}^{(i)}\big\}_{i=1}^{n}$, 
\begin{equation*}
\begin{aligned}
 \tensor[_{\hat{\mathbf{g}}^{(n)}}]{ \big[\, T_{\lambda}^{(n)}\,\big] }{_{\hat{\mathbf{f}}^{(n)}}} &\equiv D(\vec{u}),~\text{ for some $\vec{u}=(u_{1},\dots, u_{n})\in \mathbb{K}^{n}_{\mathrm{std}}$}\, ,\\[2pt]
\tensor[_{\hat{\mathbf{g}}^{(i)}}]{ \big[\, T_{\lambda}^{(i)}\,\big] }{_{\hat{\mathbf{f}}^{(i)}}}&\equiv \,\text{principal $i\times i$ submatrix of } D(\vec{u})\, , \quad\text{for all  $i\in[1,n-1]$}\, .
\end{aligned}    
\end{equation*}
\end{itemize}

Next, consider $R_{1}$, the vertical strap in $\mathbb{R}^{2}$ containing the first crossing of $\beta$, namely $\sigma_{i_{1}}$, and denote by $k=i_{1}\in [1,n-1]$ the index of $\sigma_{i_{1}}$. By construction, $R_{1}$ is the vertical strap immediately to the right of $U_{\mathrm{L}}$, and hence, the constructibility of $\sh{F}$ and $\sh{G}$ with respect to $\mathcal{S}_{\Lambda(\beta)}$ leads to one of the following two cases:
\begin{itemize}
\justifying
\item \textit{Case 1}: Suppose that $k=1$. Then, on $R_{1}$, $\sh{F}$ and $\sh{G}$ are characterized by two collections of $n$ injective linear maps: 
\begin{equation*}
\begin{aligned}
&\big\{\phi^{\,(i)}_{\sh{F}}:\mathbb{K}^{i}\to \mathbb{K}^{i+1}\,\big\}_{i=1}^{n-1}\,\cup\,\big\{\widetilde{\phi}^{\,(1)}_{\sh{F}}:\mathbb{K}^{1}\to \mathbb{K}^{2}\, \big\}\, ,\\[6pt]
&\big\{\phi^{\,(i)}_{\sh{G}}:\mathbb{K}^{i}\to \mathbb{K}^{i+1}\,\big\}_{i=1}^{n-1}\,\cup\,\big\{\widetilde{\phi}^{\,(1)}_{\sh{G}}:\mathbb{K}^{1}\to \mathbb{K}^{2}\, \big\} \, .
\end{aligned}    
\end{equation*}
respectively. Thus, in this local configuration, Lemma~\eqref{Lemma: Ext1 on R_j for k=1} asserts that, on $R_{1}$, $\sh{H}$ and $\sh{H}'$ are specified by two collections of $n$ characteristic maps
\begin{equation*}
\begin{aligned}
\big\{\phi^{(i)}_{\sh{H}}:\mathbb{K}^{i}\to \mathbb{K}^{i+1}\,\big\}_{i=1}^{n-1}\,&\cup\,\big\{ \widetilde{\phi}^{\,(1)}_{\sh{H}}:\mathbb{K}^{1}\to \mathbb{K}^{2}\, \big\}\, , \\[2pt]
\big\{\phi^{(i)}_{\sh{H}'}:\mathbb{K}^{i}\to \mathbb{K}^{i+1}\,\big\}_{i=1}^{n-1}\,&\cup\,\big\{ \widetilde{\phi}^{\,(1)}_{\sh{H}'}:\mathbb{K}^{1}\to \mathbb{K}^{2}\, \big\}\, , 
\end{aligned}
\end{equation*}
while $\lambda$ is characterized by a collection of $n+1$ linear maps $\big\{ T^{(i)}_{\lambda}: \mathbb{K}^{i}\to \mathbb{K}^{i} \big\}\;\cup\; \big\{ \widetilde{T}^{(1)}:\mathbb{K}^{1}\to \mathbb{K}^{1} \big\}$ such that
\begin{equation*}
\begin{aligned}
\phi^{(i)}_{\sh{H}'} &= \phi^{(i)}_{\sh{H}} + T^{(i+1)}_{\lambda}\circ \phi^{(i)}_{\sh{F}}- \phi^{(i)}_{\sh{G}}\circ T^{(i)}_{\lambda} \, , \quad \text{for all $i\in [1,n-1]$,} \\[2pt] 
\widetilde{\phi}^{\,(1)}_{\sh{H}'} &= \widetilde{\phi}^{\,(1)}_{\sh{H}} + T^{(2)}_{\lambda}\circ \widetilde{\phi}^{\,(1)}_{\sh{F}}- \widetilde{\phi}^{\,(1)}_{\sh{G}}\circ \widetilde{T}^{\,(1)}_{\lambda} \, .
\end{aligned}    
\end{equation*}

\item \textit{Case 2}: Suppose that $k\geq 2$. Then, on $R_{1}$, $\sh{F}$ and $\sh{G}$ are characterized by two collections of injective linear maps: 
\begin{equation*}
\begin{aligned}
&\big\{ \phi^{\,(i)}_{\sh{F}}:\mathbb{K}^{i}\to \mathbb{K}^{i+1}\,\big\}_{i=1}^{n-1}\,\cup\,\big\{ \widetilde{\phi}^{\,(k-1)}_{\sh{F}}:\mathbb{K}^{k-1}\to\mathbb{K}^{k}\,, ~\widetilde{\phi}^{\,(k)}_{\sh{F}}:\mathbb{K}^{k}\to \mathbb{K}^{k+1}\, \big\}\, ,\\[6pt]
&\big\{ \phi^{\,(i)}_{\sh{G}}:\mathbb{K}^{i}\to \mathbb{K}^{i+1}\,\big\}_{i=1}^{n-1}\,\cup\,\big\{ \widetilde{\phi}^{\,(k-1)}_{\sh{G}}:\mathbb{K}^{k-1}\to \mathbb{K}^{k}\,, ~\widetilde{\phi}^{\,(k)}_{\sh{G}}:\mathbb{K}^{k}\to \mathbb{K}^{k+1}\, \big\}\, .
\end{aligned}    
\end{equation*}
Thus, in this local configuration, Lemma~\eqref{Lemma: Ext1 on R_j for k geq 2} asserts that, on $R_{1}$, $\sh{H}$ and $\sh{H}'$ are specified by two collections of $n+1$ characteristic maps
\begin{equation*}
\begin{aligned}
\big\{ \phi^{(i)}_{\sh{H}}:\mathbb{K}^{i}\to \mathbb{K}^{i+1} \big\}_{i=1}^{n-1}&\,\cup\,\big\{ \widetilde{\phi}^{\,(k-1)}_{\sh{H}}:\mathbb{K}^{k-1}\to \mathbb{K}^{k}\,, ~ \widetilde{\phi}^{\,(k)}_{\sh{H}}:\mathbb{K}^{k}\to \mathbb{K}^{k+1} \big\}\, , \\[2pt]
\big\{ \phi^{(i)}_{\sh{H}'}:\mathbb{K}^{i}\to \mathbb{K}^{i+1} \big\}_{i=1}^{n-1}&\,\cup\,\big\{ \widetilde{\phi}^{\,(k-1)}_{\sh{H}'}:\mathbb{K}^{k-1}\to \mathbb{K}^{k}\,, ~ \widetilde{\phi}^{\,(k)}_{\sh{H}'}:\mathbb{K}^{k}\to \mathbb{K}^{k+1} \big\}\, ,  
\end{aligned}
\end{equation*}
while $\lambda$ is characterized by a collection of $n+1$ linear maps 
\begin{equation*}
\big\{ T^{(i)}_{\lambda}: \mathbb{K}^{i}\to \mathbb{K}^{i} \big\}_{i=1}^{n}\;\cup\; \big\{ \widetilde{T}^{\,(k)}:\mathbb{K}^{k}\to \mathbb{K}^{k} \big\}    
\end{equation*}
such that:
\begin{equation*}
\begin{aligned}
\phi^{(i)}_{\sh{H}'} &= \phi^{(i)}_{\sh{H}} + T^{(i+1)}_{\lambda}\circ \phi^{(i)}_{\sh{F}}- \phi^{(i)}_{\sh{G}}\circ T^{(i)}_{\lambda} \, , \qquad \text{for all $i\in [1,n-1]$}\, ,\\[4pt] 
\widetilde{\phi}^{\,(k)}_{\sh{H}'} &= \widetilde{\phi}^{\,(k)}_{\sh{H}} + T^{(k+1)}_{\lambda}\circ \widetilde{\phi}^{\,(k)}_{\sh{F}}- \widetilde{\phi}^{\,(k)}_{\sh{G}}\circ \widetilde{T}^{\,(k)}_{\lambda} \, , \\[4pt]
\widetilde{\phi}^{\,(k-1)}_{\sh{H}'} &= \widetilde{\phi}^{\,(k-1)}_{\sh{H}} + \widetilde{T}^{\,(k)}_{\lambda}\circ \widetilde{\phi}^{\,(k-1)}_{\sh{F}}- \widetilde{\phi}^{\,(k-1)}_{\sh{G}}\circ T^{\,(k-1)}_{\lambda} \, .    
\end{aligned}    
\end{equation*}
\end{itemize}
In any of the above cases, we have that $\big\{\phi^{(i)}_{\sh{F}}\big\}_{i=1}^{n-1}$, $\big\{\phi^{(i)}_{\sh{G}}\big\}_{i=1}^{n-1}$, $\big\{\phi^{(i)}_{\sh{H}}\big\}_{i=1}^{n-1}$, $\big\{\phi^{(i)}_{\sh{H}'}\big\}_{i=1}^{n-1}$, and $\big\{T^{(i)}_{\lambda}\big\}_{i=1}^{n}$ are precisely the linear maps determining $\sh{F}$, $\sh{G}$, $\sh{H}'$, $\sh{H}$, and $\lambda$ on $U_{\mathrm{L}}$, respectively. In particular, given our choice of representatives $\sh{H}$ and $\sh{H}'$, we have that $\phi^{(i)}_{\sh{H}}= \phi^{(i)}_{\sh{H}'}=0$, for all $i\in [1,n-1]$.

Now, let $\vec{z}=(z_{1},\dots, z_{\ell})$ and $\vec{z}\,'=(z'_{1},\dots, z'_{\ell})$ be the points in the braid variety $X(\beta,\mathbb{K})$ that algebraically characterize $\sh{F}$ and $\sh{G}$ in accordance with Theorem~\eqref{Prop. for sheaves and braid matrices}, respectively. By Theorem~\eqref{Theorem: adapted bases for an object at a region R_j}, we know that: 
\begin{itemize}
\justifying
\item $\big(\big\{\hat{\mathbf{f}}^{(i)} \big\}_{i=1}^{n}, z_{1}\big)$ is a system of bases adapted to $\sh{F}$ on $R_{1}$.
\item $\big(\big\{\hat{\mathbf{g}}^{(i)} \big\}_{i=1}^{n}, z'_{1}\big)$ is a system of bases adapted to $\sh{G}$ on $R_{1}$.
\end{itemize}
Then, a direct application of either Lemma~\eqref{Lemma: Ext1 on R_j for k=1} or Lemma~\eqref{Lemma: Ext0 for R_j for k geq 2} depending on the value of $k$, considering that $k=i_{1}$, implies that:
\begin{itemize}
\justifying    
\item \textbf{Parametrization of $\sh{H}$ and $\sh{H}'$ at $\sigma_{i_{1}}$}: On $R_{1}$, $\sh{H}$ and $\sh{H}'$ are determined by a pair of parameters $a_{1}, a_{1}' \in \mathbb{K}$, respectively, such that $a'_{1}=a_{1}+\delta_{1}(z_{1}', \vec{u}, z)$, where the  parameter controlling the gauge freedom in $\mathrm{Ext}^{1}(\sh{F}, \sh{G})$ after the crossing in $R_{1}$ is given by 
\begin{equation*}
\delta_{1}(z'_{1}, \vec{u}, z_{1}):=\Big[\,\big(B^{(n)}_{i_{1}}(z'_{1})\big)^{-1}\cdot D(\vec{u}) \cdot B^{(n)}_{i_{1}}(z_{1})\,\Big]_{i_{1}+1,i_{1}}\, . 
\end{equation*}
Hence, when restricting to $\mathrm{Ext}^{1}(\sh{F},\sh{G})$, we have that $\delta_{1}(z'_{1}, \vec{u}, z_{1})\equiv 0$.

\item \text{Block diagonal properties of $\lambda$ at $\sigma_{i_{1}}$}: With respect to the bases $\big\{\hat{\mathbf{f}}^{(i)}[\sigma_{i_{1}},z_{1}] \big\}_{i=1}^{n}$ and $\big\{\hat{\mathbf{g}}^{(i)}[\sigma_{i_{1}},z'_{1}] \big\}_{i=1}^{n}$, 
\begin{equation*}
\tensor[_{\hat{\mathbf{g}}^{(n)}[\sigma_{i_{1}},z'_{1}]}]{ \big[\, T_{\lambda}^{(n)}\,\big] }{_{\hat{\mathbf{f}}^{(n)}[\sigma_{i_{1}},z_{1}]}}\equiv D(\pi_{i_{1}}(\vec{u}))\,,    
\end{equation*}
where $\pi_{i_{1}}(\vec{u})$ denotes the permutation of $\vec{u}$ associated with $\sigma_{i_{1}}$. 
\end{itemize}

Applying the same reasoning to $R_{2}$, the vertical strap in $\mathbb{R}^{2}$ containing $\sigma_{i_{2}}$---the second crossing of $\beta$---we obtain that: 
\begin{itemize}
\justifying    
\item \textbf{Parametrization of $\sh{H}$ and $\sh{H}'$ at $\sigma_{i_{2}}$}: On $R_{2}$, $\sh{H}$ and $\sh{H}'$ are determined by a pair of tuples $\vec{a}_{2}:=(a_{1}, a_{2})\in \mathbb{K}^{2}$ and $\vec{a}\,'_{2}:=(a'_{1}, a'_{2}) \in \mathbb{K}^{2}$, respectively, such that $a'_{2}=a_{2}+ \delta_{2}(z'_{2}, \vec{u},z_{2})$,  where: 
\begin{itemize}
\item $a_{1}$ and $a'_{1}$ are the scalar parameterizing $\sh{H}$ and $\sh{H}'$ in $R_{1}$. 
\item The additional parameter controlling the gauge freedom in $\mathrm{Ext}^{1}(\sh{F}, \sh{G})$ after the crossing in $R_{2}$ is given by
\begin{equation*}
\delta_{2}(z'_{2}, \vec{u}, z_{2}):=\Big[\,\big(B^{(n)}_{i_{2}}(z'_{2})\big)^{-1}\cdot D(\pi_{1}(\vec{u})) \cdot B^{(n)}_{i_{2}}(z_{2})\,\Big]_{i_{2}+1, i_{2}}\, .  
\end{equation*}
\end{itemize}
Hence, when restricting to $\mathrm{Ext}^{1}(\sh{F},\sh{G})$, we have that $\delta_{2}(z'_{2}, \vec{u}, z_{2})\equiv 0$. 

\item \text{Block diagonal properties of $\lambda$ at $\sigma_{i_{2}}$}: With respect to $\big\{\hat{\mathbf{f}}^{(i)}[\beta_{2},\vec{z}_{2}] \big\}_{i=1}^{n}$ and $\big\{\hat{\mathbf{g}}^{(i)}[\beta_{2},\vec{z}\,'_{2}] \big\}_{i=1}^{n}$, 
\begin{equation*}
\begin{aligned}
\tensor[_{\hat{\mathbf{g}}^{(n)}[\beta_{2},\vec{z}\,'_{2}]}]{ \big[\, T_{\lambda}^{(n)}\,\big] }{_{\hat{\mathbf{f}}^{(n)}[\beta_{2},\vec{z}_{2}]}}&\equiv D(\pi_{i_{2}}(\pi_{i_{1}}(\vec{u})))\,,   \\[2pt]
&\equiv D(\pi_{\beta_{2}}(\vec{u}))\,, 
\end{aligned} 
\end{equation*}
where $\pi_{\beta_{2}}(\vec{u})$ denotes the permutation of $\vec{u}$ associated with the truncated braid word $\beta_{2}=\sigma_{i_{1}}\cdot \sigma_{i_{2}}$. 
\end{itemize}

Proceeding iteratively through the all vertical straps $R_{j}$, we obtain that $\sh{H}$ and $\sh{H}'$ are globally parametrized by tuples $\vec{a}=(a_{1},\dots, a_{\ell})\in \mathbb{K}^{\ell}_{\mathrm{std}}$ and $\vec{a}\,'=(a'_{1},\dots, a'_{\ell})\in \mathbb{K}^{\ell}_{\mathrm{std}}$ such that $a'_{j}=a_{j}+\delta_{j}(z'_{j}, \vec{u}, z_{j})$, where the gauge parameters are defined by
\begin{equation*}
\delta_{j}(z'_{j}, \vec{u}, z_{j}):=\Big[\, \big(B^{(n)}_{i_{j}}(z'_{j})\big)^{-1}\cdot D(\pi_{\beta_{j-1}}(\vec{u})) \cdot B^{(n)}_{i_{j}}(z_{j})\,\Big]_{i_{j}+1, i_{j}} \,,   
\end{equation*}
with $\pi_{\beta_{j-1}}(\vec{u})$ denoting the permutation of $\vec{u}$ associated with the truncated braid word $\beta_{j-1}=\sigma_{i_{1}}\cdots \sigma_{i_{j-1}}\in \mathrm{Br}^{+}_{n}$, for all $j\in [1,\ell]$. 

Finally, building on Definition~\eqref{Def: linear map delta}, we observe that two extensions $\sh{H}(\vec{a})$ and $\sh{H}'(\vec{a}\,')$ represent the same equivalence class in $\mathrm{Ext}^{1}(\sh{F},\sh{G})$ if and only if the tuples $\vec{a},\, \vec{a}\,'\in \mathbb{K}^{\ell}_{\mathrm{std}}$ that algebraically parametrize them satisfy $\vec{a}\,'-\vec{a}\in \mathrm{im}\,\delta_{\sh{F}, \sh{G}}\subseteq \mathbb{K}^{\ell}_{\mathrm{std}}$. Hence, we conclude that
\begin{equation*}
\mathrm{Ext}^{1}(\sh{F},\sh{G})\cong \mathrm{coker}\,\delta_{\sh{F},\sh{G}}\, .    
\end{equation*}
\end{proof}

As an immediate yet striking consequence of Theorems~\eqref{Theorem: Ext0 as the kernel of delta_F,G} and~\eqref{Theorem: Ext1 as the cokernel of delta_F,G}, we obtain the following result, which not only provides a powerful computational tool but also offers profound insight into the rich topological information that the category $\ccs{1}{\beta}$ carries about the Legendrian link $\Lambda(\beta)\subset (\mathbb{R}^{3}, \xi_{\mathrm{std}})$. 

\begin{theorem}\label{Theorem: Euler characteristic for Ext groups}
Let $\beta=\sigma_{i_{1}}\cdots\sigma_{i_{\ell}}\in\mathrm{Br}^{+}_{n}$ be a positive braid word, and let $\sh{F}$, $\sh{G}$ be objects of the category $\ccs{1}{\beta}$. Let \,$\hat{\mathbf{f}}^{(n)}$, $\hat{\mathbf{g}}^{(n)}$ be bases for $\mathbb{K}^{n}$, and let $\vec{z},\,\vec{z}\,'\in X(\beta,\mathbb{K})$ be points such that the pairs $\big(\,\hat{\mathbf{f}}^{(n)},\, \vec{z}\, \big)$ and $\big(\,\hat{\mathbf{g}}^{(n)},\, \vec{z}\,'\,\big)$ algebraically parametrize $\sh{F}$ and $\sh{G}$ according to Theorem~\eqref{Prop. for sheaves and braid matrices}, respectively. Then, the Poincar\'e--Euler characteristic of the graded vector space $\mathrm{Ext}^{\bullet}(\sh{F},\sh{G})$ is given by
\begin{equation*}
\chi\left( \mathrm{Ext}^{\bullet} (\sh{F},\sh{G})\right)=-\mathrm{tb}(\Lambda(\beta))\, ,     
\end{equation*}
where $\mathrm{tb}(\Lambda(\beta)):=\ell-n$ denotes the Thurston--Bennequin number of the Legendrian link $\Lambda(\beta)\subset (\mathbb{R}^{3}, \xi_{\mathrm{std}})$. 
\end{theorem}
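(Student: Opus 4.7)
The plan is to combine the hereditary-type vanishing of Theorem~\eqref{Theorem: hereditary-type property} with the explicit algebraic descriptions of $\mathrm{Ext}^{0}$ and $\mathrm{Ext}^{1}$ given by Theorem~\eqref{Theorem: main result 1} (equivalently, Theorems~\eqref{Theorem: Ext0 as the kernel of delta_F,G} and~\eqref{Theorem: Ext1 as the cokernel of delta_F,G}), and then reduce the Euler characteristic to the rank-nullity theorem applied to the linear map $\delta_{\sh{F},\sh{G}}:\mathbb{K}^{n}_{\mathrm{std}}\to\mathbb{K}^{\ell}_{\mathrm{std}}$ of Definition~\eqref{Def: linear map delta}.

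First, I would observe that by Theorem~\eqref{Theorem: hereditary-type property} we have $\mathrm{Ext}^{p}(\sh{F},\sh{G})=0$ for every $p\geq 2$, so the Poincar\'e--Euler characteristic collapses to a two-term alternating sum,
\begin{equation*}
\chi\bigl(\mathrm{Ext}^{\bullet}(\sh{F},\sh{G})\bigr)=\dim_{\,\mathbb{K}}\mathrm{Ext}^{0}(\sh{F},\sh{G})-\dim_{\,\mathbb{K}}\mathrm{Ext}^{1}(\sh{F},\sh{G}).
\end{equation*}
Next, I would invoke Theorem~\eqref{Theorem: main result 1} to replace each of these dimensions with the dimension of the kernel and cokernel of $\delta_{\sh{F},\sh{G}}$, respectively, obtaining
\begin{equation*}
\chi\bigl(\mathrm{Ext}^{\bullet}(\sh{F},\sh{G})\bigr)=\dim_{\,\mathbb{K}}\ker\delta_{\sh{F},\sh{G}}-\dim_{\,\mathbb{K}}\mathrm{coker}\,\delta_{\sh{F},\sh{G}}.
\end{equation*}

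The core computation is then purely linear-algebraic: since $\delta_{\sh{F},\sh{G}}$ is a linear map between finite-dimensional spaces of dimensions $n$ and $\ell$, the rank-nullity theorem gives $\dim\ker\delta_{\sh{F},\sh{G}}=n-\dim\mathrm{im}\,\delta_{\sh{F},\sh{G}}$ and $\dim\mathrm{coker}\,\delta_{\sh{F},\sh{G}}=\ell-\dim\mathrm{im}\,\delta_{\sh{F},\sh{G}}$, so the image dimension cancels and
\begin{equation*}
\chi\bigl(\mathrm{Ext}^{\bullet}(\sh{F},\sh{G})\bigr)=n-\ell.
\end{equation*}
Finally, I would identify $\ell-n=\mathrm{tb}(\Lambda(\beta))$: the front projection $\Pi_{x,z}(\Lambda(\beta))$ of Figure~\eqref{Front diagram of the rainbow closure of a braid in n strands} has exactly $\ell$ crossings (one per Artin generator in $\beta$) and $2n$ cusps ($n$ left and $n$ right), all of which are right cusps in the sense of the front writhe convention, so the standard combinatorial formula $\mathrm{tb}=\mathrm{writhe}-\tfrac{1}{2}\#\{\text{cusps}\}=\ell-n$ applies. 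This gives $\chi(\mathrm{Ext}^{\bullet}(\sh{F},\sh{G}))=-\mathrm{tb}(\Lambda(\beta))$, as desired.

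There is essentially no obstacle in this argument once Theorems~\eqref{Theorem: hereditary-type property} and~\eqref{Theorem: main result 1} are granted: all three steps---collapsing to degrees $0$ and $1$, translating into kernel and cokernel, and applying rank-nullity---are immediate. The only point that requires care is the Thurston--Bennequin identity $\mathrm{tb}(\Lambda(\beta))=\ell-n$, which is a classical combinatorial computation for rainbow closures of positive braid words and does not depend on any of the sheaf-theoretic machinery developed here; remarkably, it is also independent of the particular pair $(\sh{F},\sh{G})$, which is what makes the Euler characteristic a genuine Legendrian isotopy invariant rather than merely an invariant of the pair of objects.
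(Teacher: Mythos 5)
Your proposal is correct and follows essentially the same route as the paper's proof: collapse to degrees $0$ and $1$ via the vanishing of higher $\mathrm{Ext}$ groups, substitute the kernel/cokernel descriptions of Theorem~\eqref{Theorem: main result 1}, and apply rank--nullity to $\delta_{\sh{F},\sh{G}}:\mathbb{K}^{n}_{\mathrm{std}}\to\mathbb{K}^{\ell}_{\mathrm{std}}$ so that the rank cancels and $\chi=n-\ell$. The only cosmetic difference is that you re-derive $\mathrm{tb}(\Lambda(\beta))=\ell-n$ from the writhe--cusp formula, whereas the paper takes this as the definition of $\mathrm{tb}$ in the statement and cites the literature.
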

\begin{proof}
To begin, following Definition~\eqref{Def: linear map delta}, let \,$\delta_{\sh{F},\sh{G}}: \mathbb{K}^{n}_{\mathrm{std}}\to\mathbb{K}^{\ell}_{\mathrm{std}}$\, be the linear map associated with the pair $(\sh{F},\sh{G})$. In particular, by the rank--nullity theorem, we know that  
\begin{equation*}
\mathrm{dim}_{\,\mathbb{K}}\, \mathrm{ker}\,\delta_{\sh{F},\sh{G}} -  \mathrm{dim}_{\,\mathbb{K}}\, \mathrm{coker}\,\delta_{\sh{F},\sh{G}} =n-\ell \, .    
\end{equation*}

Moreover, by Theorems~\eqref{Theorem: Ext0 as the kernel of delta_F,G} and~\eqref{Theorem: Ext1 as the cokernel of delta_F,G}, we have that the lower-degree morphism spaces between $\sh{F}$ and $\sh{G}$ are given by 
\begin{equation*}
\begin{aligned}
\mathrm{Ext}^{0}(\sh{F},\sh{G})&\cong \mathrm{ker}\,\delta_{\sh{F},\sh{G}}\, ,  \\    
\mathrm{Ext}^{1}(\sh{F},\sh{G})&\cong \mathrm{coker}\,\delta_{\sh{F},\sh{G}}\, . 
\end{aligned} 
\end{equation*}

Hence, putting the above results together, we obtain that 
\begin{equation*}
\begin{aligned}
\chi\left( \mathrm{Ext}^{\bullet} (\sh{F},\sh{G})\right) & = \sum_{i\in \mathbb{Z}} (-1)^{i}   \mathrm{dim}_{\,\mathbb{K}}\, \mathrm{Ext}^{i} (\sh{F},\sh{G})\, ,\\
& = \mathrm{dim}_{\,\mathbb{K}}\, \mathrm{Ext}^{0} (\sh{F},\sh{G})- \mathrm{dim}_{\,\mathbb{K}}\, \mathrm{Ext}^{1} (\sh{F},\sh{G})\, ,\\[4pt]
& = \mathrm{dim}_{\,\mathbb{K}}\, \mathrm{ker}\,\delta_{\sh{F},\sh{G}} - \mathrm{dim}_{\,\mathbb{K}}\, \mathrm{coker}\,\delta_{\sh{F},\sh{G}}\, ,\\[4pt]
&= n-\ell\, .
\end{aligned}
\end{equation*}
Finally, since the Thurston--Bennequin number of the Legendrian link $\Lambda(\beta)$ is given by $\mathrm{tb}(\Lambda(\beta))=\ell-n$~\cite{KT2}, the result follows. 
\end{proof}  

Having analyzed the linear structure of the graded morphism spaces in the category $\ccs{1}{\beta}$, we now turn to the study of their compositions. 

\subsection{\texorpdfstring{Combinatorial Composition Rules in the Category $\ccs{1}{\beta}$}{Combinatorial Composition Rule in the Category HSh}} Let $\beta \in \mathrm{Br}^{+}_{n}$ be a positive braid word. This subsection is devoted to studying the composition of graded morphisms in the category $\ccs{1}{\beta}$. More precisely, our main goal is to establish some combinatorial rules that describe how these graded morphisms compose. Bearing this in mind, we begin by introducing some preliminaries. 

\subsubsection{Technical Background} Let $\beta \in \mathrm{Br}^{+}_{n}$ be a positive braid word. Next, we now collect some preliminaries that will play a fundamental role in the explicit computation of the composition of the graded morphisms in the category $\ccs{1}{\beta}$. In particular, we open the discussion with the following definition.

\begin{definition}
Let $\mathcal{M}$ be a smooth manifold. Let $\sh{F}$, $\sh{F}'$, and $\sh{G}$ be sheaves of $\mathbb{K}$-modules on $\mathcal{M}$, and let $\lambda:\sh{F}'\to \sh{F}$ be a sheaf homomorphism between $\sh{F}'$ and $\sh{F}$. Suppose $\sh{H}$ is an extension of $\sh{F}$ by $\sh{G}$. Then, we define the pull-back $\lambda^{*}\sh{H} $ of $\sh{H}$ via $\lambda$ to be an extension of $\sh{F}'$ by $\sh{G}$ such that there exists a sheaf homomorphism $\alpha:\lambda^{*}\sh{H} \to \sh{H}$ making each square in the diagram in Figure~\eqref{Fig: Pull-back of a sheaf extension H via a sheaf homomorphism} commute. 

\begin{figure}[ht]
\centering
\begin{tikzpicture}
\useasboundingbox (-2,-1.5) rectangle (2,1.5);
\scope[transform canvas={scale=1.2}]

\node at (-4,1) {\footnotesize $0$};
\node at (-2,1) {\footnotesize $\sh{G}$};
\node at (0,1) {\footnotesize $\displaystyle \lambda^{*}\sh{H}$};
\node at (2,1) {\footnotesize $\sh{F}'$};
\node at (4,1) {\footnotesize $0$};

\node at (-4,-1) {\footnotesize $0$};
\node at (-2,-1) {\footnotesize $\sh{G}$};
\node at (0,-1) {\footnotesize $\sh{H}$};
\node at (2,-1) {\footnotesize $\sh{F}$};
\node at (4,-1) {\footnotesize $0$};

\draw[->] (-4+0.3,1) -- (-2-0.30,1);
\draw[->] (-2+0.3,1) -- (-0-0.30-0.15,1);
\draw[->] (0+0.3+0.15,1) -- (2-0.30,1);
\draw[->] (2+0.3,1) -- (4-0.30,1);

\draw[->] (-4+0.3,-1) -- (-2-0.30,-1);
\draw[->] (-2+0.3,-1) -- (-0-0.30,-1);
\draw[->] (0+0.3,-1) -- (2-0.30,-1);
\draw[->] (2+0.3,-1) -- (4-0.30,-1);

\draw[-] (-2-0.05,1-0.30) -- (-2-0.05,-1+0.30);
\draw[-] (-2+0.05,1-0.30) -- (-2+0.05,-1+0.30);

\draw[->] (2,1-0.30) -- (2,-1+0.30);
\node[right] at (2,0) {\footnotesize $\lambda$};

\draw[->] (0,1-0.30) -- (0,-1+0.30);
\node[right] at (0,0) {\footnotesize $\alpha$};

\endscope
\end{tikzpicture}
\caption{The pull-back $\lambda^{*}\sh{H}$ of an extension $\sh{H}$ of $\sh{F}$ by $\sh{G}$ via a sheaf homomorphism $\lambda$ between $\sh{F}'$ and $\sh{F}$.}
\label{Fig: Pull-back of a sheaf extension H via a sheaf homomorphism}
\end{figure}
\end{definition}

\begin{definition}
Let $\mathcal{M}$ be a smooth manifold. Let $\sh{F}$, $\sh{G}$, and $\sh{G}'$ be sheaves of $\mathbb{K}$-modules on $\mathcal{M}$, and let $\lambda:\sh{G}\to \sh{G}'$ be a sheaf homomorphism between $\sh{G}$ and $\sh{G}'$. Suppose $\sh{H}$ is an extension of $\sh{F}$ by $\sh{G}$. Then, we define the push-out $\lambda_{*}\sh{H} $ of $\sh{H}$ via $\lambda$ to be an extension of $\sh{F}$ by $\sh{G}'$ such that there exists a sheaf homomorphism $\alpha:\sh{H} \to \lambda_{*}\sh{H}$ making each square in the diagram in Figure~\eqref{Fig: Push-out of a sheaf extension H via a sheaf homomorphism} commute. 

\begin{figure}[ht]
\centering
\begin{tikzpicture}
\useasboundingbox (-2,-1.5) rectangle (2,1.5);
\scope[transform canvas={scale=1.2}]

\node at (-4,1) {\footnotesize $0$};
\node at (-2,1) {\footnotesize $\sh{G}$};
\node at (0,1) {\footnotesize $\sh{H}$};
\node at (2,1) {\footnotesize $\sh{F}$};
\node at (4,1) {\footnotesize $0$};

\node at (-4,-1) {\footnotesize $0$};
\node at (-2,-1) {\footnotesize $\sh{G}'$};
\node at (0,-1) {\footnotesize $\displaystyle \lambda_{*}\sh{H}$};
\node at (2,-1) {\footnotesize $\sh{F}$};
\node at (4,-1) {\footnotesize $0$};

\draw[->] (-4+0.3,1) -- (-2-0.30,1);
\draw[->] (-2+0.3,1) -- (-0-0.30,1);
\draw[->] (0+0.3,1) -- (2-0.30,1);
\draw[->] (2+0.3,1) -- (4-0.30,1);

\draw[->] (-4+0.3,-1) -- (-2-0.30,-1);
\draw[->] (-2+0.3,-1) -- (-0-0.30-0.15,-1);
\draw[->] (0+0.3+0.15,-1) -- (2-0.30,-1);
\draw[->] (2+0.3,-1) -- (4-0.30,-1);

\draw[-] (2-0.05,1-0.30) -- (2-0.05,-1+0.30);
\draw[-] (2+0.05,1-0.30) -- (2+0.05,-1+0.30);

\draw[->] (-2,1-0.30) -- (-2,-1+0.30);
\node[right] at (-2,0) {\footnotesize $\lambda$};

\draw[->] (0,1-0.30) -- (0,-1+0.30);
\node[right] at (0,0) {\footnotesize $\alpha$};

\endscope
\end{tikzpicture}
\caption{The push-out $\lambda_{*}\sh{H}$ of an extension $\sh{H}$ of $\sh{F}$ by $\sh{G}$ via a sheaf homomorphism $\lambda$ between $\sh{G}$ and $\sh{G}'$.}
\label{Fig: Push-out of a sheaf extension H via a sheaf homomorphism}
\end{figure}
\end{definition}

With these definitions in place, we now present a formal and concrete definition of the composition of graded morphisms in the category $\ccs{1}{\beta}$. 

\begin{definition}
Let $\beta=\sigma_{i_{1}}\cdots \sigma_{i_{\ell}}\in \mathrm{Br}^{+}_{n}$ be a positive braid word, and let $\sh{F}$, $\sh{G}$, $\sh{Q}$ be objects of the category $\ccs{1}{\beta}$. For any $p, q\geq 0$, 
the graded composition
\begin{equation*}
\circ: \mathrm{Ext}^{p}(\sh{G},\sh{Q}) \times \mathrm{Ext}^{q}(\sh{F}, \sh{G})  \to  \mathrm{Ext}^{p+q}(\sh{F},\sh{Q})\, ,
\end{equation*}
is given by the Yoneda composition for \,$\mathrm{Ext}$ groups~\cite{HS1}, which in degrees $0$ and $1$ is defined as follows:
\begin{itemize}
\justifying
\item[(i)] Let $\lambda \in \mathrm{Ext}^{0}(\sh{F}, \sh{G})$ and $\lambda' \in \mathrm{Ext}^{0}(\sh{G}, \sh{Q})$. Then $\lambda'\circ \lambda \in \mathrm{Ext}^{0}(\sh{F},\sh{Q})$ is given by the usual composition between sheaf homomorphisms. 

\item[(ii)] Let $\lambda \in \mathrm{Ext}^{0}(\sh{F}, \sh{G})$, $\xi\,' \in \mathrm{Ext}^{1}(\sh{G}, \sh{Q})$, and $\sh{H}'$ be an extension of $\sh{G}$ by $\sh{Q}$ representing the equivalence class $\xi\,'$, namely $\big[\,\sh{H}'\,\big]=\xi\,'$. Then $\xi\,' \circ \lambda = \big[\,\lambda^{*}\sh{H}'\,\big]\in \mathrm{Ext}^{1}(\sh{F}, \sh{Q})$. 

\item[(iii)] Let $\xi \in \mathrm{Ext}^{1}(\sh{F}, \sh{G})$, $\lambda' \in \mathrm{Ext}^{0}(\sh{G}, \sh{Q})$, and $\sh{H}$ be an extension of $\sh{F}$ by $\sh{G}$ representing the equivalence class $\xi$, namely $\big[\,\sh{H}\,\big]=\xi$. Then $\lambda '\circ \xi= \big[\,{\lambda'}_{*}\sh{H}\,\big]\in \mathrm{Ext}^{1}(\sh{F}, \sh{Q})$. 
\end{itemize}
\end{definition}

Next, we present several technical lemmas that will enable us to explicitly describe the composition of graded morphisms in the category $\ccs{1}{\beta}$. To this end, we introduce the following definitions.  

\begin{definition}\label{Def: pull-back of an extension via a morphism for linear maps}
Let $A$, $B$, $C$, $C'$, $X$, $Y$, $Z$, $Z'$ be vector spaces over $\mathbb{K}$, and let $\gamma_{\sh{G}}:A \rightarrow X$, $\Gamma_{\sh{H}}:B\rightarrow Y$, $\gamma_{\sh{F}}:C\rightarrow Z$, and $\gamma_{\sh{F}'}:C'\rightarrow Z'$ be linear maps, with  $\Gamma_{\sh{H}}$ an extension of $\gamma_{\sh{F}}$ by $\gamma_{\sh{G}}$ (see Definition~\eqref{Def: Extensions of linear maps}). 

Let $\lambda_{\xi}:C' \rightarrow C$ and $\lambda_{\delta}: Z'\rightarrow Z$ be linear maps defining a morphism $\lambda_{\xi,\delta}:=(\lambda_{\xi}, \lambda_{\delta})$ between $\gamma_{\sh{F}'}$ and $\gamma_{\sh{F}}$ (see Definition~\eqref{Def: morphism between linear maps}). Then, the pull-back ${\lambda_{\xi,\delta}}^{*}\Gamma_{\sh{H}}$ of $\Gamma_{\sh{H}}$ via $\lambda_{\xi,\delta}$ is an extension of $\gamma_{\sh{F}'}$ by $\gamma_{\sh{G}}$. Concretely, it is a linear map ${\lambda_{\xi,\delta}}^{*}\Gamma_{\sh{H}}:B'\rightarrow Y'$ between vector spaces $B'$ and $Y'$ over $\mathbb{K}$, such that there exist linear maps $W_{1}:B' \rightarrow B$ and $W_{2}: Y' \rightarrow Y$ making each square of the diagram in Figure~\eqref{fig: Commutative diagram for the pull-back of extensions of linear maps via morphisms of linear maps} commute.  
\end{definition}

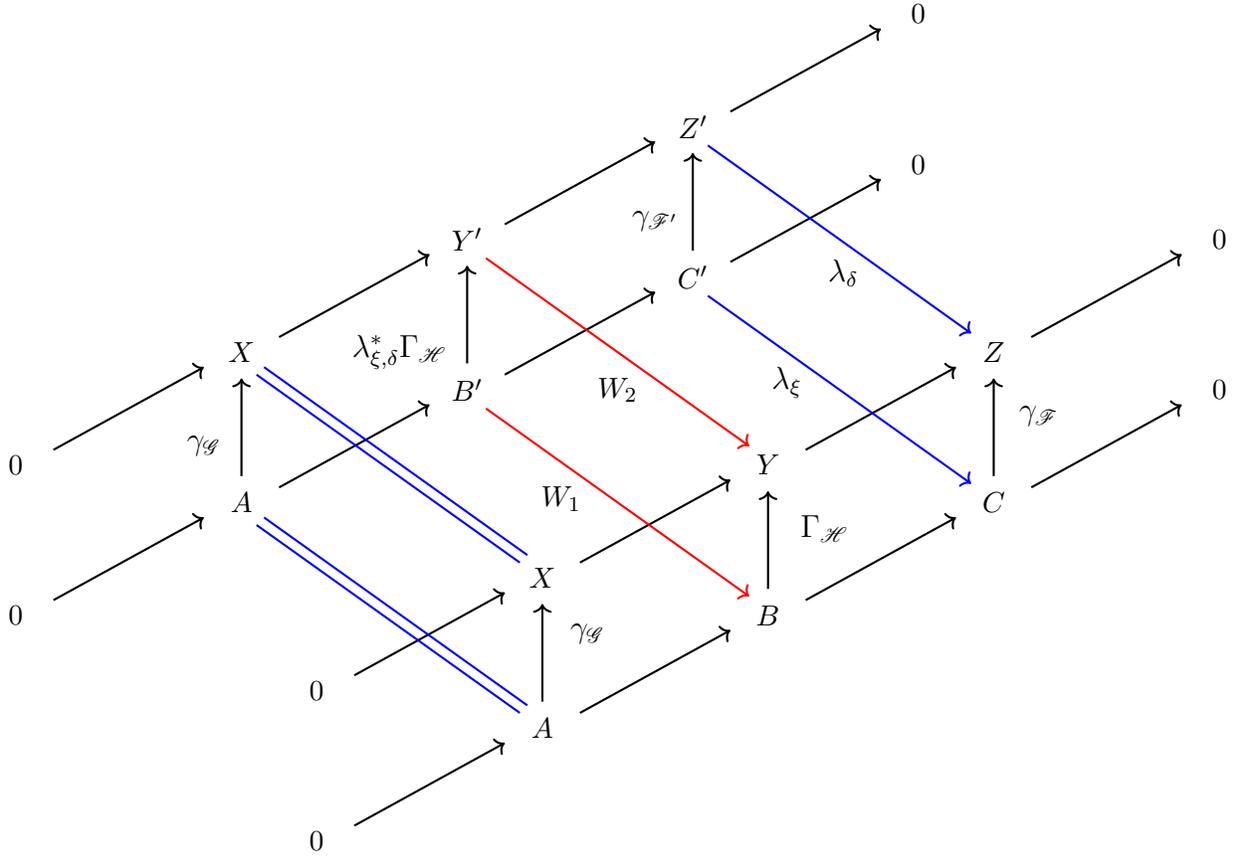
\begin{figure}[ht]
\centering
\begin{tikzpicture}
\useasboundingbox (-6,-5.5) rectangle (6,6.5);
\scope[transform canvas={scale=1}]

\node at (4,6) {\footnotesize\large $0$};
\node at (4,4) {\footnotesize\large $0$};

\node at (1,4.5) {\footnotesize\large $Z'$};
\node at (1,2.5) {\footnotesize\large $C'$};

\node at (-2,3) {\footnotesize\large $Y'$};
\node at (-2,1) {\footnotesize\large $B'$};

\node at (-5,1.5) {\footnotesize\large $X$};
\node at (-5,-0.5) {\footnotesize\large $A$};

\node at (-8,0) {\footnotesize\large $0$};
\node at (-8,-2) {\footnotesize\large $0$};

\node at (8,3) {\footnotesize\large $0$};
\node at (8,1) {\footnotesize\large $0$};

\node at (5,1.5) {\footnotesize\large $Z$};
\node at (5,-0.5) {\footnotesize\large $C$};

\node at (2,0) {\footnotesize\large $Y$};
\node at (2,-2) {\footnotesize\large $B$};

\node at (-1,-1.5) {\footnotesize\large $X$};
\node at (-1,-3.5) {\footnotesize\large $A$};

\node at (-4,-3) {\footnotesize\large $0$};
\node at (-4,-5) {\footnotesize\large $0$};

\draw[->, thick] (-8 +0.5,0+0.2) -- (-5-0.5,1.5-0.2);
\draw[->, thick] (-5 +0.5,1.5+0.2) -- (-2-0.5,3-0.2);
\draw[->, thick] (-2 +0.5,3+0.2) -- (1-0.5,4.5-0.2);
\draw[->, thick] (1 +0.5,4.5+0.2) -- (4-0.5,6-0.2);

\draw[->, thick] (-8 +0.5,-2+0.2) -- (-5-0.5,-0.5-0.2);
\draw[->, thick] (-5 +0.5,-0.5+0.2) -- (-2-0.5,1-0.2);
\draw[->, thick] (-2 +0.5,1+0.2) -- (1-0.5,2.5-0.2);
\draw[->, thick] (1 +0.5,2.5+0.2) -- (4-0.5,4-0.2);

\draw[->, thick] (-2,1+0.35) -- (-2,3-0.35);
\draw[->, thick] (1,2.5+0.35) -- (1,4.5-0.35);
\draw[->, thick] (-5,-0.5+0.35) -- (-5,1.5-0.35);

\draw[->, thick] (-4+0.5,-3+0.2) -- (-1-0.5,-1.5-0.2);
\draw[->, thick] (-1+0.5,-1.5+0.2) -- (2-0.5,0-0.2);
\draw[->, thick] (2+0.5,0+0.2) -- (5-0.5,1.5-0.2);
\draw[->, thick] (5+0.5,1.5+0.2) -- (8-0.5,3-0.2);

\draw[->, thick] (-4+0.5,-5+0.2) -- (-1-0.5,-3.5-0.2);
\draw[->, thick] (-1+0.5,-3.5+0.2) -- (2-0.5,-2-0.2);
\draw[->, thick] (2+0.5,-2+0.2) -- (5-0.5,-0.5-0.2);
\draw[->, thick] (5+0.5,-0.5+0.2) -- (8-0.5,1-0.2);

\draw[->, thick] (-1,-3.5+0.35) -- (-1,-1.5-0.35);
\draw[->, thick] (2,-2+0.35) -- (2,0-0.35);
\draw[->, thick] (5,-0.5+0.35) -- (5,1.5-0.35);

\draw[thick, blue] (-5+0.25-0.05, 1.5-0.25-0.05) -- (-1-0.25-0.05,-1.5+0.25-0.05);
\draw[thick, blue] (-5+0.25+0.05, 1.5-0.25+0.05) -- (-1-0.25+0.05,-1.5+0.25+0.05);

\draw[thick, blue] (-5+0.25-0.05, -0.5-0.25-0.05) -- (-1-0.25-0.05,-3.5+0.25-0.05);
\draw[thick, blue] (-5+0.25+0.05, -0.5-0.25+0.05) -- (-1-0.25+0.05,-3.5+0.25+0.05);

\draw[->, thick, red] (-2+0.25, 3-0.25) -- (2-0.25,0+0.25);
\draw[->, thick, red] (-2+0.25, 1-0.25) -- (2-0.25,-2+0.25);

\draw[->, thick, blue] (1+0.25-0.05, 4.5-0.25) -- (5-0.25-0.05,1.5+0.25);
\draw[->, thick, blue] (1+0.25-0.05, 2.5-0.25) -- (5-0.25-0.05,-0.5+0.25);

\node at (-0.75,-0.45) {\large $W_{1}$};
\node at (0,1) {\large $W_{2}$};

\node at (-0.75+3,-0.45+1.55) {\Large $\lambda_{\xi}$};
\node at (0+3,1+1.55) {\Large $\lambda_{\delta}$};

\node at (-5-0.5,0.5-0.25) {\Large $\gamma_{\sh{G}}$};
\node at (-2-0.9,2-0.5) {\Large $\lambda_{\xi,\delta}^{*}\Gamma_{\sh{H}}$};
\node at (1-0.5,3.5-0.25) {\Large $\gamma_{\sh{F}'}$};

\node at (-1+0.6,-2.5+0.25) {\Large $\gamma_{\sh{G}}$};
\node at (2+0.75,-1+0.15) {\Large $\Gamma_{\sh{H}}$};
\node at (5+0.6,0.5+0.15) {\Large $\gamma_{\sh{F}}$};

\endscope
\end{tikzpicture}
\caption{The pull-back $\lambda_{\xi,\delta}^{*}\Gamma_{\sh{H}}$ of the extension $\Gamma_{\sh{H}}$ of $\gamma_{\sh{F}}$ by $\gamma_{\sh{G}}$ via the morphism $\lambda_{\xi, \delta}$ between $\gamma_{\sh{F}'}$ and $\gamma_{\sh{F}}$.}
\label{fig: Commutative diagram for the pull-back of extensions of linear maps via morphisms of linear maps}
\end{figure}

\begin{definition}\label{Def: push-out of an extension via a morphism for linear maps}
Let $A$, $A'$, $B$, $C$, $X$, $X'$, $Y$, $Z$ be vector spaces over $\mathbb{K}$, and let $\gamma_{\sh{G}}:A \rightarrow X$, $\gamma_{\sh{G}'}:A' \rightarrow X'$, $\Gamma_{\sh{H}}:B\rightarrow Y$, and $\gamma_{\sh{F}}:C\rightarrow Z$ be linear maps, with  $\Gamma_{\sh{H}}$ an extension of $\gamma_{\sh{F}}$ by $\gamma_{\sh{G}}$ (see Definition~\eqref{Def: Extensions of linear maps}).

Let $\lambda_{\xi}:A \rightarrow A'$ and $\lambda_{\delta}: X\rightarrow X'$ be linear maps defining a morphism $\lambda_{\xi, \delta}:=(\lambda_{\xi},\lambda_{\delta})$ between $\gamma_{\sh{G}}$ and $\gamma_{\sh{G}'}$ (see Definition~\eqref{Def: morphism between linear maps}). Then, the push-out ${\lambda_{\xi,\delta}}_{*}\Gamma_{\sh{H}}$ of $\Gamma_{\sh{H}}$ via $\lambda_{\xi,\delta}$ is an extension of $\gamma_{\sh{F}}$ by $\gamma_{\sh{G}'}$. Concretely, it is a linear map ${\lambda_{\xi,\delta}}_{*}\Gamma_{\sh{H}}:B'\rightarrow Y'$ between vector spaces $B'$ and $Y'$ over $\mathbb{K}$, such that there exist linear maps $W_{1}:B \rightarrow B'$ and $W_{2}: Y \rightarrow Y'$ making each square of the diagram in Figure~\eqref{fig: Commutative diagram for the push-out of extensions of linear maps via morphisms of linear maps} commute. 
\end{definition}

\begin{figure}[ht]
\centering
\begin{tikzpicture}
\useasboundingbox (-6,-5.5) rectangle (6,6.5);
\scope[transform canvas={scale=1}]

\node at (4,6) {\footnotesize\large $0$};
\node at (4,4) {\footnotesize\large $0$};

\node at (1,4.5) {\footnotesize\large $Z$};
\node at (1,2.5) {\footnotesize\large $C$};

\node at (-2,3) {\footnotesize\large $Y$};
\node at (-2,1) {\footnotesize\large $B$};

\node at (-5,1.5) {\footnotesize\large $X$};
\node at (-5,-0.5) {\footnotesize\large $A$};

\node at (-8,0) {\footnotesize\large $0$};
\node at (-8,-2) {\footnotesize\large $0$};

\node at (8,3) {\footnotesize\large $0$};
\node at (8,1) {\footnotesize\large $0$};

\node at (5,1.5) {\footnotesize\large $Z$};
\node at (5,-0.5) {\footnotesize\large $C$};

\node at (2,0) {\footnotesize\large $Y'$};
\node at (2,-2) {\footnotesize\large $B'$};

\node at (-1,-1.5) {\footnotesize\large $X'$};
\node at (-1,-3.5) {\footnotesize\large $A'$};

\node at (-4,-3) {\footnotesize\large $0$};
\node at (-4,-5) {\footnotesize\large $0$};

\draw[->, thick] (-8 +0.5,0+0.2) -- (-5-0.5,1.5-0.2);
\draw[->, thick] (-5 +0.5,1.5+0.2) -- (-2-0.5,3-0.2);
\draw[->, thick] (-2 +0.5,3+0.2) -- (1-0.5,4.5-0.2);
\draw[->, thick] (1 +0.5,4.5+0.2) -- (4-0.5,6-0.2);

\draw[->, thick] (-8 +0.5,-2+0.2) -- (-5-0.5,-0.5-0.2);
\draw[->, thick] (-5 +0.5,-0.5+0.2) -- (-2-0.5,1-0.2);
\draw[->, thick] (-2 +0.5,1+0.2) -- (1-0.5,2.5-0.2);
\draw[->, thick] (1 +0.5,2.5+0.2) -- (4-0.5,4-0.2);

\draw[->, thick] (-2,1+0.35) -- (-2,3-0.35);
\draw[->, thick] (1,2.5+0.35) -- (1,4.5-0.35);
\draw[->, thick] (-5,-0.5+0.35) -- (-5,1.5-0.35);

\draw[->, thick] (-4+0.5,-3+0.2) -- (-1-0.5,-1.5-0.2);
\draw[->, thick] (-1+0.5,-1.5+0.2) -- (2-0.5,0-0.2);
\draw[->, thick] (2+0.5,0+0.2) -- (5-0.5,1.5-0.2);
\draw[->, thick] (5+0.5,1.5+0.2) -- (8-0.5,3-0.2);

\draw[->, thick] (-4+0.5,-5+0.2) -- (-1-0.5,-3.5-0.2);
\draw[->, thick] (-1+0.5,-3.5+0.2) -- (2-0.5,-2-0.2);
\draw[->, thick] (2+0.5,-2+0.2) -- (5-0.5,-0.5-0.2);
\draw[->, thick] (5+0.5,-0.5+0.2) -- (8-0.5,1-0.2);

\draw[->, thick] (-1,-3.5+0.35) -- (-1,-1.5-0.35);
\draw[->, thick] (2,-2+0.35) -- (2,0-0.35);
\draw[->, thick] (5,-0.5+0.35) -- (5,1.5-0.35);

\draw[->, thick, blue] (-5+0.25-0.05, 1.5-0.25) -- (-1-0.25-0.05,-1.5+0.25);
\draw[->, thick, blue] (-5+0.25-0.05, -0.5-0.25) -- (-1-0.25-0.05,-3.5+0.25);

\draw[->, thick, red] (-2+0.25, 3-0.25) -- (2-0.25,0+0.25);
\draw[->, thick, red] (-2+0.25, 1-0.25) -- (2-0.25,-2+0.25);

\draw[thick, blue] (1+0.25-0.05, 4.5-0.25-0.05) -- (5-0.25-0.05,1.5+0.25-0.05);
\draw[thick, blue] (1+0.25+0.05, 4.5-0.25+0.05) -- (5-0.25+0.05,1.5+0.25+0.05);

\draw[thick, blue] (1+0.25-0.05, 2.5-0.25-0.05) -- (5-0.25-0.05,-0.5+0.25-0.05);
\draw[thick, blue] (1+0.25+0.05, 2.5-0.25+0.05) -- (5-0.25+0.05,-0.5+0.25+0.05);

\node at (-0.75,-0.45) {\large $W_{1}$};
\node at (0,1) {\large $W_{2}$};

\node at (-0.75-3,-0.45-1.55) {\Large $\lambda_{\xi}$};
\node at (0-3,1-1.55) {\Large $\lambda_{\delta}$};

\node at (-5-0.5,0.5-0.25) {\Large $\gamma_{\sh{G}}$};
\node at (-2-0.5,2-0.2) {\Large $\Gamma_{\sh{H}}$};
\node at (1-0.5,3.5-0.25) {\Large $\gamma_{\sh{F}}$};

\node at (-1+0.6,-2.5+0.25) {\Large $\gamma_{\sh{G}'}$};
\node at (2+1,-1+0.2) {\Large ${\lambda_{\xi,\delta}}_{*}\Gamma_{\sh{H}}$};
\node at (5+0.6,0.5+0.15) {\Large $\gamma_{\sh{F}}$};

\endscope
\end{tikzpicture}
\caption{The push-out ${\lambda_{\xi,\delta}}_{*}\Gamma_{\sh{H}}$ of the extension $\Gamma_{\sh{H}}$ of $\gamma_{\sh{F}}$ by $\gamma_{\sh{G}}$ via the morphism $\lambda_{\xi, \delta}$ between $\gamma_{\sh{G}}$ and $\gamma_{\sh{G}'}$.}
\label{fig: Commutative diagram for the push-out of extensions of linear maps via morphisms of linear maps}
\end{figure}
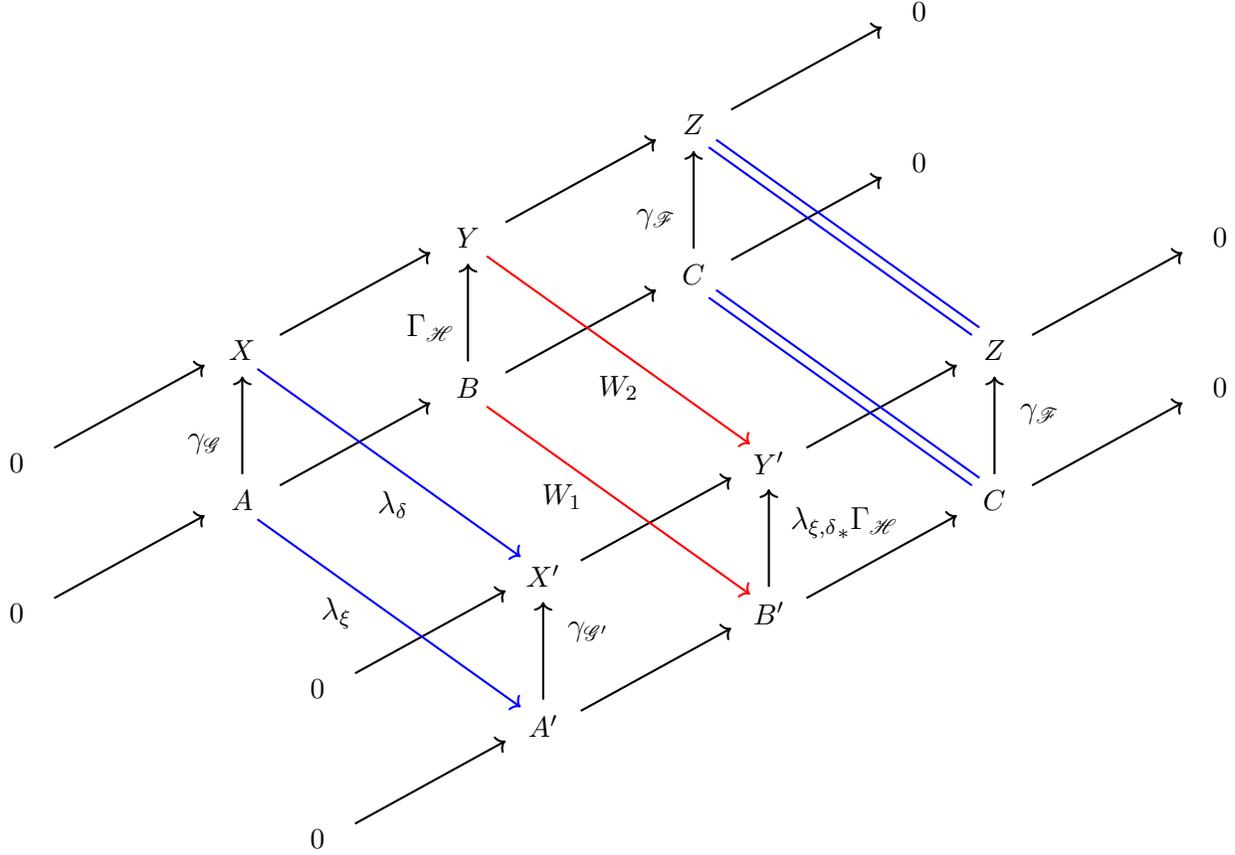

\begin{lemma}\label{Lemma: pull-back of an extension via a morphism for linear maps}
Let $A$, $C$, $C'$, $X$, $Z$, $Z'$ be vector spaces over $\mathbb{K}$, and let $\gamma_{\sh{G}}:A\rightarrow X$, $\gamma_{\sh{H}}:C\rightarrow X$, $\gamma_{\sh{F}}:C\rightarrow Z$, and $\gamma_{\sh{F}'}:C'\rightarrow Z'$ be linear maps. 

Let $\lambda_{\xi}:C' \rightarrow C$ and $\lambda_{\delta}: Z'\rightarrow Z$ be linear maps defining a morphism $\lambda_{\xi, \delta}:=(\lambda_{\xi},\lambda_{\delta})$ between $\gamma_{\sh{F}'}$ and $\gamma_{\sh{F}}$, and let $\elmG:A\oplus C\rightarrow X\oplus Z$ be the block extension of $\gamma_{\sh{F}}$ by $\gamma_{\sh{G}}$ associated with $\gamma_{\sh{H}}$ (see Lemma~\eqref{Lemma: block extensions of linear maps}). 

Finally, let $\lambda_{\xi,\delta}^{*}\,\elmG:B\rightarrow Y$ be the pull-back of $\elmG$ via $\lambda_{\xi,\delta}$, see Definition~\eqref{Def: pull-back of an extension via a morphism for linear maps}, and let $\elmg{\sh{F} }{\sh{G} }{\sh{Q}}:A\oplus C'\rightarrow X\oplus Z'$ be the block extension of $\gamma_{\sh{F}'}$ by $\gamma_{\sh{G}}$ associated with the linear map $\gamma_{\sh{Q}}:=\gamma_{\sh{H}}\circ \lambda_{\xi}: C'\to X$. Then, $\lambda_{\xi,\delta}^{*}\elmG$ and $\elmg{\sh{F} }{\sh{G} }{\sh{Q}}$ are equivalent extensions of $\gamma_{\sh{F}'}$ by $\gamma_{\sh{G}}$. 
\end{lemma}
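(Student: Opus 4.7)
The plan is to prove the lemma by explicitly exhibiting linear maps $W_{1}$ and $W_{2}$ that realize $\elmg{\sh{F}}{\sh{G}}{\sh{Q}}$ as a pull-back of $\elmG$ along $\lambda_{\xi,\delta}$, and then invoke uniqueness of pull-backs up to equivalence. Concretely, I would set
\begin{equation*}
W_{1}:=\begin{bmatrix} \mathrm{id}_{A} & 0 \\ 0 & \lambda_{\xi} \end{bmatrix}\colon A\oplus C' \longrightarrow A\oplus C\, , \qquad W_{2}:=\begin{bmatrix} \mathrm{id}_{X} & 0 \\ 0 & \lambda_{\delta} \end{bmatrix}\colon X\oplus Z' \longrightarrow X\oplus Z\, ,
\end{equation*}
and then check that these maps make each square of the diagram in Figure~\eqref{fig: Commutative diagram for the pull-back of extensions of linear maps via morphisms of linear maps} commute, with the top extension taken to be $\elmg{\sh{F}}{\sh{G}}{\sh{Q}}$.

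First, I would verify the two ``outer'' squares: $W_{1}\circ\iota_{A\oplus C'} = \iota_{A\oplus C}$ and $\rho_{A\oplus C}\circ W_{1} = \lambda_{\xi}\circ\rho_{A\oplus C'}$, together with the analogous pair for $W_{2}$. Both are immediate from the block-diagonal structure of $W_{1}$ and $W_{2}$. Second, I would verify the central compatibility square
\begin{equation*}
W_{2}\circ\elmg{\sh{F}}{\sh{G}}{\sh{Q}} \;=\; \elmG\circ W_{1}\, ,
\end{equation*}
which, after expanding the block matrices, reduces to two identities: $\gamma_{\sh{G}} = \gamma_{\sh{G}}$ on the $A$-factor, and $\gamma_{\sh{H}}\circ\lambda_{\xi} = \gamma_{\sh{Q}}$ together with $\lambda_{\delta}\circ\gamma_{\sh{F}'} = \gamma_{\sh{F}}\circ\lambda_{\xi}$ on the $C'$-factor. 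The first is the definition of $\gamma_{\sh{Q}}$, and the second is exactly the morphism condition defining $\lambda_{\xi,\delta}$.

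With these checks in place, $\elmg{\sh{F}}{\sh{G}}{\sh{Q}}$ fits into the same three-dimensional commutative diagram as $\lambda_{\xi,\delta}^{*}\elmG$ in Definition~\eqref{Def: pull-back of an extension via a morphism for linear maps}. Since any two extensions of $\gamma_{\sh{F}'}$ by $\gamma_{\sh{G}}$ that both realize the pull-back must be equivalent (by Lemma~\eqref{Lemma: equivalence of extensions and block extensions}, together with a short diagram chase using the five-lemma type argument, or directly via Lemma~\eqref{Lemma: equivalence between block extensions} applied to the associated characteristic maps), I would conclude that $\lambda_{\xi,\delta}^{*}\elmG$ and $\elmg{\sh{F}}{\sh{G}}{\sh{Q}}$ are equivalent extensions of $\gamma_{\sh{F}'}$ by $\gamma_{\sh{G}}$.

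The only genuine obstacle is packaging the uniqueness step cleanly: rather than invoking an abstract categorical pull-back, the most direct route is to produce, for any other extension $\Gamma$ realizing the pull-back, an explicit isomorphism to $\elmg{\sh{F}}{\sh{G}}{\sh{Q}}$ using Lemma~\eqref{Lemma: equivalence between block extensions}. This is routine once the characteristic maps are extracted, but it does require the small verification that the characteristic map of $\lambda_{\xi,\delta}^{*}\elmG$, viewed as a block extension via Lemma~\eqref{Lemma: equivalence of extensions and block extensions}, agrees with $\gamma_{\sh{H}}\circ\lambda_{\xi}$ up to the equivalence relation of Lemma~\eqref{Lemma: equivalence between block extensions}.
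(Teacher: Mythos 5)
Your proposal is correct and ultimately rests on the same mechanism as the paper's proof: the ``uniqueness'' step you defer to at the end is carried out in the paper by reducing the given pull-back to block form via Lemma~\eqref{Lemma: equivalence of extensions and block extensions}, decomposing the comparison maps $N_{1},N_{2}$ into blocks, and reading off from the commutative diagram that the resulting characteristic map $\gamma_{\sh{R}}$ satisfies $\gamma_{\sh{Q}}=\gamma_{\sh{R}}+\lambda^{(2)}_{N_{2}}\circ\gamma_{\sh{F}'}-\gamma_{\sh{G}}\circ\lambda^{(2)}_{N_{1}}$, so that Lemma~\eqref{Lemma: equivalence between block extensions} applies. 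Your additional preliminary step of exhibiting $W_{1}=\mathrm{id}_{A}\oplus\lambda_{\xi}$ and $W_{2}=\mathrm{id}_{X}\oplus\lambda_{\delta}$ is a correct (and clarifying) verification that the block extension associated with $\gamma_{\sh{H}}\circ\lambda_{\xi}$ really is a realization of the pull-back, but it is not needed once the uniqueness computation is done, since the paper works directly with whatever realization is given.
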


\begin{proof}
By definition, $\lambda_{\xi,\delta}^{*}\elmG$ is an extension of $\gamma_{\sh{F}'}$ by $\gamma_{\sh{G}}$, and hence, Lemma~\eqref{Lemma: equivalence of extensions and block extensions} asserts that there exists a linear map $\gamma_{\sh{R}}:C'\rightarrow X$ such that $\lambda_{\xi,\delta}^{*}\elmG$ and $\elmg{\sh{F}'}{\sh{G}}{\sh{R}}$ are equivalent extensions of $\gamma_{\sh{F}'}$ by $\gamma_{\sh{G}}$, where $\elmg{\sh{F}'}{\sh{G}}{\sh{R}}:A\oplus C'\rightarrow X\oplus Z'$ denotes the block extension of $\gamma_{\sh{F}'}$ by $\gamma_{\sh{G}}$ associated with $\gamma_{\sh{R}}$. 

By Definitions~\eqref{Def: equivalent extensions of linear maps} and~\eqref{Def: pull-back of an extension via a morphism for linear maps}, there exist linear maps $N_{1}:A\oplus C'\rightarrow A\oplus C$ and $N_{2}:X\oplus Z'\rightarrow X\oplus Z$ making each square of the diagram in Figure~\eqref{fig: Commutative diagram for the equivalence of the pull-back of extensions via morphisms for linear maps} commute. In particular, observe that there exist linear maps $\lambda^{(1)}_{N_{1}}:A\rightarrow A$,  $\lambda^{(2)}_{N_{1}}:C'\rightarrow A$,  $\lambda^{(3)}_{N_{1}}:A\rightarrow C$, and $\lambda^{(4)}_{N_{1}}:C'\rightarrow C$ such that $N_{1}$ can be decomposed as
\begin{equation*}
N_{1}=\begin{bmatrix}
\lambda^{(1)}_{N_{1}} & \lambda^{(2)}_{N_{1}}\\
\lambda^{(3)}_{N_{1}} & \lambda^{(4)}_{N_{1}}
\end{bmatrix}\, .
\end{equation*}
Similarly, there exist linear maps $\lambda^{(1)}_{N_{2}}:X\rightarrow X$,  $\lambda^{(2)}_{N_{2}}:Z'\rightarrow X$,  $\lambda^{(3)}_{N_{2}}:X\rightarrow Z$, and $\lambda^{(4)}_{N_{2}}:Z'\rightarrow Z$ such that $N_{2}$ can be expressed as
\begin{equation*}
N_{2}=\begin{bmatrix}
\lambda^{(1)}_{N_{2}} & \lambda^{(2)}_{N_{2}}\\
\lambda^{(3)}_{N_{2}} & \lambda^{(4)}_{N_{2}}
\end{bmatrix}\, .  
\end{equation*}

It then follows from the commutativity of the diagram in Figure~\eqref{fig: Commutative diagram for the equivalence of the pull-back of extensions via morphisms for linear maps} that
\begin{align*}
\lambda^{(1)}_{N_{1}}&=\mathrm{id}_{A}\, , \\
\lambda^{(3)}_{N_{1}}&=0\, , \\
\lambda^{(4)}_{N_{1}}&=\lambda_{\xi}\, , \\
\lambda^{(1)}_{N_{2}}&=\mathrm{id}_{X}\, , \\
\lambda^{(3)}_{N_{2}}&=0\, , \\
\lambda^{(4)}_{N_{2}}&=\lambda_{\delta}\, , \\
\lambda_{\delta}\circ \gamma_{\sh{F}'}&=\gamma_{\sh{F}}\circ \lambda_{\xi}\, , \\
\gamma_{\sh{R}}+\lambda^{(2)}_{N_{2}}\circ \gamma_{\sh{F}'}&=\gamma_{\sh{H}}\circ \lambda_{\xi}+\gamma_{\sh{G}}\circ \lambda^{(2)}_{N_{1}}\, .
\end{align*}    
Thus, defining $\gamma_{\sh{Q}}=\gamma_{\sh{H}}\circ \lambda_{\xi}:C'\rightarrow X$, the above relations show that there exist linear maps $\lambda^{(2)}_{N_{1}}:C'\rightarrow A$ and $\lambda^{(2)}_{N_{2}}:Z'\rightarrow X$ such that
\begin{equation*}
\gamma_{\sh{Q}} = \gamma_{\sh{R}}+\lambda^{(2)}_{N_{2}}\circ \gamma_{\sh{F}'} - \gamma_{\sh{G}}\circ \lambda^{(2)}_{N_{1}}.    
\end{equation*}
Hence, building on Lemma~\eqref{Lemma: equivalence of extensions and block extensions}, we conclude that $\elmg{\sh{F}'}{\sh{G}}{\sh{R}}$ and $\elmg{\sh{F}'}{\sh{G}}{\sh{Q}}$ are equivalent extensions of $\gamma_{\sh{F}'}$ by $\gamma_{\sh{G}}$, where $\elmg{F'}{G}{Q}:A\oplus C' \rightarrow X\oplus Z'$ denotes the block extension of $\gamma_{\sh{F}'}$ by $\gamma_{\sh{G}}$ associated with $\gamma_{\sh{Q}}$. Finally, by transitivity of equivalence, we deduce that $\lambda_{\xi,\delta}^{*}\elmG$ and $\elmg{\sh{F}'}{\sh{G}}{\sh{Q}}$ are equivalent extensions of $\gamma_{\sh{F}'}$ by $\gamma_{\sh{G}}$. This completes the proof. 

\begin{figure}[ht]
\centering
\begin{tikzpicture}
\useasboundingbox (-6,-5.5) rectangle (6,6.5);
\scope[transform canvas={scale=1}]

\node at (8-4,3+3) {\footnotesize\large $0$};
\node at (8-4,1+3) {\footnotesize\large $0$};

\node at (5-4,1.5+3) {\footnotesize\large $Z'$};
\node at (5-4,-0.5+3) {\footnotesize\large $C'$};

\node at (2-4,0+3) {\footnotesize\large $X\oplus Z'$};
\node at (2-4,-2+3) {\footnotesize\large $A\oplus C'$};

\node at (-1-4,-1.5+3) {\footnotesize\large $X$};
\node at (-1-4,-3.5+3) {\footnotesize\large $A$};

\node at (-4-4,-3+3) {\footnotesize\large $0$};
\node at (-4-4,-5+3) {\footnotesize\large $0$};

\node at (8,3) {\footnotesize\large $0$};
\node at (8,1) {\footnotesize\large $0$};

\node at (5,1.5) {\footnotesize\large $Z$};
\node at (5,-0.5) {\footnotesize\large $C$};

\node at (2,0) {\footnotesize\large $X\oplus Z$};
\node at (2,-2) {\footnotesize\large $A\oplus C$};

\node at (-1,-1.5) {\footnotesize\large $X$};
\node at (-1,-3.5) {\footnotesize\large $A$};

\node at (-4,-3) {\footnotesize\large $0$};
\node at (-4,-5) {\footnotesize\large $0$};

\draw[->, thick] (-8 +0.5,0+0.2) -- (-5-0.5,1.5-0.2);
\draw[->, thick] (-5 +0.5,1.5+0.15) -- (-2-0.5-0.15,3-0.2-0.13);
\draw[->, thick] (-2 +0.5+0.2,3+0.2+0.125) -- (1-0.5,4.5-0.2);
\draw[->, thick] (1 +0.5,4.5+0.2) -- (4-0.5,6-0.2);

\draw[->, thick] (-8 +0.5,-2+0.2) -- (-5-0.5,-0.5-0.2);
\draw[->, thick] (-5 +0.5,-0.5+0.15) -- (-2-0.5-0.15,1-0.2-0.11);
\draw[->, thick] (-2 +0.5+0.2,1+0.2+0.125) -- (1-0.5,2.5-0.2);
\draw[->, thick] (1 +0.5,2.5+0.2) -- (4-0.5,4-0.2);

\draw[->, thick] (-2,1+0.35) -- (-2,3-0.35);
\draw[->, thick] (1,2.5+0.35) -- (1,4.5-0.35);
\draw[->, thick] (-5,-0.5+0.35) -- (-5,1.5-0.35);

\draw[->, thick] (-4+0.5,-3+0.2) -- (-1-0.5,-1.5-0.2);
\draw[->, thick] (-1+0.5,-1.5+0.15) -- (2-0.5-0.15,0-0.2-0.13);
\draw[->, thick] (2+0.5+0.2,0+0.2+0.125) -- (5-0.5,1.5-0.2);
\draw[->, thick] (5+0.5,1.5+0.2) -- (8-0.5,3-0.2);

\draw[->, thick] (-4+0.5,-5+0.2) -- (-1-0.5,-3.5-0.2);
\draw[->, thick] (-1+0.5,-3.5+0.15) -- (2-0.5-0.2,-2-0.2-0.125);
\draw[->, thick] (2+0.5+0.2,-2+0.2+0.125) -- (5-0.5,-0.5-0.2);
\draw[->, thick] (5+0.5,-0.5+0.2) -- (8-0.5,1-0.2);

\draw[->, thick] (-1,-3.5+0.35) -- (-1,-1.5-0.35);
\draw[->, thick] (2,-2+0.35) -- (2,0-0.35);
\draw[->, thick] (5,-0.5+0.35) -- (5,1.5-0.35);

\draw[thick, blue] (-5+0.25-0.035, 1.5-0.25-0.035) -- (-1-0.25-0.035,-1.5+0.25-0.035);
\draw[thick, blue] (-5+0.25+0.035, 1.5-0.25+0.035) -- (-1-0.25+0.035,-1.5+0.25+0.035);

\draw[thick, blue] (-5+0.25-0.035, -0.5-0.25-0.035) -- (-1-0.25-0.035,-3.5+0.25-0.035);
\draw[thick, blue] (-5+0.25+0.035, -0.5-0.25+0.035) -- (-1-0.25+0.035,-3.5+0.25+0.035);

\draw[->, thick, red] (-2+0.25+0.25-0.3, 3-0.25-0.0) -- (2-0.25-0.1+0.3,0+0.25+0.15-0.15);
\draw[->, thick, red] (-2+0.25+0.25-0.3, 1-0.25-0.0) -- (2-0.25-0.1+0.2,-2+0.25+0.15-0.15);

\draw[->, thick, blue] (1+0.25+0.05, 4.5-0.25) -- (5-0.25+0.05,1.5+0.25);
\draw[->, thick, blue] (1+0.25+0.05, 2.5-0.25) -- (5-0.25+0.05,-0.5+0.25);

\node at (-0.75,-0.45) {\normalsize $N_{1}$};
\node at (0,1) {\normalsize$N_{2}$};

\node at (-5-0.5,0.5-0.25) {\normalsize $\gamma_{\sh{G}}$};
\node at (-2-0.75-0.1,2-0.25) {\normalsize $\elmg{F'}{G}{R}$};
\node at (1-0.5,3.5-0.25) {\normalsize$\gamma_{\sh{F'}}$};

\node at (-1+0.6,-2.5+0.25) {\normalsize $\gamma_{\sh{G}}$};
\node at (2+0.8+0.1,-1+0.25 ) {\normalsize $\elmG$};
\node at (5+0.6,0.5+0.15) {\normalsize $\gamma_{\sh{F}}$};

\node at (-4+0.6,0.5+0.45) {\normalsize $\iota_{A\oplus C'}$};
\node at (-1+0.6,1.95+0.45) {\normalsize $\rho_{A\oplus C'}$};
\node at (-4+0.3,2.5+0.3) {\normalsize $\iota_{X\oplus Z'}$};
\node at (-1+0.3,3.95+0.3) {\normalsize $\rho_{X\oplus Z'}$};

\node at (-4+0.6+4.25,0.5+0.05-3.75) {\normalsize $\iota_{A\oplus C}$};
\node at (-1+0.6+4.25,1.95+0.2-3.75) {\normalsize $\rho_{A\oplus C}$};
\node at (-4+0.6+3.75,0.5+0.05-1.95) {\normalsize $\iota_{X\oplus Z}$};
\node at (-1+0.6+3.75,1.95+0.2-1.95) {\normalsize $\rho_{X\oplus Z}$};

\node at (-0.75+3,-0.45+1.55) {\normalsize $\lambda_{\xi}$};
\node at (0+3,1+1.55) {\normalsize $\lambda_{\delta}$};

\endscope
\end{tikzpicture}
\caption{Commutative diagram between the block extensions $\elmg{F'}{G}{R}$ and $\elmG$.}
\label{fig: Commutative diagram for the equivalence of the pull-back of extensions via morphisms for linear maps}
\end{figure}
\end{proof}

\begin{lemma}\label{Lemma: push-out of an extension via a morphism for linear maps}
Let $A$, $A'$, $C$, $X$, $X'$, $Z$ be vector spaces over $\mathbb{K}$, and let $\gamma_{\sh{G}}:A\rightarrow X$, $\gamma_{\sh{G}'}:A'\rightarrow X'$, $\gamma_{\sh{H}}:C\rightarrow X$, and $\gamma_{\sh{F}}:C\rightarrow Z$ be linear maps. 

Let $\lambda_{\xi}:A \rightarrow A'$ and $\lambda_{\delta}: X\rightarrow X'$ be linear maps defining a morphism $\lambda_{\xi, \delta}=(\lambda_{\xi},\lambda_{\delta})$ between $\gamma_{\sh{G}}$ and $\gamma_{\sh{G}'}$, and let $\elmG:A\oplus C\rightarrow X\oplus Z$ be the block extension of $\gamma_{\sh{F}}$ by $\gamma_{\sh{G}}$ associated with $\gamma_{\sh{H}}$ (see Lemma~\eqref{Lemma: block extensions of linear maps}). 

Finally, let ${\lambda_{\xi,\delta}}_{*}\elmG:B\rightarrow Y$ be the push-out of $\elmG$ via $\lambda_{\xi,\delta}$, see Definition~\eqref{Def: push-out of an extension via a morphism for linear maps}, and let $\elmg{\sh{F}}{\sh{G}'}{\sh{Q}}:A'\oplus C\rightarrow X'\oplus Z$ be the block extension of $\gamma_{\sh{F}}$ by $\gamma_{\sh{G}'}$ associated with the linear map $\gamma_{\sh{Q}}:=\lambda_{\delta}\circ \gamma_{\sh{H}}: C \to X'$. Then,  ${\lambda_{\xi,\delta}}_{*}\elmG$ and $\elmg{\sh{F}}{\sh{G}'}{\sh{Q}}$ are equivalent extensions of $\gamma_{\sh{F}}$ by $\gamma_{\sh{G}'}$. 
\end{lemma}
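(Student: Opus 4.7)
The plan is to dualize the argument just given for the pull-back case, with the roles of the two outer columns of the nine-term diagrams interchanged. First, since ${\lambda_{\xi,\delta}}_{*}\elmG$ is, by Definition~\eqref{Def: push-out of an extension via a morphism for linear maps}, an extension of $\gamma_{\sh{F}}$ by $\gamma_{\sh{G}'}$, I will invoke Lemma~\eqref{Lemma: equivalence of extensions and block extensions} to produce a linear map $\gamma_{\sh{R}}:C\to X'$ together with an equivalence
\[
{\lambda_{\xi,\delta}}_{*}\elmG \;\sim\; \elmg{\sh{F}}{\sh{G}'}{\sh{R}}\, ,
\]
where $\elmg{\sh{F}}{\sh{G}'}{\sh{R}}:A'\oplus C\to X'\oplus Z$ is the block extension of $\gamma_{\sh{F}}$ by $\gamma_{\sh{G}'}$ associated with $\gamma_{\sh{R}}$. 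By transitivity of the equivalence of extensions, the statement then reduces to showing that $\elmg{\sh{F}}{\sh{G}'}{\sh{R}}$ is equivalent to $\elmg{\sh{F}}{\sh{G}'}{\sh{Q}}$, with $\gamma_{\sh{Q}}:=\lambda_{\delta}\circ \gamma_{\sh{H}}$.

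The core step will then be to assemble, from Definitions~\eqref{Def: equivalent extensions of linear maps} and~\eqref{Def: push-out of an extension via a morphism for linear maps}, a single compatible diagram that simultaneously witnesses the push-out construction and the equivalence above, yielding auxiliary linear maps $N_{1}:A\oplus C\to A'\oplus C$ and $N_{2}:X\oplus Z\to X'\oplus Z$. Decomposing each into its four block components via the direct-sum identification, the commutativity of the outer inclusion and projection squares will pin down $\lambda^{(1)}_{N_{1}}=\lambda_{\xi}$, $\lambda^{(3)}_{N_{1}}=0$, $\lambda^{(4)}_{N_{1}}=\mathrm{id}_{C}$, and dually $\lambda^{(1)}_{N_{2}}=\lambda_{\delta}$, $\lambda^{(3)}_{N_{2}}=0$, $\lambda^{(4)}_{N_{2}}=\mathrm{id}_{Z}$. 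The remaining middle-square identity will then take the form
\[
\gamma_{\sh{R}} + \lambda^{(2)}_{N_{2}}\circ \gamma_{\sh{F}} \;=\; \gamma_{\sh{G}'}\circ \lambda^{(2)}_{N_{1}} + \lambda_{\delta}\circ \gamma_{\sh{H}}\, .
\]

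Finally, setting $\gamma_{\sh{Q}}:=\lambda_{\delta}\circ \gamma_{\sh{H}}$ and rearranging this identity as
\[
\gamma_{\sh{Q}} - \gamma_{\sh{R}} \;=\; \lambda^{(2)}_{N_{2}}\circ \gamma_{\sh{F}} - \gamma_{\sh{G}'}\circ \lambda^{(2)}_{N_{1}}\, ,
\]
Lemma~\eqref{Lemma: equivalence between block extensions} will immediately deliver the desired equivalence of block extensions $\elmg{\sh{F}}{\sh{G}'}{\sh{R}}\sim \elmg{\sh{F}}{\sh{G}'}{\sh{Q}}$, and transitivity will close the argument. I do not anticipate any serious obstacle: the proof is a direct categorical dual of the pull-back case, so the only care required is in bookkeeping the directions of $N_{1}$ and $N_{2}$, which now flow from the ``upstairs'' data $(A,X)$ to the ``downstairs'' data $(A',X')$, and in tracking that the morphism $\lambda_{\xi,\delta}$ here sits between $\gamma_{\sh{G}}$ and $\gamma_{\sh{G}'}$ rather than between the right-hand columns.
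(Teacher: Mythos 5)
Your proposal is correct and follows essentially the same route as the paper's proof: reduce the push-out to a block extension $\elmg{\sh{F}}{\sh{G}'}{\sh{R}}$ via Lemma~\eqref{Lemma: equivalence of extensions and block extensions}, extract the block components of the connecting maps $N_{1}$, $N_{2}$ from the combined commutative diagram, and feed the resulting middle-square identity into Lemma~\eqref{Lemma: equivalence between block extensions} before closing by transitivity. The only cosmetic difference is the sign convention in the relation between $\gamma_{\sh{R}}$ and $\gamma_{\sh{Q}}$, which is immaterial since the criterion of Lemma~\eqref{Lemma: equivalence between block extensions} is symmetric under negating the auxiliary maps.
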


\begin{proof}
By definition, ${\lambda_{\xi,\delta}}_{*}\elmG$ is an extension of $\gamma_{\sh{F}}$ by $\gamma_{\sh{G}'}$, and hence, Lemma~\eqref{Lemma: equivalence of extensions and block extensions} asserts that there exists a linear map $\gamma_{\sh{R}}:C\rightarrow X'$ such that ${\lambda_{\xi,\delta}}_{*}\elmG$ and $\elmg{\sh{F}}{\sh{G}'}{\sh{R}}$ are equivalent extensions of $\gamma_{\sh{F}}$ by $\gamma_{\sh{G}'}$, where $\elmg{\sh{F}}{\sh{G}'}{\sh{R}}:A'\oplus C\rightarrow X'\oplus Z$ denotes the block extension of $\gamma_{\sh{F}}$ by $\gamma_{\sh{G}'}$ associated with $\gamma_{\sh{R}}$.

By Definitions~\eqref{Def: equivalent extensions of linear maps} and~\eqref{Def: push-out of an extension via a morphism for linear maps}, there exist linear maps $N_{1}:A\oplus C\rightarrow A'\oplus C$ and $N_{2}:X\oplus Z\rightarrow X'\oplus Z$ making each square of the diagram in Figure~\eqref{fig: Commutative diagram for the equivalence of the push-out of extensions via morphisms for linear maps} commute. In particular, observe that there exist linear maps $\lambda^{(1)}_{N_{1}}:A\rightarrow A'$,  $\lambda^{(2)}_{N_{1}}:C\rightarrow A'$,  $\lambda^{(3)}_{N_{1}}:A\rightarrow C$, and $\lambda^{(4)}_{N_{1}}:C\rightarrow C$ such that $N_{1}$ can be decomposed as
\begin{equation*}
N_{1}=\begin{bmatrix}
\lambda^{(1)}_{N_{1}} & \lambda^{(2)}_{N_{1}}\\
\lambda^{(3)}_{N_{1}} & \lambda^{(4)}_{N_{1}}
\end{bmatrix}\, .
\end{equation*}
Similarly, there exist linear maps $\lambda^{(1)}_{N_{2}}:X\rightarrow X'$,  $\lambda^{(2)}_{N_{2}}:Z\rightarrow X'$,  $\lambda^{(3)}_{N_{2}}:X\rightarrow Z$, and $\lambda^{(4)}_{N_{2}}:Z\rightarrow Z$  such that $N_{2}$ can be expressed as
\begin{equation*}
N_{2}=\begin{bmatrix}
\lambda^{(1)}_{N_{2}} & \lambda^{(2)}_{N_{2}}\\
\lambda^{(3)}_{N_{2}} & \lambda^{(4)}_{N_{2}}
\end{bmatrix}\, .
\end{equation*}

It then follows from the commutativity of the diagram in Figure~\eqref{fig: Commutative diagram for the equivalence of the push-out of extensions via morphisms for linear maps} that 
\begin{align*}
\lambda^{(1)}_{N_{1}}&=\lambda_{\xi}\, , \\
\lambda^{(3)}_{N_{1}}&=0\, , \\
\lambda^{(4)}_{N_{1}}&=\mathrm{id}_{C}\, , \\
\lambda^{(1)}_{N_{2}}&=\lambda_{\delta}\, , \\
\lambda^{(3)}_{N_{2}}&=0\, , \\
\lambda^{(4)}_{N_{2}}&=\mathrm{id}_{Z}\, , \\
\lambda_{\delta}\circ \gamma_{\sh{G}}&=\gamma_{\sh{G}'}\circ \lambda_{\xi}\, , \\
\gamma_{\sh{R}} + \gamma_{\sh{G}'}\circ \lambda^{(2)}_{N_{1}} &= \lambda_{\delta}\circ\gamma_{\sh{H}}+\lambda^{(2)}_{N_{2}}\circ \gamma_{\sh{F}}\, .
\end{align*}    
Thus, defining $\gamma_{\sh{Q}}:=\lambda_{\delta}\circ\gamma_{\sh{H}}:C\rightarrow X'$, the above relations show that there exist linear maps $\lambda^{(2)}_{N_{1}}:C\rightarrow A'$ and $\lambda^{(2)}_{N_{2}}:Z\rightarrow X'$ such that
\begin{equation*}
\gamma_{\sh{R}}= \gamma_{\sh{Q}}+\lambda^{(2)}_{N_{2}}\circ \gamma_{\sh{F}} - \gamma_{\sh{G}'}\circ \lambda^{(2)}_{N_{1}}.    
\end{equation*}
Hence, building on Lemma~\eqref{Lemma: equivalence between block extensions}, we conclude that $\elmg{\sh{F}}{\sh{G}'}{\sh{Q}}$ and $\elmg{\sh{F}}{\sh{G}'}{\sh{R}}$ are equivalent extensions of $\gamma_{\sh{F}}$ by $\gamma_{\sh{G}'}$, where $\elmg{\sh{F}}{\sh{G}'}{\sh{Q}}:A'\oplus C \rightarrow X'\oplus Z$ denotes the block extension of $\gamma_{\sh{F}}$ by $\gamma_{\sh{G}'}$ associated with $\gamma_{\sh{Q}}$. Finally, by transitivity of equivalence, we deduce that ${\lambda_{\xi,\delta}}_{*}\elmG$ and $\elmg{\sh{F}}{\sh{G}'}{\sh{Q}}$ are equivalent extensions of $\gamma_{\sh{F}}$ by $\gamma_{\sh{G}'}$.   

\begin{figure}[ht]
\centering
\begin{tikzpicture}
\useasboundingbox (-6,-5.5) rectangle (6,6.5);
\scope[transform canvas={scale=1}]

\node at (8-4,3+3) {\footnotesize\large $0$};
\node at (8-4,1+3) {\footnotesize\large $0$};

\node at (5-4,1.5+3) {\footnotesize\large $Z$};
\node at (5-4,-0.5+3) {\footnotesize\large $C$};

\node at (2-4,0+3) {\footnotesize\large $X\oplus Z$};
\node at (2-4,-2+3) {\footnotesize\large $A\oplus C$};

\node at (-1-4,-1.5+3) {\footnotesize\large $X$};
\node at (-1-4,-3.5+3) {\footnotesize\large $A$};

\node at (-4-4,-3+3) {\footnotesize\large $0$};
\node at (-4-4,-5+3) {\footnotesize\large $0$};

\node at (8,3) {\footnotesize\large $0$};
\node at (8,1) {\footnotesize\large $0$};

\node at (5,1.5) {\footnotesize\large $Z$};
\node at (5,-0.5) {\footnotesize\large $C$};

\node at (2,0) {\footnotesize\large $X'\oplus Z$};
\node at (2,-2) {\footnotesize\large $A'\oplus C$};

\node at (-1,-1.5) {\footnotesize\large $X'$};
\node at (-1,-3.5) {\footnotesize\large $A'$};

\node at (-4,-3) {\footnotesize\large $0$};
\node at (-4,-5) {\footnotesize\large $0$};

\draw[->, thick] (-8 +0.5,0+0.2) -- (-5-0.5,1.5-0.2);
\draw[->, thick] (-5 +0.5,1.5+0.15) -- (-2-0.5-0.15,3-0.2-0.13);
\draw[->, thick] (-2 +0.5+0.2,3+0.2+0.125) -- (1-0.5,4.5-0.2);
\draw[->, thick] (1 +0.5,4.5+0.2) -- (4-0.5,6-0.2);

\draw[->, thick] (-8 +0.5,-2+0.2) -- (-5-0.5,-0.5-0.2);
\draw[->, thick] (-5 +0.5,-0.5+0.15) -- (-2-0.5-0.15,1-0.2-0.11);
\draw[->, thick] (-2 +0.5+0.2,1+0.2+0.125) -- (1-0.5,2.5-0.2);
\draw[->, thick] (1 +0.5,2.5+0.2) -- (4-0.5,4-0.2);

\draw[->, thick] (-2,1+0.35) -- (-2,3-0.35);
\draw[->, thick] (1,2.5+0.35) -- (1,4.5-0.35);
\draw[->, thick] (-5,-0.5+0.35) -- (-5,1.5-0.35);

\draw[->, thick] (-4+0.5,-3+0.2) -- (-1-0.5,-1.5-0.2);
\draw[->, thick] (-1+0.5,-1.5+0.15) -- (2-0.5-0.15,0-0.2-0.13);
\draw[->, thick] (2+0.5+0.2,0+0.2+0.125) -- (5-0.5,1.5-0.2);
\draw[->, thick] (5+0.5,1.5+0.2) -- (8-0.5,3-0.2);

\draw[->, thick] (-4+0.5,-5+0.2) -- (-1-0.5,-3.5-0.2);
\draw[->, thick] (-1+0.5,-3.5+0.15) -- (2-0.5-0.2,-2-0.2-0.125);
\draw[->, thick] (2+0.5+0.2,-2+0.2+0.125) -- (5-0.5,-0.5-0.2);
\draw[->, thick] (5+0.5,-0.5+0.2) -- (8-0.5,1-0.2);

\draw[->, thick] (-1,-3.5+0.35) -- (-1,-1.5-0.35);
\draw[->, thick] (2,-2+0.35) -- (2,0-0.35);
\draw[->, thick] (5,-0.5+0.35) -- (5,1.5-0.35);

\draw[->, thick, blue] (-5+0.25-0.05, 1.5-0.25) -- (-1-0.25-0.05,-1.5+0.25);
\draw[->, thick, blue] (-5+0.25-0.05, -0.5-0.25) -- (-1-0.25-0.05,-3.5+0.25);

\draw[->, thick, red] (-2+0.25+0.25-0.3, 3-0.25-0.0) -- (2-0.25-0.1+0.3,0+0.25+0.15-0.15);
\draw[->, thick, red] (-2+0.25+0.25-0.3, 1-0.25-0.0) -- (2-0.25-0.1+0.2,-2+0.25+0.15-0.15);

\draw[thick, blue] (1+0.25-0.035, 4.5-0.25-0.035) -- (5-0.25-0.035,1.5+0.25-0.035);
\draw[thick, blue] (1+0.25+0.035, 4.5-0.25+0.035) -- (5-0.25+0.035,1.5+0.25+0.035);

\draw[thick, blue] (1+0.25-0.035, 2.5-0.25-0.035) -- (5-0.25-0.035,-0.5+0.25-0.035);
\draw[thick, blue] (1+0.25+0.035, 2.5-0.25+0.035) -- (5-0.25+0.035,-0.5+0.25+0.035);

\node at (-0.75,-0.45) {\normalsize $N_{1}$};
\node at (0,1) {\normalsize $N_{2}$};

\node at (-5-0.5,0.5-0.25) {\normalsize $\gamma_{\sh{G}}$};
\node at (-2-0.75-0.1,2-0.3) {\normalsize $\elmG$};
\node at (1-0.5,3.5-0.25) {\normalsize $\gamma_{\sh{F}}$};

\node at (-1+0.6,-2.5+0.25) {\normalsize$\gamma_{\sh{G}'}$};
\node at (2+0.8+0.1,-1+0.15) {\normalsize $\elmg{F}{G'}{R}$};
\node at (5+0.6,0.5+0.15) {\normalsize $\gamma_{\sh{F}}$};

\node at (-4+0.6,0.5+0.45) {\normalsize $\iota_{A\oplus C}$};
\node at (-1+0.6,1.95+0.45) {\normalsize $\rho_{A\oplus C}$};
\node at (-4+0.3,2.5+0.3) {\normalsize $\iota_{X\oplus Z}$};
\node at (-1+0.3,3.95+0.3) {\normalsize $\rho_{X\oplus Z}$};

\node at (-4+0.6+4.25,0.5+0.05-3.75) {\normalsize $\iota_{A'\oplus C}$};
\node at (-1+0.6+4.25,1.95+0.2-3.75) {\normalsize $\rho_{A'\oplus C}$};
\node at (-4+0.6+3.75,0.5+0.05-1.95) {\normalsize $\iota_{X'\oplus Z}$};
\node at (-1+0.6+3.75,1.95+0.2-1.95) {\normalsize $\rho_{X'\oplus Z}$};

\node at (-0.75-3,-0.45-1.55) {\normalsize $\lambda_{\xi}$};
\node at (0-3,1-1.55) {\normalsize $\lambda_{\delta}$};

\endscope
\end{tikzpicture}
\caption{Commutative diagram between the block extensions $\elmG$ and $\elmg{F}{G'}{R}$.}
\label{fig: Commutative diagram for the equivalence of the push-out of extensions via morphisms for linear maps}
\end{figure}
\end{proof}

Having established the above technical lemmas, we now present an observation that will elucidate the structure of the composition of mixed graded morphisms in the category $\ccs{1}{\beta}$. 

\begin{observation}
Let $\beta=\sigma_{i_{1}}\dots \sigma _{i_{\ell}}\in \mathrm{Br}^{+}_{n}$ be a positive braid word, and let $\mathcal{S}_{\Lambda(\beta)}$ denote the stratification of $\mathbb{R}^{2}$ induced by $\Lambda(\beta)$. Next, we analyze the composition of graded morphisms in the category $\ccs{1}{\beta}$ near an arbitrary arc in $\mathcal{S}_{\Lambda(\beta)}$. 

To begin, let $a$ be an arc in $\mathcal{S}_{\Lambda(\beta)}$. As illustrated in Sub-figure~\eqref{sub-fig: Strata near an arc}, near $a$, the stratification $\mathcal{S}_{\Lambda(\beta)}$ consists of $a$, an upper $2$-dimensional stratum $U$, and a lower $2$-dimensional stratum $D$. In particular, given this local configuration, we choose two arbitrary points $p \in U$ and $q \in D$. 

Let $\sh{F}$, $\sh{F}'$, $\sh{G}$, and $\sh{G}'$ be objects of the category $\ccs{1}{\beta}$. In addition, fix $\lambda\in \mathrm{Ext}^{0}(\sh{F}', \sh{F})$, $\xi\in \mathrm{Ext}^{1}(\sh{F},\sh{G})$, and $\lambda'\in \mathrm{Ext}^{0}(\sh{G}, \sh{G}')$, and let $\sh{H}$ be an extension of $\sh{F}$ by $\sh{G}$ representing $\xi$, namely $\xi=\big[\, \sh{H}\, \big]$. Bearing this in mind, our goal is to provide local models for the compositions $\lambda\circ \xi=\big[\, \lambda^{*}\sh{H} \,\big]\in \mathrm{Ext}^{1}(\sh{F}',\sh{G})$ and $\xi\circ \lambda'=\big[\, \lambda_{*}\sh{H} \,\big]\in \mathrm{Ext}^{1}(\sh{F},\sh{G}')$ near $a$, where $\lambda^{*}\sh{H}$ and ${\lambda'}_{*}\sh{H}$ denote the pull-back and push-out of $\sh{H}$ via $\lambda$ and $\lambda'$, respectively.  

Now, consider the points $p$ and $q$, and denote the stalks of the sheaves $\sh{G}$, $\sh{G}'$, $\sh{H}$, $\sh{F}$, and $\sh{F}'$ at these points by
\begin{equation*}
\begin{aligned}
X&=\sh{G}_{p}\, ,  \qquad X'={\sh{G}'}_{p}\, ,  \qquad   Y=\sh{H}_{p}\, ,  \qquad Z=\sh{F}_{p} \, , \qquad Z'=\sh{F}'_{p} \, , \\[2pt]     
A&=\sh{G}_{q}\, ,  \qquad A'={\sh{G}'}_{q}\, ,  \qquad   B=\sh{H}_{q}\, ,  \qquad C=\sh{F}_{q}\,  , \qquad C'=\sh{F}'_{q} \, .      
\end{aligned}
\end{equation*}
Then, according to the microlocal support conditions, near $a$, the sheaves $\sh{G}$, $\sh{G}'$, $\sh{H}$, $\sh{F}$, and $\sh{F}'$ are specified by linear maps $\gamma_{\sh{G}}: A \to X$, $\gamma_{\sh{G}'}: A' \to X'$, $\Gamma_{\sh{H}}:B \to Y$, $\gamma_{\sh{F}}: C \to Z$, and $\gamma_{\sh{F}'}:C' \to Z'$, respectively, with $\Gamma_{\sh{H}}$ an extension of $\gamma_{\sh{F}}$ by $\gamma_{\sh{G}}$ (see Definition~\eqref{Def: Extensions of linear maps}). In particular, by Lemma~\eqref{Lemma: equivalence of extensions and block extensions}, we know that there exists a linear map $\gamma_{\sh{H}}: C\to X$ such that $\Gamma_{\sh{H}}$ is equivalent to the block extension $\elmG$ of $\gamma_{\sh{F}}$ by $\gamma_{\sh{G}}$ associated with $\gamma_{\sh{H}}$. Hence, for the purpose of studying $\xi$, we identify $\sh{H}$ with the equivalent extension determined near $a$ by the characteristic map $\gamma_{\sh{H}}$.  

In addition, observe that near $a$, the sheaf homomorphisms $\lambda$ and $\lambda'$ are characterized by linear maps $\mu_{\lambda}:C' \to C$, $\nu_{\lambda}: Z' \to Z$, $\mu_{\lambda'}:A \to A'$, and  $\nu_{\lambda'}: X \to X'$ such that the pairs $\lambda_{\mu, \nu}:=(\mu_{\lambda}, \nu_{\lambda})$ and ${\lambda'}_{\mu, \nu}:=(\mu_{\lambda'}, \nu_{\lambda'})$ define morphisms between $\gamma_{\sh{F}'}$ and $\gamma_{\sh{F}}$ and between $\gamma_{\sh{G}}$ and $\gamma_{\sh{G}'}$, respectively (see Definition~\eqref{Def: morphism between linear maps}).

By definition, the pull-back $\lambda^{*}\sh{H}$ is an extension of $\sh{F}'$ by $\sh{G}$, and hence, near $a$, it is determined by the pull-back ${\lambda_{\mu,\nu}}^{*}\,\elmG$ of $\elmG$ via $\lambda_{\mu, \nu}$, which is an extension of $\gamma_{\sh{F}'}$ by $\gamma_{\sh{G}}$ (see Definition~\eqref{Def: pull-back of an extension via a morphism for linear maps}). Moreover, by Lemma~\eqref{Lemma: pull-back of an extension via a morphism for linear maps}, we know that ${\lambda_{\mu,\nu}}^{*}\,\elmG$ is equivalent to the block extension of $\gamma_{\sh{F}'}$ by $\gamma_{\sh{G}}$ associated with the composition $\gamma_{\sh{H}}\circ\mu_{\lambda}:C'\to X$. Consequently, we deduce that the equivalence class $\lambda\circ \xi=\big[\, \lambda^{*}\sh{H} \,\big]$ can be represented, near $a$, by an extension of $\sh{F}'$ by $\sh{G}$ whose characteristic map is $\gamma_{\sh{H}}\circ\mu_{\lambda}$. 

Similarly, the push-out ${\lambda'}_{*}\sh{H}$ is an extension of $\sh{F}$ by $\sh{G}'$, and hence, near $a$, it is determined by the push-out ${{\lambda'}_{\mu,\nu}}_{*}\elmG $ of $\elmG$ via ${\lambda'}_{\mu,\nu}$, which is an extension of $\gamma_{\sh{F}}$ by $\gamma_{\sh{G}'}$ (see Definition~\eqref{Def: push-out of an extension via a morphism for linear maps}). Furthermore, by Lemma~\eqref{Lemma: push-out of an extension via a morphism for linear maps}, we have that ${{\lambda'}_{\mu,\nu}}_{*}\,\elmG$ is equivalent to the block extension of $\gamma_{\sh{F}}$ by $\gamma_{\sh{G}'}$ associated with the composition $\nu_{\lambda'}\circ \gamma_{\sh{H}}:C\to X'$. Therefore, we conclude that near $a$, the equivalence class $\xi\circ \lambda' =\big[\, {\lambda'}_{*}\sh{H} \,\big]$ can be represented by an extension of $\sh{F}$ by $\sh{G}'$ whose characteristic map is $\nu_{\lambda'}\circ \gamma_{\sh{H}}$. 
\end{observation}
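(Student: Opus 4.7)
The plan is to verify each claim of the observation by combining three ingredients: (i) the stalkwise description of sheaves near an arc via the microlocal support conditions, (ii) the reduction of any local extension to an equivalent block extension determined by a single characteristic map, and (iii) the previously established Lemmas~\eqref{Lemma: pull-back of an extension via a morphism for linear maps} and~\eqref{Lemma: push-out of an extension via a morphism for linear maps} on pull-backs and push-outs of block extensions of linear maps.

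First, I would appeal to Observation~\eqref{Obs: elements of Ext1 at the local models}: the constructibility of $\sh{F}$, $\sh{F}'$, $\sh{G}$, $\sh{G}'$, and $\sh{H}$ with respect to $\mathcal{S}_{\Lambda(\beta)}$, together with the microlocal support conditions, reduces each of these sheaves to the stated linear map between the relevant stalks at $p$ and $q$, while a sheaf homomorphism between two such sheaves translates locally into a morphism of linear maps in the sense of Definition~\eqref{Def: morphism between linear maps}. Applied to $\lambda$ and $\lambda'$, this yields exactly the pairs $\lambda_{\mu,\nu}=(\mu_{\lambda},\nu_{\lambda})$ and $\lambda'_{\mu,\nu}=(\mu_{\lambda'},\nu_{\lambda'})$ as morphisms between $\gamma_{\sh{F}'}$ and $\gamma_{\sh{F}}$ and between $\gamma_{\sh{G}}$ and $\gamma_{\sh{G}'}$, respectively.

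Second, invoking Lemma~\eqref{Lemma: equivalence of extensions and block extensions}, the local extension $\Gamma_{\sh{H}}$ of $\gamma_{\sh{F}}$ by $\gamma_{\sh{G}}$ is equivalent to the block extension $\elmG$ for some characteristic map $\gamma_{\sh{H}}:C\to X$. The key compatibility I would then need to establish is that the sheaf-theoretic pull-back $\lambda^{*}\sh{H}$ (defined by a fiber-product diagram in the abelian category of sheaves of $\mathbb{K}$-modules) restricts, near $a$, to the linear-algebraic pull-back ${\lambda_{\mu,\nu}}^{*}\Gamma_{\sh{H}}$ of Definition~\eqref{Def: pull-back of an extension via a morphism for linear maps}, and likewise that ${\lambda'}_{*}\sh{H}$ restricts to ${\lambda'_{\mu,\nu}}_{*}\Gamma_{\sh{H}}$ of Definition~\eqref{Def: push-out of an extension via a morphism for linear maps}. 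This follows from the fact that both pull-back and push-out are characterized by universal properties in the respective abelian categories, and that stalk functors are exact and preserve fiber products and push-outs. I expect this compatibility check to be the main obstacle, since it requires carefully tracking how the universal diagrams in the category of sheaves pass to diagrams of $\mathbb{K}$-modules at stalks while remaining consistent with the block structure induced by the microlocal support conditions.

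Finally, with the two preceding paragraphs in hand, the desired local models are immediate consequences of Lemmas~\eqref{Lemma: pull-back of an extension via a morphism for linear maps} and~\eqref{Lemma: push-out of an extension via a morphism for linear maps}. The former asserts that ${\lambda_{\mu,\nu}}^{*}\elmG$ is equivalent to the block extension of $\gamma_{\sh{F}'}$ by $\gamma_{\sh{G}}$ associated with $\gamma_{\sh{H}}\circ\mu_{\lambda}$, so that $\lambda\circ\xi=[\lambda^{*}\sh{H}]\in\mathrm{Ext}^{1}(\sh{F}',\sh{G})$ is represented near $a$ by the characteristic map $\gamma_{\sh{H}}\circ\mu_{\lambda}$. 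The latter asserts that ${\lambda'_{\mu,\nu}}_{*}\elmG$ is equivalent to the block extension of $\gamma_{\sh{F}}$ by $\gamma_{\sh{G}'}$ associated with $\nu_{\lambda'}\circ\gamma_{\sh{H}}$, so that $\xi\circ\lambda'=[{\lambda'}_{*}\sh{H}]\in\mathrm{Ext}^{1}(\sh{F},\sh{G}')$ is represented near $a$ by the characteristic map $\nu_{\lambda'}\circ\gamma_{\sh{H}}$. These are precisely the claims of the observation, and they provide the local building blocks that will be globalized in the subsequent subsection to produce the combinatorial braided-composition rules of Theorem~\eqref{Theorem: main result 2}.
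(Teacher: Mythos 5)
Your proposal is correct and follows essentially the same route as the paper: reduce the sheaves and morphisms to linear maps and morphisms of linear maps near the arc via the microlocal support conditions, replace the local extension by an equivalent block extension with characteristic map $\gamma_{\sh{H}}$, and then invoke Lemmas~\eqref{Lemma: pull-back of an extension via a morphism for linear maps} and~\eqref{Lemma: push-out of an extension via a morphism for linear maps} to identify the characteristic maps $\gamma_{\sh{H}}\circ\mu_{\lambda}$ and $\nu_{\lambda'}\circ\gamma_{\sh{H}}$. The only difference is that you make explicit the compatibility of the sheaf-theoretic pull-back/push-out with their stalkwise linear-algebraic counterparts (via universal properties and exactness of stalks), a point the paper treats as immediate; your justification of it is sound.
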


With the above observation at hand, we now turn to the explicit analysis of the composition of graded morphisms in the category $\ccs{1}{\beta}$. 

\subsubsection{Combinatorial Composition Rules}
Let $\beta=\sigma_{i_{1}}\cdots \sigma_{i_{\ell}}\in \mathrm{Br}^{+}_{n}$ be a positive braid word. Next, we establish combinatorial rules for the composition of graded morphisms in the category $\ccs{1}{\beta}$. To this end, we begin by introducing a definition and a related lemma, which ensure that our combinatorial formulas yield a well-defined composition of graded morphisms.

\begin{definition}\label{Def: braided graded composition}
Let $\beta = \sigma_{i_1} \cdots \sigma_{i_\ell} \in \mathrm{Br}_n^+$ be a positive braid word. We introduce three bilinear operations associated with $\beta$: the \textbf{Hadamard} $\odot: \mathbb{K}^n_{\mathrm{std}} \times \mathbb{K}^n_{\mathrm{std}} \to \mathbb{K}^n_{\mathrm{std}}$, the \textbf{left braided} $\circ_{\beta_{\mathrm{L}}}: \mathbb{K}^n_{\mathrm{std}} \times \mathbb{K}^\ell_{\mathrm{std}} \to \mathbb{K}^\ell_{\mathrm{std}}$, and the \textbf{right braided} $\circ_{\beta_{\mathrm{R}}}: \mathbb{K}^\ell_{\mathrm{std}} \times \mathbb{K}^n_{\mathrm{std}} \to \mathbb{K}^\ell_{\mathrm{std}}$ compositions.

Specifically, for any $\vec{u} = (u_1, \dots, u_n),\; \vec{v} = (v_1, \dots, v_n) \in \mathbb{K}^n_{\mathrm{std}}$, and any $\vec{p} = (p_1, \dots, p_\ell),\; \vec{q} = (q_1, \dots, q_\ell) \in \mathbb{K}^\ell_{\mathrm{std}}$, we define:
\begin{equation*}
\begin{array}{rcl}
\vec{v} \odot \vec{u} &:=& (v_1 u_1, \dots, v_n u_n)\in \mathbb{K}^{n}_{\mathrm{std}}\,, \\[6pt]
\vec{v}\, \circ_{\beta_{\mathrm{L}}} \vec{p} &:=& (v_{\pi_{\beta_1}(i_1+1)} p_1, \dots, v_{\pi_{\beta_\ell}(i_\ell+1)} p_\ell)\in \mathbb{K}^{\ell}_{\mathrm{std}}\, ,\\[6pt]
\vec{q}\, \circ_{\beta_{\mathrm{R}}} \vec{u} &:=& (q_1 u_{\pi_{\beta_1}(i_1)}, \dots, q_\ell u_{\pi_{\beta_\ell}(i_\ell)})\in \mathbb{K}^{\ell}_{\mathrm{std}}\,.     
\end{array}    
\end{equation*}
\end{definition}

\begin{lemma}\label{Lemma: auxiliar for composition rules}
Let $\beta=\sigma_{i_{1}}\cdots \sigma_{i_{\ell}}\in\mathrm{Br}^{+}_{n}$ be a positive braid word, and let $\sh{F}$, $\sh{G}$, $\sh{Q}$ be objects of the category $\ccs{1}{\beta}$. Let \,$\hat{\mathbf{f}}^{(n)}$, $\hat{\mathbf{g}}^{(n)}$, $\hat{\mathbf{q}}^{(n)}$ be bases for $\mathbb{K}^{n}$, and let $\vec{x},\,\vec{y},\,\vec{z}\in X(\beta,\mathbb{K})$ be points such that the pairs $(\,\hat{\mathbf{f}}^{(n)}, \vec{x}\,)$, $(\,\hat{\mathbf{g}}^{(n)}, \vec{y}\,)$, $(\,\hat{\mathbf{q}}^{(n)}, \vec{z}\,)$ algebraically characterize $\sh{F}$, $\sh{G}$, $\sh{Q}$ according to Theorem~\eqref{Prop. for sheaves and braid matrices}, respectively.

Following Definition~\eqref{Def: linear map delta}, let $\delta_{\sh{F},\sh{G}}\,,\delta_{\sh{G},\sh{Q}}\,,\delta_{\sh{F},\sh{Q}}: \mathbb{K}^{n}_{\mathrm{std}} \to \mathbb{K}^{\ell}_{\mathrm{std}}$ be the linear maps associated with the pairs $(\sh{F},\sh{G})$, $(\sh{G}, \sh{Q})$, $(\sh{F}, \sh{Q})$, respectively. Then the following statements hold:  
\begin{itemize}
\item[(i)] Let $\vec{u}\in \mathrm{ker}\,\delta_{\sh{F},\sh{G}}$, and $\vec{v}\in \mathrm{ker}\,\delta_{\sh{G},\sh{Q}}$. Then the Hadamard product $\vec{v}\,\odot\, \vec{u}\in \mathbb{K}^{n}_{\mathrm{std}}$ satisfies
\begin{equation*}
\vec{v}\,\odot\, \vec{u} \in \mathrm{ker}\, \delta_{\sh{F},\sh{Q}}\, .    
\end{equation*}

\item[(ii)] Let $\vec{u}\in \mathrm{ker}\,\delta_{\sh{F},\sh{G}}$. Then the action of $\vec{u}$ on $\mathrm{im}\,\delta_{\sh{G},\sh{Q}}$ induced by the right braided graded composition satisfies
\begin{equation*}
\mathrm{im}\,\delta_{\sh{G},\sh{Q}} \,\circ_{\beta_{\mathrm{R}}} \vec{u} \;\subseteq\; \mathrm{im}\,\delta_{\sh{F},\sh{Q}}\, .    
\end{equation*}

\item[(iii)] Let $\vec{v}\in \mathrm{ker}\,\delta_{\sh{G},\sh{Q}}$. Then the action of $\vec{v}$ on $\mathrm{im}\,\delta_{\sh{F},\sh{G}}$ induced by the left braided graded composition satisfies 
\begin{equation*}
\vec{v} \,\circ_{\beta_{\mathrm{L}}} \mathrm{im}\,\delta_{\sh{F},\sh{G}} \;\subseteq\; \mathrm{im}\,\delta_{\sh{F},\sh{Q}}\, .    
\end{equation*}
\end{itemize}
\end{lemma}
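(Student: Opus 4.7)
The plan is to derive all three parts from a single computational observation: the $j$-th coordinate of $\delta_{\sh{F},\sh{Q}}$ evaluated at a Hadamard product can be split, by inserting the identity $B^{(n)}_{i_j}(y_j)\cdot\bigl(B^{(n)}_{i_j}(y_j)\bigr)^{-1}$, into the $(i_j+1,i_j)$-entry of a product of two conjugated diagonal matrices—one controlled by $\sh{G},\sh{Q}$, the other by $\sh{F},\sh{G}$. This works because the assignment $\vec{u}\mapsto D(\pi_{\beta_{j-1}}(\vec{u}))$ is multiplicative under $\odot$, so $D(\pi_{\beta_{j-1}}(\vec{v}\odot\vec{w}))=D(\pi_{\beta_{j-1}}(\vec{v}))\cdot D(\pi_{\beta_{j-1}}(\vec{w}))$.

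Concretely, for each $j\in[1,\ell]$ I would set
\begin{equation*}
A_j := \bigl(B^{(n)}_{i_j}(z_j)\bigr)^{-1} D(\pi_{\beta_{j-1}}(\vec{v}))\, B^{(n)}_{i_j}(y_j),\qquad B_j := \bigl(B^{(n)}_{i_j}(y_j)\bigr)^{-1} D(\pi_{\beta_{j-1}}(\vec{u}))\, B^{(n)}_{i_j}(x_j),
\end{equation*}
so that $[\delta_{\sh{F},\sh{Q}}(\vec{v}\odot\vec{u})]_j=[A_j B_j]_{i_j+1,i_j}$, with analogous expressions in the mixed cases. The key algebraic input—contained in the braid-matrix lemmas of Appendix A and already exploited in the \emph{Refinement} clauses of Lemmas~\eqref{Lemma: Ext0 on R_j for k=1} and~\eqref{Lemma: Ext0 for R_j for k geq 2}—is that once the $(i_j+1,i_j)$-entry of such a conjugated diagonal vanishes, the full matrix becomes diagonal and equals $D(\pi_{i_j}\pi_{\beta_{j-1}}(\,\cdot\,))=D(\pi_{\beta_j}(\,\cdot\,))$.

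With this toolkit, each part reduces to reading off a single entry of $A_j B_j$. For part (i), the hypotheses $\vec{u}\in\ker\delta_{\sh{F},\sh{G}}$ and $\vec{v}\in\ker\delta_{\sh{G},\sh{Q}}$ kill the $(i_j+1,i_j)$-entries of $B_j$ and $A_j$ respectively, so both factors are diagonal, their product is diagonal, and the $(i_j+1,i_j)$-entry is zero. For part (ii), taking $\vec{v}\odot\vec{u}$ with $\vec{u}\in\ker\delta_{\sh{F},\sh{G}}$ and $\vec{v}$ arbitrary makes $B_j$ diagonal equal to $D(\pi_{\beta_j}(\vec{u}))$, so $[A_jB_j]_{i_j+1,i_j}=(A_j)_{i_j+1,i_j}\cdot u_{\pi_{\beta_j}(i_j)}=[\delta_{\sh{G},\sh{Q}}(\vec{v})]_j\cdot u_{\pi_{\beta_j}(i_j)}$, which is exactly the $j$-th coordinate of $\delta_{\sh{G},\sh{Q}}(\vec{v})\circ_{\beta_{\mathrm{R}}}\vec{u}$; hence $\delta_{\sh{F},\sh{Q}}(\vec{v}\odot\vec{u})=\delta_{\sh{G},\sh{Q}}(\vec{v})\circ_{\beta_{\mathrm{R}}}\vec{u}$ and the inclusion follows because $\vec{v}$ was arbitrary. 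Part (iii) is symmetric: $\vec{v}\in\ker\delta_{\sh{G},\sh{Q}}$ makes $A_j=D(\pi_{\beta_j}(\vec{v}))$ diagonal, so $[A_jB_j]_{i_j+1,i_j}=v_{\pi_{\beta_j}(i_j+1)}\cdot[\delta_{\sh{F},\sh{G}}(\vec{u})]_j$, matching $\vec{v}\circ_{\beta_{\mathrm{L}}}\delta_{\sh{F},\sh{G}}(\vec{u})$.

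The main obstacle is really just the index bookkeeping: one must verify that the adjacent transposition $\pi_{i_j}$ produced by a single application of the conjugation lemma combines with $\pi_{\beta_{j-1}}$ to give exactly $\pi_{\beta_j}$, and that the resulting subscripts $\pi_{\beta_j}(i_j)$ and $\pi_{\beta_j}(i_j+1)$ land precisely in the slots dictated by the right- and left-braided compositions of Definition~\eqref{Def: braided graded composition}. Beyond this combinatorial check, the argument is formal once the conjugation-of-diagonals lemma is invoked.
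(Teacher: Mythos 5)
Your proposal is correct and follows essentially the same route as the paper: the identity-insertion $A_jB_j=\bigl(B^{(n)}_{i_j}(z_j)\bigr)^{-1}D(\pi_{\beta_{j-1}}(\vec{v}\odot\vec{u}))\,B^{(n)}_{i_j}(x_j)$ is exactly the paper's substitution of the kernel intertwining relation $D(\pi_{\beta_{j-1}}(\vec{u}))B^{(n)}_{i_j}(x_j)=B^{(n)}_{i_j}(y_j)D(\pi_{\beta_j}(\vec{u}))$, combined with multiplicativity of $\vec{w}\mapsto D(\pi_{\beta_{j-1}}(\vec{w}))$ under $\odot$. Your reading-off of the $(i_j+1,i_j)$-entry in parts (ii) and (iii), yielding $\delta_{\sh{F},\sh{Q}}(\vec{v}\odot\vec{u})=\delta_{\sh{G},\sh{Q}}(\vec{v})\circ_{\beta_{\mathrm{R}}}\vec{u}$ and its left-braided analogue, matches the paper's computation, and the index bookkeeping you flag is handled by the paper's convention $\pi_{\beta_j}=\pi_{\beta_{j-1}}s_{i_j}$ together with Lemma~\eqref{lemma for braid matrices and diagonal matrices}.
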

\begin{proof}
We prove parts (i) and (ii); part (iii) is analogous to (ii).

\noindent
Part (i): By Definition~\eqref{Def: linear map delta}, we know that $\vec{u}=(u_{1}, \dots, u_{n} )\in \mathrm{ker}\, \delta_{\sh{F},\sh{G}}\subseteq \mathbb{K}^{n}_{\mathrm{std}}$ and $\vec{v}=(v_{1}, \dots, v_{n} )\in \mathrm{ker}\, \delta_{\sh{G},\sh{Q}}\subseteq \mathbb{K}^{n}_{\mathrm{std}}$ if and only if
\begin{equation*}
\begin{aligned}
D(\pi_{\beta_{j-1}}(\vec{u}))\cdot B^{(n)}_{i_{j}}(x_{j})&=B^{(n)}_{i_{j}}(y_{j})\cdot D(\pi_{\beta_{j}}(\vec{u}))\, ,    \\
D(\pi_{\beta_{j-1}}(\vec{v}))\cdot B^{(n)}_{i_{j}}(y_{j})&=B^{(n)}_{i_{j}}(z_{j})\cdot D(\pi_{\beta_{j}}(\vec{v}))\, ,    
\end{aligned}
\end{equation*}
for all $j\in [1,\ell]$, with $\beta_{0}=e_{n}\in \mathrm{Br}^{+}_{n}$ and $\pi_{\beta_{0}}=e_{n}\in \mathcal{S}_{n}$ denoting the trivial braid word on $n$ strands and the trivial permutation on $n$ elements, respectively. Hence, since $\vec{v}\odot \vec{u}=(v_{1}u_{1},\dots, v_{n}u_{n})\in \mathbb{K}^{n}_{\mathrm{std}}$, we immediately observe that 
\begin{equation*}
\begin{aligned}
D(\pi_{\beta_{j-1}}(\vec{v}\,\odot\vec{u}))\cdot B^{(n)}_{i_{j}}(x_{j}) & =  D(\pi_{\beta_{j-1}}(\vec{v}))\cdot D(\pi_{\beta_{j-1}}(\vec{u}))\cdot B^{(n)}_{i_{j}}(x_{j})\, , \\
& = D(\pi_{\beta_{j-1}}(\vec{v}))\cdot B^{(n)}_{i_{j}}(y_{j})\cdot D(\pi_{\beta_{j}}(\vec{u}))\, , \\
& = B^{(n)}_{i_{j}}(z_{j})\cdot D(\pi_{\beta_{j}}(\vec{v})) \cdot D(\pi_{\beta_{j}}(\vec{u}))\, , \\
& = B^{(n)}_{i_{j}}(z_{j}) \cdot D(\pi_{\beta_{j}}(\vec{v}\,\odot\vec{u}))\, ,
\end{aligned}
\end{equation*}
for all $j\in [1,\ell]$. It then follows from Definition~\eqref{Def: linear map delta} that $\vec{v}\,\odot \, \vec{u}\in \mathrm{ker}\, \delta_{\sh{F},\sh{Q}}$. 

\noindent
Part (ii): By Definition~\eqref{Def: linear map delta}, we know that $\vec{u}=(u_{1}, \dots, u_{n})\in \mathrm{ker}\, \delta_{\sh{F},\sh{G}}\subseteq \mathbb{K}^{n}_{\mathrm{std}}$ if and only if
\begin{equation}\label{Eq: relations characterizing the kernel of delta_F,G}
D(\pi_{\beta_{j-1}}(\vec{u}))\cdot B^{(n)}_{i_{j}}(x_{j})=B^{(n)}_{i_{j}}(y_{j})\cdot D(\pi_{\beta_{j}}(\vec{u}))\, ,    
\end{equation}
for all $j\in [1,\ell]$.

Now, let $\vec{v} \in \mathbb{K}^{n}_{\mathrm{std}}$. By Definition~\eqref{Def: linear map delta}, we have that $\delta_{\sh{G},\sh{Q}}(\vec{v})=(\delta_{1},\dots, \delta_{\ell})\in \mathbb{K}^{\ell}_{\mathrm{std}}$, where
\begin{equation*}
\delta_{j}(\vec{z}, \vec{v}, \vec{y}) := \Big[\,\big(B^{(n)}_{i_{j}}(z_{j})\big)^{-1}\cdot D(\pi_{\beta_{j-1}}(\vec{v})) \cdot B^{(n)}_{i_{j}}(y_{j})\, \Big]_{i_{j}+1, i_{j}} \, ,  
\end{equation*}
for all $j\in [1,\ell]$. Then, in light of Definition~\eqref{Def: braided graded composition}, we observe that $\delta_{\sh{G},\sh{Q}}(\vec{v})\circ_{\beta_{\mathrm{R}}} \vec{u}=(\delta'_{1},\dots, \delta'_{\ell})\in \mathbb{K}^{\ell}_{\mathrm{std}}$, where 
\begin{equation*}
\delta'_{j}(\vec{z}, \vec{v}, \vec{y}) := \Big[\,\big(B^{(n)}_{i_{j}}(z_{j})\big)^{-1}\cdot D(\pi_{\beta_{j-1}}(\vec{v})) \cdot B^{(n)}_{i_{j}}(y_{j})\cdot D(\pi_{\beta_{j}}(\vec{u}))\,\Big]_{i_{j}+1, i_{j}}\, ,      
\end{equation*}
for all $j\in [1,\ell]$. Bearing this in mind, the kernel constraints~\eqref{Eq: relations characterizing the kernel of delta_F,G} for $\vec{u}$ imply that
\begin{equation*}
\begin{aligned}
\delta'_{j}(\vec{z}, \vec{v}, \vec{y}) &= \Big[\,\big(B^{(n)}_{i_{j}}(z_{j})\big)^{-1}\cdot D(\pi_{\beta_{j-1}}(\vec{v})) \cdot D(\pi_{\beta_{j-1}}(\vec{u}))\cdot B^{(n)}_{i_{j}}(x_{j})\,\Big]_{i_{j}+1, i_{j}}\, , \\[6pt]
&= \Big[\,\big(B^{(n)}_{i_{j}}(z_{j})\big)^{-1}\cdot D(\pi_{\beta_{j-1}}(\vec{v}\odot \vec{u})) \cdot B^{(n)}_{i_{j}}(x_{j})\,\Big]_{i_{j}+1, i_{j}} \, , \\
\end{aligned}
\end{equation*}
for all $j\in [1,\ell]$. It then follows from Definition~\eqref{Def: linear map delta} that $\delta_{\sh{G},\sh{Q}}(\vec{v})\circ_{\beta_{\mathrm{R}}} \vec{u} = \delta_{\sh{F},\sh{Q}}(\vec{v}\,\odot\, \vec{u})$. Finally, since $\vec{v}$ is arbitrary, we conclude that $\mathrm{im}\,\delta_{\sh{G},\sh{Q}} \circ_{\beta_{\mathrm{R}}} \vec{u} \;\subseteq\; \mathrm{im}\,\delta_{\sh{F},\sh{Q}}$. This completes the proof. 
\end{proof}

With this result at hand, we now proceed to prove the first part of our second main result: a theorem providing a combinatorial rule for the composition of zero-degree morphisms in the category $\ccs{1}{\beta}$.

\begin{theorem}\label{Theorem: Graded composition for Ext0 and Ext0}
Let $\beta=\sigma_{i_{1}}\dots \sigma_{i_{\ell}}\in \mathrm{Br}^{+}_{n}$ be a positive braid word, and let $\sh{F}$, $\sh{G}$, $\sh{Q}$ be objects of the category $\ccs{1}{\beta}$. Let \,$\hat{\mathbf{f}}^{(n)}$, $\hat{\mathbf{g}}^{(n)}$, $\hat{\mathbf{q}}^{(n)}$ be bases for $\mathbb{K}^{n}$, and let $\vec{x},\, \vec{y},\, \vec{z}\in X(\beta, \mathbb{K})$ be points such that the pairs $(\,\hat{\mathbf{f}}\,, \vec{x}\,)$, $(\,\hat{\mathbf{g}}\,, \vec{y}\, )$, $(\,\hat{\mathbf{q}}\,, \vec{z}\, )$ algebraically characterize $\sh{F}$, $\sh{G}$, $\sh{Q}$ according to Theorem~\eqref{Prop. for sheaves and braid matrices}, respectively. 

Following Definition~\eqref{Def: linear map delta}, let $\delta_{\sh{F},\sh{G}}, \, \delta_{\sh{G},\sh{Q}},\, \delta_{\sh{F},\sh{Q}}: \mathbb{K}^{n}_{\mathrm{std}}\to \mathbb{K}^{\ell}_{\mathrm{std}}$ be the linear maps associated with the pairs $(\sh{F}, \sh{G})$, $(\sh{G}, \sh{Q})$, $(\sh{F}, \sh{Q})$, respectively. In light of Theorem~\eqref{Theorem: Ext0 as the kernel of delta_F,G}, let $\mu\in \mathrm{Ext}^{0}(\sh{F},\sh{G})$ and $\nu\in \mathrm{Ext}^{0}(\sh{G},\sh{Q})$, and suppose that the vectors $\vec{u}\in \mathrm{ker}\,\delta_{\sh{F},\sh{G}}\subseteq \mathbb{K}^{n}_{\mathrm{std}}$ and $\vec{v}\in \mathrm{ker}\,\delta_{\sh{G},\sh{Q}}\subseteq \mathbb{K}^{n}_{\mathrm{std}}$ determine $\mu$ and $\nu$ under the isomorphisms 
\begin{equation*}
\mathrm{Ext}^{0}(\sh{F},\sh{G})\cong \mathrm{ker}\,\delta_{\sh{F},\sh{G}}\, , \qquad\quad   \mathrm{Ext}^{0}(\sh{G},\sh{Q})\cong \mathrm{ker}\,\delta_{\sh{G},\sh{Q}}\, .   
\end{equation*}
Then, building on Definition~\eqref{Def: braided graded composition}, the composition $\nu\circ \mu\in \mathrm{Ext}^{0}(\sh{F},\sh{Q})$ is determined, under the isomorphism $\mathrm{Ext}^{0}(\sh{F},\sh{Q})\cong \mathrm{ker}\,\delta_{\sh{F},\sh{Q}}$, by the vector $\vec{v}\,\odot\, \vec{u}\in \mathrm{ker}\,\delta_{\sh{F},\sh{Q}}\subseteq \mathbb{K}^{n}_{\mathrm{std}}$. 
\end{theorem}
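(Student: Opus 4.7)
The plan is to exploit the fact that the Yoneda composition on zero-degree morphism spaces reduces to the usual composition of sheaf homomorphisms, and then to track how this composition behaves under the explicit identification $\mathrm{Ext}^{0}(\mathcal{F},\mathcal{G})\cong\ker\delta_{\mathcal{F},\mathcal{G}}$ furnished by Theorem~\eqref{Theorem: Ext0 as the kernel of delta_F,G}. Specifically, I would begin by recalling that, under the given \textbf{setup}, the proof of Theorem~\eqref{Theorem: Ext0 as the kernel of delta_F,G} shows that $\mu$ is realized by a collection of linear maps $\bigl\{T^{(i)}_{\mu}:\mathbb{K}^{i}\to\mathbb{K}^{i}\bigr\}_{i=1}^{n}$ whose top-level matrix representation satisfies $\tensor[_{\hat{\mathbf{g}}^{(n)}}]{\big[T^{(n)}_{\mu}\big]}{_{\hat{\mathbf{f}}^{(n)}}}=D(\vec{u})$, and symmetrically $\nu$ is realized by $\bigl\{T^{(i)}_{\nu}\bigr\}_{i=1}^{n}$ with $\tensor[_{\hat{\mathbf{q}}^{(n)}}]{\big[T^{(n)}_{\nu}\big]}{_{\hat{\mathbf{g}}^{(n)}}}=D(\vec{v})$, together with the principal submatrix conditions at every lower level.

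Next, I would argue that since $\nu\circ\mu$ is nothing more than the honest composition of sheaf homomorphisms, its characterizing collection at each arc of $\mathcal{S}_{\Lambda(\beta)}$ is simply $\bigl\{T^{(i)}_{\nu}\circ T^{(i)}_{\mu}\bigr\}_{i=1}^{n}$. The crucial computation then reduces to matrix multiplication at the top level: under the three bases $\hat{\mathbf{f}}^{(n)}$, $\hat{\mathbf{g}}^{(n)}$, $\hat{\mathbf{q}}^{(n)}$, the change-of-basis formula gives
\begin{equation*}
\tensor[_{\hat{\mathbf{q}}^{(n)}}]{\big[T^{(n)}_{\nu}\circ T^{(n)}_{\mu}\big]}{_{\hat{\mathbf{f}}^{(n)}}} \;=\; \tensor[_{\hat{\mathbf{q}}^{(n)}}]{\big[T^{(n)}_{\nu}\big]}{_{\hat{\mathbf{g}}^{(n)}}}\cdot \tensor[_{\hat{\mathbf{g}}^{(n)}}]{\big[T^{(n)}_{\mu}\big]}{_{\hat{\mathbf{f}}^{(n)}}} \;=\; D(\vec{v})\cdot D(\vec{u}) \;=\; D(\vec{v}\odot\vec{u})\, ,
\end{equation*}
since the product of two diagonal matrices is the diagonal matrix of entry-wise products. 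The block-diagonal descent property ensured by Theorem~\eqref{Theorem: Ext0 as the kernel of delta_F,G} then propagates automatically to every level: $\tensor[_{\hat{\mathbf{q}}^{(i)}}]{\big[T^{(i)}_{\nu}\circ T^{(i)}_{\mu}\big]}{_{\hat{\mathbf{f}}^{(i)}}}$ is the principal $i\times i$ submatrix of $D(\vec{v}\odot\vec{u})$, matching the prescribed structure.

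Finally, to close the argument and confirm that the result is consistent with the kernel description, I would invoke Lemma~\eqref{Lemma: auxiliar for composition rules}(i), which guarantees that $\vec{v}\odot\vec{u}\in\ker\delta_{\mathcal{F},\mathcal{Q}}$, so that $\vec{v}\odot\vec{u}$ genuinely represents an element of $\mathrm{Ext}^{0}(\mathcal{F},\mathcal{Q})$ under the isomorphism $\mathrm{Ext}^{0}(\mathcal{F},\mathcal{Q})\cong\ker\delta_{\mathcal{F},\mathcal{Q}}$. I do not anticipate a substantial technical obstacle here, since Yoneda composition in degree $(0,0)$ is sheaf-theoretically transparent and all the heavy lifting has already been done in establishing the kernel identification; the essential new input is merely the elementary identity $D(\vec{v})\cdot D(\vec{u})=D(\vec{v}\odot\vec{u})$. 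The only point requiring mild care is keeping track of the three different adapted bases and ensuring that the matrix representations chain correctly, which is guaranteed by the uniqueness of adapted bases established in the \textbf{main assumption} of Theorem~\eqref{Theorem: Ext0 as the kernel of delta_F,G}.
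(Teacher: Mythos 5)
Your proposal is correct and follows essentially the same route as the paper's own proof: identify $\mu$ and $\nu$ with their top-level characteristic maps represented by $D(\vec{u})$ and $D(\vec{v})$, observe that degree-$(0,0)$ Yoneda composition is honest composition of sheaf homomorphisms so the result is represented by $D(\vec{v})\cdot D(\vec{u})=D(\vec{v}\odot\vec{u})$, and invoke Lemma~\eqref{Lemma: auxiliar for composition rules}(i) to confirm $\vec{v}\odot\vec{u}\in\ker\delta_{\sh{F},\sh{Q}}$. The only difference is that you spell out the descent of the principal-submatrix condition to lower levels, which the paper leaves implicit.
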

\begin{proof}
To begin, observe that, given the bases $\hat{\mathbf{f}}^{(n)}$, $\hat{\mathbf{g}}^{(n)}$, and $\hat{\mathbf{q}}^{(n)}$, the sheaf homomorphisms $\mu$ and $\nu$ are fully determined by their characteristic maps $T^{(n)}_{\mu}: \mathbb{K}^{n}\to \mathbb{K}^{n}$ and $T^{(n)}_{\nu}: \mathbb{K}^{n}\to \mathbb{K}^{n}$, namely the linear maps whose matrix representations are given by
\begin{equation*}
\tensor[_{\hat{\mathbf{g}}^{(n)}}]{ \big[\, T_{\mu}^{(n)}\,\big] }{_{\hat{\mathbf{f}}^{(n)}}}=D(\vec{u})\,, \quad \text{and} \quad \tensor[_{\hat{\mathbf{q}}^{(n)}}]{ \big[\, T_{\nu}^{(n)}\,\big] }{_{\hat{\mathbf{g}}^{(n)}}}=D(\vec{v})\, .    
\end{equation*}

By definition, $\nu\circ \mu$ corresponds to the standard composition of sheaf homomorphisms $\nu$ and $\mu$, and hence, in our case, it is fully determined by the composition of the characteristic maps $T^{(n)}_{\nu} $ and $ T^{(n)}_{\mu}$, that is, by the linear map $T^{(n)}_{\nu}\circ T^{(n)}_{\mu}: \mathbb{K}^{n}\to \mathbb{K}^{n}$ whose matrix representation with respect to the bases $\hat{\mathbf{f}}^{(n)}$ and $\hat{\mathbf{q}}^{(n)}$ is given by
\begin{equation*}
\begin{aligned}
 \tensor[_{\hat{\mathbf{q}}^{(n)}}]{ \big[\, T_{\nu}^{(n)}\circ T_{\mu}^{(n)}\,\big] }{_{\hat{\mathbf{f}}^{(n)}}}&= D(\vec{v}) \cdot D(\vec{u})\, ,\\
&=D(\vec{v}\,\odot\, \vec{u})\, ,
\end{aligned}
\end{equation*}
where $\vec{v}\,\odot\,\vec{u}=(v_{1}u_{1},\dots, v_{n}u_{n})\in \mathbb{K}^{n}_{\mathrm{std}}$. Finally, observe that since $\vec{u}\in \mathrm{ker}\, \delta_{\sh{F},\sh{G}}$ and $\vec{v}\in \mathrm{ker}\, \delta_{\sh{G},\sh{Q}}$, Lemma~\eqref{Lemma: auxiliar for composition rules} ensures that $\vec{v}\,\odot\,\vec{u}\in \mathrm{ker}\, \delta_{\sh{F}, \sh{Q}}$, which shows that, under the isomorphism $\mathrm{Ext}^{0}(\sh{F},\sh{Q})\cong \mathrm{ker}\,\delta_{\sh{F},\sh{Q}}$, the composition $\nu\circ \mu$ is completely determined by the Hadamard product $\vec{v}\,\odot\,\vec{u}$, thereby providing a well-posed and explicit combinatorial rule for the composition of the graded morphisms under consideration. This completes the proof. 
\end{proof}

Next, we extend our combinatorial approach to give an explicit description of the composition of mixed-degree morphisms in the category $\ccs{1}{\beta}$, as made precise in the following theorems. 

\begin{theorem}\label{Theorem: Graded composition for Ext0 and Ext1}
Let $\beta=\sigma_{i_{1}}\dots \sigma_{i_{\ell}}\in \mathrm{Br}^{+}_{n}$ be a positive braid word, and let $\sh{F}$, $\sh{G}$, $\sh{Q}$ be objects of the category $\ccs{1}{\beta}$. Let \,$\hat{\mathbf{f}}^{(n)}$, $\hat{\mathbf{g}}^{(n)}$, $\hat{\mathbf{q}}^{(n)}$ be bases for $\mathbb{K}^{n}$, and let $\vec{x},\, \vec{y},\, \vec{z}\in X(\beta, \mathbb{K})$ be points such that the pairs $(\,\hat{\mathbf{f}}\,, \vec{x}\,)$, $(\,\hat{\mathbf{g}}\,, \vec{y}\, )$, $(\,\hat{\mathbf{q}}\,, \vec{z}\, )$ algebraically characterize $\sh{F}$, $\sh{G}$, $\sh{Q}$ according to Theorem~\eqref{Prop. for sheaves and braid matrices}, respectively. 

Following Definition~\eqref{Def: linear map delta}, let $\delta_{\sh{F},\sh{G}}, \, \delta_{\sh{G},\sh{Q}},\, \delta_{\sh{F},\sh{Q}}: \mathbb{K}^{n}_{\mathrm{std}}\to \mathbb{K}^{\ell}_{\mathrm{std}}$ be the linear maps associated with the pairs $(\sh{F}, \sh{G})$, $(\sh{G}, \sh{Q})$, $(\sh{F}, \sh{Q})$, respectively. In light of Theorems~\eqref{Theorem: Ext0 as the kernel of delta_F,G} and~\eqref{Theorem: Ext1 as the cokernel of delta_F,G}, let $\mu\in \mathrm{Ext}^{0}(\sh{F},\sh{G})$ and $\Phi\in \mathrm{Ext}^{1}(\sh{G},\sh{Q})$, and suppose that, for some representative $\vec{q}\in \mathbb{K}^{\ell}_{\mathrm{std}}$, the vector $\vec{u}\in \mathrm{ker}\,\delta_{\sh{F},\sh{G}}\subseteq \mathbb{K}^{n}_{\mathrm{std}}$ and the class $[\,\vec{q}\,]\in \mathrm{coker}\,\delta_{\sh{G},\sh{Q}}$ determine $\mu$ and $\Phi$ under the isomorphisms 
\begin{equation*}
\mathrm{Ext}^{0}(\sh{F},\sh{G})\cong \mathrm{ker}\,\delta_{\sh{F},\sh{G}}\, , \qquad\quad   \mathrm{Ext}^{1}(\sh{G},\sh{Q})\cong \mathrm{coker}\,\delta_{\sh{G},\sh{Q}}\, .   
\end{equation*}
Then, building on Definition~\eqref{Def: braided graded composition}, the composition $\Phi\circ \mu\in \mathrm{Ext}^{1}(\sh{F},\sh{Q})$ is determined, under the isomorphism $\mathrm{Ext}^{1}(\sh{F},\sh{Q})\cong \mathrm{coker}\,\delta_{\sh{F},\sh{Q}}$, by the class $[\,\vec{q}\,\circ_{\beta_{\mathrm{R}}}\vec{u}\,]\in \mathrm{coker}\,\delta_{\sh{F},\sh{Q}}$. 
\end{theorem}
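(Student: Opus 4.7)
The plan is to compute $\Phi \circ \mu$ via the Yoneda pull-back description, namely $\Phi \circ \mu = [\,\mu^{*}\sh{H}\,]\in \mathrm{Ext}^{1}(\sh{F},\sh{Q})$ for a chosen extension $\sh{H}$ of $\sh{G}$ by $\sh{Q}$ representing $\Phi$, and then to track the characteristic maps of $\mu^{*}\sh{H}$ across each vertical strap in the decomposition of $U_{\mathrm{B}}$. Following the proof of Theorem~\ref{Theorem: Ext1 as the cokernel of delta_F,G}, I would first select a representative $\sh{H}$ that is trivial on $U_{\mathrm{T}} \cup U_{\mathrm{L}}$ and whose only non-trivial local data is concentrated at the crossings: at the $j$-th crossing, after passing to the braid-transformed bases provided by Theorem~\ref{Theorem: adapted bases for an object at a region R_j}, the relevant characteristic map has a single non-zero entry $q_{j}$ in position $(i_{j}+1,\, i_{j})$. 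Likewise, under the $\mathrm{Ext}^{0}$ correspondence from Theorem~\ref{Theorem: Ext0 as the kernel of delta_F,G}, the sheaf homomorphism $\mu$ is specified by a collection $\{T^{(i)}_{\mu}\}_{i=1}^{n}$ whose matrices in the bases adapted to $\sh{F}$ and $\sh{G}$ are principal submatrices of $D(\vec{u})$.

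Next, I would invoke Lemma~\ref{Lemma: pull-back of an extension via a morphism for linear maps} to identify $\mu^{*}\sh{H}$ locally with the block extension whose characteristic map on each arc is the composition of the characteristic map of $\sh{H}$ with that of $\mu$ on the right. On $U_{\mathrm{T}} \cup U_{\mathrm{L}}$ this composition vanishes, and $\mu^{*}\sh{H}$ is locally trivial there. At the $j$-th crossing, the relevant composition in the braid-transformed bases produces a matrix product: the matrix representing $T^{(i_{j})}_{\mu}$, which by Theorem~\ref{Theorem: adapted bases for an object at a region R_j} is the principal $i_{j}\times i_{j}$ submatrix of $D(\pi_{\beta_{j}}(\vec{u}))$, multiplied against the matrix representing the characteristic map of $\sh{H}$ at the crossing, whose only non-zero entry is $q_{j}$ in position $(i_{j}+1,\, i_{j})$. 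A direct computation yields a single non-zero entry $q_{j}\, u_{\pi_{\beta_{j}}(i_{j})}$ in position $(i_{j}+1,\, i_{j})$, which by the proof of Theorem~\ref{Theorem: Ext1 as the cokernel of delta_F,G} is precisely the scalar that parametrizes the class $[\,\mu^{*}\sh{H}\,]$ at the $j$-th crossing under the isomorphism $\mathrm{Ext}^{1}(\sh{F},\sh{Q})\cong \mathrm{coker}\,\delta_{\sh{F},\sh{Q}}$. Assembling these scalars across $j = 1,\dots,\ell$ gives exactly $\vec{q}\,\circ_{\beta_{\mathrm{R}}} \vec{u}\in \mathbb{K}^{\ell}_{\mathrm{std}}$.

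Finally, I would verify that the assignment descends to cokernels: if $\vec{q}$ and $\vec{q}\,'$ represent the same class in $\mathrm{coker}\,\delta_{\sh{G},\sh{Q}}$, then $\vec{q}-\vec{q}\,'\in \mathrm{im}\,\delta_{\sh{G},\sh{Q}}$, and part (ii) of Lemma~\ref{Lemma: auxiliar for composition rules} ensures that $(\vec{q}-\vec{q}\,')\circ_{\beta_{\mathrm{R}}}\vec{u}\in \mathrm{im}\,\delta_{\sh{F},\sh{Q}}$, so $[\vec{q}\,\circ_{\beta_{\mathrm{R}}}\vec{u}\,]$ depends only on the class of $\vec{q}$. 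The main technical obstacle will be the careful bookkeeping of how the characteristic maps of $\mu^{*}\sh{H}$ evolve through the braid-transformed bases of Theorem~\ref{Theorem: adapted bases for an object at a region R_j}, and in particular the verification that the off-diagonal "starred" entries produced by the crossing constraint on $R_{j}$ for $i_{j}\geq 2$ contribute no additional parameter beyond the $(i_{j}+1,\, i_{j})$ entry; both issues are controlled by the block structure of the braid matrices and by the local algebraic models of Lemmas~\ref{Lemma: matrix local model for sigma_1} and~\ref{Lemma: matrix local model for sigma_k}, but require verification strictly analogous to that in the proof of Theorem~\ref{Theorem: Ext1 as the cokernel of delta_F,G}.
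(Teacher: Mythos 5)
Your proposal is correct and follows essentially the same route as the paper's proof: choose a simple representative of $\Phi$ concentrated at the crossings, pull it back strap by strap via the block-extension description of Lemma~\eqref{Lemma: pull-back of an extension via a morphism for linear maps}, read off the $(i_{j}+1,i_{j})$ entry $q_{j}\,u_{\pi_{\beta_{j}}(i_{j})}$ in the braid-transformed bases, and descend to cokernels via Lemma~\eqref{Lemma: auxiliar for composition rules}(ii). The technical points you flag (the starred entries and the bookkeeping through the adapted bases) are handled in the paper exactly as you anticipate, by the block structure of the braid matrices and the local models.
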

\begin{proof}
To begin, let $\mathcal{U}_{\Lambda(\beta)}=\big\{U_{0}, U_{\mathrm{B}}, U_{\mathrm{L}}, U_{\mathrm{R}}, U_{\mathrm{T}}\big\}$ be the open cover of $\mathbb{R}^{2}$ from Construction~\eqref{Cons: Finite open cover for R^2}. By definition, $\Phi\circ \mu=\big[\, \mu^{*}\sh{H}' \,\big]$, for any extension $\sh{H}'$ of $\sh{G}$ by $\sh{Q}$ representing $\Phi$, where $\mu^{*}\sh{H}'$ denotes the pull-back of $\sh{H}'$ via $\mu$. Next, consider the open set $U_{\mathrm{B}}$, the region in $\mathbb{R}^{2}$ whose intersection with the front projection $\Pi_{x,z}(\Lambda(\beta))$ comprises the braid diagram on $n$ strands associated with $\beta$, as illustrated in Figure~\eqref{Front diagram decomposed into two regions}. By Theorem~\eqref{Theorem: Ext1 as the cokernel of delta_F,G}, any extension of $\sh{G}$ by $\sh{Q}$ is equivalent to a simple extension---namely, an extension determined by a tuple $\vec{q}=(q_{1}, \dots, q_{\ell})\in \mathbb{K}^{\ell}_{\mathrm{std}}$ on $U_{\mathrm{B}}$ and equivalent to the trivial extension away from this region, where two simple extensions are equivalent if and only if their characteristic tuples, say $\vec{q}, \vec{q}\,'\in \mathbb{K}^{\ell}_{\mathrm{std}}$, satisfy $\vec{q}\,' -\vec{q}\in \mathrm{im}\, \delta_{\sh{G}, \sh{Q}}$. Consequently, for the purpose of studying $\Phi\circ \mu$, it suffices to choose as representative for $\Phi$ a simple extension $\sh{H}'$ determined by a tuple $\vec{q}\in \mathbb{K}^{\ell}_{\mathrm{std}}$, compute the pull-back $\mu^{*}\sh{H}'$ on $\mathrm{U}_{\mathrm{B}}$, and then pass to its equivalence class.  

Let $\mathcal{R}_{\Lambda(\beta)}=\big\{R_{j}\big\}_{j=1}^{\ell}$ be the partition of $U_{\mathrm{B}}$ into $\ell$ open vertical straps from Construction~\eqref{Cons: Definition of the vertical straps}. Observe that, as we move from left to right through the regions $R_{j}$, the data characterizing the extension $\sh{H}'$ accumulates. In particular, in each region $R_{j}$, the new contribution is encoded in the characteristic map specifying the extension along the $(i_{j}+1)$-st strand immediately after the crossing $\sigma_{i_{j}}$, the $j$-th and only crossing of $\beta$ contained in $R_{j}$. Thus, to analyze the pull-back $\mu^{*}\sh{H}'$, it suffices to extract, in each region $R_{j}$, the data that determines the pull-back along the $(i_{j}+1)$-st strand, for all $j\in [1,\ell]$.   

Fix a region $R_{j}$ for some $j\in [1,\ell]$, and denote by $k=i_{j}\in [1,n-1]$ the index of $\sigma_{i_{j}}$. Then, near the $(k+1)$-st strand immediately after the crossing $\sigma_{k}$, we have that: 
\begin{itemize}
\item $\sh{F}$ and $\sh{G}$ are specified by a pair of linear maps $\widetilde{\phi}^{\,(k)}_{\sh{F}}:\mathbb{K}^{k}\to \mathbb{K}^{k+1}$ and $\widetilde{\phi}^{\,(k)}_{\sh{G}}:\mathbb{K}^{k}\to \mathbb{K}^{k+1}$, respectively. 
\item The extension $\sh{H}'$ is specified by a characteristic map $\widetilde{\phi}^{(k)}_{\sh{H}'}: \mathbb{K}^{k}\to \mathbb{K}^{k+1}$. 
\item The morphism $\mu$ is characterized by two linear maps $T^{(k+1)}_{\mu}:\mathbb{K}^{k+1}\to \mathbb{K}^{k+1}$ and $\widetilde{T}^{(k)}_{\mu}:\mathbb{K}^{k}\to \mathbb{K}^{k}$ such that the pair $(T^{(k+1)}_{\mu}, \widetilde{T}^{(k)}_{\mu})$ defines a morphism between $\widetilde{\phi}^{\,(k)}_{\sh{F}}:\mathbb{K}^{k}\to \mathbb{K}^{k+1}$ and $\widetilde{\phi}^{\,(k)}_{\sh{G}}:\mathbb{K}^{k}\to \mathbb{K}^{k+1}$. 
\end{itemize}
Consequently, we obtain that, near the $(k+1)$-st strand immediately after the crossing $\sigma_{k}$, the pull-back $\mu^{*}\sh{H}'$ is determined by the characteristic map $\widetilde{\phi}^{\,(k)}_{\sh{H}'}\circ \widetilde{T}^{\,(k)}_{\mu}: \mathbb{K}^{k}\to \mathbb{K}^{k+1}$.  

Now, for each $i\in [1,n-1]$, let $\hat{\mathbf{f}}^{(i)}$, $\hat{\mathbf{g}}^{(i)}$, and $\hat{\mathbf{q}}^{(i)}$ be bases for $\mathbb{K}^{i}$ induced by the bases $\hat{\mathbf{f}}^{(n)}$, $\hat{\mathbf{g}}^{(n)}$, and $\hat{\mathbf{q}}^{(n)}$ for $\mathbb{K}^{n}$, so that $\big\{ \hat{\mathbf{f}}^{(i)} \big\}_{i=1}^{n}$, $\big\{ \hat{\mathbf{g}}^{(i)} \big\}_{i=1}^{n}$, and $\big\{ \hat{\mathbf{q}}^{(i)} \big\}_{i=1}^{n}$ are systems of bases adapted to the linear maps characterizing $\sh{F}$, $\sh{G}$, and $\sh{Q}$ on the left of the crossing in $R_{1}$, respectively. 

Next, consider the braid-transformed bases $\hat{\mathbf{f}}^{(i)}[\beta_{j},\vec{x}_{j} ]$, $\hat{\mathbf{g}}^{(i)}[\beta_{j},\vec{y}_{j} ]$, and $\hat{\mathbf{g}}^{(i)}[\beta_{j},\vec{z}_{j} ]$ associated with the truncated braid word $\beta_{j}$ and the truncated tuples $\vec{x}_{j}$, $\vec{y}_{j}$, ${\vec{z}_{j}}\in \mathbb{K}^{j}_{\mathrm{std}}$. Then, with respect to these bases, we have that
\begin{itemize}
\item The matrix representing $\widetilde{\phi}^{(k)}_{\sh{H}'}$ is given by
\begin{equation*}
\tensor[_{\hat{\mathbf{q}}^{(k+1)}[\beta_{j}, \,\vec{z}_{j}] }]{ \big[\, \widetilde{\phi}^{\,(k)}_{\sh{H}'} \,\big] }{_{\hat{\mathbf{g}}^{(k)}[\beta_{j}, \,\vec{y}_{j}]}}= \begin{bmatrix}
\star & \cdots & \star & 0\\
\vdots & \ddots & \vdots & \vdots\\
\star & \cdots & \star & 0\\
\star & \cdots & \star & q_{j}\\
\end{bmatrix} _{(k+1, k)}\, ,
\end{equation*}
where $q_{j}$ denotes the $j$-th entry of the tuple $\vec{q}=(q_{1},\dots, q_{\ell})\in \mathbb{K}^{\ell}_{\mathrm{std}}$ characterizing $\sh{H}'$, and the starred entries depend on the data from prior crossings before $\sigma_{i_{j}}$. 
\item The matrix representing $\widetilde{T}^{\,(k)}_{\mu}$ is given by
\begin{equation*}
\tensor[_{\hat{\mathbf{g}}^{(k)}[\beta_{j}, \,\vec{y}_{j} ]}]{ \big[\, \widetilde{T}^{\,(k)}_{\mu}\,\big] }{_{\hat{\mathbf{f}}^{(k)}[\beta_{j}, \,\vec{x}_{j} ]}}
= \,\text{principal $k\times k$ submatrix of }\, D(\pi_{\beta_{j}}(\vec{u}))\, .   
\end{equation*}
\end{itemize}
Hence, a direct calculation shows that
\begin{equation*}
\tensor[_{\hat{\mathbf{q}}^{(k+1)}[\beta_{j}, \,\vec{z}_{j}] }]{ \big[\, \widetilde{\phi}^{\,(k)}_{\sh{H}'}\circ \widetilde{T}^{\,(k)}_{\mu} \,\big] }{_{\hat{\mathbf{f}}^{(k)}[\beta_{j}, \,\vec{x}_{j}]}} = \begin{bmatrix}
\star & \cdots & \star & 0\\
\vdots & \ddots & \vdots & \vdots\\
\star & \cdots & \star & 0\\
\star & \cdots & \star & q_{j}\cdot u_{\beta_{j}(k)} \\
\end{bmatrix} _{(k+1, k)}\, ,
\end{equation*}
where $u_{\pi_{\beta_{j}}(k)}$ is the $(k,k)$-entry of $D(\pi_{\beta_{j}}(\vec{u}))$, namely the $k$-th entry of the permutation of $\vec{u}$ by $\pi_{\beta_{j}}$, the permutation associated with the truncated braid word $\beta_{j}$. Moreover, since $k=i_{j}$, we deduce that the new contribution to the data characterizing the pull-back $\mu^{*}\sh{H}'$ after the crossing $\sigma_{i_{j}}$ is given by the parameter $q'_{j}=q_{j}\cdot u_{\beta_{j}(i_{j})}$. 

By systematically applying the above argument to all the regions $R_{j}$,  we obtain that the pull-back $\mu^{*}\sh{H}'$ is characterized by a tuple $\vec{q}\,':=(q'_{1}, \dots, q'_{\ell})\in \mathbb{K}^{\ell}_{\mathrm{std}}$, which entrywise is given by 
\begin{equation*}
q'_{j} = q_{j} \cdot u_{\pi_{\beta_{j}}(i_{j})}\, ,  
\end{equation*}
for all $j\in [1, \ell]$, and in light of Definition~\eqref{Def: braided graded composition}, we immediately recognize that $\vec{q}\,'=\vec{q}\,\circ_{\beta_{\mathrm{R}}} \vec{u}$. Finally, observe that, since $\vec{u}\in \mathrm{ker}\,\delta_{\sh{F},\sh{G}}$, Lemma~\eqref{Lemma: auxiliar for composition rules} ensures that $\mathrm{im}\, \delta_{\sh{G},\sh{Q}} \circ_{\beta_{\mathrm{R}}} \vec{u}\subseteq \mathrm{im}\,\delta_{\sh{F},\sh{Q}}$, and hence, passing to the equivalence class of the pull-back $\mu^{*}\sh{H}'$ in $\mathrm{Ext}^{1}(\sh{F},\sh{Q})$ allows us to conclude that the composition $\Phi\circ \mu$ is completely determined, under the isomorphism $\mathrm{Ext}^{1}(\sh{F},\sh{Q})\cong \mathrm{coker}\,\delta_{\sh{F},\sh{Q}}$, by the class $[\, \vec{q}\circ_{\beta_{\mathrm{R}}} \vec{u}  \,]\in \mathrm{coker}\, \delta_{\sh{F}, \sh{Q}}$, thereby providing a well-posed and explicit combinatorial rule for the composition of the graded morphisms under consideration. This completes the proof.
\end{proof}

\begin{theorem}\label{Theorem: Graded composition for Ext1 and Ext0}
Let $\beta=\sigma_{i_{1}}\dots \sigma_{i_{\ell}}\in \mathrm{Br}^{+}_{n}$ be a positive braid word, and let $\sh{F}$, $\sh{G}$, $\sh{Q}$ be objects of the category $\ccs{1}{\beta}$. Let \,$\hat{\mathbf{f}}^{(n)}$, $\hat{\mathbf{g}}^{(n)}$, $\hat{\mathbf{q}}^{(n)}$ be bases for $\mathbb{K}^{n}$, and let $\vec{x},\, \vec{y},\, \vec{z}\in X(\beta, \mathbb{K})$ be points such that the pairs $(\,\hat{\mathbf{f}}\,, \vec{x}\,)$, $(\,\hat{\mathbf{g}}\,, \vec{y}\, )$, $(\,\hat{\mathbf{q}}\,, \vec{z}\, )$ algebraically characterize $\sh{F}$, $\sh{G}$, $\sh{Q}$ according to Theorem~\eqref{Prop. for sheaves and braid matrices}, respectively. 

Following Definition~\eqref{Def: linear map delta}, let $\delta_{\sh{F},\sh{G}}, \, \delta_{\sh{G},\sh{Q}},\, \delta_{\sh{F},\sh{Q}}: \mathbb{K}^{n}_{\mathrm{std}}\to \mathbb{K}^{\ell}_{\mathrm{std}}$ be the linear maps associated with the pairs $(\sh{F}, \sh{G})$, $(\sh{G}, \sh{Q})$, $(\sh{F}, \sh{Q})$, respectively. In light of Theorems~\eqref{Theorem: Ext0 as the kernel of delta_F,G} and~\eqref{Theorem: Ext1 as the cokernel of delta_F,G}, let $\nu\in \mathrm{Ext}^{0}(\sh{G},\sh{Q})$ and $\Theta\in \mathrm{Ext}^{1}(\sh{F},\sh{G})$, and suppose that, for some representative $\vec{p}\in \mathbb{K}^{\ell}_{\mathrm{std}}$, the vector $\vec{v}\in \mathrm{ker}\,\delta_{\sh{G},\sh{Q}}\subseteq \mathbb{K}^{n}_{\mathrm{std}}$ and the class $[\,\vec{p}\,]\in \mathrm{coker}\,\delta_{\sh{F},\sh{G}}$ determine $\nu$ and $\Theta$ under the isomorphisms 
\begin{equation*}
\mathrm{Ext}^{0}(\sh{G},\sh{Q})\cong \mathrm{ker}\,\delta_{\sh{G},\sh{Q}}\, , \qquad\quad   \mathrm{Ext}^{1}(\sh{F},\sh{G})\cong \mathrm{coker}\,\delta_{\sh{F},\sh{G}}\, .   
\end{equation*}
Then, building on Definition~\eqref{Def: braided graded composition}, the composition $\nu\circ \Theta\in \mathrm{Ext}^{1}(\sh{F},\sh{Q})$ is determined, under the isomorphism $\mathrm{Ext}^{1}(\sh{F},\sh{Q})\cong \mathrm{coker}\,\delta_{\sh{F},\sh{Q}}$, by the class $[\,\vec{v}\,\circ_{\beta_{\mathrm{L}}}\vec{p}\,]\in \mathrm{coker}\,\delta_{\sh{F},\sh{Q}}$. 
\end{theorem}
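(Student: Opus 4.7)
The proof will mirror that of Theorem~\eqref{Theorem: Graded composition for Ext0 and Ext1} step by step, but with the push-out $\nu_{*}\sh{H}$ replacing the pull-back $\mu^{*}\sh{H}'$, reflecting the natural duality between the two degree--$(0,1)$ compositions. To begin, I would unwind the definition: by the Yoneda composition for mixed degrees, $\nu\circ\Theta=[\,\nu_{*}\sh{H}\,]$ for any extension $\sh{H}$ of $\sh{F}$ by $\sh{G}$ representing $\Theta$. In light of Theorem~\eqref{Theorem: Ext1 as the cokernel of delta_F,G}, I may choose $\sh{H}$ to be a \emph{simple} representative: one that is trivial on $U_{0}\cup U_{\mathrm{L}}\cup U_{\mathrm{R}}\cup U_{\mathrm{T}}$ and, on the region $U_{\mathrm{B}}$, is encoded by the tuple $\vec{p}=(p_{1},\dots,p_{\ell})\in \mathbb{K}^{\ell}_{\mathrm{std}}$. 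It then suffices to compute $\nu_{*}\sh{H}$ on $U_{\mathrm{B}}$ and pass to its equivalence class in $\mathrm{coker}\,\delta_{\sh{F},\sh{Q}}$.

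Next, I would work strap-by-strap through $\mathcal{R}_{\Lambda(\beta)}=\{R_{j}\}_{j=1}^{\ell}$, just as in the proof of Theorem~\eqref{Theorem: Graded composition for Ext0 and Ext1}. In the region $R_{j}$ containing the crossing $\sigma_{i_{j}}$ of index $k:=i_{j}$, the new data contributed by $\sh{H}$ is concentrated on the $(k+1)$-st strand immediately after the crossing and is carried by a characteristic map $\widetilde{\phi}^{\,(k)}_{\sh{H}}:\mathbb{K}^{k}\to\mathbb{K}^{k+1}$. Near that same strand, $\nu$ is specified by a pair $(\widetilde{T}^{\,(k)}_{\nu},T^{(k+1)}_{\nu})$ defining a morphism between $\widetilde{\phi}^{\,(k)}_{\sh{G}}$ and $\widetilde{\phi}^{\,(k)}_{\sh{Q}}$. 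By the push-out description given in Lemma~\eqref{Lemma: push-out of an extension via a morphism for linear maps}, the local characteristic map of $\nu_{*}\sh{H}$ on that strand is precisely the composition $T^{(k+1)}_{\nu}\circ \widetilde{\phi}^{\,(k)}_{\sh{H}}$.

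The third step is to translate this composition into matrices using the braid-transformed bases $\hat{\mathbf{f}}^{(i)}[\beta_{j},\vec{x}_{j}]$, $\hat{\mathbf{g}}^{(i)}[\beta_{j},\vec{y}_{j}]$, $\hat{\mathbf{q}}^{(i)}[\beta_{j},\vec{z}_{j}]$ supplied by Theorem~\eqref{Theorem: adapted bases for an object at a region R_j}. On the one hand, the simple representative $\sh{H}$ is arranged (as in the proof of Theorem~\eqref{Theorem: Graded composition for Ext0 and Ext1}) so that
\begin{equation*}
\tensor[_{\hat{\mathbf{g}}^{(k+1)}[\beta_{j},\vec{y}_{j}]}]{ \big[\,\widetilde{\phi}^{\,(k)}_{\sh{H}}\,\big] }{_{\hat{\mathbf{f}}^{(k)}[\beta_{j},\vec{x}_{j}]}} = \begin{bmatrix}
\star & \cdots & \star & 0\\
\vdots & \ddots & \vdots & \vdots\\
\star & \cdots & \star & 0\\
\star & \cdots & \star & p_{j}\\
\end{bmatrix}\, .
\end{equation*}
On the other hand, Lemma~\eqref{Lemma: Ext0 on U_T and U_L} together with the propagation across crossings in Lemmas~\eqref{Lemma: Ext0 on R_j for k=1}--\eqref{Lemma: Ext0 for R_j for k geq 2} gives that $\tensor[_{\hat{\mathbf{q}}^{(k+1)}[\beta_{j},\vec{z}_{j}]}]{[\,T^{(k+1)}_{\nu}\,]}{_{\hat{\mathbf{g}}^{(k+1)}[\beta_{j},\vec{y}_{j}]}}$ is the principal $(k{+}1)\times(k{+}1)$ submatrix of $D(\pi_{\beta_{j}}(\vec{v}))$. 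Multiplying these matrices, the new entry appearing in the $(k+1,k)$-position after the crossing $\sigma_{i_{j}}$ is
\begin{equation*}
p'_{j} \;=\; v_{\pi_{\beta_{j}}(i_{j}+1)}\cdot p_{j}\, ,
\end{equation*}
which, in view of Definition~\eqref{Def: braided graded composition}, is exactly the $j$-th entry of $\vec{v}\,\circ_{\beta_{\mathrm{L}}}\vec{p}$.

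Iterating over all $j\in[1,\ell]$ shows that $\nu_{*}\sh{H}$ is a simple extension of $\sh{F}$ by $\sh{Q}$ with characteristic tuple $\vec{v}\circ_{\beta_{\mathrm{L}}}\vec{p}\in\mathbb{K}^{\ell}_{\mathrm{std}}$. Finally, Lemma~\eqref{Lemma: auxiliar for composition rules}(iii) guarantees that $\vec{v}\circ_{\beta_{\mathrm{L}}}\mathrm{im}\,\delta_{\sh{F},\sh{G}}\subseteq \mathrm{im}\,\delta_{\sh{F},\sh{Q}}$, so the class $[\,\vec{v}\circ_{\beta_{\mathrm{L}}}\vec{p}\,]\in\mathrm{coker}\,\delta_{\sh{F},\sh{Q}}$ is independent of the choice of representative $\vec{p}$, yielding the claimed identification. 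The main obstacle will be the careful crossing-by-crossing bookkeeping that pins down the index $\pi_{\beta_{j}}(i_{j}+1)$: one must confirm that the push-out acts on the \emph{upper} strand of the crossing and that conjugation by the accumulated braid matrices permutes the entries of $\vec{v}$ exactly by $\pi_{\beta_{j}}$, mirroring the role that $\pi_{\beta_{j}}(i_{j})$ played on the lower strand in the proof of Theorem~\eqref{Theorem: Graded composition for Ext0 and Ext1}.
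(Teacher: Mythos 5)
Your proposal is correct and follows essentially the same route as the paper's own proof: choose a simple representative of $\Theta$ supported on $U_{\mathrm{B}}$, compute the push-out strap-by-strap as $T^{(k+1)}_{\nu}\circ\widetilde{\phi}^{\,(k)}_{\sh{H}}$ in the braid-transformed bases, read off $p'_{j}=v_{\pi_{\beta_{j}}(i_{j}+1)}\,p_{j}$, and invoke Lemma~\eqref{Lemma: auxiliar for composition rules} for well-definedness of the class in $\mathrm{coker}\,\delta_{\sh{F},\sh{Q}}$. No gaps.
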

\begin{proof}
To begin, let $\mathcal{U}_{\Lambda(\beta)}=\big\{U_{0}, U_{\mathrm{B}}, U_{\mathrm{L}}, U_{\mathrm{R}}, U_{\mathrm{T}}\big\}$ be the open cover of $\mathbb{R}^{2}$ from Construction~\eqref{Cons: Finite open cover for R^2}. By definition, $\nu\circ \Theta=\big[\, \nu_{*}\sh{H}\,\big]$, for any extension $\sh{H}$ of $\sh{F}$ by $\sh{G}$ representing $\Theta$, where $\nu_{*}\sh{H}$ denotes the push-out of $\sh{H}$ via $\nu$. Next, consider the open set $U_{\mathrm{B}}$, the region in $\mathbb{R}^{2}$ whose intersection with the front projection $\Pi_{x,z}(\Lambda(\beta))$ comprises the braid diagram on $n$ strands associated with $\beta$, as illustrated in Figure~\eqref{Front diagram decomposed into two regions}. By Theorem~\eqref{Theorem: Ext1 as the cokernel of delta_F,G}, any extension of $\sh{F}$ by $\sh{G}$ is equivalent to a simple extension---namely, an extension determined by a tuple $\vec{p}=(p_{1}, \dots, p_{\ell})\in \mathbb{K}^{\ell}_{\mathrm{std}}$ on $U_{\mathrm{B}}$ and equivalent to the trivial extension away from this region, where two simple extensions are equivalent if and only if their characteristic tuples, say $\vec{p}, \vec{p}\,'\in \mathbb{K}^{\ell}_{\mathrm{std}}$, satisfy ${\vec{p}\,'} -\vec{p}\in \mathrm{im}\, \delta_{\sh{F}, \sh{G}}$. Consequently, for the purpose of studying $\nu\circ \Theta$, it suffices to choose as representative for $\Theta$ a simple extension $\sh{H}$ determined by a tuple $\vec{p}\in \mathbb{K}^{\ell}_{\mathrm{std}}$, compute the push-out $\nu_{*}\sh{H}$ on $\mathrm{U}_{\mathrm{B}}$, and then pass to its equivalence class.  

Let $\mathcal{R}_{\Lambda(\beta)}=\big\{R_{j}\big\}_{j=1}^{\ell}$ be the partition of $U_{\mathrm{B}}$ into $\ell$ open vertical straps from Construction~\eqref{Cons: Definition of the vertical straps}. Observe that, as we move from left to right through the regions $R_{j}$, the data characterizing the extension $\sh{H}$ accumulates. In particular, in each region $R_{j}$, the new contribution is encoded in the characteristic map specifying the extension along the $(i_{j}+1)$-st strand immediately after the crossing $\sigma_{i_{j}}$, the $j$-th and only crossing of $\beta$ contained in $R_{j}$. Thus, to analyze the push-out $\nu_{*}\sh{H}$, it suffices to extract, in each region $R_{j}$, the data that determines the push-out along the $(i_{j}+1)$-st strand, for all $j\in [1,\ell]$.   

Fix a region $R_{j}$ for some $j\in [1,\ell]$, and denote by $k=i_{j}\in [1,n-1]$ the index of $\sigma_{i_{j}}$. Then, near the $(k+1)$-st strand immediately after the crossing $\sigma_{k}$, we have that: 
\begin{itemize}
\item $\sh{G}$ and $\sh{Q}$ are specified by a pair of linear maps $\widetilde{\phi}^{\,(k)}_{\sh{G}}:\mathbb{K}^{k}\to \mathbb{K}^{k+1}$ and $\widetilde{\phi}^{\,(k)}_{\sh{Q}}:\mathbb{K}^{k}\to \mathbb{K}^{k+1}$, respectively. 
\item The extension $\sh{H}$ is specified by a characteristic map $\widetilde{\phi}^{(k)}_{\sh{H}}: \mathbb{K}^{k}\to \mathbb{K}^{k+1}$. 
\item The morphism $\nu$ is characterized by two linear maps $T^{(k+1)}_{\nu}:\mathbb{K}^{k+1}\to \mathbb{K}^{k+1}$ and $\widetilde{T}^{(k)}_{\nu}:\mathbb{K}^{k}\to \mathbb{K}^{k}$ such that the pair $(T^{(k+1)}_{\nu}, \widetilde{T}^{(k)}_{\nu})$ defines a morphism between $\widetilde{\phi}^{\,(k)}_{\sh{G}}:\mathbb{K}^{k}\to \mathbb{K}^{k+1}$ and $\widetilde{\phi}^{\,(k)}_{\sh{Q}}:\mathbb{K}^{k}\to \mathbb{K}^{k+1}$. 
\end{itemize}
Consequently, we obtain that, near the $(k+1)$-st strand immediately after the crossing $\sigma_{k}$, the push-out $\nu_{*}\sh{H}$ is determined by the characteristic map $T^{(k+1)}_{\nu}\circ\widetilde{\phi}^{\,(k)}_{\sh{H}}: \mathbb{K}^{k}\to \mathbb{K}^{k+1}$.  

Now, for each $i\in [1,n-1]$, let $\hat{\mathbf{f}}^{(i)}$, $\hat{\mathbf{g}}^{(i)}$, and $\hat{\mathbf{q}}^{(i)}$ be bases for $\mathbb{K}^{i}$ induced by the bases $\hat{\mathbf{f}}^{(n)}$, $\hat{\mathbf{g}}^{(n)}$, and $\hat{\mathbf{q}}^{(n)}$ for $\mathbb{K}^{n}$, so that $\big\{ \hat{\mathbf{f}}^{(i)} \big\}_{i=1}^{n}$, $\big\{ \hat{\mathbf{g}}^{(i)} \big\}_{i=1}^{n}$, and $\big\{ \hat{\mathbf{q}}^{(i)} \big\}_{i=1}^{n}$ are systems of bases adapted to the linear maps characterizing $\sh{F}$, $\sh{G}$, and $\sh{Q}$ on the left of the crossing in $R_{1}$, respectively. 

Next, consider the braid-transformed bases $\hat{\mathbf{f}}^{(i)}[\beta_{j},\vec{x}_{j} ]$, $\hat{\mathbf{g}}^{(i)}[\beta_{j},\vec{y}_{j} ]$, and $\hat{\mathbf{g}}^{(i)}[\beta_{j},\vec{z}_{j} ]$ associated with the truncated braid word $\beta_{j}$ and the truncated tuples $\vec{x}_{j}$, $\vec{y}_{j}$, ${\vec{z}_{j}}\in \mathbb{K}^{j}_{\mathrm{std}}$. Then, with respect to these bases, we have that
\begin{itemize}
\item The matrix representing $\widetilde{\phi}^{\,(k)}_{\sh{H}}$ is given by
\begin{equation*}
\tensor[_{\hat{\mathbf{g}}^{(k+1)}[\beta_{j}, \,\vec{y}_{j}] }]{ \big[\, \widetilde{\phi}^{\,(k)}_{\sh{H}} \,\big] }{_{\hat{\mathbf{f}}^{(k)}[\beta_{j}, \,\vec{x}_{j}]}}= \begin{bmatrix}
\star & \cdots & \star & 0\\
\vdots & \ddots & \vdots & \vdots\\
\star & \cdots & \star & 0\\
\star & \cdots & \star & p_{j}\\
\end{bmatrix} _{(k+1, k)}\, ,
\end{equation*}
where $p_{j}$ denotes the $j$-th entry of the tuple $\vec{p}=(p_{1},\dots, p_{\ell})\in \mathbb{K}^{\ell}_{\mathrm{std}}$ characterizing $\sh{H}$, and the starred entries depend on the data from prior crossings before $\sigma_{i_{j}}$. 
\item The matrix representing $T^{(k+1)}_{\nu}$ is given by
\begin{equation*}
\tensor[_{\hat{\mathbf{q}}^{(k+1)}[\beta_{j}, \,\vec{z}_{j} ]}]{ \big[\, T^{(k+1)}_{\nu}\,\big] }{_{\hat{\mathbf{g}}^{(k+1)}[\beta_{j}, \,\vec{y}_{j} ]}}
= \,\text{principal $(k+1)\times (k+1)$ submatrix of }\, D(\pi_{\beta_{j}}(\vec{v}))\, .   
\end{equation*}
\end{itemize}
Hence, a direct calculation shows that
\begin{equation*}
\tensor[_{\hat{\mathbf{q}}^{(k+1)}[\beta_{j}, \,\vec{z}_{j}] }]{ \big[\, T^{(k+1)}_{\nu} \circ \widetilde{\phi}^{\,(k)}_{\sh{H}} \,\big] }{_{\hat{\mathbf{f}}^{(k)}[\beta_{j}, \,\vec{x}_{j}]}} = \begin{bmatrix}
\star & \cdots & \star & 0\\
\vdots & \ddots & \vdots & \vdots\\
\star & \cdots & \star & 0\\
\star & \cdots & \star &  v_{\beta_{j}(k+1)}\cdot p_{j} \\
\end{bmatrix} _{(k+1, k)}\, ,
\end{equation*}
where $v_{\pi_{\beta_{j}}(k+1)}$ is the $(k+1,k+1)$-entry of $D(\pi_{\beta_{j}}(\vec{v}))$, namely the $k+1$-th entry of the permutation of $\vec{v}$ by $\pi_{\beta_{j}}$, the permutation associated with the truncated braid word $\beta_{j}$. Moreover, since $k=i_{j}$, we deduce that the new contribution to the data characterizing the push-out $\nu_{*}\sh{H}$ after the crossing $\sigma_{i_{j}}$ is given by the parameter $p'_{j}=v_{\beta_{j}(i_{j}+1)}\cdot p_{j}$. 

By systematically applying the above argument to all the regions $R_{j}$, we obtain that the push-out $\nu_{*}\sh{H}$ is characterized by a tuple $\vec{p}\,':=(p'_{1}, \dots, p'_{\ell})\in \mathbb{K}^{\ell}_{\mathrm{std}}$, which entrywise is given by 
\begin{equation*}
p'_{j}= v_{\pi_{\beta_{j}}(i_{j}+1)} \cdot  p_{j}\, ,  
\end{equation*}
for all $j\in [1, \ell]$, and in light of Definition~\eqref{Def: braided graded composition}, we immediately recognize that $\vec{p}\,'=\vec{v}\,\circ_{\beta_{\mathrm{L}}} \vec{p}$. Finally, observe that, since $\vec{v}\in \mathrm{ker}\,\delta_{\sh{G},\sh{Q}}$, Lemma~\eqref{Lemma: auxiliar for composition rules} ensures that $\vec{v}\, \circ_{\beta_{\mathrm{L}}} \mathrm{im}\, \delta_{\sh{F},\sh{G}} \subseteq \mathrm{im}\,\delta_{\sh{F},\sh{Q}}$, and hence, passing to the equivalence class of the push-out $\nu_{*}\sh{H}$ in $\mathrm{Ext}^{1}(\sh{F},\sh{Q})$ allows us to conclude that the composition $\nu\circ \Theta$ is completely determined, under the isomorphism $\mathrm{Ext}^{1}(\sh{F},\sh{Q})\cong \mathrm{coker}\, \delta_{\sh{F}, \sh{Q}}$, by the class $[\,\vec{v}\,\circ_{\beta_{\mathrm{L}}} \vec{p} \,]\in \mathrm{coker}\, \delta_{\sh{F}, \sh{Q}}$, thereby providing a well-posed and explicit combinatorial rule for the composition of the graded morphisms under consideration. This completes the proof.
\end{proof}

Having established the preceding results,  we conclude the explicit algebraic characterization of the category $\ccs{1}{\beta}$. In the next section, we explore some applications of this characterization.

\section{Applications}\label{sec:applications}
\noindent
Let $\beta\in \mathrm{Br}^{+}_{n}$ be a positive braid word. Next, we present some applications of the explicit description of the category $\ccs{1}{\beta}$ that we developed in the preceding sections. More precisely, we first consider the case where $\Lambda(\beta)$ is a Legendrian knot---a single-component Legendrian link. In this setting, we establish some structural results concerning the category $\ccs{1}{\beta}$ over a generic field $\mathbb{K}$ and provide a simple characterization of the category in the case where $\beta=\sigma_{1}\sigma_{2}\sigma_{1}\sigma_{2}$ and $\mathbb{K}=\mathbb{Z}_{2}$, which corresponds to the Legendrian trefoil knot realized as the rainbow closure of a positive braid on three strands. Furthermore, to illustrate how our theoretical framework works for multi-component Legendrian links, we explicitly analyze the category $\ccs{1}{\beta}$ in the case where $\beta=\sigma_{1}\sigma_{2}\sigma_{1}$ and $\mathbb{K}=\mathbb{Z}_{2}$, thereby covering the case of the Legendrian Hopf link realized as the rainbow closure of a positive braid word on three strands.  

\subsection{The Knot Case} 
Let $\beta = \sigma_{i_{1}}\cdots \sigma_{i_{\ell}} \in \mathrm{Br}^{+}_{n}$ be a positive braid word such that $\Lambda(\beta)$ is a Legendrian knot. Next, we provide some structural results concerning the linear structure of the graded morphism spaces and the algebraic properties of the graded endomorphism spaces in the category $\ccs{1}{\beta}$. Moreover, we present a detailed analysis of some aspects of the category $\ccs{1}{\beta}$ in the case where $\beta=\sigma_{1}\sigma_{2}\sigma_{1}\sigma_{2}$ and $\mathbb{K}=\mathbb{Z}_{2}$.

\subsubsection{Some Structural Results} Let $\beta = \sigma_{i_{1}}\cdots \sigma_{i_{\ell}} \in \mathrm{Br}^{+}_{n}$ be a positive braid word such that $\Lambda(\beta)$ is a Legendrian knot. In particular, recall that $\Lambda(\beta)$ defines a knot if and only if the permutation $\pi_{\beta} \in \mathrm{S}_{n}$ associated with $\beta$ consists of a single cycle in its cycle decomposition in $\mathrm{S}_{n}$~\cite{CGGS1}. Bearing this in mind, we now proceed to exploit the stated property of $\pi_{\beta}$ to establish some results revealing the linear structure of the graded morphism spaces and the algebraic properties of the graded endomorphism spaces in the category $\ccs{1}{\beta}$.

Let $X(\beta,\mathbb{K})$ be the braid variety associated with $\beta$, and let $\mathcal{T}$ be the $(n-1)$-dimensional torus given by
\begin{equation*}
\mathcal{T}:=\big\{\,\vec{t}\in\mathbb{K}^{n}_{\mathrm{std}} \,|\, D(\,\vec{t}\;)\in \mathrm{SL}(n,\mathbb{K})\,\big\} \, .    
\end{equation*}
As shown in~\cite{CGGS1}, $\mathcal{T}$ acts naturally on $X(\beta,\mathbb{K})$. The following definition formalizes this group action.  

\begin{definition}\label{Def: Torus action}
Let $\beta = \sigma_{i_{1}}\cdots \sigma_{i_{\ell}} \in \mathrm{Br}^{+}_{n}$ be a positive braid word. The torus action on $X(\beta,\mathbb{K})$ corresponds to the map 
\begin{equation*}
\begin{array}{cccccc}
&\star:& \mathcal{T}\times X(\beta,\mathbb{K}) &\longrightarrow& X(\beta,\mathbb{K})\,,           &   \\[2pt]
&      & (\,\vec{t}\,, \vec{z}\,)              &\longmapsto& \vec{t} \,\star\, \vec{z}\,, &  
\end{array}
\end{equation*}
where, for any $\vec{z}=(z_{1},\dots, \vec{z}_{\ell})\in X(\beta,\mathbb{K})$, the point $\,\vec{t}\,\star\,\vec{z}:=(z'_{1}, \dots, z'_{\ell})\in X(\beta,\mathbb{K})$ is uniquely determined by the system of equations
\begin{equation*}
D(\pi_{\beta_{j-1}}(\vec{t}\,))\cdot B^{(n)}_{i_{j}}(z_{j})=B^{(n)}_{i_{j}}(z'_{j})\cdot D(\pi_{\beta_{j}}(\vec{t}\,))\, ,  \qquad\text{for all $j\in [1,\ell]$}\, .
\end{equation*}
Accordingly, we say that two points $\vec{z}_{1}, \vec{z}_{2} \in X(\beta, \mathbb{K})$ are \emph{equivalent under the torus action} if there exists $\vec{t}\in \mathcal{T}$ such that $\vec{z}_{2} = \vec{t}\,\star\,\vec{z}_{1}$, and we denote this relation by $\vec{z}_{2} \cong \vec{z}_{1}$; otherwise, we write $\vec{z}_{2} \ncong \vec{z}_{1}$.
\end{definition}

A notable property of the torus action on $X(\beta,\mathbb{K})$ is that, when $\Lambda(\beta)$ is a Legendrian knot, it is free~\cite{CGGS1}. In view of this property, we now introduce a definition that will play a fundamental role in the subsequent discussion.

\begin{definition}
Let $\beta = \sigma_{i_{1}}\cdots \sigma_{i_{\ell}} \in \mathrm{Br}^{+}_{n}$ be a positive braid word. We define the subvariety $\widetilde{X}(\beta,\mathbb{K}) \subset X(\beta,\mathbb{K})$ by
\begin{equation*}
\widetilde{X}(\beta,\mathbb{K}) := \big\{ \vec{z} \in \mathbb{K}^{\ell}_{\mathrm{std}} \,\big|\, \text{all leading principal minors of $P_{\beta}(\vec{z}\,)$ are equal to $1$} \big\} \, .
\end{equation*}
\end{definition}

The subvariety $\widetilde{X}(\beta,\mathbb{K})\subset X(\beta,\mathbb{K})$ plays a particularly important role in our current setting. Specifically, when $\Lambda(\beta)$ is a Legendrian knot, the defining properties of $\widetilde{X}(\beta,\mathbb{K})$ together with the freeness of the torus action on $X(\beta,\mathbb{K})$ imply that the induced map 
\begin{equation*}
\begin{array}{ccc}
   \mathcal{T}\times \widetilde{X}(\beta,\mathbb{K}) &\to& X(\beta,\mathbb{K})\, ,  \\
   (\,\vec{t}\,,\, \vec{z}\,) &\mapsto & \vec{t}\,\star\, \vec{z}\, ,
\end{array}    
\end{equation*}
is bijective. Equivalently, for any $\vec{z}\in X(\beta, \mathbb{K})$ there exist unique $\vec{t}\in \mathcal{T}$ and $\vec{z}\,'\in \widetilde{X}(\beta,\mathbb{K})$ such that $\vec{z}=\vec{t}\,\star\, \vec{z}\,'$. 

Next, we present a result that, exploiting the torus action on $X(\beta,\mathbb{K})$, provides an explicit description of the linear structure of the graded morphism spaces in the category $\ccs{1}{\beta}$ in the case where $\Lambda(\beta)$ is a Legendrian knot.

\begin{theorem}\label{Theorem: Graded morphism spaces for the knot case}
Let $\beta=\sigma_{i_{1}}\cdots \sigma_{i_{\ell}}\in\mathrm{Br}^{+}_{n}$ be a positive braid word such that $\Lambda(\beta)$ is a Legendrian knot, and let $\sh{F}$, $\sh{G}$ be objects of the category $\ccs{1}{\beta}$. Let \,$\hat{\mathbf{f}}^{n}$, $\hat{\mathbf{g}}^{n}$ be bases for $\mathbb{K}^{n}$, and let $\vec{z}_{1},\,\vec{z}_{2}\in X(\beta, \mathbb{K})$ be points such that the pairs $(\,\hat{\mathbf{f}}, \vec{z}\,)$ and $(\,\hat{\mathbf{g}}, \vec{z}\,'\,)$ algebraically characterize $\sh{F}$ and $\sh{G}$ according to Theorem~\eqref{Prop. for sheaves and braid matrices}, respectively. Then there is an isomorphism of graded vector spaces
\begin{equation*}
\mathrm{Ext}^{\bullet}(\sh{F},\sh{G})\cong \begin{cases}
\;\mathbb{K}\left[0\right] \oplus \mathbb{K}^{\mathrm{tb}(\Lambda(\beta))+1}\left[1\right]\,, & \text{if $\,\vec{z}\,' \cong \vec{z}$}\\[2pt]  
\;0\left[0\right] \oplus \mathbb{K}^{\mathrm{tb}(\Lambda(\beta))}\left[1\right]\,, & \text{if $\,\vec{z}\,' \ncong \vec{z}$}   
\end{cases}\, ,    
\end{equation*}
where $\mathrm{tb}(\Lambda(\beta)):=\ell-n$ corresponds to the Thurston--Bennequin number of $\Lambda(\beta)$.     
\end{theorem}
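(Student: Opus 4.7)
The strategy is to translate the computation of $\mathrm{Ext}^{\bullet}(\sh{F},\sh{G})$ into a linear-algebraic question about $\delta_{\sh{F},\sh{G}}$. By Theorems \ref{Theorem: Ext0 as the kernel of delta_F,G} and \ref{Theorem: Ext1 as the cokernel of delta_F,G} we have $\mathrm{Ext}^{0}(\sh{F},\sh{G})\cong\ker\delta_{\sh{F},\sh{G}}$ and $\mathrm{Ext}^{1}(\sh{F},\sh{G})\cong\mathrm{coker}\,\delta_{\sh{F},\sh{G}}$, while Theorem \ref{Theorem: Euler characteristic for Ext groups} supplies the relation $\dim\ker\delta_{\sh{F},\sh{G}}-\dim\mathrm{coker}\,\delta_{\sh{F},\sh{G}}=n-\ell=-\mathrm{tb}(\Lambda(\beta))$. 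The claim therefore reduces to showing that $\dim\ker\delta_{\sh{F},\sh{G}}$ equals $1$ when $\vec{z}\,'\cong\vec{z}$ and $0$ otherwise, after which the dimension of $\mathrm{Ext}^{1}(\sh{F},\sh{G})$ is pinned down by the Euler characteristic identity.

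To analyze $\ker\delta_{\sh{F},\sh{G}}$, I would use the observation (implicit in the proofs of Lemmas \ref{Lemma: Ext0 on R_j for k=1} and \ref{Lemma: Ext0 for R_j for k geq 2}) that $\vec{u}\in\ker\delta_{\sh{F},\sh{G}}$ is equivalent to the intertwining relations $D(\pi_{\beta_{j-1}}(\vec{u}))\,B^{(n)}_{i_{j}}(z_{j})=B^{(n)}_{i_{j}}(z'_{j})\,D(\pi_{\beta_{j}}(\vec{u}))$ holding for every $j\in[1,\ell]$. Entrywise these yield linear constraints of the form $z_{j}\,u_{\pi_{\beta_{j-1}}(i_{j})}=z'_{j}\,u_{\pi_{\beta_{j-1}}(i_{j}+1)}$, which I encode into a graph $G$ on the vertex set $\{1,\dots,n\}$ with one edge per crossing joining the two indices linked by that crossing. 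My first step is to show that $G$ is connected: were $G$ partitioned into non-empty components $V_{1}\sqcup V_{2}$, then every crossing would link two strands lying in the same $V_{i}$, so the set of positions occupied by the $V_{1}$-strands (initially equal to $V_{1}$) would be preserved by every crossing, forcing $\pi_{\beta}(V_{1})=V_{1}$ and contradicting the assumption that $\pi_{\beta}$ is a single $n$-cycle (the knot condition). A propagation of equations along any spanning tree of $G$ then shows that every $\vec{u}\in\ker\delta_{\sh{F},\sh{G}}$ is determined up to an overall scalar by the value of a single entry $u_{i_{0}}$, giving the upper bound $\dim\ker\delta_{\sh{F},\sh{G}}\leq 1$.

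To pin down the exact value, I would argue in both directions. If $\vec{z}\,'\cong\vec{z}$ with $\vec{z}\,'=\vec{t}\star\vec{z}$, then Definition \ref{Def: Torus action} says exactly that $\vec{t}$ satisfies the intertwining relations, so $\vec{t}$ is a non-zero element of $\ker\delta_{\sh{F},\sh{G}}$ and $\dim\ker\delta_{\sh{F},\sh{G}}=1$. Conversely, given any non-zero $\vec{u}\in\ker\delta_{\sh{F},\sh{G}}$, the propagation along $G$ (together with the fact that the entries of $\vec{z},\vec{z}\,'$ behave well on $X(\beta,\mathbb{K})$) forces every entry of $\vec{u}$ to be non-zero, after which a rescaling by a suitable $c\in\mathbb{K}^{\times}$ produces $\vec{t}:=c\,\vec{u}\in\mathcal{T}$ and the same intertwining relations exhibit $\vec{t}\star\vec{z}=\vec{z}\,'$, so $\vec{z}\,'\cong\vec{z}$. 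Plugging into the Euler characteristic identity then yields the dichotomy stated in the theorem.

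The main obstacle I anticipate is the rescaling step: selecting $c$ with $c^{n}\prod_{i}u_{i}=1$ requires an $n$-th root in $\mathbb{K}$, which need not exist over arbitrary fields. Handling this cleanly may require passing to $\overline{\mathbb{K}}$, reformulating $\cong$ to allow rescaling by any invertible diagonal matrix (i.e.\ trading the $\mathrm{SL}$-torus for $\mathbb{G}_{m}^{n}$ modulo scalars), or invoking freeness of the torus action on $X(\beta,\mathbb{K})$ in a slightly stronger form to rule out the obstruction. A secondary subtlety is the case in which some $z_{j}$ or $z'_{j}$ vanish, which could make the propagation along $G$ more delicate and would need a careful case analysis exploiting the LU-decomposition condition that defines the braid variety; I expect that neither obstacle is substantive, but both will require bookkeeping that the sketch above suppresses.
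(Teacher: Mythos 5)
Your opening reduction is exactly the paper's: identify $\mathrm{Ext}^{0}$ and $\mathrm{Ext}^{1}$ with $\ker\delta_{\sh{F},\sh{G}}$ and $\mathrm{coker}\,\delta_{\sh{F},\sh{G}}$, use the Euler characteristic identity, and reduce everything to showing $\dim\ker\delta_{\sh{F},\sh{G}}\in\{0,1\}$ with the dichotomy governed by $\vec{z}\,'\cong\vec{z}$. You also correctly identify the kernel condition with the entrywise intertwining relations $z_{j}u_{\pi_{\beta_{j-1}}(i_{j})}=z'_{j}u_{\pi_{\beta_{j-1}}(i_{j}+1)}$, and your connectivity argument for the constraint graph (components of $G$ versus cycles of $\pi_{\beta}$) is sound. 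Where you diverge is in how the kernel is actually computed, and the two obstacles you flag at the end are not bookkeeping: they are the proof. The vanishing issue is real because an edge with $z_{j}=z'_{j}=0$ imposes no constraint at all, so the ``effective'' graph along which you can propagate may a priori be strictly smaller than $G$, and connectivity of $G$ alone does not give $\dim\ker\delta_{\sh{F},\sh{G}}\leq 1$. The rescaling issue is also real: producing $\vec{t}\in\mathcal{T}$ from a kernel element $\vec{u}$ with $\prod_{i}u_{i}=c$ requires solving $\lambda^{n}c=1$, which fails over fields where $c^{-1}$ is not an $n$-th power, and you cannot redefine $\cong$ without changing the statement.

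The paper resolves both obstacles at once with a normalization device your sketch is missing. Using the bijectivity of $\mathcal{T}\times\widetilde{X}(\beta,\mathbb{K})\to X(\beta,\mathbb{K})$, write $\vec{z}=\vec{s}\star\vec{x}$ and $\vec{z}\,'=\vec{t}\star\vec{y}$ with $\vec{x},\vec{y}\in\widetilde{X}(\beta,\mathbb{K})$, and replace $\vec{u}$ by $\vec{v}=(\,\vec{t}\;)^{-1}\odot\vec{u}\odot\vec{s}$, which satisfies the same intertwining relations for the pair $(\vec{x},\vec{y})$. Composing these over all crossings yields the single global identity $D(\vec{v}\,)\cdot P_{\beta}(\vec{x}\,)=P_{\beta}(\vec{y}\,)\cdot D(\pi_{\beta}(\vec{v}\,))$; since all leading principal minors of $P_{\beta}(\vec{x}\,)$ and $P_{\beta}(\vec{y}\,)$ equal $1$, this forces $\pi_{\beta}(\vec{v}\,)=\vec{v}$, and the single-cycle hypothesis then gives $\vec{v}=u\,\vec{1}_{n}$. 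No entry-by-entry propagation is needed, so vacuous constraints cause no harm, and the relations collapse to $u\,(\vec{y}-\vec{x}\,)=0$. The dichotomy is then read off from uniqueness of normalized representatives: if $\vec{y}=\vec{x}$ the kernel is spanned by $\vec{a}=\vec{t}\odot(\,\vec{s}\;)^{-1}$, which lies in $\mathcal{T}$ by construction, so no $n$-th root is ever extracted; if $\vec{y}\neq\vec{x}$ the kernel vanishes and $\vec{z}\,'\ncong\vec{z}$. In short, your plan needs the slice $\widetilde{X}(\beta,\mathbb{K})$ and the global minor argument as its key lemma; without them the two gaps you name remain open.
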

\begin{proof}
To begin, let us consider the points $\vec{z},\,\vec{z}\,'\in X(\beta, \mathbb{K})$. By our previous discussion, there exist unique $\vec{s}, \, \vec{t}\in \mathcal{T}$ and $\vec{x},\, \vec{y}\in \widetilde{X}(\beta, \mathbb{K})$ such that $\vec{z}=\vec{s}\,\star\, \vec{x}$ and $\vec{z}\,'=\vec{t}\,\star\, \vec{y}$. More precisely, building on Definition~\eqref{Def: Torus action}, we have that
\begin{equation*}
\begin{aligned}
D(\pi_{\beta_{j-1}}(\,\vec{s}\;))\cdot B^{(n)}_{i_{j}}(x_{j})=B^{(n)}_{i_{j}}(z_{j})\cdot D(\pi_{\beta_{j}}(\,\vec{s}\;))\, , \quad \text{for all $j\in [1,\ell]$}\, , \\[2pt]
D(\pi_{\beta_{j-1}}(\,\vec{t}\;))\cdot B^{(n)}_{i_{j}}(y_{j})=B^{(n)}_{i_{j}}(z'_{j})\cdot D(\pi_{\beta_{j}}(\,\vec{t}\;))\, , \quad \text{for all $j\in [1,\ell]$}\, ,
\end{aligned}  
\end{equation*}
where $\vec{z}=(z_{1}, \dots, z_{\ell})$, $\vec{z}\,'=(z'_{1}, \dots, z'_{\ell})$, $\vec{x}=(x_{1}, \dots, x_{\ell})$, and $\vec{y}=(y_{1}, \dots, y_{\ell})$.  

Now, let us consider the linear map $\delta_{\sh{F},\sh{G}}:\mathbb{K}^{n}_{\mathrm{std}}\to \mathrm{K}^{\ell}_{\mathrm{std}}$ associated with $\sh{F}$ and $\sh{G}$, as introduced in Definition~\eqref{Def: linear map delta}, and let $\vec{u}\in \ker\,\delta_{\sh{F},\sh{G}}$. By definition, we know that
\begin{equation*}
D(\pi_{\beta_{j-1}}(\vec{u}\,))\cdot B^{(n)}_{i_{j}}(z_{j})=B^{(n)}_{i_{j}}(z'_{j})\cdot D(\pi_{\beta_{j}}(\vec{u}\,))\, , \quad \text{for all $j\in [1,\ell]$}\, .
\end{equation*}
Hence, putting the above relations together yields that
\begin{equation}\label{Eq: useful identities for Ext0 in the knot case}
D(\pi_{\beta_{j-1}}(\vec{v}\,))\cdot B^{(n)}_{i_{j}}(x_{j})=B^{(n)}_{i_{j}}(y_{j})\cdot D(\pi_{\beta_{j}}(\vec{v}\,))\, , \quad \text{for all $j\in [1,\ell]$}\, ,
\end{equation}
where $\vec{v}=\big(\,\vec{t}\;\big)^{-1}\odot\, \vec{u}\, \odot\, \vec{s}$. Furthermore, implementing relations~\eqref{Eq: useful identities for Ext0 in the knot case} iteratively throughout $j\in [1,\ell]$, we obtain that
\begin{equation}\label{Eq: useful global matrix constraint}
D(\vec{v}\,)\cdot P_{\beta}(\,\vec{x}\,)=P_{\beta}(\,\vec{y}\,)\cdot D(\pi_{\beta}(\vec{v}\,))\,,
\end{equation}
where $P_{\beta}(\vec{x})=B^{(n)}_{i_{1}}(x_{1})\cdots B^{(n)}_{i_{\ell}}(x_{\ell})$ and  $P_{\beta}(\vec{y})=B^{(n)}_{i_{1}}(y_{1})\cdots B^{(n)}_{i_{\ell}}(y_{\ell})$. In particular, observe that, since $\vec{x},\, \vec{y}\in \widetilde{X}(\beta,\mathbb{K})$, the defining properties of the subvariety $\widetilde{X}(\beta,\mathbb{K})$ and the matrix equation~\eqref{Eq: useful global matrix constraint} imply that 
\begin{equation}\label{Eq: vector identity and permutation}
\pi_{\beta}(\,\vec{v}\,)=\vec{v}\, .
\end{equation}
Here, recall that, by assumption, $\Lambda(\beta)$ is a Legendrian knot, and hence the permutation $\pi_{\beta}$ is a single-cycle in its cycle decomposition in $\mathrm{S}_{n}$~\cite{CGGS1}. Bearing this in mind, identity~\eqref{Eq: vector identity and permutation} guarantees that there exists $u\in \mathbb{K}$ such that $\vec{v}=u\,\vec{1}_{n}$, where $\vec{1}_{n}=(1,\dots, 1)\in \mathbb{K}^{n}_{\mathrm{std}}$. Consequently, relations~\eqref{Eq: useful identities for Ext0 in the knot case} reduce to the vector equation
\begin{equation*}
u\,(\,\vec{y}-\vec{x}\,)=0 \, ,  
\end{equation*}
which yields the following two cases: 
\begin{itemize}
\justifying
\item \textbf{Case 1}: Suppose $\vec{y}=\vec{x}$. In this case, $u$ is a free parameter, and as a result $\vec{u}=u\,\vec{a}$, where $\vec{a}=\vec{t}\odot \big(\,\vec{s}\;\big)^{-1}\in \ker\,\delta_{\sh{F},\sh{G}}$. Hence, since since $\vec{u}$ is arbitrary, we obtain that $\ker\,\delta_{\sh{F},\sh{G}}=\mathrm{Span}_{\,\mathbb{K}}\{\,\vec{a}\;\}\cong \mathbb{K}$, and by Theorem~\eqref{Theorem: Ext0 as the kernel of delta_F,G}, we conclude that $\mathrm{Ext}^{1}(\sh{F},\sh{G})\cong \mathbb{K}$. Here, it is important to emphasize that, if $\vec{z}\,'=\vec{z}$ or $\vec{z},\,\vec{z}\,'\in \widetilde{X}(\beta,\mathbb{K})$, then $\vec{a}=\vec{1}_{n}\in \mathbb{K}^{n}_{\mathrm{std}}$, and in any of these particular situations $\vec{u}=u\,\vec{1}_{n}$. 

\item \textbf{Case 2}: Suppose $\vec{y}\neq\vec{x}$. In this case, we have that $u=0$, and as a result $\vec{u}=u\,\vec{a}=0$. Hence, since $\vec{u}$ is arbitrary, we conclude that $\ker\,\delta_{\sh{F},\sh{G}}\cong 0$, and by Theorem~\eqref{Theorem: Ext0 as the kernel of delta_F,G}, we deduce that $\mathrm{Ext}^{1}(\sh{F},\sh{G})\cong 0$\, .
\end{itemize}

Next, by Theorem~\eqref{Theorem: Euler characteristic for Ext groups}, we know that
\begin{equation*}
\begin{aligned}
\chi (\mathrm{Ext}^{\bullet}(\sh{F},\sh{G}))&=\mathrm{dim}_{\,\mathbb{K}} \mathrm{Ext}^{0}(\sh{F},\sh{G})- \mathrm{dim}_{\,\mathbb{K}} \mathrm{Ext}^{1}(\sh{F},\sh{G})\, ,\\
&=-\mathrm{tb}(\Lambda(\beta))\, ,
\end{aligned}
\end{equation*}
which implies that
\begin{equation*}
\mathrm{dim}_{\,\mathbb{K}} \mathrm{Ext}^{1}(\sh{F},\sh{G})= \mathrm{tb}(\Lambda(\beta))+\mathrm{dim}_{\,\mathbb{K}} \mathrm{Ext}^{0}(\sh{F},\sh{G})\, . 
\end{equation*}
Bearing this in mind, we arrive to the following two cases: 
\begin{itemize}
\justifying
\item \textbf{Case 1}: Suppose that $\vec{y}=\vec{x}$. In this case, $\mathrm{Ext}^{1}(\sh{F},\sh{G})\cong \mathbb{K}$, and hence $\mathrm{dim}_{\,\mathbb{K}} \mathrm{Ext}^{1}(\sh{F},\sh{G})= \mathrm{tb}(\Lambda(\beta))+1$, which shows that $\mathrm{Ext}^{1}(\sh{F},\sh{G}) \cong \mathbb{K}^{\mathrm{tb}(\Lambda(\beta))+1}$. 
\item \textbf{Case 2}: Suppose that $\vec{y}\neq \vec{x}$. In this case, $\mathrm{Ext}^{1}(\sh{F},\sh{G})\cong 0$, and therefore $\mathrm{dim}_{\,\mathbb{K}} \mathrm{Ext}^{1}(\sh{F},\sh{G})= \mathrm{tb}(\Lambda(\beta))$, which implies that $\mathrm{Ext}^{1}(\sh{F},\sh{G}) \cong \mathbb{K}^{\mathrm{tb}(\Lambda(\beta))}$.
\end{itemize}

Finally, observe that, if $\vec{y}=\vec{x}$, then $\vec{z}\,'\cong \vec{z}$, whereas if $\vec{y}\neq \vec{x}$, then $\vec{z}\,'\ncong \vec{z}$. The result follows. 
\end{proof}

\noindent
From our point of view, Theorem~\eqref{Theorem: Graded morphism spaces for the knot case} is particularly enlightening. It establishes that, in the knot case, the linear structure of the graded morphism spaces in the category $\ccs{1}{\beta}$ is remarkably simple: it is largely determined by intrinsic topological data associated with $\Lambda(\beta)$, namely its Thurston-Bennequin number. 

Next, we present a result that, in the spirit of Theorem~\eqref{Theorem: Graded morphism spaces for the knot case}, provides a simple description of the unital graded ring structure of the graded endomorphism spaces in the category $\ccs{1}{\beta}$. 

\begin{theorem}\label{Theorem: Graded endomorphism spaces in the knot case}
Let $\beta=\sigma_{i_{1}}\cdots \sigma_{i_{\ell}}\in \mathrm{Br}^{+}_{n}$ be a positive braid word such that $\Lambda(\beta)$ is a Legendrian knot. Let $\sh{F}$ be an object of the category $\ccs{1}{\beta}$. Let $\hat{\mathbf{f}}^{(n)}$ a basis for $\mathbb{K}^{n}$, and let $\vec{z}\in X(\beta,\mathbb{K})$ be a point such that the pair $(\,\hat{\mathbf{f}}^{(n)},\, \vec{z}\,)$ algebraically characterizes $\sh{F}$ according to Theorem~\eqref{Prop. for sheaves and braid matrices}. Then: 
\begin{itemize}
\item[(1)] There is an isomorphism of graded vector spaces
\begin{equation*}
\mathrm{Ext}^{\bullet}(\sh{F}, \sh{F}) \cong \mathbb{K}\left[0\right] \oplus \mathbb{K}^{\mathrm{tb}(\Lambda(\beta))+1}\left[1\right]\, .   
\end{equation*}

\item[(2)] Under the isomorphism of graded vector spaces in \textit{(1)}, the unital graded ring structure of $\mathrm{Ext}^{\bullet}(\sh{F}, \sh{F})$ induced by the graded composition is given by
\begin{equation*}
\begin{array}{c c c}
\mathrm{Ext}^{\bullet}(\sh{F}, \sh{F}) \times \mathrm{Ext}^{\bullet}(\sh{F}, \sh{F}) & ~~\longrightarrow~~ & \mathrm{End}^{\bullet}(\sh{F}) \,,\\[6pt]
\big((u, \vec{x}\,) , (v, \vec{y}\,)\big)   & ~~\longmapsto~~ & (v,\vec{y}\,)\circ (u,\vec{x}) \;=\; (vu,\, v\vec{x} + u\vec{y}\,) \, ,
\end{array}
\end{equation*}
for all $u,\,v \in \mathbb{K}$ and $\vec{x},\, \vec{y} \in \mathbb{K}^{\mathrm{tb}(\Lambda(\beta)) +1}$.
\end{itemize}
Equivalently, there is an isomorphism of unital graded rings
\begin{equation*}
\mathrm{Ext}^{\bullet}(\sh{F}, \sh{F}) \cong H^{\bullet}(\Sigma_{g, 1}, \mathbb{K})\, ,     
\end{equation*}
where $H^{\bullet}(\Sigma_{g, 1}, \mathbb{K})$ denotes the unital cohomology ring over $\mathbb{K}$ of a genus-$g$ surface $\Sigma_{g,1}$ with one puncture and $2g=\mathrm{tb}(\Lambda(\beta))+1$.     
\end{theorem}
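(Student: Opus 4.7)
The plan is to derive both the additive and multiplicative structure of $\mathrm{Ext}^{\bullet}(\sh{F},\sh{F})$ by specializing the general results already established in the paper, and then to read off the identification with $H^{\bullet}(\Sigma_{g,1},\mathbb{K})$.

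For statement (1), I would apply Theorem~\eqref{Theorem: Graded morphism spaces for the knot case} to the pair $(\sh{F},\sh{F})$. Here the algebraic datum is $(\hat{\mathbf{f}}^{(n)},\vec{z})$ on both sides, so trivially $\vec{z}\cong\vec{z}$ (take $\vec{t}=\vec{1}_{n}$), and the first case of that theorem gives the graded-vector-space isomorphism $\mathrm{Ext}^{\bullet}(\sh{F},\sh{F})\cong \mathbb{K}[0]\oplus \mathbb{K}^{\mathrm{tb}(\Lambda(\beta))+1}[1]$. More importantly, I would record the \emph{explicit} identification produced inside the proof of that theorem: under $\mathrm{Ext}^{0}(\sh{F},\sh{F})\cong \ker\delta_{\sh{F},\sh{F}}$, every element is of the form $\vec{u}=u\,\vec{1}_{n}$ for a unique $u\in\mathbb{K}$, because the single-cycle condition on $\pi_{\beta}$ forces $\pi_{\beta}(\vec{v})=\vec{v}$ to have $\vec{v}$ proportional to $\vec{1}_{n}$. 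This explicit $\vec{1}_{n}$-normalization is what makes the next step clean.

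For statement (2), I would compute the three nontrivial graded compositions using Theorem~\eqref{Theorem: main result 2}, substituting $\vec{u}=u\,\vec{1}_{n}$ and $\vec{v}=v\,\vec{1}_{n}$. The $(0,0)$-composition is controlled by the Hadamard product $\vec{v}\odot\vec{u}=(uv)\,\vec{1}_{n}$, which represents $uv\in\mathrm{Ext}^{0}(\sh{F},\sh{F})$. The $(1,0)$-composition is controlled by the right braided product $\vec{q}\circ_{\beta_{\mathrm{R}}}\vec{u}$; since every entry $u_{\pi_{\beta_{j}}(i_{j})}$ equals $u$, this reduces to $u\,\vec{q}$, giving the class $u\,\vec{x}$. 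Symmetrically the $(0,1)$-composition $\vec{v}\circ_{\beta_{\mathrm{L}}}\vec{p}=v\,\vec{p}$ produces $v\,\vec{y}$. The $(1,1)$-composition lands in $\mathrm{Ext}^{2}(\sh{F},\sh{F})$, which vanishes by Theorem~\eqref{Theorem: hereditary-type property}. Assembling the bilinear pieces yields exactly $(v,\vec{y})\circ(u,\vec{x})=(vu,\,v\vec{x}+u\vec{y})$, with the unit given by $(1,\vec{0})$.

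For the final identification, I would note that the ring on the right-hand side is the trivial square-zero extension of $\mathbb{K}$ by the $\mathbb{K}$-module $\mathbb{K}^{\mathrm{tb}(\Lambda(\beta))+1}$ placed in degree one. The genus-$g$ surface with one puncture $\Sigma_{g,1}$ is homotopy equivalent to a wedge of $2g$ circles, so its cohomology ring over $\mathbb{K}$ is $H^{0}=\mathbb{K}$, $H^{1}=\mathbb{K}^{2g}$, $H^{\geq 2}=0$, and every cup product of positive-degree classes lands in $H^{2}=0$ and therefore vanishes. Setting $2g=\mathrm{tb}(\Lambda(\beta))+1$ matches both the ranks and the multiplicative structure found in step (2), yielding the isomorphism $\mathrm{Ext}^{\bullet}(\sh{F},\sh{F})\cong H^{\bullet}(\Sigma_{g,1},\mathbb{K})$ of unital graded rings.

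I do not anticipate a serious obstacle: the two main theorems on lower-degree morphisms and their compositions do essentially all of the work, and the only genuinely new observation is that the knot hypothesis collapses $\ker\delta_{\sh{F},\sh{F}}$ onto the line $\mathbb{K}\cdot\vec{1}_{n}$, so that the Hadamard and braided products reduce to ordinary scalar multiplication. The most delicate bookkeeping will be verifying that $v\,\vec{p}$ and $u\,\vec{q}$ are really representatives of the classes of $v\vec{x}$ and $u\vec{y}$ inside $\mathrm{coker}\,\delta_{\sh{F},\sh{F}}$, but this is immediate from the $\mathbb{K}$-linearity of $\delta_{\sh{F},\sh{F}}$.
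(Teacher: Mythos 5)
Your proposal is correct and follows essentially the same route as the paper: identify $\mathrm{Ext}^{0}(\sh{F},\sh{F})$ and $\mathrm{Ext}^{1}(\sh{F},\sh{F})$ with $\ker\delta_{\sh{F},\sh{F}}$ and $\mathrm{coker}\,\delta_{\sh{F},\sh{F}}$, use the single-cycle property of $\pi_{\beta}$ to collapse the kernel onto $\mathbb{K}\cdot\vec{1}_{n}$, count dimensions via the Euler-characteristic theorem, and then reduce the Hadamard and braided compositions to scalar multiplication, with the $(1,1)$-composition vanishing for degree reasons. The only point you leave implicit is that $\mathrm{tb}(\Lambda(\beta))$ is odd (since $\Lambda(\beta)$ is the rainbow closure of a positive braid), which is what guarantees that $g=\tfrac{1}{2}\big(\mathrm{tb}(\Lambda(\beta))+1\big)$ is an integer in the final identification with $H^{\bullet}(\Sigma_{g,1},\mathbb{K})$; the paper states this explicitly.
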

\begin{proof}
First, observe that part \text{(1)} follows directly from Theorem~\eqref{Theorem: Ext1 as the cokernel of delta_F,G}; nevertheless, we briefly outline the main arguments of its proof, as this will naturally set the stage for the proof of part \text{(2)}. 

To begin, consider the linear map $\delta_{\sh{F},\sh{F}}:\mathbb{K}^{n}_{\mathrm{std}}\to \mathbb{K}^{\ell}_{\mathrm{std}}$ associated with $\sh{F}$, as introduced in Definition~\eqref{Def: linear map delta}. Then, building on Theorems~\eqref{Theorem: Ext0 as the kernel of delta_F,G} and~\eqref{Theorem: Ext1 as the cokernel of delta_F,G}, we identify $\mathrm{Ext}^{0}(\sh{F},\sh{F})$ with $\mathrm{ker}\, \delta_{\sh{F},\sh{F}}$ and $\mathrm{Ext}^{1}(\sh{F},\sh{F})$ with $\mathrm{coker}\, \delta_{\sh{F},\sh{F}}$. 

Let $\vec{u}=(u_{1}, \dots,  u_{n} )\in \mathrm{ker}\, \delta_{\sh{F},\sh{F}}\subseteq \mathbb{K}^{n}_{\mathrm{std}}$. By an argument analogous to that in the proof of Theorem~\eqref{Theorem: Graded endomorphism spaces in the knot case}, there exists $u\in \mathbb{K}$ such that $u_{i}=u$ for all $i\in [1,n]$, that is, $\vec{u}=u\,\vec{1}_{n}$, where $\vec{1}_{n}=(1,\dots, 1)\in \mathbb{K}^{n}_{\mathrm{std}}$. Hence, since $\vec{u}$ is arbitrary, we deduce that $\mathrm{ker}\, \delta_{\sh{F},\sh{F}} = \mathrm{Span}_{\,\mathbb{K}}\{\,\vec{1}_{n}\,\}\cong \mathbb{K}$. It follows from Theorem~\eqref{Theorem: Euler characteristic for Ext groups} that $\mathrm{dim}\,\mathrm{Ext}^{1}(\sh{F},\sh{F})=\mathrm{tb}(\Lambda(\beta))+\mathrm{dim}\,\mathrm{Ext}^{0}(\sh{F},\sh{F})$, and as a result, we conclude that $\mathrm{coker}\, \delta_{\sh{F},\sh{F}} \cong \mathbb{K}^{\mathrm{tb}(\Lambda(\beta))+1}$. Bearing this in mind, part \textit{(1)} follows:
\begin{equation*}
\mathrm{Ext}^{\bullet}(\sh{F},\sh{F}) \cong \mathbb{K}\left[0\right] \oplus \mathbb{K}^{\mathrm{tb}(\Lambda(\beta))+1}\left[1\right]\, .   
\end{equation*}

Next, we verify part \textit{(2)}. For this purpose, we introduce the following notation. Let $u\in \mathbb{K}$ and $\vec{x}\in \mathbb{K}^{\mathrm{tb}(\Lambda(\beta))+1}$. Then, we write 
\begin{equation*}
u\simeq \vec{u}\in \mathrm{ker}\, \delta_{\sh{F},\sh{F}}\, , \quad \text{and} \quad    \vec{x}\simeq [\,\vec{x}\,'\,]\in \mathrm{coker}\, \delta_{\sh{F},\sh{F}}\, , \quad \vec{x}\,'\in \mathbb{K}^{\ell}_{\mathrm{std}}
\end{equation*}
to denote that $\vec{u}$ and $[\,\vec{x}\,'\,]$ are the unique elements corresponding to $u$ and $\vec{x}$ under the isomorphism of graded vector spaces established in part~\textit{(1)}, respectively. With this notation at hand, we proceed to analyze the algebraic structure induced by the graded composition in $\mathrm{Ext}^{\bullet}(\sh{F},\sh{F})$ by considering the following three cases: 

\noindent
$\bullet$ \textit{Case 1} ($\mathrm{Ext}^{0}(\sh{F},\sh{F})\times \mathrm{Ext}^{0}(\sh{F},\sh{F})$): Let $u, v\in \mathbb{K}$, and consider $\vec{u}, \vec{v}\in  \mathrm{ker}\, \delta_{\sh{F},\sh{F}}$ such that  $u\simeq \vec{u}$ and $v\simeq \vec{v}$, namely, $\vec{u}=u\,\vec{1}_{n}$ and $\vec{v}=v\,\vec{1}_{n}$. By Theorem~\eqref{Theorem: Graded composition for Ext0 and Ext0}, we know that $\vec{v}\circ \vec{u}=\vec{v}\odot \vec{u} \in \mathrm{ker}\, \delta_{\sh{F},\sh{F}}$, which implies that $\vec{v}\circ \vec{u}=vu\,\vec{1}_{n}$. Hence, if we define the composition
\begin{equation*}
 \begin{array}{ccccc}
      \circ :& \mathbb{K}\times \mathbb{K} &\to& \mathbb{K} & \\[2pt]
      &  (v,u) &\mapsto& v \circ u &
 \end{array}   
\end{equation*}
via the assignment $v\circ u\simeq \vec{v}\circ \vec{u}$, we deduce that $v\circ u=vu$. 

\noindent
$\bullet$ \textit{Case 2} ($\mathrm{Ext}^{1}(\sh{F},\sh{F})\times \mathrm{Ext}^{0}(\sh{F},\sh{F})$): Let $u\in \mathbb{K}$ and $\vec{y}\in \mathbb{K}^{\mathrm{tb}(\Lambda(\beta))+1}$, and consider $\vec{u}\in  \mathrm{ker}\, \delta_{\sh{F},\sh{F}}$ and $[\,\vec{y}\,'\,]\in \mathrm{coker}\, \delta_{\sh{F},\sh{F}}$ such that $u\simeq \vec{u}$ and $\vec{y}\simeq [\,\vec{y}\,'\,]$ for some $\vec{y}\,'=(y'_{1}, \dots, y'_{\ell})\in \mathbb{K}^{\ell}_{\mathrm{std}}$; in particular, $\vec{u}=(u_{1}, \dots, u_{n})\in \mathbb{K}^{n}_{\mathrm{std}}$ with $u_{i}=u$ for all $i\in [1,n]$, namely $\vec{u}=u\,\vec{1}_{n}$. By Theorem~\eqref{Theorem: Graded composition for Ext0 and Ext1}, we know that $[\,\vec{y}\,']\circ \vec{u}= [\,\vec{y}\,'\circ_{\beta_{\mathrm{R}}} \vec{u}\,]$, where $\vec{y}\,'\circ_{\beta_{\mathrm{R}}} \vec{u} =(\tilde{y}_{1},\dots, \tilde{y}_{\ell})\in \mathbb{K}^{\ell}_{\mathrm{std}}$ is entrywise given by
\begin{equation*}
\tilde{y}_{j}=y'_{j}\cdot u_{\pi_{\beta_{j}}(i_{j})}\, ,    
\end{equation*}
for all $j\in [1,\ell]$. Thus, since $u_{\pi_{\beta_{j}}(i_{j})}=u$ for all $j\in [1,\ell]$, we obtain that $\vec{y}\,'\circ_{\beta_{\mathrm{R}}}\vec{u}=u\,\vec{y}\,'$, and as a result, we also deduce that $u\,\vec{y} \simeq u\,[\,\vec{y}\,'\,]=[\, \vec{y}\,']\circ\,\vec{u} $. Hence, if we define the composition
\begin{equation*}
 \begin{array}{ccccc}
      \circ :& \mathbb{K}^{\mathrm{tb}(\Lambda(\beta))+1}\times \mathbb{K} &\to& \mathbb{K}^{\mathrm{tb}(\Lambda(\beta))+1} & \\[2pt]
      &  (\,\vec{y}\,,u\,) &\mapsto& \vec{y}\, \circ u &
 \end{array}   
\end{equation*}
via the assignment $\vec{y}\,\circ u\simeq [\,\vec{y}\,']\circ\, \vec{u}$, we conclude that $\vec{y}\,\circ u = u \vec{y}$. 

\noindent
$\bullet$ \textit{Case 3} ($\mathrm{Ext}^{0}(\sh{F},\sh{F})\times \mathrm{Ext}^{1}(\sh{F},\sh{F})$): Let $v\in \mathbb{K}$ and $\vec{x}\in \mathbb{K}^{\mathrm{tb}(\Lambda(\beta))+1}$, and consider  $\vec{v}\in  \mathrm{ker}\, \delta_{\sh{F},\sh{F}} $ and $[\,\vec{x}\,'\,]\in \mathrm{coker}\, \delta_{\sh{F},\sh{F}}$ such that $v\simeq \vec{v}$ and  $\vec{x}\simeq[\,\vec{x}\,'\,]$ for some $\vec{x}\,'=(x'_{1}, \dots, x'_{\ell})\in \mathbb{K}^{\ell}_{\mathrm{std}}$; in particular, $\vec{v}=(v_{1}, \dots, v_{n})\in \mathbb{K}^{n}_{\mathrm{std}}$ with $v_{i}=v$ for all $i\in [1,n]$, namely $\vec{v}=v\,\vec{1}_{n}$. By Theorem~\eqref{Theorem: Graded composition for Ext1 and Ext0}, we know that $\vec{v}\,\circ[\,\vec{x}\,']=[\,\vec{v}\,\circ_{\beta_{\mathrm{L}}}\vec{x}\,']$, where $\vec{v}\,\circ_{\beta_{\mathrm{L}}} \vec{x}\,'=(\tilde{x}_{1}, \dots, \tilde{x}_{\ell})\in \mathbb{K}^{\ell}_{\mathrm{std}}$ is entrywise given by
\begin{equation*}
\tilde{x}_{j}=v_{\pi_{\beta_{j}}(i_{j}+1)}\cdot x'_{j} \, ,    
\end{equation*}
for all $j\in [1,\ell]$. Thus, since $v_{\pi_{\beta_{j}}(i_{j}+1)}=v$ for all $j\in [1,\ell]$, we obtain that $\vec{v}\,\circ_{\beta_{\mathrm{L}}}\vec{x}\,'=v\,\vec{x}\,'$, and as a result, we also deduce that $v\,\vec{x} \simeq v\,[\,\vec{x}\,'\,]=\vec{v}\,\circ[\,\vec{x}\,']$. Hence, if we define the composition
\begin{equation*}
 \begin{array}{ccccc}
      \circ :& \mathbb{K}\times \mathbb{K}^{\mathrm{tb}(\Lambda(\beta))+1} &\to& \mathbb{K}^{\mathrm{tb}(\Lambda(\beta))+1} & \\[2pt]
      &  (\, v, \vec{x}\,) &\mapsto& v\, \circ\,\vec{x} &
 \end{array}   
\end{equation*}
via the assignment $v\,\circ \vec{x} \simeq \vec{v}\,\circ[\,\vec{x}\,']$, we conclude that $v\circ\vec{x}= v\,\vec{x}$.

In particular, combining the above cases, we obtain the graded composition rule
\begin{equation*}
\begin{array}{c c c}
\mathrm{Ext}^{\bullet}(\sh{F},\sh{F}) \times \mathrm{Ext}^{\bullet}(\sh{F},\sh{F}) & ~~\longrightarrow~~ & \mathrm{Ext}^{\bullet}(\sh{F},\sh{F}) \,,\\[6pt]
 \big((u, \vec{x}\,) , (v, \vec{y}\,)\big)   & ~~\longmapsto~~ & (v,\vec{y}\,)\circ (u,\vec{x}) \;=\; (vu,\, v\vec{x} + u\vec{y}\,) \, ,
\end{array}
\end{equation*}
for all $u,\,v \in \mathbb{K}$ and $\vec{x},\, \vec{y} \in \mathbb{K}^{\mathrm{tb}(\Lambda(\beta))+1}$. Bearing this in mind, part \textit{(2)} follows.

Now, let $\Sigma_{g,1}$ be a genus $g$-surface with one puncture. As is well known, the unital graded cohomological ring $H^{\bullet}(\Sigma_{g,1}, \mathbb{K})$ of $\Sigma_{g,1}$ over $\mathbb{K}$ is given by 
\begin{equation*}
H^{\bullet}(\Sigma_{g,1}, \mathbb{K})= \mathbb{K}[0] \oplus \mathbb{K}^{2g}[1]\, ,   
\end{equation*}
where the graded composition rule induced by the cup product is defined by
\begin{equation*}
\begin{array}{c c c}
H^{\bullet}(\Sigma_{g,1}, \mathbb{K}) \times H^{\bullet}(\Sigma_{g,1}, \mathbb{K})& ~\longrightarrow~ &H^{\bullet}(\Sigma_{g,1}, \mathbb{K}) \,,\\[4pt]
\big((u, \vec{x}\,) , (v, \vec{y}\,)\big)   & ~\longmapsto~ & (v,\vec{y}\,)\circ (u,\vec{x}) \;=\; (vu,\, v\vec{x} + u\vec{y}\,) \, ,
\end{array}    
\end{equation*}
for all $u,\,v \in \mathbb{K}$ and $\vec{x},\, \vec{y} \in \mathbb{K}^{2g}$.

Finally, recall that since by definition $\Lambda(\beta)$ is a Legendrian knot with maximal Thurston--Bennequin number, arising as the rainbow closure of a positive braid word $\beta$, $\mathrm{tb}(\Lambda(\beta))$ is an odd integer. Thus, if we consider $\Sigma_{g,1}$ such that $2g=\mathrm{tb}(\Lambda(\beta)) +1$, we conclude that there exists an isomorphism of unital graded rings
\begin{equation*}
\mathrm{Ext}^{\bullet}(\sh{F},\sh{F}) \cong H^{\bullet}(\Sigma_{g, 1}, \mathbb{K})\, . 
\end{equation*}
This completes the proof.
\end{proof}

\noindent
Conceptually, the previous result may be viewed as a sheaf-theoretic analogue of the \textit{Seidel isomorphism theorem}~\cite{R1}. From another perspective, and building on the insightful work of Chantraine~\cite{C1}, Theorem~\eqref{Theorem: Graded endomorphism spaces in the knot case} shows that, when $\Lambda(\beta)$ is a Legendrian knot, the unital graded ring structure of the graded endomorphism spaces in the category $\ccs{1}{\beta}$ is fully determined by the smooth topology of the possible exact Lagrangian fillings of $\Lambda(\beta)$. This reveals the rich algebraic, geometric, and topological structures encoded by the categorical invariant under consideration and illustrates the power of the sheaf-theoretic framework we have developed in this paper. 

With the above result at hand, we conclude our discussion compiling some structural aspects of the category $\ccs{1}{\beta}$ in the Legendrian knot case. Building on this, we proceed to analyze in detail some aspects of the category $\ccs{1}{\beta}$ in the case where $\mathbb{K}=\mathbb{Z}_{2}$ and $\Lambda(\beta)$ is a concrete Legendrian knot, namely the Legendrian trefoil knot on three strands.

\subsubsection{The Case of the Trefoil Knot} Let $\beta_{\mathrm{T}}:=\sigma_{1}\sigma_{2}\sigma_{1}\sigma_{2}\in \mathrm{Br}^{+}_{3}$. In this case, $\Lambda(\beta_{\mathrm{T}})\subset (\mathbb{R}^{3}, \xi_{\mathrm{std}})$ is Legendrian trefoil knot presented as the rainbow closure of a positive braid word on three strands. Next, we apply the framework we developed in the previous sections to provide a detailed description of certain aspects of the category $H^{\bullet}(\mathcal{S}h_{1}(\Lambda(\beta_{\mathrm{T}}), \mathbb{Z}_{2})_{0})$. In particular, since this category is a Legendrian isotopy invariant, our results agree with those in~\cite{STZ1}, where the Legendrian trefoil knot is studied as the rainbow closure of a positive braid on two strands. What is new here is the explicit description of the compositions of graded morphisms, which is not discussed in~\cite{STZ1}.

\noindent
$\bullet$ \textit{Setup}: To begin, observe that, for any $\vec{z}=(z_{1}, z_{2}, z_{3}, z_{4})\in \mathbb{Z}^{4}_{2}$, the path matrix associated with $\beta_{\mathrm{T}}$ is given by
\begin{equation*}
\displaystyle P_{\beta_{\mathrm{T}}}(\vec{z}\,)= \left[\;\begin{array}{c@{\hspace{12pt}}c@{\hspace{12pt}}c}
z_{1}z_{3}+z_{2} & z_{1}z_{4}+1 & z_{1}\\
z_{3}            &   z_{4}   & 1\\
1                &   0   & 0
\end{array}\;\right] \in \mathrm{GL}(3,\mathbb{Z}_{2})\, .    
\end{equation*}
Consequently, the braid variety associated with $\beta_{\mathrm{T}}$ is defined by
\begin{equation*}
X(\beta_{\mathrm{T}}, \mathbb{Z}_{2})=\left\{ \vec{z}=(z_{1}, z_{2}, z_{3}, z_{4})\in \mathbb{Z}^{4}_{2}\;|\; z_{1}z_{3}+z_{2}=1\,, ~ z_{2}z_{4}-z_{3}=1  \right\}\subset \mathbb{Z}^{4}_{2} \, . 
\end{equation*}
Accordingly, a straightforward calculation shows that $X(\beta_{\mathrm{H}}, \mathbb{Z}_{2})$ consists of five points:
\begin{equation*}
X(\beta_{\mathrm{T}}, \mathbb{Z}_{2})=\left\{ \vec{z}_{1}=(0,1,0,1)\,, \quad \vec{z}_{2}=(0,1,1,0)\, , \quad \vec{z}_{3}=(1,0,1,0)\, , \quad \vec{z}_{4}=(1,0,1,1)\, , \quad \vec{z}_{5}=(1,1,0,1)  \right\}\, .  
\end{equation*}

By Theorem~\eqref{Prop. for sheaves and braid matrices}, the objects of the category $H^{*}(\mathcal{S}h_{1}(\Lambda(\beta_{\mathrm{T}}), \mathbb{Z}_{2})_{0})$ are parametrized by a basis for $\mathbb{Z}^{3}_{2}$ together with a point in the braid variety $X(\beta_{\mathrm{T}}, \mathbb{Z}_{2})$. Bearing this in mind, we write
\begin{equation*}
\mathrm{Ob}(H^{*}(\mathcal{S}h_{1}(\Lambda(\beta_{\mathrm{T}}), \mathbb{Z}_{2})_{0}))= \left\{\,\left(\,\hat{\mathbf{f}}^{(3)}\,,\, \vec{z}\,\right)\,\Big|\, \text{$\hat{\mathbf{f}}^{(3)}$ is a basis for $\mathbb{Z}^{3}_{2}$,\, $\vec{z}\in X(\beta_{\mathrm{T}}, \mathbb{Z}_{2})$} \,\right\}\, .    
\end{equation*}

Under the preceding identification, the linear structure of the graded morphism spaces in the category $H^{\bullet}(\mathcal{S}h_{1}(\Lambda(\beta_{\mathrm{T}}), \mathbb{Z}_{2})_{0})$ is described as follows: let $\sh{F}=\left(\,\hat{\mathbf{f}}^{(3)}\,,\, \vec{x}\,\right)$ and $\sh{G}=\left(\,\hat{\mathbf{g}}^{(3)}\,,\, \vec{y}\,\right)$ be objects of the category $H^{*}(\mathcal{S}h_{1}(\Lambda(\beta_{\mathrm{T}}), \mathbb{Z}_{2})_{0})$, and let $\delta_{\sh{F},\sh{G}}: \mathbb{Z}^{3}_{2}\to \mathbb{Z}^{4}_{2}$ be the linear maps associated with these objects, as introduced in Definition~\eqref{Def: linear map delta}. Specifically,
\begin{equation*}
\delta_{\sh{F},\sh{G}}(\vec{u})=(u_{1}x_{1}-y_{1}u_{2}, u_{1}x_{2}-y_{2}u_{3}, u_{2}x_{3} - y_{3}u_{3}, u_{2}x_{4} - y_{4}u_{1})\, ,\qquad \vec{u}=(u_{1}, u_{2}, u_{3})\in \mathbb{Z}^{3}_{2}\,,    
\end{equation*}
where $\vec{x}=(x_{1}, x_{2}, x_{3}, x_{4})$ and $\vec{y}=(y_{1}, y_{2}, y_{3}, y_{4})$. Then, building on Theorems~\eqref{Theorem: Ext0 as the kernel of delta_F,G} and~\eqref{Theorem: Ext1 as the cokernel of delta_F,G}, we write
\begin{equation*}
\mathrm{Ext}^{0}(\sh{F},\sh{G}) = \mathrm{ker}\,\delta_{\sh{F},\sh{G}}\, ,\quad \text{and} \quad \mathrm{Ext}^{1}(\sh{F},\sh{G}) = \mathrm{coker}\,\delta_{\sh{F},\sh{G}}\, .    
\end{equation*}
Later, when dealing with concrete examples and in order to simplify our discussion, we will work under the identification $\mathrm{Ext}^{1}(\sh{F},\sh{G})=W$, where $W$ corresponds to some fixed complement of $\mathrm{im}\, \delta_{\sh{F},\sh{G}}$ in $\mathbb{Z}^{4}_{2}$, that is, a vector subspace such that $\mathbb{Z}^{4}_{2}=\mathrm{im}\, \delta_{\sh{F},\sh{G}}\,\oplus\,W$. 

With the above identifications at hand, the graded composition between the graded morphism spaces in the category $H^{\bullet}(\mathcal{S}h_{1}(\Lambda(\beta_{\mathrm{T}}), \mathbb{Z}_{2})_{0})$ is characterized as follows: let $\sh{F}=\left(\,\hat{\mathbf{f}}^{(3)}\,,\, \vec{x}\,\right)$, $\sh{G}=\left(\,\hat{\mathbf{g}}^{(3)}\,,\, \vec{y}\,\right)$, and $\sh{Q}=\left(\,\hat{\mathbf{q}}^{(3)}\,,\, \vec{z}\,\right)$ be objects of the category $H^{\bullet}(\mathcal{S}h_{1}(\Lambda(\beta_{\mathrm{T}}), \mathbb{Z}_{2})_{0})$. Fix $\vec{u}=(u_{1}, u_{2}, u_{3})\in \mathrm{Ext}^{0}(\sh{F},\sh{G})$, $\vec{v}=(v_{1}, v_{2}, v_{3})\in \mathrm{Ext}^{0}(\sh{G},\sh{Q})$, $\Sigma\in\mathrm{Ext}^{1}(\sh{F}, \sh{G})$, and $\Gamma\in \mathrm{Ext}^{1}(\sh{G}, \sh{Q})$, and let $\vec{p}=(p_{1}, p_{2}, p_{3}, p_{4})\in \mathbb{Z}^{4}_{2}$ and $\vec{q}=(q_{1}, q_{2}, q_{3}, q_{4})\in \mathbb{Z}^{4}_{2}$ be representatives such that $\Sigma=[\,\vec{p}\,]$ and $\Gamma=[\,\vec{q}\,]$. Then, by Theorems~\eqref{Theorem: Graded composition for Ext0 and Ext0}, ~\eqref{Theorem: Graded composition for Ext1 and Ext0}, and~\eqref{Theorem: Graded composition for Ext0 and Ext1}, the graded compositions between the morphisms under consideration are given by:
\begin{equation*}
\begin{aligned}
\vec{v}\,\circ\,\vec{u}&=\vec{v}\odot \vec{u}\in \mathrm{Ext}^{0}(\sh{F},\sh{Q})\, , \qquad \vec{v}\odot \vec{u}=(v_{1}u_{1}, v_{2}u_{2}, v_{3}u_{3}, v_{4}u_{4})\in \mathbb{Z}^{3}_{2}\, ,\\
\Gamma\,\circ\,\vec{u}& = [\, \,\vec{q}\,\circ_{\beta_{\mathrm{R}}}\vec{u} \,]\in \mathrm{Ext}^{1}(\sh{F},\sh{Q})\, ,  \qquad \vec{q}\,\circ_{\beta_{\mathrm{R}}}\vec{u}=(q_{1}u_{2}, q_{2}u_{3}, q_{3}u_{3}, q_{4}u_{1})\in \mathbb{Z}^{4}_{2}\, ,\\
\vec{v}\,\circ\,\Sigma& = [\, \,\vec{v}\,\circ_{\beta_{\mathrm{L}}}\vec{p} \,]\in \mathrm{Ext}^{1}(\sh{F},\sh{Q})\, , \qquad \vec{v}\,\circ_{\beta_{\mathrm{L}}}\vec{p}=(v_{1}p_{1}, v_{1}p_{2}, v_{2}p_{3}, u_{2}p_{4})\in \mathbb{Z}^{4}_{2}\, .
\end{aligned}    
\end{equation*}
Later, when working with concrete examples and in order to simplify our discussion, we identify $\mathrm{Ext}^{1}(\sh{F},\sh{G})$, $\mathrm{Ext}^{1}(\sh{G},\sh{Q})$, $\mathrm{Ext}^{1}(\sh{F},\sh{Q})$ with some fixed complements $W_{1}$, $W_{2}$, $W_{3}$ of the images of the corresponding linear maps $\delta_{\sh{F},\sh{G}},\allowbreak\;\delta_{\sh{G},\sh{Q}},\allowbreak\;\delta_{\sh{F},\sh{Q}} : \mathbb{Z}^{3}_{2} \to \mathbb{Z}^{4}_{2}$ in $\mathbb{Z}^{4}_{2}$. In this setting, the mixed degree graded compositions are computed via the left $\circ_{\beta_{\mathrm{L}}}:\mathbb{Z}^{3}_{2}\times\mathbb{Z}^{4}_{2}\to \mathbb{Z}^{4}_{2} $ and right $\circ_{\beta_{\mathrm{R}}} :\mathbb{Z}^{4}_{2}\times\mathbb{Z}^{3}_{2}\to \mathbb{Z}^{4}_{2}$ braided compositions, and the resulting vectors in the ambient space $\mathbb{Z}^{4}_{2}$ are then projected onto $W_{3}$ for consistency. 

\noindent
$\bullet$ \textit{Some Concrete Computations}: To illustrate how to apply the setup we established above, we consider five explicit objects of the category $H^{\bullet}(\mathcal{S}h_{1}(\Lambda(\beta_{\mathrm{T}}), \mathbb{Z}_{2})_{0})$: 
\begin{equation*}
\left\{ \sh{F}_{i}= \left(\,\hat{\mathbf{f}}^{(3)}_{i}\,,\, \vec{z}_{i}\,\right) \; \middle| \; \begin{array}{c}
\text{${\mathbf{f}}^{(3)}_{i}$ is a fixed basis for $\mathbb{Z}^{3}_{2}$}\,,\\ \text{$\vec{z}_{i}$ is the $i$-th distinct point in $X(\beta_{\mathrm{T}}, \mathbb{Z}_{2})$}\,,\\ i\in [1,5]     
\end{array}   \right\}\, .  
\end{equation*}

Next, we analyze in detail the algebraic properties of some of the graded morphism spaces associated with the objects $\big\{\sh{F}_{i} \big\}_{i=1}^{n}$. In particular, we begin by examining the ring structure of the graded endomorphism space $\mathrm{End}^{\bullet}(\sh{F}_{1})$, summarized for clarity in Table~\ref{tab:endomorphisms1knot}. From this table, we see that $\mathrm{End}^{\bullet}(\sh{F}_{1})$ has a particularly simple ring structure, namely that of the cohomology ring of a genus-one surface with one puncture. This is not a coincidence: since $\mathrm{tb}(\Lambda(\beta_{\mathrm{T}}))=\ell-n=1$, Theorem~\eqref{Theorem: Graded endomorphism spaces in the knot case} asserts that, for any $i\in [1,5]$, there is an isomorphism of unital graded rings
\begin{equation*}
\mathrm{End}^{\bullet}(\sh{F}_{i}) \cong H^{\bullet}(\Sigma_{1,1}, \mathbb{Z}_{2})\, ,     
\end{equation*}
where $H^{\bullet}(\Sigma_{1,1}, \mathbb{Z}_{2})$ denotes the unital cohomology ring over $\mathbb{Z}_{2}$ of a genus-one surface with one puncture. It follows that the algebraic structure of the graded endomorphism spaces of the objects $\big\{\sh{F}_{i}\big\}_{i=1}^{5}$ is particularly simple and depends primarily on intrinsic data associated with the Legendrian knot $\Lambda(\beta_{\mathrm{T}})$.   

\begin{table}[ht!]
\centering
\renewcommand{\arraystretch}{1.6} 
\setlength{\tabcolsep}{10pt}
\begin{tabular}{|c|}
\hline
Object: $\sh{F}_{1}$\\
\hline
Graded Endomorphism Spaces\\
\hline
$\mathrm{End}^{\,0}(\sh{F}_{1})=\mathrm{Span}_{\,\mathbb{Z}_{2}}\big\{ \hat{u}_{1}=\langle 1,1,1 \rangle\, \big\}$  \\
\hline
$\mathrm{End}^{\,1}(\sh{F}_{1})=\mathrm{Span}_{\,\mathbb{Z}_{2}}\big\{ \hat{\alpha}_{1}=\langle 1,0,0,0 \rangle\,, ~ \hat{\alpha}_{2}=\langle 0,0,1,0 \rangle\, \big\}$  \\
\hline
Graded Composition\\
\hline
$
\begin{array}{cccc}
\circ: &\mathrm{End}^{\,0}(\sh{F}_{1})\times \mathrm{End}^{\,0}(\sh{F}_{1})&\to& \mathrm{End}^{\,0}(\sh{F}_{1})\, ,\\[-3pt]
&\big(\,b_{1}\hat{u}_{1} \,, a_{1}\hat{u}_{1}\,\big)& \mapsto & b_{1}a_{1} \hat{u}_{1}\, .\\[4pt]
\end{array}
$\\
\hline
$
\begin{array}{cccc}
\circ: &\mathrm{End}^{\,1}(\sh{F}_{1})\times \mathrm{End}^{\,0}(\sh{F}_{1})&\to& \mathrm{End}^{\,1}(\sh{F}_{1})\, ,\\[-3pt]
&\big(\,q_{1}\hat{\alpha}_{1}+q_{2}\hat{\alpha}_{2} \,, a_{1}\hat{u}_{1}\,\big)& \mapsto & q_{1}a_{1} \hat{\alpha}_{1}+q_{2}a_{1}\hat{\alpha}_{2}\, .\\[4pt]
\end{array}
$\\
\hline
$
\begin{array}{cccc}
\circ: &\mathrm{End}^{\,0}(\sh{F}_{1})\times \mathrm{End}^{\,1}(\sh{F}_{1})&\to& \mathrm{End}^{\,1}(\sh{F}_{1})\, ,\\[-3pt]
&\big(\,b_{1}\hat{u}_{1}\,, p_{1}\hat{\alpha}_{1}+p_{2}\hat{\alpha}_{2}\,\big)& \mapsto & b_{1}p_{1}\hat{\alpha}_{1}+b_{1}p_{2}\hat{\alpha}_{2}\, .\\[4pt]
\end{array}
$\\
\hline
\end{tabular}
\caption{Unital ring structure of the graded endomorphism space $\mathrm{End}^{\bullet}(\sh{F}_{1})$.}
\label{tab:endomorphisms1knot}
\end{table}

We now extend our discussion by explicitly analyzing the graded morphism spaces and their compositions for the ordered triples $(\sh{F}_{1}, \sh{F}_{2}, \sh{F}_{3})$ and $(\sh{F}_{1}, \sh{F}_{1}, \sh{F}_{2})$.  
As summarized in Tables~\ref{tab:morphisms1knot} and~\ref{tab:morphisms2knot}, these examples illustrate two contrasting behaviors: when all objects are distinct, compositions are trivial, whereas when some objects coincide, nontrivial mixed-degree compositions appear.

\begin{table}[ht!]
\centering
\renewcommand{\arraystretch}{1.6} 
\setlength{\tabcolsep}{10pt}
\begin{tabular}{|c|}
\hline
Ordered Triple: ($\sh{F}_{1}, \sh{F}_{2}, \sh{F}_{3}$)\\
\hline
Graded morphism Spaces\\
\hline
$\begin{array}{c|c}
\mathrm{Ext}^{0}(\sh{F}_{1}, \sh{F}_{2})=0  ~&~ \mathrm{Ext}^{1}(\sh{F}_{1}, \sh{F}_{2})=\mathrm{Span}_{\,\mathbb{Z}_{2}}\big\{\,\hat{\alpha}_{1}=\langle 1,0,0,0 \rangle\,\big\} \\
\hline
\mathrm{Ext}^{0}(\sh{F}_{2}, \sh{F}_{3})=0  ~&~ \mathrm{Ext}^{1}(\sh{F}_{2}, \sh{F}_{3})=\mathrm{Span}_{\,\mathbb{Z}_{2}}\big\{\,\hat{\beta}_{1}=\langle 0,0,0,1 \rangle\,\big\} \\
\hline
\mathrm{Ext}^{0}(\sh{F}_{1}, \sh{F}_{3})=0 ~&~ \mathrm{Ext}^{1}(\sh{F}_{1}, \sh{F}_{3})=\mathrm{Span}_{\,\mathbb{Z}_{2}}\big\{\,\hat{\gamma}_{1}=\langle 1,0,0,1 \rangle\,\big\} \\
\end{array}$\\
\hline
Graded Composition\\
\hline
$
\begin{array}{cccc}
\circ: &\mathrm{Ext}^{0}(\sh{F}_{2},\sh{F}_{3})\times \mathrm{Ext}^{0}(\sh{F}_{1},\sh{F}_{2})&\to& \mathrm{Ext}^{0}(\sh{F}_{1},\sh{F}_{3})\, ,\\[-3pt]
&\big(\,0\,, 0\,\big)& \mapsto & 0\, .\\[4pt]
\end{array}
$\\
\hline
$
\begin{array}{cccc}
\circ: &\mathrm{Ext}^{1}(\sh{F}_{2},\sh{F}_{3})\times \mathrm{Ext}^{0}(\sh{F}_{1},\sh{F}_{2})&\to& \mathrm{Ext}^{1}(\sh{F}_{1},\sh{F}_{3})\, ,\\[-3pt]
&\big(\,q_{1}\hat{\beta}_{1} \,, 0\,\big)& \mapsto & 0\, .\\[4pt]
\end{array}
$\\
\hline
$
\begin{array}{cccc}
\circ: &\mathrm{Ext}^{0}(\sh{F}_{2},\sh{F}_{3})\times \mathrm{Ext}^{1}(\sh{F}_{1},\sh{F}_{2})&\to& \mathrm{Ext}^{1}(\sh{F}_{1},\sh{F}_{3})\, ,\\[-3pt]
&\big(\,0 \,, p_{1}\hat{\alpha}_{1}\,\big)& \mapsto & 0\, .\\[4pt]
\end{array}
$\\
\hline
\end{tabular}
\caption{Graded morphism spaces and their compositions for the ordered triple $(\sh{F}_{1}, \sh{F}_{2}, \sh{F}_{3})$.}
\label{tab:morphisms1knot}
\end{table}

\begin{table}[ht!]
\centering
\renewcommand{\arraystretch}{1.6} 
\setlength{\tabcolsep}{10pt}
\begin{tabular}{|c|}
\hline
Ordered Triple: ($\sh{F}_{1}, \sh{F}_{1}, \sh{F}_{2}$)\\
\hline
Graded morphism Spaces\\
\hline
$\begin{array}{c|c}
\mathrm{Ext}^{0}(\sh{F}_{1}, \sh{F}_{1})=\mathrm{Span}_{\,\mathbb{Z}_{2}}\big\{ \hat{u}_{1}=\langle 1,1,1 \rangle\, \big\}  ~&~ \mathrm{Ext}^{1}(\sh{F}_{1},\sh{F}_{1} )=\mathrm{Span}_{\,\mathbb{Z}_{2}}\big\{ \hat{\alpha}_{1}=\langle 1,0,0,0 \rangle\,, ~ \hat{\alpha}_{2}=\langle 0,0,1,0 \rangle\, \big\}\\
\hline
\mathrm{Ext}^{0}(\sh{F}_{1}, \sh{F}_{2})=0  ~&~ \mathrm{Ext}^{1}(\sh{F}_{1}, \sh{F}_{2})=\mathrm{Span}_{\,\mathbb{Z}_{2}}\big\{\,\hat{\beta}_{1}=\langle 1,0,0,0 \rangle\,\big\} \\
\end{array}$\\
\hline
Graded Composition\\
\hline
$
\begin{array}{cccc}
\circ: &\mathrm{Ext}^{0}(\sh{F}_{1},\sh{F}_{2})\times \mathrm{Ext}^{0}(\sh{F}_{1},\sh{F}_{1})&\to& \mathrm{Ext}^{0}(\sh{F}_{1},\sh{F}_{2})\, ,\\[-3pt]
&\big(\,0\,, a_{1}\hat{u}_{1}\,\big)& \mapsto & 0\, .\\[4pt]
\end{array}
$\\
\hline
$
\begin{array}{cccc}
\circ: &\mathrm{Ext}^{1}(\sh{F}_{1},\sh{F}_{2})\times \mathrm{Ext}^{0}(\sh{F}_{1},\sh{F}_{1})&\to& \mathrm{Ext}^{1}(\sh{F}_{1},\sh{F}_{2})\, ,\\[-3pt]
&\big(\,q_{1}\hat{\beta}_{1} \,, a_{1}\hat{u}_{1}\,\big)& \mapsto & q_{1}a_{1}\hat{\beta}_{1}\, .\\[4pt]
\end{array}
$\\
\hline
$
\begin{array}{cccc}
\circ: &\mathrm{Ext}^{0}(\sh{F}_{1},\sh{F}_{2})\times \mathrm{Ext}^{1}(\sh{F}_{1},\sh{F}_{1})&\to& \mathrm{Ext}^{1}(\sh{F}_{1},\sh{F}_{2})\, ,\\[-3pt]
&\big(\,0 \,, p_{1}\hat{\alpha}_{1}+p_{2}\hat{\alpha}_{2}\,\big)& \mapsto & 0\, .\\[4pt]
\end{array}
$\\
\hline
\end{tabular}
\caption{Graded morphism spaces and their compositions for the ordered triple $(\sh{F}_{1}, \sh{F}_{1}, \sh{F}_{2})$.}
\label{tab:morphisms2knot}
\end{table}

Finally, we conclude our discussion by examining the linear algebraic structure of the graded morphism spaces associated with the objects $\big\{\sh{F}_{i}\big\}_{i=1}^{5}$.
For this purpose, recall that $\mathrm{tb}(\Lambda(\beta_{\mathrm{T}}))=1$. Then, since the points $\vec{z}_{i}$ are all distinct and the torus action on $X(\beta_{\mathrm{T}}, \mathbb{Z}_{2})$ is trivial, Theorem~\eqref{Theorem: Graded morphism spaces for the knot case} yields an isomorphism of graded vector spaces
\begin{equation*}
\mathrm{Ext}^{\bullet}(\sh{F}_{i}, \sh{F}_{j}) \cong
\begin{cases}
\mathbb{Z}_{2}[0] \oplus \mathbb{Z}^{2}_{2}[1], & \text{if } i=j,\\[2pt]
\hfil 0[0] \oplus \mathbb{Z}_{2}[1], & \text{if } i\neq j,
\end{cases}
\end{equation*}
which shows that the linear structure of the graded morphism spaces associated with the objects $\big\{\sh{F}_{i}\big\}_{i=1}^{5}$ is particularly simple and largely determined by intrinsic data associated with the Legendrian knot $\Lambda(\beta_{\mathrm{T}})$.

Having established this, we conclude our study of the category $\ccs{1}{\beta}$ in the case where $\Lambda(\beta)$ is a Legendrian knot. The general case of Legendrian links is richer in structure and depends more heavily on the choice of the positive braid word $\beta$. In the next subsection, we apply the machinery we developed in the preceding sections to a simple yet illuminating example that offers significant insight into the structure of the category $\ccs{1}{\beta}$ in the case where $\mathbb{K}=\mathbb{Z}_{2}$ and $\Lambda(\beta)$ is a Legendrian link, namely the Legendrian Hopf link on three strands.

\subsection{The Case of the Hopf Link}
Let $\beta_{\mathrm{H}}:=\sigma_{1}\sigma_{2}\sigma_{1}\in \mathrm{Br}^{+}_{3}$. In this case, $\Lambda(\beta_{\mathrm{H}})\subset (\mathbb{R}^{3}, \xi_{\mathrm{std}})$ is the Legendrian Hopf link presented as the rainbow closure of a positive braid word on three strands. Next, we apply the framework we developed in the previous sections to provide a detailed description of certain aspects of the category $H^{\bullet}(\mathcal{S}h_{1}(\Lambda(\beta_{\mathrm{H}}), \mathbb{Z}_{2})_{0})$. 

\noindent
$\bullet$ \textit{Setup}: To begin, observe that, for any $\vec{z}=(z_{1}, z_{2}, z_{3})\in \mathbb{Z}^{3}_{2}$, the path matrix associated with $\beta_{\mathrm{H}}$ is given by
\begin{equation*}
\displaystyle P_{\beta_{\mathrm{H}}}(\vec{z}\,)= \left[\;\begin{array}{c@{\hspace{12pt}}c@{\hspace{12pt}}c}
z_{1}z_{3}+z_{2} & z_{1} & 1\\
z_{3}            &   1   & 0\\
1                &   0   & 0
\end{array}\;\right] \in \mathrm{GL}(3,\mathbb{Z}_{2})\, .    
\end{equation*}
Consequently, the braid variety associated with $\beta_{\mathrm{H}}$ is defined by
\begin{equation*}
X(\beta_{\mathrm{H}}, \mathbb{Z}_{2})=\left\{ \vec{z}=(z_{1}, z_{2}, z_{3})\in \mathbb{Z}^{3}_{2}\;|\; z_{1}z_{3}+z_{2}=1\,, ~ z_{2}=1  \right\}\subset \mathbb{Z}^{3}_{2} \, . 
\end{equation*}
Accordingly, a straightforward calculation shows that $X(\beta_{\mathrm{H}}, \mathbb{Z}_{2})$ consists of three points:
\begin{equation*}
X(\beta_{\mathrm{H}}, \mathbb{Z}_{2})=\left\{ \vec{z}_{1}=(0,1,0)\,, \quad \vec{z}_{2}=(0,1,1)\, , \quad \vec{z}_{3}=(1,1,0)  \right\}\, .  
\end{equation*}

By Theorem~\eqref{Prop. for sheaves and braid matrices}, the objects of the category $H^{\bullet}(\mathcal{S}h_{1}(\Lambda(\beta_{\mathrm{H}}), \mathbb{Z}_{2})_{0})$ are parametrized by a choice of basis for $\mathbb{Z}^{3}_{2}$ together with a point in the braid variety $X(\beta_{\mathrm{H}}, \mathbb{Z}_{2})$. Bearing this in mind, we write
\begin{equation*}
\mathrm{Ob}(H^{\bullet}(\mathcal{S}h_{1}(\Lambda(\beta_{\mathrm{H}}), \mathbb{Z}_{2})_{0}))= \left\{\,\left(\,\hat{\mathbf{f}}^{(3)}\,,\, \vec{z}\,\right)\,\Big|\, \text{$\hat{\mathbf{f}}^{(3)}$ is a basis for $\mathbb{Z}^{3}_{2}$,\, $\vec{z}\in X(\beta_{\mathrm{H}}, \mathbb{Z}_{2})$} \,\right\}\, .    
\end{equation*}

Under the preceding identification, the linear structure of the graded morphism spaces in the category $H^{\bullet}(\mathcal{S}h_{1}(\Lambda(\beta_{\mathrm{H}}), \mathbb{Z}_{2})_{0})$ is described as follows: let $\sh{F}=\left(\,\hat{\mathbf{f}}^{(3)}\,,\, \vec{x}\,\right)$ and $\sh{G}=\left(\,\hat{\mathbf{g}}^{(3)}\,,\, \vec{y}\,\right)$ be objects of the category $H^{\bullet}(\mathcal{S}h_{1}(\Lambda(\beta_{\mathrm{H}}), \mathbb{Z}_{2})_{0})$, and let $\delta_{\sh{F},\sh{G}}: \mathbb{Z}^{3}_{2}\to \mathbb{Z}^{3}_{2}$ be the linear maps associated with these objects, as introduced in Definition~\eqref{Def: linear map delta}. Specifically,
\begin{equation*}
\delta_{\sh{F},\sh{G}}(\vec{u})=(u_{1}x_{1}-y_{1}u_{2}, u_{1}x_{2}-y_{2}u_{3}, u_{2}x_{3} - y_{3}u_{3})\, ,\qquad \vec{u}=(u_{1}, u_{2}, u_{3})\in \mathbb{Z}^{3}_{2}\,,    
\end{equation*}
where $\vec{x}=(x_{1}, x_{2}, x_{3})$ and $\vec{y}=(y_{1}, y_{2}, y_{3})$. Then, building on Theorems~\eqref{Theorem: Ext0 as the kernel of delta_F,G} and~\eqref{Theorem: Ext1 as the cokernel of delta_F,G}, we write
\begin{equation*}
\mathrm{Ext}^{0}(\sh{F},\sh{G}) = \mathrm{ker}\,\delta_{\sh{F},\sh{G}}\, ,\quad \text{and} \quad \mathrm{Ext}^{1}(\sh{F},\sh{G}) = \mathrm{coker}\,\delta_{\sh{F},\sh{G}}\, .    
\end{equation*}
Later, when dealing with concrete examples and in order to simplify our discussion, we will work under the identification $\mathrm{Ext}^{1}(\sh{F},\sh{G})=W$, where $W$ corresponds to some fixed complement of $\mathrm{im}\, \delta_{\sh{F},\sh{G}}$ in $\mathbb{Z}^{3}_{2}$, that is, a vector subspace such that $\mathbb{Z}^{3}_{2}=\mathrm{im}\, \delta_{\sh{F},\sh{G}}\,\oplus\,W$. 

In particular, with the preceding identifications at hand, the composition of graded morphisms in the category $H^{\bullet}(\mathcal{S}h_{1}(\Lambda(\beta_{\mathrm{H}}), \mathbb{Z}_{2})_{0})$ is characterized as follows: let $\sh{F}=\left(\,\hat{\mathbf{f}}^{(3)}\,,\, \vec{x}\,\right)$, $\sh{G}=\left(\,\hat{\mathbf{g}}^{(3)}\,,\, \vec{y}\,\right)$, and $\sh{Q}=\left(\,\hat{\mathbf{q}}^{(3)}\,,\, \vec{z}\,\right)$ be objects of the category $H^{\bullet}(\mathcal{S}h_{1}(\Lambda(\beta_{\mathrm{H}}), \mathbb{Z}_{2})_{0})$. Fix $\vec{u}=(u_{1}, u_{2}, u_{3})\in \mathrm{Ext}^{0}(\sh{F},\sh{G})$, $\vec{v}=(v_{1}, v_{2}, v_{3})\in \mathrm{Ext}^{0}(\sh{G},\sh{Q})$, $\Sigma\in\mathrm{Ext}^{1}(\sh{F}, \sh{G})$, and $\Gamma\in \mathrm{Ext}^{1}(\sh{G}, \sh{Q})$, and let $\vec{p}=(p_{1}, p_{2}, p_{3})\in \mathbb{Z}^{3}_{2}$ and $\vec{q}=(q_{1}, q_{2}, q_{3})\in \mathbb{Z}^{3}_{2}$ be representatives such that $\Sigma=[\,\vec{p}\,]$ and $\Gamma=[\,\vec{q}\,]$. Then, by Theorems~\eqref{Theorem: Graded composition for Ext0 and Ext0},~\eqref{Theorem: Graded composition for Ext1 and Ext0}, and~\eqref{Theorem: Graded composition for Ext0 and Ext1}, the graded compositions between the morphisms under consideration are given by:
\begin{equation*}
\begin{aligned}
\vec{v}\,\circ\,\vec{u}&=\vec{v}\odot \vec{u}\in \mathrm{Ext}^{0}(\sh{F},\sh{Q})\, , \qquad \vec{v}\odot \vec{u}=(v_{1}u_{1}, v_{2}u_{2}, v_{3}u_{3})\in \mathbb{Z}^{3}_{2}\, ,\\
\Gamma\,\circ\,\vec{u}& = [\, \,\vec{q}\,\circ_{\beta_{\mathrm{R}}}\vec{u} \,]\in \mathrm{Ext}^{1}(\sh{F},\sh{Q})\, ,  \qquad \vec{q}\,\circ_{\beta_{\mathrm{R}}}\vec{u}=(q_{1}u_{2}, q_{2}u_{3}, q_{3}u_{3})\in \mathbb{Z}^{3}_{2}\, ,\\
\vec{v}\,\circ\,\Sigma& = [\, \,\vec{v}\,\circ_{\beta_{\mathrm{L}}}\vec{p} \,]\in \mathrm{Ext}^{1}(\sh{F},\sh{Q})\, , \qquad \vec{v}\,\circ_{\beta_{\mathrm{L}}}\vec{p}=(v_{1}p_{1}, v_{1}p_{2}, v_{2}p_{3})\in \mathbb{Z}^{3}_{2}\, .
\end{aligned}    
\end{equation*}
Later, when working with concrete examples and in order to simplify our discussion, we identify $\mathrm{Ext}^{1}(\sh{F},\sh{G})$, $\mathrm{Ext}^{1}(\sh{G},\sh{Q})$, $\mathrm{Ext}^{1}(\sh{F},\sh{Q})$ with some fixed complements $W_{1}$, $W_{2}$, $W_{3}$ of the corresponding linear maps $\delta_{\sh{F},\sh{G}},\allowbreak\;\delta_{\sh{G},\sh{Q}},\allowbreak\;\delta_{\sh{F},\sh{Q}} : \mathbb{Z}^{3}_{2} \to \mathbb{Z}^{3}_{2}$ in $\mathbb{Z}^{3}_{2}$. In this setting, the mixed degree graded compositions are computed via the left $\circ_{\beta_{\mathrm{L}}}:\mathbb{Z}^{3}_{2}\times\mathbb{Z}^{3}_{2}\to \mathbb{Z}^{3}_{2} $ and right $\circ_{\beta_{\mathrm{R}}} :\mathbb{Z}^{3}_{2}\times\mathbb{Z}^{3}_{2}\to \mathbb{Z}^{3}_{2}$ braided compositions, and the resulting vectors in the ambient space $\mathbb{Z}^{3}_{2}$ are then projected onto $W_{3}$ for consistency. 

\noindent
$\bullet$ \textit{Some Concrete Computations}: To illustrate how to apply the framework we developed in the previous sections, we consider three concrete objects of the category $H^{\bullet}(\mathcal{S}h_{1}(\Lambda(\beta_{\mathrm{H}}), \mathbb{Z}_{2})_{0})$: 
\begin{equation*}
\sh{F}_{1}= \left(\,\hat{\mathbf{f}}^{(3)}\,,\, \vec{z}_{1}\,\right)\, , \qquad \sh{F}_{2}= \left(\,\hat{\mathbf{g}}^{(3)}\,,\, \vec{z}_{2}\,\right)\, , \qquad \sh{F}_{3}= \left(\,\hat{\mathbf{q}}^{(3)}\,,\, \vec{z}_{3}\,\right)\, ,
\end{equation*}
where $\hat{\mathbf{f}}^{(3)}$, $\hat{\mathbf{g}}^{(3)}$, $\hat{\mathbf{q}}^{(3)}$ are fixed bases for $\mathbb{Z}^{3}_{2}$, and $\vec{z}_{1}$, $\vec{z}_{2}$, $\vec{z}_{3}$ are the three distinct points in the braid variety $X(\beta_{\mathrm{H}},\mathbb{Z}_{2})$.  

Next, we explicitly describe the algebraic properties of some of the graded morphism spaces associated with $\sh{F}_{1}$, $\sh{F}_{2}$, $\sh{F}_{3}$. To begin, note that the ring structure of the graded endomorphism space $\mathrm{End}^{\bullet}(\sh{F}_{1})$ is summarized in Table~\ref{tab:endomorphisms1}, which reveals that $\mathrm{End}^{0\,}(\sh{F}_{1})$ and $\mathrm{End}^{\,1}(\sh{F}_{1})$ are 2-dimensional over $\mathbb{Z}_{2}$, with the graded composition exhibiting a non-trivial, rich algebraic structure.

\begin{table}[ht!]
\centering
\renewcommand{\arraystretch}{1.6} 
\setlength{\tabcolsep}{10pt}
\begin{tabular}{|c|}
\hline
Object: $\sh{F}_{1}$\\
\hline
Graded Endomorphism Spaces\\
\hline
$\mathrm{End}^{\,0}(\sh{F}_{1})=\mathrm{Span}_{\,\mathbb{Z}_{2}}\big\{ \hat{u}_{1}=\langle 0,1,0 \rangle\,, ~ \hat{u}_{2}=\langle 1,0,1 \rangle\, \big\}$  \\
\hline
$\mathrm{End}^{\,1}(\sh{F}_{1})=\mathrm{Span}_{\,\mathbb{Z}_{2}}\big\{ \hat{\alpha}_{1}=\langle 1,0,0 \rangle\,, ~ \hat{\alpha}_{2}=\langle 0,0,1 \rangle\, \big\}$  \\
\hline
Graded Composition\\
\hline
$
\begin{array}{cccc}
\circ: &\mathrm{End}^{\,0}(\sh{F}_{1})\times \mathrm{End}^{\,0}(\sh{F}_{1})&\to& \mathrm{End}^{\,0}(\sh{F}_{1})\, ,\\[-3pt]
&\big(\,b_{1}\hat{u}_{1}+b_{2}\hat{u}_{2} \,, a_{1}\hat{u}_{1}+a_{2}\hat{u}_{2}\,\big)& \mapsto & b_{1}a_{1} \hat{u}_{1}+b_{2}a_{2}\hat{u}_{2}\, .\\[4pt]
\end{array}
$\\
\hline
$
\begin{array}{cccc}
\circ: &\mathrm{End}^{\,1}(\sh{F}_{1})\times \mathrm{End}^{\,0}(\sh{F}_{1})&\to& \mathrm{End}^{\,1}(\sh{F}_{1})\, ,\\[-3pt]
&\big(\,q_{1}\hat{\alpha}_{1}+q_{2}\hat{\alpha}_{2} \,, a_{1}\hat{u}_{1}+a_{2}\hat{u}_{2}\,\big)& \mapsto & q_{1}a_{1} \hat{\alpha}_{1}+q_{2}a_{2}\hat{\alpha}_{2}\, .\\[4pt]
\end{array}
$\\
\hline
$
\begin{array}{cccc}
\circ: &\mathrm{End}^{\,0}(\sh{F}_{1})\times \mathrm{End}^{\,1}(\sh{F}_{1})&\to& \mathrm{End}^{\,1}(\sh{F}_{1})\, ,\\[-3pt]
&\big(\,b_{1}\hat{u}_{1}+b_{2}\hat{u}_{2} \,, p_{1}\hat{\alpha}_{1}+p_{2}\hat{\alpha}_{2}\,\big)& \mapsto & b_{2}p_{1}\hat{\alpha}_{1}+b_{1}p_{2}\hat{\alpha}_{2}\, .\\[4pt]
\end{array}
$\\
\hline
\end{tabular}
\caption{Unital ring structure of the graded endomorphism space $\mathrm{End}^{\bullet}(\sh{F}_{1})$.}
\label{tab:endomorphisms1}
\end{table}

Similarly, the ring structure of the graded endomorphism spaces $\mathrm{End}^{\bullet}(\sh{F}_{2})$ and $\mathrm{End}^{\bullet}(\sh{F}_{3})$ is summarized in Tables~\eqref{tab:endomorphisms2} and~\eqref{tab:endomorphisms3}. In particular, these tables reveal that, compared to $\mathrm{End}^{\bullet}(\sh{F}_{1})$, the graded endomorphism spaces associated with $\sh{F}_{2}$ and $\sh{F}_{3}$ have a slightly simpler ring structure, namely that of the cohomology ring of an annulus.

\begin{table}[ht!]
\centering
\renewcommand{\arraystretch}{1.6} 
\setlength{\tabcolsep}{10pt}
\begin{tabular}{|c|}
\hline
Object: $\sh{F}_{2}$\\
\hline
Graded Endomorphism Spaces\\
\hline
$\mathrm{End}^{\,0}(\sh{F}_{2})=\mathrm{Span}_{\,\mathbb{Z}_{2}}\big\{\,\hat{u}_{1}=\langle 1,1,1 \rangle\,\big\}$  \\
\hline
$\mathrm{End}^{\,1}(\sh{F}_{2})=\mathrm{Span}_{\,\mathbb{Z}_{2}}\big\{\,\hat{\alpha}_{1}=\langle 1,0,0 \rangle\,\big\}$  \\
\hline
Graded Composition\\
\hline
$
\begin{array}{cccc}
\circ: &\mathrm{End}^{\,0}(\sh{F}_{2})\times \mathrm{End}^{\,0}(\sh{F}_{2})&\to& \mathrm{End}^{\,0}(\sh{F}_{2})\, ,\\[-3pt]
&\big(\,b_{1}\hat{u}_{1} \,, a_{1}\hat{u}_{1}\,\big)& \mapsto & b_{1}a_{1} \hat{u}_{1}\, .\\[4pt]
\end{array}
$\\
\hline
$
\begin{array}{cccc}
\circ: &\mathrm{End}^{\,1}(\sh{F}_{2})\times \mathrm{End}^{\,0}(\sh{F}_{2})&\to& \mathrm{End}^{\,1}(\sh{F}_{2})\, ,\\[-3pt]
&\big(\,q_{1}\hat{\alpha}_{1} \,, a_{1}\hat{u}_{1}\,\big)& \mapsto & q_{1}a_{1}\hat{\alpha}_{1}\, .\\[4pt]
\end{array}
$\\
\hline
$
\begin{array}{cccc}
\circ: &\mathrm{End}^{\,0}(\sh{F}_{2})\times \mathrm{End}^{\,1}(\sh{F}_{2})&\to& \mathrm{End}^{\,1}(\sh{F}_{2})\, ,\\[-3pt]
&\big(\,b_{1}\hat{u}_{1} \,, p_{1}\hat{\alpha}_{1}\,\big)& \mapsto & b_{1}p_{1}\hat{\alpha}_{1}\, .\\[4pt]
\end{array}
$\\
\hline
\end{tabular}
\caption{Unital ring structure of the graded endomorphism space $\mathrm{End}^{\bullet}(\sh{F}_{2})$.}
\label{tab:endomorphisms2}
\end{table}

\begin{table}[ht!]
\centering
\renewcommand{\arraystretch}{1.6} 
\setlength{\tabcolsep}{10pt}
\begin{tabular}{|c|}
\hline
Object: $\sh{F}_{3}$\\
\hline
Graded Endomorphism Spaces\\
\hline
$\mathrm{End}^{\,0}(\sh{F}_{3})=\mathrm{Span}_{\,\mathbb{Z}_{2}}\big\{\,\hat{u}_{1}=\langle 1,1,1 \rangle\,\big\}$  \\
\hline
$\mathrm{End}^{\,1}(\sh{F}_{3})=\mathrm{Span}_{\,\mathbb{Z}_{2}}\big\{\,\hat{\alpha}_{1}=\langle 0,0,1 \rangle\,\big\}$  \\
\hline
Graded Composition\\
\hline
$
\begin{array}{cccc}
\circ: &\mathrm{End}^{\,0}(\sh{F}_{3})\times \mathrm{End}^{\,0}(\sh{F}_{3})&\to& \mathrm{End}^{\,0}(\sh{F}_{3})\, ,\\[-3pt]
&\big(\,b_{1}\hat{u}_{1} \,, a_{1}\hat{u}_{1}\,\big)& \mapsto & b_{1}a_{1} \hat{u}_{1}\, .\\[4pt]
\end{array}
$\\
\hline
$
\begin{array}{cccc}
\circ: &\mathrm{End}^{\,1}(\sh{F}_{3})\times \mathrm{End}^{\,0}(\sh{F}_{3})&\to& \mathrm{End}^{\,1}(\sh{F}_{3})\, ,\\[-3pt]
&\big(\,q_{1}\hat{\alpha}_{1} \,, a_{1}\hat{u}_{1}\,\big)& \mapsto & q_{1}a_{1}\hat{\alpha}_{1}\, .\\[4pt]
\end{array}
$\\
\hline
$
\begin{array}{cccc}
\circ: &\mathrm{End}^{\,0}(\sh{F}_{3})\times \mathrm{End}^{\,1}(\sh{F}_{3})&\to& \mathrm{End}^{\,1}(\sh{F}_{3})\, ,\\[-3pt]
&\big(\,b_{1}\hat{u}_{1} \,, p_{1}\hat{\alpha}_{1}\,\big)& \mapsto & b_{1}p_{1}\hat{\alpha}_{1}\, .\\[4pt]
\end{array}
$\\
\hline
\end{tabular}
\caption{Unital ring structure of the graded endomorphism space $\mathrm{End}^{\bullet}(\sh{F}_{3})$.}
\label{tab:endomorphisms3}
\end{table}

Next, we extend our discussion by explicitly analyzing the graded morphism spaces and their compositions for the ordered triple $(\sh{F}_{3}, \sh{F}_{1}, \sh{F}_{2})$, which for clarity is summarized in Table~\eqref{tab:morphisms1}. In this concrete example, the graded composition is largely trivial, with the only non-trivial contributions arising from the degree zero morphism spaces.

\begin{table}[ht!]
\centering
\renewcommand{\arraystretch}{1.6} 
\setlength{\tabcolsep}{10pt}
\begin{tabular}{|c|}
\hline
Ordered Triple: ($\sh{F}_{3}, \sh{F}_{1}, \sh{F}_{2}$)\\
\hline
Graded morphism Spaces\\
\hline
$\begin{array}{c|c}
\mathrm{Ext}^{0}(\sh{F}_{3}, \sh{F}_{1})=\mathrm{Span}_{\,\mathbb{Z}_{2}}\big\{\,\hat{u}_{1}=\langle 0,1,0 \rangle\,\big\}  ~&~ \mathrm{Ext}^{1}(\sh{F}_{3}, \sh{F}_{1})=\mathrm{Span}_{\,\mathbb{Z}_{2}}\big\{\,\hat{\alpha}_{1}=\langle 0,0,1 \rangle\,\big\} \\
\hline
\mathrm{Ext}^{0}(\sh{F}_{1}, \sh{F}_{2})=\mathrm{Span}_{\,\mathbb{Z}_{2}}\big\{\,\hat{v}_{1}=\langle 0,1,0 \rangle\,\big\}  ~&~ \mathrm{Ext}^{1}(\sh{F}_{1}, \sh{F}_{2})=\mathrm{Span}_{\,\mathbb{Z}_{2}}\big\{\,\hat{\beta}_{1}=\langle 1,0,0 \rangle\,\big\} \\
\hline
\mathrm{Ext}^{0}(\sh{F}_{3}, \sh{F}_{2})=\mathrm{Span}_{\,\mathbb{Z}_{2}}\big\{\,\hat{w}_{1}=\langle 0,1,0 \rangle\,\big\}  ~&~ \mathrm{Ext}^{1}(\sh{F}_{3}, \sh{F}_{2})=\mathrm{Span}_{\,\mathbb{Z}_{2}}\big\{\,\hat{\gamma}_{1}=\langle 1,1,1 \rangle\,\big\} \\
\end{array}$\\
\hline
Graded Composition\\
\hline
$
\begin{array}{cccc}
\circ: &\mathrm{Ext}^{0}(\sh{F}_{1},\sh{F}_{2})\times \mathrm{Ext}^{0}(\sh{F}_{3},\sh{F}_{1})&\to& \mathrm{Ext}^{0}(\sh{F}_{3},\sh{F}_{2})\, ,\\[-3pt]
&\big(\,b_{1}\hat{v}_{1} \,, a_{1}\hat{u}_{1}\,\big)& \mapsto & b_{1}a_{1} \hat{w}_{1}\, .\\[4pt]
\end{array}
$\\
\hline
$
\begin{array}{cccc}
\circ: &\mathrm{Ext}^{1}(\sh{F}_{1},\sh{F}_{2})\times \mathrm{Ext}^{0}(\sh{F}_{3},\sh{F}_{1})&\to& \mathrm{Ext}^{1}(\sh{F}_{3},\sh{F}_{2})\, ,\\[-3pt]
&\big(\,q_{1}\hat{\beta}_{1} \,, a_{1}\hat{u}_{1}\,\big)& \mapsto & 0\, .\\[4pt]
\end{array}
$\\
\hline
$
\begin{array}{cccc}
\circ: &\mathrm{Ext}^{0}(\sh{F}_{1},\sh{F}_{2})\times \mathrm{Ext}^{1}(\sh{F}_{3},\sh{F}_{1})&\to& \mathrm{Ext}^{1}(\sh{F}_{3},\sh{F}_{2})\, ,\\[-3pt]
&\big(\,b_{1}\hat{v}_{1} \,, p_{1}\hat{\alpha}_{1}\,\big)& \mapsto & 0\, .\\[4pt]
\end{array}
$\\
\hline
\end{tabular}
\caption{Graded morphism spaces and their compositions for the ordered triple $(\sh{F}_{3}, \sh{F}_{1}, \sh{F}_{2})$.}
\label{tab:morphisms1}
\end{table}

Finally, we close our discussion by summarizing in Table~\eqref{tab:morphisms} the linear algebraic structure of the graded morphism spaces associated with $\sh{F}_{1}$, $\sh{F}_{2}$, $\sh{F}_{3}$. From this table, we observe that $\sh{F}_{1}$ stands out, as its graded morphism spaces exhibit a richer structure compared to the others.
 
\begin{table}[ht!]
\centering
\renewcommand{\arraystretch}{1.6} 
\setlength{\tabcolsep}{10pt}
\begin{tabular}{|c|c|c|c|}
\hline
$\mathrm{Ext}^\bullet(\sh{F}_{i},\sh{F}_{j})$ & $\sh{F}_1$ & $\sh{F}_2$ & $\sh{F}_3$ \\
\hline
$\sh{F}_1$ & $\mathbb{Z}^{2}_{2}[0]\oplus \mathbb{Z}^{2}_{2}[1]$ & $\mathbb{Z}_{2}[0]\oplus \mathbb{Z}_{2}[1]$ & $\mathbb{Z}_{2}[0]\oplus \mathbb{Z}_{2}[1]$  \\
\hline
$\sh{F}_2$ & $\mathbb{Z}_{2}[0]\oplus \mathbb{Z}_{2}[1]$  & $\mathbb{Z}_{2}[0]\oplus \mathbb{Z}_{2}[1]$  & $\mathbb{Z}_{2}[0]\oplus \mathbb{Z}_{2}[1]$  \\
\hline
$\sh{F}_3$ & $\mathbb{Z}_{2}[0]\oplus \mathbb{Z}_{2}[1]$  & $\mathbb{Z}_{2}[0]\oplus \mathbb{Z}_{2}[1]$  & $\mathbb{Z}_{2}[0]\oplus \mathbb{Z}_{2}[1]$  \\
\hline
\end{tabular}
\caption{Linear structure of the graded morphism spaces associated with $\sh{F}_{1}$, $\sh{F}_{2}$, $\sh{F}_{3}$.} 
\label{tab:morphisms}
\end{table}
\section{Auxiliary Results on Braid Matrices}\label{appendix:a}
\noindent
In this appendix, we record several auxiliary results concerning the braid matrices (see Definition~\eqref{Def: braid matrices}). Let $n\geq2$ be an integer. Given $M\in \mathrm{M}(n,\mathbb{K})$, $k\in[1, n-1]$, and $z\in \mathbb{K}$, the following claims provide explicit formulas for the products $MB^{(n)}_{k}(z)$, $B^{(n)}_{k}(z)M$, $M(B^{(n)}_{k}(z))^{-1}$, and $(B^{(n)}_{k}(z))^{-1}M$. 

\begin{claim}\label{braid matrix product from right}
Let $n\geq2$ be an integer, and let $M\in \mathrm{M}(n,\mathbb{K})$ be a matrix given by
\begin{equation*}
M=\big[\vec{c}_{1},\dots, \vec{c}_{n} \big]\, ,
\end{equation*}
where $\vec{c}_{i}$ denotes the $i$-th column vector of $M$ for each $i\in[1, n]$. Then, for any $k\in[1, n-1]$ and $z\in \mathbb{K}$, we have that 
\begin{equation*}
MB^{(n)}_{k}(z)=\big[\vec{c}_{1},\dots, \vec{c}_{k-1}, \vec{c}_{k+1}+\vec{c}_{k}\cdot z,\vec{c}_{k}, \vec{c}_{k+2},\dots, \vec{c}_{n} \big]\,.
\end{equation*}
\end{claim}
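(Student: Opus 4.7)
The plan is to verify the identity by a direct column-by-column computation, exploiting the standard fact that the $j$-th column of a matrix product $MA$ equals $M$ applied to the $j$-th column of $A$. Since the braid matrix $B^{(n)}_k(z)$ has a very sparse structure (it differs from the identity only in the $2\times 2$ block at rows/columns $k$ and $k+1$), nearly all of its columns are standard basis vectors, which makes the analysis essentially a three-case check.

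First I would unpack the columns of $B^{(n)}_k(z)$ from Definition~\ref{Def: braid matrices}. Let $\{\hat{e}_1,\dots,\hat{e}_n\}$ denote the standard basis of $\mathbb{K}^{n}_{\mathrm{std}}$, and let $\vec{b}_{j}$ denote the $j$-th column of $B^{(n)}_{k}(z)$. A direct reading of the entries gives
\begin{equation*}
\vec{b}_{j} \;=\; \hat{e}_{j} \quad \text{for } j \notin \{k, k+1\}, \qquad \vec{b}_{k} \;=\; z\,\hat{e}_{k} + \hat{e}_{k+1}, \qquad \vec{b}_{k+1} \;=\; \hat{e}_{k}.
\end{equation*}

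Next, I would use linearity of $M$ acting on columns. The $j$-th column of $MB^{(n)}_{k}(z)$ equals $M\vec{b}_{j}$, and since $M\hat{e}_{i} = \vec{c}_{i}$ for every $i \in [1,n]$, a short calculation yields
\begin{equation*}
M\vec{b}_{j} = \vec{c}_{j} \;\; (j \notin \{k,k+1\}), \qquad M\vec{b}_{k} = z\,\vec{c}_{k} + \vec{c}_{k+1} = \vec{c}_{k+1} + \vec{c}_{k}\cdot z, \qquad M\vec{b}_{k+1} = \vec{c}_{k}.
\end{equation*}
Assembling these columns in order produces exactly the matrix on the right-hand side of the claim, which completes the argument.

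The proof is entirely mechanical; there is no substantive obstacle, only bookkeeping of indices and a careful transcription of the braid matrix entries. The only place where an error could creep in is confusing the $(k,k)$ and $(k+1,k+1)$ diagonal entries (the former is $z$, the latter is $0$) when writing out the column $\vec{b}_{k}$, so I would double-check that step explicitly before concluding.
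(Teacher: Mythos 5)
Your proof is correct and follows essentially the same route as the paper's: the paper simply asserts that right multiplication by $B^{(n)}_{k}(z)$ replaces the $k$-th and $(k+1)$-th columns by $\vec{c}_{k}\cdot z+\vec{c}_{k+1}$ and $\vec{c}_{k}$ while leaving the others fixed, which is exactly the column-by-column computation you carry out (just spelled out via the columns of the braid matrix). Your reading of the entries, including $[B^{(n)}_{k}(z)]_{k,k}=z$ and $[B^{(n)}_{k}(z)]_{k+1,k+1}=0$, is accurate.
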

\begin{proof}
Multiplying the matrix $M$ on the right by the braid matrix $B^{(n)}_k(z)$ only changes the $k$-th and $(k+1)$-th columns of $M$. Specifically,
\begin{equation*}
\vec{c}_k \mapsto \vec{c}_k\cdot z + \vec{c}_{k+1}, \qquad 
\vec{c}_{k+1} \mapsto \vec{c}_k,    
\end{equation*}
while all other columns of $M$ remain unchanged. The claim follows.
\end{proof}

\begin{claim}\label{braid matrix product from left}
Let $n\geq2$ be an integer, and let $M\in \mathrm{M}(n,\mathbb{K})$ be a matrix given by
\begin{equation*}
M=\left[ \begin{array}{c}
     \vec{r}_{1} \\
     \vdots\\
     \vec{r}_{n}
\end{array}\right]\, ,    
\end{equation*}
where $\vec{r}_{i}$ denotes the $i$-th row vector of $M$ for each $i\in[1, n]$. Then, for any $k\in[1, n-1]$ and $z\in \mathbb{K}$, we have that 
\begin{equation*}
B^{(n)}_{k}(z)M= \left[ \begin{array}{c}
     \vec{r}_{1}  \\
     \vdots\\
     \vec{r}_{k-1}\\
     \vec{r}_{k+1}+z\cdot \vec{r}_{k}\\
     \vec{r}_{k}\\
     \vec{r}_{k+2}\\
     \vdots\\
     \vec{r}_{n}
\end{array}    \right]\, .
\end{equation*}  
\end{claim}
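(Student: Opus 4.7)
My plan is to prove Claim A.2 by a direct entry-wise computation, exploiting the well-known principle that left multiplication by a matrix acts on rows in exactly the way right multiplication acts on columns. Concretely, for any $i \in [1,n]$, the $i$-th row of the product $B^{(n)}_{k}(z)\,M$ equals the linear combination of the rows of $M$ whose coefficients are given by the $i$-th row of $B^{(n)}_{k}(z)$.

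The first step is to read off, from Definition~\eqref{Def: braid matrices}, the three distinct types of rows of $B^{(n)}_{k}(z)$. For $i\in[1,n]$ with $i\neq k,\,k+1$, the $i$-th row of $B^{(n)}_{k}(z)$ is simply the standard basis row vector $e_i^{\mathrm{T}}$, so the $i$-th row of the product is $\vec{r}_i$. The $k$-th row of $B^{(n)}_{k}(z)$ has a $z$ in position $k$ and a $1$ in position $k+1$, with zeros elsewhere, so the $k$-th row of the product is $z\,\vec{r}_{k}+\vec{r}_{k+1}$. Finally, the $(k+1)$-st row of $B^{(n)}_{k}(z)$ has a $1$ in position $k$ and zeros elsewhere, so the $(k+1)$-st row of the product is $\vec{r}_{k}$. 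Assembling these three cases yields precisely the row vector displayed in the statement of the claim.

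Alternatively, and perhaps more cleanly for the write-up, I could deduce Claim A.2 from Claim~\eqref{braid matrix product from right} via transposition. A direct inspection of Definition~\eqref{Def: braid matrices} shows that the braid matrix is symmetric, $\bigl(B^{(n)}_{k}(z)\bigr)^{\mathrm{T}} = B^{(n)}_{k}(z)$. Hence
\begin{equation*}
B^{(n)}_{k}(z)\,M \;=\; \bigl(M^{\mathrm{T}}\,B^{(n)}_{k}(z)\bigr)^{\mathrm{T}},
\end{equation*}
and applying Claim~\eqref{braid matrix product from right} to $M^{\mathrm{T}}$ (whose columns are exactly the rows of $M$) immediately produces the stated identity upon transposing back. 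I do not expect any real obstacle here: the claim is a purely mechanical consequence of the sparse structure of $B^{(n)}_{k}(z)$, and the only minor bookkeeping point is to verify the symmetry $\bigl(B^{(n)}_{k}(z)\bigr)^{\mathrm{T}}=B^{(n)}_{k}(z)$ by inspection of the four nonzero positions defining the matrix.
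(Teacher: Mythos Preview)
Your proposal is correct and your primary argument---reading off the $i$-th row of $B^{(n)}_{k}(z)M$ as the linear combination of the rows $\vec{r}_j$ dictated by the $i$-th row of $B^{(n)}_{k}(z)$---is exactly the paper's approach, just spelled out in slightly more detail. The transposition alternative via Claim~\ref{braid matrix product from right} is a nice bonus the paper does not include, but it is not a genuinely different route, just a repackaging of the same computation.
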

\begin{proof}
Multiplying the matrix $M$ on the left by the braid matrix $B^{(n)}_k(z)$ only changes the $k$-th and $(k+1)$-th rows of $M$. Explicitly,
\begin{equation*}
\vec{r}_k \mapsto \vec{r}_{k+1} + z\cdot \vec{r}_k, \qquad 
\vec{r}_{k+1} \mapsto \vec{r}_k,    
\end{equation*}
while all other rows of $M$ remain unchanged. The claim follows.
\end{proof}

\begin{claim}\label{inverse braid matrix product from right}
Let $n\geq2$ be an integer, and let $M\in\mathrm{M}(n,\mathbb{K})$ be a matrix given by 
\begin{equation*}
M=\big[\vec{c}_{1},\dots, \vec{c}_{n} \big]\, ,
\end{equation*}
where $\vec{c}_{i}$ denotes the $i$-th column vector of $M$ for each $i\in[1, n]$. Then, for any $k\in[1, n-1]$ and $z\in\mathbb{K}$, we have that
\begin{equation*}
M(B^{(n)}_{k}(z))^{-1}=\big[\vec{c}_{1},\dots, \vec{c}_{k-1}, \vec{c}_{k+1},\vec{c}_{k}-\vec{c}_{k+1}\cdot z, \vec{c}_{k+2},\dots, \vec{c}_{n} \big]\,.
\end{equation*}
\end{claim}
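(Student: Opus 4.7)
The plan is to deduce this identity directly from Claim~\eqref{braid matrix product from right}, rather than inverting the braid matrix explicitly and redoing the column computation. More precisely, I would set
\begin{equation*}
N := \big[\,\vec{c}_{1},\dots,\vec{c}_{k-1},\,\vec{c}_{k+1},\,\vec{c}_{k}-\vec{c}_{k+1}\cdot z,\,\vec{c}_{k+2},\dots,\vec{c}_{n}\,\big] \in \mathrm{M}(n,\mathbb{K})
\end{equation*}
and verify the equivalent identity $N\cdot B^{(n)}_{k}(z)=M$. Once that is established, right-multiplying both sides by $(B^{(n)}_{k}(z))^{-1}$ yields $N = M\cdot(B^{(n)}_{k}(z))^{-1}$, which is exactly the claim.

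To carry out the verification, I would apply Claim~\eqref{braid matrix product from right} to the matrix $N$ with its $k$-th column equal to $\vec{c}_{k+1}$ and its $(k+1)$-th column equal to $\vec{c}_{k}-\vec{c}_{k+1}\cdot z$, leaving all other columns untouched. The claim then gives the new $k$-th column
\begin{equation*}
(\vec{c}_{k}-\vec{c}_{k+1}\cdot z) + \vec{c}_{k+1}\cdot z \;=\; \vec{c}_{k}\,,
\end{equation*}
and the new $(k+1)$-th column $\vec{c}_{k+1}$, while all remaining columns remain $\vec{c}_{i}$ for $i\neq k,k+1$. Hence $N\cdot B^{(n)}_{k}(z)=\big[\vec{c}_{1},\dots,\vec{c}_{n}\big]=M$, as desired.

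There is no real obstacle here: the only subtle point is ensuring that the braid matrix $B^{(n)}_{k}(z)$ is indeed invertible (so that the manipulation $N\cdot B^{(n)}_{k}(z)=M \Rightarrow N=M\cdot (B^{(n)}_{k}(z))^{-1}$ is legitimate), which is immediate from Definition~\eqref{Def: braid matrices} since the only nontrivial $2\times 2$ block $\left[\begin{smallmatrix} z & 1 \\ 1 & 0\end{smallmatrix}\right]$ has determinant $-1\in \mathbb{K}^{\times}$. As an alternative route, one could instead compute $(B^{(n)}_{k}(z))^{-1}$ explicitly (the nontrivial block becomes $\left[\begin{smallmatrix} 0 & 1 \\ 1 & -z\end{smallmatrix}\right]$) and then read off the column action directly, but the argument above using Claim~\eqref{braid matrix product from right} is shorter and avoids repeating the same computation in inverted form.
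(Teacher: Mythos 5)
Your argument is correct. You verify the equivalent identity $N\cdot B^{(n)}_{k}(z)=M$ by applying Claim~\eqref{braid matrix product from right} to the candidate matrix $N$, and the cancellation $(\vec{c}_{k}-\vec{c}_{k+1}\cdot z)+\vec{c}_{k+1}\cdot z=\vec{c}_{k}$ together with the invertibility of $B^{(n)}_{k}(z)$ (determinant of the nontrivial block is $-1$) gives the claim.

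This is a genuinely different route from the paper's. The paper proves the statement directly: it records the column action of right-multiplication by $(B^{(n)}_{k}(z))^{-1}$ itself, namely $\vec{c}_{k}\mapsto\vec{c}_{k+1}$ and $\vec{c}_{k+1}\mapsto\vec{c}_{k}-\vec{c}_{k+1}\cdot z$, which amounts to the ``alternative route'' you mention at the end (computing the inverse block $\left[\begin{smallmatrix}0&1\\1&-z\end{smallmatrix}\right]$ and reading off the columns). Your version instead reduces the inverse statement to the already-established forward statement and cancels; this avoids writing down $(B^{(n)}_{k}(z))^{-1}$ at all and makes explicit why the two column operations are mutually inverse, at the cost of introducing the auxiliary matrix $N$ and an extra cancellation step. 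The paper's direct computation is shorter and keeps the proof formally parallel to those of Claims~\eqref{braid matrix product from right}, \eqref{braid matrix product from left}, and~\eqref{inverse braid matrix product from left}, but both arguments are complete and correct.
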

\begin{proof}
Multiplying the matrix $M$ on the right by the inverse braid matrix $\big(B^{(n)}_k(z)\big)^{-1}$ only changes the $k$-th and $(k+1)$-th columns of $M$. Specifically,
\begin{equation*}
\vec{c}_k \mapsto \vec{c}_{k+1}, \qquad 
\vec{c}_{k+1} \mapsto \vec{c}_{k} - \vec{c}_{k+1}\cdot z,    
\end{equation*}
while all other columns of $M$ remain unchanged. The claim follows.
\end{proof}

\begin{claim}\label{inverse braid matrix product from left}
Let $n\geq2$ be an integer, and let $M\in \mathrm{M}(n,\mathbb{K})$ be a matrix given by
\begin{equation*}
M=\left[ \begin{array}{c}
     \vec{r}_{1} \\
     \vdots\\
     \vec{r}_{n}
\end{array}\right]\, ,    
\end{equation*}
where $\vec{r}_{i}$ denotes the $i$-th row vector of $M$ for each $i\in[1, n]$. Then, for any $k\in[1, n-1]$ and $z\in \mathbb{K}$, we have that
\begin{equation*}
(B^{(n)}_{k}(z))^{-1}M= \left[ \begin{array}{c}
     \vec{r}_{1}  \\
     \vdots\\
     \vec{r}_{k-1}\\
     \vec{r}_{k+1}\\
     \vec{r}_{k}-z\cdot \vec{r}_{k+1}\\
     \vec{r}_{k+2}\\
     \vdots\\
     \vec{r}_{n}
\end{array}    \right]\, .
\end{equation*}  
\end{claim}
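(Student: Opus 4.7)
The plan is to mirror the approach used in Claims~\eqref{braid matrix product from right},~\eqref{braid matrix product from left}, and~\eqref{inverse braid matrix product from right}: first pin down an explicit expression for $(B^{(n)}_{k}(z))^{-1}$, then read off the effect of left multiplication row by row.

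First I would compute $(B^{(n)}_{k}(z))^{-1}$ directly from Definition~\eqref{Def: braid matrices}. Since $B^{(n)}_{k}(z)$ is block-diagonal with identity blocks of sizes $k-1$ and $n-k-1$ on the diagonal and with the non-trivial $2\times 2$ block
\[
\begin{bmatrix} z & 1 \\ 1 & 0 \end{bmatrix}
\]
in rows/columns $k$ and $k+1$, its inverse is again block-diagonal with the same identity blocks together with the inverse of the $2\times 2$ block, namely
\[
\begin{bmatrix} 0 & 1 \\ 1 & -z \end{bmatrix}.
\]
This gives a clean closed-form expression for $(B^{(n)}_{k}(z))^{-1}$.

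Next I would invoke the general principle that left multiplication by a matrix performs row operations. Because $(B^{(n)}_{k}(z))^{-1}$ coincides with the identity outside rows $k$ and $k+1$, all rows $\vec{r}_{i}$ with $i\neq k, k+1$ are preserved. For the two affected rows, I would read off the linear combinations dictated by the $2\times 2$ block above:
\[
\vec{r}_{k}\;\longmapsto\; 0\cdot\vec{r}_{k}+1\cdot\vec{r}_{k+1}=\vec{r}_{k+1},
\qquad
\vec{r}_{k+1}\;\longmapsto\; 1\cdot\vec{r}_{k}+(-z)\cdot\vec{r}_{k+1}=\vec{r}_{k}-z\cdot\vec{r}_{k+1}.
\]
Assembling these gives exactly the matrix displayed in the claim.

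There is no genuine obstacle here: the argument is a direct matrix computation. The only point that requires a little care is verifying the $2\times 2$ inverse formula, which can be checked by multiplying with $B^{(n)}_{k}(z)$ and confirming that the product equals the identity; alternatively, one can invoke Claim~\eqref{inverse braid matrix product from right} applied to an appropriate transpose, which yields the same answer by symmetry between left and right multiplication on rows and columns.
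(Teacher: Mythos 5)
Your proposal is correct and follows essentially the same route as the paper: the paper's proof simply asserts that left multiplication by $(B^{(n)}_{k}(z))^{-1}$ replaces $\vec{r}_{k}$ by $\vec{r}_{k+1}$ and $\vec{r}_{k+1}$ by $\vec{r}_{k}-z\cdot\vec{r}_{k+1}$ while fixing the other rows, which is exactly what you derive. Your explicit computation of the inverse $2\times 2$ block is a harmless (and welcome) expansion of the detail the paper leaves implicit.
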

\begin{proof}
Multiplying the matrix $M$ on the left by the inverse braid matrix $\big(B^{(n)}_k(z)\big)^{-1}$ only changes the $k$-th and $(k+1)$-th rows of $M$. Explicitly,
\begin{equation*}
\vec{r}_k \mapsto \vec{r}_{k+1}, \qquad 
\vec{r}_{k+1} \mapsto \vec{r}_{k} - z\cdot \vec{r}_{k+1},    
\end{equation*}
while all other rows of $M$ remain unchanged. The claim follows.
\end{proof}

We conclude this appendix with the following lemma, which describes the effect of a generalized conjugation of a diagonal matrix by braid matrices.

\begin{lemma}\label{lemma for braid matrices and diagonal matrices}
Let $\vec{u}=(u_{1}, \dots, u_{n})\in \mathbb{K}^{n}_{\mathrm{std}}$ be a fixed tuple. Then, for any $k\in[1, n-1]$ and any $z,\,z'\in \mathbb{K}$, we have that

\begin{equation*}
\big(B^{(n)}_{k}(z')\big)^{-1} D(\vec{u})\; B^{(n)}_{k}(z)
=
\begin{bmatrix}
u_1 & & & & & & & \\
& \ddots & & & & & & \\
& & u_{k-1} & & & & & \\
& & & u_{k+1} & ~~0~~ & & & \\
& & & u_{k}\,z - z'\,u_{k+1} & ~~~u_k~~ & & & \\
& & & & & u_{k+2} & & \\
& & & & & & \ddots & \\
& & & & & & & u_n
\end{bmatrix}.
\end{equation*}
\end{lemma}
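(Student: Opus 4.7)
The plan is to reduce the claim to a direct $2\times 2$ block computation. The key structural observation is that all three matrices $B^{(n)}_k(z)$, $\bigl(B^{(n)}_k(z')\bigr)^{-1}$, and $D(\vec{u})$ decompose compatibly with respect to the splitting of the index set $\{1,\dots,n\}$ into $\{1,\dots,k-1\}$, $\{k,k+1\}$, and $\{k+2,\dots,n\}$. More precisely, outside the rows and columns indexed by $\{k,k+1\}$, the two braid matrices act as the identity, while $D(\vec{u})$ acts as a diagonal matrix whose entries are $u_1,\dots,u_{k-1}$ and $u_{k+2},\dots,u_n$. Consequently, the outer rows and columns of the triple product are immediately read off as the corresponding diagonal entries of $D(\vec{u})$, and the only nontrivial work is confined to the central $2\times 2$ block at positions $(k,k+1)$.

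Next, I would carry out the central computation explicitly. By Definition~\eqref{Def: braid matrices}, the $2\times 2$ block of $B^{(n)}_k(z)$ at rows and columns $k,k+1$ is
\[
\begin{bmatrix} z & 1 \\ 1 & 0 \end{bmatrix},
\]
which has determinant $-1$, so the corresponding block of $\bigl(B^{(n)}_k(z')\bigr)^{-1}$ is
\[
\begin{bmatrix} 0 & 1 \\ 1 & -z' \end{bmatrix}.
\]
The central block of $D(\vec{u})$ is $\mathrm{diag}(u_k, u_{k+1})$. A straightforward multiplication then yields
\[
\begin{bmatrix} 0 & 1 \\ 1 & -z' \end{bmatrix}
\begin{bmatrix} u_k & 0 \\ 0 & u_{k+1} \end{bmatrix}
\begin{bmatrix} z & 1 \\ 1 & 0 \end{bmatrix}
=
\begin{bmatrix} u_{k+1} & 0 \\ u_k z - z'\,u_{k+1} & u_k \end{bmatrix},
\]
which matches the claimed block exactly.

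Finally, I would assemble the two pieces: the block-diagonal structure established in the first step shows that the off-block entries of the triple product are precisely those displayed in the statement (diagonal entries $u_i$ for $i\notin\{k,k+1\}$, and zero elsewhere), while the second step identifies the central $2\times 2$ block with the required expression. There is no real obstacle in this proof; the only point requiring minor care is verifying that the braid matrices genuinely act as the identity on the complementary indices, which follows directly from the case analysis in Definition~\eqref{Def: braid matrices}. The result then follows by concatenation of the block computations.
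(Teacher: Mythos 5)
Your proof is correct and is essentially the same direct computation the paper performs, which invokes Claims~\eqref{braid matrix product from right} and~\eqref{inverse braid matrix product from left} (column and row operations) rather than phrasing the reduction as a compatible $2\times 2$ block decomposition. The central block multiplication, including the inverse $\bigl[\begin{smallmatrix}0 & 1\\ 1 & -z'\end{smallmatrix}\bigr]$, checks out and yields exactly the stated matrix.
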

\begin{proof}
The result follows from a straightforward application of claims \eqref{braid matrix product from right} and \eqref{inverse braid matrix product from left}. 
\end{proof}

\bibliographystyle{amsalpha}   
\bibliography{references}

\end{document}